\newtheorem{cor}[subsubsection]{Corollary}
\newtheorem{lem}[subsubsection]{Lemma}
\newtheorem{prop}[subsubsection]{Proposition}
\newtheorem{conj}[subsubsection]{Conjecture}
\newtheorem{thm}[subsubsection]{Theorem}
\theoremstyle{definition}
\theoremstyle{remark}
\newtheorem{rem}[subsubsection]{Remark}
\newcommand{\thmref}[1]{Theorem~\ref{#1}}
\newcommand{\secref}[1]{Sect.~\ref{#1}}
\newcommand{\lemref}[1]{Lemma~\ref{#1}}
\newcommand{\propref}[1]{Proposition~\ref{#1}}
\newcommand{\corref}[1]{Corollary~\ref{#1}}
\newcommand{\conjref}[1]{Conjecture~\ref{#1}}
\numberwithin{equation}{section}
\newcommand{\nc}{\newcommand}
\nc{\renc}{\renewcommand}
\nc{\ssec}{\subsection}
\nc{\sssec}{\subsubsection}
\nc{\on}{\operatorname}
\nc\ol{\overline}
\nc\wt{\widetilde}
\nc\tboxtimes{\wt{\boxtimes}}
\nc\tstar{\wt{\star}}
\nc{\alp}{a}
\nc{\ZZ}{{\mathbb Z}}
\nc{\NN}{{\mathbb N}}
\nc{\OO}{{\mathbb O}}
\renc{\SS}{{\mathbb S}}
\nc{\DD}{{\mathbb D}}
\nc{\GG}{{\mathbb G}}
\nc{\Fq}{{\mathbb F}_q}
\nc{\Fqb}{\ol{{\mathbb F}_q}}
\nc{\Ql}{\ol{{\mathbb Q}_\ell}}
\nc{\id}{\text{id}}
\nc\X{\mathcal X}
\nc{\Hom}{\on{Hom}}
\nc{\Lie}{\on{Lie}}
\nc{\Loc}{\on{Loc}}
\nc{\Pic}{\on{Pic}}
\nc{\Bun}{\on{Bun}}
\nc{\IC}{\on{IC}}
\nc{\Fls}{\on{Fl}^{\frac{\infty}{2}}}
\nc{\ICs}{\on{IC}^{\frac{\infty}{2}}}
\nc{\ICsd}{\overset{\bullet}{\on{IC}}{}^{\frac{\infty}{2}}}
\nc{\ICsdt}{\overset{\bullet}{\on{IC}}{}^{\frac{\infty}{2},\tau}}
\nc{\ICsl}{\on{IC}^{\lambda+\frac{\infty}{2}}}
\nc{\ICslm}{\on{IC}^{\lambda+\frac{\infty}{2},-}}
\nc{\ICsm}{\on{IC}^{-,\frac{\infty}{2}}}
\nc{\Aut}{\on{Aut}}
\nc{\rk}{\on{rk}}
\nc{\Sh}{\on{Sh}}
\nc{\Perv}{\on{Perv}}
\nc{\pos}{{\on{pos}}}
\nc{\Conv}{\on{Conv}}
\nc{\Sph}{\on{Sph}}
\nc{\Sat}{\on{Sat}}
\nc{\Sym}{\on{Sym}}
\nc{\BunBb}{\overline{\Bun}_B}
\nc{\BunNb}{\overline{\Bun}_N}
\nc{\BunTb}{\overline{\Bun}_T}
\nc{\BunBbm}{\overline{\Bun}_{B^-}}
\nc{\BunBbel}{\overline{\Bun}_{B,el}}
\nc{\BunBbmel}{\overline{\Bun}_{B^-,el}}
\nc{\Buno}{\overset{o}{\Bun}}
\nc{\BunPb}{{\overline{\Bun}_P}}
\nc{\BunBM}{\Bun_{B(M)}}
\nc{\BunBMb}{\overline{\Bun}_{B(M)}}
\nc{\BunPbw}{{\widetilde{\Bun}_P}}
\nc{\BunBP}{\widetilde{\Bun}_{B,P}}
\nc{\GUb}{\overline{G/U}}
\nc{\GUPb}{\overline{G/U(P)}}
\nc{\Hhom}{\underline{\on{Hom}}}
\nc\syminfty{\on{Sym}^{\infty}}
\nc\lal{\ol{\kappa_x}}
\nc\xl{\ol{x}}
\nc\thl{\ol{\theta}}
\nc\nul{\ol{\nu}}
\nc\mul{\ol{\mu}}
\nc{\oX}{\overset{o}{X}{}}
\nc{\hl}{\overset{\leftarrow}h{}}
\nc{\hr}{\overset{\rightarrow}h{}}
\nc{\M}{{\mathcal M}}
\nc{\N}{{\mathcal N}}
\nc{\F}{{\mathcal F}}
\nc{\D}{{\mathcal D}}
\nc{\Q}{{\mathcal Q}}
\nc{\Y}{{\mathcal Y}}
\nc{\G}{{\mathcal G}}
\nc{\E}{{\mathcal E}}
\nc{\CalC}{{\mathcal C}}
\nc\Dh{\widehat{\D}}
\nc{\C}{{\mathcal C}}
\nc{\K}{{\mathcal K}}
\renewcommand{\H}{{\mathcal H}}
\nc{\T}{{\mathcal T}}
\nc{\V}{{\mathcal V}}
\renc{\P}{{\mathcal P}}
\nc{\A}{{\mathcal A}}
\nc{\B}{{\mathcal B}}
\nc{\U}{{\mathcal U}}
\nc{\Gr}{{\on{Gr}}}
\nc{\frn}{{\check{\mathfrak u}(P)}}
\nc{\fC}{\mathfrak C}
\nc{\p}{\mathfrak p}
\nc{\q}{\mathfrak q}
\nc\f{{\mathfrak f}}
\nc{\qo}{{\mathfrak q}}
\nc{\po}{{\mathfrak p}}
\nc{\s}{{\mathfrak s}}
\nc\w{\text{w}}
\renewcommand{\mod}{{\on{-mod}}}
\newcommand{\comod}{{\on{-comod}}}
\nc\Spec{\on{Spec}}
\nc\Mod{\on{Mod}}
\nc{\tw}{\widetilde{\mathfrak t}}
\nc{\pw}{\widetilde{\mathfrak p}}
\nc{\qw}{\widetilde{\mathfrak q}}
\nc{\jw}{\widetilde j}
\nc{\grb}{\overline{\Gr}}
\nc{\I}{\mathcal I}
\nc{\kappach}{{\check\kappa_x}}
\nc{\Lambdach}{{\check\Lambda}{}}
\nc{\much}{{\check\mu}}
\nc{\omegach}{{\check\omega}}
\nc{\nuch}{{\check\nu}}
\nc{\etach}{{\check\eta}}
\nc{\alphach}{{\checka}}
\nc{\oblvtach}{{\check\oblvta}}
\nc{\pich}{{\check\pi}}
\nc{\ch}{{\check h}}
\nc{\Hb}{\overline{\H}}
\nc{\BA}{{\mathbb{A}}}
\nc{\BC}{{\mathbb{C}}}
\nc{\BE}{{\mathbb{E}}}
\nc{\BF}{{\mathbb{F2}}}
\nc{\BG}{{\mathbb{G}}}
\nc{\BM}{{\mathbb{M}}}
\nc{\BO}{{\mathbb{O}}}
\nc{\BD}{{\mathbb{D}}}
\nc{\BN}{{\mathbb{N}}}
\nc{\BP}{{\mathbb{P}}}
\nc{\BQ}{{\mathbb{Q}}}
\nc{\BR}{{\mathbb{R}}}
\nc{\BV}{{\mathbb{V}}}
\nc{\BW}{{\mathbb{W}}}
\nc{\BZ}{{\mathbb{Z}}}
\nc{\BS}{{\mathbb{S}}}
\nc{\CA}{{\mathcal{A}}}
\nc{\CB}{{\mathcal{B}}}
\nc{\CE}{{\mathcal{E}}}
\nc{\CF}{{\mathcal{F2}}}
\nc{\CG}{{\mathcal{G}}}
\nc{\CH}{{\mathcal{H}}}
\nc{\CL}{{\mathcal{L}}}
\nc{\CC}{{\mathcal{C}}}
\nc{\CM}{{\mathcal{M}}}
\nc{\CN}{{\mathcal{N}}}
\nc{\cCN}{\check{{\mathcal{N}}}}
\nc{\CK}{{\mathcal{K}}}
\nc{\CO}{{\mathcal{O}}}
\nc{\CP}{{\mathcal{P}}}
\nc{\CQ}{{\mathcal{Q}}}
\nc{\CR}{{\mathcal{R}}}
\nc{\CS}{{\mathcal{S}}}
\nc{\CT}{{\mathcal{T}}}
\nc{\CU}{{\mathcal{U}}}
\nc{\CV}{{\mathcal{V}}}
\nc{\CW}{{\mathcal{W}}}
\nc{\CX}{{\mathcal{X}}}
\nc{\CY}{{\mathcal{Y}}}
\nc{\CZ}{{\mathcal{Z}}}
\nc{\CI}{{\mathcal{I}}}
\nc{\CJ}{{\mathcal{J}}}
\nc{\csM}{{\check{\mathcal A}}{}}
\nc{\oM}{{\overset{\circ}{\mathcal M}}{}}
\nc{\obM}{{\overset{\circ}{\mathbf M}}{}}
\nc{\oCA}{{\overset{\circ}{\mathcal A}}{}}
\nc{\obA}{{\overset{\circ}{\mathbf A}}{}}
\nc{\ooM}{{\overset{\circ}{M}}{}}
\nc{\osM}{{\overset{\circ}{\mathsf M}}{}}
\nc{\vM}{{\overset{\bullet}{\mathcal M}}{}}
\nc{\nM}{{\underset{\bullet}{\mathcal M}}{}}
\nc{\oD}{{\overset{\circ}{\mathcal D}}{}}
\nc{\obD}{{\overset{\circ}{\mathbf D}}{}}
\nc{\oA}{{\overset{\circ}{\mathbb A}}{}}
\nc{\op}{{\overset{\bullet}{\mathbf p}}{}}
\nc{\cp}{{\overset{\circ}{\mathbf p}}{}}
\nc{\oU}{{\overset{\bullet}{\mathcal U}}{}}
\nc{\oZ}{{\overset{\circ}{\mathcal Z}}{}}
\nc{\ofZ}{{\overset{\circ}{\mathfrak Z}}{}}
\nc{\oF}{{\overset{\circ}{\fF}}}
\nc{\fa}{{\mathfrak{a}}}
\nc{\fb}{{\mathfrak{b}}}
\nc{\fd}{{\mathfrak{d}}}
\nc{\ff}{{\mathfrak{f}}}
\nc{\fg}{{\mathfrak{g}}}
\nc{\fgl}{{\mathfrak{gl}}}
\nc{\fh}{{\mathfrak{h}}}
\nc{\fj}{{\mathfrak{j}}}
\nc{\fk}{{\mathfrak{k}}}
\nc{\fl}{{\mathfrak{l}}}
\nc{\fm}{{\mathfrak{m}}}
\nc{\fn}{{\mathfrak{n}}}
\nc{\fu}{{\mathfrak{u}}}
\nc{\fp}{{\mathfrak{p}}}
\nc{\fr}{{\mathfrak{r}}}
\nc{\fs}{{\mathfrak{s}}}
\nc{\ft}{{\mathfrak{t}}}
\nc{\fz}{{\mathfrak{z}}}
\nc{\fsl}{{\mathfrak{sl}}}
\nc{\hsl}{{\widehat{\mathfrak{sl}}}}
\nc{\hgl}{{\widehat{\mathfrak{gl}}}}
\nc{\hg}{{\widehat{\mathfrak{g}}}}
\nc{\hf}{{\widehat{\mathfrak{f}}}}
\nc{\chg}{{\widehat{\mathfrak{g}}}{}^\vee}
\nc{\hn}{{\widehat{\mathfrak{n}}}}
\nc{\chn}{{\widehat{\mathfrak{n}}}{}^\vee}
\nc{\fA}{{\mathfrak{A}}}
\nc{\fB}{{\mathfrak{B}}}
\nc{\fD}{{\mathfrak{D}}}
\nc{\fE}{{\mathfrak{E}}}
\nc{\fF}{{\mathfrak{F2}}}
\nc{\fG}{{\mathfrak{G}}}
\nc{\fK}{{\mathfrak{K}}}
\nc{\fL}{{\mathfrak{L}}}
\nc{\fM}{{\mathfrak{M}}}
\nc{\fN}{{\mathfrak{N}}}
\nc{\fP}{{\mathfrak{P}}}
\nc{\fU}{{\mathfrak{U}}}
\nc{\fV}{{\mathfrak{V}}}
\nc{\fZ}{{\mathfrak{Z}}}
\nc{\ba}{{\mathbf{a}}}
\nc{\bb}{{\mathbf{b}}}
\nc{\bc}{{\mathbf{c}}}
\nc{\bd}{{\mathbf{d}}}
\nc{\bbf}{{\mathbf{f}}}
\nc{\be}{{\mathbf{e}}}
\nc{\bi}{{\mathbf{i}}}
\nc{\bj}{{\mathbf{j}}}
\nc{\bn}{{\mathbf{n}}}
\nc{\bo}{{\mathbf{o}}}
\nc{\bp}{{\mathbf{p}}}
\nc{\bq}{{\mathbf{q}}}
\nc{\bu}{{\mathbf{u}}}
\nc{\bv}{{\mathbf{v}}}
\nc{\bx}{{\mathbf{x}}}
\nc{\bs}{{\mathbf{s}}}
\nc{\by}{{\mathbf{y}}}
\nc{\bw}{{\mathbf{w}}}
\nc{\bA}{{\mathbf{A}}}
\nc{\bK}{{\mathbf{K}}}
\nc{\bB}{{\mathbf{B}}}
\nc{\bF}{{\mathbf{F2}}}
\nc{\bC}{{\mathbf{C}}}
\nc{\bG}{{\mathbf{G}}}
\nc{\bD}{{\mathbf{D}}}
\nc{\bE}{{\mathbf{E}}}
\nc{\bH}{{\mathbf{H}}}
\nc{\bI}{{\mathbf{I}}}
\nc{\bM}{{\mathbf{M}}}
\nc{\bN}{{\mathbf{N}}}
\nc{\bO}{{\mathbf{O}}}
\nc{\bV}{{\mathbf{V}}}
\nc{\bW}{{\mathbf{W}}}
\nc{\bX}{{\mathbf{X}}}
\nc{\bZ}{{\mathbf{Z}}}
\nc{\bS}{{\mathbf{S}}}
\nc{\sA}{{\mathsf{A}}}
\nc{\sB}{{\mathsf{B}}}
\nc{\sC}{{\mathsf{C}}}
\nc{\sD}{{\mathsf{D}}}
\nc{\sF}{{\mathsf{F}}}
\nc{\sG}{{\mathsf{G}}}
\nc{\sH}{{\mathsf{H}}}
\nc{\sK}{{\mathsf{K}}}
\nc{\sM}{{\mathsf{M}}}
\nc{\sO}{{\mathsf{O}}}
\nc{\sW}{{\mathsf{W}}}
\nc{\sQ}{{\mathsf{Q}}}
\nc{\sP}{{\mathsf{P}}}
\nc{\sZ}{{\mathsf{Z}}}
\nc{\sfi}{{\mathsf{i}}}
\nc{\sr}{{\mathsf{r}}}
\nc{\bk}{{\mathsf{k}}}
\nc{\sg}{{\mathsf{g}}}
\nc{\sff}{{\mathsf{f}}}
\nc{\sfe}{{\mathsf{e}}}
\nc{\sfj}{{\mathsf{j}}}
\nc{\sfb}{{\mathsf{b}}}
\nc{\sfc}{{\mathsf{c}}}
\nc{\sd}{{\mathsf{d}}}
\nc{\sv}{{\mathsf{v}}}
\nc{\BK}{{\bar{K}}}
\nc{\tA}{{\widetilde{\mathbf{A}}}}
\nc{\tB}{{\widetilde{\mathcal{B}}}}
\nc{\tg}{{\widetilde{\mathfrak{g}}}}
\nc{\tG}{{\widetilde{G}}}
\nc{\TM}{{\widetilde{\mathbb{M}}}{}}
\nc{\tO}{{\widetilde{\mathsf{O}}}{}}
\nc{\tU}{{\widetilde{\mathfrak{U}}}{}}
\nc{\TZ}{{\tilde{Z}}}
\nc{\tx}{{\tilde{x}}}
\nc{\tbv}{{\tilde{\bv}}}
\nc{\tfP}{{\widetilde{\mathfrak{P}}}{}}
\nc{\tz}{{\tilde{\zeta}}}
\nc{\tmu}{{\tilde{\mu}}}
\nc{\urho}{\underline{\pi}}
\nc{\uB}{\underline{B}}
\nc{\uC}{{\underline{\mathbb{C}}}}
\nc{\ui}{\underline{i}}
\nc{\uj}{\underline{j}}
\nc{\ofP}{{\overline{\mathfrak{P}}}}
\nc{\oB}{{\overline{\mathcal{B}}}}
\nc{\og}{{\overline{\mathfrak{g}}}}
\nc{\oI}{{\overline{I}}}
\nc{\eps}{\varepsilon}
\nc{\hrho}{{\hat{\pi}}}
\nc{\one}{{\mathbf{1}}}
\nc{\two}{{\mathbf{t}}}
\nc{\Rep}{{\mathop{\operatorname{\rm Rep}}}}
\nc{\Tot}{{\mathop{\operatorname{\rm Tot}}}}
\nc{\Ker}{{\mathop{\operatorname{\rm Ker}}}}
\nc{\Hilb}{{\mathop{\operatorname{\rm Hilb}}}}
\nc{\End}{{\mathop{\operatorname{\rm End}}}}
\nc{\Ext}{{\mathop{\operatorname{\rm Ext}}}}
\nc{\CHom}{{\mathop{\operatorname{{\mathcal{H}}\it om}}}}
\nc{\GL}{{\mathop{\operatorname{\rm GL}}}}
\nc{\gr}{{\mathop{\operatorname{\rm gr}}}}
\nc{\Id}{{\mathop{\operatorname{\rm Id}}}}
\nc{\de}{{\mathop{\operatorname{\rm def}}}}
\nc{\length}{{\mathop{\operatorname{\rm length}}}}
\nc{\supp}{{\mathop{\operatorname{\rm supp}}}}
\nc{\Cliff}{{\mathsf{Cliff}}}
\nc{\Fl}{\on{Fl}}
\nc{\Fib}{{\mathsf{Fib}}}
\nc{\Coh}{{\on{Coh}}}
\nc{\QCoh}{{\on{QCoh}}}
\nc{\IndCoh}{{\on{IndCoh}}}
\nc{\FCoh}{{\mathsf{FCoh}}}
\nc{\reg}{{\text{\rm reg}}}
\nc{\cplus}{{\mathbf{C}_+}}
\nc{\cminus}{{\mathbf{C}_-}}
\nc{\cthree}{{\mathbf{C}_*}}
\nc{\Qbar}{{\bar{Q}}}
\nc\Eis{\on{Eis}}
\nc\Eisb{\ol\Eis{}}
\nc\Eisr{\on{Eis}^{rat}{}}
\nc\wh{\widehat}
\nc{\Def}{\on{Def_{\check{\fb}}(E)}}
\nc{\barZ}{\overline{Z}{}}
\nc{\barbarZ}{\overline{\barZ}{}}
\nc{\barpi}{\overline\iota}
\nc{\barbarpi}{\overline\barpi}
\nc{\barpip}{\overline\iota{}^+}
\nc{\barpim}{\overline\iota{}^-}
\nc{\fq}{\mathfrak q}
\nc{\fqb}{\ol{\fq}{}}
\nc{\fpb}{\ol{\fp}{}}
\nc{\fpr}{{\fp^{rat}}{}}
\nc{\fqr}{{\fq^{rat}}{}}
\nc{\hattimes}{\wh\otimes}
\nc{\bh}{{\bar{h}}}
\nc{\bOmega}{{\overline{\Omega(\check \fn)}}}
\nc{\seq}[1]{\stackrel{#1}{\sim}}
\nc{\cT}{{\check{T}}}
\nc{\cG}{{\check{G}}}
\nc{\cM}{{\check{M}}}
\nc{\cB}{{\check{B}}}
\nc{\cN}{{\check{N}}}
\nc{\ct}{{\check{\mathfrak t}}}
\nc{\cg}{{\check{\fg}}}
\nc{\cb}{{\check{\fb}}}
\nc{\cn}{{\check{\fn}}}
\nc{\cLambda}{{\check\Lambda}}
\nc{\cla}{{\check\kappa_x}}
\nc{\cmu}{{\check\mu}}
\nc{\clambda}{{\check\lambda}}
\nc{\cnu}{{\check\nu}}
\nc{\ceta}{{\check\eta}}
\nc{\DefbE}{{\on{Def}_{\cB}(E_\cT)}}
\nc{\imathb}{{\ol{\imath}}}
\nc{\rlr}{\overset{\longrightarrow}{\underset{\longrightarrow}\longleftarrow}}
\nc{\KG}{K\backslash G}
\nc{\comult}{{co\text{-}mult}}
\nc{\counit}{{co\text{-}unit}}
\nc{\uHom}{{\underline{\Maps}}}
\nc{\dgSch}{\on{Sch}}
\nc{\Sch}{\on{Sch}}
\nc{\affdgSch}{\on{Sch}^{\on{aff}}}
\nc{\affSch}{\on{Sch}^{\on{aff}}}
\nc{\Groupoids}{\on{Grpd}}
\nc{\inftygroup}{\on{Spc}}
\nc{\inftyCat}{\infty\on{-Cat}}
\nc{\StinftyCat}{\inftyCat^{\on{St}}}
\nc{\MoninftyCat}{\infty\on{-Cat}^{\on{Mon}}}
\nc{\SymMoninftyCat}{\infty\on{-Cat}^{\on{SymMon}}}
\nc{\SymMonStinftyCat}{\on{DGCat}^{\on{SymMon}}}
\nc{\MonStinftyCat}{\on{DGCat}^{\on{Mon}}}
\nc{\inftystack}{\on{Stk}}
\nc{\inftystackalg}{Stk^{1\text{-}alg}}
\nc{\inftyprestack}{\on{PreStk}}
\nc{\inftydgnearstack}{\on{NearStk}}
\nc{\inftydgstack}{\on{Stk}}
\nc{\inftydgstackalg}{DGStk^{1\text{-}alg}}
\nc{\inftydgprestack}{\on{PreStk}}
\nc{\dgindSch}{\on{indSch}}
\nc{\indSch}{{}^{\on{cl}}\!\on{indSch}}
\nc{\infSch}{\on{infSch}}
\nc{\dr}{{\on{dR}}}
\nc{\mmod}{{\on{-}\!{\mathbf{mod}}}}
\nc{\starr}{\text{\dh}}
\nc{\Spectra}{\on{Spectra}}
\nc{\Crys}{\on{Crys}}
\nc{\oblv}{{\mathbf{oblv}}}
\nc{\ind}{{\mathbf{ind}}}
\nc{\coind}{{\mathbf{coind}}}
\nc{\inv}{{\mathbf{inv}}}
\nc{\triv}{{\mathbf{triv}}}
\nc{\CMaps}{{\mathcal Maps}}
\nc{\Maps}{\on{Maps}}
\nc{\bMaps}{\mathbf{Maps}}
\nc{\Grid}{\on{Grid}}
\nc{\hGrid}{\on{Grid}^{\geq\,\on{dgnl}}}
\nc{\Diag}{\on{Diag}}
\nc{\bDelta}{\mathbf{\Delta}}
\nc{\tCateg}{(\infty\on{-2)-Cat}}
\nc{\ul}{\underline}
\nc{\Seg}{\on{Seq}}
\nc{\biSeg}{\on{bi-Seq}}
\nc{\triSeg}{\on{tri-Seq}}
\nc{\quadSeg}{\on{quad-Seq}}
\nc{\nSeg}{\on{n-Seq}}
\nc{\Segm}{\on{Seg}^{\on{mkd}}}
\nc{\fLm}{\fL^{\on{mkd}}}
\nc{\inftyCatm}{\inftyCat^{\on{mkd}}}
\nc{\Blocks}{\mathbf{Blocks}}
\nc{\Snakes}{\mathbf{Snakes}}
\nc{\bifL}{\on{bi-}\!\fL}
\nc{\Sets}{\on{Sets}}
\nc{\Ran}{\on{Ran}}
\nc{\Vect}{\on{Vect}}
\nc{\Shv}{\on{Shv}}
\nc{\unn}{\mathbf{union}}
\nc{\Spc}{\on{Spc}}
\nc{\ppart}{(\!(t)\!)}
\nc{\qqart}{[\![t]\!]}
\nc{\Dmod}{\on{D-mod}}
\nc{\cD}{\mathcal D}
\nc{\ocD}{\overset{\circ}{\cD}}
\nc{\sfp}{\mathsf{p}}
\nc{\sfq}{\mathsf{q}}
\nc{\DGCat}{\on{DGCat}}
\renc{\det}{\on{det}}
\nc{\Conf}{\on{Conf}}
\nc{\Whit}{\on{Whit}}
\nc{\Reg}{\on{Reg}}
\nc{\Res}{\on{Res}}
\nc{\BunNbox}{(\overline\Bun_N^{\omega^\rho})_{\infty\cdot x}} 
\nc{\BunNmbox}{(\overline\Bun_{N^-}^{\omega^\rho})_{\infty\cdot x}}
\nc{\bHecke}{\overset{\bullet}{\on{Hecke}}}
\nc{\Hecke}{\on{Hecke}}
\nc{\bCZ}{\ol\CZ}
\nc{\oCZ}{\overset{\circ}\CZ} 
\nc{\boCZ}{\ol{\oCZ}}
\nc{\sotimes}{\overset{!}\otimes}
\nc{\semiinf}{{\frac{\infty}{2}}}
\nc{\coInd}{\on{coInd}}
\nc{\bCM}{\overset{\bullet}\CM{}}
\nc{\bCF}{\overset{\bullet}\CF{}}
\nc{\SI}{\on{SI}}
\nc{\KL}{\on{KL}}
\nc{\htt}{\wh{\mathfrak{t}}}
\nc{\Ind}{\on{Ind}}
\nc{\Bl}{\mathsf{Bl}}
\title{A conjectural extension of the Kazhdan-Lusztig equivalence} 
\author{Dennis Gaitsgory}
\dedicatory{To Masaki Kashiwara, with admiration} 
\date{\today}
\begin{document} 

\begin{abstract}
A theorem of Kazhdan and Lusztig establishes an equivalence between the category of
$G(\CO)$-integrable representations of the Kac-Moody algebra $\hg_{-\kappa}$ at a negative level $-\kappa$
and the category $\Rep_q(G)$ of (algebraic) representations of the ``big" (a.k.a. Lusztig's) quantum group.
In this paper we propose a conjecture that describes the category of Iwahori-integrable Kac-Moody modules.
The corresponding object on the quantum group side, denoted $\Rep^{\on{mxd}}_q(G)$, involves Lusztig's version 
of the quantum group for the Borel and the De Concini-Kac version for the negative Borel.  
\end{abstract} 

\maketitle

\tableofcontents

\section*{Introduction}

\ssec{What is this paper about?}

\sssec{}

In their series of papers \cite{KL}, D.~Kazhdan and G.~Lusztig established an equivalence between the (abelian) category
$$\KL(G,-\kappa):=\hg\mod_{-\kappa}^{G(\CO)}$$
of $G(\CO)$-integrable modules over the affine Kac-Moody Lie algebra at a negative level $-\kappa$ and the (abelian) category 
$\Rep_q(G)$ of integrable (=algebraic) representations of the ``big" quantum group, whose quantum parameter $q$ is related
to $\kappa$ via formula \eqref{e:q and kappa}.

\medskip

This paper addresses the following natural question: if we enlarge the category $\hg\mod_{-\kappa}^{G(\CO)}$ to 
$\hg\mod_{-\kappa}^I$, i.e., if we relax $G(\CO)$-integrability to Iwahori-integrability, what would the corresponsing
category on the quantum group side be? 

\sssec{}

We propose a conjectural answer to this question: namely, we define a version of the category of modules over the
quantum group, denoted $\Rep^{\on{mxd}}_q(G)$, see \secref{ss:mixed} for the actual definition. 
Our \conjref{c:main} says that there is supposed to be an equivalence 
\begin{equation} \label{e:main intro}
\sF_{-\kappa}:\hg\mod_{-\kappa}^I\simeq \Rep^{\on{mxd}}_q(G).
\end{equation} 

\sssec{}

Let us explain the main points of difference/similarity between $\Rep^{\on{mxd}}_q(G)$ and the usual category $\Rep_q(G)$:

\begin{itemize}

\item In both versions, the vector space underlying an object of our category of representations is $\cLambda$-graded,
where $c\Lambda$ is the weight lattice of $G$, and carries an action of the \emph{positive part of Lusztig's quantum group},
$U_q^{\on{Lus}}(N)$;

\item In the case of $\Rep_q(G)$, our vector space carries an \emph{integrable} action also of the \emph{negative part of Lusztig's quantum group},
$U_q^{\on{Lus}}(N^-)$, while in the case of $\Rep^{\on{mxd}}_q(G)$ it carries a \emph{not necessarily integrable} action of the 
\emph{negative part of the De Concini-Kac version of the quantum group}, $U_q^{\on{DK}}(N^-)$.

\end{itemize}

\medskip

Note that when $q$ is \emph{not} a root of unity, there is no difference between the Lusztig and De Concini-Kac versions. So in this
case, the difference is in the integrability condition with respect to $U_q(N^-)$. In other words, for $q$ not a root of unity, our 
$\Rep^{\on{mxd}}_q(G)$ is just the quantum category $\CO$.

\medskip

However, when $q$ is a root of unity, $U_q^{\on{DK}}(N^-)$ and $U_q^{\on{Lus}}(N^-)$ are truly different, so in this case
$\Rep^{\on{mxd}}_q(G)$ is emphatically \emph{not} the quantum category $\CO$.

\sssec{}

Here is one caveat regarding the proposed equivalence \eqref{e:main intro}: 

\medskip

As was just mentioned, the original equivalence of \cite{KL}
\begin{equation} \label{e:original intro}
\sF_{-\kappa}:\hg\mod_{-\kappa}^{G(\CO)}\simeq \Rep_q(G)
\end{equation} 
is an exact equivalence of abelian categories. Hence, it induces an equivalence of their derived categories,
preserving the t-structures.  

\medskip

From now on, when we say ``category" we will mean a triangulated category, or even
more precisely, a DG category. If $\CC$ is such a category, and if it is equipped with a t-structure, 
we will write $\CC^\heartsuit$ for the heart of the t-structure. So let us read \eqref{e:original intro}  
as an equivalence of derived categories; the original equivalence at the abelian level is
$$(\hg\mod_{-\kappa}^{G(\CO)})^\heartsuit \simeq (\Rep_q(G))^\heartsuit.$$

\medskip

Now, the point here is that the extended equivalence \eqref{e:main intro} only holds at the level of 
DG categories. I.e., it is \emph{not} t-exact with respect to the natural t-structures that exist on
both sides. 

\begin{rem}
That said, one can try to mimic the construction of \cite[Sect. 2]{FG2} to define a new t-structure on 
$\hg\mod_{-\kappa}^I$ so that the equivalence \eqref{e:main intro} becomes t-exact. This is an interesting
problem, but we will not pursue it in this paper. See, however, \secref{sss:semi-inf categ} below. 
\end{rem} 

\sssec{}

Let us mention one curious feature of the equivalence \eqref{e:main intro}. 

\medskip

Recall that the Kazhdan-Lusztig equivalence \eqref{e:original intro} sends the standard objects
$$\BV_{-\kappa}^\clambda \in \hg\mod_{-\kappa}^{G(\CO)}, \quad \clambda\in \cLambda^+$$
(affine Weyl modules) to the standard objects
$$\CV_q^\clambda\in \Rep_q(G), \quad \clambda\in \cLambda^+$$
(quantum Weyl modules).

\medskip

Now, the category $\Rep^{\on{mxd}}_q(G)$ contains a naturally defined family of standard objects,
denoted $\BM^\clambda_{q,\on{mxd}}$, where $\clambda$ is a weight of $G$. These are modules induced from
characters of the quantum Borel.

\medskip

Under the equivalence \eqref{e:main intro}, the objects $\BM^\clambda_{q,\on{mxd}}$ \emph{do not} correspond to
the affine Verma modules. Rather, they go over the \emph{Wakimoto modules}\footnote{Our conventions regarding Wakimoto
modules are different from those in most places in the literature such as \cite{Fr2} abd \cite{FG2}, see Remark \ref{r:other Wak}},
denoted $\BW^\clambda_{-\kappa}$.

\medskip

Let us mention that if $\kappa$ is irrational, then $\BW^\clambda_{-\kappa}$ is actually isomorphic to the affine Verma module
$\BM^\clambda_{-\kappa}$. If $\kappa$ is rational, the isomorphism still holds for $\clambda$ dominant, but not otherwise.
For example, if $\clambda$ is sufficiently anti-dominant, the Wakimoto module $\BW^\clambda_{-\kappa}$ is isomorphic to
the \emph{dual affine Verma module} $\BM^{\vee,\clambda}_{-\kappa}$. 

\ssec{Where did the motivation come from?}  \label{ss:motiv 1}

The motivation for guessing the equivalence \eqref{e:main intro} came from multiple sources. 

\medskip

One source of motivation is the author's desire to re-prove the original Kazhdan-Lusztig equivalence \eqref{e:original intro}
by a ``more algebraic method".

\sssec{}

We recall that the statement of the equivalence \eqref{e:original intro} in \cite{KL} is not as mere abelian categories,
but as \emph{braided monoidal} abelian categories. In fact, the proof of the equivalence in \cite{KL} uses
this additional structure in a most essential way. 

\medskip

One can interpret the braided monoidal structure on $\KL(G,-\kappa)$ as a structure of \emph{de Rham factorization category},
and the braided monoidal structure on $\Rep_q(G)$ as a structure of \emph{Betti factorization factorization category}. From this
point of view, the equivalence \eqref{e:original intro} should read that these two structures match up under the (appropriately defined)
Riemann-Hilbert functor that maps Betti factorization categories to de Rham factorization categories. 

\sssec{}

Now, a structure of factorization category in either of the two contexts is a complicated piece of data. However, there is one
case when a factorization category can be described succinctly: namely, when the factorization category in question is that
of \emph{factorization modules} for a \emph{factorization algebra} (say, within another factorization category, but one which
is easier to understand). 

\medskip

The ``trouble with" the original Kazhdan-Lusztig equivalence \eqref{e:original intro} is that the categories involved are 
\emph{not} of this form. 

\sssec{}

By contrast, the factorization category corresponding to $\Rep^{\on{mxd}}_q(G)$ is (more or less tautologically) equivalent 
to that of factorization modules. 

\medskip

The ambient factorization category in question is the factorization category corresponding
to the braided monoidal category $\Rep_q(T)$ of representations of the quantum torus. The factorization algebra in question,
denoted $\Omega_q^{\on{Lus}}$, is the Koszul dual of the Hopf algebra $U^{\on{Lus}}_q(N)\in \Rep_q(T)$. This actually explains the 
appearance of the De Concini-Kac version: it enters as the dual Hopf algebra of $U^{\on{Lus}}_q(N)$.

\sssec{}  \label{sss:semi-inf categ}

Let us now look at the left-hand side of the proposed equivalence \eqref{e:main intro}. 

\medskip

The Iwahori subgroup $I$ is not a factorizable object. However, the following result was proved by S.~Raskin
(see, e.g., \cite[Sect. 5]{Ga2} for a proof): for a category $\CC$ equipped with an action of $G(\CK)$ (see \secref{sss:action at level kappa}
for what this means), there is a canonical equivalence
$$\CC^I\simeq \CC^{N(\CK)\cdot T(\CO)};$$
here the superscript indicates taking the equivariant category with respect to the corresponding subgroup. 

\medskip

Hence, we can interpret the category $\hg\mod_{-\kappa}^I$ as 
\begin{equation} \label{e:semi-inf categ}
\hg\mod_{-\kappa}^{N(\CK)\cdot T(\CO)}.
\end{equation} 

This paves a way to using factorization methods, as the category \eqref{e:semi-inf categ} admits a natural factorization structure.

\medskip

If we could prove that a certain explicit functor from $\hg\mod_{-\kappa}^{N(\CK)\cdot T(\CO)}$ to the category of factorization modules over 
a De Rham version $\Omega_{-\kappa}^{\on{Lus}}$ of $\Omega_q^{\on{Lus}}$ was an equivalence, that would establish the equivalence
\eqref{e:main intro}. And having \eqref{e:main intro}, one can hope to be able to extract the original equivalence \eqref{e:original intro}. 

\sssec{}

Now, an equivalence between $\hg\mod_{-\kappa}^{N(\CK)\cdot T(\CO)}$ and 
$\Omega_{-\kappa}^{\on{Lus}}\mod^{\on{Fact}}$ as factorization categories may or may not be too much to hope for. 

\medskip

However, such an equivalence is supposed to take place at the level of fibers (this is what our Conjecture \eqref{e:main intro} says)
and also on sufficiently large subcategories, which should be enough to deduce the original equivalence \eqref{e:original intro}. 

\ssec{Motivation from local geometric Langlands}

There is yet another aspect to the above story, which has to do with local geometric Langlands.

\sssec{}

One of the key conjectures (proposed in 2008 by J.~Lurie and the author) is that the category $\KL(G,-\kappa)$ is supposed to be
equivalent (as a factorization category) to the twisted Whittaker category $\Whit_{-\check\kappa}(\Gr_{\cG})$ of the affine
Grassmannian of the Langlands dual group (here $\check\kappa$ is the level for $\cG$ dual to the level $\kappa$ for $G$,
see \secref{sss:duality for levels}):
\begin{equation} \label{e:FLE}
\hg\mod_{-\kappa}^{G(\CO)} \simeq \Whit_{-\check\kappa}(\Gr_{\cG})
\end{equation} 

This conjecture is called the \emph{Fundamental Local Equivalence}, or the FLE.  

\medskip

The trouble with proving the FLE is essentially the same one as with the original Kazhdan-Lusztig equivalence 
\eqref{e:original intro}: the two sides are some complicated factorization categories, yet we must relate them, based 
just on the combinatorial information that the groups $G$ and $\cG$ are mutually dual.

\sssec{}

However, just as in \secref{ss:motiv 1}, one can have a better chance to first prove the Iwahori version of the FLE, namely, 
an equivalence
\begin{equation} \label{e:Iw FLE}
\hg\mod_{-\kappa}^I \simeq \Whit_{-\check\kappa}(\on{Fl}^{\on{aff}}_{\cG}),
\end{equation} 
and then bootstrap from it the original FLE \eqref{e:FLE}.

\sssec{}

In a subsequent publication, the author is planning to record the properties of the conjectural equivalence 
\eqref{e:Iw FLE}: the behavior of the standard and costandard objects, compatibility with duality, etc. 

\ssec{Representation-theoretic motivation}

Finally, there is a purely representation-theoretic piece of motivation for the conjectural equivalence
\eqref{e:main intro}\footnote{It originated in discussions between S.~Arkhipov, R.~Bezrukavnikov, M.~Finkelberg, I.~Mikovi\'c and the author
some 20 years ago.}. 

\sssec{}

We have the equivalence established in the paper \cite{ABG} that says that a regular block $\Bl(\Rep^{\on{sml}}_q(G))$
of the category of modules over the \emph{small} quantum group is equivalent to the category 
$$\IndCoh(\{0\}\underset{\cg}\times \wt\cg)$$
of ind-coherent sheaves on the \emph{derived Spring fiber} over $0$ for the Langlands dual Lie algebra. 

\sssec{}

Let now $\chi$ be a point of the spectrum of the ``$q$-center" of the De Concini-Kac algebra. We can form
the corresponding category
$$\Rep^{\chi}_q(G)$$
(so that for $\chi=0$ we recover $\Rep^{\on{sml}}_q(G)$), and consider its regular block $\Bl(\Rep^{\chi}_q(G))$.

\medskip

By analogy with representations in positive characteristic, one conjectured an equivalence
\begin{equation} \label{e:chi intro}
\Bl(\Rep^{\chi}_q(G))\simeq \IndCoh(\{\chi\}\underset{\cg}\times \wt\cg),
\end{equation} 
generalizing the equivalence of \cite{ABG}. 

\sssec{}

Now, as we shall see in \secref{s:coh}, our conjectural equivalence \eqref{e:main intro}, when restricted to a regular block gives an equivalence
$$\Bl(\Rep_q^{\on{mxd}}(G))\simeq \IndCoh((\cn\underset{\cg}\times \wt\cg)/\cB),$$
which is a version of \eqref{e:chi intro} in the family $\chi\in \cn/\on{Ad}(\cB)$. 

\ssec{What is done in this paper?}

We now proceed to review the actual mathematical content of the paper. 

\sssec{}

This paper centers around \conjref{c:main}, which proclaims the existence of an equivalence 
\eqref{e:main intro}. However, the statement of the conjecture is both preceded and followed 
by some 45 pages of mathematical text. 

\medskip

In fact, this paper is divided into 3 parts. In the first part, we recall some facts pertaining 
to the affine category $\CO$. In the second part, we review various
versions of the category of modules over the quantum group. Neither Part I nor Part II contain
substantial original results. In the third part, after stating
\conjref{c:main}, we run some consistency checks and derive some consequences.

\sssec{}  \label{sss:duality for levels}

Part 3 is the core of this paper, in which we study the affine algebra vs quantum group relationship. Here we need to take our field
of coefficients $k$ to be $\BC$. 

\medskip

Our quantum parameter is the quadratic form $q$ on $\cLambda$ with coefficients in $k^\times$, related to $\kappa$ by the formula 
\begin{equation} \label{e:q and kappa}
q(\clambda):=\on{exp}(2\cdot \pi\cdot i\cdot \frac{\check\kappa(\clambda,\clambda)}{2}),
\end{equation} 
where $\check\kappa$ is as in \secref{sss:level} below.

\medskip

Note that the quadratic form $q$ comes as restriction to the diagonal of the symmetric bilinear form $b'$ on $\cLambda$ with coefficients in $k^\times$
equal to
\begin{equation} \label{e:b and kappa}
b'(\clambda_1,\clambda_2):=\on{exp}(2\cdot \pi\cdot i\cdot \frac{\check\kappa(\clambda_1,\clambda_2)}{2}).
\end{equation} 

\sssec{}

After stating \conjref{c:main}, we do the following:

\medskip

\noindent--We note that \conjref{c:main} implies that the original Kazhdan-Lusztig equivalence \eqref{e:original intro} satisfies
\begin{equation}  \label{e:Weyl intro}
\sF_{-\kappa}(\BV^\clambda_{-\kappa}):=\CV_q^\clambda,\quad \clambda\in \cLambda
\end{equation}
where $\BV^\clambda_{-\kappa}\in \KL(G,-\kappa)$ is the (derived) Weyl module, defined by
$$\BV^\clambda_{-\kappa}:=\on{Av}_!^{G(\CO)/I}(\BW_{-\kappa}^\clambda),$$
and $\CV^\clambda_q\in \Rep_q(G)$ is the (derived) Weyl module, defined by
$$\CV^\clambda_{-\kappa}:=\ind^{\Rep_q(G)}_{\Rep_q(B)}(k^\clambda).$$

\medskip

\noindent--We prove the isomorphism \eqref{e:Weyl intro} unconditionally (i.e., without assuming \conjref{c:main}).
This occupies most of \secref{s:small cohomology}. We should mention that this isomorphism is essentially 
equivalent to the main result of \cite{Liu}. 

\medskip

\noindent--We give an expression for the functor
\begin{equation} \label{e:small intro}
\Rep_q(G)\to \Vect, \quad \CM\mapsto \on{C}^\cdot(u_q(N),\CM)^\clambda, \quad \clambda\in \cLambda
\end{equation} 
in terms of the original Kazhdan-Lusztig equivalence \eqref{e:original intro}. This is done in \secref{ss:small cohomology}. 

\medskip

\noindent--Assuming \conjref{c:main}, we show that the functor
\begin{equation} \label{e:DK intro}
\Rep^{\on{mxd}}_q(G)\to \Vect, \quad \CM\mapsto \on{C}^\cdot(U^{\on{DK}}_q(N^-),\CM)^\clambda, \quad \clambda\in \cLambda
\end{equation} 
corresponds under the equivalence \eqref{e:main intro} to the functor
$$\hg\mod_{-\kappa}^I\to \Vect, \quad \CM\mapsto \on{C}^\semiinf(\fn^-(\CK),\CM)^\clambda,$$
see \secref{sss:seminf la} for the notation. This is done in \secref{s:DK cohomology}. 

\medskip

\noindent--Assuming \conjref{c:main}, we show that under the equivalence
$$\Bl(\Rep^{\on{mxd}}_q(G))\simeq \IndCoh((\wt{\check\CN}\underset{\cg}\times \wt\cg)/\cG)$$
(normalized as in \secref{sss:coh norm}), the functor \eqref{e:DK intro} for $\clambda=\clambda_0$ corresponds to the functor 
of restriction to the big Schubert cell 
$$\on{pt}/\cT\simeq (\cG/\cB\times \cG/\cB)^o/\cG\subset (\wt{\check\CN}\underset{\cg}\times \wt\cg)/\cG,$$
followed by the functor of $\cT$-invariants. This is done in \secref{ss:DK cohomology}.

\medskip

\noindent--Finally, we show that the duality equivalences
$$(\Rep_q(G))^\vee \simeq \Rep_{q^{-1}}(G) \text{ and } (\Rep^{\on{mxd}}_q(G))^\vee \simeq \Rep^{\on{mxd}}_{q^{-1}}(G)$$
and
$$\KL(G,-\kappa)^\vee\simeq \KL(G,\kappa) \text{ and } (\hg\mod_{-\kappa}^I)^\vee \simeq \hg\mod_\kappa^I$$
combined with the equivalence(s) $\sF_{-\kappa}$ induce an equivalence
$$\sF_\kappa:\KL(G,\kappa)\simeq \Rep_{q^{-1}}(G),$$
and, assuming \conjref{c:main}, also an equivalence
$$\sF_\kappa:  \hg\mod_\kappa^I\simeq \Rep^{\on{mxd}}_{q^{-1}}(G).$$

We study properties of these dual equivalences parallel to ones at the negative level, listed above. 

\sssec{What is \emph{not} done in this paper?}

There are two main themes that could have been part of this paper but that are not:

\medskip

One is the discussion of factorization (see \secref{ss:motiv 1}). The other is the relationship of the categories
appearing in \eqref{e:main intro} to the Whittaker category $\Whit_{-\check\kappa}(\on{Fl}^{\on{aff}}_{\cG})$.

\ssec{Structure of the paper}

We now proceed to describing the contents of the paper section-by-section.

\sssec{}

In \secref{s:recall} we recollect some basic facts pertaining to the category of Kac-Moody modules:
the definition is not completely straightforward as it involves a \emph{renormalization procedure}
designed to make the category compactly generated. We explain that the categories at opposite levels
are in relation of \emph{duality}, see \secref{sss:duality DG categ} for what this means. 

\medskip

In \secref{s:Wak} we introduce Wakimoto modules. We first introduce ``the true", i.e., semi-infinite
Wakimoto modules $\BW_\kappa^{\clambda,\semiinf}$; they are as induced from the loop subalgebra 
$\fb(\CK)\subset \fg(\CK)$. The key feature of $\BW^{\clambda,\semiinf}_\kappa$
is that it belongs to the category $\hg\mod_\kappa^{N(\CK)\cdot T(\CO)}$, see \secref{sss:semi-inf categ}. We then
show that the usual Wakimoto module $\BW^\clambda_\kappa$ can be obtained from $\BW^{\clambda,\semiinf}_\kappa$ 
by the procedure of averaging with respect to the Iwahori group.
We study the pattern of convolution of Wakimoto modules with the standard objects 
$J_\mu$ in the category $\Dmod_{-\kappa}(\on{Fl}_G^{\on{aff}})^I$. We show that $$J_\mu\star \BW^\clambda_\kappa\simeq 
\BW^{\clambda+\mu}_\kappa.$$ (This result had been previously obtained in \cite{FG2}.) 

\medskip

In \secref{s:Verma} we study the relationship between Wakimoto modules and Verma modules. 
We recall that these two classes of modules coincide when the level $\kappa$ is irrational.
When the level $\kappa$ is negative, we re-prove the theorem of \cite{Fr1} which says that 
the Wakimoto module $\BW_\kappa^\clambda$ is isomorphic to the affine Verma module 
$\BM_\kappa^\clambda$ if $\clambda$ is dominant, and to the dual affine Verma module 
$\BM_\kappa^{\vee,\clambda}$ if $\clambda$ is sufficiently anti-dominant. Whereas the original
proof in \cite{Fr1} relies on the analysis of singular vectors, our proof uses the Kashiwara-Tanisaki
localization theorem.  

\sssec{}

In \secref{s:quant alg} we introduce the general framework that most versions of the category
of modules over the quantum group fit into. Namely, we start with the datum of quadratic form
$q$ on the weight lattice with values in $k^\times$ and attach to it the braided monoidal category
$\Rep_q(T)$ that we think of as the category of representations of the quantum torus. Given
a Hopf algebra $A$ in $\Rep_q(T)$, we consider the category $A\mod(\Rep_q(T))_{\on{loc.nilp}}$
of locally nilpotent $A$-modules and its (relative to $\Rep_q(T)$) Drinfeld's center, denoted
$Z_{\on{Dr},\Rep_q(T)}(A\mod(\Rep_q(T))_{\on{loc.nilp}})$. We study the basic properties of
such categories: t-structures, standard and costandard objects, etc. 

\medskip

In \secref{s:quant grp} we specialize to the case of Hopf algebras $A$ relevant to quantum groups. 
The main example is $A=U_q^{\on{Lus}}(N)$. The resulting category 
$Z_{\on{Dr},\Rep_q(T)}(A\mod(\Rep_q(T))_{\on{loc.nilp}})$ is our $\Rep_q^{\on{mxd}}(G)$. 
In this section we also recall the definition of the category of algebraic (a.k.a. locally finite or
integrable) modules of Lusztig's quantum group, denoted $\Rep_q(G)$. We note that this
category does \emph{not} fit into the pattern of $Z_{\on{Dr},\Rep_q(T)}(A\mod(\Rep_q(T))_{\on{loc.nilp}})$,
which makes it more difficult and more interesting to study.

\medskip

In \secref{s:small} we specialize to the case when $q$ takes values in the group of roots of unity, and
consider the case of the Hopf algebra $A=u_q(N)$, the positive part of the ``small" quantum group.
We introduce the corresponding category $\Rep_q^{\on{sml,grd}}(G):=Z_{\on{Dr},\Rep_q(T)}(A\mod(\Rep_q(T))_{\on{loc.nilp}})$,
and study how it is related to the category $\Rep_q(G)$. We note that there are three versions of $\Rep_q^{\on{sml,grd}}(G)$
that differ from each other by renormalization (i.e., which objects are declared compact); it is important to keep track of these distinctions, 
for otherwise various desired equivalences would not hold ``as-is". 

\medskip

In \secref{s:1/2} we introduce yet another \emph{non-standard} version of the category of modules over the quantum group,
denoted $\Rep_q^{\frac{1}{2}}(G)$: it corresponds to having as the positive part Lusztig's algebra $U_q^{\on{Lus}}(N)$ and as
the negative part the ``small" version $u_q(N^-)$. There are three versions of this category that differ from each other
by renormalization, and they serve as intermediaries between $\Rep_q^{\on{sml,grd}}(G)$ and $\Rep_q^{\on{mxd}}(G)$.

\medskip

In \secref{s:q duality} we prove that the categories $\Rep_q^{\on{mxd}}(G)$ and $\Rep_{q^{-1}}^{\on{mxd}}(G)$ are each other's duals.
The corresponding duality functor $\BD^{\on{can}}$ sends the standard object $\BM^\clambda_{q,\on{mxd}}$ to
$\BM^{-\clambda-2\check\rho}_{q^{-1},\on{mxd}}[d]$. In addition, we introduce a contragredient duality functor $\BD^{\on{contr}}$
from (a certain subcategory containing all compact objects of) $\Rep_q^{\on{mxd}}(G)$
to the category $\Rep_{q^{-1}}^{\wt{\on{mxd}}}(G)$ (in which the roles of Lusztig's version and the De Concini-Kac version are swapped). We show
that the functors $\BD^{\on{can}}$ and $\BD^{\on{contr}}$ differ by a ``long intertwining functor"
$$\Upsilon: \Rep_q^{\on{mxd}}(G)\to \Rep_q^{\wt{\on{mxd}}}(G).$$

\sssec{}

In \secref{s:conj} we formulate our \conjref{c:main}, which states the existence of the equivalence satisfying certain properties. 
We then run some initial consistency checks. We also formulate a version of \conjref{c:main} for the positive level, obtained
from the initial one by duality. 

\medskip

In \secref{s:small cohomology} we prove \thmref{t:Weyl}, which states the existence of the isomorphism \eqref{e:Weyl intro}.
The key idea in the proof is to identify the object in the category
\begin{equation} \label{e:1/2 intro}
\Rep(\cB)\underset{\Rep(\cG)}\otimes \KL(G,-\kappa) 
\end{equation} 
that corresponds to
the \emph{baby Verma module}, considered as an object of the category
$$\Rep_q^{\frac{1}{2}}(G)\simeq \Rep(\cB)\underset{\Rep(\cG)}\otimes \Rep_q(G).$$
It turns out that such a description is essentially equivalent to the description of the functor
$$\KL(G,-\kappa)\to \Vect$$
corresponding to \eqref{e:small intro} via the Kazhdan-Lusztig equivalence \eqref{e:original intro}. 
The description of the sought-for object in \eqref{e:1/2 intro} is closely related to a certain
geometric object which was recently introduced in \cite{Ga2} under the name {\it semi-infinite
intersection cohomology sheaf}. 

\medskip

In \secref{s:AB action} we introduce an additional requirement that the conjectural equivalence \eqref{e:main intro}
is supposed to satisfy. Namely, it is supposed to be compatible with the actions on the two sides of the
monoidal category $\QCoh(\fn/\on{Ad}(B_H)$, where $\QCoh(\fn/\on{Ad}(B_H)$ acts on $\hg\mod_{-\kappa}^I$ via
the Arkhipov-Bezrukavnikov functor $$\QCoh(\fn/\on{Ad}(B_H)\to \Dmod_{-\kappa}(\on{Fl}_G^{\on{aff}})^I.$$ 
We will see that this compatibility gives a conceptual explanation of the identification of the object in
\eqref{e:1/2 intro} corresponding to the baby Verma module. 

\medskip

In \secref{s:DK cohomology} we will show that the functor $\hg\mod_{-\kappa}^I\to \Vect$ corresponding under the
conjectural equivalence \eqref{e:main intro} to the functor \eqref{e:DK intro}, is given by \emph{semi-infinite cohomology}
with respect to $\fn^-(\CK)$. 

\medskip

Finally, in \secref{s:coh}, we will explain how the equivalence \eqref{e:main intro} leads to the identification 
of a regular block $\Bl(\Rep^{\on{mxd}}_q(G))$ of $\Rep^{\on{mxd}}_q(G)$ with the category 
$\IndCoh((\wt{\check\CN}\underset{\cg}\times \wt\cg)/\cG)$ of ind-coherent sheaves on the Steinberg
stack $(\wt{\check\CN}\underset{\cg}\times \wt\cg)/\cG$. 

\ssec{Conventions}

\sssec{Ground field}

In this paper we will be working over a ground field $k$, assumed algebraically closed and of characteristic $0$. 
All algebro-geometric objects in this paper will be schemes (or more generally, prestacks) over $k$. 

\sssec{Reductive groups}

We let $G$ be a reductive group over $k$. We denote by $B$ a (chosen) Borel subgroup in $G$ and by $T$ its Cartan
quotient.  We let
$$\fg\supset \fb\twoheadrightarrow \ft$$
denote their respective Lie algebras. 

\medskip

Deviating slightly from the usual conventions, we will denote by $\Lambda$ the \emph{coweight} lattice of $T$,
and by $\cLambda$ the weight lattice. 

\sssec{The level}  \label{sss:level}

By a \emph{level} we will mean  a $W$-invariant symmetric bilinear form $\kappa$ on $\ft$, or which is the same, 
a $W$-invariant symmetric bilinear form on the coweight lattice $\Lambda$ with coefficients in $k$.

\medskip

We will assume that $\kappa$ is non-degenerate, i.e., it defines an isomorphism $\ft\to \ft^\vee=:\check\ft$. We let
$\check\kappa$ denote the resulting symmetric bilinear form on $\ft^\vee$, or which is the same, a $W$-invariant symmetric bilinear 
form on the weight lattice $\cLambda$ with coefficients in $k$.

\medskip

To $\kappa$ we attach an $\on{Ad}(G)$-invariant symmetric bilinear form on $\fg$ so that its restriction to $\ft$ equals
$$\kappa+\kappa_{\on{crit}},$$
where 
$$\kappa_{\on{crit}}=-\frac{\kappa_{\on{Kil}}}{2},$$
where $\kappa_{\on{Kil}}$ is the Killing form of the adjoint action of $\ft$ on $\fg$. 

\begin{rem}

Note that the level $\kappa=0$ corresponds to the form $\kappa_{\on{crit}}$ on $\fg$; it is called the critical level. Our
convention of shifting the level for $\fg$ by the critical is an affine version of the ``$\rho$-shift" in the usual representation theory of $\fg$. 

\end{rem} 

\sssec{DG categories}  \label{sss:DG categories}

The object of study of this paper is DG categories over $k$.
This automatically puts us in the context of 
higher algebra, developed in \cite{Lu}.  We refer the reader to \cite[Chapter 1]{GR} for a user guide. 

\medskip

There are a few things we need to mention about DG categories, viewed both intrinsically and extrinsically. 

\medskip 

When we say ``DG category" we will assume it to be \emph{cocomplete}, unless explicitly stated otherwise.
When talking about a functor between two DG categories, we will always mean an \emph{exact} functor
(i.e., a functor preserving finite colimits). When the DG categories in question are cocomplete, we will
assume our functor to be \emph{continuous} (i.e., preserving infinite direct sums, which, given exactness, 
is equivalent to preserving filtered colimits, and in fact all colimits). 

\medskip

Most of the DG categories we will encounter (and ones that we will end up working with) are \emph{compactly generated}.
If $\CC$ is such a category, we will denote by $\CC_c$ its full (but \emph{not} cocomplete) subcategory consisting
of compact objects. 

\medskip

Vice versa, starting from a non-cocomplete category $\CC_0$, one can produce a cocomplete DG category by the procedure
of \emph{ind-completion}; the resulting cocomplete category $\CC$ will be denoted $\on{IndCompl}(\CC_0)$. The category
$\CC$ is compactly generated and $\CC_0\subset \CC_c$. For any compactly generated $\CC$, we have $\CC\simeq \on{IndCompl}(\CC_c)$.
 
\medskip

Given a DG category $\CC$ one can talk about a t-structure on $\CC$. We will denote by $\CC^{\leq 0}$ (resp., $\CC^{\geq 0}$)
the full subcategory of connective (resp., coconnetive) objects. We let $\CC^\heartsuit:=\CC^{\leq 0}\cap \CC^{\geq 0}$ denote the
heart of the t-structure; this is an abelian category. We let $\CC^{<\infty}$ (resp., $\CC^{>\infty}$) the full subcategory consisting of
\emph{eventually connective} (resp., \emph{eventually coconnective}) objects. 

\sssec{Tensor product of DG categories}

The $(\infty,1)$-category $\on{DGCat}$ of DG categories carries a symmetric monoidal structure, denoted 
$\CC_1,\CC_2\mapsto \CC_1\otimes \CC_2$.

\medskip

The unit object for this symmetric monoidal structure is the category $\Vect$ of chain complexes of vector spaces. 

\medskip

As a basic example, if $\CC_i=A_i\mod$ for an associative algebra $A_i$, then
$$\CC_1\otimes \CC_2\simeq (A_1\otimes A_2)\mod.$$

\sssec{Duality for DG categories}  \label{sss:duality DG categ} 

In particular, it makes sense to talk about a DG category $\CC$ being \emph{dualizable}. For
a functor $F:\CC_1\to \CC_2$ between two dualizable DG categories we let 
$F^\vee:\CC_2^\vee\to \CC_1^\vee$ denote the dual functor. 

\medskip

If $\CC$ is compactly generated, it is dualizable and we have 
$$\CC^\vee\simeq \on{IndCompl}((\CC_c)^{\on{op}}).$$

\sssec{Algebra in DG categories}

The symmetric monoidal structure on $\on{DGCat}$ allows ``to do algebra" in the world of DG categories.
In particular, given a monoidal DG category $\CC$ (i.e., an associative algebra object in $\on{DGCat}$)
and its right and left module categories $\CC_1$ and $\CC_2$, respectively, it makes sense to talk about
$$\CC_1\underset{\CC}\otimes \CC_2\in \on{DGCat}.$$

\sssec{(Weak) actions of groups on categories}

We have the symmetric monoidal functor
$$(\Sch_{\on{f.t.}})_{/k}\to \on{DGCat}, \quad S\mapsto \QCoh(S), \quad (S_1\overset{f}\to S_2) \mapsto \QCoh(S_1)\overset{f_*}\longrightarrow \QCoh(S_2).$$

In particular, if $H$ is an algebraic group, the DG category $\QCoh(H)$ has a structure of monoidal category under convolution. We a category
acted on \emph{weakly} by $H$ we will mean a module category over $\QCoh(H)$. 

\medskip

For such a module category $\CC$, we set
$$\inv_H(\CC):=\on{Funct}_{H\mmod}(\Vect,\CC).$$

For $\CC=\Vect$, we have
$$\inv_H(\Vect)\simeq \Rep(H),$$
and for any $\CC$, the category $\inv_H(\CC)$ is naturally a module category over $\Rep(H)$. 

\medskip

It is shown in \cite[Theorem 2.5.5]{Ga3}, the above functor
$$H\mmod\to \Rep(H)\mmod, \quad \CC\mapsto \inv_H(\CC)$$
is an equivalence, with the inverse given by
$$\CC'\mapsto \Vect\underset{\Rep(H)}\otimes \CC'.$$

\begin{rem}
In this paper we will also encounter the notion of \emph{strong} action of an algebraic group 
(rather, group ind-scheme) on a DG category. We defer the discussion of this notion until 
\secref{ss:action}.
\end{rem} 

\ssec{Acknowledgements}

It is an honor to dedicate this paper to Masaki Kashiwara, especially as this paper deals with areas
where he has made crucial contributions: quantum groups and representations of affine Kac-Moody algebras.

\medskip

I am grateful to S.~Arkhipov, R.~Bezrukavnikov, M.~Finkelberg, E.~Frenkel, D.~Kazhdan and I.~Mirkovi\'c,
discussions with whom have shaped my thinking about the subjects treated in this paper over the years. 

\medskip

I am grateful to J.~Lurie for introducing me to the world of higher categories, without which this project
could never take off. 

\medskip

I am grateful to P.~Etingof for providing me with some very useful references. 

\medskip

The author's research is supported by NSF grant DMS-1063470. He has also received support from 
ERC grant 669655. 

\bigskip

\bigskip

\centerline{\bf Part I: Iwahori-intergrable Kac-Moody representations}

\bigskip

\section{Modules over the Kac-Moody algebra: recollections}  \label{s:recall}

\ssec{Definition of the category of modules}   \label{ss:g-mod}

One of the two primary objects of study in this paper is the (DG) category of modules over the Kac-Moody algebra 
at a given level $\kappa$, denoted $\hg\mod_\kappa$. In this subsection we recall its definition and discuss some
basic properties.

\sssec{}   \label{sss:g-mod}

The version of the DG category $\hg\mod_\kappa$ that we will use was defined in \cite[Sect. 23.1]{FG3}. It involves a renormalization
procedure.

\medskip

Namely, we start with the abelian category $(\hg\mod_\kappa)^\heartsuit$ and consider the usual derived category
$D((\hg\mod_\kappa)^\heartsuit)$ (by which we mean the corresponding DG category, following \cite[Sect. 1.3.2]{Lu}). 

\medskip

For every congruence subgroup $K_i\subset G(\CO)$, we consider the corresponding induced module
\begin{equation} \label{e:induced from congr}
\on{Ind}_{\fk_i}^{\hg_\kappa}(k)\in (\hg\mod_\kappa)^\heartsuit.
\end{equation} 

We let $\hg\mod_\kappa$ be the ind-completion of the full (but not cocomplete)
subcategory of $D((\hg\mod_\kappa)^\heartsuit)$ generated under finite colimits by the objects 
\eqref{e:induced from congr}. By construction, the objects \eqref{e:induced from congr} form a set
of compact generators of $\hg\mod_\kappa$. 

\medskip

Ind-completing the tautological embedding, we obtain a functor
\begin{equation} \label{e:ren KM}
\fs:\hg\mod_\kappa\to D((\hg\mod_\kappa)^\heartsuit).
\end{equation}

It is shown in {\it loc.cit.} that $\hg\mod_\kappa$ carries a unique t-structure, for which the functor
\eqref{e:ren KM} is t-exact and defines an equivalence of the corresponding eventually coconnective
subcategories, i.e.,
$$(\hg\mod_\kappa)^{>\infty}\to D^+((\hg\mod_\kappa)^\heartsuit).$$

Note, however, that the functor $\fs$ is \emph{not} conservative. In particular, the category
$\hg\mod_\kappa$ is \emph{not} left-separated in its t-structure. 

\sssec{}   \label{sss:g-modK} 

Let $K$ be a subgroup of finite-codimension in $G(\CO)$. The category $\hg\mod_\kappa$ has a version, 
denoted $\hg\mod_\kappa^K$ and defined as follows. 

\medskip

We start with the abelian category $(\hg\mod_\kappa^K)^\heartsuit$ of modules for the Harish-Chandra pair
$(\hg_\kappa,K)$, and consider its derived category $D((\hg\mod_\kappa)^\heartsuit)$.

\medskip

The renormalization procedure is done with respect to modules of the form
$$\on{Ind}_{\fk}^{\hg_\kappa}(V),\quad V\in (\Rep(K)_{\on{f.d.}})^\heartsuit.$$

\medskip

When $K$ is pro-unipotent, the object $\on{Ind}_{\fk}^{\hg_\kappa}(k)$ is a compact generator of 
$\hg\mod_\kappa^K$. 

\sssec{}

The cases of particular interest for us are when $K=G(\CO)$ and $K=I$, the Iwahori subgroup.

\medskip

We denote
\begin{equation} \label{e:Weyl mod}
\BV^\clambda_\kappa:=\on{Ind}_{\fg(\CO)}^{\hg_\kappa}(V^\clambda)\in \hg\mod_\kappa^{G(\CO)},  \quad \clambda\in \cLambda^+,
\end{equation} 
where $V^\clambda$ denotes the irreducible representation of $G$ with highest weight $\clambda$, viewed as a representation of
$G(\CO)$ via the evaluation map $G(\CO)\to G$. 

\medskip

Denote also 
$$\BM_\kappa^\clambda:=\on{Ind}^{\hg_\kappa}_{\on{Lie}}(k^\clambda)\in \hg\mod_\kappa^I, \quad \clambda\in \cLambda,$$
where $k^\clambda$ is one-dimensional representation of $T$ corresponding to the character $\clambda$,
viewed as a representation of $I$ via $I\to T$. 

\medskip

The Weyl modules $\BV^\clambda_\kappa$ (resp., the Verma modules $\BM_\kappa^\clambda$) form a set of compact generators
for $\hg\mod_\kappa^{G(\CO)}$ (resp., $\hg\mod_\kappa^I$). 

\sssec{}  \label{sss:averaging}

Let us be given a pair of subgroups $K'\subset K''$. In this case we have the (obvious) forgetful functor
$$\oblv_{K''/K'}:\hg\mod_\kappa^{K''}\to \hg\mod_\kappa^{K'},$$
which admits a right adjoint, denoted
$$\on{Av}^{K''/K'}_*:\hg\mod_\kappa^{K'}\to \hg\mod_\kappa^{K''}.$$

If the quotient $K''/K'$ is homologically contractible, i.e., if $k\simeq H_{\on{dR}}(K''/K')$
(e.g., both $K'$ and $K''$ are pro-unipotent), then the functor
$\oblv_{K''/K'}$ is fully faithful. 

\sssec{}

One shows that the naturally defined functor
$$\underset{i}{\on{colim}}\, \hg\mod_\kappa^{K_i}\to \hg\mod_\kappa$$
is an equivalence. 

\medskip

From here we obtain that for any $K\subset G(\CO)$ as above, there is an adjoint pair
$$\oblv_K:\hg\mod_\kappa^K\rightleftarrows \hg\mod_\kappa:\on{Av}^K_*.$$

\sssec{} \label{sss:proper averaging}

Assume for a moment that $K''/K'$ is a proper scheme (e.g., $K''=G(\CO)$ and $K'=I$). Then the
functor $\oblv_{K''/K'}$ also admits a \emph{left} adjoint, denoted $\on{Av}^{K''/K'}_!$. 

\medskip

However, Verdier duality on $K''/K'$ implies that we have a canonical isomorphism
$$\on{Av}^{K''/K'}_!\simeq \on{Av}^{K''/K'}_*[2(\dim(K''/K'))],$$
see \cite[Sect. 22.10]{FG2}. 

\ssec{Categories with Kac-Moody group actions}    \label{ss:action}

In certain places in this paper, it will be convenient to place the pattern described in \secref{ss:g-mod}
in the more general context of \emph{categories equipped with an action of $G(\CK)$}. 

\medskip

The material in this subsection is drawn from \cite[Sect. 22]{FG2}.

\sssec{}

We introduce the category $\Dmod_\kappa(G(\CK))$ of D-modules 
on the loop group $G(\CK)$ as follows.

\medskip

We start with the abelian category $(\Dmod_\kappa(G(\CK)))^\heartsuit$ defined in 
\cite[Sect. 21]{FG2}, and apply the renormalization procedure with respect to the following class
of objects: 
$$\on{Ind}^{\Dmod_\kappa(G(\CK))}_{\IndCoh(G(\CK))}(\CF),\quad \CF\in (\Coh(G(\CK)))^\heartsuit,$$
where $\on{Ind}^{\Dmod_\kappa(G(\CK))}_{\IndCoh(G(\CK))}$ is the left adjoint of the forgetful functor
$$(\Dmod_\kappa(G(\CK)))^\heartsuit\to (\IndCoh(G(\CK)))^\heartsuit$$
(one can show that the above functor $\on{Ind}^{\Dmod_\kappa(G(\CK))}_{\IndCoh(G(\CK))}$ is exact).

\medskip

Convolution defines on $\Dmod_\kappa(G(\CK))$ a structure of monoidal DG category, see \cite[Sect. 22.3]{FG2}.

\sssec{}  \label{sss:action at level kappa}

By a (DG) category $\CC$ acted on (strongly) by $G(\CK)$ at level $\kappa$ we will mean a module category 
over $\Dmod_\kappa(G(\CK))$. 

\medskip

In particular, such $\CC$ acquires a (strong) action of the group-scheme $G(\CO)$. For every subgroup $K\subset G(\CO)$
of finite codimension, we can consider the corresponding equivariant category $\CC^K$. It is acted on by the corresponding Hecke
category $\Dmod_\kappa(G(\CK)/K))^K$.

\medskip

An example of a category acted on (strongly) by $G(\CK)$ at level $\kappa$ at level $\kappa$ is $\CC:=\hg\mod_\kappa$.
The corresponding categories $\CC^K$ are what we have denoted earlier by $\hg\mod_\kappa^K$

\medskip

The categories $\CC^K$ for different $K$ are related by the functors $(\oblv,\on{Av}_*)$ as in Sects.
\ref{sss:averaging} and \ref{sss:proper averaging}. 

\sssec{}

Let $\CC$ be a category acted (strongly) on by $G(\CK)$ at level $\kappa$, and let $H\subset G(\CK)$ be a
group indsubscheme, such that the Kac-Moody extension $\hg_\kappa$ splits over its Lie algebra. 

\medskip

Then it makes sense to consider the equivariant category $\CC^H$. 

\medskip

A case of particular interest for us is when $H=N(\CK)\cdot T(\CO)$. Explicitly, if we write 
$N(\CK)$ as a union of group subschemes $N_\alpha$ that are invariant under conjugation 
by $T(\CO)$, then each $\CC^{N_\alpha\cdot T(\CO)}$ is a full subcategory in $\CC^{T(\CO)}$ 
(due to pro-unipotence), and we have
$$\CC^{N(\CK)\cdot T(\CO)}=\underset{\alpha}\cap\, \CC^{N_\alpha\cdot T(\CO)},$$
which is also a full subcategory in $\CC^{T(\CO)}$. 

\ssec{Duality}

It turns out that the categories of Kac-Moody modules at opposite levels are related to each other
by the procedure of \emph{duality of DG categories}, see \secref{sss:duality DG categ}. This 
often provides a convenient tool for the study of their representation theory.  

\medskip

The material in this subsection does not admit adequate references in the published literature.

\sssec{}

We start with the discussion of the finite-dimensional $\fg$. Let $A$ be an associative algebra and 
let $A^{\on{rev}}$ be the algebra with reversed multiplication.  Then the categories 
$$A\mod \text{ and } A^{\on{rev}}\mod$$
are related by
$$(A\mod)^\vee\simeq  A^{\on{rev}}\mod.$$

The pairing 
$$A\mod\otimes A^{\on{rev}}\mod\to \Vect$$
is given by
$$M_1,M_2\mapsto M_1\underset{A}\otimes M_2.$$

The dualizing object in $A\mod\otimes A^{\on{rev}}\mod$ is provided by $A$, viewed as a bimodule.

\medskip

Taking $A=U(\fg)$, and identifying $U(\fg)^{\on{rev}}\simeq U(\fg)$ via the antipode map, we obtain a
canonical identification
\begin{equation} \label{e:duality fin dim}
(\fg\mod)^\vee\simeq \fg\mod.
\end{equation} 

The corresponding pairing
$$\fg\mod\otimes \fg\mod\to \Vect$$
is given by
$$M_1,M_2\mapsto \CHom_{\fg\mod}(k,\CM_1\otimes \CM_2).$$

\medskip

The duality \eqref{e:duality fin dim} induces a duality
$$(\fg\mod^K)^\vee\simeq  \fg\mod^K$$
for any subgroup $K\subset G$, so that the functors
$$\oblv_K:\fg\mod^K\to \fg\mod:\on{Av}^K_*$$
satisfy
$$(\oblv_K)^\vee\simeq \on{Av}^*_K \text{ and } (\on{Av}^*_K)^\vee\simeq \oblv_K.$$

\medskip

Let $\BD$ denote the resulting contravariant auto-equivalence
$$(\fg\mod^K)_c\to (\fg\mod^K)_c.$$

\medskip

Taking $K=G$, the corresponding duality on $\fg\mod^G=\Rep(G)$ is just contragredient duality,
i.e., 
$$\BD(V)\simeq V^\vee.$$

\medskip

For $K=B$, we have
$$\BD(M^\clambda)\simeq M^{-\clambda-2\check\rho}[d].$$

\sssec{}

We now discuss the Kac-Moody case. 

\medskip

Let $-\kappa$ be the opposite level of $\kappa$. A key feature of the category $\Dmod_\kappa(G(\CK))$
is that it admits a global sections functor
$$\Gamma(G(\CK),-): \Dmod_\kappa(G(\CK))\to \hg\mod_\kappa\otimes \hg\mod_{-\kappa},$$
see \cite[Sect. 21]{FG2}.

\medskip

For a subgroup $K\subset G(\CO)$ of finite codimension, consider the object
$$\delta_{G(\CK),K}:=\on{Ind}^{\Dmod_\kappa(G(\CK))}_{\IndCoh(G(\CK))}(\CO_{K})\in \Dmod_\kappa(G(\CK)),$$
where $\CO_{K}\in \Coh(G(\CK))$ is the structure sheaf of $K$. 

\medskip

Consider the object
$$\Gamma(G(\CK),\delta_{G(\CK),K})\in \hg\mod_\kappa\otimes \hg\mod_{-\kappa}.$$

It naturally upgrades to an object of $\hg\mod^K_\kappa\otimes \hg\mod^K_{-\kappa}$. Denote
$$\on{unit}_K:=\Gamma(G(\CK),\delta_{G(\CK),K})[-\dim(G(\CO)/K)].$$

\sssec{}

The following assertion is proved in the same way as \cite[Theorem 6.2]{AG2}:

\begin{prop}  \label{p:duality K}
The object $\on{unit}_K$ defines the co-unit of a duality. 
\end{prop}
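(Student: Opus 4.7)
The plan is to follow the strategy of \cite[Theorem 6.2]{AG2}, using the fact that both categories in play are compactly generated in order to reduce the verification of the duality to a computation on a convenient set of generators. Recall that for a dualizable compactly generated DG category one has $\CC^\vee\simeq \on{IndCompl}((\CC_c)^{\on{op}})$, and that an object of $\CC_1\otimes \CC_2$ defines a co-unit of a duality if and only if the induced functor $(\CC_1)^\vee\to \CC_2$ is an equivalence; the latter can be tested by checking that the functor sends a set of compact generators to a set of compact generators and realizes the \emph{a priori} contravariant equivalence $(\CC_c)^{\on{op}}\simeq (\CC^\vee)_c$.

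First I would reduce to the case in which $K$ is pro-unipotent, in which case $\hg\mod_\kappa^K$ is compactly generated by the single object $\on{Ind}_\fk^{\hg_\kappa}(k)$, and similarly for $-\kappa$ (the general case follows from this by taking $K$-finite vectors for a pro-unipotent normal subgroup $K'\triangleleft K$ of finite index, using the duality $\Rep(K/K')^\vee\simeq\Rep(K/K')$ via contragredient and \propref{sss:averaging}-style adjunctions). Next, I would unpack the definition of $\on{unit}_K$. By construction, $\delta_{G(\CK),K}$ is the $!\,$-extension (with a shift by $-\dim(G(\CO)/K)$ absorbed into the definition of $\on{unit}_K$) of the structure sheaf of $K$ viewed as a subscheme of $G(\CK)$, so its global sections, as a bimodule for $\hg_\kappa$ on the left and $\hg_{-\kappa}$ on the right, can be identified with the natural bimodule $U(\hg)$-type object obtained by inducing the adjoint $K$-action on $U(\fk)^\vee$ (here the two different levels $\pm\kappa$ appear because the central extensions for the left and right actions of $G(\CK)$ on itself are dual to each other).

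With this identification, the main computation is to evaluate the functor
$$F_{\on{unit}_K}:(\hg\mod_\kappa^K)_c^{\on{op}}\to \hg\mod_{-\kappa}^K$$
on the compact generator $\on{Ind}_\fk^{\hg_\kappa}(V)$ for $V\in\Rep(K)^\heartsuit_{\on{f.d.}}$ and to show that the answer is (canonically, up to the correct cohomological shift by $\dim(G(\CO)/K)$ that enters via Serre duality on $K$ versus Verdier duality on $G(\CO)/K$) the object $\on{Ind}_\fk^{\hg_{-\kappa}}(V^\vee)$. Granted this, the functor $F_{\on{unit}_K}$ is by construction continuous, sends a set of compact generators to a set of compact generators, and induces the prescribed contravariant equivalence of the categories of compact objects, so it must be an equivalence.

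The main obstacle is the explicit identification in the preceding paragraph: one must carefully track the line bundle data encoded in $\kappa$ versus $-\kappa$, and pin down the homological shift coming from the normalization $[-\dim(G(\CO)/K)]$ in the definition of $\on{unit}_K$. As in \cite[Sect.\ 6]{AG2}, this amounts to checking that the inner Hom against $\delta_{G(\CK),K}$ coincides with the functor $\on{Av}^K_*$ composed with contragredient duality on $\Rep(K)$, which is a direct unwinding of the definition of the twisted D-module category $\Dmod_\kappa(G(\CK))$ and of the \propref{sss:proper averaging}-style compatibility between $!\,$- and $*\,$-averaging modulo the canonical shift.
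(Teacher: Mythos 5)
Your overall route is the one the paper intends (the paper itself gives no argument beyond citing \cite[Theorem 6.2]{AG2}): view $\on{unit}_K$ as inducing a functor $(\hg\mod_\kappa^K)^\vee\to \hg\mod_{-\kappa}^K$, compute it on the compact generators $\on{Ind}_{\fk}^{\hg_\kappa}(V)$, and conclude that it is an equivalence. But the concluding step, as you state it, has a genuine gap: a continuous functor that sends a set of compact generators to a set of compact generators need not be an equivalence. What is missing is full faithfulness on these generators, i.e. the identification
$$\CHom_{\hg\mod_\kappa^K}\big(\on{Ind}_{\fk}^{\hg_\kappa}(V_1),\on{Ind}_{\fk}^{\hg_\kappa}(V_2)\big)\simeq
\CHom_{\hg\mod_{-\kappa}^K}\big(\on{Ind}_{\fk}^{\hg_{-\kappa}}(V_2^\vee\otimes\det(\fg(\CO)/\fk)),\on{Ind}_{\fk}^{\hg_{-\kappa}}(V_1^\vee\otimes\det(\fg(\CO)/\fk))\big),$$
compatibly with composition. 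This is exactly where the substance of the AG2-style argument lies: either one computes both sides (e.g. by a PBW/semi-infinite Tor computation relative to $K$, which is what the pairing is supposed to encode), or one exhibits the evaluation functor as well and verifies the duality identities on generators. Your sentence ``induces the prescribed contravariant equivalence of the categories of compact objects'' asserts precisely the thing that has to be proved; the object-level formula alone does not give it.

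Three smaller imprecisions that matter for carrying out the computation. The target of the object-level identification must be $\on{Ind}_{\fk}^{\hg_{-\kappa}}(V^\vee\otimes\det(\fg(\CO)/\fk))$: the determinant line is a nontrivial character of $K$ in general (for $K=I$ it is responsible for the $-2\check\rho$ in \eqref{e:dual of Verma}), so it cannot be absorbed into a mere cohomological shift. In the reduction step, the quotient $K/K'$ by a pro-unipotent normal subgroup is a reductive algebraic group (e.g. $G$ or $T$), not a finite group; the descent should be phrased via $\hg\mod^K_{\pm\kappa}\simeq \inv_{K/K'}(\hg\mod^{K'}_{\pm\kappa})$ and the self-duality of $\Rep(K/K')$. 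Finally, describing $\Gamma(G(\CK),\delta_{G(\CK),K})$ as ``induced from the adjoint $K$-action on $U(\fk)^\vee$'' is too loose to support the main computation: what is needed is its description as the Harish-Chandra bimodule for the two mutually dual extensions $\hg_\kappa$ and $\hg_{-\kappa}$, obtained by two-sided induction from functions on $K$ with its two-sided regular structure (the adjoint action appears only after restriction to the diagonal copy of $K$), and it is from this description that both the object-level formula and the Hom-level isomorphism above must be extracted.
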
 

Thus, we obtain that the categories $\hg\mod_\kappa^K$ and $\hg\mod_{-\kappa}^K$ are canonically mutually dual. 
We let $$\BD:(\hg\mod_\kappa^K)_c\to (\hg\mod_{-\kappa}^K)_c$$
denote the resulting contravariant equivalence. 

\medskip

We will denote the resulting pairing
$$\hg\mod^K_\kappa\otimes \hg\mod^K_{-\kappa}\to \Vect$$
by $\langle-,-\rangle_{\hg\mod_\kappa^K}$. 

\begin{rem}
The pairing $\langle-,-\rangle_{\hg\mod_\kappa^K}$ is explicitly given by the operation \emph{semi-infinite Tor} relative to $K$. Alternatively,
we can take $\langle-,-\rangle_{\hg\mod_\kappa^K}$ as the definition of this semi-infinite Tor functor. 
\end{rem}

\sssec{}

By unwinding the definitions, we obtain the following:

\begin{lem}  
For $V\in \Rep(K)_{\on{fin.dim}}$, we have a canonical isomorphism
$$\BD(\on{Ind}_{\fk}^{\hg_\kappa}(V))\simeq \on{Ind}_{\fk}^{\hg_{-\kappa}}(V^\vee\otimes \det(\fg(\CO)/\fk)).$$
\end{lem}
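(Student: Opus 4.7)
The proof revolves around the defining property of $\BD$: for compact $c \in (\hg\mod_\kappa^K)_c$, the object $\BD(c)$ corepresents the functor $\CN \mapsto \langle c, \CN\rangle_{\hg\mod_\kappa^K}$. Setting $c = \on{Ind}_{\fk}^{\hg_\kappa}(V)$, the plan is to produce a canonical isomorphism of functors $\hg\mod_{-\kappa}^K \to \Vect$:
\begin{equation*}
\langle \on{Ind}_{\fk}^{\hg_\kappa}(V), -\rangle_{\hg\mod_\kappa^K} \simeq \CHom_{\hg\mod_{-\kappa}^K}\bigl(\on{Ind}_{\fk}^{\hg_{-\kappa}}(V^\vee\otimes \det(\fg(\CO)/\fk)),\, -\bigr).
\end{equation*}

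First I would rewrite the right-hand side using Frobenius reciprocity for the adjoint pair $(\on{Ind}_{\fk}^{\hg_{-\kappa}}, \oblv_K)$: since $V^\vee \otimes \det(\fg(\CO)/\fk)$ is dualizable in $\Rep(K)$, this yields
\begin{equation*}
\CHom_{\Rep(K)}\bigl(V^\vee \otimes \det(\fg(\CO)/\fk),\, \oblv_K(\CN)\bigr) \simeq \bigl(V \otimes \det(\fg(\CO)/\fk)^{-1} \otimes \oblv_K(\CN)\bigr)^{hK}.
\end{equation*}
For the left-hand side, I would unpack $\on{unit}_K = \Gamma(G(\CK), \delta_{G(\CK),K})[-\dim(G(\CO)/K)]$, as introduced before \propref{p:duality K}, and use the remark that the pairing is a form of semi-infinite Tor relative to $K$. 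Since $\delta_{G(\CK),K}$ is supported on $K \subset G(\CK)$ and $\on{Ind}_{\fk}^{\hg_\kappa}(V)$ is itself induced from the ``finite'' Harish-Chandra pair $(\fg(\CO), K)$, the pairing ought to reduce to a computation internal to this pair. Concretely, I expect a natural isomorphism
\begin{equation*}
\langle \on{Ind}_{\fk}^{\hg_\kappa}(V), \CN\rangle_{\hg\mod_\kappa^K} \simeq \on{C}^{\cdot}\bigl(\fg(\CO)/\fk,\, G(\CO)/K;\; V \otimes \oblv_K(\CN)\bigr)[-\dim(G(\CO)/K)],
\end{equation*}
i.e., relative Chevalley--Eilenberg cochains on the finite-dimensional quotient, shifted by the normalization built into $\on{unit}_K$. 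The classical Serre-duality identity relating induction and coinduction for the finite pair $(\fk, \fg(\CO))$ then converts this expression into the $K$-invariants appearing on the right-hand side of Step 1, at the cost of precisely the $\det(\fg(\CO)/\fk)$ twist asserted by the lemma.

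The main obstacle is the second step: rigorously reducing the semi-infinite Tor against $\on{Ind}_{\fk}^{\hg_\kappa}(V)$ to finite-dimensional relative Lie-algebra cohomology. One approach is to resolve $\delta_{G(\CK),K}$ by a Koszul-type complex constructed from the embedding $K \hookrightarrow G(\CK)$ (equivalently, from $\fk \subset \hg_\kappa$ together with $K$-integrability), and to verify that, upon pairing with an object induced from $(\fg(\CO), K)$, the resulting bicomplex collapses onto the Chevalley--Eilenberg complex for the finite-dimensional quotient $(\fg(\CO)/\fk,\, G(\CO)/K)$ with coefficients in $V \otimes \oblv_K(\CN)$. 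The $\det$ twist and its accompanying shift then appear as the Serre-duality discrepancy between the left and right $\fg(\CO)$-actions on $U(\hg_\kappa)$ when transported across this collapse; the $[-\dim(G(\CO)/K)]$ in $\on{unit}_K$ is precisely what absorbs the homological shift so that the final answer carries no residual $[d]$ (in contrast to the finite-dimensional model $\BD(M^\clambda) \simeq M^{-\clambda - 2\check\rho}[d]$ recalled earlier).
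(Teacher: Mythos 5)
The paper gives no argument for this lemma beyond ``unwinding the definitions,'' so the relevant comparison is with the natural unwinding, and your skeleton matches it: characterize $\BD(\on{Ind}_{\fk}^{\hg_\kappa}(V))$ as the object corepresenting $\langle \on{Ind}_{\fk}^{\hg_\kappa}(V),-\rangle_{\hg\mod_\kappa^K}$, use Frobenius reciprocity for $(\on{Ind}_{\fk}^{\hg_{-\kappa}},\oblv_K)$ on the target side, and let the determinant line and shift come out of the normalization of $\on{unit}_K$. The genuine gap is in your central intermediate formula. First, it is not well defined: $V$ carries only a $K$-action, so $V\otimes\oblv_K(\CN)$ is not a module for the Harish-Chandra pair $(\fg(\CO),K)$, and ``relative Chevalley--Eilenberg cochains $\on{C}^\cdot(\fg(\CO)/\fk, G(\CO)/K; V\otimes\oblv_K(\CN))$'' has no differential. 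Second, the premise feeding it is off: $\on{Ind}_{\fk}^{\hg_\kappa}(V)$ is induced from $(\fk,K)$, and by transitivity it is induced from $(\fg(\CO),K)$ only with inducing object $\on{Ind}_{\fk}^{\fg(\CO)}(V)$, not $V$; keeping $V$ as the coefficient while also introducing a resolution along $\fg(\CO)/\fk$ double-counts those finitely many directions. The correct ``unwinding'' collapses all the way down to the pair $(\fk,K)$ itself: exactly as in the identity displayed in the paper's proof of \propref{p:semiinf with Wak}, the pairing of an induced module with $\CN$ identifies with the canonical pairing, in $\Rep(K)$, of $V\otimes\det(\fg(\CO)/\fk)$ (the relative determinant $\on{det.rel.}(\fg(\CO),\fk)$) with $\oblv_K(\CN)$, the shift $[-\dim(G(\CO)/K)]$ in $\on{unit}_K$ being what places the determinant line in cohomological degree $-\dim(G(\CO)/K)$. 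After that, Frobenius reciprocity gives the lemma in one line; no Chevalley--Eilenberg complex over $\fg(\CO)/\fk$ should appear, and indeed the answer $\CHom_{\Rep(K)}(V^\vee\otimes\det(\fg(\CO)/\fk),\oblv_K(\CN))$ is just a line-twisted derived $K$-invariants, not the cohomology of a length-$\dim(G(\CO)/K)$ relative complex.

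A smaller but symptomatic point: your closing remark that the shift in $\on{unit}_K$ ensures ``no residual $[d]$, in contrast to the finite-dimensional model'' is not right. For $K=I$ the lemma specializes precisely to $\BD(\BM^\clambda_\kappa)\simeq \BM_{-\kappa}^{-\clambda-2\check\rho}[d]$, i.e.\ the same shape as the finite-dimensional statement $\BD(M^\clambda)\simeq M^{-\clambda-2\check\rho}[d]$; the role of the $[-\dim(G(\CO)/K)]$ is only to make the dualities for varying $K$ compatible with the $(\oblv,\on{Av}_*)$ adjunctions, which manifests itself in the convention that $\det(\fg(\CO)/\fk)$ sits in degree $-\dim(G(\CO)/K)$. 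So the plan is salvageable, but the middle step should be replaced by the tautological reduction to the pair $(\fk,K)$ with the $\det(\fg(\CO)/\fk)$ twist, rather than a relative cochain computation over the quotient.
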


In the above formula, the line $\det(\fg(\CO)/\fk)$ is, according to our conventions, placed in cohomological degree
$-\dim(G(\CO)/K)$, and is regarded as a one-dimensional representation of $K$. 

\medskip

In particular, for $K=G(\CO)$, we have
$$\BD(\on{Ind}_{\fg(\CO)}^{\hg_\kappa}(V))\simeq \on{Ind}_{\fg(\CO)}^{\hg_{-\kappa}}(V^\vee)$$
and for $K=I$, we have
\begin{equation} \label{e:dual of Verma}
\BD(\BM^\clambda_\kappa)\simeq \BM_{-\kappa}^{-\clambda-2\check\rho}[d].
\end{equation} 

\sssec{}   \label{sss:Av adj}

Let $K'\subset K''$ be a pair of subgroups. Then the functors
$$\oblv_{K''/K'}:\hg\mod_\kappa^{K''}\rightleftarrows \hg\mod_\kappa^{K'}:\on{Av}_*^{K''/K'}$$
and
$$\oblv_{K''/K'}:\hg\mod_{-\kappa}^{K''}\rightleftarrows \hg\mod_{-\kappa}^{K'}:\on{Av}_*^{K''/K'}$$
are related by
$$(\oblv_{K''/K'})^\vee \simeq \on{Av}_*^{K''/K'} \text{ and } (\on{Av}_*^{K''/K'})^\vee\simeq \oblv_{K''/K'},$$
with respect to the identifications
$$(\hg\mod_\kappa^{K''})^\vee \simeq \hg\mod_{-\kappa}^{K''} \text{ and } 
(\hg\mod_\kappa^{K'})^\vee \simeq \hg\mod_{-\kappa}^{K'}.$$ 

\sssec{}  \label{sss:duality}

The compatible family of equivalences
$$(\hg\mod_\kappa^{K_i})^\vee \simeq \hg\mod_{-\kappa}^{K_i}$$ 
for congruence subgroups induces an equivalence
$$(\hg\mod_\kappa)^\vee \simeq \hg\mod_{-\kappa},$$
uniquely characterized by the property that for every $K$ as above, the functors
$$\oblv_K:\hg\mod_\kappa^K \rightleftarrows \hg\mod_\kappa: \on{Av}^K_*$$
and 
$$\oblv_K:\hg\mod_{-\kappa}^K \rightleftarrows \hg\mod_{-\kappa}: \on{Av}^K_*$$
are obtained from each other as duals.

\medskip

We denote the corresponding pairing
$$\hg\mod_\kappa\otimes \hg\mod_{-\kappa}\to \Vect$$
by $\langle-,-\rangle_{\hg\mod_\kappa}$.

\medskip

For any \emph{group-scheme}, $H\subset G(\CK)$, the pairing $\langle-,-\rangle_{\hg\mod_\kappa}$ induces a perfect pairing
$$\langle-,-\rangle_{\hg\mod^H_\kappa}: \hg\mod_\kappa^H\otimes \hg\mod^H_{-\kappa}\to \Vect,$$
so that the functors $\oblv_H$ and $\on{Av}^H_*$ are mutually dual as before. 

\ssec{BRST}  \label{ss:BRST}

We will now discuss the functor of BRST reduction (a.k.a. semi-infinite cohomology) with respect to the loop algebra $\fn(\CK)$.
It appears prominently in this paper as it is closely related to Wakimoto modules. 

\sssec{}

Consider the topological Lie algebra $\fn(\CK)$, and consider the category $\fn(\CK)\mod$, defined in the same 
manner as in \secref{sss:g-mod}.

\medskip

We define the functor
$$\on{C}^\semiinf(\fn(\CK),-):\hg\mod_\kappa\to \Vect$$
as follows.

\medskip

Write $\fn(\CK)=\underset{\alpha}\cup\, N_\alpha$. Note that for $N_{\alpha_1}\subset N_{\alpha_2}$ we we have a canonically defined natural transformation
$$\on{C}^\cdot(\fn_{\alpha_1},-)\to \on{C}^\cdot(\fn_{\alpha_2},-\otimes \det(\fn_{\alpha_2}/\fn_{\alpha_1})).$$

\medskip

Assume without loss of generality that $\fn(\CO)\subset \fn_\alpha$ for
all $\alpha$. The assignment
$$\alpha\mapsto \on{C}^\cdot(\fn_{\alpha},-\otimes \det(n_\alpha/\fn(\CO))$$
is a functor from the category of indices $\alpha$ to that of functors $\hg\mod_\kappa\to \Vect$. 

\medskip

We set
$$\on{C}^\semiinf(\fn(\CK),-):=\underset{\alpha}{\on{colim}}\,  \on{C}^\cdot(\fn_{\alpha},-\otimes \det(n_\alpha/\fn(\CO)).$$

\sssec{}  \label{sss:Miura}

Consider now the topological Lie algebra $\fb(\CK)$. According to \cite[Sect. 2.7.7]{BD}, there exists a canonically defined central
extension
\begin{equation} \label{e:Tate}
0\to k\to \wh\ft_{\on{Tate}}\to \ft(\CK)\to 0
\end{equation} 
with the following property:

\medskip

The composite functor
$$\fb(\CK)\mod \to \fn(\CK)\mod\overset{\on{C}^\semiinf(\fn(\CK),-)}\longrightarrow \Vect$$
canonically lifts to a functor
\begin{equation} \label{e:BRST functor} 
\on{BRST}_\fn:\fb(\CK)\mod\to \wh\ft_{\on{Tate}}\mod
\end{equation}

Moreover, the above functor \eqref{e:BRST functor} has a structure of functor between categories
acted on by $B(\CO)$. 

\begin{rem}
A remarkable feature of the extension \eqref{e:Tate} is that it \emph{does} not admit a canonical
splitting as vector spaces. In fact, it does not admit a splitting which is $\on{Aut}(\CO)$-invariant. 
\end{rem}

\sssec{}

Since \eqref{e:Tate} is a central extension of the abelian Lie algebra $\ft(\CK)$, it corresponds to a skew
symmetric form on $\ft(\CK)$. It is shown in \cite[Sect. 2.8.17]{BD} that this form equals the usual Kac-Moody
cocycle corresponding to the symmetric bilinear form on $\ft$ equal to 
$$\frac{\kappa_{\on{Kil}}}{2}|_{\ft}=-\kappa_{\on{crit}}|_\ft.$$

\medskip

Given a symmetric bilinear form $\kappa$ on $\ft$, let $\htt'_\kappa$ be the central extension 
$$0\to k\to \htt'_\kappa\to \ft(\CK)\to 0$$
equal to the Baer sum of the restriction of 
$$\hg_\kappa|_{\ft(\CK)}:=\hg_\kappa\underset{\fg(\CK)}\times \htt(\CK)$$
and $\wh\fh_{\on{Tate}}$.

\medskip

Note that due to our conventions of shifting the level for $\fg$ by the critical value, the extension $\htt'_\kappa$ is the Baer
sum of $\htt_\kappa$ and an abelian (but not canonically split) extension of $\ft(\CK)$. 

\medskip

We obtain that the composite functor
$$\hg_\kappa\mod \to \hg_\kappa|_{\fb(\CK)}\mod\to \fn(\CK)\mod\to \Vect$$
can be canonically lifted to a functor
\begin{equation} \label{sss:BRST on g}
\on{BRST}_\fn:\hg_\kappa\mod\to \htt'\mod_\kappa.
\end{equation} 

\section{Semi-infinite cohomology and Wakimoto modules}  \label{s:Wak}

Wakimoto modules for the Kac-Moody Lie algebra $\hg_\kappa$, first introduced in \cite{FF}, 
play a central role in this paper. 

\medskip

In this section we will reinterpret the construction of Wakimoto modules by first
defining \emph{semi-infinite} Wakimoto modules (as modules in a certain precise sense
(co)induced from loops into the Borel subalgebra), and then applying the functor of Iwahori-averaging. 

\ssec{The functor of semi-infinite cohomology}

In this subsection we will discuss a ``hands-on" way to describe a certain variant of the functor of semi-infinite cohomology
$$\on{BRST}_\fn: \hg\mod_\kappa\to \htt'\mod_\kappa,$$
see \eqref{sss:BRST on g}. 

\sssec{}   \label{sss:seminf la} 

The above functor $\on{BRST}_\fn$ is a functor between categories acted on by $B(\CO)$. In particular, it gives rise to the (same-named) functor
\begin{equation} \label{e:BRST with T}
\on{BRST}_\fn: \hg\mod^{T(\CO)}_\kappa\to \htt'\mod^{T(\CO)}_\kappa.%=:\KL(T)'_\kappa.
\end{equation} 

\medskip

Fix $\clambda\in \cLambda$, and let
$$\on{C}^\semiinf(\fn(\CK),-)^\clambda:\hg\mod^{T(\CO)}_\kappa\to \Vect$$
be the functor equal to the composite of $\on{BRST}_\fn$ of \eqref{e:BRST with T} and the functor
\begin{equation} \label{e:lambda comp}
\htt'\mod^{T(\CO)}_\kappa\to \Vect, \quad \CM\mapsto \CHom_{\Rep(T(\CO))}(k^\clambda,\CM).
\end{equation}

\medskip

We will now describe the functor $\on{C}^\semiinf(\fn(\CK),-)^\clambda$ explicitly as a colimit.

\begin{rem}
Recall that when we write $\hg\mod_\kappa$, we take into account the critical shift (so that the symmetric bilinear
form on $\fg$ corresponding to $\kappa$ is one whose restriction to $\ft$ is 
$\kappa+\kappa_{\on{crit}}$). Recall that we are assuming that $\kappa$ is non-degenerate. 
In this case, the category
$$\KL(T)_\kappa':=\htt'\mod^{T(\CO)}_\kappa$$
is semi-simple and is equivalent to $\Rep(T)\simeq \Vect^{\cLambda}$ by means of
$$k^\clambda\mapsto \Ind_{\ft(\CO)}^{\htt'_\kappa}(k^\clambda),$$
with the functors \eqref{e:lambda comp} providing the inverse. Hence, in this case, the collection of
the functors $\on{C}^\semiinf(\fn(\CK),-)^\clambda$ recovers the functor \eqref{e:BRST with T}.
\end{rem} 

\sssec{}  \label{sss:family of sgrps}

Let $A$ be a filtered set, and let 
$$\alpha\in I_\alpha$$
be a family of pro-solvable subgroups of $G(\CK)$ with the following properties:

\begin{itemize}

\item $T(\CO)\subset I_\alpha$ and each $I_\alpha$ admits a triangular decomposition in the sense that the multiplication map defines an isomorphism of schemes
$$I^+_\alpha\times T(\CO)\times I^-_\alpha\to I_\alpha,$$
where
$$I^+_\alpha:=I_\alpha\cap N(\CK), \quad I^-_\alpha:=I_\alpha\cap N^-(\CK).$$

\item For each $\alpha$, we have $N(\CO)\subset I^+_{\alpha}$, for 
$\alpha_1<\alpha_2$, we have $I^+_{\alpha_1}\subset I^+_{\alpha_2}$ and
$$\underset{\alpha\in A}\cup\, I^+_\alpha=N(\CK).$$

\item For each $\alpha$, we have $N^-(\CO)\supset I^-_{\alpha}$, for 
$\alpha_1<\alpha_2$, we have $I^-_{\alpha_1}\supset I^-_{\alpha_2}$ and
$$\underset{\alpha\in A}\cap\, I^-_\alpha=\{1\}.$$

\end{itemize}

\sssec{}  \label{sss:ex family}

An example of such a family of subgroups is provided by taking $A=\Lambda^+$ with the order relation given by
$$\mu_1<\mu_2\, \Leftrightarrow\, \mu_2-\mu_1\in \Lambda^+,$$
and setting 
$$I_\mu:=\on{Ad}_{t^{-\mu}}(I).$$

\sssec{}

It follows that for every $\alpha$ we have a canonical (surjective) homomorphism
$I_\alpha\to T$, whose kernel, denoted $\overset{\circ}I_\alpha$, is the pro-unipotent radical of $I_\alpha$, i.e.,
$$I_\alpha\simeq T\ltimes \overset{\circ}I_\alpha.$$

\sssec{}

Note the restriction of the Kac-Moody extension $\hg_\kappa$ to each $\on{Lie}(I_\alpha)$ admits a canonical splitting
fixed by the condition that it agrees with the given splitting over $\ft(\CO)\subset \fg(\CO)$. These splittings agree under the
inclusions
$$\on{Lie}(I_{\alpha_1})\supset \on{Lie}(I_{\alpha_1}\cap I_{\alpha_2})\subset \on{Lie}(I_{\alpha_2}), \quad \alpha_1,\alpha_2\in A.$$

\medskip

In particular, for every $\CM\in \hg\mod_\kappa$ and $\alpha\in A$, it makes sense to consider
$$\on{C}^\cdot(\on{Lie}(\overset{\circ}{I}_{\alpha}),\CM) \in \ft\mod,$$
and for a pair of indices $\alpha_1,\alpha_2\in A$ we have the natural maps of $\ft$-modules 
$$\on{C}^\cdot(\on{Lie}(\overset{\circ}{I}_{\alpha_1}),\CM)\to 
\on{C}^\cdot(\on{Lie}(\overset{\circ}{I}_{\alpha_1}\cap \overset{\circ}{I}_{\alpha_2}),\CM)\leftarrow 
\on{C}^\cdot(\on{Lie}(\overset{\circ}{I}_{\alpha_2}),\CM).$$

\medskip

In what follows, for $\CM\in \hg\mod_\kappa^T$ and $\clambda\in \cLambda$, we will write
$$\on{C}^\cdot(\on{Lie}(\overset{\circ}{I}_{\alpha}),\CM)^\clambda:=
\CHom_{\Rep(T)}\left(k^\clambda, \on{C}^\cdot(\on{Lie}(\overset{\circ}{I}_{\alpha}),\CM)\right).$$

\sssec{}  \label{sss:degrees}

Note that for every pair of indices $\alpha_1<\alpha_2$ and $\CM\in \hg\mod_\kappa$, we have canonical maps 
$$\on{C}^\cdot(\on{Lie}(\overset{\circ}{I}_{\alpha_1}),\CM) \to \on{C}^\cdot(\on{Lie}(\overset{\circ}{I}_{\alpha_1}\cap \overset{\circ}{I}_{\alpha_2}),\CM)\to
\on{C}^\cdot(\on{Lie}(\overset{\circ}{I}_{\alpha_2}),\CM\otimes \ell_{\alpha_1,\alpha_2}),$$
where $\ell_{\alpha_1,\alpha_2}$ is the graded line 
$$\det(\on{Lie}(I^+_{\alpha_2})/\on{Lie}(I^+_{\alpha_1})),$$
equipped with the natural $T$-action.

\medskip

Here and hereafter, for a finite-dimensional vector space $V$, its determinant line $\det(V)$ will be placed in cohomological degree $-\dim(V)$. 

\sssec{}

Let $\ell_\alpha$ be the graded line $\det(\on{Lie}(I^+_\alpha)/\fn(\CO))$, equipped 
with the natural action of $T$. 

\medskip

Thus, for $\alpha_1<\alpha_2$ and $\CM\in \hg\mod_\kappa$, we have a canonical map
$$\on{C}^\cdot(\on{Lie}(\overset{\circ}{I}_{\alpha_1}),\CM \otimes \ell_{\alpha_1})\to
\on{C}^\cdot(\on{Lie}(\overset{\circ}{I}_{\alpha_2}),\CM \otimes \ell_{\alpha_2})$$
of $\ft$-modules.  The assignment
$$\alpha\mapsto \on{C}^\cdot(\on{Lie}(\overset{\circ}{I}_\alpha),\CM \otimes \ell_\alpha)^\clambda$$
upgrades to a functor from the category of indices $A$ to $\Vect$. Set 
\begin{equation} \label{e:semiinf as colim}
\hg\mod_\kappa^T\to \Vect, \quad \CM\mapsto \underset{\alpha}{\on{colim}}\, \on{C}^\cdot(\on{Lie}(\overset{\circ}{I}_\alpha),\CM \otimes \ell_\alpha)^\clambda.
\end{equation}

It is easy to see that the above construction is canonically independent of the choice of the family $\alpha\mapsto I_\alpha$.  

\medskip

The following results by unfolding the definitions:

\begin{prop}  \label{p:semiinf}
For $\CM\in \hg\mod_\kappa^{T(\CO)}$ there exists a canonical isomorphism 
$$\on{C}^\semiinf(\fn(\CK),\CM)^\clambda \simeq 
\underset{\alpha}{\on{colim}}\, \on{C}^\cdot(\on{Lie}(\overset{\circ}{I}_\alpha),\CM \otimes \ell_\alpha)^\clambda.$$
\end{prop}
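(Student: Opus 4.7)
The proof is a matter of unfolding definitions, so the plan is a chain of identifications.

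First, I would specialize the construction of $\on{C}^\semiinf(\fn(\CK),-)$ from \secref{ss:BRST} to the cofinal system $\{\on{Lie}(I^+_\alpha)\}_{\alpha \in A}$: this is a valid choice of cofinal family because the axioms of \secref{sss:family of sgrps} provide $\fn(\CO) \subset \on{Lie}(I^+_\alpha)$ for all $\alpha$, monotonicity, and $\underset{\alpha}\cup\, I^+_\alpha = N(\CK)$. With this choice the determinant lines appearing in the colimit definition of semi-infinite cohomology are exactly $\ell_\alpha = \det(\on{Lie}(I^+_\alpha)/\fn(\CO))$ and the transition maps coincide with those of \secref{sss:degrees}. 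Applying $\CHom_{\Rep(T(\CO))}(k^\clambda,-)$, which commutes with filtered colimits and is compatible with the $B(\CO)$-equivariance of $\on{BRST}_\fn$ from \secref{sss:Miura}, yields
$$\on{C}^\semiinf(\fn(\CK),\CM)^\clambda \;\simeq\; \underset{\alpha}{\on{colim}}\; \on{C}^\cdot(\on{Lie}(I^+_\alpha), \CM \otimes \ell_\alpha)^\clambda.$$

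Next, I would compare this to $\underset{\alpha}{\on{colim}}\, \on{C}^\cdot(\on{Lie}(\overset{\circ}I_\alpha), \CM \otimes \ell_\alpha)^\clambda$. The decomposition $\overset{\circ}I_\alpha = I^+_\alpha \cdot \ker(T(\CO)\to T) \cdot I^-_\alpha$ coming from the triangular decomposition of $I_\alpha$ gives a splitting of $\on{Lie}(\overset{\circ}I_\alpha)$. The $\ker(T(\CO)\to T)$-direction contributes trivially on each $\clambda$-weight piece, because $\CM$ is strongly $T(\CO)$-equivariant and this kernel is pro-unipotent, so its Lie algebra cohomology is concentrated in the expected degree on each $T$-isotypic component. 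The $\on{Lie}(I^-_\alpha)$-direction does not vanish at a fixed $\alpha$, but by the axiom $\underset{\alpha}\cap\, I^-_\alpha = \{1\}$ this contribution washes out in the colimit, in which only the degree-zero part (i.e.\ the underlying module) survives.

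The main technical obstacle is that $\on{Lie}(I^+_\alpha)$ is \emph{not} an ideal in $\on{Lie}(\overset{\circ}I_\alpha)$, so a naive Hochschild--Serre spectral sequence does not apply stage-by-stage. I would handle this by filtering through the intermediate Lie subalgebra $\on{Lie}(I^+_\alpha) \oplus \on{Lie}(\ker(T(\CO)\to T))$, which \emph{is} a subalgebra since $T(\CO)$ normalizes $N(\CK)$, and controlling the $\on{Lie}(I^-_\alpha)$-direction via an explicit Koszul computation whose transition maps cancel those of \secref{sss:degrees} in the colimit. Canonicity and functoriality in $\CM$ then follow from the naturality in $\CM$ and $\alpha$ of every construction used, and the independence of the resulting functor from the choice of the cofinal family $\{I_\alpha\}$ is verified by a standard two-family comparison.
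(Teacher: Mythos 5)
Your plan is directionally right, and since the paper gives no argument for this proposition beyond ``unfolding the definitions,'' there is nothing written to compare against. But a couple of points in your reconstruction are either a red herring or left vague at exactly the spot where the content is.

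First, the ``main technical obstacle'' you flag---that $\on{Lie}(I^+_\alpha)$ is not an ideal in $\on{Lie}(\overset{\circ}{I}_\alpha)$---is not actually an obstacle, because no Hochschild--Serre spectral sequence is needed. The comparison happens directly at the level of the Chevalley--Eilenberg complexes. Writing $T^{(1)}=\ker(T(\CO)\to T)$, the inclusion $\on{Lie}(I^+_\alpha\,T^{(1)})\hookrightarrow \on{Lie}(\overset{\circ}{I}_\alpha)$ of Lie subalgebras induces a surjective chain map
$$\CM\otimes \Lambda^\cdot\bigl(\on{Lie}(\overset{\circ}{I}_\alpha)\bigr)^\vee \;\longrightarrow\; \CM\otimes \Lambda^\cdot\bigl(\on{Lie}(I^+_\alpha\,T^{(1)})\bigr)^\vee,$$
and this is by functoriality of Lie algebra cohomology in the Lie algebra---no ideal hypothesis is needed. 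The kernel of this chain map is the (differential) ideal generated by $\bigl(\on{Lie}(I^-_\alpha)\bigr)^\vee$, i.e., by continuous functionals that vanish on the ``upper-triangular'' part. One then checks that these restriction maps are compatible with the transition maps of \secref{sss:degrees} (base change of restriction and corestriction; the codimensions on the two sides of the relevant square both equal $\dim(\on{Lie}(I^+_{\alpha_2})/\on{Lie}(I^+_{\alpha_1}))$), so one has a map of filtered diagrams, hence a map of colimits.

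Second, and this is the point I would press you to make explicit rather than outsource to an unnamed ``Koszul computation whose transition maps cancel those of \secref{sss:degrees}'' (a phrase I can't parse as written): the reason the kernel disappears in the colimit is that
$$\underset{\alpha}{\on{colim}}\;\bigl(\on{Lie}(I^-_\alpha)\bigr)^\vee \;=\; 0,$$
where the transition maps are restriction of continuous functionals along $\on{Lie}(I^-_{\alpha_2})\subset \on{Lie}(I^-_{\alpha_1})$. This is immediate: any continuous functional on $\on{Lie}(I^-_\alpha)$ factors through a finite-dimensional quotient, and $\underset{\alpha}\cap\,\on{Lie}(I^-_\alpha)=0$ together with monotonicity forces the open subalgebras $\on{Lie}(I^-_{\alpha'})$ to eventually fall inside the kernel of that quotient. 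Consequently $\underset{\alpha}{\on{colim}}\,\Lambda^{\geq 1}\bigl(\on{Lie}(I^-_\alpha)\bigr)^\vee=0$ and the kernel ideal is zero in the colimit. This single observation, together with the $T^{(1)}$-step (which you handle correctly via strong $T(\CO)$-equivariance: $\CHom_{\Rep(T(\CO))}(k^\clambda,-)=\CHom_{\Rep(T)}(k^\clambda,\on{C}^\cdot(\ker(\ft(\CO)\to\ft),-))$, and here Hochschild--Serre \emph{does} apply because $T(\CO)$ normalizes $I^+_\alpha$), is the whole content. I would also soften your phrase ``only the degree-zero part (i.e.\ the underlying module) survives''---what actually happens is that the entire $I^-$-Koszul factor collapses to the ground field, not that cohomology in positive degrees vanishes termwise.

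So: correct in spirit, and essentially what ``unfolding the definitions'' must mean; but rephrase the $I^-$-step as the chain-level statement above, drop the spectral-sequence framing, and state the vanishing of $\on{colim}_\alpha\,(\on{Lie}(I^-_\alpha))^\vee$ explicitly---that is where the hypothesis $\underset{\alpha}\cap\,I^-_\alpha=\{1\}$ actually gets used.
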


\ssec{The semi-infinite Wakimoto module}

In this subsection we will define the \emph{semi-infinite} Wakimoto modules, denoted $\BW^{\clambda,\semiinf}_\kappa$
for $\clambda\in \cLambda$. 

\sssec{}

Let $\alpha\mapsto I_\alpha$ be as in \secref{sss:family of sgrps}. Let $\ell^-_\alpha$ denote the graded line
$$\det(\on{Lie}(I^-)/\on{Lie}(I^-_\alpha)).$$

\medskip

For each $\alpha$ consider the object
$$\Ind_{\on{Lie}(I_\alpha)}^{\hg_\kappa}(k^\clambda\otimes \ell^-_\alpha)\in \hg\mod_\kappa.$$

Note that it naturally belongs to
$$\hg\mod_\kappa^{I_\alpha}\subset \hg\mod_\kappa^{I^+_\alpha\cdot T(\CO)}\subset \hg\mod_\kappa^{T(\CO)}.$$

Note also that it is concentrated in cohomological degree $-\dim(I^-/I^-_\alpha)$, which is the 
cohomological degree of $\ell^-_\alpha$. 

\sssec{}  \label{sss:trans map}

We claim that for $\alpha_1\leq \alpha_2$ we have a naturally defined map
$$\Ind_{\on{Lie}(I_{\alpha_1})}^{\hg_\kappa}(k^\clambda\otimes \ell^-_{\alpha_1})\to
\Ind_{\on{Lie}(I_{\alpha_2})}^{\hg_\kappa}(k^\clambda\otimes \ell^-_{\alpha_2}).$$

Indeed, this map equals the composition
$$\Ind_{\on{Lie}(I_{\alpha_1})}^{\hg_\kappa}(k^\clambda\otimes \ell^-_{\alpha_1}) \to 
\Ind_{\on{Lie}(I_{\alpha_1}\cap I_{\alpha_2})}^{\hg_\kappa}(k^\clambda\otimes \ell^-_{\alpha_2}) \to
\Ind_{\on{Lie}(I_{\alpha_2})}^{\hg_\kappa}(k^\clambda\otimes \ell^-_{\alpha_2}),$$
where the second arrow comes by applying $\Ind_{\on{Lie}(I_{\alpha_2})}^{\hg_\kappa}$ to 
the counit of the adjunction 
$$\Ind_{\on{Lie}(I_{\alpha_1}\cap I_{\alpha_2})}^{\on{Lie}(I_{\alpha_2})}\circ \Res_{\on{Lie}(I_{\alpha_1}\cap I_{\alpha_2})}^{\on{Lie}(I_{\alpha_2})}\to \on{Id},$$
and the first arrow comes by applying $\Ind_{\on{Lie}(I_{\alpha_1})}^{\hg_\kappa}$ to the natural transformation
$$\on{Id} \otimes \,\ell^-_{\alpha_1,\alpha_2} \to
\Ind_{\on{Lie}(I_{\alpha_1}\cap I_{\alpha_2})}^{\on{Lie}(I_{\alpha_1})}\circ \Res_{\on{Lie}(I_{\alpha_1}\cap I_{\alpha_2})}^{\on{Lie}(I_{\alpha_1})},$$
where 
$$\ell^-_{\alpha_1,\alpha_2}:=\det(\on{Lie}(I^-_{\alpha_1})/\on{Lie}(I^-_{\alpha_1}\cap I^-_{\alpha_2}))^{\otimes -1}.$$

\sssec{}

We define
$$\BW^{\clambda,\semiinf}_\kappa:=\underset{\alpha}{\on{colim}}\, \Ind_{\on{Lie}(I_\alpha)}^{\hg_\kappa}(k^\clambda\otimes \ell^-_\alpha)\in \hg\mod_\kappa^{T(\CO)}.$$

\medskip

By construction, $\BW^{\clambda,\semiinf}_\kappa$ belongs to the full subcategory $\hg\mod_\kappa^{N(\CK)\cdot T(\CO)}\subset \hg\mod_\kappa^{T(\CO)}$, i.e., 
it is equivariant with respect to any group-subscheme of $N(\CK)$.

\medskip

It is easy to see that $\BW^{\clambda,\semiinf}_\kappa$ is canonically independent of the choice of the family of the subgroups
$\alpha\mapsto I_\alpha$. 

\begin{rem}
A feature of $\BW^{\clambda,\semiinf}_\kappa$ is that, when viewed as an object of $\hg\mod_\kappa^{T(\CO)}$,
it is \emph{infinitely connective}, i.e., is cohomologically $\leq -n$ for any $n$ with respect to the natural
t-structure on $\hg\mod_\kappa^{T(\CO)}$. In other words, all of its cohomologies with respect to this t-structure are zero. 
\end{rem}

\ssec{Relation to duality}

As we shall presently see, the semi-infinite Wakimoto modules defined above are precisely the objects that represent the BRST
functor. 

\sssec{}

Recall that we have a canonical pairing
$$\langle-,-\rangle_{\hg\mod^{T(\CO)}}:\hg\mod_\kappa^{T(\CO)}\otimes \hg\mod_{-\kappa}^{T(\CO)}\to \Vect,$$
see \secref{sss:duality}.

\medskip

We claim:

\begin{prop}  \label{p:semiinf with Wak}
The functor
$$\langle \BW^{\clambda,\semiinf}_\kappa,\CM\rangle_{\hg\mod^{T(\CO)}}: \hg\mod_{-\kappa}^{T(\CO)}\to \Vect$$
is canonically isomorphic to the functor
$$\on{C}^\semiinf(\fn(\CK),\CM\otimes \det(\fn^-)^{\otimes -1})^{-\clambda}.$$ 
\end{prop}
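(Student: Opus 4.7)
\emph{Plan.}  I would prove the identification by realizing both sides as filtered colimits over the common index set $A$ and matching them termwise.  For the right-hand side, \propref{p:semiinf} already expresses
$\on{C}^\semiinf(\fn(\CK),\CM\otimes \det(\fn^-)^{\otimes -1})^{-\clambda}$ as
$\on{colim}_\alpha\, \on{C}^\cdot(\on{Lie}(\overset{\circ}I_\alpha),\CM\otimes \det(\fn^-)^{\otimes -1}\otimes \ell_\alpha)^{-\clambda}.$
For the left-hand side, unfold the defining colimit of $\BW^{\clambda,\semiinf}_\kappa$; since the pairing $\langle-,-\rangle_{\hg\mod^{T(\CO)}}$ arises from a duality of compactly generated DG categories (\secref{sss:duality}), the corresponding evaluation functor is continuous, so
$$\langle \BW^{\clambda,\semiinf}_\kappa,\CM\rangle_{\hg\mod^{T(\CO)}}\simeq
\underset{\alpha}{\on{colim}}\, \bigl\langle \Ind_{\on{Lie}(I_\alpha)}^{\hg_\kappa}(k^\clambda\otimes \ell^-_\alpha),\CM\bigr\rangle_{\hg\mod^{T(\CO)}}.$$

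\medskip

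\emph{Termwise identification.}  Using $\langle N_1,N_2\rangle_{\hg\mod^{T(\CO)}}=\CHom(\BD N_1,N_2)$ and the $(\Ind,\Res)$-adjunction, for any $T$-weight $V$ I obtain
$$\bigl\langle \Ind_{\on{Lie}(I_\alpha)}^{\hg_\kappa}(V),\CM\bigr\rangle_{\hg\mod^{T(\CO)}}
\simeq \on{C}^\cdot\bigl(\on{Lie}(\overset{\circ}I_\alpha),\, V\otimes \delta_\alpha^{\otimes -1}\otimes \CM\bigr)^T,$$
where $\delta_\alpha$ is the twist appearing in $\BD(\Ind^{\hg_\kappa}_{\on{Lie}(I_\alpha)}(V))\simeq \Ind^{\hg_{-\kappa}}_{\on{Lie}(I_\alpha)}(V^\vee\otimes \delta_\alpha)$.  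The duality formula of \secref{ss:g-mod}, proven there for $K\subset G(\CO)$, extends to $K=I_\alpha$ using the triangular decomposition $I_\alpha=I_\alpha^+\cdot T(\CO)\cdot I_\alpha^-$ and the semi-infinite nature of the pairing; the effect of $\on{Lie}(I_\alpha)$ displacing $\fg(\CO)$ in the negative direction (by $\on{Lie}(I_\alpha^-)\subset \fn^-(\CO)$) and the positive direction (by $\on{Lie}(I_\alpha^+)\supset \fn(\CO)$) yields
$$\delta_\alpha\simeq \det\bigl(\fn^-(\CO)/\on{Lie}(I_\alpha^-)\bigr)\otimes \det\bigl(\on{Lie}(I_\alpha^+)/\fn(\CO)\bigr)^{\otimes -1}.$$
The short exact sequence
$0\to \on{Lie}(I^-)/\on{Lie}(I_\alpha^-)\to \fn^-(\CO)/\on{Lie}(I_\alpha^-)\to \fn^-\to 0$
factors the first determinant as $\det(\fn^-)\otimes \ell^-_\alpha$, and the second equals $\ell_\alpha$, giving
$\ell^-_\alpha\otimes \delta_\alpha^{\otimes -1}\simeq \det(\fn^-)^{\otimes -1}\otimes \ell_\alpha.$
Substituting $V=k^\clambda\otimes \ell^-_\alpha$ cancels the $\ell^-_\alpha$ factor and reproduces, for each $\alpha$, the corresponding term of the RHS colimit.

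\medskip

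\emph{Transition maps and main obstacle.}  It remains to verify that under the termwise identifications the transition maps of \secref{sss:trans map} (built from counits of induction--restriction together with the natural identifications of the lines $\ell^-_{\alpha_1,\alpha_2}$) correspond to those of \secref{sss:degrees} (built from pushforward in Lie algebra cohomology together with the lines $\ell_{\alpha_1,\alpha_2}$).  Both are governed by canonical comparison maps between determinant lines of filtration quotients, so this is a coherence statement.  The principal obstacle of the proof is, in fact, the extension of the $\BD(\Ind)$-formula from $K\subset G(\CO)$ to $K=I_\alpha$: this requires promoting the finite-dimensional quotient $\det(\fg(\CO)/\fk)$ to the virtual, semi-infinite object whose two finite-dimensional constituents were identified above. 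Once the coherent family of these identifications is in place for all $\alpha$, passing to the colimit completes the proof.
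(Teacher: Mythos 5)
Your proposal is correct and takes essentially the same route as the paper: the paper's proof consists precisely of the termwise identification you isolate, namely the canonical isomorphism $\langle \CN\otimes \on{det.rel.}(\fg(\CO),\on{Lie}(I_\alpha)),\CM\rangle_{\on{Lie}(I_\alpha)\mod^T}\simeq \langle \Ind_{\on{Lie}(I_\alpha)}^{\hg_\kappa}(\CN),\CM\rangle_{\hg\mod^{T(\CO)}}$, in which the relative determinant line $\det(\fg(\CO)/\fg(\CO)\cap\on{Lie}(I_\alpha))\otimes\det(\on{Lie}(I_\alpha)/\fg(\CO)\cap\on{Lie}(I_\alpha))^{\otimes -1}$ is exactly your twist $\delta_\alpha$, and the rest (passing to the colimit and the determinant-line bookkeeping giving $\det(\fn^-)^{\otimes -1}\otimes\ell_\alpha$) is declared tautological. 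The steps you flag as the principal obstacle — the induction/duality formula for $\on{Lie}(I_\alpha)\not\subset\fg(\CO)$ and the coherence of transition maps — are precisely what the paper asserts without further argument, so your account matches its proof while making the implicit bookkeeping explicit.
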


Note that in the above formula, the graded line $\det(\fn^-)^{\otimes -1}$ is placed in the cohomological degree $\dim(\fn^-)=:d:=\dim(\fn)$,
and as a character of $T$, it corresponds to $2\rho$.  In other words, of we trivialize $\det(\fn^-)$ as a line, we have:
$$\on{C}^\semiinf(\fn(\CK),\CM\otimes \det(\fn^-)^{\otimes -1})^{-\clambda}\simeq 
\on{C}^\semiinf(\fn(\CK),\CM)^{-\clambda-2\check\rho}[-d].$$

\begin{rem}
Note that \propref{p:semiinf with Wak} is an affine analog of the following isomorphism of functors for the category $\fg\mod^T$:
$$\langle M^\clambda,\CM\rangle_{\fg\mod^T}\simeq \on{C}^\cdot(\fn,\CM \otimes \det(\fn^-)^{\otimes -1})^{-\clambda}
\simeq \on{C}^\cdot(\fn,\CM)^{-\clambda-2\check\rho}[-d],$$
where $M^\clambda\in \fg\mod^B$ is the Verma module, i.e., $M^\clambda=\Ind_\fb^\fg(k^\clambda)$.
\end{rem}

\begin{proof}[Proof of \propref{p:semiinf with Wak}]

The proof is tautological from the following canonical identification:

\medskip

Let $I'$ be a solvable group sub-scheme of $G(\CK)$ that contains $T\subset T(\CO)$ as a maximal torus.
Let $\overset{\circ}I{}'$ denote the unipotent radical of $I'$, so that $I'\simeq T\ltimes \overset{\circ}I{}'$. 
Note that the Kac-Moody extension $\hg_\kappa$, as well as $\hg\mod_{-\kappa}$, is equipped with splittings over 
$\on{Lie}(I')$, compatible with the given splitting over $T(\CO)$.

\medskip

Note also that the splitting of the extension $\hg_{\on{Kil}}$ endows the determinant line
$$\on{det.rel.}(\fg(\CO),\on{Lie}(I')):=\det(\fg(\CO)/\fg(\CO)\cap \on{Lie}(I')) \otimes \det(\on{Lie}(I')/\fg(\CO)\cap \on{Lie}(I'))^{\otimes -1}$$
with an action of $I'$. The corresponding character of $T$ corresponds to the natural action of $T$ on $\on{det.rel.}(\fg(\CO),\on{Lie}(I'))$
(note $T$ is contained in $G(\CO)\cap I'$). 

\medskip

Let $\CM$ be an object of $\hg\mod_{-\kappa}^T$, and let $\CN$ be an object of $\on{Lie}(I')\mod^T$. Then we have a canonical isomorphism:
$$\langle \CN\otimes \on{det.rel.}(\fg(\CO),\on{Lie}(I')),\CM\rangle_{\on{Lie}(I')\mod^T} \simeq
\langle \Ind_{I'}^{\hg\mod_\kappa}(\CN),\CM\rangle_{\hg\mod^{T(\CO)}}.$$

\end{proof}

\ssec{The usual Wakimoto modules}

In this section we will define the ``usual'' Wakimoto modules and relate them to the semi-infinite Wakimoto modules defined earlier. 

\sssec{}

Recall the averaging functor
$$\on{Av}^{I/T}_*:\hg\mod_\kappa^T\to \hg\mod_\kappa^I.$$

We set
\begin{equation} \label{e:def Wak}
\BW_\kappa^\clambda:=\on{Av}^{I/T}_*(\BW^{\clambda,\semiinf}_\kappa).
\end{equation} 

\begin{rem}
A remarkable feature of $\BW_\kappa^\clambda$, not obvious from the above definition, 
is that it belongs to the heart of the t-structure on 
$\hg\mod_\kappa^I$.
\end{rem}

\sssec{}

From \propref{p:semiinf with Wak} and \secref{sss:Av adj} we obtain:

\begin{cor}  \label{c:semiinf with Wak I}
There exists a canonical isomorphism of functors $\hg\mod_{-\kappa}^I\to \Vect$
$$\langle \BW^{\clambda}_\kappa,\CM\rangle_{\hg\mod^I} \simeq 
\on{C}^\semiinf(\fn(\CK),\CM\otimes \det(\fn^-)^{\otimes -1})^{-\clambda}.$$ 
\end{cor}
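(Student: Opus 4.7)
The plan is to combine \propref{p:semiinf with Wak} with the duality relation between the forgetful and averaging functors from \secref{sss:Av adj}, essentially by adjointness of the pairing.

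First, recall that by the definition \eqref{e:def Wak}, we have $\BW^\clambda_\kappa = \on{Av}^{I/T(\CO)}_*(\BW^{\clambda,\semiinf}_\kappa)$, where $\on{Av}^{I/T(\CO)}_*:\hg\mod_\kappa^{T(\CO)} \to \hg\mod_\kappa^I$ is the right adjoint of $\oblv_{I/T(\CO)}$. According to \secref{sss:Av adj}, with respect to the dualities $(\hg\mod_\kappa^{T(\CO)})^\vee \simeq \hg\mod_{-\kappa}^{T(\CO)}$ and $(\hg\mod_\kappa^I)^\vee \simeq \hg\mod_{-\kappa}^I$, the dual of $\on{Av}^{I/T(\CO)}_*$ is precisely $\oblv_{I/T(\CO)}$. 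Therefore, for any $\CM \in \hg\mod_{-\kappa}^I$,
\[
\langle \BW^\clambda_\kappa, \CM\rangle_{\hg\mod^I} \;=\; \langle \on{Av}^{I/T(\CO)}_*(\BW^{\clambda,\semiinf}_\kappa), \CM\rangle_{\hg\mod^I} \;\simeq\; \langle \BW^{\clambda,\semiinf}_\kappa, \oblv_{I/T(\CO)}(\CM)\rangle_{\hg\mod^{T(\CO)}}.
\]

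Next, I apply \propref{p:semiinf with Wak} to the right-hand side, with $\oblv_{I/T(\CO)}(\CM) \in \hg\mod_{-\kappa}^{T(\CO)}$ as the input:
\[
\langle \BW^{\clambda,\semiinf}_\kappa, \oblv_{I/T(\CO)}(\CM)\rangle_{\hg\mod^{T(\CO)}} \;\simeq\; \on{C}^\semiinf\bigl(\fn(\CK), \oblv_{I/T(\CO)}(\CM)\otimes \det(\fn^-)^{\otimes -1}\bigr)^{-\clambda}.
\]

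Finally, I observe that the functor $\on{C}^\semiinf(\fn(\CK),-)$ is computed on the underlying Kac--Moody module (cf.\ the colimit description in \propref{p:semiinf}, and the fact that the BRST construction only uses the action of the loop subalgebras $\on{Lie}(\overset{\circ}{I}_\alpha)$), so it commutes with the forgetful functor $\oblv_{I/T(\CO)}$ in the obvious sense. Thus the composite isomorphism produces exactly the desired identification. The only step that requires any real verification is the compatibility of the duality statement from \secref{sss:Av adj} with the pairings $\langle-,-\rangle_{\hg\mod^{K}}$ at the two equivariance levels; this is essentially the defining property of the dual functor, so I expect no genuine obstacle beyond carefully tracking the bookkeeping of the determinant line and the shift by $-\clambda$.
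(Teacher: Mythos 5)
Your argument is exactly the paper's intended one: the corollary is stated as an immediate consequence of \propref{p:semiinf with Wak} together with the duality relation $(\on{Av}_*^{I/T(\CO)})^\vee \simeq \oblv_{I/T(\CO)}$ from \secref{sss:Av adj}, applied through the adjointness-of-pairings identity $\langle \on{Av}_*^{I/T(\CO)}(-),\CM\rangle_{\hg\mod^I}\simeq \langle -,\oblv_{I/T(\CO)}(\CM)\rangle_{\hg\mod^{T(\CO)}}$. The final observation that $\on{C}^\semiinf(\fn(\CK),-)^{-\clambda}$ factors through the forgetful functor is a correct and useful clarification, since the paper leaves this implicit.
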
 

This corollary shows that $\BW_\kappa^\clambda$, defined by formula \eqref{e:def Wak}, indeed agrees
with the usual definition of Wakimoto modules, defined e.g. in \cite[Sect. 11]{FG2}.

\begin{rem} \label{r:other Wak}
Our conventions regarding what we call ``the usual" Wakimoto modules are slightly different from those in \cite{FG2}. 

\medskip

Namely, the most standard Wakimoto module is what in {\it loc.cit.} was denoted $\BW^{w_0}_{\kappa,\clambda}$;
for example for $\clambda=0$, it has a structure of chiral/vertex operator algebra. Our $\BW^{\clambda}_\kappa$
is what in \cite{FG2} is denoted $\BW^1_{\kappa,\clambda}$. 

\medskip

Note, however, that $\BW^{w_0}_{\kappa,\clambda}$ and
$\BW^1_{\kappa,\clambda}$ can be easily related:
$$\BW^{w_0}_{\kappa,\clambda}\simeq j_{w_0,*}\star \BW^1_{\kappa,\clambda} \text{ and }
\BW^1_{\kappa,\clambda}\simeq j_{w_0,!}\star \BW^{w_0}_{\kappa,\clambda}.$$
\end{rem}  

\sssec{}

Here is one of the standard properties of Wakimoto modules:

\begin{prop}  \label{p:neg semiinf of Wak}
The semi-infinite cohomology $\on{C}^{\semiinf}(\fn^-(\CK),\BW^{\clambda}_\kappa)^{\clambda'}$
is given by:
$$
\begin{cases}
&\det(\fn^-)^{\otimes -1} \text{ for }\clambda'=\clambda+2\check\rho \\
&0 \text{ otherwise.}
\end{cases}
$$
\end{prop}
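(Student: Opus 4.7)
The plan is to reduce the computation to a double filtered colimit of finite-dimensional Lie algebra cohomology calculations, exploiting the colimit presentations of both the Wakimoto module and of the functor of semi-infinite cohomology.

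First, I would record the $\fn^-$-analog of \propref{p:semiinf}: for a cofinal family $\beta\mapsto J_\beta$ of pro-solvable subgroups of $G(\CK)$ obtained by swapping the roles of $N$ and $N^-$ in \secref{sss:family of sgrps} (for instance $J_\mu=\on{Ad}_{t^\mu}(I^-)$ for $\mu\in\Lambda^+$, with $I^-$ the Iwahori attached to $B^-$), one has
$$\on{C}^\semiinf(\fn^-(\CK),\CM)^{\clambda'}\simeq\underset{\beta}{\on{colim}}\,\on{C}^\cdot\bigl(\on{Lie}(\overset{\circ}{J}_\beta),\CM\otimes m_\beta\bigr)^{\clambda'},$$
where $m_\beta:=\det\bigl(\on{Lie}(J_\beta\cap N^-(\CK))/\fn^-(\CO)\bigr)$ with its natural $T$-action. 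Since $\BW^{\clambda,\semiinf}_\kappa$ is already $N(\CK)\cdot T(\CO)$-equivariant, the averaging $\on{Av}^{I/T}_*$ that produces $\BW^\clambda_\kappa$ from $\BW^{\clambda,\semiinf}_\kappa$ can be absorbed into this colimit by taking $\beta$ large enough that $\on{Lie}(\overset{\circ}{J}_\beta)$ contains the ``negative part" of $\on{Lie}(\overset{\circ}{I})$, which is the only piece that acts nontrivially. This yields
$$\on{C}^\semiinf(\fn^-(\CK),\BW^\clambda_\kappa)^{\clambda'}\simeq\underset{\alpha,\beta}{\on{colim}}\,\on{C}^\cdot\bigl(\on{Lie}(\overset{\circ}{J}_\beta),\Ind_{\on{Lie}(I_\alpha)}^{\hg_\kappa}(k^\clambda\otimes\ell^-_\alpha)\otimes m_\beta\bigr)^{\clambda'}.$$

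Next I would compute the inner cohomology for $(\alpha,\beta)$ in a cofinal subset of pairs in ``standard position", meaning $\alpha,\beta$ both large and matched so that $I^-_\alpha\supset J^-_\beta$ and $J^+_\beta\subset I^+_\alpha$, whence $\on{Lie}(I_\alpha)\cap\on{Lie}(J_\beta)=\ft(\CO)$. Under these assumptions, combining the PBW decomposition of the induced module with an affine Shapiro-type identity reduces the inner cohomology to the Lie algebra cohomology of the finite-dimensional unipotent Lie algebra $\on{Lie}(\overset{\circ}{J}_\beta)/(\on{Lie}(\overset{\circ}{J}_\beta)\cap\on{Lie}(I_\alpha))$ on a one-dimensional character formed from $k^\clambda\otimes\ell^-_\alpha\otimes m_\beta$; this is concentrated in top degree and yields a one-dimensional line.

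Finally, passing to the double colimit, the transition maps are compatible by construction, so the one-dimensional lines stabilize to a single line independent of $(\alpha,\beta)$. Tracking weights and degrees through the determinant shifts $\ell^-_\alpha$ and $m_\beta$, which telescope as $\alpha,\beta$ grow, identifies this line as $\det(\fn^-)^{\otimes-1}$, concentrated in cohomological degree $d$ and weight $2\check\rho$; hence the $\clambda'$-graded piece is as claimed. The main difficulty will be this bookkeeping: coordinating the canonical splittings of $\hg_\kappa$ over $\on{Lie}(I_\alpha)$ and $\on{Lie}(J_\beta)$ (which a priori differ by a character related to the critical shift) and verifying that the accumulated weight shift is exactly $2\check\rho$ --- the sum of positive roots --- with no surviving level-dependent twist.
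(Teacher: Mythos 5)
Your plan to reduce to a double colimit of finite-dimensional cohomology computations is in the right spirit, but the crucial step is flawed, and the flaw is structural, not a matter of bookkeeping.

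You want to restrict to pairs $(\alpha,\beta)$ ``in standard position,'' with $I^-_\alpha\supset J^-_\beta$ and $J^+_\beta\subset I^+_\alpha$, and assert that then $\on{Lie}(I_\alpha)\cap\on{Lie}(J_\beta)=\ft(\CO)$. First, these two containments are mutually inconsistent on a cofinal set: $I^-_\alpha$ shrinks to $\{1\}$ as $\alpha$ grows while $J^-_\beta$ grows to $N^-(\CK)$, so $I^-_\alpha\supset J^-_\beta$ forces $\alpha$ and $\beta$ both small, whereas $J^+_\beta\subset I^+_\alpha$ pushes $\beta$ large. Second, even if both held, the intersection would be $J^+_\beta\cdot T(\CO)\cdot J^-_\beta=J_\beta$, not $\ft(\CO)$: nesting is the opposite of transversality. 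The deeper obstruction is that no Iwahori-type $J_\beta$ can ever be PBW-complementary to $\on{Lie}(I_\alpha)$ in $\fg(\CK)$. The complement of $\on{Lie}(I_\alpha)$ involves deep negative powers of $t$ along $\fn$ (and along $\ft$) that no member of a family of pro-solvable subgroups of $G(\CK)$ of the kind described in \secref{sss:family of sgrps} ever contains. So your ``affine Shapiro'' reduction to a one-dimensional finite piece does not go through, and the bookkeeping you flag as ``the main difficulty'' is actually a casualty of a preceding gap.

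The paper avoids the whole problem by not using a second Iwahori-type family at all. For each $\alpha$ it uses the honest \emph{opposite} Lie subalgebra $^-\!\on{Lie}(I_\alpha)$, defined so that $^-\!\on{Lie}(\overset{\circ}{I}_\alpha)$ is literally a linear complement to $\on{Lie}(I_\alpha)$ in $\fg(\CK)$; this is not pro-solvable of Iwahori type, and it is indexed by the \emph{same} $\alpha$. The paper then records the $\fn^-$-analog of \propref{p:semiinf} as a colimit over $\alpha$ of Lie algebra \emph{homology} of $^-\!\on{Lie}(\overset{\circ}{I}_\alpha)$ (with a determinant twist), not cohomology of a swapped family. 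With both colimits indexed by $\alpha$, the double colimit contracts along the diagonal, and for each $\alpha$ the PBW theorem makes $\Ind_{\on{Lie}(I_\alpha)}^{\hg_\kappa}(-)$ manifestly free over $U\bigl({}^-\!\on{Lie}(\overset{\circ}{I}_\alpha)\bigr)$ on one generator; the homology is just the generator, the two determinant lines cancel, and no level-dependent twist survives. Your reduction from $\BW^\clambda_\kappa$ to $\BW^{\clambda,\semiinf}_\kappa$ is close to the paper's (which instead uses that $\on{C}^\semiinf(\fn^-(\CK),-)$ is unchanged under $\on{Av}^{I^-}_*$ and the triangular decomposition of $I/T$), but the main computation needs the opposite-subalgebra idea, which your proposal is missing.
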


\begin{proof}

For any $\CM\in \fg\mod_\kappa^{T(\CO)}$, we have 
$$\on{C}^{\semiinf}(\fn^-(\CK),\CM)^{\clambda'}\simeq \on{C}^{\semiinf}(\fn^-(\CK),\on{Av}^{I^-}_*(\CM))^{\clambda'}.$$

If $\CM$ was $I^+$-equivariant, the latter further identifies with 
$$\on{C}^{\semiinf}(\fn^-(\CK),\on{Av}^{I/T}_*(\CM))^{\clambda'}.$$

Hence, it suffices to show that
$$\on{C}^{\semiinf}(\fn^-(\CK),\BW^{\clambda,\semiinf}_\kappa)^{\clambda'}=\
\begin{cases}
&\det(\fn^-)^{\otimes -1} \text{ for } \clambda'=\clambda+2\check\rho\\
&0 \text{ otherwise}.
\end{cases}
$$

Let $^-\!\on{Lie}(I_\alpha)$ be the opposite subalgebra to $I_\alpha$, i.e.,
$$^-\!\on{Lie}(I_\alpha)\cap \on{Lie}(I_\alpha)=\ft \text{ and } ^-\!\on{Lie}(I_\alpha)\oplus \on{Lie}(\overset{\circ}{I}_\alpha)=\fg(\CK).$$

Parallel to \propref{p:semiinf}, for $\CM\in \fg\mod_\kappa^{T(\CO)}$, we have:
$$\on{C}^{\semiinf}(\fn^-(\CK),\CM)^{\clambda'}\simeq 
\underset{\alpha}{\on{colim}}\, \on{C}_\cdot({}^-\!\on{Lie}(\overset{\circ}{I}_\alpha),\CM\otimes \det({}^-\!\on{Lie}(I^-_\alpha)/{}^-\!\on{Lie}(I^-))^{\otimes -1}
\otimes \det(\fn^-)^{\otimes -1})^{\clambda'}.$$

Taking 
$$\CM=\BW^{\clambda,\semiinf}_\kappa:=\underset{\alpha}{\on{colim}}\, \Ind_{\on{Lie}(I_\alpha)}^{\hg_\kappa}(k^\clambda\otimes \ell^-_\alpha)\in \hg\mod_\kappa^{T(\CO)},$$
and contracting the double colimit, it suffices to show that for each index $\alpha$, we have
$$\on{C}_\cdot({}^-\!\on{Lie}(\overset{\circ}{I}_\alpha),\Ind_{\on{Lie}(I_\alpha)}^{\hg_\kappa}
(k^\clambda\otimes \det({}^-\!\on{Lie}(I^-_\alpha)/{}^-\!\on{Lie}(I^-)))^{\otimes -1}
\otimes \ell^-_\alpha)^{\clambda'}=
\begin{cases}
&k \text{ for } \clambda'=\clambda'\\
&0 \text{ otherwise}.
\end{cases}
$$

However, this follows from the fact that $\Ind_{\on{Lie}(I_\alpha)}^{\hg_\kappa}(-)$ is free over ${}^-\!\on{Lie}(\overset{\circ}{I}_\alpha)$, while
the lines $\det({}^-\!\on{Lie}(I^-_\alpha)/{}^-\!\on{Lie}(I^-))$ and $\ell^-_\alpha$ are canonically isomorphic. 

\end{proof} 

\ssec{Convolution action on Wakimoto modules}

In this subsection we will assume that our level is integral; we will denote it by $-\kappa$. 
That said, the discussion will go through verbatim for a rational level, see Remark \ref{r:Frob lattice}.

\medskip

We will study the behavior of Wakimoto modules with respect to convolution with certain standard
D-modules on the affine flag scheme.

%\medskip

%The construction in this section depend on the choice of the uniformiser $t\in \CO$. 

\sssec{}

Let us view $\kappa$ (i.e., the opposite of the given level) as a pairing
$$\kappa:\Lambda\otimes \Lambda\to \BZ.$$

Hence, $\kappa$ defines an embedding
$$\Lambda\to \cLambda.$$

\begin{rem}  \label{r:Frob lattice}
If $\kappa$ is rational, rather than integral, in what follows the sublattice $\Lambda\subset \cLambda$ should be replaced by
$$\{\cmu\in \cLambda\,|\, \kappa(\cmu,\clambda)\in \BZ,\, \forall \clambda\in \cLambda\}.$$
\end{rem} 

\sssec{}   \label{sss:omega} 

%\otimes \omega_x^{\frac{\kappa(\mu,\mu)}{2}}.$$
%(without the grading or $T$-action), where $\omega_x$ denotes the cotangent fiber of the formal
%disc $\Spec(k\qqart)$, and $\omega^{\frac{1}{2}}_x$ it chosen square once and for all square root. 

%We extend this
%assignment to all of $\Lambda$ by requiring that 
%$$\fl_{\mu_1+\mu_2}\simeq \fl_{\mu_1}\otimes \fl_{\mu_2}.$$
%Using the uniformiser $t$ and the Killing form, we can $\fn(\CK)/\fn(\CO)$ with the topological dual of $\fn^-(\CO)$.
%This allows to identify $\fl_\mu$ for $\mu\in \Lambda^+$ with
%$$\det(\fn(\CO)/\on{Ad}_{t^\mu}(\fn(\CO))\simeq \det(\on{Ad}_{t^{-\mu}}(\fn(\CO))/\fn(\CO)).$$

Let $t$ be a uniformizer in $\Spec(\CO)$, so that for $\mu\in \Lambda$, we can view $t^\mu$ as a point 
in $T(\CK)$. Note, however, that for $\CM\in \hg\mod_\kappa^{T(\CO)}$, the object
$$t^\mu\cdot \CM\in  \hg\mod_\kappa$$
is canonically independent of the choice of $t$ (this is because the image of $t^\mu$ modulo $T(\CO)$
is independent of the choice of $t$). 

\medskip 

Let  $\omega_x$ denote the cotangent fiber of the formal disc $\Spec(\CO)$. A choice of $t$ trivializes
this line, and a change $t\mapsto f(t)\cdot t$ results in scaling this trivialization by $f(0)$. 

\medskip

For $\mu\in \Lambda^-$ let $\fl_\mu$ denote the line $\det(\on{Lie}(I^-)/\on{Lie}(\on{Ad}_{t^\mu}(I^-)))$, 
considered without the grading or $T$-action. 

\medskip

We claim: 

%\medskip

%The next assertion results from the construction of the semi-infinite Wakimoto modules:

\begin{prop}  \label{p:conv Wak}
For $\mu\in \Lambda^-$ we have a canonical isomorphism. 
$$(t^\mu\cdot \BW^{\clambda,\semiinf}_{-\kappa})\otimes \fl_\mu[-\langle \mu,2\check\rho\rangle]
\simeq \BW^{\clambda+\mu,\semiinf}_{-\kappa} \otimes \omega_x^{\langle \mu,\clambda\rangle}.$$
\end{prop}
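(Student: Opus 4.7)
The approach is to compute both sides directly from the colimit presentation of $\BW^{\clambda,\semiinf}_{-\kappa}$, using the explicit family of subgroups from \secref{sss:ex family}: take $A = \Lambda^+$ and $I_\alpha = \on{Ad}_{t^{-\alpha}}(I)$, so that
$$\BW^{\clambda,\semiinf}_{-\kappa} \simeq \underset{\alpha \in \Lambda^+}{\on{colim}}\, \Ind_{\on{Lie}(I_\alpha)}^{\hg_{-\kappa}}(k^\clambda \otimes \ell^-_\alpha).$$
Convolution with $t^\mu$ preserves colimits, and for $\mu \in \Lambda^-$ the reindexing $\alpha \mapsto \alpha - \mu$ is cofinal in $\Lambda^+$. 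Thus it suffices to produce, for each $\alpha$ with $\alpha - \mu \in \Lambda^+$, a natural isomorphism
$$t^\mu \cdot \Ind_{\on{Lie}(I_\alpha)}^{\hg_{-\kappa}}(k^\clambda \otimes \ell^-_\alpha) \otimes \fl_\mu[-\langle \mu, 2\check\rho \rangle] \simeq \Ind_{\on{Lie}(I_{\alpha - \mu})}^{\hg_{-\kappa}}(k^{\clambda + \mu} \otimes \ell^-_{\alpha - \mu}) \otimes \omega_x^{\langle \mu, \clambda \rangle}.$$

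Since $\on{Ad}_{t^\mu}(I_\alpha) = I_{\alpha - \mu}$, the standard transport-of-structure formula
$$t^\mu \cdot \Ind_{\on{Lie}(I_\alpha)}^{\hg_{-\kappa}}(V) \simeq \Ind_{\on{Lie}(I_{\alpha - \mu})}^{\hg_{-\kappa}}(V^{t^\mu})$$
reduces the problem to identifying $(k^\clambda \otimes \ell^-_\alpha)^{t^\mu}$ as a representation of $\on{Lie}(I_{\alpha - \mu})$ with its canonical Kac-Moody splitting. The line-bundle piece is clean: the inclusions $\on{Lie}(I^-_{\alpha - \mu}) \subset \on{Lie}(\on{Ad}_{t^\mu}(I^-)) \subset \on{Lie}(I^-)$ give a short exact sequence whose associated $\det$ yields
$$\ell^-_{\alpha - \mu} \simeq \bigl(\text{image of }\ell^-_\alpha\text{ under }\on{Ad}_{t^\mu}\bigr) \otimes \ell^-_{-\mu},$$
and $\ell^-_{-\mu}$ as a graded line is exactly $\fl_\mu[-\langle\mu,2\check\rho\rangle]$ by the degree convention of \secref{sss:degrees}; its $T$-character will be absorbed into the character computation below.

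The character piece is more delicate. Under $\on{Ad}_{t^\mu}$ the canonical splitting of $\hg_{-\kappa}$ over $\ft(\CO)$ is transported to another splitting over $\ft(\CO)$; the two differ by a character prescribed by the Kac-Moody $2$-cocycle on $\ft(\CK)$ together with the critical shift built into our level convention, and combining this with the $T$-character correction coming from $\ell^-_{-\mu}$ produces precisely $k^{\clambda + \mu}$, where $\mu \in \cLambda$ via the embedding $\kappa : \Lambda \to \cLambda$. The outstanding factor $\omega_x^{\langle \mu, \clambda \rangle}$ records the fact that, while the convolution action of $t^\mu$ descends canonically to $T(\CO)$-equivariant modules as explained in \secref{sss:omega}, the naive transport formula used above depends on the choice of uniformizer $t$, and the two differ by exactly this power of the cotangent fiber $\omega_x$ acting on a weight-$\clambda$ vector. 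Passing to the colimit over $\alpha$ (cofinally reindexed by $\alpha' := \alpha - \mu \in \Lambda^+$) then yields the proposition.

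\textbf{Main obstacle.} The delicate point is the character computation: one must verify that the combined effect of the Kac-Moody cocycle, the critical shift, and the $T$-character of $\ell^-_{-\mu}$ is exactly the shift $\clambda \mapsto \clambda + \mu$, and that the uniformizer-dependence collects into precisely $\omega_x^{\langle \mu, \clambda \rangle}$. If the direct sign-tracking becomes cumbersome, a cleaner alternative is to dualize via \propref{p:semiinf with Wak}: using $\langle t^\mu \cdot X, Y\rangle \simeq \langle X, t^{-\mu} \cdot Y\rangle$, the proposition reduces to an identity
$$\on{C}^\semiinf(\fn(\CK), t^{-\mu} \cdot \CM \otimes \det(\fn^-)^{\otimes -1})^{-\clambda} \otimes \fl_\mu[-\langle \mu, 2\check\rho \rangle] \simeq \on{C}^\semiinf(\fn(\CK), \CM \otimes \det(\fn^-)^{\otimes -1})^{-\clambda - \mu} \otimes \omega_x^{\langle \mu, \clambda \rangle}$$
between functors of $\CM \in \hg\mod_\kappa^{T(\CO)}$, which can be verified term-by-term from the colimit presentation of \propref{p:semiinf}.
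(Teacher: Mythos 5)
Your strategy — computing term-by-term in the colimit presentation via the explicit family $I_\alpha = \on{Ad}_{t^{-\alpha}}(I)$, transporting the induction under $\on{Ad}_{t^\mu}$, and splitting the determinant line $\ell^-_{\alpha-\mu}$ as $\ell^-_{-\mu}\otimes (\text{conjugate of }\ell^-_\alpha)$ — is essentially the approach the paper takes (and the paper's proof is even more compressed than yours). Your line-bundle bookkeeping is correct.

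Where you stop, though, is the whole content. You flag the character computation as ``delicate'' and simply assert that the combination of the Kac-Moody cocycle, the critical shift, and the $T$-character of $\ell^-_{-\mu}$ yields $k^{\clambda+\mu}$; without actually carrying that out there is no proof, only a plan. The paper's proof names the specific numerical input you are missing: the $T$-action on the determinant line $\det(\on{Lie}(I^-)/\on{Lie}(\on{Ad}_{t^\mu}(I^-)))$ is by the character $\kappa_{\on{crit}}(\mu,-)$. With this in hand the bookkeeping closes: the $T$-character of $\ell^-_\alpha$ is $\kappa_{\on{crit}}(\alpha,-)$ for every $\alpha$, and under conjugation by $t^\mu$ the canonical splitting of $\hg_{-\kappa}$ over $\ft(\CO)$ changes by the character cut out by the actual Kac-Moody form $-\kappa+\kappa_{\on{crit}}$ applied to $\mu$. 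Adding these up, the two copies of $\kappa_{\on{crit}}(\mu,-)$ cancel and what remains is precisely $\kappa(\mu,-)$, i.e. $\mu$ viewed in $\cLambda$ via $\kappa$ — which is exactly the shift $\clambda\mapsto\clambda+\mu$. You should make this cancellation explicit; leaving it at ``produces precisely $k^{\clambda+\mu}$'' is circular, since that identity is the statement to be proved. The same caveat applies to the asserted power of $\omega_x$: the dependence on the uniformizer does enter exactly as you suspect (through the $t$-dependent family $\alpha\mapsto I_\alpha$ versus the $t$-independent objects $t^\mu\cdot\BW^{\clambda,\semiinf}_{-\kappa}$ and $\fl_\mu$), but the exponent $\langle\mu,\clambda\rangle$ has to be traced, not merely predicted.
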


\begin{proof}

Follows from the observation that for $\mu\in \Lambda^-$, the action of $T$ on the determinant line of $\on{Lie}(I^-)/\on{Lie}(\on{Ad}_{t^\mu}(I^-))$
is given by the character $\kappa_{\on{crit}}(\mu,-)$.

\end{proof} 

\sssec{}

The (monoidal) category $\Dmod_{-\kappa}(\on{Fl}^{\on{aff}}_G)^I$ of $-\kappa$-twisted D-modules on the affine flag scheme 
$\on{Fl}^{\on{aff}}_G=G(\CK)/I$ acts on $\hg\mod_{-\kappa}^I$ by convolutions.

\medskip

(As $\kappa$ was assumed integral, we can 
identify $\Dmod_{-\kappa}(\on{Fl}^{\on{aff}}_G)^I$ with the non-twisted version $\Dmod(\on{Fl}^{\on{aff}}_G)^I$, but it is more
convenient not to resort to this identification.)

\medskip

For $\mu\in \Lambda^-$ we let 
$$j_{\mu,!}, j_{\mu,*}\in (\Dmod_{-\kappa}(\on{Fl}^{\on{aff}}_G)^I)^\heartsuit$$
denote the corresponding standard (resp., costandard object) corresponding to the coset $I\cdot t^\mu\in G(\CK)/I$,
normalized so that its !-fiber at $t^\mu$ is the line $\fl_\mu$ placed in degree $\langle -\mu,2\check\rho\rangle$.

\medskip

For $\mu_1,\mu_2\in  \Lambda^-$ we have a canonical isomorphism
\begin{equation} \label{e:BMW mult}
j_{\mu_1,*}\star j_{\mu_2,*}\simeq j_{\mu_1+\mu_2,*}\otimes \omega_x^{\kappa(\mu_1,\mu_2)}.
\end{equation} 
%(Note that this is consistent with \propref{p:conv Wak}.)

\sssec{}

It is known after \cite{AB} that the assignment
$$\mu\mapsto  j_{\mu,*}, \quad \mu\in \Lambda^-$$
can be extended to an assignment 
$$\mu\mapsto  J_\mu, \quad \mu\in \Lambda,$$
uniquely determined by the requirement that 
\begin{equation} \label{e:BMW mult bis}
J_{\mu_1}\star J_{\mu_2}\simeq J_{\mu_1+\mu_2}\otimes \omega_x^{\kappa(\mu_1,\mu_2)}.
\end{equation} 

It is also known that for $\mu\in \Lambda^+$, the corresponding object $J_\mu$ is \emph{a} standard object 
on the orbit $I\cdot t^\mu\subset \on{Fl}^{\on{aff}}_G$; denote it by $j_{\mu,!}$. 

\medskip

The !-fiber of $j_{\mu,!}$ at $t^\mu$ is the line 
$$\fl_\mu:=\det(\on{Lie}(I^+)/\on{Lie}(\on{Ad}_{t^\mu}(I^+)))\simeq \fl_{-\mu}^{\otimes -1}\otimes \omega_x^{-\kappa_{\on{crit}}(\mu,\mu)},$$
placed in degree $\langle \mu,2\check\rho\rangle$. Note that for $\mu\in \Lambda^+$, we have
$$j_{-\mu,*}\star j_{\mu,!}\simeq \delta_{1,\Fl}\otimes \omega_x^{-\kappa(\mu,\mu)}\simeq j_{\mu,!}\star j_{-\mu,*}.$$

\medskip

For future reference we introduce the corresponding costandard object $j_{\mu,*}$ normalized so that its 
!-fiber at $t^\mu$ is the above line $\fl_\mu$. 

\medskip

By construction
$$\begin{cases}
&J_\mu\simeq \on{Av}^{I/T}_*(t^\mu\cdot \delta_{\Fl})\otimes \fl_\mu[-\langle \mu,2\check\rho\rangle], \quad \mu\in \Lambda^-;\\
&J_\mu\simeq \on{Av}^{I/T}_!(t^\mu\cdot \delta_{1,\Fl})\otimes \fl_\mu[-\langle \mu,2\check\rho\rangle], \quad \mu\in \Lambda^+.
\end{cases}
$$

From here, for $\CM\in \hg\mod_\kappa^I$, we have:
\begin{equation} \label{e:J as conv}
\begin{cases}
&J_\mu\star \CM\simeq \on{Av}^{I/T}_*(t^\mu\cdot \CM) \otimes \fl_\mu[-\langle \mu,2\check\rho\rangle], \quad \mu\in \Lambda^-;\\
&J_\mu\star \CM\simeq \on{Av}^{I/T}_!(t^\mu\cdot \CM) \otimes \fl_\mu[-\langle \mu,2\check\rho\rangle], \quad \mu\in \Lambda^+.
\end{cases}
\end{equation} 

\sssec{}

From \propref{p:conv Wak} (and \corref{c:semiinf with Wak I})
we obtain:

\begin{cor}  \label{c:conv Wak neg}  Let $\CM\in \hg\mod_{-\kappa}^I$.

\smallskip

\noindent{\em(a)}
For $\mu\in \Lambda^-$ there exists the canonical isomorphism
$$j_{\mu,*}\star \BW_{-\kappa}^\clambda\simeq \BW_{-\kappa}^{\clambda+\mu}\otimes \omega_x^{\langle \mu,\clambda\rangle}.$$

\smallskip

\noindent{\em(b)} 
For $\mu\in \Lambda^+$ there exists the canonical isomorphism
$$\on{C}^\semiinf(\fn(\CK),j_{\mu,*} \star \CM)^\clambda \simeq \on{C}^\semiinf(\fn(\CK),\CM)^{\clambda-\mu} \otimes 
\omega_x^{\langle \mu,\clambda\rangle-\kappa(\mu,\mu)}.$$
\end{cor}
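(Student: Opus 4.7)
The plan is to derive both parts by applying the results already established in the excerpt: part (a) follows by averaging \propref{p:conv Wak}, while part (b) reduces via \corref{c:semiinf with Wak I} to a Wakimoto-module computation at the opposite level $+\kappa$, which in turn reduces to (an extension of) part (a).

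For part (a), I would apply $\on{Av}^{I/T}_*$ to the isomorphism of \propref{p:conv Wak}. The right-hand side becomes $\BW^{\clambda+\mu}_{-\kappa}\otimes\omega_x^{\langle\mu,\clambda\rangle}$ by the definition \eqref{e:def Wak}. For the left-hand side, \eqref{e:J as conv} combined with the fact that $J_\mu=j_{\mu,*}$ for $\mu\in\Lambda^-$ gives
$$j_{\mu,*}\star\BW^{\clambda}_{-\kappa}\simeq \on{Av}^{I/T}_*(t^\mu\cdot\BW^{\clambda}_{-\kappa})\otimes\fl_\mu[-\langle\mu,2\check\rho\rangle].$$
What remains is the identification
$$\on{Av}^{I/T}_*\bigl(t^\mu\cdot\on{Av}^{I/T}_*(\BW^{\clambda,\semiinf}_{-\kappa})\bigr)\simeq \on{Av}^{I/T}_*\bigl(t^\mu\cdot\BW^{\clambda,\semiinf}_{-\kappa}\bigr),$$
which follows from the pro-unipotency of $I/T(\CO)$ together with the $N(\CK)\cdot T(\CO)$-equivariance of the semi-infinite Wakimoto module.

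For part (b), the Remark following \propref{p:semiinf with Wak} rewrites \corref{c:semiinf with Wak I} as
$$\on{C}^\semiinf(\fn(\CK),\CM)^{\clambda}\simeq \langle\BW^{-\clambda-2\check\rho}_\kappa,\CM\rangle_{\hg\mod^I}[d].$$
Plugging in $j_{\mu,*}\star\CM$, I would then move $j_{\mu,*}$ across the pairing using the duality between $\hg\mod^I_{-\kappa}$ and $\hg\mod^I_{\kappa}$ and the compatibility of the Hecke actions on the two sides: under this duality the operator $j_{\mu,*}^{-\kappa}$ corresponds to $j_{\mu,!}^\kappa=J_\mu^\kappa$, up to a central twist by $\omega_x^{-\kappa(\mu,\mu)}$ reflecting the Verdier-duality of standard and costandard twisted D-modules on $\on{Fl}^{\on{aff}}_G$. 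For $\mu\in\Lambda^+$, an extension of part (a) to all $\mu\in\Lambda$ at level $+\kappa$ then computes $J_\mu^\kappa\star\BW^{-\clambda-2\check\rho}_\kappa$ explicitly: this extension is obtained from the $\Lambda^-$ case by using the invertibility relation $J_\mu\star J_{-\mu}\simeq \delta\otimes\omega_x^{-\kappa(\mu,\mu)}$ from \eqref{e:BMW mult bis} together with the coincidence $\langle\mu,\mu\rangle=\kappa(\mu,\mu)$ (under the embedding $\Lambda\hookrightarrow\cLambda$ induced by $\kappa$), which makes the $\omega_x$-exponents collapse to the expected form $\omega_x^{\langle\mu,\clambda\rangle}$. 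Re-applying \corref{c:semiinf with Wak I} at level $+\kappa$ converts the pairing back into $\on{C}^\semiinf(\fn(\CK),\CM)^{\clambda-\mu}$, and combining all $\omega_x$-contributions produces the stated twist $\omega_x^{\langle\mu,\clambda\rangle-\kappa(\mu,\mu)}$.

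The main technical burden is the bookkeeping of $\omega_x$-twists and cohomological shifts. In particular, isolating the $-\kappa(\mu,\mu)$ contribution from the twisted Verdier duality $(j_{\mu,*}^{-\kappa})^\vee\simeq j_{\mu,!}^\kappa$ at opposite levels, and verifying that it is not cancelled by the twists coming from \eqref{e:BMW mult bis} in the extension of (a) to $\Lambda^+$, is the principal computational check.
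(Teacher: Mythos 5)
Your overall route is the one the paper intends (its ``proof'' is just the pointer to \propref{p:conv Wak} and \corref{c:semiinf with Wak I}): part (a) by applying $\on{Av}^{I/T}_*$ to \propref{p:conv Wak} and identifying the left-hand side via \eqref{e:J as conv}, and part (b) by rewriting $\on{C}^\semiinf(\fn(\CK),-)^\clambda$ as the pairing with $\BW^{-\clambda-2\check\rho}_{\kappa}$ and moving the Hecke kernel across the duality, then invoking the Wakimoto convolution formula at the opposite level. Your justification of the commutation needed in (a) (pro-unipotence of the relevant subgroups together with the $N(\CK)\cdot T(\CO)$-equivariance of $\BW^{\clambda,\semiinf}_{-\kappa}$) is the right argument.

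In (b), however, the identification of the transposed operator is off. With respect to the pairing $\hg\mod^I_{\kappa}\otimes \hg\mod^I_{-\kappa}\to \Vect$, the functor dual to $j_{\mu,*}\star-$ is convolution with the level-$\kappa$ object attached to $-\mu$, namely $j_{-\mu,*}=J_{-\mu}$ (equivalently the Verdier dual of $j_{-\mu,!}$; compare the paper's later objects $J^{\BD}_\nu$, whose action is dual to that of $J_{-\nu}$), and not $j_{\mu,!}=J_\mu$: the two differ, being convolution-inverses of each other up to an $\omega_x$-twist by \eqref{e:BMW mult bis}. Taken literally with consistent conventions --- at level $\kappa$ the embedding $\Lambda\hookrightarrow\cLambda$ is by $-\kappa(\cdot,-)$, so $J^{\kappa}_\mu$ shifts Wakimoto highest weights by $-\kappa(\mu,-)$ --- your step would output $\on{C}^\semiinf(\fn(\CK),\CM)^{\clambda+\mu}$ rather than $\clambda-\mu$; you land on the stated answer only because you simultaneously use the level-$(-\kappa)$ embedding on the level-$\kappa$ side, i.e.\ two compensating slips. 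With the corrected transpose the argument also simplifies: since $-\mu\in\Lambda^-$, part (a) at level $\kappa$ applies verbatim, so no extension of (a) to dominant coweights via the $J$-family is needed, and the remaining $\omega_x$-bookkeeping (including the twist carried by the transpose identification itself) is what produces the exponent $\langle\mu,\clambda\rangle-\kappa(\mu,\mu)$.
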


From \corref{c:conv Wak neg}(a) we also obtain:

\begin{cor}  \label{c:conv Wak all}
For any $\mu\in \Lambda$ and any $\clambda\in \cLambda$, there exists a canonical isomorphism
$$J_\mu \star \BW_{-\kappa}^\clambda\simeq \BW_{-\kappa}^{\clambda+\mu}\otimes \omega_x^{\langle \mu,\clambda\rangle}.$$
\end{cor}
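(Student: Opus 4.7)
The plan is to bootstrap \corref{c:conv Wak all} from its anti-dominant instance \corref{c:conv Wak neg}(a) by exploiting the multiplicativity \eqref{e:BMW mult bis} of the family $\{J_\mu\}_{\mu\in\Lambda}$. Given $\mu\in\Lambda$ and $\clambda\in\cLambda$, I first choose $\nu\in\Lambda^-$ sufficiently anti-dominant so that $\mu+\nu\in\Lambda^-$ as well; such $\nu$ exists because $\langle\nu,\check\alpha_i\rangle$ can be made arbitrarily negative for each simple coroot $\check\alpha_i$, and then $\langle\mu+\nu,\check\alpha_i\rangle\leq 0$ automatically.

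Next, I convolve the identity $J_\mu\star J_\nu\simeq J_{\mu+\nu}\otimes\omega_x^{\kappa(\mu,\nu)}$ from \eqref{e:BMW mult bis} with $\BW^{\clambda_0}_{-\kappa}$ for a parameter $\clambda_0\in\cLambda$ to be adjusted. Applying \corref{c:conv Wak neg}(a) to $J_\nu\star\BW^{\clambda_0}_{-\kappa}$ on the left-hand side and to $J_{\mu+\nu}\star\BW^{\clambda_0}_{-\kappa}$ on the right-hand side, and rearranging the $\omega_x$-twists, yields
$$J_\mu\star\BW^{\clambda_0+\nu}_{-\kappa}\simeq \BW^{(\clambda_0+\nu)+\mu}_{-\kappa}\otimes \omega_x^{\langle\mu,\clambda_0\rangle+\kappa(\mu,\nu)}.$$
Setting $\clambda:=\clambda_0+\nu$ (every $\clambda\in\cLambda$ arises this way as $\clambda_0$ varies) and invoking the convention of \secref{sss:omega}, whereby $\nu\in\Lambda$ is identified with its image in $\cLambda$ under the $\kappa$-embedding (so that $\langle\mu,\nu\rangle=\kappa(\mu,\nu)$), the exponent reduces to
$$\langle\mu,\clambda-\nu\rangle+\kappa(\mu,\nu)=\langle\mu,\clambda\rangle-\kappa(\mu,\nu)+\kappa(\mu,\nu)=\langle\mu,\clambda\rangle,$$
giving the desired formula.

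There is no real conceptual obstacle here; the entire step reduces to careful bookkeeping of $\omega_x$-twists, and the only point of substance is the exact cancellation between the cocycle $\kappa(\mu,\nu)$ of \eqref{e:BMW mult bis} and the shift picked up from re-expressing $\clambda_0+\nu$ as $\clambda$. Canonicity and independence of the auxiliary choice of $\nu$ are formal consequences of the canonicity of \eqref{e:BMW mult bis} together with the associativity of the convolution action of $\Dmod_{-\kappa}(\on{Fl}_G^{\on{aff}})^I$ on $\hg\mod_{-\kappa}^I$.
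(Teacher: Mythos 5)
Your argument is correct and is exactly the bootstrapping the paper has in mind when it says Corollary~\ref{c:conv Wak all} follows from Corollary~\ref{c:conv Wak neg}(a): pick a sufficiently anti-dominant $\nu$ so both $\nu$ and $\mu+\nu$ lie in $\Lambda^-$, convolve the multiplicativity relation \eqref{e:BMW mult bis} against a Wakimoto module, apply the anti-dominant case twice, and observe that the pairing $\langle\mu,\nu\rangle=\kappa(\mu,\nu)$ under the $\kappa$-embedding $\Lambda\hookrightarrow\cLambda$ makes the $\omega_x$-exponents cancel to $\langle\mu,\clambda\rangle$. The bookkeeping checks out and the canonicity remark at the end is the right level of detail.
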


\begin{rem}
We can define another family of twisted D-modules
$$'\!J_{\mu,*}:=J_{\mu,*}\otimes \omega_x^{\frac{\kappa(\mu,\mu+2\check\rho)}{2}}, \quad \mu\in \Lambda^-,$$
and for this family we will have
$$'\!J_{\mu_1,*}\star {}'J_{\mu_2,*}\simeq {}'\!J_{\mu_1+\mu_2,*}.$$
Similarly, choosing a compatible family of powers $\omega_x^c$ with $c\in \BQ$, we can define
$$'\BW_{-\kappa}^\clambda:=\BW_{-\kappa}^\clambda\otimes \omega_x^{\frac{\kappa^{-1}(\clambda,\clambda+2\check\rho)}{2}},$$
where $\kappa^{-1}$ is the resulting form $\cLambda\otimes \cLambda\to \BQ$. In this case we have
$$'\!J_\mu \star {}'\BW_{-\kappa}^\clambda\simeq {}'\BW_{-\kappa}^{\clambda+\mu}.$$

In fact, the module $'\BW_{-\kappa}^\clambda$ defined in the above way carries a unique action of (a finite cover of) the group ind-scheme  
$$\on{Aut}(\CO), \quad \on{Lie}(\on{Aut}(\CO))=\on{Span}(t^i\partial_t, i\geq 0)$$
compatible with the $\on{Aut}(\CO)$-action on $\hg_\kappa$, in such a way that its highest weight line, i.e.,  
$\omega_x^{\frac{\kappa^{-1}(\clambda,\clambda+2\check\rho)}{2}}$, is acted on by (a finite cover of) the group-scheme
$$\on{Aut}_0(\Spec(\CO)), \quad \on{Lie}(\on{Aut}_0(\Spec(\CO)))=\on{Span}(t^i\partial_t, i\geq 1)$$
by character $\frac{\kappa^{-1}(\clambda,\clambda+2\check\rho)}{2}$. 

\end{rem} 

\section{Wakimoto modules via Verma modules}  \label{s:Verma}

In this section we will give a more explicit description of Wakimoto modules within the affine category $\CO$, i.e., $\hg\mod_\kappa^I$. 
Namely, we will express them via affine Verma modules.

\ssec{Affine Verma modules}

\sssec{}

The affine Verma module $\BM_\kappa^\clambda\in \hg\mod_\kappa^I$ is defined to be 
$$\BM_\kappa^\clambda:=\Ind_{\on{Lie}(I)}^{\hg_\kappa}(k^\clambda).$$

Note that $\BM_\kappa^\clambda\in \hg\mod_\kappa^I$ co-represents the functor
$$\on{C}^\cdot(\overset{\circ}{I},\CM)^\clambda\simeq \CHom_{\Rep(I)}(k^\clambda,\CM)$$

\sssec{}

Note that by taking $I_\alpha=I$, we obtain a canonical map
$$\BM_\kappa^\clambda\to \BW^{\clambda,\semiinf}_\kappa.$$

From here, by adjunction, we obtain a map in $\hg\mod_\kappa^I$ 
\begin{equation} \label{e:Verma to Wak}
\BM_\kappa^\clambda\to \BW^{\clambda}_\kappa.
\end{equation} 

\ssec{Irrational level}

\sssec{}

We have: 
\begin{prop}  \label{p:Wak irrational}
Let $\kappa$ be irrational. Then the map \eqref{e:Verma to Wak} is an isomorphism for any $\clambda$.
\end{prop}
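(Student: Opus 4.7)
The plan is to compare $\BM^\clambda_\kappa$ and $\BW^\clambda_\kappa$ via a combination of character-matching and a weight-space-by-weight-space analysis, exploiting the non-degeneracy of Shapovalov-type pairings at irrational levels.

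First I will observe that the map \eqref{e:Verma to Wak} is nonzero. Unwinding the construction (in particular, using the $\mu=0$ term in the family of \secref{sss:ex family}, for which $I_0=I$, $\ell^-_0$ is canonically trivial, and $\Ind_{\on{Lie}(I_0)}^{\hg_\kappa}(k^\clambda\otimes \ell^-_0) = \BM^\clambda_\kappa$ sits as the initial term of the colimit defining $\BW^{\clambda,\semiinf}_\kappa$), one sees that the composed map sends the canonical cyclic vector $1\otimes v_\clambda$ of $\BM^\clambda_\kappa$ to a nonzero generator of the $\clambda$-weight line of $\BW^\clambda_\kappa$.

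Next I will check that both $\BM^\clambda_\kappa$ and $\BW^\clambda_\kappa$ lie in the heart $(\hg\mod_\kappa^I)^\heartsuit$ and are $\cLambda\oplus\BZ$-graded (by the $T$-action and loop rotation), with all weights bounded above by $\clambda$ in the usual partial order on affine weights and with 1-dimensional $\clambda$-weight line. The character of $\BM^\clambda_\kappa$ is given directly by the PBW theorem applied to a complement of $\on{Lie}(I)$ in $\fg(\CK)$. To compute the character of $\BW^\clambda_\kappa$, I will use the duality pairing of \corref{c:semiinf with Wak I}, which expresses each weight-and-degree multiplicity
$$\dim(\BW^\clambda_\kappa)^{\cmu,n}
= \dim\,\on{C}^\semiinf\!\bigl(\fn(\CK),\BM^{-\cmu-2\check\rho}_{-\kappa}\bigr)^{-\clambda-2\check\rho,n}[-d]$$
as a semi-infinite $\fn(\CK)$-cohomology of a Verma at the opposite level. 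For irrational $\kappa$ the higher contributions in the standard complex computing this semi-infinite cohomology will vanish: the differentials factor through Shapovalov-type pairings on each $T$-weight component of the Verma, and the determinants of these pairings at irrational $\kappa$ are never zero (the affine Kac--Kazhdan conditions fail identically once $\kappa\notin\BQ$, since the relevant reducibility equations mix an irrational multiple of $\kappa$ with rational quantities). What remains matches the PBW character of $\BM^\clambda_\kappa$.

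Having matched characters and with the map nonzero on the top weight line, I will conclude that \eqref{e:Verma to Wak} is an isomorphism by a weight-space induction downward from $\clambda$: on each finite-dimensional weight component the induced linear map is between spaces of equal dimension, and its invertibility is again governed by a Shapovalov-type determinant at the corresponding weight, which at irrational $\kappa$ is nonzero. The hard part will be the character/cohomology computation for $\BW^\clambda_\kappa$, in particular the generic vanishing of the higher semi-infinite $\fn(\CK)$-cohomology of Verma modules. This is exactly where the irrationality hypothesis enters essentially; the rational/integral case treated in the rest of \secref{s:Verma} is much more delicate and, as the author indicates, requires the Kashiwara--Tanisaki localization theorem instead.
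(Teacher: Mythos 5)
There is a genuine gap, and it sits exactly at the step you flag as the "hard part". The identity you want to use,
$$\dim(\BW^\clambda_\kappa)^{\cmu,n}\;=\;\dim\,\on{C}^\semiinf\!\bigl(\fn(\CK),\BM^{-\cmu-2\check\rho}_{-\kappa}\bigr)^{-\clambda-2\check\rho,n}[-d],$$
is not what \corref{c:semiinf with Wak I} gives you. The right-hand side is, by that corollary, the duality pairing $\langle \BW^\clambda_\kappa,\BM^{-\cmu-2\check\rho}_{-\kappa}\rangle_{\hg\mod^I}$, and pairing against a \emph{Verma} module does not compute a weight space of $\BW^\clambda_\kappa$. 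In the finite-dimensional model the analogous pairing is $\langle M^\clambda, M^{\cnu}\rangle \simeq \on{C}^\cdot(\fn,M^{\cnu})^{-\clambda-2\check\rho}[-d]\simeq \CHom_{\fg\mod^B}(M^{-\clambda-2\check\rho},M^{\cnu})[-d]$, which is finite-dimensional and vanishes for generic pairs of weights, whereas the weight multiplicities you are after are (affine) Kostant partition numbers. So the character computation, which carries the whole plan, does not go through in this form. (A smaller issue of the same kind: reading off "the image of the cyclic vector" from the colimit defining $\BW^{\clambda,\semiinf}_\kappa$ is not innocent, since that object is infinitely connective; nonvanishing of \eqref{e:Verma to Wak} on the top line needs an argument.)

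The irrationality mechanism you invoke is also not the right one. In the affine Kac--Kazhdan/Shapovalov determinant, the factors attached to the finite positive roots (real affine roots with trivial $\delta$-component) do not involve $\kappa$ at all; they vanish, e.g., for $\clambda$ dominant integral, at \emph{any} level. So at irrational $\kappa$ the contravariant form on $\BM^\clambda_\kappa$ is still degenerate for such $\clambda$ and $\BM^\clambda_\kappa$ is still reducible --- irrationality only kills the linkage coming from roots involving $\delta$. Consequently the closing argument "equal characters, nonzero on the highest weight line, invertibility on each weight space governed by a nondegenerate Shapovalov-type form" cannot work: that is precisely the pattern of the canonical map $M^\clambda\to M^{\vee,\clambda}$, which fails to be an isomorphism exactly when the form degenerates, yet the proposition asserts $\BM^\clambda_\kappa\simeq\BW^\clambda_\kappa$ for \emph{all} $\clambda$, including reducible ones. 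The paper's proof is of a different nature and bypasses all of this: for irrational $\kappa$ the induction functor $\fg\mod^B\to\hg\mod^I_\kappa$ is an equivalence, so the statement reduces to finite-dimensional $\fg$; equivalently, under the perfect pairing of \secref{sss:duality}, both $\BM^\clambda_\kappa$ and $\BW^\clambda_\kappa$ represent the same functor at irrational level (this is the content of the corollary following the proposition, namely that $\on{C}^\semiinf(\fn(\CK),-)^\clambda$ and $\on{C}^\cdot(\overset{\circ}{I},-)^\clambda$ agree there). If you want a computational proof along your lines, you would need an independent computation of the character of $\BW^\clambda_\kappa$ (e.g.\ from a free-field/PBW-type description) and then an argument for bijectivity that does not pass through nondegeneracy of the contravariant form.
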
 

\begin{proof}

Follows from the fact that for $\kappa$ irrational, the induction functor
$$\on{Ind}^{\hg_\kappa}_{\fg(\CO)}:\fg\mod^B\to \hg\mod_\kappa^I$$
is an equivalence.

\end{proof} 

\sssec{}

As a corollary, combining \corref{c:semiinf with Wak I} and \eqref{e:dual of Verma}, with we obtain:

\begin{cor}
For irrational $\kappa$ and any $\clambda\in \cLambda$, the functors $\hg\mod_\kappa^I\to \Vect$ 
$$\on{C}^\semiinf(\fn(\CK),\CM)^\clambda \text{ and } \on{C}^\cdot(\overset{\circ}{I},\CM)^\lambda$$
are canonically isomorphic. 
\end{cor}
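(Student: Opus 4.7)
The plan is to express both functors in terms of the canonical pairing $\langle -, -\rangle_{\hg\mod^I}$ between $\hg\mod_\kappa^I$ and $\hg\mod_{-\kappa}^I$, and then to apply \propref{p:Wak irrational} at the opposite level $-\kappa$ (which is again irrational) to identify the two resulting objects of $\hg\mod_{-\kappa}^I$.

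First I would rewrite the target of the isomorphism in terms of duality. Since $\BM_\kappa^\clambda$ co-represents $\CM\mapsto \on{C}^\cdot(\overset{\circ}{I},\CM)^\clambda$, and since the pairing $\langle -,-\rangle_{\hg\mod^I}$ is perfect by \propref{p:duality K} with $\BD(\BM_\kappa^\clambda)\simeq \BM_{-\kappa}^{-\clambda-2\check\rho}[d]$ by \eqref{e:dual of Verma}, one gets
$$
\on{C}^\cdot(\overset{\circ}{I},\CM)^\clambda \;\simeq\; \langle \BM_{-\kappa}^{-\clambda-2\check\rho},\CM\rangle_{\hg\mod^I}[d].
$$
Now \propref{p:Wak irrational} applied at the (still irrational) level $-\kappa$ produces an isomorphism $\BM_{-\kappa}^{-\clambda-2\check\rho}\simeq \BW_{-\kappa}^{-\clambda-2\check\rho}$, so the right-hand side becomes $\langle \BW_{-\kappa}^{-\clambda-2\check\rho},\CM\rangle_{\hg\mod^I}[d]$.

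Next I would invoke \corref{c:semiinf with Wak I} with $\kappa$ replaced by $-\kappa$ (so $\CM$ sits at level $\kappa$, as required). Together with the parenthetical identification in the Remark accompanying \propref{p:semiinf with Wak}, this gives
$$
\langle \BW_{-\kappa}^{\clambda'},\CM\rangle_{\hg\mod^I} \;\simeq\; \on{C}^\semiinf(\fn(\CK),\CM)^{-\clambda'-2\check\rho}[-d].
$$
Setting $\clambda' = -\clambda - 2\check\rho$ yields $\langle \BW_{-\kappa}^{-\clambda-2\check\rho},\CM\rangle_{\hg\mod^I}\simeq \on{C}^\semiinf(\fn(\CK),\CM)^\clambda[-d]$, which upon combining with the preceding paragraph produces the desired canonical isomorphism (the $[d]$ and $[-d]$ cancel, and the two $-2\check\rho$ shifts coming from duality of Vermas and from semi-infinite vs.\ homology in a top power of $\fn^-$ likewise cancel).

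I do not expect a serious obstacle; the content is really bookkeeping. The one point that deserves care is the sign and shift convention on the two sides: \eqref{e:dual of Verma} contributes a shift by $[d]$ and a weight twist by $-2\check\rho$, while \propref{p:semiinf with Wak} contributes the opposite shift $[-d]$ and an equal and opposite twist $-2\check\rho$ (coming from the line $\det(\fn^-)^{\otimes -1}$ of weight $2\check\rho$ in cohomological degree $d$). Verifying that these two contributions cancel exactly, rather than combining incorrectly, is the only step that needs to be checked rather than merely stated.
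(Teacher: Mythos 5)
Your proposal is correct and is essentially the paper's own argument: the paper derives this corollary in one line by combining \corref{c:semiinf with Wak I}, the duality isomorphism \eqref{e:dual of Verma}, and \propref{p:Wak irrational} (Wakimoto $=$ Verma at irrational level), which is exactly the chain you spell out. Your shift/twist bookkeeping (the $[d]$ versus $[-d]$ and the two $-2\check\rho$ twists cancelling, after trivializing $\det(\fn^-)$) checks out against the paper's conventions.
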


\ssec{Convolution action on Verma modules}

From now on until the end of this section we will assume that our level is integral; we will denote it by $-\kappa$. 
That said, the discussion will go through verbatim for a negative-rational level, see Remark \ref{r:Frob lattice}.

\sssec{}

As in \secref{sss:trans map}, for any $\clambda\in \cLambda$ and $\mu\in \Lambda^+$ there exists a canonical map in $\hg_{-\kappa}\mod^T$. 
$$\BM_{-\kappa}^{\clambda-\mu} \otimes \omega_x^{\langle -\mu,\clambda\rangle}\to (t^{-\mu}\cdot \BM_{-\kappa}^{\clambda})\otimes 
\fl_{-\mu}[\langle \mu,2\check\rho\rangle].$$

From here, we obtain a map
\begin{equation} \label{e:twist Verma}
\BM_{-\kappa}^{\clambda-\mu} \otimes \omega_x^{\langle -\mu,\clambda\rangle}
\to j_{-\mu,*}\star \BM_{-\kappa}^{\clambda}, \quad \mu\in \Lambda^+.
\end{equation}

We claim:

\begin{prop}  \label{p:Wak as colim Verma}
There exists a canonical isomorphism 
$$\BW^{\clambda}_{-\kappa}\simeq \underset{\mu\in \Lambda^+}{\on{colim}}\, j_{-\mu,*}\star \BM_{-\kappa}^{\clambda+\mu}\otimes 
\omega_x^{\langle \clambda,\mu\rangle+\kappa(\mu,\mu)}.$$
\end{prop}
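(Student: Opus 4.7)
The plan is to unfold the defining colimit for $\BW^{\clambda,\semiinf}_{-\kappa}$ along the specific family of subgroups $I_\mu := \on{Ad}_{t^{-\mu}}(I)$, $\mu\in\Lambda^+$, from the example in Sect.~\ref{sss:ex family}, and transport each term of the colimit to an affine Verma module by conjugating by $t^\mu$. For this family $I^-_\mu = \on{Ad}_{t^{-\mu}}(I^-)$, so $\ell^-_\mu$ coincides with the line $\fl_{-\mu}$ from Sect.~\ref{sss:omega}, and the defining colimit takes the form
\[
\BW^{\clambda,\semiinf}_{-\kappa}\simeq \underset{\mu\in\Lambda^+}{\on{colim}}\, \Ind_{\on{Lie}(I_\mu)}^{\hg_{-\kappa}}(k^\clambda\otimes\fl_{-\mu}).
\]

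The heart of the proof is to identify, for each $\mu\in\Lambda^+$, the $\mu$-term with $t^{-\mu}\cdot \BM^{\clambda+\mu}_{-\kappa}$ up to an explicit power of $\omega_x$ and a cohomological shift. This uses the automorphism of $\hg_{-\kappa}$ induced by $\on{Ad}_{t^\mu}$, which sends $\on{Lie}(I_\mu)$ to $\on{Lie}(I)$. The weight shift $\clambda\mapsto \clambda+\mu$ (with $\mu$ identified with its image in $\cLambda$ via the embedding of Sect.~\ref{sss:omega}) is forced by the non-invariance of the Kac-Moody cocycle under conjugation by $t^\mu$; the precise power of $\omega_x$ is read off by combining this cocycle shift with the $T$-character of $\fl_{-\mu}$, which by the computation used in the proof of Prop.~\ref{p:conv Wak} equals $\kappa_{\on{crit}}(-\mu,-)$. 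I would then apply $\on{Av}^{I/T}_*$: the left-hand side becomes $\BW^\clambda_{-\kappa}$ by \eqref{e:def Wak}, while on each term of the right-hand side formula \eqref{e:J as conv} (for $-\mu\in\Lambda^-$) gives
\[
\on{Av}^{I/T}_*\bigl(t^{-\mu}\cdot \BM^{\clambda+\mu}_{-\kappa}\bigr)\simeq j_{-\mu,*}\star \BM^{\clambda+\mu}_{-\kappa}\otimes(\fl_{-\mu})^{-1}[\langle-\mu,2\check\rho\rangle].
\]
Assembling the two identifications, the determinant-line twists and degree shifts cancel, leaving the power $\omega_x^{\langle\clambda,\mu\rangle+\kappa(\mu,\mu)}$ asserted in the statement; the transition maps match those of \eqref{e:twist Verma} by construction.

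The main obstacle is twofold. First, bookkeeping of the determinant-line twists: the three sources of shifts (the Kac-Moody cocycle discrepancy, the $T$-character of $\fl_{-\mu}$, and the cohomological shift in \eqref{e:J as conv}) must be combined correctly, with $\kappa$ entering through the identification $\Lambda\hookrightarrow\cLambda$ of Sect.~\ref{sss:omega}. Second, the commutation of $\on{Av}^{I/T}_*$ with the filtered colimit over $\Lambda^+$ is not automatic since $\on{Av}^{I/T}_*$ is a right adjoint; this can be addressed either by invoking continuity on the pro-unipotently equivariant subcategory carrying the colimit, or by dualizing and testing the induced morphism against compact generators of $\hg\mod_{-\kappa}^I$ via the pairing of Cor.~\ref{c:semiinf with Wak I}.
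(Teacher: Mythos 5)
Your proposal is correct and follows essentially the same route as the paper: the paper likewise rewrites $\BW^{\clambda,\semiinf}_{-\kappa}$ "by construction" as $\underset{\mu\in \Lambda^+}{\on{colim}}\, (t^{-\mu}\cdot \BM_{-\kappa}^{\clambda+\mu})\otimes \fl_{-\mu}\otimes \omega_x^{\langle \clambda,\mu\rangle+\kappa(\mu,\mu)}[\langle \mu,2\check\rho\rangle]$ (your steps of unfolding along $I_\mu=\on{Ad}_{t^{-\mu}}(I)$ and conjugating by $t^\mu$) and then applies $\on{Av}^{I/T}_*$ together with \eqref{e:J as conv}, exactly as you do. Your extra care with the determinant-line bookkeeping and the commutation of $\on{Av}^{I/T}_*$ with the filtered colimit only makes explicit what the paper leaves implicit.
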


\begin{proof}

By construction
$$\BW^{\clambda,\semiinf}_{-\kappa}\simeq \underset{\mu\in \Lambda^+}{\on{colim}}\, (t^{-\mu}\cdot \BM_{-\kappa}^{\clambda+\mu})\otimes \fl_{-\mu}\otimes
\omega_x^{\langle \clambda,\mu\rangle+\kappa(\mu,\mu)}[\langle \mu,2\check\rho\rangle].$$

The required isomorphism follows from \eqref{e:J as conv} by applying $\on{Av}^{I/T}_*$.

\end{proof} 

\sssec{}

Combining with \corref{c:semiinf with Wak I}, we obtain: 

\begin{cor}  \label{c:semiinf via Verma}
For $\CM\in \hg_{-\kappa}\mod^I$ and any $\clambda\in \cLambda$, there exists a canonical equivalence
$$\on{C}^\semiinf(\fn(\CK),\CM)^\clambda\simeq \underset{\mu\in \Lambda^+}{\on{colim}}\, 
\CHom_{\hg_{-\kappa}\mod^I}(\BM_{-\kappa}^{\clambda+\mu},j_{\mu,*}\star \CM)\otimes \omega_x^{-\langle \mu,\clambda\rangle}.$$
\end{cor}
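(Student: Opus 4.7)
The plan is to derive the corollary by combining \propref{p:Wak as colim Verma} with \corref{c:semiinf with Wak I}, as indicated by the phrasing ``Combining with \corref{c:semiinf with Wak I}''. The three main ingredients are these two results together with the canonical duality $\BD:(\hg\mod_\kappa^I)_c\to(\hg\mod_{-\kappa}^I)_c$ of \propref{p:duality K}.

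First, since $\CM$ lies at level $-\kappa$, I would apply the level-swapped variant of \corref{c:semiinf with Wak I} (valid by the symmetric argument), combined with the identification
$$\on{C}^\semiinf(\fn(\CK),\CM\otimes\det(\fn^-)^{\otimes-1})^{-\clambda}\simeq\on{C}^\semiinf(\fn(\CK),\CM)^{-\clambda-2\check\rho}[-d]$$
from the Remark following \propref{p:semiinf with Wak}, so as to realize
$$\on{C}^\semiinf(\fn(\CK),\CM)^\clambda\simeq\langle\BW^{-\clambda-2\check\rho}_{\kappa},\CM\rangle_{\hg\mod^I}[d].$$

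Second, I would expand this Wakimoto at level $\kappa$ via the level-$\kappa$ analogue of \propref{p:Wak as colim Verma} (which follows by the same proof, with the sign of the $\kappa(\mu,\mu)$-twist flipped, since the Kac-Moody cocycle changes sign). Since the pairing is continuous in the first variable, the colimit may be extracted, producing a filtered colimit over $\mu\in\Lambda^+$ of pairings $\langle j_{-\mu,*}\star\BM_\kappa^{-\clambda-2\check\rho+\mu},\CM\rangle$ twisted by a power of $\omega_x$.

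Third, each term would be converted into a Hom-space by using the canonical duality $\langle\CK,\CM\rangle=\CHom(\BD(\CK),\CM)$ and the formula $\BD(\BM_\kappa^\nu)\simeq\BM_{-\kappa}^{-\nu-2\check\rho}[d]$ of \eqref{e:dual of Verma}, together with the compatibility of the pairing with Hecke convolution: left convolution by $F$ at level $\kappa$ on the first argument corresponds, via the pullback under the inversion $g\mapsto g^{-1}$ on $G(\CK)$ (which swaps the orbits $I\cdot t^{-\mu}\cdot I$ and $I\cdot t^{\mu}\cdot I$) and the level flip, to left convolution by $j_{\mu,*}$ at level $-\kappa$ on the second argument, up to an $\omega_x$-twist.

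The hard part will be the bookkeeping of the $\omega_x$-twists, cohomological shifts $[d]$, and weight shifts by $2\check\rho$ accumulated in these three steps; they must conspire to produce the single twist $\omega_x^{-\langle\mu,\clambda\rangle}$ of the final formula. The coherence of this cancellation rests on the identity $\langle\mu,\mu\rangle=\kappa(\mu,\mu)$ built into the embedding $\Lambda\hookrightarrow\cLambda$ of \secref{sss:omega}, together with the cancellation of the $\kappa(\mu,\mu)$-twist from the Wakimoto expansion against the one arising from the Hecke inversion duality (a reflection of the relation $j_{-\mu,*}\star j_{\mu,!}\simeq\delta\otimes\omega_x^{-\kappa(\mu,\mu)}$).
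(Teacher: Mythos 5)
Your route is essentially the paper's own (implicit) derivation: one writes $\on{C}^\semiinf(\fn(\CK),\CM)^\clambda\simeq \langle \BW^{-\clambda-2\check\rho}_{\kappa},\CM\rangle[d]$ by \corref{c:semiinf with Wak I}, expands that Wakimoto through the level-$\kappa$ form of \propref{p:Wak as colim Verma}, pulls the colimit out of the (continuous) pairing, and converts each term into a $\CHom$ out of a level-$(-\kappa)$ Verma via \eqref{e:dual of Verma} and the compatibility of the duality with convolution (\secref{sss:Av adj}); so the architecture is the same. Two corrections to your write-up: first, no ``level-swapped variant'' of \corref{c:semiinf with Wak I} is needed --- as stated it already pairs $\BW^{\clambda'}_\kappa$ against $\CM\in\hg\mod^I_{-\kappa}$, which is exactly your situation. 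Second, and more substantively, at level $\kappa$ it is not only the $\kappa(\mu,\mu)$-twist that flips sign but also the weight shift: the expansion reads $\BW^{\clambda'}_{\kappa}\simeq \underset{\mu\in \Lambda^+}{\on{colim}}\, j_{-\mu,*}\star \BM_{\kappa}^{\clambda'-\mu}\otimes \omega_x^{\langle \clambda',\mu\rangle-\kappa(\mu,\mu)}$ (this is exactly how the paper uses \propref{p:Wak as colim Verma} in the positive-level subsection), so the terms to be paired with $\CM$ are $j_{-\mu,*}\star \BM_{\kappa}^{-\clambda-2\check\rho-\mu}$, not $\BM_{\kappa}^{-\clambda-2\check\rho+\mu}$ as you wrote; since $\BD(\BM^\nu_\kappa)\simeq \BM^{-\nu-2\check\rho}_{-\kappa}[d]$, your sign would dualize to $\BM_{-\kappa}^{\clambda-\mu}$ instead of the required $\BM_{-\kappa}^{\clambda+\mu}$. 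With that sign fixed, the deferred bookkeeping of $\omega_x$-powers, the shift $[d]$, and the $2\check\rho$-shifts does close up to the stated twist $\omega_x^{-\langle\mu,\clambda\rangle}$ (one can sanity-check the exponents against \corref{c:semiinf neg via Wak} via \thmref{t:twist Verma}).
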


\ssec{The negative level case}

In turns out that when the level is rational, the precise relation between Verma modules and Wakimoto modules drastically 
depends on the sign of the level. In this subsection we will specialize to the case when the level $-\kappa$ is negative. 

\sssec{}

We have the following assertion that follows from Kashiwara-Tanisaki localization,
proved in \secref{ss:conv Verma}:

\begin{thm}  \label{t:twist Verma}
If $\clambda-\mu\in \cLambda^+$, then the map \eqref{e:twist Verma} is an isomorphism.
\end{thm}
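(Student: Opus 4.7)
The plan is to deduce the isomorphism from Kashiwara--Tanisaki localization. Recall that at a negative integral level $-\kappa$, KT provides an exact global-sections functor
$$\Gamma_{-\kappa}:\Dmod_{-\kappa}(\Fl_G^{\on{aff}})^I\to \hg_{-\kappa}\mod^I$$
which becomes an equivalence on the block(s) containing the relevant dominant weights, and which is a functor of module categories over the convolution monoidal category $\Dmod_{-\kappa}(\Fl_G^{\on{aff}})^I$. In particular, convolution with $j_{-\mu,*}$ on the D-module side matches convolution with $j_{-\mu,*}$ on the Kac--Moody side.

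First, I would identify, for each $\cnu\in \cLambda^+$, the affine Verma module $\BM_{-\kappa}^{\cnu}$ with the global sections of a standard D-module $j_{\tw_{\cnu},!}$, for $\tw_{\cnu}$ the appropriate element of the extended affine Weyl group $\wt W^{\on{aff}}=W\ltimes \cLambda$, with an $\omega_x$-twist recording the critical shift. This identification is the hallmark of KT on the dominant side: on such cells, $j_!$ and $j_*$ admit no higher cohomology and the global sections are cyclic modules generated by a $\clambda$-weight vector annihilated by $\on{Lie}(\overset{\circ}{I})$, i.e.\ Verma modules.

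Second, I would invoke the length-additivity calculus in $\wt W^{\on{aff}}$: the hypothesis $\clambda-\mu\in \cLambda^+$ (combined with $\mu\in \Lambda^+$) means that $\clambda$ and $\clambda-\mu$ lie in the closure of the same Weyl chamber, hence $\ell(t^{-\mu}\cdot\tw_\clambda)=\ell(t^{-\mu})+\ell(\tw_\clambda)$. This length-additivity yields the classical isomorphism of twisted D-modules
$$j_{-\mu,*}\star j_{\tw_\clambda,!}\;\simeq\; j_{\tw_{\clambda-\mu},!}\otimes \omega_x^{c(\mu,\clambda)}$$
for an explicit power $c(\mu,\clambda)$ of $\omega_x$ coming from the level twist \eqref{e:BMW mult}. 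Applying $\Gamma_{-\kappa}$ and invoking its compatibility with convolution transports this isomorphism into
$$j_{-\mu,*}\star \BM_{-\kappa}^{\clambda}\;\simeq\; \BM_{-\kappa}^{\clambda-\mu}\otimes \omega_x^{\langle -\mu,\clambda\rangle}.$$

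Finally, to identify the resulting isomorphism with the specific map \eqref{e:twist Verma}, I would use the universal property of the Verma module on the left: the Hom-space
$$\Hom_{\hg_{-\kappa}\mod^I}\bigl(\BM_{-\kappa}^{\clambda-\mu}\otimes \omega_x^{\langle -\mu,\clambda\rangle},\;j_{-\mu,*}\star \BM_{-\kappa}^\clambda\bigr)$$
is the $(\clambda-\mu)$-weight component of $\overset{\circ}I$-invariants of $j_{-\mu,*}\star \BM_{-\kappa}^\clambda$, which by the D-module calculation above is one-dimensional. Hence any non-zero morphism (in particular the one of \eqref{e:twist Verma}) is automatically the isomorphism produced by $\Gamma_{-\kappa}$.

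The main obstacle, and really the only one, is the careful bookkeeping: tracking the exact $\omega_x$-powers, verifying that the critical shift conventions of \secref{sss:level} match the conventions behind KT, and confirming that the length-additivity really is equivalent to $\clambda-\mu\in \cLambda^+$ under the identification of $\Lambda$ with a sublattice of $\cLambda$ via $\kappa$. The conceptual input --- namely that dominant Vermas are standard D-modules on affine Schubert cells and that $\Gamma_{-\kappa}$ intertwines the two convolution actions --- is standard and essentially contained in Kashiwara--Tanisaki.
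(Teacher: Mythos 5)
Your overall route is the same as the paper's: invoke Kashiwara--Tanisaki localization for the twist $(-\kappa,\clambda_0)$ with $\clambda_0$ admissible, realize the relevant Verma modules as $\Gamma$ of standard objects $j_{y,!}$ on Iwahori orbits, reduce \eqref{e:twist Verma} to a convolution identity among (co)standard D-modules, and reduce that to a length identity in the affine Weyl group; your closing step (pinning down that the canonical map is the isomorphism via one-dimensionality of the relevant Hom) is fine and is implicit in the paper. The problem is that the length identity is exactly where the content of the proof lies, and you both misstate it and give no argument for it.

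Concretely, write $y_\clambda=\mu'\cdot w$ and $y_{\clambda-\mu}=t^{-\mu}\,y_\clambda=(\mu'-\mu)\cdot w$ for the elements with $\Gamma(j_{y_\clambda,!})\simeq \BM^\clambda_{-\kappa}$, $\Gamma(j_{y_{\clambda-\mu},!})\simeq \BM^{\clambda-\mu}_{-\kappa}$. With the paper's normalization the element attached to the \emph{larger} dominant weight is the \emph{longer} one (its translation part is bigger, cf.\ \eqref{e:length}), so the true relation is $\ell(y_\clambda)=\ell(t^{\mu})+\ell(y_{\clambda-\mu})$, the opposite of your claimed $\ell(t^{-\mu}y_\clambda)=\ell(t^{-\mu})+\ell(y_\clambda)$; it is the former that gives $j_{\mu,!}\star j_{y_{\clambda-\mu},!}\simeq j_{y_\clambda,!}$ and hence, after cancelling against $j_{-\mu,*}$, the desired $j_{-\mu,*}\star j_{y_\clambda,!}\simeq j_{y_{\clambda-\mu},!}$ (note also that a mixed $*/!$ convolution is not a consequence of length additivity alone; it comes from $j_{-\mu,*}\star j_{\mu,!}\simeq\delta$ together with additivity for the $!$-convolution). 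More importantly, ``$\clambda$ and $\clambda-\mu$ lie in the closure of the same Weyl chamber'' is not a proof of any length identity: the elements involved carry a nontrivial finite part $w$, and what has to be shown is the Iwahori--Matsumoto positivity condition \eqref{e:pos cond} for the element attached to a dominant weight. Establishing \eqref{e:pos cond} is precisely where the admissibility inequalities \eqref{e:adm} for $\clambda_0$ (including their strictness) enter, and the paper's proof of \thmref{t:twist Verma} in \secref{ss:conv Verma} consists essentially of that verification, done by contradiction with \eqref{e:adm}. So the step you set aside as ``bookkeeping'' is the actual mathematical content, and as written your argument has a genuine gap there.
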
 

From here, using \propref{p:Wak as colim Verma}, we obtain:  
%rederive the following result of \cite{Fr2}:

\begin{cor}  \label{c:Wak=Ver}
For $\clambda\in \cLambda^+$, the map 
$$\BM_{-\kappa}^\clambda\to \BW^{\clambda}_{-\kappa}$$
is an isomorphism.
\end{cor}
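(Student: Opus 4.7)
The plan is to deduce this directly from \propref{p:Wak as colim Verma} and \thmref{t:twist Verma}: under the hypothesis $\clambda \in \cLambda^+$, every transition map in the colimit presentation of $\BW^\clambda_{-\kappa}$ will turn out to be an isomorphism, so the colimit collapses to its initial ($\mu = 0$) term, which is $\BM_{-\kappa}^\clambda$. The canonical map \eqref{e:Verma to Wak} is by construction the structure map at $\mu = 0$, so this collapse yields precisely the claimed isomorphism.

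Concretely, I would proceed as follows. First, from \propref{p:Wak as colim Verma} one has
$$\BW^\clambda_{-\kappa} \simeq \underset{\mu \in \Lambda^+}{\on{colim}}\, j_{-\mu,*} \star \BM_{-\kappa}^{\clambda+\mu} \otimes \omega_x^{\langle \clambda, \mu\rangle + \kappa(\mu,\mu)}.$$
Next, I would apply \thmref{t:twist Verma} termwise, substituting its ``$\clambda$'' by $\clambda + \mu$ and its ``$\mu$'' by $\mu$: the required condition becomes $(\clambda+\mu) - \mu = \clambda \in \cLambda^+$, which is our assumption, yielding an isomorphism
$$\BM_{-\kappa}^\clambda \otimes \omega_x^{\langle -\mu, \clambda + \mu\rangle} \xrightarrow{\sim} j_{-\mu,*} \star \BM_{-\kappa}^{\clambda+\mu}.$$
A short computation using $\langle \mu, \mu\rangle = \kappa(\mu,\mu)$ (from the embedding $\Lambda \hookrightarrow \cLambda$ induced by $\kappa$) shows that the combined exponent $\langle -\mu, \clambda+\mu\rangle + \langle \clambda, \mu\rangle + \kappa(\mu,\mu)$ vanishes, so each term of the colimit is canonically identified with $\BM_{-\kappa}^\clambda$. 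Applying the same theorem to the pair $(\clambda + \mu_2, \mu_2 - \mu_1)$ for $\mu_1 \leq \mu_2$ in $\Lambda^+$ (the relevant condition $\clambda + \mu_1 \in \cLambda^+$ again holds), together with the multiplicativity \eqref{e:BMW mult bis} to factor $j_{-\mu_2,*}$ as $j_{-\mu_1,*}\star j_{-(\mu_2-\mu_1),*}$ up to $\omega_x$-twist, shows that every transition map in the filtered colimit is an isomorphism. Hence the colimit equals any of its terms; taking $\mu = 0$ gives $\BM_{-\kappa}^\clambda \simeq \BW^\clambda_{-\kappa}$, and this isomorphism agrees with \eqref{e:Verma to Wak} because the latter is the structure map at the $\mu = 0$ index, as can be read off from the construction of the colimit in the proof of \propref{p:Wak as colim Verma}.

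The main obstacle is of course \thmref{t:twist Verma} itself, whose proof (via Kashiwara-Tanisaki localization) is deferred to \secref{ss:conv Verma}. Once that theorem is in hand, the present corollary is essentially a matter of bookkeeping $\omega_x$-twists and verifying that the transition maps in \propref{p:Wak as colim Verma} are built from the same maps \eqref{e:twist Verma} that the theorem certifies to be isomorphisms under our dominance hypothesis.
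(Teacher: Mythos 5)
Your proof is correct and takes precisely the route the paper intends: the paper deduces \corref{c:Wak=Ver} from \thmref{t:twist Verma} via the colimit presentation of \propref{p:Wak as colim Verma} in a one-line "from here, using ..., we obtain" step, and your write-up spells out exactly that deduction (termwise collapse of the colimit, vanishing of the $\omega_x$-exponent, and identification of the structure map at $\mu=0$ with \eqref{e:Verma to Wak}).
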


\begin{rem}
\corref{c:Wak=Ver} was originally proved in \cite[Theorem 2]{Fr1}, by a different method. 
Note also that \thmref{t:twist Verma} is logically equivalent to \corref{c:Wak=Ver}.
\end{rem} 

\sssec{}

Let us now give an expression for $\BW^\clambda_{-\kappa}$ for $\clambda$ not necessarily dominant. Namely, let $\mu\in \Lambda^+$
be such that $\clambda+\mu\in \cLambda^+$. We have:
$$\BW^\clambda_{-\kappa}\simeq j_{-\mu,*}\star \BW^{\clambda+\mu}_{-\kappa}\otimes \omega_x^{\langle \mu,\clambda\rangle+\kappa(\mu,\mu)},$$
and combining with \thmref{t:twist Verma}, we obtain:
\begin{equation} \label{e:Wak non-dom}
\BW^\clambda_{-\kappa}\simeq j_{-\mu,*}\star \BM^{\clambda+\mu}_{-\kappa}\otimes \omega_x^{\langle \mu,\clambda\rangle+\kappa(\mu,\mu)}.
\end{equation} 

\sssec{}

We have just seen that Wakimoto modules at the negative level are expressible via affine Verma modules by a ``finite" procedure.
By contrast, the expression for functor of semi-infinite cohomology $\on{C}^\semiinf(\fn(\CK),-)^\clambda$ involves a colimit. 
Indeed, by combining \thmref{t:twist Verma} and \corref{c:semiinf via Verma}, we obtain:

\begin{cor}  \label{c:semiinf neg via Wak}
For any $\clambda\in \cLambda$ and $\CM\in \hg\mod_{-\kappa}^I$, there exists a canonical isomorphism
\begin{multline*}
\on{C}^\semiinf(\fn(\CK),\CM)^\clambda\simeq \underset{\mu\in \Lambda^+}{\on{colim}}\, 
\CHom_{\hg\mod_{-\kappa}^I}(\BW_{-\kappa}^\clambda, j_{-\mu,*}\star j_{\mu,*}\star \CM\otimes \omega_x^{\kappa(\mu,\mu)})\simeq \\
\simeq \underset{\mu\in \Lambda^+}{\on{colim}}\, 
\CHom_{\hg\mod_{-\kappa}^I}(\BM_{-\kappa}^\clambda, j_{-\mu,*}\star j_{\mu,*}\star \CM\otimes \omega_x^{\kappa(\mu,\mu)}),
\end{multline*}
where the transition maps are given by the maps
$$j_{-\mu,*}\star j_{\mu,*} \otimes \omega_x^{\kappa(\mu,\mu)} \to j_{-\mu-\mu',*}\star j_{\mu+\mu',*} \otimes \omega_x^{\kappa(\mu+\mu',\mu+\mu')}, \quad \mu'\in \Lambda^+$$
that come from the canonical maps $j_{\mu',!}\to  j_{\mu',*}$.
\end{cor}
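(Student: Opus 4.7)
The plan is to deduce the result by combining Corollary~\ref{c:semiinf via Verma} with Theorem~\ref{t:twist Verma}, handling the two displayed isomorphisms in turn.

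For the first isomorphism, I would start from Corollary~\ref{c:semiinf via Verma} and restrict the filtered colimit over $\mu\in\Lambda^+$ to its cofinal subset where $\clambda+\mu\in\cLambda^+$. For such $\mu$, Corollary~\ref{c:Wak=Ver} identifies $\BM^{\clambda+\mu}_{-\kappa}\simeq\BW^{\clambda+\mu}_{-\kappa}$, while Corollary~\ref{c:conv Wak all} applied to $\mu\in\Lambda^+$ (so $J_\mu=j_{\mu,!}$) yields $\BW^{\clambda+\mu}_{-\kappa}\simeq j_{\mu,!}\star \BW^\clambda_{-\kappa}\otimes\omega_x^{-\langle\mu,\clambda\rangle}$. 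Substitution cancels the explicit $\omega_x$-factors, turning the $\mu$th term of Corollary~\ref{c:semiinf via Verma} into $\CHom(j_{\mu,!}\star \BW^\clambda_{-\kappa}, j_{\mu,*}\star\CM)$. Since $j_{\mu,!}\star j_{-\mu,*}\simeq\delta\otimes\omega_x^{-\kappa(\mu,\mu)}$, convolution with $j_{\mu,!}$ is an equivalence with inverse convolution with $j_{-\mu,*}\otimes\omega_x^{\kappa(\mu,\mu)}$, so this $\CHom$ rewrites canonically as $\CHom(\BW^\clambda_{-\kappa}, j_{-\mu,*}\star j_{\mu,*}\star\CM\otimes\omega_x^{\kappa(\mu,\mu)})$. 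Tracing the transition maps back through Proposition~\ref{p:Wak as colim Verma} (which underlies Corollary~\ref{c:semiinf via Verma}) then shows that they correspond precisely to the morphisms induced by $j_{\mu',!}\to j_{\mu',*}$, as stated.

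For the second isomorphism, the canonical map $\BM^\clambda\to\BW^\clambda$ of \eqref{e:Verma to Wak} induces a natural transformation from the $\BW^\clambda$-colimit to the $\BM^\clambda$-colimit, which I would prove is an isomorphism. Applying the invertibility rewrite of the first step to $\BM^\clambda$ in place of $\BW^\clambda$ converts the target into $\underset{\mu}{\on{colim}}\,\CHom(j_{\mu,!}\star\BM^\clambda, j_{\mu,*}\star\CM)$. Using the presentation $\BW^\clambda_{-\kappa}\simeq\underset{\mu_0\in\Lambda^+}{\on{colim}}\,j_{-\mu_0,*}\star\BM^{\clambda+\mu_0}_{-\kappa}\otimes\omega_x^{\langle\mu_0,\clambda\rangle+\kappa(\mu_0,\mu_0)}$ from Proposition~\ref{p:Wak as colim Verma}, in which $\BM^\clambda$ appears as the $\mu_0=0$ entry, together with $j_{\mu_0,!}\star j_{-\mu,*}\simeq j_{\mu_0-\mu,*}\otimes\omega_x^{-\kappa(\mu_0,\mu)}$ for $\mu\geq\mu_0$ from \eqref{e:BMW mult bis}, the claim reduces to showing that the cofiber of the canonical map $j_{\mu,!}\star\BM^\clambda\to\BM^{\clambda+\mu}\otimes\omega_x^{\langle\mu,\clambda\rangle}$ of Theorem~\ref{t:twist Verma} (which vanishes for $\clambda\in\cLambda^+$, and for general $\clambda$ identifies, via Corollary~\ref{c:Wak=Ver} and Corollary~\ref{c:conv Wak all}, with $j_{\mu,!}\star(\BW^\clambda/\BM^\clambda)$ once $\clambda+\mu\in\cLambda^+$) contributes trivially to the outer colim over $\mu$.

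The hard part will be precisely this last vanishing: after the invertibility rewrite, it amounts to the claim that $\underset{\mu}{\on{colim}}\,\CHom(\BW^\clambda/\BM^\clambda, j_{-\mu,*}\star j_{\mu,*}\star\CM\otimes\omega_x^{\kappa(\mu,\mu)})$ is zero. I would attack it by resolving $\BW^\clambda/\BM^\clambda$ via the $\mu_0>0$ ``tail'' of Proposition~\ref{p:Wak as colim Verma} and running a cofinality/interchange-of-limits argument, so that for each fixed $\mu_0>0$ the contribution is killed by the transition arrows $j_{\mu',!}\to j_{\mu',*}$ in the outer colim, using Theorem~\ref{t:twist Verma} and \eqref{e:BMW mult bis} to keep the $\omega_x$-twists straight. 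The delicate point is the double (co)limit bookkeeping and the verification that the resulting isomorphism is indeed the one induced by $\BM^\clambda\to\BW^\clambda$.
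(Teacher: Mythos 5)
Your derivation of the first isomorphism is correct and is essentially the intended one: restrict the colimit in \corref{c:semiinf via Verma} to the cofinal set where $\clambda+\mu\in\cLambda^+$, use \corref{c:Wak=Ver} and \corref{c:conv Wak all} to replace $\BM^{\clambda+\mu}_{-\kappa}$ by $j_{\mu,!}\star\BW^\clambda_{-\kappa}\otimes\omega_x^{-\langle\mu,\clambda\rangle}$, and then conjugate by the invertible object $j_{\mu,!}$; the $\omega_x$-bookkeeping and the matching of transition maps with $j_{\mu',!}\to j_{\mu',*}$ both go through. So the reduction
$\on{C}^\semiinf(\fn(\CK),\CM)^\clambda\simeq\underset{\mu}{\on{colim}}\,\CHom(\BW^\clambda_{-\kappa},j_{-\mu,*}\star j_{\mu,*}\star\CM\otimes\omega_x^{\kappa(\mu,\mu)})$
is established.

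The second isomorphism is where your proposal has a real gap. Since $\BW^\clambda_{-\kappa}$ and $\BM^\clambda_{-\kappa}$ are compact and the index poset is filtered, both colimits compute $\CHom(\BW^\clambda_{-\kappa},\CK\star\CM)$ and $\CHom(\BM^\clambda_{-\kappa},\CK\star\CM)$ for the single object $\CK\star\CM=\underset{\mu}{\on{colim}}\,j_{-\mu,*}\star j_{\mu,*}\star\CM\otimes\omega_x^{\kappa(\mu,\mu)}$, so the claim is precisely that $\CHom(\on{cofib}(\BM^\clambda_{-\kappa}\to\BW^\clambda_{-\kappa}),\CK\star\CM)=0$. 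You correctly identify this as the crux, but the final paragraph of your proposal is a sketch of an attack, not an argument: the ``resolve the tail and exchange colimits'' manoeuvre is circular as written, because after convolving by $j_{\mu_0,!}$ (using $j_{\mu_0,!}\star(\CK\star-)\simeq\CK\star(j_{\mu_0,*}\star-)$) one lands on exactly the same vanishing claim for a different $\CM$. Note also that the direct combination of \thmref{t:twist Verma} with \corref{c:semiinf via Verma} naturally yields the Wakimoto colimit and \emph{not} the Verma one, so the second display is genuinely additional content; it is not obtained by simply inserting $\BW^\clambda\simeq\BM^\clambda$ term by term (that identification fails for the non-dominant $\clambda$ of the statement). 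Closing the gap seems to require a further input — for instance, that $\CK\star\CM$ lies in the essential image of $\on{Av}^{I/T}_*$ applied to $\hg\mod_{-\kappa}^{N(\CK)\cdot T(\CO)}$ (as the Remark after the Corollary indicates), and that maps out of $\BM^\clambda$ and out of $\BW^\clambda$ into such objects coincide because both detect the same $k^\clambda$-isotypic piece under the triangular decomposition of the $I_\alpha$ from \secref{sss:family of sgrps}. As it stands, the proposal does not establish the second isomorphism.
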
 

\begin{rem}

Let us also note that the object
$$\underset{\mu\in \Lambda^+}{\on{colim}}\, j_{-\mu,*}\star j_{\mu,*}\star \omega_x^{\kappa(\mu,\mu)}\in \Dmod_{-\kappa}(\on{Fl}^{\on{aff}}_G)^I$$
identifies with
$$\on{Av}^{I/T}_*(\iota_*(\omega_{N(\CK)\cdot 1})),$$
where 
$$N(\CK)\cdot 1 \overset{\iota}\hookrightarrow \on{Fl}^{\on{aff}}_G$$
is the embedding of the $N(\CK)$-orbit of the point $1\in \on{Fl}^{\on{aff}}_G$; here we are using the fact that the twisting corresponding to $\kappa$
is canonically trivialized on $N(\CK)\cdot 1$ so that 
$$\iota_*(\omega_{N(\CK)\cdot 1})\in \Dmod_{-\kappa}(\on{Fl}^{\on{aff}}_G)^{N(\CK)\cdot T(\CO)}$$
makes sense. 

\medskip

So the conclusion of \corref{c:semiinf neg via Wak} can we rewritten as
$$\on{C}^\semiinf(\fn(\CK),\CM)^\clambda \simeq \CHom_{\fg\mod_{-\kappa}^T}(\BW^\clambda_{-\kappa}, \iota_*(\omega_{N(\CK)\cdot 1})\star \CM),$$
where we can further rewrite 
$$\iota_*(\omega_{N(\CK)\cdot 1})\star \CM\simeq \omega_{N(\CK)/N(\CO)}\star \CM$$
in terms of the action of the group $N(\CK)$ on $\hg\mod_\kappa$. 

\end{rem}

\sssec{}

We can now exhibit the collection of objects that are right-orthogonal to the Wakimito modules:

\begin{cor} \label{c:right orth to Wak}
The collection of modules
$$\clambda'\mapsto \underset{\mu\in \Lambda^+}{\on{colim}}\, j_{-\mu,*}\star j_{\mu,*}\star j_{w_0,*}\star \BW_{-\kappa}^{w_0(\clambda')-2\check\rho}
\otimes \omega_x^{\kappa(\mu,\mu)}$$
is right-orthogonal to the collection
$$\clambda\mapsto \BW_{-\kappa}^\clambda.$$
\end{cor}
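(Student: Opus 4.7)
The plan is to reduce the Hom-pairing directly to a computable semi-infinite cohomology by combining Corollary \ref{c:semiinf neg via Wak} with Proposition \ref{p:neg semiinf of Wak}, the intermediary being the long-Weyl-element intertwining functor $j_{w_0,*}$.

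First I would verify that $\BW^\clambda_{-\kappa}$ is a compact object of $\hg\mod^I_{-\kappa}$. When $\clambda\in\cLambda^+$, Corollary \ref{c:Wak=Ver} identifies $\BW^\clambda_{-\kappa}\simeq \BM^\clambda_{-\kappa}$, which is compact by construction (\secref{sss:g-modK}). For general $\clambda$, formula \eqref{e:Wak non-dom} realizes $\BW^\clambda_{-\kappa}$ as $j_{-\mu,*}\star \BM^{\clambda+\mu}_{-\kappa}$ twisted by a line, for $\mu\in\Lambda^+$ large enough that $\clambda+\mu\in \cLambda^+$; since $j_{-\mu,*}\star$ is a continuous auto-equivalence of $\hg\mod^I_{-\kappa}$ (its inverse being $j_{\mu,!}\star$, up to an $\omega_x$-twist, by \eqref{e:BMW mult}), it preserves compactness. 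Consequently $\on{Hom}_{\hg\mod^I_{-\kappa}}(\BW^\clambda_{-\kappa},-)$ commutes with the filtered colimit that defines the putative right-dual object.

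Denote that object by $X^{\clambda'}$. Commuting Hom past the colimit yields exactly the expression appearing in Corollary \ref{c:semiinf neg via Wak} with $\CM:=j_{w_0,*}\star \BW^{w_0(\clambda')-2\check\rho}_{-\kappa}$, so
$$\on{Hom}_{\hg\mod^I_{-\kappa}}(\BW^\clambda_{-\kappa}, X^{\clambda'})\simeq
\on{C}^\semiinf(\fn(\CK),\, j_{w_0,*}\star \BW^{w_0(\clambda')-2\check\rho}_{-\kappa})^\clambda.$$
The next step is to interpret $j_{w_0,*}$ as the long intertwining that swaps $\fn(\CK)$ with $\fn^-(\CK)$: left translation by (a lift of) $w_0$ conjugates $N(\CK)$ into $N^-(\CK)$, and unraveling this at the level of the colimit presentation of Proposition \ref{p:semiinf} (one replaces the filtered family $\{I_\alpha\}$ by $\{\on{Ad}(w_0)(I_\alpha)\}$ and tracks the resulting $\ell_\alpha$, $\ell^-_\alpha$) produces a canonical isomorphism
$$\on{C}^\semiinf(\fn(\CK),\, j_{w_0,*}\star \CM)^\clambda \simeq
\on{C}^\semiinf(\fn^-(\CK),\, \CM)^{w_0(\clambda)},$$
up to a fixed cohomological shift and determinant line arising from $\det(\fn^-)$ and the relative dimension of $I$ vs.\ $\on{Ad}(w_0)(I)$. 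Applied to $\CM=\BW^{w_0(\clambda')-2\check\rho}_{-\kappa}$, Proposition \ref{p:neg semiinf of Wak} now computes
$$\on{C}^\semiinf(\fn^-(\CK),\, \BW^{w_0(\clambda')-2\check\rho}_{-\kappa})^{w_0(\clambda)}
=\begin{cases} \det(\fn^-)^{\otimes -1}, & w_0(\clambda)=w_0(\clambda'),\\ 0, & \text{otherwise,}\end{cases}$$
i.e.\ nonzero precisely when $\clambda=\clambda'$. This is the required right-orthogonality (and also pins down the ``diagonal value'' as a canonical graded line).

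The main obstacle is the third step, the long-intertwining identity for semi-infinite cohomology. The shape of the formula is dictated by $\on{Ad}(w_0)\cdot \fn=\fn^-$, but making it precise requires carefully organizing the colimit of Proposition \ref{p:semiinf} after replacing each $I_\alpha$ by its $w_0$-conjugate, and matching the arising determinants $\det(\fn^-)^{\otimes -1}$, cohomological shifts by $d=\dim(\fn)$, and $\omega_x$-twists against those appearing in Proposition \ref{p:neg semiinf of Wak}. Everything else in the plan is bookkeeping against Cor.\ \ref{c:semiinf neg via Wak} and the compactness check.
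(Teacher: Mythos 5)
Your plan is correct, and it fills in what the paper's two-line proof leaves implicit. The reduction is exactly the one intended: compactness of $\BW^{\clambda}_{-\kappa}$ (which you correctly justify via \corref{c:Wak=Ver} and formula \eqref{e:Wak non-dom} together with the invertibility of $j_{-\mu,*}\star-$) lets you pull the Hom inside the colimit, whereupon \corref{c:semiinf neg via Wak} with $\CM=j_{w_0,*}\star\BW^{w_0(\clambda')-2\check\rho}_{-\kappa}$ rewrites the Hom-space as $\on{C}^\semiinf(\fn(\CK), j_{w_0,*}\star \BW^{w_0(\clambda')-2\check\rho}_{-\kappa})^\clambda$, and \propref{p:neg semiinf of Wak} is then used to evaluate it.

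The one piece that the paper does not spell out, and that you correctly single out as the crux, is the identity
$$\on{C}^\semiinf(\fn(\CK),\, j_{w_0,*}\star \CM)^\clambda \;\simeq\; \on{C}^\semiinf(\fn^-(\CK),\, \CM)^{w_0(\clambda)}$$
(up to a fixed determinant line and shift). Your bookkeeping for the weights is right: with $\CM=\BW^{w_0(\clambda')-2\check\rho}_{-\kappa}$, \propref{p:neg semiinf of Wak} concentrates the right-hand side at $w_0(\clambda)=w_0(\clambda')$, i.e.\ $\clambda=\clambda'$; note in particular that the formula must have \emph{no} residual $\check\rho$-shift, or the orthogonality would come out at $\clambda=\clambda'\pm 2\check\rho$. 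The way you propose to prove it — writing $j_{w_0,*}\star\CM\simeq\on{Av}^{I/T}_*(\dot w_0\cdot\CM)$, using the averaging-insensitivity argument from the proof of \propref{p:neg semiinf of Wak} (now applied to $\fn(\CK)$ and the opposite Iwahori, exploiting that $\dot w_0\cdot\CM$ is $I^-\cdot T(\CO)$-equivariant and $\on{Ad}(\dot w_0)$ exchanges the two families $\{I_\alpha\}$ and $\{\on{Ad}(w_0)(I_\alpha)\}$), and then tracking determinant lines — is the right strategy, and is precisely the ``$w_0$-opposite'' version of \propref{p:neg semiinf of Wak} that the paper tacitly invokes via the observation $j_{w_0,*}\star\BW^{\cmu}_{-\kappa}\simeq\BW^{w_0}_{-\kappa,\cmu}$ from Remark \ref{r:other Wak}. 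So the proposal and the paper's proof coincide in substance; what remains is the careful verification of that intertwining, which you flag appropriately.
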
 

\begin{proof}

Follows by combining \propref{p:neg semiinf of Wak} and \corref{c:semiinf neg via Wak}.

\end{proof}

\ssec{The positive level case}

In this subsection we will take our level $\kappa$ to be positive integral. 

\sssec{}

At the positive level, the behavior of Wakimoto modules and the functor of semi-infinite cohomology will be in a certain sense opposite to that
of the negative level: the latter will be co-representable by a compact object, while the former will be (infinite) colimits of Verma modules.

\medskip

Indeed, the latter assertion amounts to the isomorphism 
$$\BW^{\clambda}_{\kappa}\simeq \underset{\mu\in \Lambda^+}{\on{colim}}\, j_{-\mu,*}\star \BM_{\kappa}^{\clambda-\mu}\otimes 
\omega_x^{\langle \clambda,\mu\rangle-\kappa(\mu,\mu)},$$
given by \propref{p:Wak as colim Verma}. 

\sssec{}

For the expression of semi-infinite cohomology we have the following consequence of 
Corollaries \ref{c:semiinf with Wak I} and \ref{c:Wak=Ver}:

\begin{cor}
For $\CM\in \hg\mod_\kappa^I$ and $\clambda\in \cLambda$ such that $-\clambda-2\check\rho\in \cLambda^+$, we
have a canonical isomorphism: 
$$\on{C}^\semiinf(\fn(\CK),\CM)^\clambda \simeq  \CHom_{\hg\mod_\kappa^I}(\BM^\clambda_\kappa,\CM).$$
For $\mu\in \Lambda^+$, we have a canonical isomorphism
$$\on{C}^\semiinf(\fn(\CK),\CM)^{\clambda+\mu}\simeq 
\on{C}^\semiinf(\fn(\CK),j_{\mu,*}\star \CM)^\clambda \otimes \omega_x^{-\langle \mu,\clambda\rangle-\kappa(\mu,\mu)}.$$
\end{cor}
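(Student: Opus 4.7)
The plan is to deduce both isomorphisms from \corref{c:semiinf with Wak I} (applied with the roles of $\kappa$ and $-\kappa$ swapped), combined with \corref{c:Wak=Ver} and the Kac-Moody duality formula \eqref{e:dual of Verma}.

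First, I would reinterpret \corref{c:semiinf with Wak I} at the opposite level: for $\CM\in\hg\mod_\kappa^I$ and any $\clambda'\in\cLambda$, there is a canonical isomorphism
$$\langle\BW_{-\kappa}^{\clambda'},\CM\rangle_{\hg\mod^I}\simeq \on{C}^\semiinf(\fn(\CK),\CM)^{-\clambda'-2\check\rho}[-d].$$
Specializing to $\clambda':=-\clambda-2\check\rho$ (which by hypothesis lies in $\cLambda^+$), the right-hand side becomes $\on{C}^\semiinf(\fn(\CK),\CM)^{\clambda}[-d]$. Using \corref{c:Wak=Ver} I replace $\BW_{-\kappa}^{-\clambda-2\check\rho}$ on the left-hand side by the affine Verma module $\BM_{-\kappa}^{-\clambda-2\check\rho}$, and by \eqref{e:dual of Verma} the latter (shifted by $[d]$) is exactly $\BD(\BM_\kappa^\clambda)$. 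The tautological property of the duality pairing, $\langle\BD(X),\CM\rangle_{\hg\mod^I}\simeq\CHom_{\hg\mod_\kappa^I}(X,\CM)$ for $X\in(\hg\mod_\kappa^I)_c$, then yields the first claimed isomorphism.

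For the second isomorphism, no dominance hypothesis is needed; this is the positive-level analog of \corref{c:conv Wak neg}(b). I would derive it by exactly the same route: \propref{p:conv Wak} describes the effect of the translation $\CM\mapsto t^\mu\cdot\CM$ on semi-infinite Wakimoto modules, and after applying $\on{Av}^{I/T}_*$ one obtains the shift behavior of $j_{\mu,*}\star(-)$ for the usual Wakimoto modules. Feeding this into the same incarnation of \corref{c:semiinf with Wak I} used above converts the $t^\mu$-translation into the asserted shift of $T$-weight by $\mu$ together with the twist by $\omega_x^{-\langle\mu,\clambda\rangle-\kappa(\mu,\mu)}$.

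The main obstacle is purely bookkeeping: one has to track carefully (i) the cohomological degree shift $[d]$, (ii) the $2\check\rho$ shift arising from the $T$-character of $\det(\fn^-)^{\otimes -1}$, and (iii) the $\omega_x$-twists coming from the $T(\CO)$-action on the relative determinant lines $\det(\on{Lie}(I^-)/\on{Ad}_{t^\mu}(\on{Lie}(I^-)))$. Each of these has already been computed in the relevant preceding results, so no new representation-theoretic input beyond the cited corollaries is required.
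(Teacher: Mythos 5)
Your proposal is correct and is essentially the paper's own (unwritten) argument: the corollary is stated there precisely as a consequence of \corref{c:semiinf with Wak I} and \corref{c:Wak=Ver}, with \eqref{e:dual of Verma} supplying the identification of the pairing against $\BM_{-\kappa}^{-\clambda-2\check\rho}$ with $\CHom_{\hg\mod_\kappa^I}(\BM^\clambda_\kappa,-)$, exactly as you do. For the second isomorphism your route through \propref{p:conv Wak} (the positive-level analogue of \corref{c:conv Wak neg}) is likewise the intended one; the only step you leave implicit is transporting the convolution $j_{\mu,*}\star(-)$ across the duality pairing of \secref{sss:duality}, which indeed belongs to the $\omega_x$- and shift-bookkeeping you acknowledge.
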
 

\begin{rem}
Let $\clambda\in \cLambda$ be such that $-\clambda-2\check\rho \in \cLambda^+$.
Then from the above corollary we obtain an isomorphism
$$j_{\mu,*}\star \BM_\kappa^{\clambda} \simeq \BM_\kappa^{\clambda-\mu}
\otimes \omega_x^{\langle \mu,\clambda\rangle}, \quad \mu\in \Lambda^+$$
The latter isomorphism can be obtained from the Kashiwara-Tanisaki localization at the positive level. 
\end{rem} 

\ssec{Proof of \thmref{t:twist Verma}}  \label{ss:conv Verma}

For the proof of \thmref{t:twist Verma} we will choose a uniformizer $t\in \CO$; so we will trivialize the line $\omega_x$. 

\sssec{}  \label{sss:adm}

First, let us recall the version of the Kasiwara-Tanisaki localization theorem that we will need. 

\medskip

Let us call a weight $\clambda_0\in \cLambda$ \emph{admissible} for $\kappa$ if the following conditions hold:
\begin{equation} \label{e:adm}
\begin{cases}
&\langle \alpha,\clambda_0+\check\rho\rangle\notin \BZ^{>0} \text{ for all positive coroots } \alpha, \\
&\pm \langle\alpha,\clambda_0+\check\rho\rangle +n\cdot \frac{-\kappa(\alpha,\alpha)}{2}\notin \BZ^{>0} \text{ for all positive coroots } \alpha \text{ and all } n\in \BZ^{>0}.
\end{cases}
\end{equation}

\medskip

We consider the twisting $(-\kappa,\clambda_0)$ on the affine flag scheme $\Fl^{\on{aff}_G}$, so that
$$\Gamma(\Fl^{\on{aff}_G},\delta_{1,\Fl})\simeq \BM_{-\kappa}^{\clambda_0}.$$ 

The Kasiwara-Tanisaki theorem of \cite{KT} says that in this case the functor
$$\Gamma(\Fl^{\on{aff}_G},-):\Dmod_{(-\kappa,\clambda_0)}(\Fl^{\on{aff}_G})^I\to \hg\mod_{-\kappa}^I$$
is t-exact, sends standard objects to standard objects, and costandard objects to costandard objects.

\sssec{}

For an element $\clambda \in \cLambda$ we can always find an element 
$$\mu\cdot w\in W^{\on{aff}}:=W\ltimes \Lambda,$$
so that
$$\clambda=\mu+w(\clambda_0+\check\rho)-\check\rho$$
with $\clambda_0$ being $\kappa$-admissible. 

\medskip

Hence,
$$\BM^\clambda_{-\kappa}\simeq \Gamma(\Fl^{\on{aff}_G},j_{\mu\cdot w,!}), \quad j_{\mu\cdot w,!}\in \Dmod_{(-\kappa,\clambda_0)}(\Fl^{\on{aff}_G})^I.$$

Similarly, for any $\nu\in \Lambda$, 
$$\BM^{\clambda+\nu}_{-\kappa}\simeq \Gamma(\Fl^{\on{aff}_G},j_{(\nu+\mu)\cdot w,!}).$$

\medskip

Thus, to prove \thmref{t:twist Verma} it suffices to show that if $\clambda\in \cLambda^+$ and $\nu\in \Lambda^+$, we have
$$j_{\nu,!}\star j_{\mu\cdot w,!}\simeq j_{(\nu+\mu)\cdot w,!}.$$

\medskip

The latter is a combinatorial condition that translates as
$$\ell(\nu)+\ell(\mu\cdot w)=\ell((\nu+\mu)\cdot w), \quad \nu\in \Lambda^+.$$

\medskip

It is equivalent to
\begin{equation} \label{e:length}
\ell(\mu\cdot w)=\langle \mu,2\check\rho\rangle+\ell(w).
\end{equation}

\sssec{}

In order to prove \eqref{e:length} it is easy to see that it suffices to show that the element $\mu\cdot w$
satisfies the following condition for every positive root $\check\alpha$:
\begin{equation} \label{e:pos cond}
\begin{cases}
&\langle \mu, \check\alpha\rangle>0 \text{ if } w^{-1}(\check\alpha) \text{ is positive},\\
&\langle \mu, \check\alpha\rangle\geq 0 \text{ if } w^{-1}(\check\alpha) \text{ is negative}.
\end{cases}
\end{equation}

We claim that \eqref{e:pos cond} is forced by the condition that $\clambda$ is dominant. 

\medskip

Suppose that there exists a positive root $\check\alpha$ for which $\langle \mu, \check\alpha\rangle\leq 0$ and 
$w^{-1}(\check\alpha)=:\beta$ is positive.  We have
$$0<\langle \alpha, \clambda+\check\rho\rangle=
\langle \alpha, w(\clambda_0+\check\rho)\rangle+ \langle\alpha,\mu\rangle=
\langle \beta, \clambda_0+\check\rho \rangle + \langle \mu,\check\alpha \rangle \cdot \frac{\kappa(\alpha,\alpha)}{2}.$$

However, this violates \eqref{e:adm}.

\medskip

Similarly, suppose there exists a positive root $\check\alpha$ for which $\langle \mu, \check\alpha\rangle<0$ 
and $w^{-1}(\check\alpha)=:\beta$ is negative. We have
$$0<\langle \alpha, \clambda+\check\rho\rangle=
\langle \alpha, w(\clambda_0+\check\rho)\rangle+ \langle\alpha,\mu\rangle=
-\langle -\beta, \clambda_0+\check\rho \rangle + \langle \mu,\check\alpha \rangle \cdot \frac{\kappa(\alpha,\alpha)}{2},$$
and this again violates \eqref{e:adm}.

\qed[\thmref{t:twist Verma}]

\sssec{}

For $\clambda\in \cLambda$, let $\BM_{-\kappa}^{\vee,\clambda}\in \hg\mod_{-\kappa}^I$ be the dual Verma module, i.e.,
$$\CHom_{\hg\mod_{-\kappa}^I}(\BM_{-\kappa}^{\clambda'},\BM_{-\kappa}^{\vee,\clambda})=
\begin{cases}
&k \text{ if } \clambda'=\clambda,\\
&0 \text{ otherwise}.
\end{cases}$$

\medskip

Let us note that a similar argument to the one proving \thmref{t:twist Verma}, proves the following assertion:

\medskip

\begin{thm}  \label{t:dual Verma}
Let $\clambda$ be dominant and $\mu\in \Lambda^+$ be such that $-2\check\rho-\clambda+\mu\in \cLambda^+$.
Then 
$$j_{\mu,!}\star \BM_{-\kappa}^{\vee,-2\check\rho-\clambda}\simeq \BM_{-\kappa}^{-2\check\rho-\clambda+\mu}.$$
\end{thm}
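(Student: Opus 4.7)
The plan is to adapt the strategy from the proof of Theorem \ref{t:twist Verma} in \secref{ss:conv Verma}: use Kashiwara-Tanisaki localization to translate the statement into a convolution identity in the Iwahori-equivariant category on the affine flag variety, and then reduce to a length identity in $W^{\on{aff}} = W \ltimes \Lambda$.

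Pick a $\kappa$-admissible weight $\clambda_0$ satisfying \eqref{e:adm}, and decompose $-2\check\rho-\clambda = \nu + w(\clambda_0 + \check\rho) - \check\rho$ for some $\nu \in \Lambda$ and $w \in W$. By the Kashiwara-Tanisaki theorem recalled in \secref{sss:adm}, the functor $\Gamma(\Fl^{\on{aff}}_G, -)$ on the twisted category $\Dmod_{(-\kappa, \clambda_0)}(\Fl^{\on{aff}}_G)^I$ sends costandards to dual Vermas and standards to Vermas, giving
\[ \BM^{\vee, -2\check\rho - \clambda}_{-\kappa} \simeq \Gamma(\Fl^{\on{aff}}_G, j_{\nu \cdot w, *}), \qquad \BM^{-2\check\rho - \clambda + \mu}_{-\kappa} \simeq \Gamma(\Fl^{\on{aff}}_G, j_{(\nu + \mu) \cdot w, !}). \]
Since $\Gamma$ intertwines left convolution, the theorem reduces to the isomorphism $j_{\mu, !} \star j_{\nu \cdot w, *} \simeq j_{(\nu + \mu) \cdot w, !}$ in the appropriate Iwahori-equivariant Hecke setting.

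For this, the algebraic identity $\mu = (\nu + \mu) \cdot w \cdot (\nu \cdot w)^{-1}$ in $W^{\on{aff}}$ suggests the strategy: once one establishes the length identity $\ell(\mu) = \ell((\nu + \mu) \cdot w) + \ell(\nu \cdot w)$, length-additive composition of standards gives $j_{\mu, !} \simeq j_{(\nu+\mu) \cdot w, !} \star j_{(\nu \cdot w)^{-1}, !}$, whence
\[ j_{\mu, !} \star j_{\nu \cdot w, *} \simeq j_{(\nu + \mu) \cdot w, !} \star j_{(\nu \cdot w)^{-1}, !} \star j_{\nu \cdot w, *} \simeq j_{(\nu + \mu) \cdot w, !}, \]
the second isomorphism being the cancellation $j_{a^{-1}, !} \star j_{a, *} \simeq \delta_1$ applied to $a = \nu \cdot w$. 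This cancellation in turn follows by induction on a reduced expression $a = s_{i_1} \cdots s_{i_k}$, using length-additive decomposition of the $!$- and $*$-objects, from the elementary affine Hecke relation $j_{s, !} \star j_{s, *} \simeq \delta_1$ for simple reflections.

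The length identity is equivalent, per positive coroot $\check\alpha$, to $|\langle \nu + \mu, \check\alpha \rangle + \chi_\alpha| + |\langle \nu, \check\alpha \rangle + \chi_\alpha| = \langle \mu, \check\alpha \rangle$, where $\chi_\alpha := \chi(w^{-1}\check\alpha < 0)$; since $\langle \mu, \check\alpha \rangle \geq 0$ by dominance of $\mu$, this amounts to the two-sided inequality $-\langle \mu, \check\alpha \rangle \leq \langle \nu, \check\alpha \rangle + \chi_\alpha \leq 0$. The upper bound follows from the strict anti-dominance of $-2\check\rho - \clambda$ (a consequence of $\clambda \in \cLambda^+$) combined with the second admissibility condition in \eqref{e:adm}, by an argument mirroring the one at the end of \secref{ss:conv Verma}. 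The lower bound, in the sharper form $\langle \nu + \mu, \check\alpha \rangle + \chi_\alpha \geq 1$, is precisely the content of that same argument applied verbatim to the decomposition $-2\check\rho - \clambda + \mu = (\nu + \mu) + w(\clambda_0 + \check\rho) - \check\rho$, invoking the hypothesis $-2\check\rho - \clambda + \mu \in \cLambda^+$. The main obstacle is thus the careful case-by-case verification of these admissibility-based inequalities (each sign of $w^{-1}\check\alpha$ and each value of $\langle\nu,\check\alpha\rangle$ separately), which effectively doubles the bookkeeping done in \secref{ss:conv Verma}; once it is in place, the proof structure is exactly parallel to that of Theorem \ref{t:twist Verma}.
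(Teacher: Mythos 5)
Your proposal is correct and is essentially the argument the paper intends: the paper offers no separate proof of this statement, saying only that it follows by ``a similar argument'' to \thmref{t:twist Verma}, namely Kashiwara--Tanisaki localization (sending costandards to dual Verma modules), reduction to a convolution identity among Iwahori (co)standard objects, and a length identity checked against the admissibility conditions \eqref{e:adm}. Your treatment of the mixed $!$/$*$ convolution via length-additivity together with the cancellation $j_{a^{-1},!}\star j_{a,*}\simeq \delta_{1,\Fl}$, and the two-sided per-root inequality deduced from dominance of $-2\check\rho-\clambda+\mu$ and strict anti-dominance of $-\clambda-\check\rho$, supply exactly the bookkeeping the paper leaves implicit.
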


From here, we obtain the following result originally proved in \cite[Proposition 6.3 and Remark 6.4]{Fr2} (see also \cite[Proposition 13.2.1]{FG2} for a proof): 

\begin{cor} \label{c:Wakimoto for neg}
For $\clambda\in \cLambda^+$ we have a canonical isomorphism
$$\BW_{-\kappa}^{-2\check\rho-\clambda}\simeq \BM_{-\kappa}^{\vee,-2\check\rho-\clambda}.$$
\end{cor}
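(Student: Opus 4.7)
The plan is to reduce the corollary to \thmref{t:dual Verma} via the expression of non-dominant Wakimoto modules given in \eqref{e:Wak non-dom}. Since both $\BW_{-\kappa}^{-2\check\rho-\clambda}$ and $\BM_{-\kappa}^{\vee,-2\check\rho-\clambda}$ can be written, after convolution by a large $j_{-\mu,*}$, as Verma modules at the ``sufficiently dominant'' shift $-2\check\rho-\clambda+\mu$, the two sides ought to be identified canonically.

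Concretely, as a first step I would choose $\mu\in\Lambda^+$ large enough so that $-2\check\rho-\clambda+\mu\in\cLambda^+$; such $\mu$ exists since $\clambda\in\cLambda^+$. The hypotheses of both \eqref{e:Wak non-dom} (applied to the non-dominant weight $-2\check\rho-\clambda$) and \thmref{t:dual Verma} are then satisfied. From \eqref{e:Wak non-dom} we get
$$\BW^{-2\check\rho-\clambda}_{-\kappa}\simeq j_{-\mu,*}\star \BM^{-2\check\rho-\clambda+\mu}_{-\kappa}$$
(up to an explicit power of $\omega_x$), while from \thmref{t:dual Verma} we obtain
$$j_{\mu,!}\star \BM_{-\kappa}^{\vee,-2\check\rho-\clambda}\simeq \BM_{-\kappa}^{-2\check\rho-\clambda+\mu}.$$

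Next, convolving the latter identity with $j_{-\mu,*}$ and invoking the inversion relation $j_{-\mu,*}\star j_{\mu,!}\simeq \delta_{1,\Fl}$ (valid up to an $\omega_x$-twist by $-\kappa(\mu,\mu)$, as recorded just before \eqref{e:BMW mult bis}), we find
$$j_{-\mu,*}\star \BM_{-\kappa}^{-2\check\rho-\clambda+\mu}\simeq \BM_{-\kappa}^{\vee,-2\check\rho-\clambda}$$
(again up to an explicit $\omega_x$-power). Substituting this into the previous displayed isomorphism yields the desired identification $\BW_{-\kappa}^{-2\check\rho-\clambda}\simeq \BM_{-\kappa}^{\vee,-2\check\rho-\clambda}$.

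The only real work is in bookkeeping: one must verify that the various $\omega_x$-twists picked up from \eqref{e:Wak non-dom}, from \thmref{t:dual Verma}, and from the inversion formula for $j_{-\mu,*}\star j_{\mu,!}$ cancel, so that the result is independent of the auxiliary $\mu$ and genuinely canonical. Following the convention of \secref{ss:conv Verma} I would trivialize $\omega_x$ by a uniformizer $t\in\CO$, which reduces the verification to a direct combinatorial check of exponents; independence from the choice of $\mu$ follows because the transition from $\mu$ to $\mu+\mu'$ with $\mu'\in\Lambda^+$ is absorbed into the corresponding multiplicativity relation \eqref{e:BMW mult}. I expect no substantive difficulty beyond this bookkeeping, since the heavy lifting (Kashiwara-Tanisaki localization) is already encoded in \thmref{t:dual Verma}.
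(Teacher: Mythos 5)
Your proof is correct and reconstructs exactly the deduction the paper leaves implicit: combine \eqref{e:Wak non-dom} for the weight $-2\check\rho-\clambda$ with \thmref{t:dual Verma} and the inversion relation $j_{-\mu,*}\star j_{\mu,!}\simeq \delta_{1,\Fl}$ (with $\omega_x$ trivialized as throughout \secref{ss:conv Verma}). The bookkeeping you flag is indeed routine, and independence of $\mu$ is absorbed by \eqref{e:BMW mult} exactly as you say.
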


\bigskip

\centerline{\bf Part II: Representations of the quantum group}

\bigskip

\section{Quantum algebras}  \label{s:quant alg}

This section is devoted to a review of the basic setting in which quantum groups are constructed. It can be summarized as
follows: we start with the quantum torus, take a Hopf algebra $A$ in the corresponding braided monoidal category, and then
take the (relative) Drinfeld center of the category of $A$-modules. 

\ssec{The quantum torus}

Our approach to the definition of categories of modules for the various versions of the quantum group
is to build them starting from the basic case, that being the case of the quantum torus. 

\sssec{}

We start with the lattice $\cLambda$ and a symmetric bilinear $W$-invariant form, denoted $b'$,
$$\cLambda\otimes \cLambda\to k^\times.$$ 

We denote by $q$ the associated quadratic form $q(\clambda)=b'(\clambda,\clambda)$, and by $b$ the
symmetric bilinear form associated to $q$, i.e.,
$$b(\clambda_1,\clambda_2)=q(\clambda_1+\clambda_2)\cdot q(\clambda_1)^{-1}\cdot q(\clambda_2)^{-1}=b'(\clambda_1,\clambda_2)^2.$$

\medskip

We regard $\Rep(T)$ as a monoidal category, and the form $b'$ defines on $\Rep(T)$ a new braided structure, obtained
by multiplying the tautological one by $b'$: the new braiding 
$$R_{k^{\clambda_1},k^{\clambda_2}}:k^{\clambda_1}\otimes k^{\clambda_2}\to k^{\clambda_2}\otimes k^{\clambda_1}$$
is the identity map times $b'(\clambda_1,\clambda_2)$.

\medskip

Denote the resulting braided monoidal category by $\Rep_q(T)$. 

\sssec{}

Although we will not use this extensively in the current paper, we remark that the braided monoidal category $\Rep_q(T)$ 
carries a ribbon structure, where the ribbon automorphism of the object $k^\clambda\in \Rep_q(T)$ equals 
$$b'(\clambda,\clambda+2\check\rho).$$

The $2\rho$-shift in this formula is necessary in order to make the quantum Hopf algebras in \secref{ss:q grp}
\emph{ribbon-equivariant}. 

\sssec{}
Let $c$ be a compact object of $\Rep_q(T)$. To it we can attach its left monoidal and right monoidal duals,
which are objects equipped with perfect pairings
$$c^{\vee,L}\otimes c\to k \text{ and } c\otimes c^{\vee,R}\to k,$$ 
respectively. 

\medskip

The braided structure on $\Rep_q(T)$ allows us to identify $c^{\vee,L}\simeq c^{\vee,R}$. Multiplying this identification
by the ribbon twist, we can make it compatible with the monoidal structure
$$
\CD
(c_1\otimes c_2)^{\vee,L}  @>>>  (c_2\otimes c_1)^{\vee,R}  \\
@VVV   @VVV  \\
c_2^{\vee,L}  \otimes c_1^{\vee,L}  @>>> c_2^{\vee,R}  \otimes c_1^{\vee,R}
\endCD
$$

Henceforth, we will use the above identification between $c^{\vee,L}$ and $c^{\vee,R}$ and simply write $c^\vee$.

\sssec{}  \label{sss:rev}

Let $\Rep_{q^{-1}}(T)$ denote the braided monoidal category with the inverted $b'$. Note that it can be identified
with $(\Rep_q(T))^{\on{rev-br}}$, the braided monoidal category obtained
from $\Rep_q(T)$ by reversing the braiding. 

%\medskip

%Namely, the identity functor $\Phi$ on 
%$$\Rep_q(T)\simeq \Rep(T)\simeq \Rep_{q^{-1}}(T),$$
%equipped with the tautological natural transformation
%$$\Phi(c_1\otimes c_2) \simeq \Phi(c_2)\otimes \Phi(c_1)$$
%makes the diagram
%$$
%\CD
%\Phi(c_1\otimes c_2) @>>> \Phi(c_2)\otimes \Phi(c_1) \\
%@V{\Phi(R^{-1}_{c_2,c_1})}VV   @VV{R_{\Phi(c_2),\Phi(c_1)}}V  \\
%\Phi(c_2\otimes c_1)   @>>> \Phi(c_1)\otimes \Phi(c_2) 
%\endCD
%$$
%commute. 

\ssec{Quantum Hopf algebras}  \label{ss:Hopf algebras}

The Hopf algebras that appear in this subsection should be thought of as ``positive" parts of
the corresponding versions of the quantum group. 

\sssec{} \label{sss:Hopf algebras}

Note that given a braided monoidal category, it makes sense to talk about Hopf algebras in it. 
Our interest will be Hopf algebras $A$ in $(\Rep_q(T))^\heartsuit$ such that their weight components $A^\clambda$
satisfy
$$A^\clambda=
\begin{cases}
&0 \text{ for } \clambda\notin \cLambda^+ \\
&k \text{ for } \clambda=0 \\
&\text{is finite-dimensional for } \clambda\in \cLambda^+. 
\end{cases}
$$

We will also consider Hopf algebras that satisfy the opposite condition (i.e., replace $\cLambda^+$ by $-\cLambda^+$).

\medskip

For a given $A$, we will consider the DG category $A\mod(\Rep_q(T))$, equipped with the forgetful functor $\oblv_A$ to $\Rep_q(T)$.
Let $\ind_A:\Rep_q(T)\to A\mod(\Rep_q(T))$ denote its left adjoint.

\medskip

The datum of Hopf algebra structure on $A$ is equivalent to a structure of monoidal category on $A\mod(\Rep_q(T))$ so that
the functor $\oblv_A$ is monoidal. 

\sssec{}

Let $A\mod(\Rep_q(T))_{\on{fin.dim}}\subset A\mod(\Rep_q(T))$ denote the full (but not cocomplete) subcategory
of finite-dimensional $A$-modules. We let
$$A\mod(\Rep_q(T))_{\on{loc.nilp}}$$ 
denote the ind-completion of $A\mod(\Rep_q(T))_{\on{fin.dim}}$. 

\medskip

The monoidal structure on $A\mod(\Rep_q(T))$
restricts to one on $A\mod(\Rep_q(T))_{\on{fin.dim}}$, which, in turn, ind-extends to one on $A\mod(\Rep_q(T))_{\on{loc.nilp}}$. 

\medskip

Below we will give a different interpretation of $A\mod(\Rep_q(T))_{\on{loc.nilp}}$. 

\sssec{}

By ind-extending the tautological embedding $A\mod(\Rep_q(T))_{\on{fin.dim}}\hookrightarrow A\mod(\Rep_q(T))$
we obtain a functor
$$A\mod(\Rep_q(T))_{\on{loc.nilp}}\to A\mod(\Rep_q(T)).$$

\medskip

\noindent Warning: in general, this functor does not send compacts to compacts (as $A\mod(\Rep_q(T))_{\on{fin.dim}}$ is not necessarily 
contained in $A\mod(\Rep_q(T))_c$)), nor is it fully faithful. In fact, this functor may fail to even be conservative (this is the case if
$k$ is non-compact as an object of $A\mod(\Rep_q(T))$). 

\medskip

By a slight abuse of notation, we will denote by the same symbol $\oblv_A$ the composite functor
$$A\mod(\Rep_q(T))_{\on{loc.nilp}}\to A\mod(\Rep_q(T)) \overset{\oblv_A}\to \Rep_q(T).$$

Note that the resulting functor
\begin{equation} \label{e:oblv nilp}
\oblv_A:A\mod(\Rep_q(T))_{\on{loc.nilp}}\to \Rep_q(T)
\end{equation} 
is not necessarily conservative. 

\sssec{}  \label{sss:recover from heart mod}

Since $A$ was assumed connective, the category $A\mod(\Rep_q(T))$ carries a t-structure, for which the forgetful functor
$\oblv_A$ is t-exact. The subcategory $A\mod(\Rep_q(T))_{\on{fin.dim}}$ is compatible with this t-structure, and that
in turn induces a t-structure on $A\mod(\Rep_q(T))_{\on{loc.nilp}}$. The functor \eqref{e:oblv nilp} is t-exact, and 
an object of $A\mod(\Rep_q(T))_{\on{loc.nilp}}$ is connective
if and only if its image under \eqref{e:oblv nilp} is connective. 

\medskip

Note, however, that the above t-structure on $A\mod(\Rep_q(T))_{\on{fin.dim}}$ is in general not left separated. 

\medskip

Since $A$ was assumed co-connective (recall that we are assuming that $A$ is actually in the heart), it follows (e.g., using \secref{sss:coind} below) that $(A\mod(\Rep_q(T))_{\on{loc.nilp}})^\heartsuit$
contains enough injective objects $\CI$, such that
$$\CHom_{A\mod(\Rep_q(T))_{\on{loc.nilp}}}(\CM,\CI)$$
is acyclic off degree $0$ for any $\CM\in (A\mod(\Rep_q(T))_{\on{loc.nilp}})^\heartsuit$.

\medskip

This implies that the bounded part of $A\mod(\Rep_q(T))_{\on{loc.nilp}})$, i.e., 
$$(A\mod(\Rep_q(T))_{\on{loc.nilp}})^{< \infty,>-\infty}\subset A\mod(\Rep_q(T))_{\on{loc.nilp}}$$
can be recovered from the abelian category $(A\mod(\Rep_q(T))_{\on{loc.nilp}})^\heartsuit$ as its \emph{bounded derived category}
in the sense of \cite[Sect. 1.3.2]{Lu}. 

\medskip

All of $A\mod(\Rep_q(T))_{\on{loc.nilp}}$ can be recovered as the ind-completion of $(A\mod(\Rep_q(T))_{\on{loc.nilp}}))_c$,
which is a full subcategory in $(A\mod(\Rep_q(T))_{\on{loc.nilp}}))^{<\infty,>\infty}$. 

\ssec{Coalgebras and comodules}

\sssec{} \label{sss:dual Hopf}

Let $A^\vee\in \Rep_q(T)$ be the component-wise monoidal dual of $A$. The Hopf algebra structure on $A$
induces one on $A^\vee$ so that the diagrams
$$
\CD
A\otimes A^\vee \otimes A^\vee   & @>{\on{id}_A\otimes \on{mult}_{A^\vee}}>> & A\otimes A^\vee   \\
@V{\on{comult}_A\otimes \on{id}}VV & &  @VV{\on{pairing}}V   \\
A \otimes A\otimes A^\vee \otimes A^\vee@>{\on{id}_{A}\otimes \on{pairing}\otimes \on{id}_{A^\vee}}>> A\otimes A^\vee @>{\on{pairing}}>> k
\endCD
$$
and
$$
\CD
A \otimes A\otimes A^\vee  & @>{\on{mult}_{A}\otimes \on{id}_{A^\vee}}>> &  A\otimes A^\vee    \\
@V{\on{id}_{A\otimes A}\otimes \on{comult}_{A^\vee}}VV & &  @VV{\on{pairing}}V   \\
A \otimes A\otimes A^\vee \otimes A^\vee @>{\on{id}_{A}\otimes \on{pairing}\otimes \on{id}_{A^\vee}}>> A\otimes A^\vee  @>{\on{pairing}}>> k
\endCD
$$
are commutative. 

\sssec{}  \label{sss:rev algebras}

Note also that if $B$ is a Hopf algebra in a braided monoidal category $\CC$, we can attach to it a Hopf algebra
$B^{\on{rev-comult}}$ in the braided monoidal category $\CC^{\on{rev-br}}$.

\medskip

Namely, we let $B^{\on{rev-comult}}$ be the same as $B$ as an associative algebra, but we set the comultiplication to be  
$$B \overset{\on{comult}}\longrightarrow B\otimes B  \overset{R^{-1}_{B,B}}\longrightarrow B \otimes B.$$

\medskip

Similarly, we can consider the Hopf algebra $B^{\on{rev-mult}}$ in $\CC^{\on{rev-br}}$. It is the same as $B$ as a
co-associative coalgebra, and the multiplication is defined by 
$$B\otimes B \overset{R^{-1}_{B,B}}\longrightarrow B\otimes B \overset{\on{mult}}\to B.$$

\medskip

We note, however, that the antipode on $B$ identifies $B^{\on{rev-comult}}$ and $B^{\on{rev-mult}}$ 
as Hopf algebras. 

\sssec{}

If $B$ is a coalgebra as in \secref{sss:Hopf algebras}, we let
$$B\comod(\Rep_q(T))_{\on{fin.dim}}$$
denote the category of finite-dimensional $B$-comodules.
We let $B\comod(\Rep_q(T))$ denote the ind-completion of $B\comod(\Rep_q(T))_{\on{fin.dim}}$. 

\medskip

We have the forgetful functor
$$\oblv_B:B\comod(\Rep_q(T))\to \Rep_q(T),$$
which admits a continuous \emph{right} adjoint, denoted $\coind_B$. 
The corresponding comonad on $\Rep_q(T)$ is given by tensor product with $B$.

\medskip

Note, however, that the functor $\oblv_B$ is \emph{not} necessarily conservative. 

\sssec{}

We have:

\begin{lem} \label{l:mod vs comod}
There exists a canonical equivalence of monoidal categories 
$$A\mod(\Rep_q(T))_{\on{loc.nilp}}\simeq (A^\vee)^{\on{rev-mult}}\comod(\Rep_{q^{-1}}(T))$$
that commutes with the forgetful functors to
$$\Rep_q(T)\simeq (\Rep_{q^{-1}}(T))^{\on{rev-br}}.$$
\end{lem}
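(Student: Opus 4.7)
The plan is to construct the equivalence at the level of finite-dimensional objects, verify the algebraic structures correspond, and then ind-extend to obtain the full equivalence of DG categories. The key input is that although $A$ itself is not compact in $\Rep_q(T)$, each weight component of $A$ is finite-dimensional and each finite-dimensional object $M \in (\Rep_q(T))^\heartsuit$ has only finitely many nonzero weights, so the evaluation pairing $A \otimes A^\vee \to k$ is well-defined when restricted to the weights relevant to $M$. This gives $A$ a form of ``weightwise rigidity'' against which all dualizations can be carried out.

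Step one is the correspondence of data on a single finite-dimensional $M$. Specifying a map $\mu \colon A \otimes M \to M$ in $\Rep_q(T)$ is equivalent, by weightwise transposition using $\on{coev}$ and $\on{ev}$ on the dualizable $M$, to specifying a map $\delta \colon M \to A^\vee \otimes M$. Associativity of $\mu$ translates to coassociativity of $\delta$; however, because the canonical isomorphism $(X \otimes Y)^\vee \simeq Y^\vee \otimes X^\vee$ reverses tensor order, the comultiplication on $A^\vee$ that appears is dual to $\on{mult}_A$ composed with the braiding of $\Rep_q(T)$. This is precisely the structure that, upon reinterpreting the ambient braided category as $\Rep_{q^{-1}}(T) = \Rep_q(T)^{\on{rev-br}}$, becomes $(A^\vee)^{\on{rev-mult}}$ in the sense of \secref{sss:rev algebras}. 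The unit axiom and antipode translate in the analogous way.

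Step two is the monoidal compatibility and ind-extension. The tensor product of two $A$-modules $M_1, M_2$ uses $\on{comult}_A$ via a map of the shape $A \otimes M_1 \otimes M_2 \to A \otimes A \otimes M_1 \otimes M_2 \to M_1 \otimes M_2$, where the middle arrow employs $\on{comult}_A$ together with the braiding to move the second copy of $A$ past $M_1$. Dualizing this and invoking step one yields the tensor product of $(A^\vee)^{\on{rev-mult}}$-comodules, with the multiplication of $(A^\vee)^{\on{rev-mult}}$ being exactly the dual of $\on{comult}_A$. Ind-extending the equivalence $A\mod(\Rep_q(T))_{\on{fin.dim}} \simeq (A^\vee)^{\on{rev-mult}}\comod(\Rep_{q^{-1}}(T))_{\on{fin.dim}}$ then produces the stated equivalence of cocomplete DG categories; commutation with the forgetful functors is automatic because at the underlying level in $\Rep_q(T) \simeq \Rep_{q^{-1}}(T)^{\on{rev-br}}$ the functor is the identity.

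The hard part will be the careful bookkeeping of braidings. Braidings and their inverses appear in many places: in the Hopf axioms for $A$, in the identification $(A \otimes M)^\vee \simeq M^\vee \otimes A^\vee$, in the monoidal structures on both module and comodule categories, and in the ribbon twist used to identify left and right duals (as discussed after \secref{sss:rev}). I expect the cleanest verification to proceed by first treating the symmetric case $b' \equiv 1$, where one recovers the classical equivalence between finite-dimensional modules over a Hopf algebra and comodules over its linear dual, and then systematically tracking each $b'$-insertion, confirming that the accumulated factors combine into exactly the $\on{rev-mult}$ modification together with the passage $q \rightsquigarrow q^{-1}$.
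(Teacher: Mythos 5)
The paper states this lemma without proof, evidently treating it as a routine instance of the classical duality between locally finite modules over a (graded) algebra and comodules over its (graded) dual coalgebra, transported into the braided setting. Your strategy — weight-by-weight transposition of the action map into a coaction map, made possible by the finite-dimensionality of the graded pieces of $A$ and of the locally nilpotent module $M$, followed by ind-extension — is indeed the standard and correct approach.

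That said, there are two genuine weaknesses. First, you explicitly defer the braiding bookkeeping as "the hard part" and supply only a plan (treat the symmetric case first, then track $b'$-insertions), rather than carrying it out. Since the entire content of the lemma beyond classical module/comodule duality is precisely what the braiding insertions produce, leaving this unresolved means the proof is not yet complete. Second, and more concretely, your account of \emph{where} the braiding modification lands appears to be backwards relative to the paper's definition. In \secref{sss:rev algebras}, $(A^\vee)^{\on{rev-mult}}$ has the \emph{same} comultiplication as $A^\vee$; it is the \emph{multiplication} that is twisted by $R^{-1}_{A^\vee,A^\vee}$. But your Step one claims "the comultiplication on $A^\vee$ that appears is dual to $\on{mult}_A$ composed with the braiding," and your Step two asserts "the multiplication of $(A^\vee)^{\on{rev-mult}}$ being exactly the dual of $\on{comult}_A$," which omits the $R^{-1}$-twist. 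Note that the comultiplication of $A^\vee$ is what governs the coassociativity (comodule) axiom — Step one — while the multiplication is what governs the monoidal structure on comodules — Step two. The paper already fixes a pairing convention for $\on{comult}_{A^\vee}$ via the second defining diagram, so if your transposition uses the opposite duality convention $(X\otimes Y)^\vee\simeq Y^\vee\otimes X^\vee$ together with an explicit braiding, you would need to verify that the net effect coincides with the paper's $\on{comult}_{A^\vee}$ unchanged in Step one, and that the $R^{-1}$-twist emerges precisely in the multiplication when you unwind Step two. As written, the sign conventions do not obviously line up, and this is exactly where a careful verification could silently go wrong.
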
 

\sssec{}  \label{sss:coind}

As a consequence, we obtain that the forgetful functor $\oblv_A$ of \eqref{e:oblv nilp}
admits a continuous \emph{right adjoint}, denoted $\coind_A$.

\medskip

The corresponding co-monad on $\Rep_q(T)$ is given by tensor product with $A^\vee$. 

\ssec{Formation of the relative Drinfeld double}

We will now perform the crucial step in the construction of quantum groups: we will add the
``negative part" of the quantum group by considering the (relative) Drinfeld center.  

\sssec{}

Note that tensor product by objects of $\Rep_q(T)$ \emph{on the right} makes the monoidal category $A\mod(\Rep_q(T))_{\on{loc.nilp}}$ into 
a \emph{right module category} for the braided monoidal category $\Rep_q(T)$.

\medskip

In this case, it makes sense to form the \emph{relative Drinfeld double}
$$Z_{\on{Dr},\Rep_q(T)}(A\mod(\Rep_q(T))_{\on{loc.nilp}}).$$

This is a braided monoidal category, universal with respect to the property that it acts on $A\mod(\Rep_q(T))_{\on{loc.nilp}}$
\emph{on the left}, in a way commuting with the right action of $\Rep_q(T)$.

\sssec{}

Explicitly, objects of $Z_{\on{Dr},\Rep_q(T)}(A\mod(\Rep_q(T))_{\on{loc.nilp}})$ are
objects $z\in A\mod(\Rep_q(T))_{\on{loc.nilp}}$ equipped with a system of identifications
\begin{equation} \label{e:Dr center}
z\otimes \CM\to \CM \otimes z, \quad \forall \,\CM\in A\mod(\Rep_q(T))_{\on{loc.nilp}},
\end{equation}
compatible with tensor products of the $\CM$'s, and such that for $\CM$ coming via 
augmentation from an object $c\in \Rep_q(T)$, the resulting map 
$$z\otimes c\to c \otimes z$$
is the braiding $R_{z,c}$ in $\Rep_q(T)$.

\sssec{}  \label{sss:forget from double}

We have the evident (monoidal) conservative forgetful functor
$$\oblv_{\on{Dr}\to A}:Z_{\on{Dr},\Rep_q(T)}(A\mod(\Rep_q(T))_{\on{loc.nilp}})\to A\mod(\Rep_q(T))_{\on{loc.nilp}},$$
which admits a \emph{left} adjoint, denoted $\ind_{A\to \on{Dr}}$. 

\medskip

Via the equivalence of \lemref{l:mod vs comod}, we obtain that $Z_{\on{Dr},\Rep_q(T)}(A\mod(\Rep_q(T))_{\on{loc.nilp}})$
admits also a (monoidal) forgetful functor 
$$\oblv_{\on{Dr}\to A^\vee}:Z_{\on{Dr},\Rep_q(T)}(A\mod(\Rep_q(T))_{\on{loc.nilp}})\to 
(A^\vee)^{\on{rev-mult}}\mod(\Rep_{q^{-1}}(T)),$$
which we further identify with
$$(A^\vee)^{\on{rev-comult}}\mod(\Rep_{q^{-1}}(T)),$$
via the antipode map. 

\medskip

If we disregard the monoidal structures, we can view the latter functor as 
$$Z_{\on{Dr},\Rep_q(T)}(A\mod(\Rep_q(T))_{\on{loc.nilp}})\to 
A^\vee\mod(\Rep_q(T)).$$

\ssec{Basic structures on the category of modules}

\sssec{}

We have the commutative diagram
\begin{equation} \label{e:ind cond0}
\CD
Z_{\on{Dr},\Rep_q(T)}(A\mod(\Rep_q(T))_{\on{loc.nilp}}) @>{\oblv_{\on{Dr}\to A}}>>  A\mod(\Rep_q(T))_{\on{loc.nilp}} \\
@V{\oblv_{\on{Dr}\to A^\vee}}VV   @VV{\oblv_A}V   \\
A^\vee\mod(\Rep_q(T)) @>{\oblv_{A^\vee}}>> \Rep_q(T),
\endCD
\end{equation} 
and the following two commutative diagrams, obtained by passing to left adjoints along the horizontal arrows 
(resp., right adjoints along the vertical arrows):

\begin{equation} \label{e:ind cond1}
\CD
Z_{\on{Dr},\Rep_q(T)}(A\mod(\Rep_q(T))_{\on{loc.nilp}}) @<{\ind_{A\to \on{Dr}}}<<  A\mod(\Rep_q(T))_{\on{loc.nilp}} \\
@V{\oblv_{\on{Dr}\to A^\vee}}VV   @VV{\oblv_A}V   \\
A^\vee\mod(\Rep_q(T))  @<{\ind_{A^\vee}}<< \Rep_q(T),
\endCD
\end{equation} 
and
\begin{equation} \label{e:ind cond2}
\CD
Z_{\on{Dr},\Rep_q(T)}(A\mod(\Rep_q(T))_{\on{loc.nilp}}) @>{\oblv_{\on{Dr}\to A}}>>  A\mod(\Rep_q(T))_{\on{loc.nilp}} \\
@A{\coind_{A^\vee \to \on{Dr}}}AA   @AA{\coind_A}A   \\
A^\vee\mod(\Rep_q(T))  @>{\oblv_{A^\vee}}>> \Rep_q(T).
\endCD
\end{equation} 

\sssec{}   \label{sss:t on double}

The category $Z_{\on{Dr},\Rep_q(T)}(A\mod(\Rep_q(T))_{\on{loc.nilp}})$ carries a t-structure for which the functor
$\oblv_{\on{Dr}\to A}$ is t-exact.

\medskip

It follows from diagrams \eqref{e:ind cond0}, \eqref{e:ind cond1} and \eqref{e:ind cond2} that the functors
$\oblv_{\on{Dr}\to A^\vee}$, $\ind_{A\to \on{Dr}}$ and $\coind_{A^\vee \to \on{Dr}}$ are also t-exact. 

\medskip

Note, however, that the above t-structure on $Z_{\on{Dr},\Rep_q(T)}(A\mod(\Rep_q(T))_{\on{loc.nilp}})$ is in general
\emph{not separated}. In particular, the functor $\oblv_{\on{Dr}\to A^\vee}$ is in general \emph{not} conservative.

\sssec{}

For every $\clambda\in \cLambda$, we have the standard and the costandard objects 
$$\BM_A^\clambda:=\ind_{A\to \on{Dr}}(k^\clambda) \text{ and } \BM_A^{\vee,\clambda}:=\coind_{A^\vee \to \on{Dr}}(k^\clambda)$$
that both lie in the heart of $Z_{\on{Dr},\Rep_q(T)}(A\mod(\Rep_q(T))_{\on{loc.nilp}})$.

\medskip

From the commutative diagrams \eqref{e:ind cond1} and \eqref{e:ind cond2} we obtain that
$$\oblv_{\on{Dr}\to A^\vee}(\BM_A^\clambda)\simeq \ind_{A^\vee}(k^\clambda) \text{ and }
\oblv_{\on{Dr}\to A^\vee}(\BM_A^{\vee,\clambda})\simeq \coind_{A}(k^\clambda).$$

From either of those we obtain:  
$$
\CHom_{Z_{\on{Dr},\Rep_q(T)}(A\mod(\Rep_q(T))_{\on{loc.nilp}})}(\BM_A^\clambda,\BM_A^{\vee,\clambda'})=
\begin{cases}
&k \text{ if } \clambda'=\lambda \\
&0 \text{ otherwise.}
\end{cases}
$$

\sssec{}  \label{sss:when compact}

\medskip

Note that, by construction, the objects $\BM_A^\clambda\in Z_{\on{Dr},\Rep_q(T)}(A\mod(\Rep_q(T))_{\on{loc.nilp}})$ 
are compact and generate $Z_{\on{Dr},\Rep_q(T)}(A\mod(\Rep_q(T))_{\on{loc.nilp}})$. From here it follows that
$$(Z_{\on{Dr},\Rep_q(T)}(A\mod(\Rep_q(T))_{\on{loc.nilp}}))_c\subset (Z_{\on{Dr},\Rep_q(T)}(A\mod(\Rep_q(T))_{\on{loc.nilp}}))^{<\infty ,>-\infty }.$$
We will now give an explicit
description of all compact objects in $Z_{\on{Dr},\Rep_q(T)}(A\mod(\Rep_q(T))_{\on{loc.nilp}})$:

\begin{prop} \label{p:when compact}
 An object $\CM\in (Z_{\on{Dr},\Rep_q(T)}(A\mod(\Rep_q(T))_{\on{loc.nilp}}))^{<\infty ,>-\infty }$ is compact if and only if 
$\oblv_{\on{Dr}\to A^\vee}(\CM)\in A^\vee\mod(\Rep_q(T))$ 
is compact.
\end{prop}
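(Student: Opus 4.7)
For the ``only if'' direction, the commutative diagram \eqref{e:ind cond1} applied to $V = k^\clambda$ gives $\oblv_{\on{Dr}\to A^\vee}(\BM_A^\clambda) \simeq \ind_{A^\vee}(k^\clambda)$, which is compact in $A^\vee\mod(\Rep_q(T))$ as the image of the compact object $k^\clambda$ under the continuous left adjoint $\ind_{A^\vee}$. Since $\oblv_{\on{Dr}\to A^\vee}$ admits the right adjoint $\coind_{A^\vee\to\on{Dr}}$ by diagram \eqref{e:ind cond2}, it is continuous; combined with the fact that the $\BM_A^\clambda$ are compact generators of $Z_{\on{Dr},\Rep_q(T)}(A\mod(\Rep_q(T))_{\on{loc.nilp}})$ (as noted just before \ref{sss:when compact}), this shows that $\oblv_{\on{Dr}\to A^\vee}$ preserves compact objects.

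For the converse, assume $\CM$ is bounded with $\CN := \oblv_{\on{Dr}\to A^\vee}(\CM)$ compact. The plan is a highest-weight induction exploiting the positivity hypothesis on $A$. Since $A^\vee$ is concentrated on $-\cLambda^+$ (dually to $A$), a finite resolution of $\CN$ by finite direct sums of $\ind_{A^\vee}(k^{\clambda_j})$ forces the weight support of $\CN$ into a finite union $\bigcup_j (\clambda_j - \cLambda^+)$, which has only finitely many maximal elements in the partial order on $\cLambda$. Pick a maximal $\clambda_{\on{max}}$ for which $\CN$ has nonzero cohomology. The crucial observation is that any weight-$\clambda_{\on{max}}$ vector of $\oblv_{\on{Dr}\to A}(\CM)$ is automatically $A$-invariant: the positive-weight part of $A$ would send it to a vector of strictly greater weight, which must vanish by maximality. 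Hence any finite-dimensional $V$ covering $H^*(\CN)^{\clambda_{\on{max}}}$ gives an $A$-module map $V \otimes k^{\clambda_{\on{max}}} \to \oblv_{\on{Dr}\to A}(\CM)$, and by adjunction a map $V \otimes \BM_A^{\clambda_{\on{max}}} \to \CM$ in $Z_{\on{Dr},\Rep_q(T)}(A\mod(\Rep_q(T))_{\on{loc.nilp}})$.

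Taking the cofiber yields a bounded $\CM'$ whose image under $\oblv_{\on{Dr}\to A^\vee}$ is still compact (as a cone of compacts) and whose weight support strictly decreases. Iterating, with termination controlled by the finite set of weights appearing in a fixed perfect resolution of $\CN$, we eventually exhibit $\CM$ as a finite iterated cone of objects of the form $V \otimes \BM_A^\clambda$ with $V$ finite-dimensional; each such is compact, hence so is $\CM$. The main obstacle is the termination of this induction: killing one maximal weight could in principle expose infinitely many new lower weights. The cleanest remedy is to lift the entire finite perfect resolution of $\CN$ in $A^\vee\mod(\Rep_q(T))$ directly to a resolution of $\CM$ in $Z_{\on{Dr},\Rep_q(T)}(A\mod(\Rep_q(T))_{\on{loc.nilp}})$, proceeding from the highest cohomological degree downward and applying the highest-weight lifting principle at each stage.
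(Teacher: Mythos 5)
Your ``only if'' direction is fine and, apart from routing through the compact generators $\BM_A^\clambda$ instead of directly citing that $\oblv_{\on{Dr}\to A^\vee}$ is the left adjoint of the continuous functor $\coind_{A^\vee\to\on{Dr}}$, it agrees with the paper. Your key observation in the converse --- that a vector of maximal weight is automatically annihilated by the augmentation ideal of $A$, so that Verma modules $\BM_A^\clambda$ map in --- is also exactly the mechanism the paper uses.

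The genuine gap is the one you yourself flag: termination. Killing the cohomology in a maximal weight $\clambda_{\on{max}}$ by taking the cofiber of $V\otimes \BM_A^{\clambda_{\on{max}}}[-i]\to \CM$ removes that weight from the support but (via the long exact sequence) can create new cohomology at weights strictly below $\clambda_{\on{max}}$, and there are infinitely many such weights inside $\clambda_{\on{max}}-\cLambda^+$; nothing in your argument bounds the number of steps, so the ``finite iterated cone'' is never reached. Your proposed remedy --- lift a finite free resolution of $\CN=\oblv_{\on{Dr}\to A^\vee}(\CM)$ to a resolution of $\CM$ --- is precisely the nontrivial content that is missing: a compact object of $A^\vee\mod(\Rep_q(T))$ is only a \emph{retract} of a finite complex of modules $\ind_{A^\vee}(k^{\clambda_j})$, the differentials of such a complex are not morphisms in the Drinfeld center, and even constructing the lift degree by degree requires an Ext-vanishing argument to promote cohomology classes of weight $\clambda_{\on{max}}$ to actual maps $V\otimes\BM_A^{\clambda_{\on{max}}}[-i]\to\CM$ (the obstructions lie in Hom groups that vanish only because they involve weights $>\clambda_{\on{max}}$). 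The paper's proof supplies the finiteness from a different source: a preliminary ``standard argument'' writes $\CM$, up to an already compact piece, as an object $\CM'$ of the heart whose image under $\oblv_{\on{Dr}\to A^\vee}$ is \emph{finitely generated and free} over $A^\vee$; such an $\CM'$ admits a \emph{finite} filtration (of length the number of generators, not indexed by a descending chain of weights) whose subquotients are generated over $A^\vee$ by a single homogeneous element, and the highest-weight observation then identifies each subquotient with some $\BM_A^\clambda$. So the finiteness comes from finite generation over $A^\vee$ (i.e.\ from perfectness/finite Tor-amplitude of $\CN$), not from descent through the weight poset; without an argument of this kind your induction does not close.
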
 

\noindent Warning: if in the statement of the proposition we omitted the condition that $\CM$ be cohomologically bounded,
the assertion would be false: indeed, the functor $\oblv_{\on{Dr}\to A^\vee}$ is not necessarily conservative. 

\begin{proof}

The ``only if" direction is clear, since the functor $\oblv_{\on{Dr}\to A^\vee}$ preserves compactness, being the left adjoint
of a continuous functor.

\medskip

For the ``if" direction, let us start with an object in 
$$\CM\in (Z_{\on{Dr},\Rep_q(T)}(A\mod(\Rep_q(T))_{\on{loc.nilp}}))^{<\infty ,>-\infty }$$ whose
image in $A^\vee\mod(\Rep_q(T))$ along $\oblv_{\on{Dr}\to A^\vee}$ is compact. A standard argument shows that there exists a fiber
sequence
$$\CM'\to \CM''\to \CM',$$
where $\CM''\in (Z_{\on{Dr},\Rep_q(T)}(A\mod(\Rep_q(T))_{\on{loc.nilp}}))_c$ and 
$\CM'\in  (Z_{\on{Dr},\Rep_q(T)}(A\mod(\Rep_q(T))_{\on{loc.nilp}}))^\heartsuit$ is such that its image
in $A^\vee\mod(\Rep_q(T))$ along $\oblv_{\on{Dr}\to A^\vee}$ is finitely generated and free. 

\medskip

It is easy to
see that such $\CM'$ admits a filtration 
$$0=\CM'_0\subset \CM'_1\subset ...\subset \CM'_n=\CM',$$
where each successive quotient is generated under the action of $A^\vee$ by a homogeneous element (with respect
to the grading by $\cLambda$).  
However, such an element is necessarily annihilated by the action of $A$. Hence, we obtain that each successive
quotient is isomorphic to $\BM_A^\clambda$ for some $\clambda$. 

\end{proof} 

\sssec{} \label{sss:recover from heart}

From \secref{sss:recover from heart mod} it follows that the abelian category $(Z_{\on{Dr},\Rep_q(T)}(A\mod(\Rep_q(T))_{\on{loc.nilp}}))^\heartsuit$
has enough injectives, which are also acyclic in $Z_{\on{Dr},\Rep_q(T)}(A\mod(\Rep_q(T))_{\on{loc.nilp}})$. 

\medskip

This implies that the cohomologically bounded part of $Z_{\on{Dr},\Rep_q(T)}(A\mod(\Rep_q(T))_{\on{loc.nilp}})$, i.e., 
$$(Z_{\on{Dr},\Rep_q(T)}(A\mod(\Rep_q(T))_{\on{loc.nilp}}))^{<\infty ,>-\infty },$$ can be recovered from the abelian category 
$(Z_{\on{Dr},\Rep_q(T)}(A\mod(\Rep_q(T))_{\on{loc.nilp}}))^\heartsuit$ as its bounded derived category.
The entire $Z_{\on{Dr},\Rep_q(T)}(A\mod(\Rep_q(T))_{\on{loc.nilp}})$ can be recovered as the ind-completion of
$(Z_{\on{Dr},\Rep_q(T)}(A\mod(\Rep_q(T))_{\on{loc.nilp}}))_c\subset 
(Z_{\on{Dr},\Rep_q(T)}(A\mod(\Rep_q(T))_{\on{loc.nilp}}))^{<\infty ,>-\infty }$.

\section{Modules over the quantum group}   \label{s:quant grp}

We continue to review the basics of quantum groups. In this section we define the categories of primary interest,
$\Rep_q(G)$ and $\Rep^{\on{mxd}}_q(G)$. The former is (the usual) category of modules over Lusztig's quantum group.
The latter category is less known: it combines Lusztig's version for the positive part and the De Concini-Kac version for
the negative part. 

\ssec{The various version of $U_q(N)$}  \label{ss:q grp}

In this subsection we will introduce some particular Hopf algebras in $\Rep_q(T)$ that correspond to
the several versions of the quantum group that we will consider. 

\sssec{}

Let $U^{\on{free}}_q(N)$ be the free associative algebra in $\Rep_q(T)$ on the generators $e_i$, each in degree 
the simple root $\check\alpha_i$. It has a canonical Hopf algebra structure, defined by the condition that the 
comultiplication sends 
$$e_i\mapsto e_1\otimes 1+1\otimes e_i.$$

\begin{rem}
We emphasize that $U^{\on{free}}_q(N)$ is a Hopf algebra in $\Rep_q(T)$, and not in $\Vect$. The usual formula
$$\nabla(e_i)=e_i\otimes 1+K_i\cdot e_i\otimes 1$$
arises from the braiding on $\Rep_q(T)$, where $K_i$ acts on the $\clambda$-weight space as $b'(\check\alpha_i,\clambda)$.
\end{rem} 

\sssec{}

We introduce the De Concini-Kac version of $U_q(N)$, denoted $U^{\on{DK}}_q(N)$, 
to be the quotient of $U^{\on{free}}_q(N)$ defined as in \cite[Sect. 3.3.6]{Ga4}. 

\medskip

We note that if $q$ satisfies Assumption \eqref{e:estimate} below (which we \emph{will} be assuming in any case),
then the ideal of the projection
$$U^{\on{free}}_q(N)\to U^{\on{DK}}_q(N)$$
is generated by the quantum Serre relations, see \cite[Sect. 3.7.5]{Ga4}.

%
%\medskip
%
%It is known that the ideal generated by the quantum Serre relations is a Hopf ideal; hence
%$U^{\on{DK}}_q(N)$ has a unique structure of Hopf algebra, compatible with the projection
%$$U^{\on{free}}_q(N)\twoheadrightarrow U^{\on{DK}}_q(N).$$

\medskip

Swapping the roles of $\cLambda^+$ and $\cLambda^-$, we obtain the Hopf algebras in $\Rep_q(T)$:
$$U^{\on{free}}_q(N^-)\twoheadrightarrow U^{\on{DK}}_q(N^-).$$

\sssec{}

We define 
$$U^{\on{Lus}}_q(N)$$
to be the Hopf algebra in $\Rep_q(T)$ equal to the graded dual of $U^{\on{DK}}_q(N^-)$.

\sssec{}

Let $U^{\on{cofree}}_q(N)$ be the graded dual of $U^{\on{free}}_q(N^-)$. By duality, we have an injection
$$U^{\on{Lus}}_q(N)\hookrightarrow U^{\on{cofree}}_q(N).$$

\medskip

It is known that $U^{\on{Lus}}_q(N)$ is Lusztig's (i.e., quantum divided power) version of the quantum group.
In what follows we will use the notation
$$\Rep_q(B):=U^{\on{Lus}}_q(N)\mod(\Rep_q(T))_{\on{loc.nilp}}.$$

\sssec{}

Note that we have a canonical identification
$$(U^{\on{free}}_q(N^-))^{\on{rev-comult}}\simeq U^{\on{free}}_{q^{-1}}(N^-),$$
which induces an identification 
$$(U^{\on{DK}}_q(N^-))^{\on{rev-comult}}\simeq U^{\on{DK}}_{q^{-1}}(N^-).$$

This implies that we can think of $U^{\on{DK}}_{q^{-1}}(N^-)$ as obtained from $U^{\on{Lus}}_q(N)$
by the procedure of \secref{sss:dual Hopf}.

\sssec{}

We have a canonical map of Hopf algebras
\begin{equation} \label{e:free to cofree}
U^{\on{free}}_q(N)\to U^{\on{cofree}}_q(N),
\end{equation} 
obtained by extending the identity map on the generators. 

\medskip

It is known that \eqref{e:free to cofree}
factors as
$$U^{\on{free}}_q(N)\twoheadrightarrow U^{\on{DK}}_q(N) \to U^{\on{cofree}}_q(N).$$

By duality, \eqref{e:free to cofree} also factors as
$$U^{\on{free}}_q(N) \to U^{\on{Lus}}_q(N) \hookrightarrow U^{\on{cofree}}_q(N).$$

Hence, the map \eqref{e:free to cofree} actually factors as
\begin{equation} \label{e:free to cofree Lus DK}
U^{\on{free}}_q(N)\twoheadrightarrow U^{\on{DK}}_q(N) \to U^{\on{Lus}}_q(N) \hookrightarrow U^{\on{cofree}}_q(N).
\end{equation} 

\sssec{}

We let $u_q(N)$ denote the image of the above map $U^{\on{DK}}_q(N) \to U^{\on{Lus}}_q(N)$, so that we have

\begin{equation} \label{e:free to cofree Lus DK sm}
U^{\on{free}}_q(N)\twoheadrightarrow U^{\on{DK}}_q(N) \twoheadrightarrow  u_q(N) \hookrightarrow 
U^{\on{Lus}}_q(N) \hookrightarrow U^{\on{cofree}}_q(N).
\end{equation} 

The Hopf algebra $u_q(N)$ is the positive part of the ``small'' quantum group. 

\sssec{}

Let $u_q(N^-)$ be defined similarly. We have:
$$(u_q(N))^\vee\simeq u_q(N^-).$$

In what follows we will denote
$$\Rep^{\on{sml,grd}}_q(B):=u_q(N) \mod(\Rep_q(T))_{\on{loc.nilp}}.$$

\sssec{}

It is known that away from the case of the root of unity (i.e., when $q(\check\alpha_i)$ is not a root of unity for any simple root
$\check\alpha_i$), the maps
$$U^{\on{DK}}_q(N) \twoheadrightarrow  u_q(N) \hookrightarrow U^{\on{Lus}}_q(N)$$
are isomorphisms.

\medskip

By contrast, in the root of unity case (i.e., when $q(\check\alpha_i)$ is a root of unity for all simple roots), it is known that
$u_q(N)$ is finite-dimensional. In fact, its generators $e_i$ satisfy $e_i^{d_i}=0$ for $d_i=\on{ord}(q(\check\alpha_i))$. 

\ssec{Cohomology of the De Concini-Kac algebra}

For what follows we will need to review some cohomological properties of $U_q^{\on{DK}}(N^-)$, viewed as
an associative algebra. They can be summarized by saying that it behaves like the classical universal enveloping
$U(\fn^-)$.

\sssec{}

The following assertion is established in \cite[Theorem 6.4.1]{Geo}:

\begin{thm} \label{t:cohomology DK}
Assume that for every positive root $\alpha$ we have the inequality 
\begin{equation} \label{e:estimate}
\on{ord}(q(\check\alpha))\geq
\langle \check\rho,\alpha\rangle.
\end{equation}
Then we have
$$\on{Tor}_i^{U_q^{\on{DK}}(N^-)}(k,k)^\clambda=
\begin{cases}
&k, \quad \clambda=w(\check\rho)-\rho \text{ for } w\in W \text{ and } i=\ell(w),\\
&0 \text{ otherwise}.
\end{cases}
$$ 
\end{thm}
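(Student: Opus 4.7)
The strategy is to reduce the computation to the classical Kostant weight identity by means of a PBW-type filtration argument. The role of hypothesis \eqref{e:estimate} is crucial but auxiliary: under it, the kernel of $U_q^{\on{free}}(N^-) \twoheadrightarrow U_q^{\on{DK}}(N^-)$ is generated by the quantum Serre relations alone (as recalled in Section~\ref{ss:q grp}), so $U_q^{\on{DK}}(N^-)$ has the ``expected'' size with no hidden nilpotency relations on root vectors. This is what ensures the answer matches the classical $U(\fn^-)$.

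First I would fix a reduced expression for $w_0$, producing via Lusztig's braid operators a convex ordering $\beta_1, \ldots, \beta_N$ of the positive roots together with root vectors $E_{\beta_k} \in U_q^{\on{DK}}(N^-)$ of $\ct$-weight $-\beta_k$. By Levendorskii--Soibelman straightening, ordered monomials $\prod_k E_{\beta_k}^{m_k}$ form a basis, and filtering by height of positive roots yields an associated graded algebra $S_q(\fn^-)$ which is a $q$-twisted polynomial ring on generators $E_{\beta_k}$ with relations $E_{\beta_i} E_{\beta_j} = q^{c_{ij}} E_{\beta_j} E_{\beta_i}$ for $i<j$.

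Next I would compute $\on{Tor}^{S_q(\fn^-)}_\bullet(k,k)$ via the explicit quantum Koszul complex $S_q(\fn^-) \otimes \Lambda_q(\fn^-)^\vee$ with the appropriate $q$-signed differential; this gives $\Lambda_q^\bullet(\fn^-)$, whose $\ct$-weight-graded character coincides with that of the classical $\Lambda^\bullet(\fn^-)$ (the $q$-factors affect products but not the underlying graded vector space). Then the height filtration induces a convergent spectral sequence
\[
E_1 \;=\; \on{Tor}^{S_q(\fn^-)}_\bullet(k,k) \;\Longrightarrow\; \on{Tor}^{U_q^{\on{DK}}(N^-)}_\bullet(k,k),
\]
and degeneration would be established by a character count: the Euler characteristic $\sum_i (-1)^i \on{ch}\bigl(\on{Tor}_i(k,k)\bigr)$ must equal $\prod_{\alpha>0}(1 - e^{-\alpha})$, the Weyl denominator, which already equals the character of $\Lambda^\bullet(\fn^-)$; hence no higher differentials can cancel classes. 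Finally, the weight decomposition of $\Lambda^\bullet(\fn^-)$ is Kostant's classical combinatorial identity: subsets $S \subset \{\beta > 0\}$ of size $i$ with $\sum_{\beta \in S} \beta$ fixed are in bijection with $w \in W$ of length $i$ via the inversion set of $w$, and $\sum_{\beta \in S} \beta = \check\rho - w(\check\rho)$. This yields the stated formula, with $k$ in weight $w(\check\rho) - \check\rho$ for each $w$ with $\ell(w) = i$.

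The main obstacle is the degeneration of the spectral sequence in the third step. The abstract character argument above is the cleanest if it works; should it prove delicate (for instance because of subtleties at roots of unity where $q$-symmetrization is more restrictive), a safer fallback is to construct a Koszul-type resolution of $k$ directly over $U_q^{\on{DK}}(N^-)$ by iterated Hochschild--Serre along the chain of subalgebras generated by the initial segments $E_{\beta_1}, \ldots, E_{\beta_k}$ of the PBW basis, exploiting the fact that these are normal in the appropriate sense by convexity of the ordering. Either way, the delicate point is to verify that no unexpected relations arise, and this is precisely where hypothesis \eqref{e:estimate} is used: it guarantees the PBW-flatness of $U_q^{\on{DK}}(N^-)$, without which additional relations could shrink the algebra and alter the answer.
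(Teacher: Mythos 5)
First, a point of comparison: the paper does not actually prove this statement — it quotes it from \cite[Theorem 6.4.1]{Geo} (and remarks that the hypothesis \eqref{e:estimate} is weaker than the one in {\it loc.cit.}, with the general case deferred to a future publication). So your proposal is to be judged as a self-contained proof attempt, and it has a genuine gap.

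The fatal step is the claimed degeneration of the May-type spectral sequence $E_1=\on{Tor}^{S_q(\fn^-)}_\bullet(k,k)\Rightarrow \on{Tor}^{U_q^{\on{DK}}(N^-)}_\bullet(k,k)$. The Euler-characteristic argument is vacuous: the differentials preserve the $\cLambda$-weight and shift homological degree by one, so they cancel classes in pairs within a fixed weight and therefore automatically preserve the signed character $\sum_i(-1)^i\on{ch}\on{Tor}_i$; agreement of Euler characteristics can never rule out higher differentials. Worse, degeneration is actually \emph{false}: the $E_1$ page is the quantum exterior algebra, of total dimension $2^N$ ($N$ the number of positive roots), while the asserted answer has total dimension $|W|$. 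Already for $\fsl_3$ the weight $-\check\alpha_1-\check\alpha_2$ occurs in $\Lambda^\bullet(\fn^-)$ twice (as $e_{-\theta}$ in degree $1$ and $e_{-\alpha_1}\wedge e_{-\alpha_2}$ in degree $2$), and neither class survives, because neither $\{\theta\}$ nor $\{\alpha_1,\alpha_2\}$ is an inversion set; the nonzero differential pairing them is precisely the shadow of the bracket $[e_{-\alpha_1},e_{-\alpha_2}]=\pm e_{-\theta}$. This is exactly the classical situation: for $U(\fn^-)$ the same filtration produces the Chevalley--Eilenberg complex $\Lambda^\bullet(\fn^-)$, and Kostant's theorem is the (nontrivial) computation of its homology, not a degeneration statement. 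Your plan therefore sidesteps the entire content of the theorem. The fallback via iterated Hochschild--Serre along the convex PBW chain does not repair this: each step again produces rank-one Tor of size $2$ per root vector, and the needed cancellations reappear as differentials/extension problems carrying the same information. What a correct argument must supply is a quantum replacement for the structural input behind Kostant's proof (a Laplacian/Casimir argument, a BGG-type resolution of $k$ over $U_q^{\on{DK}}(N^-)$, or the inductive scheme of \cite{Geo}), together with an explanation of where \eqref{e:estimate} enters beyond guaranteeing the Serre presentation and PBW flatness — in \cite{Geo} the bound on $\on{ord}(q(\check\alpha))$ is used to control exactly which weights $w(\check\rho)-\check\rho$ can collide modulo the relevant lattice, not merely to fix the size of the algebra. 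The peripheral steps of your plan (PBW/Levendorskii--Soibelman filtration, Koszulity of the $q$-polynomial associated graded, and the inversion-set combinatorics identifying the weights) are fine and are indeed the standard scaffolding.
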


In other words, this theorem says that the homology of $U_q^{\on{DK}}(N^-)$ looks
exactly the same as that of the classical universal enveloping $U(\fn^-)$. 

\medskip

The estimate in \eqref{e:estimate} says that if the elements $q(\check\alpha)$ are torsion, then
their orders are not too small. 

\medskip

\noindent{\bf Assumption:} {\it In what follows, when discussing quantum groups we will assume that $q$
is such that the estimate \eqref{e:estimate} is satisfied.}

\medskip

We note that in \cite{Geo}, the assertion of \thmref{t:cohomology DK} was established under
a more restrictive assumption than \eqref{e:estimate} below. In a future publication we will
show that \eqref{e:estimate} is actually sufficient for the validity of \thmref{t:cohomology DK}.
Alternatively, the reader should assume that $q$
is such that the conclusion of \thmref{t:cohomology DK} holds. 

\medskip

Note that in terms of \eqref{e:adm}, the estimate \eqref{e:estimate} says that the weight $-\check\rho$ is
admissible. 

\sssec{}

In particular from \thmref{t:cohomology DK}, we obtain:

\begin{cor} \label{c:cohomology DK}
The groups $\on{Tor}_i^{U_q^{\on{DK}}(N^-)}(k,k)$ are non-zero only for finitely many $i$.
\end{cor}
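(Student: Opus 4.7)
The plan is to deduce this statement as an immediate consequence of \thmref{t:cohomology DK}. First, I would observe that since $U^{\on{DK}}_q(N^-)$ is a $\cLambda$-graded algebra (as a Hopf algebra in $\Rep_q(T)$), the Tor groups $\on{Tor}_i^{U_q^{\on{DK}}(N^-)}(k,k)$ acquire a $\cLambda$-grading, and we have the decomposition
\[
\on{Tor}_i^{U_q^{\on{DK}}(N^-)}(k,k) \;=\; \bigoplus_{\clambda \in \cLambda} \on{Tor}_i^{U_q^{\on{DK}}(N^-)}(k,k)^\clambda.
\]

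Next, I would invoke \thmref{t:cohomology DK}, which, under our standing assumption \eqref{e:estimate} on $q$, identifies the graded pieces: the $\clambda$-component is nonzero precisely when $\clambda = w(\check\rho) - \check\rho$ for some $w \in W$, and in that case only in cohomological degree $i = \ell(w)$. Consequently, the set of indices $i$ for which $\on{Tor}_i^{U_q^{\on{DK}}(N^-)}(k,k)$ has any nonzero graded piece is contained in $\{\ell(w) : w \in W\}$.

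Finally, since the Weyl group $W$ is finite, this set is finite; in fact it is contained in $\{0,1,\ldots,\ell(w_0)\} = \{0,1,\ldots,\dim(\fn)\}$. This yields the claim. There is essentially no obstacle: the only nontrivial input is \thmref{t:cohomology DK} itself, which has already been granted. One minor point to verify along the way is that the direct sum decomposition over $\cLambda$ commutes with the formation of Tor; this is standard, as one can compute Tor using a $\cLambda$-graded free resolution of $k$ over $U_q^{\on{DK}}(N^-)$, which exists because $k$ is concentrated in weight $0$ and the algebra is $\cLambda$-graded.
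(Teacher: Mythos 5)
Your proof is correct and is precisely the one-line deduction the paper intends (the paper simply states the corollary follows "in particular" from \thmref{t:cohomology DK} without further argument). The only point of substance beyond the theorem — that the finitely many Weyl group lengths bound the range of $i$ — is exactly what you identify.
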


Note that a statement analogous to \corref{c:cohomology DK} would be completely false for the other two versions
of the quantum group: $U^{\on{Lus}}_q(N^-)$ and $u_q(N^-)$. 

\sssec{}

We have the following general assertion:

\begin{lem}
Let an associative algebra $A$ is graded by a monoid isomorphic to $(\BZ^+)^n$ and such that
$k\to A^0$ is an isomorphism. Then the following conditions are equivalent:

\smallskip

\noindent{\em(i)} $\on{Tor}_i^A(k,k)\neq 0$ for finitely many $i$.

\smallskip

\noindent{\em(ii)} $A$ has a finite cohomological dimension.

\end{lem}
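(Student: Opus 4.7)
My approach is to exploit the connected graded structure of $A$: with $A$ graded by $(\BZ^+)^n$ and $k\to A^0$ an isomorphism, the augmentation ideal $A^{>0}:=\bigoplus_{\nu\neq 0} A^\nu$ is a two-sided ideal with $A/A^{>0}\simeq k$, and the graded Nakayama lemma applies to graded modules (which are automatically bounded below in the grading). The implication (ii) $\Rightarrow$ (i) is immediate, since a finite global dimension $d$ bounds the vanishing range of $\on{Tor}_\bullet^A(-,-)$ on all pairs of modules. So the content of the lemma is the converse (i) $\Rightarrow$ (ii).

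For that converse my plan is to construct a \emph{minimal} graded free resolution $P_\bullet\to k$ of the trivial left module: inductively, set $P_i:=A\otimes_k V_i$, where $V_i$ is a graded $k$-vector space projecting isomorphically onto $k\otimes_A\ker(P_{i-1}\to P_{i-2})$. By graded Nakayama, the differentials of $P_\bullet$ have entries in $A^{>0}$, so tensoring with $k$ kills them and yields a canonical identification $V_i\simeq\on{Tor}_i^A(k,k)$. Hypothesis (i) then says $V_i=0$ for $i>d$ for some $d$, and hence $k$ has projective dimension $\le d$ as a left $A$-module. Exactly the same argument applied to $A^{\on{op}}$, using the symmetry $\on{Tor}_i^A(k,k)\simeq\on{Tor}_i^{A^{\on{op}}}(k,k)$, gives the analogous bound for $k$ as a right $A$-module.

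For an arbitrary graded left $A$-module $M$ the same minimal-resolution construction yields a graded free resolution whose $i$-th term is $A\otimes_k\on{Tor}_i^A(k,M)$. But now $\on{Tor}_i^A(k,M)$ can be computed from a length-$\le d$ projective resolution of $k$ as a \emph{right} $A$-module tensored with $M$, so it vanishes for $i>d$. Hence $M$ has projective dimension $\le d$, which gives the finiteness of (graded) global dimension. The passage to arbitrary ungraded modules, if desired, proceeds via the standard filtration argument for connected graded algebras.

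The main obstacle I anticipate is purely technical: a careful verification that the graded Nakayama lemma applies in this generality. Since the grading monoid $(\BZ^+)^n$ has no nontrivial invertible elements and $A^0=k$, every nonzero graded submodule of a graded free module must project nontrivially to the augmentation quotient, which is precisely what makes the minimal resolution procedure well-defined and forces it to terminate in the range dictated by $\on{Tor}^A(k,k)$. This is standard but must be checked explicitly in the $(\BZ^+)^n$-graded setting.
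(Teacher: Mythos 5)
The paper states this lemma without proof, so there is no paper argument to compare against; I evaluate your proposal on its own. Your minimal-free-resolution approach is the standard proof for connected graded algebras and is essentially correct: identify $V_i\simeq\on{Tor}_i^A(k,k)$ via minimality, use the symmetry $\on{Tor}_i^A(k,k)\simeq\on{Tor}_i^{A^{\on{op}}}(k,k)$ (both compute $k\overset{L}{\otimes}_A k$) to obtain a length-$d$ free resolution of $k$ as a right module, and conclude $\on{pd}_A(M)\le d$ from $\on{Tor}_i^A(k,M)=0$ for $i>d$ via the minimal resolution of $M$.

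Two imprecisions are worth fixing. First, the graded Nakayama statement you give is literally false: $xk[x]\subset k[x]$ is a nonzero graded submodule projecting to zero in the augmentation quotient of the free module. The form you actually need is that a nonzero \emph{bounded-below} graded module $N$ over connected $(\BZ^+)^n$-graded $A$ satisfies $N/A^{>0}N\ne 0$, applied to the successive syzygies (which are bounded below whenever $M$ is). Second, and relatedly, the minimal-resolution construction requires the grading on $M$ to be bounded below; a general $\BZ^n$-graded module over a $(\BZ^+)^n$-graded algebra need not be, so ``arbitrary graded left $A$-module $M$'' should read ``bounded-below graded $M$.'' This is harmless for the paper's applications --- \corref{c:UqDK}, \corref{c:aug DK}, and the compactness discussion in \secref{sss:when compact mixed} only involve finitely generated graded modules, which are automatically bounded below --- so the appeal to an ungraded dévissage at the end can simply be dropped.
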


Combining this with \corref{c:cohomology DK}, we obtain: 

\begin{cor}  \label{c:UqDK}
The algebra $U^{\on{DK}}_q(N^-)$ has a finite cohomological dimension.
\end{cor}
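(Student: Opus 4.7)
The plan is to apply the preceding lemma directly to $A := U^{\on{DK}}_q(N^-)$, so that the corollary is an immediate consequence of \thmref{t:cohomology DK} (equivalently \corref{c:cohomology DK}). First I would check the grading hypothesis: the algebra $U^{\on{DK}}_q(N^-)$ is naturally graded by $-\cLambda^+$ (the span of the negative simple roots $-\check\alpha_i$), which is isomorphic to $(\BZ^+)^n$ for $n$ equal to the rank, and its weight-zero component is spanned by $1$, so the canonical map $k\to (U^{\on{DK}}_q(N^-))^0$ is an isomorphism. This places us in the setup of the lemma.

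By \corref{c:cohomology DK}, only finitely many $\on{Tor}_i^{U_q^{\on{DK}}(N^-)}(k,k)$ are nonzero, which is condition (i) of the lemma. Hence condition (ii) holds, i.e., $U^{\on{DK}}_q(N^-)$ has finite cohomological dimension, as desired.

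For completeness, I would sketch why the lemma itself holds (the only non-trivial direction being (i)$\Rightarrow$(ii)). Under the grading hypothesis, the augmentation ideal is nilpotent on any bounded weight range, so one can construct for any graded module $M$ a minimal free resolution whose $i$-th term has rank $\dim_k \on{Tor}_i^A(k,M)$. Since $A$ is free on a finite-dimensional weight space up to degree shifts, a standard d\'evissage argument (filtering $M$ by weight and using that each graded piece is a sum of copies of $k$ up to shift) reduces the computation of $\on{Tor}_i^A(k,M)$ to $\on{Tor}_i^A(k,k)$, so vanishing of the latter beyond some degree $d$ implies vanishing of the former, hence $M$ has projective dimension $\leq d$. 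Since $A$ is connected graded, this bound on projective dimensions of arbitrary modules is exactly finite global (equivalently cohomological) dimension.

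The only step that is not completely formal is the lemma itself, and the subtle point there is passing from control of $\on{Tor}^A_*(k,k)$ to control of $\on{Tor}^A_*(k,M)$ for \emph{arbitrary} (not necessarily finitely generated or bounded) graded modules $M$; but the monoid $(\BZ^+)^n$ having all principal ideals of finite codimension (so that the grading is locally finite in a suitable sense) makes the d\'evissage work. Given the lemma, the corollary requires no further work beyond invoking \corref{c:cohomology DK}.
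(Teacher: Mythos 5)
Your proof is correct and takes essentially the same approach as the paper: the paper derives \corref{c:UqDK} by combining \corref{c:cohomology DK} with the immediately preceding lemma on graded algebras, exactly as you do. Your verification of the grading hypothesis and your sketch of the lemma's proof are extra detail, but the logical route is identical.
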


In particular:

\begin{cor}  \label{c:aug DK}
The augmentation module $k$ is perfect as a $U^{\on{DK}}_q(N^-)$-module.
\end{cor}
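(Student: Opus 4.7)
The plan is to construct an explicit minimal graded free resolution of the augmentation module $k$ over $A := U^{\on{DK}}_q(N^-)$ and show it has finite length with finitely generated terms, which is precisely the definition of perfectness.

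First, I would use the structure of $A$ as an algebra graded by the submonoid $-\cLambda^+ \subset \cLambda$, with $A^0 = k$ and $A^\clambda$ finite-dimensional in each graded piece. For such a ``connected graded'' algebra, Nakayama's lemma in the graded setting produces a minimal graded free resolution $P_\bullet \to k$, where each $P_i$ has the form $A \otimes V_i$ with $V_i$ a graded vector space canonically isomorphic to $\on{Tor}_i^A(k,k)$ (viewed with its induced $\cLambda$-grading). The ranks and graded structure of the $P_i$ are thus completely determined by the Tor groups of $k$.

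Next, I would apply \thmref{t:cohomology DK}, which identifies $\on{Tor}_i^A(k,k)^\clambda$ as being nonzero precisely for pairs $(w,i)$ with $\ell(w)=i$ and $\clambda = w(\check\rho)-\check\rho$, each contribution being one-dimensional. This has two consequences: (i) each $V_i$ is finite-dimensional, so each $P_i$ is finitely generated; and (ii) $V_i = 0$ for $i > \ell(w_0)$, so the resolution terminates in finitely many steps (which is also the content of \corref{c:UqDK}). Together these show that $P_\bullet$ is a finite complex of finitely generated free, hence projective, $A$-modules quasi-isomorphic to $k$, establishing perfectness.

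The only point that requires care, and what I would regard as the mild technical obstacle, is justifying the existence of the minimal graded free resolution in this noncommutative graded setting: one needs the grading monoid $-\cLambda^+$ to be well-founded with respect to the partial order (so that inductive lifting and Nakayama's lemma apply degree by degree). This is standard once one fixes an embedding of $-\cLambda^+$ into $\mathbb{Z}_{\geq 0}^{\operatorname{rk}(G)}$ via the simple roots, and it ensures that the $\on{Tor}$ computation faithfully controls the minimal resolution. Everything else is formal from \thmref{t:cohomology DK}.
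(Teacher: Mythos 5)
Your argument is correct and takes essentially the same route as the paper: the paper likewise deduces the claim from \thmref{t:cohomology DK} via the graded structure of $U^{\on{DK}}_q(N^-)$, passing through \corref{c:cohomology DK} and the lemma equating boundedness of $\on{Tor}_\bullet^A(k,k)$ with finite cohomological dimension (\corref{c:UqDK}), and then stating perfectness of $k$ as an immediate consequence. Your explicit minimal graded free resolution simply spells out that last ``in particular'' step, in particular making visible where the finite-dimensionality of each $\on{Tor}_i^A(k,k)$ (and hence finite generation of the terms) enters.
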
 

\ssec{The mixed quantum group}  \label{ss:mixed}

In this subsection we will define the second principal actor for this paper: the category of modules over the 
mixed quantum group. The terminology ``mixed'' comes from the fact that this version has Lusztig's quantum
group as its positive part, and the De Concini-Kac one as the negative part. 

\sssec{}

The basic object of study in this paper is the category
$$\Rep^{\on{mxd}}_q(G):=Z_{\on{Dr},\Rep_q(T)}(\Rep_q(B)).$$

\medskip

By \secref{sss:forget from double}, this category is equipped with a pair of adjoint functors
$$\ind_{\on{Lus}^+ \to \on{mxd}}:\Rep_q(B)\rightleftarrows \Rep^{\on{mxd}}_q(G):\oblv_{\on{mxd}\to \on{Lus}^+}.$$ 

\medskip

In addition, we have the following adjoint pair
$$\oblv_{\on{mxd}\to \on{DK}^-}:\Rep^{\on{mxd}}_q(G) \rightleftarrows U^{\on{DK}}_q(N^-) \mod(\Rep_q(T)): 
\coind_{\on{DK}^- \to \on{mxd}}.$$

\sssec{}

For $\clambda\in \cLambda$ we let
$$\BM_{q,\on{mxd}}^\clambda:=\ind_{\on{Lus}^+ \to \on{mxd}}(k^\clambda) \in \Rep^{\on{mxd}}_q(G) \text{ and }
\BM_{q,\on{mxd}}^{\vee,\clambda}:=\coind_{\on{DK}^- \to \on{mxd}}(k^\clambda)\in \Rep^{\on{mxd}}_q(G)$$
be the corresponding standard and costandard objects, respectively.

\medskip

From the commutative diagrams \eqref{e:ind cond1} and \eqref{e:ind cond2}, we obtain that 
$\BM_{q,\on{mxd}}^\clambda$ is free over $U^{\on{DK}}_q(N^-) $ and 
$\oblv_{\on{mxd}\to \on{Lus}^+}(\BM_{q,\on{mxd}}^{\vee,\clambda})$ is cofree as an object of $\Rep_q(B)$, and we have 
$$  
\CHom_{\Rep^{\on{mxd}}_q(G)}(\BM_{q,\on{mxd}}^\clambda,\BM_{q,\on{mxd}}^{\vee,\clambda'})=
\begin{cases}
&k \text{ if } \clambda=\clambda';\\
&0 \text{ otherwise}.
\end{cases}
$$ 

\sssec{}  \label{sss:when compact mixed}

The objects $\BM_{q,\on{mxd}}^\clambda$ are compact and generate $\Rep^{\on{mxd}}_q(G)$.

\medskip

Recall (see \secref{sss:when compact}) that an object of $(\Rep^{\on{mxd}}_q(G))^{<\infty ,>-\infty }$ is compact if and only if its image under 
$\oblv_{\on{mxd}\to \on{DK}^-}$ is compact in $U^{\on{DK}}_q(N^-) \mod(\Rep_q(T))$. 
Note that according to \corref{c:UqDK}, the compactness condition in $U^{\on{DK}}_q(\fn^-) \mod(\Rep_q(T))$
is equivalent to finite generation. 

\sssec{}

According to \secref{sss:t on double}, the category $\Rep^{\on{mxd}}_q(G)$ carries a unique t-structure for which 
the functors $\oblv_{\on{mxd}\to \on{Lus}^+}$ and $\oblv_{\on{mxd}\to \on{DK}^-}$ are t-exact. Moreover, the functors 
$\ind_{\on{Lus}^+ \to \on{mxd}}$ and $\coind_{\on{DK}^- \to \on{mxd}}$
are also t-exact, and in particular, $\BM_{q,\on{mxd}}^\clambda$ and $\BM_{q,\on{mxd}}^{\vee,\clambda}$
belong to $(\Rep^{\on{mxd}}_q(G))^\heartsuit$. 

\begin{rem} \label{r:recover mixed from abelian}
As was explained in \secref{sss:recover from heart}, the abelian category $(\Rep^{\on{mxd}}_q(G))^\heartsuit$ can be described
explicitly as the category of objects of $(\Rep_q(T))^\heartsuit$, endowed with a locally nilpotent action of $U_q^{\on{Lus}}(N)$ and a compatible
action of $U_q^{\on{DK}}(N^-) $. Furthermore,  $\Rep^{\on{mxd}}_q(G)$ can be recovered from $(\Rep^{\on{mxd}}_q(G))^\heartsuit$ 
by the following procedure: 

\medskip

First, we note that the cohomologically bounded part of $\Rep^{\on{mxd}}_q(G)$, i.e., $\left(\Rep^{\on{mxd}}_q(G)\right)^{<\infty,>\infty}$, 
is recovered as
$$\left(\Rep^{\on{mxd}}_q(G)\right)^{<\infty,>\infty}\simeq D^b\left((\Rep^{\on{mxd}}_q(G))^\heartsuit\right).$$ 
Now, all of $\Rep^{\on{mxd}}_q(G)$ is recovered as the
ind-completion of the full subcategory of $\left(\Rep^{\on{mxd}}_q(G)\right)^{<\infty,>\infty}$ generated under finite colimits by the
objects $\BM_{q,\on{mxd}}^\clambda$. 

\end{rem} 

\ssec{The category of modules over the ``big" quantum group}

In this subsection we recall the definition of another object of primary interest: the category of (algebraic=a.k.a. locally finite)
modules over Lusztig's quantum group, denoted $\Rep_q(G)$. 

\medskip

A salient feature of this category is that it does \emph{not} 
arise as Drinfeld's center. The only way we know how to construct $\Rep_q(G)$ is via the underlying abelian category. 

\sssec{}

Let $\Rep_q(G)_{\on{fin.dim}}$ be the category of algebraic representations of Lusztig's quantum group. By definition, this 
is the bounded derived category of the abelian category $(\Rep_q(G)_{\on{fin.dim}})^\heartsuit$ that consists of finite-dimensional
objects of $(\Rep_q(T))^\heartsuit$, endowed with actions of $U_q^{\on{Lus}}(N)$ and $U_q^{\on{Lus}}(N^-)$
that satisfy the usual relations. 

\medskip

The abelian category $(\Rep_q(G)_{\on{fin.dim}})^\heartsuit$ has enough projectives. We let 
\begin{equation} \label{e:perfect in all} 
\Rep_q(G)_{\on{perf}}\subset \Rep_q(G)_{\on{fin.dim}}
\end{equation} 
be the full subcategory consisting of perfect objects (i.e., those represented by finite complexes of projectives).

\medskip

We set
$$\Rep_q(G)_{\on{ren}}:=\on{IndCompl}\left(\Rep_q(G)_{\on{fin.dim}}\right) \text{ and }
\Rep_q(G):=\on{IndCompl}\left(\Rep_q(G)_{\on{perf}}\right).$$

Both categories $\Rep_q(G)_{\on{ren}}$ and $\Rep_q(G)$ carry naturally defined t-structures. 

\sssec{}
 
The inclusion \eqref{e:perfect in all} extends to a fully faithful functor
$$\fr_: \Rep_q(G)\to \Rep_q(G)_{\on{ren}}.$$

The above functor $\fr$ admits a right adjoint, denoted $\fs$, given by ind-extending the inclusion
$$\Rep_q(G)_{\on{fin.dim}}\hookrightarrow \Rep_q(G).$$

Note, however, that the functor $\fs$ is \emph{not} fully faithful (even though its restriction to the subcategory
of compact objects is.) 

\sssec{}   \label{sss:renorm pattern}

The situation of the adjoint pair
\begin{equation} \label{e:ren adj pattern}
\fr: \Rep_q(G)\rightleftarrows \Rep_q(G)_{\on{ren}}: \fs$$
is completely parallel to that of 
$$\QCoh(X) \rightleftarrows \IndCoh(X)
\end{equation} 
of \cite[Sect. 1]{Ga1} for a finite type scheme $X$. In particular, both $\Rep_q(G)_{\on{ren}}$ and $\Rep_q(G)$ have t-structures
such that the following properties hold: 

\begin{itemize}

\item The functor $\fs$ is t-exact and induces an equivalence on eventually coconnective subcategories 
(in particular, on the hearts);

\item The category $\Rep_q(G)$ is left-complete in its t-structure;

\item The kernel of $\fs$ consists of objects that are infinitely connective (i.e., those
objects all of whose cohomologies are zero).

\end{itemize} 

\sssec{}

We have the obvious forgetful functor
$$\oblv_{\on{big}\to \on{Lus}^+}:\Rep_q(G)_{\on{ren}}\to \Rep_q(B).$$

It admits both a left and a right adjoints, denoted $\ind_{\on{Lus}^+\to \on{big}}$ and $\coind_{\on{Lus}^+\to \on{big}}$,
respectively. For $\clambda\in \cLambda$, set
$$\CV^\clambda_q:=\ind_{\on{Lus}^+\to \on{big}}(k^\clambda)\in \Rep_q(G)_{\on{ren}}.$$

\medskip

It is known that for $\clambda\in \cLambda^+$, the object $\CV^\clambda_q$ belongs to $\Rep_q(G)^\heartsuit$;
it is called the Weyl module of highest weight $\clambda$. 

\medskip

Denote:
$$\CV^{\vee,\clambda}_q:=\coind_{\on{Lus}^+\to \on{big}}(k^{w_0(\clambda)})\in \Rep_q(G)_{\on{ren}}.$$
It is known that for $\clambda\in \cLambda^+$, the object $\CV^{\vee,\clambda}_q$ belongs to $\Rep_q(G)^\heartsuit$;
it is called the dual Weyl module of highest weight $\clambda$. 

\ssec{``Big" vs ``mixed"}

\sssec{}  \label{sss:big to mixed}

We have a canonically defined (braided monoidal) functor
$$(\Rep_q(G)_{\on{fin.dim}})^\heartsuit\to (Z_{\on{Dr},\Rep_q(T)}(\Rep_q(B)))^\heartsuit=(\Rep^{\on{mxd}}_q(G))^\heartsuit,$$
which extends to a (braided monoidal) functor
$$\oblv_{\on{big}\to \on{mxd}}:\Rep_q(G)_{\on{ren}}\to \Rep^{\on{mxd}}_q(G).$$

\begin{rem}
Note that the functor $\oblv_{\on{big}\to \on{mxd}}$ does \emph{not} factor through the projection
$$\fs_:\Rep_q(G)_{\on{ren}} \to \Rep_q(G).$$

This is due to the fact that $\Rep^{\on{mxd}}_q(G)$ is not separated in its t-structure. 
\end{rem}

\sssec{}

We claim:

\begin{prop}  \label{p:oblv big to mixed comp}
The functor $\oblv_{\on{big}\to \on{mxd}}$ sends compacts to compacts.
\end{prop}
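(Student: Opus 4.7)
The plan is to reduce the claim to a concrete check on a set of compact generators and then apply the explicit description of compact objects in $\Rep^{\on{mxd}}_q(G)$. Specifically, by \secref{sss:when compact mixed} (itself based on \propref{p:when compact}), an object of $(\Rep^{\on{mxd}}_q(G))^{<\infty,>-\infty}$ is compact if and only if its image under $\oblv_{\on{mxd}\to \on{DK}^-}$ is compact in $U^{\on{DK}}_q(N^-)\mod(\Rep_q(T))$, and by \corref{c:UqDK} (finite cohomological dimension of $U^{\on{DK}}_q(N^-)$) this is in turn equivalent to being finitely generated as a $U^{\on{DK}}_q(N^-)$-module. So the question becomes: does $\oblv_{\on{big}\to \on{mxd}}$ take compacts to cohomologically bounded objects that are finitely generated over $U^{\on{DK}}_q(N^-)$?

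Since $\oblv_{\on{big}\to \on{mxd}}$ is a continuous (exact) functor between cocomplete DG categories, it preserves compacts as soon as it takes some set of compact generators of $\Rep_q(G)$ to compact objects of $\Rep^{\on{mxd}}_q(G)$. By definition $\Rep_q(G)=\on{IndCompl}(\Rep_q(G)_{\on{perf}})$, so I would take as generators the perfect objects in $\Rep_q(G)_{\on{fin.dim}}$. By definition these are finite complexes of projectives in the abelian category $(\Rep_q(G)_{\on{fin.dim}})^\heartsuit$, and since the latter is by construction the category of \emph{finite-dimensional} algebraic modules over Lusztig's quantum group, every such projective is finite-dimensional as a $\cLambda$-graded vector space. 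In particular, any perfect object is a bounded complex whose terms are finite-dimensional in $(\Rep_q(T))^\heartsuit$.

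Finally, the functor $\oblv_{\on{big}\to \on{mxd}}$ was defined in \secref{sss:big to mixed} by ind-extending a functor between hearts, and in particular preserves both the t-structure and the underlying $\cLambda$-graded vector space. Hence the image of a perfect object is represented by a bounded complex whose terms are finite-dimensional in $(\Rep^{\on{mxd}}_q(G))^\heartsuit$; this image lies in $(\Rep^{\on{mxd}}_q(G))^{<\infty,>-\infty}$, and its further image under $\oblv_{\on{mxd}\to \on{DK}^-}$ is a bounded complex of finite-dimensional objects, which is trivially finitely generated over $U^{\on{DK}}_q(N^-)$. The compactness criterion recalled above then gives the assertion.

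The only non-trivial input in the entire argument is \corref{c:UqDK}, and this is precisely where the main obstacle sits: the compactness criterion ``finitely generated over $U^{\on{DK}}_q(N^-)$'' is a surprisingly weak condition, and it would be false for the Lusztig or small versions $U_q^{\on{Lus}}(N^-)$ or $u_q(N^-)$. Absent the finite cohomological dimension of $U^{\on{DK}}_q(N^-)$, one would have to produce a finite resolution of $\oblv_{\on{mxd}\to \on{DK}^-}(\oblv_{\on{big}\to \on{mxd}}(\CM))$ by finitely generated projective $U^{\on{DK}}_q(N^-)$-modules directly, which is considerably less tractable; with it, the proof is essentially bookkeeping.
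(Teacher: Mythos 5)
You've misidentified the source of the functor, and this is a real gap, though one that is easily repaired. The functor $\oblv_{\on{big}\to \on{mxd}}$ is defined (in \secref{sss:big to mixed}) on $\Rep_q(G)_{\on{ren}}$, not on $\Rep_q(G)$. The compact objects of $\Rep_q(G)_{\on{ren}}$ are by construction \emph{all} of $\Rep_q(G)_{\on{fin.dim}}$, not merely the perfect ones. Since $\Rep_q(G)_{\on{perf}}$ generates $\Rep_q(G)\subsetneq \Rep_q(G)_{\on{ren}}$, and since, as the remark after the definition of $\oblv_{\on{big}\to \on{mxd}}$ emphasizes, this functor does \emph{not} factor through the projection $\fs:\Rep_q(G)_{\on{ren}}\to \Rep_q(G)$, checking your criterion only on $\Rep_q(G)_{\on{perf}}$ would at best establish the (irrelevant) statement that $\oblv_{\on{big}\to \on{mxd}}|_{\Rep_q(G)}$ preserves compacts, not the proposition.

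The repair is immediate and in fact makes the argument cleaner: drop the detour through projective resolutions. Every object of $\Rep_q(G)_{\on{fin.dim}}:=D^b\left((\Rep_q(G)_{\on{fin.dim}})^\heartsuit\right)$ is already a bounded complex whose cohomologies are finite-dimensional in $(\Rep_q(T))^\heartsuit$; there is no need to know that the terms of a representative complex are finite-dimensional. The rest of your reasoning then applies verbatim: $\oblv_{\on{big}\to \on{mxd}}$ and $\oblv_{\on{mxd}\to \on{DK}^-}$ are t-exact forgetful functors that preserve the underlying $\cLambda$-graded vector space, so the image in $U^{\on{DK}}_q(N^-)\mod(\Rep_q(T))$ is bounded with finite-dimensional (hence finitely generated) cohomologies, and \corref{c:UqDK} together with \propref{p:when compact} gives compactness.

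Your (corrected) argument takes a genuinely different route from the paper's. The paper exploits the monoidal structure: it first observes that $\oblv_{\on{big}\to \on{mxd}}(k)$ is compact (same criterion, reducing to \corref{c:aug DK}), writes $\oblv_{\on{big}\to \on{mxd}}(\CM)\simeq \CM\otimes \oblv_{\on{big}\to \on{mxd}}(k)$, and then shows that tensoring by a finite-dimensional $\CM$ preserves the subcategory generated under finite colimits by the $\BM^\clambda_{q,\on{mxd}}$, via a finite filtration of $\CM\otimes\BM^\clambda_{q,\on{mxd}}$ with subquotients $\BM^{\clambda+\clambda'}_{q,\on{mxd}}$. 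That argument buys a more structural statement (tensoring by $\Rep_q(G)_{\on{fin.dim}}$ preserves compacts of $\Rep^{\on{mxd}}_q(G)$), which the paper needs later; your route is more elementary and makes the dependence on \corref{c:UqDK} — the finite cohomological dimension of $U^{\on{DK}}_q(N^-)$ — maximally transparent, which is a good observation to have highlighted.
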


\begin{proof}

First, by \secref{sss:when compact mixed}, the image of $k\in (\Rep_q(G))^\heartsuit$ under
$\oblv_{\on{big}\to \on{mxd}}(k)$, viewed as an object of $(\Rep^{\on{mxd}}_q(G))^\heartsuit\subset \Rep^{\on{mxd}}_q(G)$, 
is compact. 

\medskip

For any $\CM\in \Rep_q(G)_{\on{fin.dim}}$, we have 
$$\oblv_{\on{big}\to \on{mxd}}(\CM)\simeq \CM\otimes \oblv_{\on{big}\to \on{mxd}}(k).$$

However, it is easy to see that the operation of tensor product by $\CM\in \Rep_q(G)_{\on{fin.dim}}$
on $\Rep^{\on{mxd}}_q(G)$ preserves the subcategory generated under finite colimits by objects
$\BM_{q,\on{mxd}}^\clambda$. Indeed, every
$$\CM \otimes \BM_{q,\on{mxd}}^\clambda$$
admits a finite filtration with subquotients $\BM_{q,\on{mxd}}^{\clambda+\clambda'}$, where $\clambda'$
runs through the set of weights of $\CM$ (with multiplicities). 

\end{proof} 

\sssec{}

By construction, the composite functor
$$\Rep_q(G)_{\on{ren}} \overset{\oblv_{\on{big}\to \on{mxd}}}\longrightarrow 
\Rep^{\on{mxd}}_q(G)\overset{\oblv_{\on{mxd}\to \on{Lus}^+}}\longrightarrow \Rep_q(B)$$
identifies with the functor $\oblv_{\on{big}\to \on{Lus}^+}$.

\medskip

By adjunction, we obtain an identification
$$\ind_{\on{mxd}\to \on{big}} \circ \ind_{\on{Lus}^+\to \on{mxd}}\simeq \ind_{\on{Lus}^+\to \on{big}}.$$

In particular, we have a canonical isomorphism
\begin{equation} \label{e:Weyl from mixed}
\ind_{\on{mxd}\to \on{big}}(\BM_{q,\on{mxd}}^\clambda)\simeq \ind_{\on{Lus}^+\to \on{big}}(k^\clambda)=:\CV_q^\clambda
\end{equation}
for \emph{any} $\clambda\in \cLambda$.

\section{The small quantum group}  \label{s:small}

In this section we will specialize to the case when $q$ takes values in the group of roots of unity in $k^\times$.
We will study the category of representations of the \emph{small quantum group} and its relation to that
of Lusztig's version. 

\ssec{Modules for the graded small quantum group}

In this subsection we define (one of the three versions of) the category of modules for the $\cLambda$-graded version of the 
small quantum group. We will have two more versions versions of this category that differ from each by \emph{renormalization} (i.e., which objects are
declared to be compact). 

\sssec{}

We set
$$\Rep^{\on{sml,grd}}_q(G)_{\on{baby-ren}}:=Z_{\on{Dr},\Rep_q(T)}(\Rep^{\on{sml,grd}}_q(B)).$$

We have the usual adjunctions
$$\ind_{\on{sml}^+\to \on{sml}}: \Rep^{\on{sml,grd}}_q(B) \rightleftarrows 
\Rep^{\on{sml,grd}}_q(G)_{\on{baby-ren}}: \oblv_{\on{sml}\to \on{sml}^+}$$
and
$$\oblv_{\on{sml}\to \on{sml}^-}: \Rep^{\on{sml,grd}}_q(G)_{\on{baby-ren}} \rightleftarrows  
u_q(N^-) \mod(\Rep_q(T)): \coind_{\on{sml}^-\to \on{sml}}.$$

\sssec{}

For $\clambda\in \cLambda$, we let
$$\BM^\clambda_{q,\on{sml}}:=\ind_{\on{sml}^+\to \on{sml}}(k^\clambda) \text{ and }
\BM^{\vee,\clambda}_{q,\on{sml}}:=\coind_{\on{sml}^-\to \on{sml}}(k^\clambda)$$
be the corresponding standard and costandard objects.

\medskip

We have

$$  
\CHom_{\Rep^{\on{sml,grd}}_q(G)_{\on{baby-ren}}}(\BM_{q,\on{sml}}^\clambda,\BM_{q,\on{sml}}^{\vee,\clambda'})=
\begin{cases}
&k \text{ if } \clambda=\clambda';\\
&0 \text{ otherwise}.
\end{cases}
$$ 

\begin{rem}
The objects $\BM^\clambda_{q,\on{sml}}$ and $\BM^{\vee,\clambda}_{q,\on{sml}}$ are sometimes called the \emph{baby Verma}
and \emph{dual baby Verma} modules, respectively. This is the origin of the notation ``$\on{baby-ren}$" in the subscript. 
\end{rem}

\ssec{Renormalized categories}

We will now introduce two more versions of the category of modules over the small quantum group, denoted 
$\Rep^{\on{sml,grd}}_q(G)_{\on{ren}}$ and $\Rep^{\on{sml,grd}}_q(G)$, respectively, that differ from the original
one by a renormalization procedure (i.e., by redefining the class of compact objects). 

\sssec{}

Note that since $u_q(N) $ is finite-dimensional, we have a fully faithful embedding
$$u_q(N) \mod(\Rep_q(T))_{\on{perf}}\hookrightarrow u_q(N) \mod(\Rep_q(T))_{\on{fin.dim}},$$
and by ind-extension a fully faithful embedding
$$\fr_{\on{baby}}:u_q(N) \mod(\Rep_q(T))\hookrightarrow u_q(N) \mod(\Rep_q(T))_{\on{loc.nilp}}.$$
The latter admits a right adjoint, denoted $\fs_{\on{baby}}$; it is the ind-extension of the fully faithful embedding
$$u_q(N) \mod(\Rep_q(T))_{\on{fin.dim}}\hookrightarrow u_q(N) \mod(\Rep_q(T)).$$

The adjoint pair $(\fr,\fs)$ has the same properties as the pair \eqref{e:ren adj pattern}. 
 
\sssec{}

Note that the category $\Rep^{\on{sml,grd}}_q(G)_{\on{baby-ren}}$ can be thought of as modules for the monad 
$$\oblv_{\on{sml}\to \on{sml}^+}\circ \ind_{\on{sml}^+\to \on{sml}}$$
acting on $u_q(N) \mod(\Rep_q(T))_{\on{loc.nilp}}$.
We observe that the action of this monad preserves the subcategory $u_q(N) \mod(\Rep_q(T))_{\on{perf}}$, and hence
also $u_q(N) \mod(\Rep_q(T))$.

\medskip

We define the category $\Rep^{\on{sml,grd}}_q(G)$ to be 
$$\oblv_{\on{sml}\to \on{sml}^+}\circ \ind_{\on{sml}^+\to \on{sml}}\mod(u_q(N) \mod(\Rep_q(T))).$$

\sssec{}
By construction, we have an adjoint pair
$$\ind_{\on{sml}^+\to \on{sml}}: u_q(N) \mod(\Rep_q(T))\rightleftarrows 
\Rep^{\on{sml,grd}}_q(G): \oblv_{\on{sml}\to \on{sml}^+},$$
and a pair of adjoint functors
$$\fr_{\on{baby}}: \Rep^{\on{sml,grd}}_q(G)\rightleftarrows  \Rep^{\on{sml,grd}}_q(G)_{\on{baby-ren}}:\fs_{\on{baby}}$$
that makes all circuits in the following diagram commute:
\begin{equation} \label{e:nonren baby and ind}
\xymatrix{
u_q(N) \mod(\Rep_q(T)) \ar[rr]<2pt> \ar[d]<2pt>^{\fr_{\on{baby}}} &&
\Rep^{\on{sml,grd}}_q(G)^{\on{non-ren}}  \ar[ll]<2pt>  \ar[d]<2pt>^{\fr_{\on{baby}}}  \\
u_q(N) \mod(\Rep_q(T))_{\on{loc.nilp}} \ar[rr]<2pt> \ar[u]<2pt>^{\fs_{\on{baby}}} && 
\Rep^{\on{sml,grd}}_q(G)_{\on{baby-ren}} \ar[ll]<2pt> \ar[u]<2pt>^{\fs_{\on{baby}}}}
\end{equation} 

\medskip

The adjoint pair $(\fr_{\on{baby}},\fs_{\on{baby}})$ has the same properties as the pair 
\eqref{e:ren adj pattern}, specified in \secref{sss:renorm pattern}. In particular, $\Rep^{\on{sml,grd}}_q(G)$
is left-complete in its t-structure. 

\sssec{}  \label{sss:small Verma baby vs non-ren}

By a slight abuse of notation, we will denote by the same symbol $\BM^\clambda_{q,\on{sml}}$ the image
of $\BM^\clambda_{q,\on{sml}}\in \Rep^{\on{sml,grd}}_q(G)_{\on{baby-ren}}$ under the functor
$$\fs_{\on{baby}}: \Rep^{\on{sml,grd}}_q(G)_{\on{baby-ren}}\to \Rep^{\on{sml,grd}}_q(G).$$

Note, however, that $\BM^\clambda_{q,\on{sml}}\in \Rep^{\on{sml,grd}}_q(G)$ is \emph{not} compact. Moreover,
it is \emph{not} true that the image of $\BM^\clambda_{q,\on{sml}}\in \Rep^{\on{sml,grd}}_q(G)$ under the functor
$$\fr_{\on{baby}}: \Rep^{\on{sml,grd}}_q(G)\rightleftarrows  \Rep^{\on{sml,grd}}_q(G)_{\on{baby-ren}}$$
gives back $\BM^\clambda_{q,\on{sml}}\in \Rep^{\on{sml,grd}}_q(G)_{\on{baby-ren}}$ (but we do have a map
from the former to the latter). 

\medskip

Similarly, we define $\BM^{\vee,\clambda}_{q,\on{sml}}\in \Rep^{\on{sml,grd}}_q(G)_{\on{baby-ren}}$, and the above remarks
apply. Since the functor $\fs_{\on{baby}}$ is an equivalence on the eventually coconnective subcategories, we have: 
$$  
\CHom_{\Rep^{\on{sml,grd}}_q(G)}(\BM_{q,\on{sml}}^\clambda,\BM_{q,\on{sml}}^{\vee,\clambda'})=
\begin{cases}
&k \text{ if } \clambda=\clambda';\\
&0 \text{ otherwise}.
\end{cases}
$$

\sssec{}

Let 
$$\Rep^{\on{sml,grd}}_q(G)_{\on{fin.dim}}\subset \Rep^{\on{sml,grd}}_q(G)$$
be the full (but not cocomplete) subcategory consisting of finite-dimensional objects (i.e., those objects that
have non-zero cohomology only in finitely many cohomological degrees, and each of these cohomologies
is finite-dimensional).

\medskip

We define
$$\Rep^{\on{sml,grd}}_q(G)_{\on{ren}}:=\on{IndCompl}(\Rep^{\on{sml,grd}}_q(G)_{\on{fin.dim}}).$$

We have the adjoint pair
$$\fr: 
\Rep^{\on{sml,grd}}_q(G)\rightleftarrows  \Rep^{\on{sml,grd}}_q(G)_{\on{ren}}:\fs$$
that has the same properties as the pair 
\eqref{e:ren adj pattern}, specified in \secref{sss:renorm pattern}.

\sssec{}

Consider the category $\Rep^{\on{sml,grd}}_q(G)_{c,\on{baby-ren}}$ of compact objects in $\Rep^{\on{sml,grd}}_q(G)_{\on{baby-ren}}$.
Note that the functor $\fr_{\on{baby}}$ induces a fully faithful embedding
$$\Rep^{\on{sml,grd}}_q(G)_{c,\on{baby-ren}}\hookrightarrow \Rep^{\on{sml,grd}}_q(G)_{\on{fin.dim}}.$$

Ind-extending, we obtain a fully faithful functor
$$\fr_{\on{baby-ren}\to \on{ren}}: \Rep^{\on{sml,grd}}_q(G)_{\on{baby-ren}}\to \Rep^{\on{sml,grd}}_q(G)_{\on{ren}}$$
so that
$$\fr_\simeq \fr_{\on{baby-ren}\to \on{ren}}\circ \fr_{\on{baby}}.$$

Since the functor $\fr_{\on{baby-ren}\to \on{ren}}$ sends compacts to compacts, it admits a continuous
right adjoint, which we will denote by 
$$\fs_{\on{ren}\to \on{baby-ren}}:\Rep^{\on{sml,grd}}_q(G)_{\on{ren}}\to \Rep^{\on{sml,grd}}_q(G)_{\on{baby-ren}}.$$

By adjunction, we have:
$$\fs \simeq \fs_{\on{baby}}\circ \fs_{\on{ren}\to \on{baby-ren}}.$$

\sssec{}

As in \cite[Corollary 4.4.3]{AriG}, we obtain that the functor $\fs_{\on{ren}\to \on{baby-ren}}$ is t-exact
and induces an equivalence on the eventually coconnective subcategories.

\medskip 

To summarize, we have the following sequence of fully faithful embeddings
$$\Rep^{\on{sml,grd}}_q(G) \overset{\fr_{\on{baby}}}\hookrightarrow 
\Rep^{\on{sml,grd}}_q(G)_{\on{baby-ren}} \overset{\fr_{\on{baby-ren}\to \on{ren}}}\hookrightarrow  
\Rep^{\on{sml,grd}}_q(G)_{\on{ren}}$$
and their right adjoints
$$\Rep^{\on{sml,grd}}_q(G) \overset{\fs_{\on{baby}}}\twoheadleftarrow
\Rep^{\on{sml,grd}}_q(G)_{\on{baby-ren}} \overset{\fs_{\on{ren}\to \on{baby-ren}}}\twoheadleftarrow
\Rep^{\on{sml,grd}}_q(G)_{\on{ren}}.$$

\sssec{}

By a slight abuse of notation, for $\clambda\in \cLambda$ set
$$\BM^\clambda_{q,\on{sml}}:=\fr_{\on{baby-ren}\to \on{ren}}(\BM^\clambda_{q,\on{sml}})\in \Rep^{\on{sml,grd}}_q(G)_{\on{ren}}.$$

By construction, $\BM^\clambda_{q,\on{sml}}$ lies in the heart of $\Rep^{\on{sml,grd}}_q(G)_{\on{ren}}$, and when we apply
the functor $\fs_{\on{ren}\to \on{baby-ren}}$ to it we recover the original $\BM^\clambda_{q,\on{sml}}\in \Rep^{\on{sml,grd}}_q(G)_{\on{baby-ren}}$. 

\medskip

We let
$$\BM^{\vee,\clambda}_{q,\on{sml}}\in \Rep^{\on{sml,grd}}_q(G)_{\on{ren}}$$
be the unique object in the heart that gets sent to $\BM^{\vee,\clambda}_{q,\on{sml}}\in \Rep^{\on{sml,grd}}_q(G)_{\on{baby-ren}}$
by the functor $\fr_{\on{ren}\to \on{baby-ren}}$. 

\medskip

By adjunction, we have:
$$  
\CHom_{\Rep^{\on{sml,grd}}_q(G)_{\on{ren}}}(\BM_{q,\on{sml}}^\clambda,\BM_{q,\on{sml}}^{\vee,\clambda'})=
\begin{cases}
&k \text{ if } \clambda=\clambda';\\
&0 \text{ otherwise}.
\end{cases}
$$ 

\begin{rem}
A variant of the remark \ref{r:recover mixed from abelian} applies to the above three versions of modules over the small
quantum group as well. Namely, in all three cases, the corresponding abelian category
$$\left(\Rep^{\on{sml,grd}}_q(G)\right)^\heartsuit \simeq \left(\Rep^{\on{sml,grd}}_q(G)_{\on{baby-ren}}\right)^\heartsuit\simeq 
\left(\Rep^{\on{sml,grd}}_q(G)_{\on{ren}}\right)^\heartsuit$$
is that of objects of $\Rep_q(T)^\heartsuit$ equipped with an action of $u_q(N)$ (which is automatically locally nilpotent as
$u_q(N)$ is finite-dimensional) and a compatible action of $u_q(N^-) $ (which also 
automatically happens to be locally nilpotent). 

\medskip

Each of the categories
$$\left(\Rep^{\on{sml,grd}}_q(G)\right)^{<\infty,>\infty} \simeq \left(\Rep^{\on{sml,grd}}_q(G)_{\on{baby-ren}}\right)^{<\infty,>\infty} \simeq 
\left(\Rep^{\on{sml,grd}}_q(G)_{\on{ren}}\right)^{<\infty,>\infty}$$
identifies with $D^b\left(\left(\Rep^{\on{sml,grd}}_q(G)\right)^\heartsuit\right)$.

\medskip

Now, $\Rep^{\on{sml,grd}}_q(G)_{\on{baby-ren}}$ (resp., $\Rep^{\on{sml,grd}}_q(G)$)
identifies with the ind-completion of the category generated under finite colimits
by objects of the form $\BM_{q,\on{sml}}^\clambda$ (resp., $\ind_{\on{sml}^+\to \on{sml}}(k^\clambda\otimes u_q(N))$). 

\medskip

For $\Rep^{\on{sml,grd}}_q(G)_{\on{ren}}$ we take as generators all finite-dimensional
objects of $\left(\Rep^{\on{sml,grd}}_q(G)\right)^\heartsuit$. 

\end{rem} 

\ssec{The ungraded small quantum group}

\sssec{}   \label{sss:quant Frob lattice} 

Let $H$ be the reductive group that is the recipient of Lusztig's quantum Frobenius. Let
$$T_H\subset B_H\subset H$$
be the Cartan and Borel subgroups of $H$, respectively. 

\medskip

Denote by $\Lambda_H$ the weight lattice of $T_H$. Explicitly,
$$\Lambda_H=\{\clambda\in \cLambda\,|\, b(\clambda,\clambda')=1 \text{ for all }\clambda'\in \cLambda\}.$$

\sssec{}

Quantum Frobenius for tori defines a map from the category $\Rep(T_H)$ to the $E_3$-center of 
$\Rep_q(T)$. In particular, we obtain an action of $\Rep(T_H)$ on any of the categories of the form 
$$Z_{\on{Dr},\Rep_q(T)}(A\mod(\Rep_q(T))_{\on{loc.nilp}})$$
of \secref{sss:Hopf algebras}, and in particular on $\Rep^{\on{mxd}}_q(G)$ and $\Rep^{\on{sml,grd}}_q(G)_{\on{baby-ren}}$.

\medskip

In addition, by unwinding the constructions, we obtain that we also have an action of $\Rep(T_H)$ on the categories
$\Rep^{\on{sml,grd}}_q(G)$ and $\Rep^{\on{sml,grd}}_q(G)_{\on{ren}}$.

\medskip

Set
$$\Rep^{\on{sml}}_q(G):=\Vect\underset{\Rep(T_H)}\otimes \Rep^{\on{sml,grd}}_q(G),$$
$$\Rep^{\on{sml}}_q(G)_{\on{baby-ren}}:=\Vect\underset{\Rep(T_H)}\otimes \Rep^{\on{sml,grd}}_q(G)_{\on{baby-ren}},$$
$$\Rep^{\on{sml}}_q(G)_{\on{ren}}:=\Vect\underset{\Rep(T_H)}\otimes \Rep^{\on{sml,grd}}_q(G)_{\on{ren}},$$
where $\Rep(T_H)\to \Vect$ is the forgetful functor.

\medskip

These are the three versions of the category of representation of the \emph{ungraded} small quantum group. 
Each of these categories carries a t-structure, uniquely characterized by the property that the forgetful functor
from the corresponding graded version is t-exact.

\sssec{}

Note that the identification
$$\Rep^{\on{sml}}_q(G)_{\on{ren}}:=\Vect\underset{\Rep(T_H)}\otimes \Rep^{\on{sml,grd}}_q(G)_{\on{ren}}$$
gives rise to an action of $T_H$ on the category $\Rep^{\on{sml}}_q(G)_{\on{ren}}$ so that
$$\inv_{T_H}\left(\Rep^{\on{sml}}_q(G)_{\on{ren}}\right)\simeq \Rep^{\on{sm,grd}}_q(G)_{\on{ren}},$$
and similarly for the two other versions. 

\sssec{}

We have the fully faithful embeddings
\begin{equation} \label{e:string of small r}
\Rep^{\on{sml}}_q(G) \overset{\fr_{\on{baby}}}\hookrightarrow 
\Rep^{\on{sml}}_q(G)_{\on{baby-ren}} \overset{\fr_{\on{baby-ren}\to \on{ren}}}\hookrightarrow  
\Rep^{\on{sml}}_q(G)_{\on{ren}}
\end{equation} 
and their right adjoints
\begin{equation} \label{e:string of small s}
\Rep^{\on{sml}}_q(G) \overset{\fs_{\on{baby}}}\twoheadleftarrow
\Rep^{\on{sml}}_q(G)_{\on{baby-ren}} \overset{\fs_{\on{ren}\to \on{baby-ren}}}\twoheadleftarrow
\Rep^{\on{sml}}_q(G)_{\on{ren}},
\end{equation} 
which are t-exact and induce equivalences on eventually coconnective parts. The adjoint pairs
$$\fr_{\on{baby}}:\Rep^{\on{sml}}_q(G)\rightleftarrows \Rep^{\on{sml}}_q(G)_{\on{baby-ren}}:\fs_{\on{baby}}$$
and
$$\fr:\Rep^{\on{sml}}_q(G)\rightleftarrows \Rep^{\on{sml}}_q(G)_{\on{ren}}:\fs$$
have the same properties as the pair 
\eqref{e:ren adj pattern}, specified in \secref{sss:renorm pattern}. 

\sssec{}

By a slight abuse of notation we will denote by 
$$\BM_{q,\on{sml}}^\clambda \in \Rep^{\on{sml}}_q(G)_{?}$$
(for the above three options for the value of ?) 
the image of the standard object $\BM_{q,\on{sml}}^\clambda \in \Rep^{\on{sml,grd}}_q(G)_{?}$
under the forgetful functor
$$\Rep^{\on{sml,grd}}_q(G)_{?}\to \Rep^{\on{sml}}_q(G)_{?}.$$

\medskip

Note, however, that these objects of the ungraded category only depend on $\clambda$ as an element of the quotient group $\cLambda/\Lambda_H$. 

\begin{rem}  \label{r:recover small from heart}
As in the case of the graded version, each of the above three categories can be recovered from its
heart. First off, the abelian category
$$\left(\Rep^{\on{sml}}_q(G)\right)^\heartsuit \simeq \left(\Rep^{\on{sml}}_q(G)_{\on{baby-ren}}\right)^\heartsuit\simeq 
\left(\Rep^{\on{sml}}_q(G)_{\on{ren}}\right)^\heartsuit$$
identifies with modules over the usual  small quantum universal enveloping algebra $u_q(G)$. 

\medskip

The category $\Rep^{\on{sml}}_q(G)$, as defined above, identifies with the (derived) category of modules over $u_q(G)$. 
This is the most commonly used version of the category of modules over the small quantum group. 

\medskip

The categories
$$\Rep^{\on{sml}}_q(G)_{\on{baby-ren}} \text{ and } \Rep^{\on{sml}}_q(G)_{\on{ren}}$$
are obtained as ind-completions of the full (but not cocomplete subcategory) of $\Rep^{\on{sml}}_q(G)$, 
generated under finite colimits by modules of the form $\BM_{q,\on{sml}}^\clambda$ (for the ``baby" version)
and all finite-dimensional modules (for the ``ren" version), respectively. 

\end{rem} 

\ssec{Relation between the ``big" and the ``small" quantum groups}

\sssec{}

As in \secref{sss:big to mixed}, we have a canonically defined braided monoidal functor
\begin{equation} \label{e:big to sml grd}
\oblv_{\on{big}\to \on{sml}}:\Rep_q(G)_{\on{ren}}\to \Rep^{\on{sml,grd}}_q(G)_{\on{ren}}.
\end{equation} 

By construction, this functor sends compacts to compacts.

\medskip

By a slight abuse of notation, we will denote by the same symbol $\oblv_{\on{big}\to \on{sml}}$ the composition
of the above functor with the forgetful functor
$$\Rep^{\on{sml,grd}}_q(G)_{\on{ren}}\to \Rep^{\on{sml}}_q(G)_{\on{ren}}.$$

\sssec{}

Let
$$\on{Frob}^*_q:\Rep(H)\to \Rep_q(G)_{\on{ren}}$$
denote Lusztig's quantum Frobenius.  This functor is in fact an $E_3$-functor from $\Rep(H)$,
viewed as an $E_3$-category, to the $E_3$-center of $\Rep_q(G)_{\on{ren}}$. 

\medskip

We have the following commutative diagram
$$
\CD
\Rep(H)  @>{\on{Frob}^*_q}>> \Rep_q(G)_{\on{ren}}  \\
@V{\oblv_{H\to T_H}}VV   @VV{\oblv_{\on{big}\to \on{sml}}}V   \\
\Rep(T_H)  @>{\on{Frob}^*_q}>> \Rep^{\on{sml,grd}}_q(G)_{\on{ren}}. 
\endCD
$$

In particular, the functor $\oblv_{\on{big}\to \on{sml}}$ of \eqref{e:big to sml grd} canonically factors as
\begin{equation} \label{e:AG grd}
\Rep(T_H)\underset{\Rep(H)}\otimes \Rep_q(G)_{\on{ren}}\to \Rep^{\on{sml,grd}}_q(G)_{\on{ren}}.
\end{equation}

The following theorem was established
in \cite{AG1}: 

\begin{thm} \label{t:AG grd}
The functor \eqref{e:AG grd} is an equivalence.
\end{thm}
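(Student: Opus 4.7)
The plan is to reduce the equivalence in \eqref{e:AG grd}, call the functor $F$, to a statement at the level of abelian hearts by t-exactness and compact generation, and then verify it using Lusztig's quantum Steinberg tensor product theorem.

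First I would establish t-exactness of $F$. The functor $\oblv_{\on{big}\to \on{sml}}$ is t-exact by construction, and the base-change operation $\Rep(T_H)\underset{\Rep(H)}\otimes -$ is t-exact because the forgetful functor $\Rep(T_H)\to \Rep(H)$ is t-exact and fits into a bar resolution that lives in cohomological degree zero on compact $\Rep(H)$-modules. Since $\Rep^{\on{sml,grd}}_q(G)_{\on{ren}}$ is left-complete in its t-structure (by the discussion of \secref{sss:renorm pattern}), and the LHS has the same property, it suffices to prove that $F$ carries a set of compact generators bijectively onto a set of compact generators, and that it is fully faithful on these.

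Second, I would apply the quantum Steinberg theorem: every $V\in (\Rep_q(G)_{\on{fin.dim}})^\heartsuit$ admits a finite filtration whose subquotients have the form $V_0\otimes \on{Frob}^*_q(W)$, where $V_0$ has a restricted highest weight (and thus lies in the image of $\Rep^{\on{sml}}_q(G)$) and $W\in \Rep(H)^\heartsuit_{\on{fin.dim}}$. Under $F$, such an object is sent to $V_0$ equipped with the $T_H$-grading induced by $\oblv_{H\to T_H}(W)$, i.e., to the corresponding object of $\Rep^{\on{sml,grd}}_q(G)_{\on{ren}}$. Letting $V_0$ range over restricted simples and $W$ over irreducible $H$-modules exhausts, up to $\cLambda$-shifts, a set of compact generators of the target. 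For full faithfulness on these generators, the key identity is that sliding $\on{Frob}^*_q(W)$ across the relative tensor product (which is legitimate since $\on{Frob}^*_q$ factors through the $E_3$-center of $\Rep_q(G)_{\on{ren}}$) yields
$$\Hom_{\Rep(T_H)\underset{\Rep(H)}\otimes \Rep_q(G)_{\on{ren}}}\bigl(k^{\mu_1}\underset{\Rep(H)}\otimes V_1\otimes \on{Frob}^*_q(W_1),\,k^{\mu_2}\underset{\Rep(H)}\otimes V_2\otimes \on{Frob}^*_q(W_2)\bigr)\simeq$$
$$\simeq \Hom_{\Rep^{\on{sml}}_q(G)}(V_1,V_2)\otimes \Hom_{\Rep(T_H)}\bigl(k^{\mu_1}\otimes W_1|_{T_H},\,k^{\mu_2}\otimes W_2|_{T_H}\bigr),$$
which matches the Hom in $\Rep^{\on{sml,grd}}_q(G)_{\on{ren}}$ computed directly through the Steinberg decomposition.

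The main obstacle I anticipate is controlling compact objects in the relative tensor product $\Rep(T_H)\underset{\Rep(H)}\otimes \Rep_q(G)_{\on{ren}}$ at the DG level, since compact objects in a relative tensor product are not automatically built in a naive way out of compact objects of the factors. This reduces to verifying that the action of $\Rep(H)$ on $\Rep_q(G)_{\on{ren}}$ via $\on{Frob}^*_q$ is sufficiently well-behaved: concretely, that $\on{Frob}^*_q$ admits a continuous right adjoint with good finiteness (the quantum analogue of the Kempf vanishing theorem together with existence of good and Weyl filtrations). Granting this, the bar complex computing the relative tensor product stabilizes in finitely many steps on perfect objects, and the matching of compact generators above upgrades to the sought equivalence.
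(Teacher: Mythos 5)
The paper does not supply its own proof here: the theorem is quoted from \cite{AG1}, so there is no in-text argument to compare against. Assessing your sketch on its merits: the skeleton (match compact generators via the Steinberg tensor product theorem, exploit that $\on{Frob}^*_q$ is $E_3$-central so $\Rep(H)$-objects slide across the relative tensor product) is a reasonable start and is in the spirit of \cite{AG1}, but there is a genuine gap at the crux. The displayed Hom formula, on which your full-faithfulness rests, is asserted rather than proved: the Hom between two pure tensors in $\CA\underset{\CB}\otimes\CC$ is computed by a two-sided bar complex and does not, in general, factor as the tensor product of a Hom in $\CA$ with a Hom in $\CC$; showing that it collapses in this way is essentially the content of the theorem, so this step is circular. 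The t-structure and left-completeness of $\Rep(T_H)\underset{\Rep(H)}\otimes\Rep_q(G)_{\on{ren}}$ are likewise asserted (a relative tensor product of DG categories carries no t-structure a priori, and the ``degree zero bar resolution'' remark does not supply one); also the forgetful functor runs $\Rep(H)\to\Rep(T_H)$, not the direction you wrote.

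What the argument actually requires -- and what \cite{AG1} provides -- is the promotion of the $\Rep(H)$-module structure on $\Rep^{\on{sml}}_q(G)_{\on{ren}}$ to a genuine weak $H$-action (i.e., a $\QCoh(H)$-module structure) with $\inv_H\bigl(\Rep^{\on{sml}}_q(G)_{\on{ren}}\bigr)\simeq\Rep_q(G)_{\on{ren}}$; one then deduces \eqref{e:AG grd} from the $1$-affineness statement \cite[Theorem 2.5.5]{Ga3} (quoted earlier in the paper) together with its $T_H$-equivariant variant. The quantum Kempf vanishing and good/Weyl filtrations you invoke as a backstop control a different finiteness issue and neither produce that $H$-action nor force the bar complex to stabilize on perfect objects; so as written the proposal reduces the theorem to a claim essentially equivalent to it.
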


\sssec{}

By tensoring \eqref{e:AG grd} with $\Vect$ over $\Rep(T_H)$, we obtain a functor
\begin{equation} \label{e:AG}
\Vect \underset{\Rep(H)}\otimes \Rep_q(G)_{\on{ren}}\to \Rep^{\on{sml}}_q(G)_{\on{ren}}.
\end{equation}

From \thmref{t:AG grd} we obtain: 

\begin{cor} \label{c:AG}  The functor \eqref{e:AG} is an equivalence. In particular, the category 
$\Rep^{\on{sml}}_q(G)_{\on{ren}}$ carries an action of the group $H$, and we have an
identification 
$$\inv_{H}\left(\Rep^{\on{sml}}_q(G)_{\on{ren}}\right)\simeq \Rep_q(G)_{\on{ren}}.$$
\end{cor}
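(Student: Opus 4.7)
The plan is to deduce Corollary~\ref{c:AG} from the graded version, Theorem~\ref{t:AG grd}, by a base change along $\Rep(T_H) \to \Vect$. All the structural work has been absorbed into the proof of \thmref{t:AG grd}, so what remains is essentially formal manipulation of tensor products of module categories.

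First I would apply the functor $\Vect \underset{\Rep(T_H)}{\otimes} -$ to both sides of the equivalence
$$\Rep(T_H)\underset{\Rep(H)}\otimes \Rep_q(G)_{\on{ren}}\simeq \Rep^{\on{sml,grd}}_q(G)_{\on{ren}}$$
supplied by \thmref{t:AG grd}. On the right-hand side, by the very definition of $\Rep^{\on{sml}}_q(G)_{\on{ren}}$ given earlier in this section, we obtain $\Rep^{\on{sml}}_q(G)_{\on{ren}}$. On the left-hand side, I would use the associativity of the relative tensor product of module categories over symmetric monoidal $\infty$-categories, together with the observation that the composite of the forgetful functors $\Rep(H) \to \Rep(T_H) \to \Vect$ coincides with the forgetful functor $\Rep(H) \to \Vect$ (since $T_H \subset H$). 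This yields a canonical identification
$$\Vect \underset{\Rep(T_H)}\otimes \left(\Rep(T_H)\underset{\Rep(H)}\otimes \Rep_q(G)_{\on{ren}}\right) \simeq \Vect\underset{\Rep(H)}\otimes \Rep_q(G)_{\on{ren}},$$
which is exactly the source of the functor \eqref{e:AG}. A check unwinding the constructions confirms that the functor obtained by base change agrees with the functor \eqref{e:AG} as defined. This establishes that \eqref{e:AG} is an equivalence.

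For the second assertion, I would invoke the equivalence
$$H\mmod \to \Rep(H)\mmod, \qquad \CC \mapsto \inv_H(\CC),$$
with inverse $\CC' \mapsto \Vect \underset{\Rep(H)}\otimes \CC'$, established in \cite[Theorem 2.5.5]{Ga3} and recalled in the conventions. The functor $\on{Frob}^*_q:\Rep(H) \to \Rep_q(G)_{\on{ren}}$ lands in the $E_3$-center, so in particular equips $\Rep_q(G)_{\on{ren}}$ with the structure of a $\Rep(H)$-module category. Applying the above equivalence to this module category, the corresponding category with weak $H$-action is
$$\Vect\underset{\Rep(H)}\otimes \Rep_q(G)_{\on{ren}} \simeq \Rep^{\on{sml}}_q(G)_{\on{ren}},$$
and passing to $H$-invariants recovers the original $\Rep(H)$-module category $\Rep_q(G)_{\on{ren}}$, giving the desired identification $\inv_H(\Rep^{\on{sml}}_q(G)_{\on{ren}}) \simeq \Rep_q(G)_{\on{ren}}$.

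There is no real obstacle here: all the substantive content is packaged into \thmref{t:AG grd} and the general formalism of weak group actions on DG categories. The only thing to be careful about is that the various base changes are performed along genuinely compatible symmetric monoidal functors, so that the associativity isomorphism above is canonical and matches the functor $\eqref{e:AG}$ on the nose; this is a routine diagram chase.
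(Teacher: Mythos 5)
Your proposal is correct and follows essentially the same route the paper implicitly takes: the paper states the corollary with no proof beyond ``From \thmref{t:AG grd} we obtain:'' and defines the functor \eqref{e:AG} precisely by tensoring \eqref{e:AG grd} with $\Vect$ over $\Rep(T_H)$, so your base-change argument together with the invocation of \cite[Theorem 2.5.5]{Ga3} is exactly the intended reasoning, just spelled out in more detail.
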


\medskip

Note that, by construction, we have a commutative diagram
$$
\CD
\inv_{H}\left(\Rep^{\on{sml}}_q(G)_{\on{ren}}\right)  @>{\sim}>>  \Rep_q(G)_{\on{ren}} \\
@V{\oblv_{H\to T_H}}VV   @VV{\oblv_{\on{big}\to \on{sml}}}V   \\
\inv_{T_H}\left(\Rep^{\on{sml}}_q(G)_{\on{ren}}\right)  @>{\sim}>>  \Rep^{\on{sml,grd}}_q(G)_{\on{ren}} \\
@V{\oblv_H}VV  @VVV    \\
\inv_{1}\left(\Rep^{\on{sml}}_q(G)_{\on{ren}}\right)    @>{\sim}>>  \Rep^{\on{sml}}_q(G)_{\on{ren}}.
\endCD
$$

\sssec{}

The action of $H$ on $\Rep^{\on{sml}}_q(G)_{\on{ren}}$ given by \corref{c:AG} respects the t-structure. Thus, by viewing
$\Rep^{\on{sml}}_q(G)$ (resp., $\Rep_q(G)$) as a quotient of $\Rep^{\on{sml}}_q(G)_{\on{ren}}$ (resp., $\Rep_q(G)_{\on{ren}}$)
by the subcategory of infinitely connective objects, we obtain that the category $\Rep^{\on{sml}}_q(G)$ also carries an action
of $H$, compatible with the inclusion $\fr$, and we have a canonical identification
$$\on{Inv}_H(\Rep^{\on{sml}}_q(G))\simeq \Rep_q(G),$$
and as a consequence
$$\Rep^{\on{sml}}_q(G)\simeq \Vect \underset{\Rep(H)}\otimes \Rep_q(G).$$

These identifications are compatible with the functors $\fr$ and $\fs$. 

%\sssec{}

%Along with the above identifications, we also obtain the identifications
%\begin{equation} \label{e:big vs graded}
%\Rep^{\on{sml,grd}}_q(G)_{\on{ren}} \simeq \Rep(T_H)\underset{\Rep(H)}\otimes \Rep_q(G)_{\on{ren}}
%\text{ and } 
%\Rep^{\on{sml,grd}}_q(G)\simeq \Rep(T_H)\underset{\Rep(H)}\otimes \Rep_q(G),
%\end{equation} 
%compatible with  
%$$\Rep^{\on{sml}}_q(G)_{\on{ren}}\simeq \Vect\underset{\Rep(T_H)}\otimes \Rep^{\on{sml,grd}}_q(G)_{\on{ren}} \text{ and }
%\Rep^{\on{sml}}_q(G)\simeq \Vect\underset{\Rep(T_H)}\otimes \Rep^{\on{sml,grd}}_q(G).$$

\ssec{Digression: ``big" vs ``small" for the quantum Borel}

The material in this subsection seems not to have references in the literature. 

\sssec{}

Let $\fn_H$ be the Lie algebra of the unipotent radical of $B_H$. 
Quantum Frobenius for $B$ is the map of Hopf algebras
\begin{equation} \label{e:q Frob B}
\on{Frob}_q:U^{\on{Lus}}_q(N)\to U(\fn_H),
\end{equation}
where $U(\fn_H)$ is regarded as a Hopf algebra in $\Rep_q(T)$ via the
$$\on{Frob}_q:\Rep(T_H)\to \Rep_q(T).$$

We have a ``short exact sequence" of Hopf algebras
\begin{equation} \label{e:SOE +}
0\to u_q(N)\to U^{\on{Lus}}_q(N)\to U(\fn_H) \to 0
\end{equation} 

\medskip

A key property of \eqref{e:q Frob B} is that it is \emph{co-central}. 

\medskip

Dually, we have a Hopf algebra homomorphism
$$\on{Sym}(\fn^-_H)\to U_q^{\on{DK}}(N^-),$$
which is \emph{central}, and a short exact sequence of Hopf algebras
\begin{equation} \label{e:SOE -}
0\to \on{Sym}(\fn^-)\to U_q^{\on{DK}}(N^-)\to u_q(N^-)\to 0.
\end{equation} 

\sssec{}

Pullback with respect to \eqref{e:q Frob B} defines a monoidal functor
$$\on{Frob}_q^*:\Rep(B_H)\to \Rep_q(B)$$
so that the diagram
$$
\CD
%\Rep(H) @>{\on{Frob}_q^*}>> \Rep_q(G)_{\on{ren}}  \\
%@V{\oblv_{H\to B_H}}VV   @VV{\oblv_{\on{big}\to \on{Lus}^+}}V  \\
\Rep(B_H)  @>{\on{Frob}_q^*}>>  \Rep_q(B) \\
@V{\oblv_{B_H\to T_H}}VV   @VV{\oblv_{\on{Lus}^+\to \on{sml}^+}}V  \\
\Rep(T_H)  @>{\on{Frob}_q^*}>>  \Rep^{\on{sml,grd}}_q(B)
\endCD
$$
commutes. 

\medskip

In particular, we obtain that the functor 
$$\oblv_{\on{Lus}^+\to \on{sml}^+}: \Rep_q(B) \to \Rep^{\on{sml,grd}}_q(B)$$
canonically factors as 
\begin{equation} \label{e:AG for B grd}
\Rep(T_H) \underset{\Rep(B_H)}\otimes \Rep_q(B)\to  \Rep^{\on{sml,grd}}_q(B).
\end{equation}

\medskip

In the same way as one proves the equivalence in \thmref{t:AG grd}, one also establishes:

\begin{thm} \label{t:AG for B grd} 
The functor \eqref{e:AG for B grd} is an equivalence.
\end{thm}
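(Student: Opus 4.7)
The plan is to mirror the argument of \cite{AG1} used to prove \thmref{t:AG grd}, with considerable simplifications since on the Borel side the short exact sequence \eqref{e:SOE +} presents all three Hopf algebras in a single package. The strategy is to pass to an equivalent comonadic-descent statement, identify the resulting comonad using co-centrality of the quantum Frobenius on the positive part, and match its comodule category with the desired relative tensor product.

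First, I would verify that the forgetful functor
$$\oblv_{\on{Lus}^+\to \on{sml}^+}:\Rep_q(B)\to \Rep^{\on{sml,grd}}_q(B)$$
is comonadic. Continuity is immediate from its description as restriction of scalars, while conservativity follows from the fact that $U^{\on{Lus}}_q(N)$ is free as a $u_q(N)$-module (a PBW-type result valid both at generic $q$ and at a root of unity). The right adjoint $\coind_{\on{sml}^+\to \on{Lus}^+}$ is continuous for the same flatness input. Barr--Beck--Lurie then identifies $\Rep_q(B)$ with the category of comodules over the comonad $T:=\oblv_{\on{Lus}^+\to \on{sml}^+}\circ \coind_{\on{sml}^+\to \on{Lus}^+}$ acting on $\Rep^{\on{sml,grd}}_q(B)$.

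Next, I would identify $T$ explicitly. On underlying objects, $T$ is the cotensor functor $M\mapsto U^{\on{Lus}}_q(N)\otimes_{u_q(N)}M$. Co-centrality of the quantum Frobenius, combined with \eqref{e:SOE +}, yields a natural isomorphism
$$U^{\on{Lus}}_q(N)\otimes_{u_q(N)}(-)\simeq U(\fn_H)\otimes_k(-),$$
where $U(\fn_H)$ is viewed in $\Rep^{\on{sml,grd}}_q(B)$ via the composition $\Rep(T_H)\xrightarrow{\on{Frob}_q^*}\Rep_q(T)\to \Rep^{\on{sml,grd}}_q(B)$; under this identification the comonad structure on $T$ corresponds to the Hopf-coalgebra structure on $U(\fn_H)$. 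Therefore $T$-comodules coincide with locally nilpotent $U(\fn_H)$-comodules inside $\Rep^{\on{sml,grd}}_q(B)$, the local-nilpotency condition matching the one built into the definition of $\Rep_q(B)$.

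Finally, using the standard identification $\Rep(B_H)\simeq U(\fn_H)\comod(\Rep(T_H))_{\on{loc.nilp}}$ together with the compatibility of comodule categories over a fixed Hopf algebra with relative tensor products of module categories, I would conclude
$$\Rep(B_H)\underset{\Rep(T_H)}\otimes \Rep^{\on{sml,grd}}_q(B)\simeq U(\fn_H)\comod(\Rep^{\on{sml,grd}}_q(B))_{\on{loc.nilp}}\simeq \Rep_q(B).$$
De-equivariantizing (i.e., tensoring the first two terms with $\Rep(T_H)$ over $\Rep(B_H)$) then delivers \eqref{e:AG for B grd}. The principal obstacle is the comonad identification in the previous step: it reduces to the Hopf--Galois isomorphism
$$U^{\on{Lus}}_q(N)\otimes_{u_q(N)} U^{\on{Lus}}_q(N)\simeq U^{\on{Lus}}_q(N)\otimes U(\fn_H),$$
equivalent to the normality of $u_q(N)$ as a Hopf subalgebra of $U^{\on{Lus}}_q(N)$, which is the algebraic incarnation of co-centrality of the quantum Frobenius; once this is verified on the level of underlying associative algebras, its upgrade to a statement inside $\Rep_q(T)$ is formal.
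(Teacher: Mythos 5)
The paper gives no explicit proof of this theorem; it only remarks that it is established ``in the same way as \thmref{t:AG grd},'' which is proved in \cite{AG1}. Your strategy — Barr--Beck--Lurie for the restriction functor $\oblv_{\on{Lus}^+\to\on{sml}^+}$, identification of the resulting (co)monad via co-centrality of quantum Frobenius and the short exact sequence \eqref{e:SOE +}, and the Hopf--Galois/normality property — is indeed the standard route and matches what one would import from \cite{AG1}. So the overall approach is right.

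A couple of points where the writeup slips, though they do not sink the argument. First, the displayed formula identifying the comonad is garbled: you write ``the cotensor functor $M\mapsto U^{\on{Lus}}_q(N)\otimes_{u_q(N)}M$,'' but that expression is the \emph{monad} for the left adjoint $\ind$, not the comonad for $\coind$. The comonad underlying $\oblv\circ\coind$ should be a $\CHom_{u_q(N)}(U^{\on{Lus}}_q(N),-)$ (equivalently, a cotensor over the dual coalgebras, in the comodule picture of \lemref{l:mod vs comod}); using freeness of $U^{\on{Lus}}_q(N)$ over $u_q(N)$ with basis $U(\fn_H)$, one then gets tensoring with $\CO_{N_H}=(U(\fn_H))^\vee$, which is what makes the identification with $U(\fn_H)$-\emph{comodules} land. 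Your final conclusion $U(\fn_H)\comod(\Rep^{\on{sml,grd}}_q(B))_{\on{loc.nilp}}\simeq\Rep_q(B)$ is correct, so this is likely a transcription error rather than a conceptual one, but as stated the displayed formula is wrong. Second, your justification of conservativity (``follows from the fact that $U^{\on{Lus}}_q(N)$ is free as a $u_q(N)$-module'') is aimed at the wrong functor: restriction of scalars is automatically conservative; freeness is what one needs to show the \emph{right adjoint} is continuous and that the comonad is computed by the naive formula after ind-completion. Third, the final de-equivariantization step (``tensoring with $\Rep(T_H)$ over $\Rep(B_H)$'') is fine, but it deserves the remark that one must also identify the resulting equivalence with the specific functor \eqref{e:AG for B grd}, since a priori Barr--Beck only produces \emph{some} equivalence between those two categories.

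With those corrections, your proposal reproduces the expected argument and I see no genuine gap.
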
 

\sssec{}

Tensoring \eqref{e:AG for B} with $\Vect$ over $\Rep(T_H)$, we obtain a functor
\begin{equation} \label{e:AG for B}
\Vect \underset{\Rep(H)}\otimes \Rep_q(B) \to \Vect \underset{\Rep(T_H)}\otimes \Rep^{\on{sml,grd}}_q(B)=:\Rep^{\on{sml}}_q(B).
\end{equation}

From \thmref{t:AG for B grd} we obtain: 

\begin{cor} \label{c:AG for B}  The functor \eqref{e:AG for B} is an equivalence. In particular, the category 
$\Rep^{\on{sml}}_q(B)$ carries an action of the group $B_H$, and we have an
identification 
$$\inv_{B_H}(\Rep^{\on{sml}}_q(B))\simeq \Rep_q(B).$$
\end{cor}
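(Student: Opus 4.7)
The plan is to derive this as a base-change consequence of \thmref{t:AG for B grd}, and then to deduce the statements about $B_H$-actions and invariants from the general equivalence $H\mmod \simeq \Rep(H)\mmod$ recalled in the introduction.

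First I would tensor both sides of the equivalence
$$\Rep(T_H) \underset{\Rep(B_H)}\otimes \Rep_q(B) \simeq \Rep^{\on{sml,grd}}_q(B)$$
of \thmref{t:AG for B grd} with $\Vect$ over $\Rep(T_H)$, taking $\Rep(T_H)\to \Vect$ to be the forgetful functor. By definition the right-hand side becomes $\Rep^{\on{sml}}_q(B)$. By associativity of the relative tensor product in $\on{DGCat}$, the left-hand side becomes
$$\Vect \underset{\Rep(T_H)}\otimes \Bigl(\Rep(T_H) \underset{\Rep(B_H)}\otimes \Rep_q(B)\Bigr) \simeq \Vect \underset{\Rep(B_H)}\otimes \Rep_q(B).$$
This establishes that \eqref{e:AG for B} is an equivalence, provided one verifies that the equivalence of \thmref{t:AG for B grd} is $\Rep(T_H)$-linear; this will be built into its construction, since on both sides the $\Rep(T_H)$-action comes from $\on{Frob}_q^*:\Rep(T_H)\to \Rep_q(T)$ sitting in the center.

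For the second assertion I would invoke the equivalence $H'\mmod \simeq \Rep(H')\mmod$ of \cite[Theorem 2.5.5]{Ga3} applied to the algebraic group $H'=B_H$: the functor $\CC\mapsto \inv_{B_H}(\CC)$ is an equivalence between $B_H\mmod$ and $\Rep(B_H)\mmod$, with inverse $\CC'\mapsto \Vect\underset{\Rep(B_H)}\otimes \CC'$. The category $\Rep_q(B)$ is a module over $\Rep(B_H)$ via $\on{Frob}_q^*:\Rep(B_H)\to \Rep_q(B)$, so under the equivalence it corresponds to a category carrying a canonical weak $B_H$-action, whose category of invariants is $\Rep_q(B)$ and which coincides as a plain DG category with $\Vect\underset{\Rep(B_H)}\otimes \Rep_q(B)$. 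Transporting along the equivalence $\Vect \underset{\Rep(B_H)}\otimes \Rep_q(B)\simeq \Rep^{\on{sml}}_q(B)$ from the previous step endows $\Rep^{\on{sml}}_q(B)$ with a $B_H$-action and gives the desired identification $\inv_{B_H}(\Rep^{\on{sml}}_q(B))\simeq \Rep_q(B)$.

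The argument is almost entirely formal once the two ingredients are in place. I do not expect any genuine obstacle: the only point needing attention is the verification that the equivalence of \thmref{t:AG for B grd} is $\Rep(T_H)$-linear (so that tensoring over $\Rep(T_H)$ is well defined and compatible on both sides), and this is essentially encoded in the fact that the whole construction of the small and mixed quantum categories is designed to be natural with respect to the quantum Frobenius $\Rep(T_H)\to \Rep_q(T)$.
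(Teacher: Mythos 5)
Your proposal matches the paper's (implicit) argument: tensor the equivalence of Theorem~\ref{t:AG for B grd} with $\Vect$ over $\Rep(T_H)$ and then invoke the general equivalence $B_H\mmod\simeq \Rep(B_H)\mmod$ of \cite[Theorem 2.5.5]{Ga3} to read off the $B_H$-action and the identification of invariants. (One minor remark: the display \eqref{e:AG for B} in the paper writes the source as $\Vect\underset{\Rep(H)}\otimes \Rep_q(B)$, but as you correctly compute, tensoring the left-hand side of Theorem~\ref{t:AG for B grd} over $\Rep(T_H)$ gives $\Vect\underset{\Rep(B_H)}\otimes \Rep_q(B)$; this appears to be a typo in the source and your version is the correct one.)
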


\sssec{}

The restriction functor 
$$\oblv_{\on{big}\to \on{Lus}^+}:\Rep_q(G)_{\on{ren}}\to \Rep_q(B)$$
is compatible with the actions of $\Rep(H)$ and $\Rep(B_H)$ via the commutative diagram
$$
\CD
\Rep(H) @>{\oblv_{H\to B_H}}>>  \Rep(B_H) \\
@V{\on{Frob}_q^*}VV   @V{\on{Frob}_q^*}VV  \\
\Rep_q(G)_{\on{ren}} @>>{\oblv_{\on{big}\to \on{Lus}^+}}> \Rep_q(B). 
\endCD
$$

\medskip

We have a commutative diagram
$$
\CD
\Rep_q(G)_{\on{ren}}   @>{\oblv_{\on{big}\to \on{Lus}^+}}>> \Rep_q(B) \\
@V{\oblv_{\on{big}\to \on{sml}}}VV   @VV{\oblv_{\on{Lus}^+\to \on{sml}^+}}V  \\
\Rep^{\on{sml,grd}}_q(G)_{\on{ren}} @>{\oblv_{\on{sm}\to \on{sm}^+}}>> \Rep^{\on{sml,grd}}_q(B),
\endCD
$$
which gives rise to a commutative diagram
\begin{equation} \label{e:com diag B}
\CD
\Rep(T_H)\underset{\Rep(H)}\otimes \Rep_q(G)_{\on{ren}}   @>>>  \Rep(T_H)\underset{\Rep(B_H)}\otimes\Rep_q(B) \\
@VVV   @VVV  \\
\Rep^{\on{sml,grd}}_q(G)_{\on{ren}} @>{\oblv_{\on{sm}\to \on{sm}^+}}>> \Rep^{\on{sml,grd}}_q(B),
\endCD
\end{equation} 
in which the vertical arrows are equivalences, by Theorems \ref{t:AG grd} and \ref{t:AG for B grd}. 

\section{The ``$\frac{1}{2}$" version of the quantum group}  \label{s:1/2}

In this section we continue to assume that $q$ takes values in roots of unity.

\medskip

We will introduce and study yet one more version of the quantum group: it is one that
has Lusztig's version as its positive part and the small quantum group as its negative part. The resulting
category of modules, denoted, $\Rep^{\frac{1}{2}}_q(G)$ will play the role of intermediary between 
$\Rep_q(G)$ and $\Rep^{\on{sml,grd}}_q(G)$. 

\ssec{Definition of the ``$\frac{1}{2}$" version }

\sssec{}

We define the categories
$$\Rep^{\frac{1}{2}}_q(G) \text{ and } \Rep^{\frac{1}{2}}_q(G)_{\on{ren}}$$
to be 
$$\on{Inv}_{B_H}(\Rep^{\on{sml}}_q(G)) \text{ and } \on{Inv}_{B_H}(\Rep^{\on{sml}}_q(G)_{\on{ren}}),$$
respectively.

\medskip

Both categories carry a t-structure, uniquely characterized by the condition
that the corresponding forgetful functors
$$\oblv_{\frac{1}{2}\to \on{sml}}:\Rep^{\frac{1}{2}}_q(G) \to \Rep^{\on{sml}}_q(G) \text{ and } 
\oblv_{\frac{1}{2}\to \on{sml}}:\Rep^{\frac{1}{2}}_q(G)_{\on{ren}} \to \Rep^{\on{sml}}_q(G)_{\on{ren}}$$
are t-exact.  We have an adjoint pair
$$\fr:\Rep^{\frac{1}{2}}_q(G) \rightleftarrows \Rep^{\frac{1}{2}}_q(G)_{\on{ren}}:\fs$$
with $\fs$ t-exact and inducing an equivalence on eventually coconnective parts. 

\sssec{}

By construction, we have:
$$\Rep^{\frac{1}{2}}_q(G)\simeq \Rep(B_H)\underset{\Rep(H)}\otimes \Rep_q(G) \text{ and }
\Rep^{\frac{1}{2}}_q(G)_{\on{ren}}\simeq \Rep(B_H)\underset{\Rep(H)}\otimes \Rep_q(G)_{\on{ren}}.$$

We have the obvious forgetful functors
$$\oblv_{\on{big}\to \frac{1}{2}}:\Rep_q(G)\to \Rep^{\frac{1}{2}}_q(G) 
\text{ and } \oblv_{\on{big}\to \frac{1}{2}}:\Rep_q(G)_{\on{ren}}\to \Rep^{\frac{1}{2}}_q(G) _{\on{ren}}$$
that are \emph{fully faithful} because the forgetful functor 
\begin{equation} \label{e:oblv H to BH}
\oblv_{H\to B_H}:\Rep(H)\to \Rep(B_H)
\end{equation} 
is fully faithful. 

\medskip

The functor $\oblv_{\on{big}\to \frac{1}{2}}$ admits a left and a right adjoints, denoted $\ind_{\frac{1}{2}\to \on{big}}$ and $\coind_{\frac{1}{2}\to \on{big}}$,
respectively. They are related by the formula
\begin{equation} \label{e:coind 1/2}
\coind_{\frac{1}{2}\to \on{big}}(-)\simeq \ind_{\frac{1}{2}\to \on{big}}(-\otimes k^{-2\rho_H})[-d],
\end{equation} 
because this is the case for the corresponding left and right adjoints of the forgetful functor $\oblv_{H\to B_H}$ of \eqref{e:oblv H to BH}
i.e.,
$$\coind_{B_H\to H}(-)\simeq \ind_{B_H\to H}(-\otimes k^{-2\rho_H})[-d]$$
as functors $\Rep(B_H)\to \Rep(H)$. 

\sssec{}

Recall that the restriction functor 
$$\oblv_{\on{big}\to \on{Lus}^+}:\Rep_q(G)_{\on{ren}}\to \Rep_q(B)$$
is compatible with the actions of $\Rep(H)$ and $\Rep(B_H)$, respectively, via the forgetful functor
$\oblv_{H\to B_H}:\Rep(H)\to \Rep(B_H)$.

\medskip

From here we obtain a functor
$$\oblv_{\frac{1}{2}\to \on{Lus}^+}: \Rep^{\frac{1}{2}}_q(G)_{\on{ren}} :=
\Rep(B_H) \underset{\Rep(H)}\otimes \Rep_q(G)_{\on{ren}}\to \Rep_q(B).$$

\medskip

From the diagram \eqref{e:com diag B}, we obtain a commutative diagram
$$
\CD
\Rep^{\frac{1}{2}}_q(G)_{\on{ren}} @>{\oblv_{\frac{1}{2}\to \on{Lus}^+}}>>  \Rep_q(B)  \\
@V{\oblv_{B_H\to T_H}}VV   @VV{\oblv_{\on{Lus}^+\to \on{sml}^+}}V   \\
\Rep_q^{\on{sml,grd}}(G)_{\on{ren}}  @>{\oblv_{\on{sml}\to \on{sml}^+}}>>  \Rep_q^{\on{sml,grd}}(B),
\endCD
$$
in which the bottom row is obtained from the top row by  $\Rep(T_H)\underset{\Rep(B_H)}\otimes -$. 

\medskip

Hence, by passing to left adjoints along the horizontal arrows, we obtain another commutative diagram
$$
\CD
\Rep^{\frac{1}{2}}_q(G)_{\on{ren}} @<{\ind_{\on{Lus}^+\to \frac{1}{2}}}<<  \Rep_q(B)  \\
@V{\oblv_{B_H\to T_H}}VV   @VV{\oblv_{\on{Lus}^+\to \on{sml}^+}}V   \\
\Rep_q^{\on{sml,grd}}(G)_{\on{ren}} @<{\ind_{\on{sml}^+\to \on{sml}}}<<  \Rep_q^{\on{sml,grd}}(B).
\endCD
$$

\sssec{}

For $\clambda\in \cLambda$,  denote
$$\BM^\clambda_{q,\frac{1}{2}}:=\ind_{\on{Lus}^+\to \frac{1}{2}}(k^\clambda)\in \Rep^{\frac{1}{2}}_q(G)_{\on{ren}}.$$

Note that the restriction of $\BM^\clambda_{q,\frac{1}{2}}$ along the functor
$$\oblv_{B_H\to T_H}:\Rep^{\frac{1}{2}}_q(G)_{\on{ren}} \to \Rep^{\on{sml,grd}}_q(G)_{\on{ren}}$$
is the standard object (a.k.a. baby Verma module) $\BM^\clambda_{q,\on{sml}}$. 

\sssec{}

We now claim:

\begin{prop}  \label{p:B preserves baby}
The action of $B_H$ on $\Rep^{\on{sml}}_q(G)_{\on{ren}}$ preserves the full subcategory 
$$\Rep^{\on{sml}}_q(G)_{\on{baby-ren}}\subset \Rep^{\on{sml}}_q(G)_{\on{ren}}.$$
\end{prop}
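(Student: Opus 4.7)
The plan is to show that for every $V\in \Rep(B_H)$, the corresponding endofunctor ``$V\star -$'' on $\Rep^{\on{sml}}_q(G)_{\on{ren}}$ sends the full subcategory $\Rep^{\on{sml}}_q(G)_{\on{baby-ren}}$ into itself. Since both the $\Rep(B_H)$-action and the embedding of subcategories are continuous, it suffices to verify this for compact $V\in \Rep(B_H)$ applied to a set of compact generators of $\Rep^{\on{sml}}_q(G)_{\on{baby-ren}}$.

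First I would record that, via the fully faithful colimit-preserving functor $\fr_{\on{baby-ren}\to \on{ren}}$ of \eqref{e:string of small r}, the subcategory $\Rep^{\on{sml}}_q(G)_{\on{baby-ren}}\subset \Rep^{\on{sml}}_q(G)_{\on{ren}}$ is identified with the cocomplete full subcategory generated by the baby Verma modules $\BM^\clambda_{q,\on{sml}}$ for $\clambda\in \cLambda$. Being the essential image of such a functor, it is closed under all colimits, and in particular under extensions. So it is enough to prove, for each compact $V\in \Rep(B_H)$ and each $\clambda\in \cLambda$, that $V\star \BM^\clambda_{q,\on{sml}}$ admits a finite filtration whose subquotients are shifts of baby Vermas.

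The next step exploits the solvability of $B_H$: any compact (i.e.\ perfect) $V\in \Rep(B_H)$ admits a finite filtration in $\Rep(B_H)$ whose successive quotients are of the form $k^{\mu_i}[n_i]$ with $\mu_i\in \Lambda_H$ and $n_i\in \BZ$, each obtained from a character of $T_H$ via the quotient $B_H\twoheadrightarrow T_H$. Exactness of the $\Rep(B_H)$-action then yields a matching filtration on $V\star \BM^\clambda_{q,\on{sml}}$, reducing the problem to the case $V=k^\mu$ with $\mu\in \Lambda_H$.

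For this final case I would invoke \corref{c:AG} and \corref{c:AG for B}: the $B_H$-action is the restriction of the $H$-action given by tensor product with the quantum Frobenius pullback $\on{Frob}^*_q:\Rep(H)\to \Rep_q(G)_{\on{ren}}$ followed by the forgetful functor to $\Rep^{\on{sml}}_q(G)_{\on{ren}}$; tracing through \eqref{e:com diag B}, the action of a character $k^\mu\in \Rep(T_H)\subset \Rep(B_H)$ with $\mu\in \Lambda_H$ is tensor product with the one-dimensional module $\on{Frob}^*_q(k^\mu)$, which sits in weight $\mu$ and carries the trivial action of both $u_q(N)$ and $u_q(N^-)$. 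Since $\BM^\clambda_{q,\on{sml}}\simeq \ind_{\on{sml}^+\to \on{sml}}(k^\clambda)$ is $u_q(N^-)$-free on a generator of weight $\clambda$, this tensor product shifts the weight, giving $k^\mu\star \BM^\clambda_{q,\on{sml}}\simeq \BM^{\clambda+\mu}_{q,\on{sml}}$, which lies in $\Rep^{\on{sml}}_q(G)_{\on{baby-ren}}$. The main bookkeeping hurdle is the unwinding of the $B_H$-action through the equivalences of Corollaries \ref{c:AG} and \ref{c:AG for B} and the compatibility diagram \eqref{e:com diag B}; once that identification is in hand, the rest of the argument is purely formal.
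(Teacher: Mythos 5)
Your proposal correctly pins down the key ingredient — that the compact generators $\BM^\clambda_{q,\on{sml}}$ of $\Rep^{\on{sml}}_q(G)_{\on{baby-ren}}$ lift to $\BM^\clambda_{q,\frac{1}{2}}\in \Rep^{\frac{1}{2}}_q(G)_{\on{ren}} = \on{Inv}_{B_H}(\Rep^{\on{sml}}_q(G)_{\on{ren}})$, and that tensoring by characters of $B_H$ just shifts the inducing weight of a baby Verma. This is the same observation the paper uses. However, there is a real confusion in the way you set up the argument, and as written it does not prove what is asked.

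The $B_H$-action on $\Rep^{\on{sml}}_q(G)_{\on{ren}}$ in the sense of the proposition is a \emph{weak} action, i.e.\ a module structure over $\QCoh(B_H)$, not over $\Rep(B_H)$. A $\Rep(B_H)$-module structure lives one level up, on the invariant category $\Rep^{\frac{1}{2}}_q(G)_{\on{ren}} = \on{Inv}_{B_H}(\Rep^{\on{sml}}_q(G)_{\on{ren}})$. Your proposal runs the character-filtration argument against a putative endofunctor ``$V\star-$'' of $\Rep^{\on{sml}}_q(G)_{\on{ren}}$ for $V\in\Rep(B_H)$, implemented by tensoring with $\on{Frob}^*_q(V)$. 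But for $\mu\in\Lambda_H$ the image of $\on{Frob}^*_q(k^\mu)$ under the forgetful functor $\Rep^{\on{sml,grd}}_q(G)_{\on{ren}}\to\Rep^{\on{sml}}_q(G)_{\on{ren}}$ is the monoidal unit — precisely because the $\Lambda_H$-grading has been discarded in passing to the ungraded small quantum group. So the endofunctor you reduce to is the identity, and the computation $k^\mu\star\BM^\clambda_{q,\on{sml}}\simeq\BM^{\clambda+\mu}_{q,\on{sml}}$ is vacuous (those two objects are already equal in $\Rep^{\on{sml}}_q(G)_{\on{ren}}$, since they depend on $\clambda$ only modulo $\Lambda_H$). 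Nothing about the actual $\QCoh(B_H)$-action has been tested.

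The paper's own proof avoids this by invoking (implicitly) the equivalence, coming from 1-affineness of $\on{pt}/B_H$ \cite[Theorem 2.5.5]{Ga3}, between sub-$\QCoh(B_H)$-module categories of $\CC$ and sub-$\Rep(B_H)$-module categories of $\on{Inv}_{B_H}(\CC)$: once the generators $\BM^\clambda_{q,\on{sml}}$ are exhibited as the images of $\BM^\clambda_{q,\frac12}$, the subcategory they generate under colimits is automatically the de-equivariantization of the $\Rep(B_H)$-submodule of $\Rep^{\frac12}_q(G)_{\on{ren}}$ generated by the lifts, hence is preserved. Your filtration argument (compact $V\in\Rep(B_H)$ is a finite extension of shifted characters by Lie's theorem, $k^\mu\otimes\BM^\clambda_{q,\frac12}\simeq\BM^{\clambda+\mu}_{q,\frac12}$) is exactly the right verification that this $\Rep(B_H)$-submodule of $\Rep^{\frac12}_q(G)_{\on{ren}}$ is well-defined — but it must be carried out there, not in $\Rep^{\on{sml}}_q(G)_{\on{ren}}$. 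If you move the argument up to the invariant category, the proof goes through and in fact supplies a detail the paper leaves implicit.
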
 

\begin{proof}

To prove the proposition, it would suffice to show that the objects $\BM^\clambda_{q,\on{sml}}\in \Rep^{\on{sml}}_q(G)_{\on{ren}}$
could be lifted to $\on{Inv}_{B_H}(\Rep^{\on{sml}}_q(G)_{\on{ren}})$. However, we just saw that $\BM^\clambda_{q,\on{sml}}$
lifts to an object denoted $\BM^\clambda_{q,\frac{1}{2}}\in \Rep^{\frac{1}{2}}_q(G)_{\on{ren}}$. 

\end{proof}

\sssec{}

It follows from \propref{p:B preserves baby} that the category $\Rep^{\on{sml}}_q(G)_{\on{baby-ren}}$ acquires a $B_H$-action, and 
the functors \eqref{e:string of small r}, and hence also the functors 
\eqref{e:string of small s}, are compatible with the $B_H$-actions.

\medskip

We define
$$\Rep^{\frac{1}{2}}_q(G)_{\on{baby-ren}}:=\on{Inv}_{B_H}(\Rep^{\on{sml}}_q(G)_{\on{baby-ren}}).$$ 

We have the corresponding fully faithful embeddings
\begin{equation} \label{e:string of 1/2 r}
\Rep^{\frac{1}{2}}_q(G) \overset{\fr_{\on{baby}}}\hookrightarrow 
\Rep^{\frac{1}{2}}_q(G)_{\on{baby-ren}} \overset{\fr_{\on{baby-ren}\to \on{ren}}}\hookrightarrow  
\Rep^{\frac{1}{2}}_q(G)_{\on{ren}}
\end{equation} 
and their right adjoints
\begin{equation} \label{e:string of 1/2 s}
\Rep^{\frac{1}{2}}_q(G) \overset{\fs_{\on{baby}}}\twoheadleftarrow
\Rep^{\frac{1}{2}}_q(G)_{\on{baby-ren}} \overset{\fs_{\on{ren}\to \on{baby-ren}}}\twoheadleftarrow
\Rep^{\frac{1}{2}}_q(G).
\end{equation} 

Moreover, $\Rep^{\frac{1}{2}}_q(G)_{\on{baby-ren}}$ also carries a t-structure, so that the forgetful functor
$$\Rep^{\frac{1}{2}}_q(G)_{\on{baby-ren}}\to \Rep^{\on{sml}}_q(G)_{\on{baby-ren}}$$
is t-exact.  The functors in \eqref{e:string of 1/2 s} are t-exact and induce equivalences on 
eventually coconnective parts. 

\sssec{}

By construction, the objects $\BM^\clambda_{q,\frac{1}{2}}$ lie in the essential image of 
$\fr_{\on{baby-ren}\to \on{ren}}$. This implies that the functor $\ind_{\on{Lus}^+\to \frac{1}{2}}$ factors as
$$\Rep_q(B)\to \Rep^{\frac{1}{2}}_q(G)_{\on{baby-ren}} \overset{\fr_{\on{baby-ren}\to \on{ren}}}\hookrightarrow  
\Rep^{\frac{1}{2}}_q(G)_{\on{ren}}.$$

\medskip

By adjunction, this implies that the functor
$$\oblv_{\frac{1}{2}\to \on{Lus}^+}:\Rep^{\frac{1}{2}}_q(G)_{\on{ren}}\to \Rep_q(B)$$
factors as
$$\Rep^{\frac{1}{2}}_q(G)_{\on{ren}} \overset{\fs_{\on{ren}\to \on{baby-ren}}}\twoheadrightarrow 
\Rep^{\frac{1}{2}}_q(G)_{\on{baby-ren}} \to \Rep_q(B).$$

By a slight abuse of notation, we will denote the resulting pair of adjoint functors
$$\Rep_q(B)\rightleftarrows \Rep^{\frac{1}{2}}_q(G)_{\on{baby-ren}}$$
by the same symbols $(\ind_{\on{Lus}^+\to \frac{1}{2}},\oblv_{\frac{1}{2}\to \on{Lus}^+})$. 

\begin{rem}
The categories
$$\Rep^{\frac{1}{2}}_q(G),\,\, \Rep^{\frac{1}{2}}_q(G)_{\on{baby-ren}} \text{ and } \Rep^{\frac{1}{2}}_q(G)_{\on{ren}}$$
can be recovered from their common heart by the same procedure as in Remark \ref{r:recover small from heart}. 

\medskip

The abelian category 
$(\Rep^{\frac{1}{2}}_q(G))^\heartsuit$ can be explicitly described as objects of $(\Rep_q(T))^\heartsuit$, equipped
with a locally nilpotent action of $U_q^{\on{Lus}}(N)$ and a compatible action of $u_q(N^-) $.

\end{rem} 

\ssec{``Mixed" vs ``$\frac{1}{2}$"}

\sssec{}

Recall the co-central homomorphism 
$$U_q^{\on{Lus}}(N) \to U(\fn_H)$$
of \eqref{e:q Frob B}. 

\medskip

It induces a functor from $\Rep(B_H)$ to the $E_3$-center of 
$\Rep^{\on{mxd}}_q(G)$. In particular, we obtain a monoidal action of $\Rep(B_H)$ on 
$\Rep^{\on{mxd}}_q(G)$.

\sssec{}

The following conjecture seems to be within easy reach, and in what follows we will assume its validity: 

\begin{conj} \label{c:1/2 vs mixed}
The above action of $\Rep(B_H)$ on $\Rep^{\on{mxd}}_q(G)$ can be promoted to an action
of the monoidal category $\QCoh(\fn_H/\on{Ad}(B_H))$.  Furthermore, we have a canonical 
equivalence
$$\Rep^{\frac{1}{2}}_q(G)_{\on{baby-ren}} \simeq \Rep(B_H)\underset{\QCoh(\fn_H/\on{Ad}(B_H))}\otimes \Rep^{\on{mxd}}_q(G),$$
where the functor $\QCoh(\fn_H/\on{Ad}(B_H))\to \Rep(B_H)$ is given by evaluation at the point $0\overset{\iota}\hookrightarrow \fn_H$. 
\end{conj}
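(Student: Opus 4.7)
The starting observation is that the short exact sequence \eqref{e:SOE -} exhibits $\on{Sym}(\fn_H^-)$ as a central Hopf subalgebra of $U_q^{\on{DK}}(N^-)$ in $\Rep_q(T)$: its image is annihilated by the braiding with every object of $U_q^{\on{DK}}(N^-)\mod(\Rep_q(T))$. Therefore $\on{Sym}(\fn_H^-)\mod(\Rep_q(T))$ -- equivalently $\QCoh(\fn_H)$ equipped with its natural $T_H$-equivariance via the quantum Frobenius $\Rep(T_H)\to \Rep_q(T)$ -- acts monoidally on $U_q^{\on{DK}}(N^-)\mod(\Rep_q(T))$. Transporting along the forgetful functor $\oblv_{\on{mxd}\to \on{DK}^-}$ of \secref{ss:mixed} produces a monoidal action of $\QCoh(\fn_H/\on{Ad}(T_H))$ on $\Rep_q^{\on{mxd}}(G)$. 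Base change to the zero section $\on{pt}/T_H\hookrightarrow \fn_H/\on{Ad}(T_H)$ kills $\on{Sym}(\fn_H^-)^+$, and by \eqref{e:SOE -} this is exactly the condition that the $U_q^{\on{DK}}(N^-)$-action on a mixed object descends to $u_q(N^-)$; the resulting category is the graded ``Lusztig-positive, small-negative'' center, matching Remark 8.1 (the description of $\bigl(\Rep^{\frac{1}{2}}_q(G)\bigr)^\heartsuit$).

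Upgrading the $T_H$-equivariance to the full $B_H$-equivariance is the substantive step. I would use the co-central quantum Frobenius \eqref{e:q Frob B} in tandem: the surjection $U_q^{\on{Lus}}(N)\twoheadrightarrow U(\fn_H)$, combined with the Drinfeld-center structure of $\Rep_q^{\on{mxd}}(G)$, delivers an action of $U(\fn_H)$ on the identity functor of $\Rep_q^{\on{mxd}}(G)$ (viewed as a $\QCoh(\fn_H/T_H)$-module category) by derivations. A direct computation on divided-power generators $e_i^{(d_i)}$, using that the classical $U(\fn_H)$-action on $\on{Sym}(\fn_H^-)$ is by adjoint derivations, shows that these derivations restrict to the central $\on{Sym}(\fn_H^-)$ as the classical adjoint action $\fn_H\curvearrowright \fn_H^-$. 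Categorical descent then promotes the $T_H$-equivariant $\QCoh(\fn_H)$-action and the $T_H$-equivariant infinitesimal $N_H$-lift to a monoidal $\QCoh(\fn_H/\on{Ad}(B_H))$-action.

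Having this action, the tensor product is computed by base change along $\on{pt}/B_H\hookrightarrow \fn_H/\on{Ad}(B_H)$:
$$\Rep(B_H)\underset{\QCoh(\fn_H/\on{Ad}(B_H))}\otimes \Rep_q^{\on{mxd}}(G)\simeq \inv_{B_H}\!\left(\Vect\underset{\QCoh(\fn_H)}\otimes \Rep_q^{\on{mxd}}(G)\right),$$
whose inner factor is, by the previous two paragraphs, $\Rep_q^{\frac{1}{2},\on{grd}}(G)_{\on{baby-ren}}$ with its canonical $B_H$-action. Taking $B_H$-invariants recovers $\Rep_q^{\frac{1}{2}}(G)_{\on{baby-ren}}=\inv_{B_H}(\Rep_q^{\on{sml}}(G)_{\on{baby-ren}})$ via \propref{p:B preserves baby}, once the two $B_H$-actions (the one from this construction and the one implicit in the paper's definition via \corref{c:AG}) are matched -- this matching reduces to their common restriction to $T_H$ and to the common infinitesimal $\fn_H$-action constructed above. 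The main obstacle I anticipate is the coherent assembly of the $\QCoh(\fn_H/\on{Ad}(B_H))$-action -- not merely an $E_1$-promotion of the $T_H$-equivariance to $B_H$-equivariance on objects, but a genuine factorization as a braided/$E_2$-functor from $\QCoh(\fn_H/\on{Ad}(B_H))$ to the appropriate center of $\Rep_q^{\on{mxd}}(G)$; this is where higher-categorical bookkeeping becomes delicate, though the constituent algebraic inputs (the two Frobenius sequences \eqref{e:SOE +}, \eqref{e:SOE -}) are explicit and standard.
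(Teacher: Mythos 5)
First, a point of calibration: the paper does not prove this statement. It appears only as \conjref{c:1/2 vs mixed} (``seems to be within easy reach'') and is thereafter \emph{assumed}; the only supporting text is the remark following it, which gives precisely the heuristic your plan starts from, namely that passing from $\Rep^{\on{mxd}}_q(G)$ to $\Rep^{\frac{1}{2}}_q(G)_{\on{baby-ren}}$ should amount to making the augmentation ideal of $\Sym(\fn^-_H)\subset U^{\on{DK}}_q(N^-)$ from \eqref{e:SOE -} act by zero. So there is no argument in the paper to compare yours against, and your proposal has to stand on its own as a proof; as written, it does not.

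The genuine gap is that the crucial step --- promoting the $\Rep(B_H)$-action coming from \eqref{e:q Frob B} to a monoidal action of $\QCoh(\fn_H/\on{Ad}(B_H))$ --- is exactly the first assertion of the conjecture, and your treatment of it is an assertion rather than a proof. Concretely: (i) you need the central map $\Sym(\fn^-_H)\to U^{\on{DK}}_q(N^-)$ to act on objects of the relative Drinfeld center by morphisms \emph{of the center}, i.e.\ compatibly with the data \eqref{e:Dr center}, and coherently enough to produce a monoidal functor out of $\QCoh(\fn_H)$ with its $T_H$-equivariance --- this is plausible because the relevant weights lie in $\Lambda_H$ where the braiding is trivial, but it is not checked; (ii) the claim that the $U(\fn_H)$-action extracted from $U^{\on{Lus}}_q(N)\twoheadrightarrow U(\fn_H)$ restricts on $\Sym(\fn^-_H)$ to the adjoint action is a genuine computation (in the spirit of the De Concini--Kac--Procesi quantum coadjoint action), not a formal consequence of the two sequences \eqref{e:SOE +} and \eqref{e:SOE -}; and (iii) ``categorical descent then promotes'' hides both the integration of the infinitesimal $\fn_H$-action to an algebraic $N_H$-action at the DG-categorical level and all higher coherences needed for a $\QCoh(\fn_H/\on{Ad}(B_H))$-module structure --- the very point you flag as delicate, which is the mathematical content of the conjecture, so flagging it does not discharge it. Separately, in the second half your identification of the fiber at $0$ is justified only at the level of hearts; but $\Rep^{\frac{1}{2}}_q(G)$, $\Rep^{\frac{1}{2}}_q(G)_{\on{baby-ren}}$ and $\Rep^{\frac{1}{2}}_q(G)_{\on{ren}}$ all share the same heart, and the paper shows (\propref{p:i_*good good} and the remark after it) that landing in the wrong renormalization is a real danger: the tensor product is expected to be the baby-renormalized version and emphatically not $\Rep^{\frac{1}{2}}_q(G)_{\on{ren}}$. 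To pin this down you must track compact generators (the images of the $\BM^\clambda_{q,\on{mxd}}$, which go to baby Verma modules), not just the abelian category.
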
 

\begin{rem}
The intuitive meaning behind the second statement in \conjref{c:1/2 vs mixed} is the short exact sequence \eqref{e:SOE -}
of Hopf algebras in $\Rep_q(T)$:
$$0\to \Sym(\fn^-_H) \to U^{\on{DK}}_q(N^-) \to u_q(N^-)\to 0.$$

Namely, the passage from $\Rep^{\on{mxd}}_q(G)$ to $\Rep^{\frac{1}{2}}_q(G)_{\on{baby-ren}}$ is obtained by imposing that the
augmentation ideal in $\Sym(\fn^-_H)$ should act by $0$. Here we identify $\fn^-_H$ with the dual vector space of $\fn_H$, so that
$$\fn_H\simeq \Spec(\Sym(\fn^-_H)).$$
\end{rem}

\sssec{}

Assuming \conjref{c:1/2 vs mixed}, we obtain that there exists a forgetful functor
$$\Rep^{\frac{1}{2}}_q(G)_{\on{baby-ren}}\to \Rep^{\on{mxd}}_q(G),$$ 
denoted 
$$\iota_*\simeq \oblv_{\frac{1}{2}\to \on{mxd}},$$
which admits a left and a right adjoints, denoted 
$$\iota^*\simeq \ind_{\on{mxd}\to \frac{1}{2}} \text{ and } \iota^!\simeq \coind_{\on{mxd}\to \frac{1}{2}},$$
respectively. 

\medskip

In addition, we have
\begin{equation} \label{e:coind mixed 1/2}
\coind_{\on{mxd}\to \frac{1}{2}}(-)\simeq \ind_{\on{mxd}\to \frac{1}{2}}(-)\otimes k^{2\rho_H}[-d],
\end{equation}
because relationship holds for the functors
$$\iota^*,\iota^!:\QCoh(\fn_H/\on{Ad}(B_H))\to \QCoh(\on{pt}/B_H).$$

\sssec{}

The statement of \conjref{c:1/2 vs mixed} should be complemented by the following two additional ones:

\medskip
 
One is that the composite functor
$$\Rep_q(G)_{\on{ren}} \overset{\oblv_{\on{big}\to \frac{1}{2}}}\longrightarrow 
\Rep^{\frac{1}{2}}_q(G)_{\on{ren}} \overset{\fs_{\on{ren}\to \on{baby-ren}}}\longrightarrow \Rep^{\frac{1}{2}}_q(G)_{\on{baby-ren}} 
\overset{\oblv_{\frac{1}{2}\to \on{mxd}}}\longrightarrow \Rep^{\on{mxd}}_q(G)$$ 
identifies with the functor $\oblv_{\on{big}\to \on{mxd}}$.

\medskip

The other is that the composite functor
$$\Rep^{\frac{1}{2}}_q(G)_{\on{baby-ren}}\overset{\oblv_{\frac{1}{2}\to \on{mxd}}}\longrightarrow 
\Rep^{\on{mxd}}_q(G) \overset{\oblv_{\on{mxd}\to \on{Lus}^+}}\longrightarrow \Rep_q(B)$$
identifies with the functor 
$\oblv_{\frac{1}{2}\to \on{Lus}^+}$. 

\medskip

These two identifications of functors must be compatible with the identification
$$\oblv_{\frac{1}{2}\to \on{Lus}^+}\circ \oblv_{\on{big}\to \frac{1}{2}}\simeq \oblv_{\on{big}\to \on{Lus}^+}.$$

\sssec{}

Note that, by adjunction, the second of the above compatibilities implies an isomorphism
$$\ind_{\on{mxd}\to \frac{1}{2}} \circ \ind_{\on{Lus}^+\to \on{mxd}}\simeq \ind_{\on{Lus}^+\to \frac{1}{2}}.$$

In particular, for $\clambda\in \cLambda$, we obtain an isomorphism
$$\ind_{\on{mxd}\to \frac{1}{2}}(\BM_{q,\on{mxd}})\simeq \BM_{q,\frac{1}{2}}.$$

\ssec{More on mixed vs $\frac{1}{2}$}

\sssec{}

We claim:

\begin{prop} \label{p:i_*good good} 
The functor $\oblv_{\frac{1}{2}\to \on{mxd}}\circ \fs_{\on{ren}\to \on{baby-ren}}$ sends compacts to compacts.
\end{prop}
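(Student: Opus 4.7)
The plan is to exhibit an explicit family of compact generators of $\Rep^{\frac{1}{2}}_q(G)_{\on{ren}}$ and to verify the claim on them. By definition, $\Rep^{\frac{1}{2}}_q(G)_{\on{ren}}$ is equivalent to $\Rep(B_H)\underset{\Rep(H)}\otimes \Rep_q(G)_{\on{ren}}$. As a relative tensor product of compactly generated DG categories over the compactly generated rigid symmetric monoidal category $\Rep(H)$, this category is compactly generated by objects of the form $V\boxtimes X$ with $V\in \Rep(B_H)$ finite-dimensional and $X\in \Rep_q(G)_{\on{ren}}$ compact. Via the canonical identifications we may rewrite $V\boxtimes X \simeq V\star \oblv_{\on{big}\to\frac{1}{2}}(X)$, where $\star$ denotes the action of $\Rep(B_H)$ on $\Rep^{\frac{1}{2}}_q(G)_{\on{ren}}$. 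Filtering $V$ by its one-dimensional $T_H$-subquotients $k^{\clambda'}$ for $\clambda'\in \Lambda_H$, it suffices to verify that $\oblv_{\frac{1}{2}\to \on{mxd}}\circ \fs_{\on{ren}\to \on{baby-ren}}$ sends each $k^{\clambda'}\star \oblv_{\on{big}\to\frac{1}{2}}(X)$ to a compact object of $\Rep^{\on{mxd}}_q(G)$.

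Next, I invoke the identity of functors
$$\oblv_{\frac{1}{2}\to \on{mxd}}\circ \fs_{\on{ren}\to \on{baby-ren}}\circ \oblv_{\on{big}\to\frac{1}{2}}\simeq \oblv_{\on{big}\to \on{mxd}},$$
which was stated as one of the compatibilities accompanying \conjref{c:1/2 vs mixed}. All three functors on the left-hand side commute, by construction, with the central action of $\Rep(T_H)$, acting on the $\Rep^{\frac{1}{2}}$-categories via the forgetful $\Rep(B_H)\to \Rep(T_H)$ and on $\Rep^{\on{mxd}}_q(G)$ via the quantum Frobenius of \secref{sss:quant Frob lattice}. Applied to $k^{\clambda'}\star \oblv_{\on{big}\to\frac{1}{2}}(X)$ this yields
$$\oblv_{\frac{1}{2}\to \on{mxd}}\circ \fs_{\on{ren}\to \on{baby-ren}}(k^{\clambda'}\star \oblv_{\on{big}\to\frac{1}{2}}(X))\simeq k^{\clambda'}\star \oblv_{\on{big}\to \on{mxd}}(X).$$
By \propref{p:oblv big to mixed comp}, $\oblv_{\on{big}\to \on{mxd}}(X)$ is compact in $\Rep^{\on{mxd}}_q(G)$; since $k^{\clambda'}\in \Rep(T_H)$ is an invertible object (with inverse $k^{-\clambda'}$), its action on $\Rep^{\on{mxd}}_q(G)$ is an autoequivalence and therefore preserves compactness. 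This will complete the proof.

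The main obstacle is the first step: justifying that pure tensors of compact objects generate the compact objects of the relative tensor product $\Rep(B_H)\underset{\Rep(H)}\otimes \Rep_q(G)_{\on{ren}}$. This is a general fact about tensor products of compactly generated module categories over a rigid compactly generated symmetric monoidal DG category (the relevant formalism is developed in \cite[Chapter 1]{GR}), but one should take care to cite or verify it in the present setting. With this granted, the rest of the argument is purely formal, relying only on \propref{p:oblv big to mixed comp} and on the structural identity of functors accompanying \conjref{c:1/2 vs mixed}.
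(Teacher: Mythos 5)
Your proof is correct and takes essentially the same route as the paper. The paper states without elaboration that $\Rep^{\frac{1}{2}}_q(G)_{\on{ren}}$ is generated by $k^\mu\otimes \oblv_{\on{big}\to\frac{1}{2}}(\CM)$ for compact $\CM$, then applies the functorial identity accompanying \conjref{c:1/2 vs mixed} and invokes \propref{p:oblv big to mixed comp} together with the fact that twisting by characters preserves compactness; you do exactly this, just supplying the (correct) justification for the compact-generation step via the general theory of relative tensor products over $\Rep(H)$ and the filtration of $V\in\Rep(B_H)$ by $T_H$-characters.
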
 

\begin{proof}

The category $\Rep^{\frac{1}{2}}_q(G)_{\on{ren}}$ is generated by objects of the form
$$k^\mu\otimes \oblv_{\on{big}\to \frac{1}{2}}(\CM), \quad \CM\in \Rep_q(G)_{\on{ren}}, \,\, \mu\in \Lambda.$$

So it suffices to show that the functor $\oblv_{\frac{1}{2}\to \on{mxd}}\circ \fs_{\on{ren}\to \on{baby-ren}}$ 
sends such objects to compact objects in $\Rep^{\on{mxd}}_q(G)$. 

\medskip

Now, since the operation $k^\mu \otimes -$ on $\Rep^{\on{mxd}}_q(G)$ preserves compacts, the assertion 
follows from \propref{p:oblv big to mixed comp}.

\end{proof} 

\begin{rem} 
Note that the above proposition implies the following relationship between the categories 
$\Rep^{\frac{1}{2}}_q(G)_{\on{ren}}$ and $\Rep^{\on{mxd}}_q(G)$:

\medskip

The functor
$$\Rep(B_H)\underset{\QCoh(\fn_H/\on{Ad}(B_H))}\otimes \Rep^{\on{mxd}}_q(G)\to \Rep^{\frac{1}{2}}_q(G)_{\on{ren}}$$
is fully faithful, but not an equivalence; rather it is an equivalence onto the full subcategory 
$\Rep^{\frac{1}{2}}_q(G)_{\on{baby-ren}}\subset \Rep^{\frac{1}{2}}_q(G)_{\on{ren}}$. 

\medskip

Instead, $\Rep^{\frac{1}{2}}_q(G)_{\on{ren}}$ can be obtained from $\Rep(B_H)\underset{\QCoh(\fn_H/\on{Ad}(B_H))}\otimes \Rep^{\on{mxd}}_q(G)$
as follows: it is the ind-completion of the full (but not cocomplete) subcategory of
$\Rep(B_H)\underset{\QCoh(\fn_H/\on{Ad}(B_H))}\otimes \Rep^{\on{mxd}}_q(G)$ spanned by objects
that become compact after applying the forgetful functor 
$$\iota_*:\Rep(B_H)\underset{\QCoh(\fn_H/\on{Ad}(B_H))}\otimes \Rep^{\on{mxd}}_q(G)\to  \Rep^{\on{mxd}}_q(G).$$ 

This is analogous to \cite[Theorem 11.4.2]{FG3}.

\end{rem}

\begin{rem} 

To summarize, we have the following diagram of stacks
$$
\CD
\on{pt}/T_H  @>>>  \on{pt}/B_H @>{\iota}>>  \fn_H/\on{Ad}(B_H)   \\
& & @VVV  \\
& & \on{pt}/H. 
\endCD
$$

And we have the following diagram of categories over these stacks 
$$
\CD
\Rep^{\on{sml,grd}}_q(G)_{\on{ren}} @<<< \Rep^{\frac{1}{2}}_q(G)_{\on{ren}}  @<{\fr_{\on{baby-ren}\to \on{ren}}}<< \Rep^{\frac{1}{2}}_q(G)_{\on{baby-ren}} 
@<{\ind_{\on{mxd}\to \frac{1}{2}}}<<  \Rep^{\on{mxd}}_q(G)  \\
& & @AAA  \\
& & \Rep_q(G)_{\on{ren}}
\endCD
$$

This diagram of categories is \emph{almost} compatible with pullbacks along the above diagram of stacks: the only wrinkle is the functor 
$\fr_{\on{baby-ren}\to \on{ren}}$, which is fully faithful but not an equivalence. 

\end{rem} 

\sssec{}

From \propref{p:i_*good good} we obtain: 

\begin{cor} \label{c:i_*good good} 
The functors 
$$\oblv_{\frac{1}{2}\to \on{mxd}}\circ \fs_{\on{ren}\to \on{baby-ren}}:
 \Rep^{\frac{1}{2}}_q(G)_{\on{ren}}\rightleftarrows  \Rep^{\on{mxd}}_q(G): \fr_{\on{baby-ren}\to \on{ren}}\circ \coind_{\on{mxd}\to \frac{1}{2}}$$
form an adjoint pair.
\end{cor}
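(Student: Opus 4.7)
The key input is \propref{p:i_*good good}, which states that the composite $\oblv_{\frac{1}{2}\to \on{mxd}}\circ \fs_{\on{ren}\to \on{baby-ren}}$ sends compact objects to compact objects. Since this composite is continuous (both $\oblv_{\frac{1}{2}\to \on{mxd}}$, as a left adjoint of $\coind_{\on{mxd}\to \frac{1}{2}}$, and $\fs_{\on{ren}\to \on{baby-ren}}$, as the right adjoint of the compact-preserving $\fr_{\on{baby-ren}\to \on{ren}}$, preserve colimits), and both $\Rep^{\frac{1}{2}}_q(G)_{\on{ren}}$ and $\Rep^{\on{mxd}}_q(G)$ are compactly generated, preservation of compacts is equivalent to the existence of a continuous right adjoint $G$. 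It remains to identify $G$ with $\fr_{\on{baby-ren}\to \on{ren}}\circ \coind_{\on{mxd}\to \frac{1}{2}}$.

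The plan is to match Hom-spaces. Starting from $\oblv_{\frac{1}{2}\to \on{mxd}}\dashv \coind_{\on{mxd}\to \frac{1}{2}}$, for any $X\in \Rep^{\frac{1}{2}}_q(G)_{\on{ren}}$ and $Y\in \Rep^{\on{mxd}}_q(G)$ we have
\[
\CHom(\oblv_{\frac{1}{2}\to \on{mxd}}(\fs_{\on{ren}\to \on{baby-ren}}(X)), Y)\simeq \CHom_{\Rep^{\frac{1}{2}}_q(G)_{\on{baby-ren}}}(\fs_{\on{ren}\to \on{baby-ren}}(X), \coind_{\on{mxd}\to \frac{1}{2}}(Y)).
\]
By fully faithfulness of $\fr_{\on{baby-ren}\to \on{ren}}$, this lifts to $\CHom_{\Rep^{\frac{1}{2}}_q(G)_{\on{ren}}}(\fr\fs(X), \fr\coind(Y))$, and precomposing with the counit $\fr\fs(X)\to X$ of $\fr\dashv\fs$ produces a comparison map from $\CHom(X, \fr\coind(Y))$ into this expression. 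The asserted adjunction amounts to showing that this comparison map is an isomorphism.

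The main obstacle is that this comparison isomorphism does not hold formally from the adjunction data alone---it is equivalent to $\fr$ being also a \emph{right} adjoint of $\fs$, so that $(\fr,\fs)$ is ambidextrous. My approach is to verify the isomorphism on the compact generators of $\Rep^{\frac{1}{2}}_q(G)_{\on{ren}}$ identified in the proof of \propref{p:i_*good good}, namely objects of the form $c = k^\mu\otimes \oblv_{\on{big}\to \frac{1}{2}}(\CM)$ with $\CM\in \Rep_q(G)_{\on{ren}}$ compact, using the explicit identification $\oblv_{\frac{1}{2}\to \on{mxd}}(\fs(c))\simeq k^\mu\otimes \oblv_{\on{big}\to \on{mxd}}(\CM)$ (which is precisely the content of the proof of \propref{p:i_*good good}) to reduce both sides of the comparison to a Hom-space computation in $\Rep_q(G)_{\on{ren}}$. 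The desired matching then follows from the adjunction $\oblv_{\on{big}\to \frac{1}{2}}\dashv \coind_{\frac{1}{2}\to \on{big}}$ furnished by \corref{c:AG} together with the compatibility of the coinduction functors across the triangle of categories $\Rep_q(G)_{\on{ren}}$, $\Rep^{\frac{1}{2}}_q(G)_{\on{baby-ren}}$, and $\Rep^{\on{mxd}}_q(G)$; once established on compact generators, the identification extends to all objects by continuity of both functors.
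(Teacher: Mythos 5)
Your construction of the comparison map via the counit of $(\fr_{\on{baby-ren}\to\on{ren}},\fs_{\on{ren}\to\on{baby-ren}})$ is correct, and coincides with the natural transformation the paper's proof implicitly uses. However, the verification you outline is circular: the ``compatibility of the coinduction functors across the triangle'' needed to match the two Hom-space computations in $\Rep_q(G)_{\on{ren}}$ is, once spelled out, the identity $\coind_{\on{mxd}\to\on{big}}\simeq \coind_{\frac{1}{2}\to\on{big}}\circ \fr_{\on{baby-ren}\to\on{ren}}\circ \coind_{\on{mxd}\to\frac{1}{2}}$, and this is precisely what \corref{c:i_*good good} (via its immediate consequence \corref{c:adj to res}) is meant to establish---it is not a formal consequence of the three listed adjunctions, because $\fr$ has no left adjoint. (Also, the remark that the comparison isomorphism ``is equivalent to'' ambidexterity of $(\fr,\fs)$ is too strong: ambidexterity would be sufficient, but it is not necessary and indeed fails here, this being the $\QCoh\to\IndCoh$ pattern of \secref{sss:renorm pattern}.)

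The ingredient your plan is missing is a boundedness observation. After reducing to $\CM_1$ compact, \propref{p:i_*good good} makes $\oblv_{\frac{1}{2}\to\on{mxd}}\circ\fs(\CM_1)$ compact, so one can further reduce to $\CM_2$ compact. Then $\coind_{\on{mxd}\to\frac{1}{2}}(\CM_2)$ is compact, since by \eqref{e:coind mixed 1/2} it differs from $\ind_{\on{mxd}\to\frac{1}{2}}(\CM_2)$ only by a twist and a shift, and $\ind_{\on{mxd}\to\frac{1}{2}}$ preserves compacts; hence $\fr_{\on{baby-ren}\to\on{ren}}\circ\coind_{\on{mxd}\to\frac{1}{2}}(\CM_2)$ is compact and in particular cohomologically bounded. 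Since $\fs_{\on{ren}\to\on{baby-ren}}$ is t-exact and fully faithful on eventually coconnective objects (\secref{sss:renorm pattern}), applying it yields
$$\CHom_{\Rep^{\frac{1}{2}}_q(G)_{\on{ren}}}(\CM_1,\fr\circ\coind(\CM_2))\simeq
\CHom_{\Rep^{\frac{1}{2}}_q(G)_{\on{baby-ren}}}(\fs(\CM_1),\coind(\CM_2)),$$
after which the $(\oblv_{\frac{1}{2}\to\on{mxd}},\coind_{\on{mxd}\to\frac{1}{2}})$-adjunction finishes. This short, self-contained argument replaces both the ambidexterity worry and the circular coinduction compatibility in your plan.
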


\begin{proof}
Given $\CM_1\in \Rep^{\frac{1}{2}}_q(G)_{\on{ren}}$ and $\CM_2\in  \Rep^{\on{mxd}}_q(G)$ we need to construct a canonical 
isomorphism
\begin{multline*} 
\CHom_{\Rep^{\on{mxd}}_q(G)}(\oblv_{\frac{1}{2}\to \on{mxd}}\circ \fs_{\on{ren}\to \on{baby-ren}}(\CM_1),\CM_2)\simeq \\
\simeq \CHom_{\Rep^{\frac{1}{2}}_q(G)_{\on{ren}}}(\CM_1,\fr_{\on{baby-ren}\to \on{ren}}\circ \coind_{\on{mxd}\to \frac{1}{2}}(\CM_2)).
\end{multline*}

With no restriction of generality, we can assume that $\CM_1$ is compact. However, 
\propref{p:i_*good good} implies that we can assume that $\CM_2$ is compact as well. Note that since the functor $\coind_{\on{mxd}\to \frac{1}{2}}$
differs from $\ind_{\on{mxd}\to \frac{1}{2}}$ by a twist, we obtain that $\coind_{\on{mxd}\to \frac{1}{2}}(\CM_2)$ is then also compact.

\medskip

Since the functor $\fs_{\on{ren}\to \on{baby-ren}}$
is fully faithful on compact objects, we have
\begin{multline*} 
\CHom_{\Rep^{\frac{1}{2}}_q(G)_{\on{ren}}}(\CM_1,\fr_{\on{baby-ren}\to \on{ren}}\circ \coind_{\on{mxd}\to \frac{1}{2}}(\CM_2))\simeq \\ \simeq 
\CHom_{\Rep^{\frac{1}{2}}_q(G)_{\on{baby-ren}}}(\fs_{\on{ren}\to \on{baby-ren}}(\CM_1),\fs_{\on{ren}\to \on{baby-ren}}\circ
\fr_{\on{baby-ren}\to \on{ren}}\circ \coind_{\on{mxd}\to \frac{1}{2}}(\CM_2)) \simeq \\
\simeq 
\CHom_{\Rep^{\frac{1}{2}}_q(G)_{\on{baby-ren}}}(\fs_{\on{ren}\to \on{baby-ren}}(\CM_1),\coind_{\on{mxd}\to \frac{1}{2}}(\CM_2))
\end{multline*}
and the assertion follows from the $(\oblv_{\frac{1}{2}\to \on{mxd}},\coind_{\on{mxd}\to \frac{1}{2}})$-adjunction. 

\end{proof} 

From here we obtain:

\begin{cor} \label{c:adj to res} 
The left and right adjoints to $\oblv_{\on{big}\to \on{mxd}}$ are related by the
formula
$$\coind_{\on{mxd}\to \on{big}}\simeq \ind_{\on{mxd}\to \on{big}}[-2d].$$
\end{cor}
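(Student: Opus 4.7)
The plan is to factor $\oblv_{\on{big}\to \on{mxd}}$ through the intermediate category $\Rep^{\frac{1}{2}}_q(G)$, where the two relations \eqref{e:coind 1/2} and \eqref{e:coind mixed 1/2} between $\ind$ and $\coind$ are already known. Concretely, as noted in the discussion following \conjref{c:1/2 vs mixed}, the functor $\oblv_{\on{big}\to \on{mxd}}$ is the composite
$$\Rep_q(G)_{\on{ren}} \xrightarrow{\;\oblv_{\on{big}\to \frac{1}{2}}\;} \Rep^{\frac{1}{2}}_q(G)_{\on{ren}}
\xrightarrow{\;\oblv_{\frac{1}{2}\to \on{mxd}}\circ \fs_{\on{ren}\to \on{baby-ren}}\;} \Rep^{\on{mxd}}_q(G).$$

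First, I would identify the left and right adjoints of the second factor. Since $\fr_{\on{baby-ren}\to \on{ren}}$ is the left adjoint of $\fs_{\on{ren}\to \on{baby-ren}}$ and $\ind_{\on{mxd}\to \frac{1}{2}}$ is the left adjoint of $\oblv_{\frac{1}{2}\to \on{mxd}}$, the left adjoint of the composite is $\fr_{\on{baby-ren}\to \on{ren}}\circ \ind_{\on{mxd}\to \frac{1}{2}}$. The corresponding right adjoint has been identified in \corref{c:i_*good good} to be $\fr_{\on{baby-ren}\to \on{ren}}\circ \coind_{\on{mxd}\to \frac{1}{2}}$. Composing with the adjoints of $\oblv_{\on{big}\to \frac{1}{2}}$ yields
$$\ind_{\on{mxd}\to \on{big}} \simeq \ind_{\frac{1}{2}\to \on{big}}\circ \fr_{\on{baby-ren}\to \on{ren}}\circ \ind_{\on{mxd}\to \frac{1}{2}},$$
$$\coind_{\on{mxd}\to \on{big}} \simeq \coind_{\frac{1}{2}\to \on{big}}\circ \fr_{\on{baby-ren}\to \on{ren}}\circ \coind_{\on{mxd}\to \frac{1}{2}}.$$

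Second, I would plug in the two known relations. Applying \eqref{e:coind mixed 1/2} inside and \eqref{e:coind 1/2} outside, and using that $\fr_{\on{baby-ren}\to \on{ren}}$ commutes with tensoring by objects of $\Rep(T_H)$ (since the $\Rep(T_H)$-action on both sides is inherited from a single $\Rep(T_H)$-action via quantum Frobenius, and $\fr_{\on{baby-ren}\to \on{ren}}$ is a $\Rep(T_H)$-linear functor), one computes
$$\coind_{\on{mxd}\to \on{big}}(\CM)\simeq \coind_{\frac{1}{2}\to \on{big}}\bigl(\fr_{\on{baby-ren}\to \on{ren}}(\ind_{\on{mxd}\to \frac{1}{2}}(\CM))\otimes k^{2\rho_H}\bigr)[-d],$$
and then
$$\coind_{\on{mxd}\to \on{big}}(\CM)\simeq \ind_{\frac{1}{2}\to \on{big}}\bigl(\fr_{\on{baby-ren}\to \on{ren}}(\ind_{\on{mxd}\to \frac{1}{2}}(\CM))\otimes k^{2\rho_H}\otimes k^{-2\rho_H}\bigr)[-2d].$$
The two character twists cancel, and the resulting expression is precisely $\ind_{\on{mxd}\to \on{big}}(\CM)[-2d]$.

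The only genuine point to check, and what I would regard as the main (minor) obstacle, is the $\Rep(T_H)$-linearity claim used when pulling $k^{\pm 2\rho_H}$ past the fully faithful embedding $\fr_{\on{baby-ren}\to \on{ren}}$. This is essentially a consistency statement for the constructions of \secref{sss:quant Frob lattice}: the quantum Frobenius action of $\Rep(T_H)$ on $\Rep^{\frac{1}{2}}_q(G)_{\on{ren}}$ restricts (under $\fs_{\on{ren}\to \on{baby-ren}}$) to the one on $\Rep^{\frac{1}{2}}_q(G)_{\on{baby-ren}}$, because both are induced from the central action on the ambient category $\Rep_q(B)$ (equivalently, both descend from the same action on $\Rep^{\on{sml}}_q(G)_{\on{ren}}$ preserved, by \propref{p:B preserves baby}, on $\Rep^{\on{sml}}_q(G)_{\on{baby-ren}}$). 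Once this is noted, the rest is formal.
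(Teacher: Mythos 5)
Your proof is essentially the paper's own argument: the paper factors both adjoints through $\Rep^{\frac{1}{2}}_q(G)_{\on{ren}}$ exactly as you do, invokes \corref{c:i_*good good} for the right adjoint, and then says the conclusion ``follows from \eqref{e:coind mixed 1/2} and \eqref{e:coind 1/2}'' — which is precisely the two-step twist computation you spell out. The only difference is one of explicitness: you have correctly identified that to cancel $k^{2\rho_H}\otimes k^{-2\rho_H}$ one must commute the character twist past the fully faithful embedding $\fr_{\on{baby-ren}\to \on{ren}}$, i.e.\ one needs $\Rep(T_H)$-linearity of that functor; the paper leaves this implicit, but it is indeed true for the reason you give (both actions descend from the same quantum Frobenius action, preserved by \propref{p:B preserves baby}). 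No gap.
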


\begin{proof}

We have:
$$\ind_{\on{mxd}\to \on{big}}\simeq \ind_{\frac{1}{2}\to \on{big}}\circ \fr_{\on{baby-ren}\to \on{ren}}\circ \ind_{\on{mxd}\to \frac{1}{2}}$$
and by \corref{c:i_*good good}, we have
$$\coind_{\on{mxd}\to \on{big}}\simeq \coind_{\frac{1}{2}\to \on{big}}\circ \fr_{\on{baby-ren}\to \on{ren}}\circ \coind_{\on{mxd}\to \frac{1}{2}}$$

The assertion follows now from \eqref{e:coind mixed 1/2} and \eqref{e:coind 1/2}.

\end{proof} 

\begin{rem}

The assertion of \corref{c:adj to res} holds as-is in the non-root of unity case. This follows from the fact that
in this case the pair of categories
$$\Rep_q(G)\simeq \Rep_q(G)_{\on{ren}} \overset{\oblv_{\on{big}\to \on{mxd}}}\longrightarrow \Rep^{\on{mxd}}_q(G)$$
is equivalent to 
$$\Rep(G)\to \fg\mod^B.$$

\end{rem} 

\ssec{Some consequences of the Frobenius algebra property}

\sssec{}

A remarkable property of $u_q(N^-)$ is that it is a \emph{Frobenius algebra} (this is the case of any finite-dimensional
Hopf algebra, see, e.g., \cite[Corollary 6.4.5]{Et}). We will use this in the following guise:

\begin{lem}  \label{l:Frob ppty}
The object of $u_q(N^-)\mod(\Rep_q(T))$ given by $u_q(N^-)$ itself is cofree:
$$\CHom_{u_q(N^-)\mod(\Rep_q(T))}(\CM,u_q(N^-))\simeq \CHom_{\Rep_q(T)}(\CM,k^{2\check\rho-2\rho_H}).$$ 
\end{lem}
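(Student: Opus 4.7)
The plan is to exhibit $u_q(N^-)$ as a cofree module over itself via its Frobenius algebra structure, thereby reducing the lemma to a combinatorial identification of the top weight of $u_q(N^-)$.

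Recall the right adjoint $\coind$ to the forgetful functor $u_q(N^-)\mod(\Rep_q(T))\to \Rep_q(T)$: for $V\in \Rep_q(T)$ one has $\coind(V)\simeq u_q(N^-)^\vee\otimes V$ as an object of $\Rep_q(T)$, and by adjunction
$$\CHom_{u_q(N^-)\mod(\Rep_q(T))}(\CM,\coind(V))\simeq \CHom_{\Rep_q(T)}(\CM,V).$$
So the claim is equivalent to producing a $u_q(N^-)$-module isomorphism $u_q(N^-)\simeq u_q(N^-)^\vee\otimes k^{2\check\rho-2\rho_H}$. Since $u_q(N^-)$ is a finite-dimensional $\cLambda$-graded Hopf algebra in the braided category $\Rep_q(T)$, it is Frobenius by the cited result of Etingof, internalized inside $\Rep_q(T)$. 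A Frobenius form $\lambda_0: u_q(N^-) \to k$ is forced by the connectedness $(u_q(N^-))^0=k$ to be supported in the unique one-dimensional lowest-weight component $(u_q(N^-))^{w_{\on{top}}}$. The assignment $a\mapsto \lambda_0(a\cdot -)$ then defines a weight-homogeneous $u_q(N^-)$-linear isomorphism $u_q(N^-)\simeq u_q(N^-)^\vee\otimes k^{w_{\on{top}}}$, and the proof reduces to the identification $w_{\on{top}}=2\check\rho-2\rho_H$.

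To compute $w_{\on{top}}$ I would use the PBW decomposition of $u_q(N^-)$, available under Assumption \eqref{e:estimate}: as a graded vector space $u_q(N^-)$ is the tensor product, over positive roots $\beta\in \Delta^+$, of the truncated polynomial rings $k[F_\beta]/(F_\beta^{d_\beta})$ with $F_\beta$ of weight $-\check\beta$ and $d_\beta=\on{ord}(q(\check\beta))$ (the factor being trivial when $d_\beta=1$). The unique top monomial $\prod_\beta F_\beta^{d_\beta-1}$ has weight $w_{\on{top}}=-\sum_{\beta\in \Delta^+}(d_\beta-1)\check\beta$. On the other hand, Lusztig's quantum Frobenius identifies the positive roots of $H$, viewed inside $\cLambda$, with $\{d_\beta\check\beta:\beta\in \Delta^+\}$, so that $2\rho_H=\sum_\beta d_\beta\check\beta$. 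Hence
$$2\check\rho-2\rho_H=\sum_\beta \check\beta-\sum_\beta d_\beta\check\beta=-\sum_\beta (d_\beta-1)\check\beta=w_{\on{top}},$$
completing the argument. The main obstacle is this last numerical identification, which requires matching the PBW combinatorics of $u_q(N^-)$ with the explicit description of the $H$-roots coming from Lusztig's quantum Frobenius under \eqref{e:estimate}; the remaining input — a Frobenius isomorphism in the braided setting of $\Rep_q(T)$ — is standard.
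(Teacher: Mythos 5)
Your proposal is correct in substance and follows the route the paper clearly intends — the paper states the lemma immediately after citing the Frobenius algebra property of finite-dimensional Hopf algebras (\cite[Corollary 6.4.5]{Et}) and leaves the computation of the distinguished character to the reader. Your contribution is to (a) reduce cofreeness to producing a $u_q(N^-)$-linear isomorphism $u_q(N^-)\simeq u_q(N^-)^\vee\otimes k^{w_{\on{top}}}$ in $\Rep_q(T)$ via the Frobenius form, and (b) compute the top weight via PBW. Both steps are the right ones.

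Two remarks. First, a small but real slip: the map you write, $a\mapsto \lambda_0(a\cdot -)$, is \emph{not} a morphism of left $u_q(N^-)$-modules unless the form $\lambda_0$ is cyclic (i.e., the algebra is \emph{symmetric} Frobenius), which $u_q(N^-)$ generally is not. The map of left modules $u_q(N^-)\to u_q(N^-)^\vee\otimes k^{w_{\on{top}}}$ produced by a (weight-homogeneous) Frobenius form is $a\mapsto \lambda_0(-\cdot a)$: one checks $\lambda_0(b(a'a))=\lambda_0((ba')a)$, which is tautologically the module-map condition for the $\coind$-module structure $(a'\cdot\phi)(b)=\phi(ba')$, with no symmetry hypothesis. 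The weight bookkeeping and the conclusion are unaffected, but the direction matters.

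Second, on the combinatorial input you flag as the main obstacle ($\Delta^+_H=\{d_\beta\check\beta:\beta\in\Delta^+_G\}$ so that $2\rho_H=\sum_\beta d_\beta\check\beta$): this is exactly what is needed, and one can source it directly from material already in the paper rather than from an external quantum Frobenius argument. The short exact sequence \eqref{e:SOE -}, $0\to \Sym(\fn^-_H)\to U^{\on{DK}}_q(N^-)\to u_q(N^-)\to 0$, together with the PBW basis of $U^{\on{DK}}_q(N^-)$ under Assumption \eqref{e:estimate}, identifies $\Sym(\fn^-_H)$ with the central subalgebra generated by the $F_\beta^{d_\beta}$, whence the negative roots of $H$ are precisely $\{-d_\beta\check\beta\}$ and the PBW basis of $u_q(N^-)$ is the truncated tensor product you describe. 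Citing the exact sequence tightens the argument and keeps it internal to the paper's framework.
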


\sssec{}

From \lemref{l:Frob ppty} we will deduce:

\begin{cor}  \label{c:cohomology of UqDK}
For $\clambda\in \cLambda$, we have 
$$\CHom_{U_q^{\on{DK}}(N^-)\mod(\Rep_q(T))}(k^\clambda,U_q^{\on{DK}}(N^-))=
\begin{cases}
&k[-d] \text{ for } \clambda=2\check\rho; \\
&0 \text{ otherwise}.
\end{cases}
$$ 
\end{cor}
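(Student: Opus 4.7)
The plan is to deduce the corollary from \lemref{l:Frob ppty} by leveraging the short exact sequence of Hopf algebras \eqref{e:SOE -},
\[
0 \to Z \to A \to B \to 0,
\]
where $A := U_q^{\on{DK}}(N^-)$, $B := u_q(N^-)$, and $Z := \on{Sym}(\fn^-_H)$ sits \emph{centrally} in $A$ via quantum Frobenius. Since $Z$ is central, extension of scalars along $Z \hookrightarrow A$ gives $B = A \otimes_Z k$, and the PBW theorem for $U_q^{\on{DK}}(N^-)$ yields that $A$ is free as a $Z$-module; in particular $A|_Z \simeq Z \otimes B$. The idea is to use this $Z$-freeness to bootstrap from the Frobenius (cofreeness) property of the finite-dimensional algebra $B$ to the desired statement about $A$.

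First I would compute $R\CHom_A(B, A)$. Applying $A \otimes_Z (-)$ to the Koszul resolution $Z \otimes \Lambda^\bullet(\fn^-_H) \twoheadrightarrow k$ produces a finite free resolution $A \otimes \Lambda^\bullet(\fn^-_H) \twoheadrightarrow B$ of length $d = \dim \fn^-_H$ in $A\mod(\Rep_q(T))$. The extension-restriction adjunction
\[
R\CHom_A(A \otimes_Z M, A) \simeq R\CHom_Z(M, A|_Z),
\]
combined with the $Z$-freeness, yields
\[
R\CHom_A(B, A) \simeq R\CHom_Z(k, Z) \otimes B \simeq k^{2\rho_H}[-d] \otimes B,
\]
a free left $B$-module of rank one. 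The scalar twist $k^{2\rho_H}$ is the standard Koszul contribution: the regular closed embedding $\{0\} \hookrightarrow \Spec(Z)$ has codimension $d$ with normal line $\det(\fn^-_H)^\vee = \det(\fn_H)$, whose $\cLambda$-weight is $2\rho_H$, the sum of positive roots of $H$ transported to $\cLambda$ via quantum Frobenius.

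Second, I would use the change-of-rings adjunction
\[
R\CHom_A(k^\clambda, A) \simeq R\CHom_B(k^\clambda, R\CHom_A(B, A))
\]
(valid because $k^\clambda \in A\mod$ is the restriction of $k^\clambda \in B\mod$ along the quotient $A \twoheadrightarrow B$), rewrite the right-hand side as $R\CHom_B(k^{\clambda - 2\rho_H}, B)[-d]$ via the standard shift-twist identity, and invoke \lemref{l:Frob ppty}. The Frobenius property says this is nonzero precisely when $\clambda - 2\rho_H = 2\check\rho - 2\rho_H$, i.e., when $\clambda = 2\check\rho$, in which case it equals $k[-d]$, exactly the claim. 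The main delicate point is tracking the two weight twists and observing that they combine correctly: the $k^{2\rho_H}$ from the Koszul step cancels the $-2\rho_H$ shift in the Frobenius character of $u_q(N^-)$, leaving exactly the $2\check\rho$ predicted by the corollary. One should also verify, routinely, that the left $B$-module structure on $R\CHom_A(B, A)$ used in the second step matches the free structure on $B \otimes k^{2\rho_H}$ appearing in the formula above; this compatibility follows from the centrality of $Z$ in $A$, which makes the two adjunctions (extension of scalars and restriction) naturally compatible with the left $B$-action and the $\cLambda$-weight grading.
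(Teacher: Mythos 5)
Your proposal is correct and follows essentially the same route as the paper's proof: the paper also reduces via the central $\Sym(\fn^-_H) \subset U_q^{\on{DK}}(N^-)$, uses $R\CHom_{\Sym(\fn^-_H)}(k,-)\simeq (k\otimes^L_{\Sym(\fn^-_H)} -)\otimes k^{2\rho_H}[-d]$ (your Koszul step), and then applies \lemref{l:Frob ppty}, with the two weight twists cancelling exactly as you observe. The only cosmetic difference is that you first isolate $R\CHom_A(B,A)$ before applying the $A\twoheadrightarrow B$ change-of-rings adjunction, whereas the paper writes the two adjunctions in the single step $\CHom_A(k^\clambda,\CM)\simeq\CHom_B(k^\clambda,\CHom_Z(k,\CM))$; these are literally the same functors since $\CHom_A(B,-)=\CHom_Z(k,-)$ on $A$-modules.
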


\begin{rem}  \label{r:cohomology of UqDK} 
One can show that the assertion of \corref{c:cohomology of UqDK} holds verbatim
also in the non-root of unity case.
\end{rem}

\begin{proof}

For $\CM\in U_q^{\on{DK}}(N^-)\mod(\Rep_q(T))$, we calculate 
$$\CHom_{U_q^{\on{DK}}(N^-)\mod(\Rep_q(T))}(k^\clambda,\CM)\simeq
\CHom_{u_q(N^-)\mod(\Rep_q(T))}(k^\clambda,\CHom_{\Sym(\fn^-_H)}(k,\CM)).$$ 

We note
$$\CHom_{\Sym(\fn^-_H)}(k,\CM)\simeq (k\underset{\Sym(\fn^-_H)}\otimes \CM) \otimes k^{2\rho_H}[-d].$$

We take $\CM=U_q^{\on{DK}}(N^-)$, and we note that
$$k\underset{\Sym(\fn^-_H)}\otimes U_q^{\on{DK}}(N^-)\simeq u_q(N^-).$$

Now the assertion follows from \lemref{l:Frob ppty}. 

\end{proof} 

\begin{rem}
In fact, \lemref{l:Frob ppty} implies that the two functors 
$$u_q(N^-)\mod(\Rep_q(T))\rightrightarrows \Rep_q(T),$$
given by
$$\CM\mapsto k\underset{u_q(N^-)}\otimes \CM \text{ and } \CHom_{u_q(N^-)}(k,\CM)\otimes k^{2\check\rho-2\rho_H}$$
are canonically isomorphic when evaluated on $u_q(N^-)\mod(\Rep_q(T))_{\on{perf}}$.

\medskip

This implies that the two functors
$$U_q^{\on{DK}}(N^-)\mod(\Rep_q(T))\rightrightarrows \Rep_q(T),$$
given by
\begin{equation} \label{e:homology vs cohomology}
\CM\mapsto k\underset{U^{\on{DK}}_q(N^-)}\otimes \CM \text{ and } \CHom_{U^{\on{DK}}_q(N^-)}(k,\CM)\otimes k^{2\check\rho}[-d]
\end{equation} 
are canonically isomorphic when evaluated on $U_q^{\on{DK}}(N^-)\mod(\Rep_q(T))_{\on{perf}}$. 

\medskip

Note, however that \corref{c:UqDK} implies that the
isomorphism between the functors \eqref{e:homology vs cohomology} holds on all of $U_q^{\on{DK}}(N^-)\mod(\Rep_q(T))$. 

\end{rem} 

\sssec{}

Another corollary of \lemref{l:Frob ppty} that we will use is the following:

\begin{cor} \label{c:ind and coind from Borel to small}
The functor 
$$\oblv_{\on{sml}\to \on{sml}^+}:\Rep^{\on{sml,grd}}_q(G)\mod_{\on{ren}}\to \Rep^{\on{sml,grd}}_q(B)\mod$$
admits a \emph{right} adjoint, denoted $\coind_{\on{sml}^+\to \on{sml}}$, and for $\clambda\in \cLambda$ we have:
$$\coind_{\on{sml}^+\to \on{sml}}(k^\clambda)\simeq \ind_{\on{sml}^+\to \on{sml}}(k^{\clambda+2\rho_H-2\check\rho}).$$
\end{cor}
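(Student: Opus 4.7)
The plan is first to establish the existence of the right adjoint via preservation of compactness, then to identify its value on characters by invoking the Frobenius property of $u_q(N^-)$ recorded in \lemref{l:Frob ppty}.

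\emph{Existence.} The forgetful functor $\oblv_{\on{sml}\to\on{sml}^+}$ is continuous, so it suffices to verify that it preserves compact objects to ensure the existence of a continuous right adjoint. The compact generators of $\Rep^{\on{sml,grd}}_q(G)_{\on{ren}}$ are the finite-dimensional representations; their images in $\Rep^{\on{sml,grd}}_q(B)=u_q(N)\mod(\Rep_q(T))$ are finite-dimensional $u_q(N)$-modules, hence perfect (since $u_q(N)$ is a finite-dimensional Frobenius algebra), hence compact.

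\emph{Formula on characters.} The essential input is \lemref{l:Frob ppty}, which yields, for any $V\in\Rep_q(T)$, a canonical isomorphism of $u_q(N^-)$-modules
\begin{equation*}
\coind_{u_q(N^-)}(V)\simeq\ind_{u_q(N^-)}\bigl(V\otimes k^{2\rho_H-2\check\rho}\bigr).
\end{equation*}
Using the analog of the commutative diagram \eqref{e:ind cond2} with the roles of $A$ and $A^\vee$ interchanged, the composition $\oblv_{\on{sml}\to\on{sml}^-}\circ\coind_{\on{sml}^+\to\on{sml}}$ coincides with $\coind_{u_q(N^-)}\circ\oblv_{u_q(N)}$ as functors $\Rep^{\on{sml,grd}}_q(B)\to u_q(N^-)\mod(\Rep_q(T))$, while $\oblv_{\on{sml}\to\on{sml}^-}\circ\ind_{\on{sml}^+\to\on{sml}}$ coincides with $\ind_{u_q(N^-)}\circ\oblv_{u_q(N)}$. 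The Frobenius identification above then matches the images of $\coind_{\on{sml}^+\to\on{sml}}(k^\clambda)$ and $\ind_{\on{sml}^+\to\on{sml}}(k^{\clambda+2\rho_H-2\check\rho})$ under $\oblv_{\on{sml}\to\on{sml}^-}$.

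\emph{Main obstacle.} The delicate step is upgrading the isomorphism of underlying $u_q(N^-)$-modules to a genuine isomorphism inside the Drinfeld center. My plan is to exhibit both sides as representing the same functor $\CM\mapsto\CHom_{\Rep^{\on{sml,grd}}_q(B)}(\oblv_{\on{sml}\to\on{sml}^+}(\CM),k^\clambda)$: the existence of the right adjoint established in the first step provides one such representation, while for $\ind_{\on{sml}^+\to\on{sml}}(k^{\clambda+2\rho_H-2\check\rho})$ one constructs the required counit
\begin{equation*}
\oblv_{\on{sml}\to\on{sml}^+}\circ\ind_{\on{sml}^+\to\on{sml}}(k^{\clambda+2\rho_H-2\check\rho})\to k^\clambda
\end{equation*}
from the Frobenius pairing of \lemref{l:Frob ppty} and verifies the adjunction by a direct weight-space calculation on the compact generators $\BM^{\clambda'}_{q,\on{sml}}$, which reduces to the character-level identification above. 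The subtlety is precisely that one must check the half-braiding data on both sides agree — but since both half-braidings are determined uniquely by the respective adjoint universal properties, this compatibility is forced.
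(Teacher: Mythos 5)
Your proof is correct and uses the same essential ingredient as the paper's, namely \lemref{l:Frob ppty}. The paper's argument is just a terser version: it observes that $\coind_{\on{sml}^+\to \on{sml}}(k^\clambda)$ is, by construction, cofree over $u_q(N^-)$ with cogenerator of weight $\clambda$, invokes \lemref{l:Frob ppty} to rewrite it as free on a generator of weight $\clambda+2\rho_H-2\check\rho$, and notes that this free generator is the highest-weight vector, hence annihilated by $u_q(N)$, so that the adjunction map out of $\ind_{\on{sml}^+\to\on{sml}}(k^{\clambda+2\rho_H-2\check\rho})$ lands correctly and is an isomorphism on underlying $u_q(N^-)$-modules. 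The ``half-braiding'' concern you raise dissolves once one recalls (cf.\ \secref{sss:recover from heart}) that the heart of $\Rep^{\on{sml,grd}}_q(G)_{\on{ren}}$ is exactly objects of $(\Rep_q(T))^\heartsuit$ with compatible $u_q(N)$- and $u_q(N^-)$-actions, so a morphism commuting with both actions is automatically a morphism in the Drinfeld center.

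Two small remarks on your existence step. First, $\Rep^{\on{sml,grd}}_q(B)$ is defined as $u_q(N)\mod(\Rep_q(T))_{\on{loc.nilp}}$, not as $u_q(N)\mod(\Rep_q(T))$; the two have the same heart (since $u_q(N)$ is finite-dimensional, every module is locally nilpotent) but different compact generators as DG categories. Second, once one uses the correct target, the appeal to perfectness and the Frobenius property is unnecessary at this stage: the compact objects of $u_q(N)\mod(\Rep_q(T))_{\on{loc.nilp}}$ are by definition the finite-dimensional ones, so $\oblv_{\on{sml}\to\on{sml}^+}$ manifestly preserves compacts.
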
 

\begin{proof}

We need to show that $\coind_{\on{sml}^+\to \on{sml}}(k^\clambda)$ is free over $u_q(N^-)$ on one generator of
weight equal to $\clambda+2\rho_H-2\check\rho$. By construction, $\coind_{\on{sml}^+\to \on{sml}}(k^\clambda)$ is cofree 
over $u_q(N^-)$ on one generator of weight $\clambda$. Now the assertion follows from \lemref{l:Frob ppty}.

\end{proof} 

\medskip

The same argument proves also the \emph{ungraded} version of \corref{c:ind and coind from Borel to small}, as well
as the following statement: 

\begin{cor}  \label{c:ind and coind from Borel to small 1/2}
The functor
$$\oblv_{\frac{1}{2}\to \on{Lus}^+}: \Rep^{\frac{1}{2}}_q(G)_{\on{ren}} \to \Rep_q(B)$$
admits a \emph{right} adjoint, denoted $\coind_{\on{Lus}^+\to \frac{1}{2}}$, and for $\clambda\in \cLambda$ we have:
$$\coind_{\on{Lus}^+\to \frac{1}{2}}(k^\clambda)\simeq \ind_{\on{Lus}^+\to \frac{1}{2}}(k^{\clambda+2\rho_H-2\check\rho}).$$
\end{cor}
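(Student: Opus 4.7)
The plan is to adapt the argument used to prove \corref{c:ind and coind from Borel to small}, exploiting the observation that the Frobenius algebra structure invoked there (\lemref{l:Frob ppty}) lives on $u_q(N^-)$, which is the negative part in both the ``small'' and the ``$\frac{1}{2}$'' versions. Only the positive part changes --- from $u_q(N)$ to $U_q^{\on{Lus}}(N)$ --- and this change is transparent to the Frobenius computation.

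First, since $\oblv_{\frac{1}{2}\to \on{Lus}^+}$ is continuous between presentable stable $\infty$-categories, it admits a right adjoint $\coind_{\on{Lus}^+\to \frac{1}{2}}$ by the adjoint functor theorem. For the explicit description, I would reduce to the baby-renormalized version: using the factorization of $\oblv_{\frac{1}{2}\to \on{Lus}^+}$ through $\fs_{\on{ren}\to \on{baby-ren}}$ established just after the definition of $\Rep^{\frac{1}{2}}_q(G)_{\on{baby-ren}}$ and passing to right adjoints, the value of $\coind_{\on{Lus}^+\to \frac{1}{2}}(k^\clambda)$ coincides with $\fr_{\on{baby-ren}\to \on{ren}}$ applied to the right adjoint at the baby-ren level evaluated on $k^\clambda$ --- provided the latter lies in the essential image of $\fr_{\on{baby-ren}\to \on{ren}}$, which is automatic for a baby Verma.

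Using the heart-level description of $\Rep^{\frac{1}{2}}_q(G)_{\on{baby-ren}}$ as objects of $(\Rep_q(T))^\heartsuit$ equipped with compatible actions of $U_q^{\on{Lus}}(N)$ and $u_q(N^-)$, the forgetful functor at the baby-ren level is comonadic for the $u_q(N^-)$-comonad coming from the Hopf-algebra structure. Its right adjoint therefore produces, on $k^\clambda$, the object which is \emph{cofree} over $u_q(N^-)$ on a cogenerator of weight $\clambda$, with the unique compatible $U_q^{\on{Lus}}(N)$-action determined by triviality of the cogenerator. By \lemref{l:Frob ppty}, this same $u_q(N^-)$-module is \emph{free} on a generator of weight $\clambda+2\rho_H-2\check\rho$; since the compatible $U_q^{\on{Lus}}(N)$-action on such a free module with trivial generator is also uniquely determined, the resulting object coincides canonically with $\ind_{\on{Lus}^+\to \frac{1}{2}}(k^{\clambda+2\rho_H-2\check\rho})=\BM^{\clambda+2\rho_H-2\check\rho}_{q,\frac{1}{2}}$.

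The main obstacle is justifying the comonadic description of the forgetful functor at the baby-ren level --- that is, that its right adjoint coincides with cofree $u_q(N^-)$-addition rather than introducing corrections from the commutation relations between $U_q^{\on{Lus}}(N)$ and $u_q(N^-)$. At the abelian level this follows directly from the Hopf-algebra structure of the ``mixed Borel'' (paralleling the corresponding step for the small quantum group); in the derived setting it should follow from the general framework of Hopf algebras in $\Rep_q(T)$ developed in \secref{ss:Hopf algebras}.
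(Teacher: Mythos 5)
Your proposal is correct and matches the paper's approach: the paper proves this corollary with the one-line remark that ``the same argument proves'' it as \corref{c:ind and coind from Borel to small}, namely that $\coind_{\on{Lus}^+\to \frac{1}{2}}(k^\clambda)$ is by construction cofree over $u_q(N^-)$ on a cogenerator of weight $\clambda$, and \lemref{l:Frob ppty} converts this to free on a generator of weight $\clambda+2\rho_H-2\check\rho$. Your key observation --- that the Frobenius structure lives entirely on $u_q(N^-)$, which is unchanged in passing from the small version to the $\frac{1}{2}$ version --- is exactly what makes the transfer immediate; the baby-ren scaffolding and the comonadicity discussion in your writeup are honest elaborations of the ``by construction'' step that the paper leaves implicit, not a different route.
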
 

\sssec{}

One can use \corref{c:ind and coind from Borel to small 1/2} to reproduce the result of \cite[Theorem 7.3]{APW}:

\begin{cor}  \label{c:big ind an coind} 
The objects
$$\CV^\clambda_q:=\ind_{\on{Lus}^+\to \on{big}}(k^\clambda) \text{ and } \CV^{\vee,w_0(\clambda)}_q:=\coind_{\on{Lus}^+\to \on{big}}(k^\clambda)$$
are related by the formula
$$\coind_{\on{Lus}^+\to \on{big}}(k^\clambda)\simeq \ind_{\on{Lus}^+\to \on{big}}(k^{\clambda-2\check\rho})[-d].$$
\end{cor}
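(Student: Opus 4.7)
The plan is to factor both functors through the intermediate category $\Rep^{\frac{1}{2}}_q(G)_{\on{ren}}$. Since right adjoints compose, and similarly for left adjoints, we have
$$\coind_{\on{Lus}^+\to \on{big}}\simeq \coind_{\frac{1}{2}\to \on{big}}\circ \coind_{\on{Lus}^+\to \frac{1}{2}}
\quad\text{and}\quad \ind_{\on{Lus}^+\to \on{big}}\simeq \ind_{\frac{1}{2}\to \on{big}}\circ \ind_{\on{Lus}^+\to \frac{1}{2}}.$$
This reduces the statement to a combination of two results that are already at our disposal: \corref{c:ind and coind from Borel to small 1/2}, which identifies $\coind_{\on{Lus}^+\to \frac{1}{2}}(k^\clambda)$ with $\ind_{\on{Lus}^+\to \frac{1}{2}}(k^{\clambda+2\rho_H-2\check\rho})$ (this step is where the Frobenius property of $u_q(N^-)$ enters), together with formula \eqref{e:coind 1/2}, which compares $\coind_{\frac{1}{2}\to \on{big}}$ with $\ind_{\frac{1}{2}\to \on{big}}$.

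The computation then runs as follows:
\begin{align*}
\coind_{\on{Lus}^+\to \on{big}}(k^\clambda)
&\simeq \coind_{\frac{1}{2}\to \on{big}}\bigl(\coind_{\on{Lus}^+\to \frac{1}{2}}(k^\clambda)\bigr) \\
&\simeq \coind_{\frac{1}{2}\to \on{big}}\bigl(\ind_{\on{Lus}^+\to \frac{1}{2}}(k^{\clambda+2\rho_H-2\check\rho})\bigr) \\
&\simeq \ind_{\frac{1}{2}\to \on{big}}\bigl(\ind_{\on{Lus}^+\to \frac{1}{2}}(k^{\clambda+2\rho_H-2\check\rho})\otimes k^{-2\rho_H}\bigr)[-d].
\end{align*}
To finish, I would invoke a projection-formula style observation: tensoring by $k^{-2\rho_H}$ (viewed in $\Rep^{\frac{1}{2}}_q(G)_{\on{ren}}$ via the $\Rep(B_H)$-action) commutes with $\ind_{\on{Lus}^+\to \frac{1}{2}}$, so that
$$\ind_{\on{Lus}^+\to \frac{1}{2}}(k^{\clambda+2\rho_H-2\check\rho})\otimes k^{-2\rho_H}\simeq \ind_{\on{Lus}^+\to \frac{1}{2}}(k^{\clambda-2\check\rho}).$$
Recognizing $\ind_{\frac{1}{2}\to \on{big}}\circ \ind_{\on{Lus}^+\to \frac{1}{2}}\simeq \ind_{\on{Lus}^+\to \on{big}}$ then gives exactly the asserted isomorphism $\coind_{\on{Lus}^+\to \on{big}}(k^\clambda)\simeq \ind_{\on{Lus}^+\to \on{big}}(k^{\clambda-2\check\rho})[-d]$.

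The substantive content is not deep, but the main obstacle is bookkeeping. One must justify the commutation of $\otimes k^{-2\rho_H}$ with $\ind_{\on{Lus}^+\to \frac{1}{2}}$: the character $k^{-2\rho_H}$ appears in \eqref{e:coind 1/2} as an object of $\Rep(B_H)$ acting on $\Rep^{\frac{1}{2}}_q(G)_{\on{ren}}$, while on the right side of the computation it must be interpreted as a character of $T$ via quantum Frobenius so that the tensor product with $k^{\clambda+2\rho_H-2\check\rho}\in \Rep_q(B)$ makes sense and produces $k^{\clambda-2\check\rho}$. This compatibility amounts to the commutativity of restriction $\Rep(B_H)\to \Rep(T_H)$ with the quantum Frobenius $\Rep(T_H)\to \Rep_q(T)$, together with the fact that induction from $U_q^{\on{Lus}}(N)$ is $\Rep_q(T)$-linear in the source variable. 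Once this is written out carefully (using the $E_3$-center structure recalled in \secref{sss:quant Frob lattice}), the argument is entirely formal.
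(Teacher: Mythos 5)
Your proof is correct and is essentially identical to the paper's: it factors $\coind_{\on{Lus}^+\to \on{big}}$ through $\Rep^{\frac{1}{2}}_q(G)_{\on{ren}}$, applies \corref{c:ind and coind from Borel to small 1/2} and then \eqref{e:coind 1/2}, exactly as the paper does. The only difference is that you explicitly spell out the projection-formula step (commuting $\otimes\, k^{-2\rho_H}$ past $\ind_{\on{Lus}^+\to \frac{1}{2}}$) that the paper's three-line chain of isomorphisms leaves implicit.
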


\begin{proof}

We have:
\begin{multline*} 
\coind_{\on{Lus}^+\to \on{big}}(k^\clambda)\simeq 
\coind_{\frac{1}{2}\to \on{big}}\circ \coind_{\on{Lus}^+\to \frac{1}{2}}(k^\clambda)\simeq \\
\simeq 
\coind_{\frac{1}{2}\to \on{big}}\circ \ind_{\on{Lus}^+\to \frac{1}{2}}(k^{\clambda+2\rho_H-2\check\rho})\simeq
\ind_{\frac{1}{2}\to \on{big}}\circ \ind_{\on{Lus}^+\to \frac{1}{2}}(k^{\clambda-2\check\rho})[-d].$$
\end{multline*}

\end{proof} 

\begin{rem}
The assertion of \corref{c:big ind an coind} is also valid in the non-root of unity case.
\end{rem} 

\section{Duality for quantum groups}  \label{s:q duality}

In this section we will show that the effect of replacing $q$ by $q^{-1}$ is \emph{duality for DG categories} 
for the various versions of the category of modules over the quantum group.

\ssec{Framework for duality}

\sssec{}  \label{sss:duality from mon}

Let $\CC$ be a monoidal DG category, which is compactly generated, and such that the following conditions are satisfied:

\begin{itemize}

\item The monoidal operation preserves compactness;

\item Every compact object admits a (right) monoidal dual.

\end{itemize}

Let $\CC^{\on{rev}}$ denote the monoidal category obtained by reversing the monoidal operation. Then the functor
\begin{equation} \label{e:monoidal pairing}
\CC\otimes \CC^{\on{rev}}\to \Vect
\end{equation}
given by ind-extending 
$$c_1,c_2 \mapsto \CHom_\CC({\bf 1}_\CC,c_1\otimes c_2), \quad c_1\in \CC_c,\,c_2\in \CC^{\on{rev}}_c$$
defines a perfect pairing and in itself is a right-lax monoidal functor. 

\medskip

In particular, we obtain an identification of DG categories
$$\CC^\vee\simeq \CC^{\on{rev}}.$$

\sssec{}

Assume now that $\CC$ is \emph{braided monoidal}. We define a braiding on $\CC^{\on{rev}}$ by letting the map
$$c_1\overset{\on{rev}}\otimes c_2\to c_2\overset{\on{rev}}\otimes c_1$$
be the map
$$c_2\otimes c_1 \overset{R^{-1}_{c_1,c_2}}\longrightarrow c_1\otimes c_2.$$

Then the above functor \eqref{e:monoidal pairing} has a natural right-lax braided monoidal structure. 

\sssec{}  \label{sss:Hopf rev}

Let $\CC$ be braided monoidal, and let $A$ be a Hopf algebra in $\CC$. Let $A^{\on{rev-mult}}$ be the
Hopf algebra in $\CC^{\on{rev}}$ defined as follows:

\medskip

As an object, $A^{\on{rev-mult}}$ is the same as $A$. The co-multiplication map
$$A^{\on{rev-mult}}\to A^{\on{rev-mult}}\overset{\on{rev}}\otimes A^{\on{rev-mult}}$$
is the initial comultiplication map
$$A\to A\otimes A.$$

The multiplication map
$$A^{\on{rev-mult}}\overset{\on{rev}}\otimes A^{\on{rev-mult}}\to A^{\on{rev-mult}}$$
is set to be the map
$$A\otimes A \overset{R^{-1}_{A,A}}\longrightarrow A\otimes A\to A.$$

\medskip

In a similar way we define the Hopf algebra $A^{\on{rev-comult}}$ in $\CC^{\on{rev}}$. We note, however, that the antipode
defines an isomorphism of Hopf algebras 
$$A^{\on{rev-mult}}\simeq A^{\on{rev-comult}}.$$

\begin{rem}
Recall that in \secref{sss:rev algebras} the notation $A^{\on{rev-mult}}$ (resp., $A^{\on{rev-comult}}$) had a different meaning:
in {\it loc.cit.} they denoted Hopf algebras in $\CC^{\on{rev-br}}$. 

\medskip

The two notions are related as follows: we have a canonical defined functor
\begin{equation} \label{e:rev vs br}
\CC^{\on{rev}}\to \CC^{\on{rev-br}}
\end{equation}
that \emph{reverses} the product but preserves the braiding. Hence this functor maps Hopf algebras to Hopf algebras.

\medskip

Now, the functor \eqref{e:rev vs br} sends $A^{\on{rev-mult}}$ (resp., $A^{\on{rev-comult}}$) in $\CC^{\on{rev}}$ to
$A^{\on{rev-mult}}$ (resp., $A^{\on{rev-comult}}$) in $\CC^{\on{rev-br}}$.

\end{rem}

\sssec{}

Let us be in the setting of \secref{sss:Hopf rev}. We have a natural monoidal equivalence 
\begin{equation} \label{e:A-mod rev}
(A\mod(\CC))^{\on{rev}}\simeq A^{\on{rev-mult}}\mod(\CC^{\on{rev}}).
\end{equation}

\medskip

It sends an $A$-module $M$ in $\CC$ to an $A^{\on{rev-mult}}$-module in $\CC^{\on{rev}}$, whose underlying 
object of $\CC^{\on{rev}}$ is the same $M$, and where the action map
$$A^{\on{rev-mult}} \overset{\on{rev}}\otimes M\to M$$
is set to be
$$M\otimes A \overset{R^{-1}_{A,M}}\longrightarrow A\otimes M\to M.$$

\sssec{}

The equivalence \eqref{e:A-mod rev} induces a \emph{braided monoidal} equivalence
\begin{equation} \label{e:opposite centers}
\left(Z_{\on{Dr},\CC}(A\mod(\CC))\right)^{\on{rev}}\simeq Z_{\on{Dr},\CC^{\on{rev}}}(A^{\on{rev-mult}}\mod(\CC^{\on{rev}})).
\end{equation} 

At the level of the underlying DG categories, we obtain an equivalence
$$Z_{\on{Dr},\CC}(A\mod(\CC))\simeq Z_{\on{Dr},\CC^{\on{rev}}}(A^{\on{rev-mult}}\mod(\CC^{\on{rev}})),$$
which makes the following diagrams commute:
\begin{equation} \label{e:duality1}
\CD 
Z_{\on{Dr},\CC}(A\mod(\CC))   @>{\sim}>>  Z_{\on{Dr},\CC^{\on{rev}}}(A^{\on{rev-mult}}\mod(\CC^{\on{rev}})) \\
@VVV    @VVV   \\
A\mod(\CC) @>{\sim}>> A^{\on{rev-mult}}\mod(\CC^{\on{rev}})
\endCD
\end{equation} 
and
\begin{equation} \label{e:duality2}
\CD 
Z_{\on{Dr},\CC}(A\mod(\CC))   @>{\sim}>>  Z_{\on{Dr},\CC^{\on{rev}}}(A^{\on{rev-mult}}\mod(\CC^{\on{rev}})) \\
@VVV    @VVV   \\
A\comod(\CC) @>{\sim}>> A^{\on{rev-comult}}\comod(\CC^{\on{rev}})\simeq A^{\on{rev-mult}}\comod(\CC^{\on{rev}}). 
\endCD
\end{equation} 

\medskip

In addition, the diagrams
$$
\CD 
Z_{\on{Dr},\CC}(A\mod(\CC))   @>{\sim}>>  Z_{\on{Dr},\CC}(A^{\on{rev-mult}}\mod(\CC^{\on{rev}})) \\
@AAA    @AAA   \\
A\mod(\CC) @>{\sim}>> A^{\on{rev-mult}}\mod(\CC^{\on{rev}}),
\endCD
$$
(obtained from \eqref{e:duality1} by passing to \emph{right} adjoints along the vertical arrows) and 
$$
\CD 
Z_{\on{Dr},\CC}(A\mod(\CC))   @>{\sim}>>  Z_{\on{Dr},\CC}(A^{\on{rev-mult}}\mod(\CC^{\on{rev}})) \\
@AAA    @AAA   \\
A\comod(\CC) @>{\sim}>> A^{\on{rev-comult}}\comod(\CC^{\on{rev}})\simeq A^{\on{rev-mult}}\comod(\CC^{\on{rev}}).
\endCD
$$
(obtained from \eqref{e:duality2} by passing to \emph{left} adjoints along the vertical arrows) also commute.

\ssec{The case of quantum groups}

\sssec{}  \label{sss:duality big}

We start by discussing duality for the big quantum group. We note that the braided monoidal abelian categories 
$(\Rep_q(G))^\heartsuit$ and $(\Rep_{q^{-1}}(G))^\heartsuit$ are related by
$$((\Rep_q(G))^\heartsuit)^{\on{rev}}\simeq (\Rep_{q^{-1}}(G))^\heartsuit, \quad \CM\mapsto \CM^\sigma,$$
induced by the canonical algebra isomorphism
$$\sigma:U^{\on{Lus}}_q(G)\to U^{\on{Lus}}_{q^{-1}}(G),$$
which reverses the comultiplication.

\medskip

The above equivalence induces an equivalence 
\begin{equation} \label{e:big invert q}
(\Rep_q(G)_{\on{ren}})^{\on{rev}}\simeq \Rep_{q^{-1}}(G)_{\on{ren}}.
\end{equation}

\medskip

By \secref{sss:duality from mon}, the equivalence \eqref{e:big invert q} induces an identification 
\begin{equation} \label{e:big duality}
(\Rep_q(G)_{\on{ren}})^\vee\simeq  \Rep_{q^{-1}}(G)_{\on{ren}},
\end{equation}
with the pairing
$$\Rep_q(G)_{\on{ren}}\otimes \Rep_{q^{-1}}(G)_{\on{ren}}\to \Vect$$
given by
$$\CM_1,\CM_2\mapsto \CHom_{\Rep_q(G)_{\on{ren}}}(k,\CM_1\otimes \CM_2^\sigma).$$

\medskip

The corresponding contravariant functor on compact objects
$$\BD:(\Rep_{q^{-1}}(G)_{\on{fin.dim}})\to (\Rep_q(G)_{\on{ren}})_{\on{fin.dim}}$$
is 
\begin{equation} \label{e:big dualization}
\CM\mapsto (\CM^\sigma)^\vee,
\end{equation} 
where $(-)^\vee$ is monoidal dualization. 

\begin{rem}
The same discussion applies to $\Rep_q(G)$. Here we use the fact that contragredient duality on 
$(\Rep_q(G)_{\on{fin.dim}})^\heartsuit$ sends projective to projectives; in other words, in the abelian category 
$(\Rep_q(G)_{\on{fin.dim}})^\heartsuit$, the classes of projective and injective objects coincide (this is the case
for any monoidal abelian category which is rigid). 
\end{rem}

\sssec{}  \label{sss:coh A}

We now consider the braided monoidal category $\Rep_q(T)$. Note that the corresponding category $(\Rep_q(T))^{\on{rev}}$
identifies with $\Rep_{q^{-1}}(T)$. 

\medskip

If $A$ is a Hopf algebra as in \secref{sss:Hopf algebras}, we obtain a canonical identification
%\begin{multline} \label{e:Z rev}
$$\left(Z_{\on{Dr},\Rep_q(T)}(A\mod(\Rep_q(T))_{\on{loc.nilp}}\right)^{\on{rev}} \simeq
Z_{\on{Dr},\Rep_{q^{-1}}(T)}(A^{\on{rev-mult}}\mod(\Rep_{q^{-1}}(T))_{\on{loc.nilp}}),$$
denoted $\CM\mapsto \CM^\sigma$, 
%\end{multline}
and in particular, an identification
$$\left(Z_{\on{Dr},\Rep_q(T)}(A\mod(\Rep_q(T))_{\on{loc.nilp}})\right)^\vee \simeq
Z_{\on{Dr},\Rep_{q^{-1}}(T)}(A^{\on{rev-mult}}\mod(\Rep_{q^{-1}}(T))_{\on{loc.nilp}}),$$
with the pairing 
$$Z_{\on{Dr},\Rep_q(T)}(A\mod(\Rep_q(T))_{\on{loc.nilp}})\otimes Z_{\on{Dr},\Rep_{q^{-1}}(T)}(A^{\on{rev-mult}}\mod(\Rep_{q^{-1}}(T))_{\on{loc.nilp}})\to
\Vect$$
given by ind-extending
\begin{multline*}
\CM_1,\CM_2\mapsto \CHom_{Z_{\on{Dr},\Rep_q(T)}(A\mod(\Rep_q(T))_{\on{loc.nilp}})}(k,\CM_1\otimes \CM_2^\sigma), \\
\CM_1\in Z_{\on{Dr},\Rep_q(T)}(A\mod(\Rep_q(T))_{\on{loc.nilp},c}),\\
\CM_2\in Z_{\on{Dr},\Rep_{q^{-1}}(T)}(A^{\on{rev-mult}}\mod(\Rep_{q^{-1}}(T))_{\on{loc.nilp},c}).
\end{multline*}

\sssec{}

Let us take $A=u_q(N)$. Note that by construction, we can identify
$$\left(u_q(N)\right)^{\on{rev-mult}}\simeq u_{q^{-1}}(N)$$
as Hopf algebras in $\Rep_{q^{-1}}(T)$.

\medskip

From here we obtain an equivalence 
$$(\Rep_q^{\on{sml,grd}}(G)_{\on{baby-ren}})^{\on{rev}}\simeq \Rep_{q^{-1}}^{\on{sml,grd}}(G)_{\on{baby-ren}}, \quad \CM\mapsto \CM^\sigma$$

This equivalence induces an equivalence 
$$(\Rep_q^{\on{sml,grd}}(G))^{\on{rev}}\simeq \Rep_{q^{-1}}^{\on{sml,grd}}(G),$$
and further, by restriction an ind-extension, an equivalence
$$(\Rep_q^{\on{sml,grd}}(G)_{\on{ren}})^{\on{rev}}\simeq \Rep_{q^{-1}}^{\on{sml,grd}}(G)_{\on{ren}}.$$

\sssec{}

Thus, we obtain an equivalence
\begin{equation} \label{e:duality small}
(\Rep_q^{\on{sml,grd}}(G)_{\on{ren}})^\vee\simeq \Rep_{q^{-1}}^{\on{sml,grd}}(G)_{\on{ren}},
\end{equation}
with the pairing 
\begin{equation} \label{e:pairing sml}
\Rep^{\on{sml,grd}}_q(G)_{\on{ren}}\otimes \Rep^{\on{sml,grd}}_{q^{-1}}(G)_{\on{ren}}\to \Vect
\end{equation}
given by 
\begin{equation} \label{e:pairing small}
\CM_1,\CM_2\mapsto \CHom_{\Rep^{\on{sml,grd}}_q(G)_{\on{ren}}}(k,\CM_1\otimes \CM^\sigma_2),\quad 
\CM_1\in \Rep^{\on{sml.grd}}_q(G)_{\on{fin.dim}},\, \CM_2\in \Rep^{\on{sml,grd}}_{q^{-1}}(G)_{\on{fin.dim}}
\end{equation}

Note, however, that formula \eqref{e:pairing small} with \emph{any} $\CM_1\in \Rep^{\on{sml,grd}}_q(G)$ 
and $\CM_2\in \Rep^{\on{sml,grd}}_{q^{-1}}(G)$ defines the pairing \eqref{e:pairing mixed pre} because
$1\in \Rep^{\on{sml,grd}}_q(G)$ is compact. 

\sssec{}

The corresponding contravariant functor
\begin{equation} \label{e:dualization small}
\BD:\Rep^{\on{sml,grd}}_{q^{-1}}(G)_{\on{fin.dim}}\to \Rep^{\on{sml.grd}}_q(G)_{\on{fin.dim}}
\end{equation}
is 
$$\CM\mapsto (\CM^\sigma)^\vee,$$
where $(-)^\vee$ is monoidal dualization. 

\sssec{}

The duality \eqref{e:duality small} induces the dualities
$$(\Rep_q^{\on{sml,grd}}(G)_{\on{baby-ren}})^\vee\simeq (\Rep_{q^{-1}}^{\on{sml,grd}}(G)_{\on{baby-ren}})^{\on{rev}}
\text{ and } (\Rep_q^{\on{sml,grd}}(G))^\vee\simeq (\Rep_{q^{-1}}^{\on{sml,grd}}(G))^{\on{rev}},$$
with the corresponding contravariant functors
$$\BD:\Rep^{\on{sml,grd}}_{q^{-1}}(G)_{\on{baby-ren},c}\to \Rep^{\on{sml.grd}}_q(G)_{\on{baby-ren},c} \text{ and } 
\BD:\Rep^{\on{sml,grd}}_{q^{-1}}(G)_{c}\to \Rep^{\on{sml.grd}}_q(G)_{c}$$
obtained by restriction.

\ssec{Cohomological duality for the mixed category}

\sssec{}

In the context of \secref{sss:coh A}, let us take $A=U^{\on{Lus}}_q(N)$. Note that by construction, we can identify
$$\left(U^{\on{Lus}}_q(N)\right)^{\on{rev-mult}}\simeq U^{\on{Lus}}_{q^{-1}}(N)$$
as Hopf algebras in $\Rep_{q^{-1}}(T)$.

\medskip

From here we obtain an equivalence
$$\Rep^{\on{mxd}}_q(G))^{\on{rev}}\simeq \Rep^{\on{mxd}}_{q^{-1}}(G),\quad \CM\mapsto \CM^\sigma,$$
which induces an equivalence 
\begin{equation} \label{e:duality mixed}
(\Rep^{\on{mxd}}_q(G))^\vee \simeq \Rep^{\on{mxd}}_{q^{-1}}(G),
\end{equation}
with the pairing
\begin{equation} \label{e:pairing mixed pre}
\Rep^{\on{mxd}}_q(G)\otimes \Rep^{\on{mxd}}_{q^{-1}}(G)\to \Vect
\end{equation}
given by ind-extending 
\begin{equation} \label{e:pairing mixed}
\CM_1,\CM_2\mapsto \CHom_{\Rep^{\on{mxd}}_q(G)}(k,\CM_1\otimes \CM^\sigma_2),\quad 
\CM_1\in \Rep^{\on{mxd}}_q(G)_c,\, \CM_2\in \Rep^{\on{mxd}}_{q^{-1}}(G)_c.
\end{equation}

Note, however, that formula \eqref{e:pairing mixed} with \emph{any} $\CM_1\in \Rep^{\on{mxd}}_q(G)$ 
and $\CM_2\in \Rep^{\on{mxd}}_{q^{-1}}(G)$ defines the pairing \eqref{e:pairing mixed pre} because
$1\in \Rep^{\on{mxd}}_q(G)$ is compact. 

\medskip

Let $\BD^{\on{can}}$ denote corresponding contravariant dualization functor
$$\Rep^{\on{mxd}}_{q^{-1}}(G)_c\to \Rep^{\on{mxd}}_q(G)_c$$

\sssec{}

Note that the resulting identifications
$$(\Rep_q(G)_{\on{ren}})^\vee\simeq \Rep_{q^{-1}}(G)_{\on{ren}}$$
and
$$(\Rep^{\on{mxd}}_q(G))^\vee\simeq \Rep^{\on{mxd}}_{q^{-1}}(G)$$
are compatible in the following way:

\begin{prop}  \label{p:duality big and mixed}
The following diagram, in which the vertical arrows are contravariant functors, commutes:
$$
\CD
\Rep_{q^{-1}}(G)_{\on{fin.dim}}  @>{\oblv_{\on{big}\to \on{mxd}}}>>  (\Rep^{\on{mxd}}_{q^{-1}}(G))_{c} \\
@V{\BD}VV   @VV{\BD^{\on{can}}}V  \\
(\Rep_q(G)_{\on{ren}})_{\on{fin.dim}}  @>{\oblv_{\on{big}\to \on{mxd}}}>>  (\Rep^{\on{mxd}}_q(G))_{c}
\endCD
$$
commutes.
\end{prop}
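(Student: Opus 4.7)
The plan is to show that both duality functors $\BD$ and $\BD^{\on{can}}$ admit a uniform description as ``the monoidal dual composed with the involution $\sigma$'' on sufficiently dualizable compact objects, and then to check that the braided monoidal functor $\oblv_{\on{big}\to \on{mxd}}$ intertwines both operations.

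First I would describe $\BD^{\on{can}}$ on monoidally dualizable compact objects. For $\CN \in (\Rep^{\on{mxd}}_{q^{-1}}(G))_c$ such that $\CN^\sigma \in \Rep^{\on{mxd}}_q(G)$ admits a right monoidal dual $(\CN^\sigma)^\vee$, the adjunction $- \otimes \CN^\sigma \dashv - \otimes (\CN^\sigma)^\vee$ yields
$$\CHom_{\Rep^{\on{mxd}}_q(G)}(k,\CN'\otimes \CN^\sigma) \simeq \CHom_{\Rep^{\on{mxd}}_q(G)}((\CN^\sigma)^\vee,\CN')$$
for every $\CN' \in \Rep^{\on{mxd}}_q(G)_c$. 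Comparing with the pairing \eqref{e:pairing mixed} that defines $\BD^{\on{can}}$, we obtain $\BD^{\on{can}}(\CN) \simeq (\CN^\sigma)^\vee$ whenever $\CN^\sigma$ is monoidally dualizable. The analogous formula for $\BD$ is \eqref{e:big dualization}, and it holds for \emph{every} $\CM \in \Rep_{q^{-1}}(G)_{\on{fin.dim}}$ by rigidity of the category of finite-dimensional representations of the big quantum group.

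Next I would record two compatibilities of $\oblv_{\on{big}\to \on{mxd}}$. Since this functor is braided monoidal (see \secref{sss:big to mixed}) and preserves the unit, it preserves monoidal duals: for any $\CM \in \Rep_{q^{-1}}(G)_{\on{fin.dim}}$ with dual $\CM^\vee$, the images of the evaluation and coevaluation maps exhibit $\oblv(\CM^\vee)$ as a right dual of $\oblv(\CM)$ in $\Rep^{\on{mxd}}_{q^{-1}}(G)$, so that $\oblv(\CM)$ is monoidally dualizable and $\oblv(\CM)^\vee \simeq \oblv(\CM^\vee)$. Moreover, the two involutions $\sigma$ on the pairs $(\Rep_q(G), \Rep_{q^{-1}}(G))$ and $(\Rep^{\on{mxd}}_q(G), \Rep^{\on{mxd}}_{q^{-1}}(G))$ are both induced by the antipode-type isomorphism on the underlying Hopf algebras (see \secref{sss:duality big} and \secref{sss:coh A}); compatibility of $\oblv_{\on{big}\to \on{mxd}}$ with these antipodes produces a canonical isomorphism
$$\oblv_{\on{big}\to \on{mxd}} \circ (-)^\sigma \simeq (-)^\sigma \circ \oblv_{\on{big}\to \on{mxd}}.$$

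Putting these together, for $\CM \in \Rep_{q^{-1}}(G)_{\on{fin.dim}}$ we compute
$$\oblv(\BD(\CM)) \simeq \oblv((\CM^\sigma)^\vee) \simeq (\oblv(\CM^\sigma))^\vee \simeq (\oblv(\CM)^\sigma)^\vee \simeq \BD^{\on{can}}(\oblv(\CM)),$$
where the last isomorphism applies the first paragraph to $\CN := \oblv(\CM)$, which is monoidally dualizable by what was just observed. The main obstacle I anticipate is the bookkeeping in the first paragraph: namely, verifying that the equivalence $(\Rep^{\on{mxd}}_q(G))^\vee \simeq \Rep^{\on{mxd}}_{q^{-1}}(G)$ of \eqref{e:duality mixed}, which \secref{sss:coh A} derives abstractly from the identification of opposite monoidal structures, agrees on monoidally dualizable compact objects with the naive recipe $\CN \mapsto (\CN^\sigma)^\vee$. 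This is essentially formal from \secref{sss:duality from mon} applied to the (not necessarily cocomplete) subcategory of monoidally dualizable compact objects, but the identifications need to be tracked carefully since not all compacts of $\Rep^{\on{mxd}}_q(G)$ are monoidally dualizable.
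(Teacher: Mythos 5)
Your argument is correct and is essentially the paper's own proof: the paper likewise reduces the statement to the defining property of $\BD^{\on{can}}$ via the pairing \eqref{e:pairing mixed}, i.e.\ to the isomorphism $\CHom_{\Rep^{\on{mxd}}_q(G)}(\oblv_{\on{big}\to \on{mxd}}(\BD(\CM_2)),\CM_1)\simeq \CHom_{\Rep^{\on{mxd}}_q(G)}(k,\CM_1\otimes (\oblv_{\on{big}\to \on{mxd}}(\CM_2))^\sigma)$, which it deduces from the compatibility of $\oblv_{\on{big}\to \on{mxd}}$ with $\sigma$ together with monoidality (rigidity of $\Rep_{q^{-1}}(G)_{\on{fin.dim}}$ and preservation of monoidal duals) — exactly the ingredients you spell out.
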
 

\begin{proof} 

We need to show that for $\CM_1\in \Rep^{\on{mxd}}_q(G)_{c}$ and 
$\CM_2\in \Rep_{q^{-1}}(G)_{\on{fin.dim}}$, we have a canonical isomorphism
$$\CHom_{\Rep^{\on{mxd}}_q(G)}(\oblv_{\on{big}\to \on{mxd}}(\BD(\CM_2)),\CM_1)\simeq 
\CHom_{\Rep^{\on{mxd}}_q(G)}(k,\CM_1\otimes (\oblv_{\on{big}\to \on{mxd}}(\CM_2))^\sigma).$$

This follows from the commutativity of the next diagram, which in turn follows from the construction
$$
\CD
\Rep_{q^{-1}}(G)_{\on{fin.dim}}  @>{\oblv_{\on{big}\to \on{mxd}}}>>  \Rep^{\on{mxd}}_{q^{-1}}(G) \\
@V{\sigma}VV   @VV{\sigma}V  \\
(\Rep_q(G)_{\on{fin.dim}})^{\on{rev}}  @>{\oblv_{\on{big}\to \on{mxd}}}>>  (\Rep^{\on{mxd}}_q(G)))^{\on{rev}}.
\endCD
$$

\end{proof}

\sssec{}

The goal of this section is to prove the following result:

\begin{thm} \label{t:quantum duality}  
For $\clambda\in \cLambda$, we have
$$\BD^{\on{can}}(\BM^\clambda_{q^{-1},\on{mxd}})\simeq \BM^{-\clambda-2\check\rho}_{q,\on{mxd}}[d].$$
\end{thm}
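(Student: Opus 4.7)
The plan is to compute the functor represented by each side by Yoneda, reducing \thmref{t:quantum duality} to an identification of two functors of $\CN\in \Rep^{\on{mxd}}_q(G)$. By the $(\ind_{\on{Lus}^+\to\on{mxd}}, \oblv_{\on{mxd}\to\on{Lus}^+})$-adjunction,
$$\CHom_{\Rep^{\on{mxd}}_q(G)}\bigl(\BM^{-\clambda-2\check\rho}_{q,\on{mxd}}[d], \CN\bigr) \simeq \CHom_{\Rep_q(B)}\bigl(k^{-\clambda-2\check\rho}, \oblv_{\on{mxd}\to\on{Lus}^+}(\CN)\bigr)[-d],$$
while the construction of $\BD^{\on{can}}$ via the pairing \eqref{e:pairing mixed} gives
$$\CHom_{\Rep^{\on{mxd}}_q(G)}\bigl(\BD^{\on{can}}(\BM^\clambda_{q^{-1},\on{mxd}}),\CN\bigr) \simeq \CHom_{\Rep^{\on{mxd}}_q(G)}\bigl(k, \CN\otimes(\BM^\clambda_{q^{-1},\on{mxd}})^\sigma\bigr).$$

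First I would identify $(\BM^\clambda_{q^{-1},\on{mxd}})^\sigma$ as an object of $\Rep^{\on{mxd}}_q(G)$. The commutativity of \eqref{e:duality1} (applied to $A=U^{\on{Lus}}_q(N)$) shows that $\sigma$ intertwines the functors $\oblv_{\on{mxd}\to\on{Lus}^+}$ on the two sides; since the underlying equivalence $\Rep_{q^{-1}}(B)\simeq\Rep_q(B)$ preserves the $\cLambda$-grading and the trivial $U^{\on{Lus}}_q(N)$-action, we have $\sigma(k^\clambda)=k^\clambda$. Passing to left adjoints we conclude $(\BM^\clambda_{q^{-1},\on{mxd}})^\sigma\simeq\BM^\clambda_{q,\on{mxd}}$, so the theorem reduces to an intrinsic statement about the $q$-side.

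Next, I would invoke two general facts about the Drinfeld center $\Rep^{\on{mxd}}_q(G)=Z_{\on{Dr},\Rep_q(T)}(\Rep_q(B))$: the projection formula
$$\CN\otimes \ind_{\on{Lus}^+\to\on{mxd}}(X) \simeq \ind_{\on{Lus}^+\to\on{mxd}}\bigl(\oblv_{\on{mxd}\to\on{Lus}^+}(\CN)\otimes X\bigr)$$
for $X\in\Rep_q(B)$, and the identification $\CHom_{\Rep^{\on{mxd}}_q(G)}(k,\CM)\simeq \CHom_{\Rep_q(B)}(k, \oblv_{\on{mxd}\to\on{Lus}^+}(\CM))$, which holds because any morphism from the unit of the base is automatically compatible with the braiding. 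Combining these, the LHS pairing becomes
$$\CHom_{\Rep_q(B)}\bigl(k,\,(\oblv_{\on{mxd}\to\on{Lus}^+}\circ \ind_{\on{Lus}^+\to\on{mxd}})(\oblv_{\on{mxd}\to\on{Lus}^+}(\CN)\otimes k^\clambda)\bigr).$$
The monad $\oblv_{\on{mxd}\to\on{Lus}^+}\circ \ind_{\on{Lus}^+\to\on{mxd}}$ on $\Rep_q(B)$ is the standard Mackey-type monad of a Drinfeld center: it is given by tensoring (in $\Rep_q(B)$) with $\oblv_{\on{mxd}\to\on{Lus}^+}(\BM^0_{q,\on{mxd}})$, which as a $\cLambda$-graded vector space is $U^{\on{DK}}_q(N^-)$, endowed with a $U^{\on{Lus}}_q(N)$-module structure dictated by the braiding.

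Finally, I would invoke \corref{c:cohomology of UqDK}, which supplies the Euler--Poincar\'e duality
$$\CHom_{U^{\on{DK}}_q(N^-)\mod(\Rep_q(T))}(k^\mu, U^{\on{DK}}_q(N^-))=\begin{cases}k[-d],&\mu=2\check\rho,\\0,&\text{otherwise}.\end{cases}$$
After unwinding the braided-action identification, this isolates precisely the weight $-2\check\rho$ component of the tensor product, placed in cohomological degree $-d$. Combined with the weight shift by $-\clambda$ from the factor $k^\clambda$, the LHS reduces to $\CHom_{\Rep_q(B)}\bigl(k^{-\clambda-2\check\rho},\oblv_{\on{mxd}\to\on{Lus}^+}(\CN)\bigr)[-d]$, which matches the RHS. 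Yoneda then yields the theorem.

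The main obstacle lies in the second step: pinning down the precise $U^{\on{Lus}}_q(N)$-action on $U^{\on{DK}}_q(N^-)$ encoded in the monad $\oblv_{\on{mxd}\to\on{Lus}^+}\circ \ind_{\on{Lus}^+\to\on{mxd}}$, including the twist coming from the braiding in $\Rep_q(T)$, and checking that the projection-formula identifications are compatible with this structure. Once this is in hand, the Frobenius property \corref{c:cohomology of UqDK} supplies exactly the cohomological shift $[d]$ and the weight shift $2\check\rho$ that appear in the statement, essentially as a quantum analog of the classical identity $\BD(M^\clambda)\simeq M^{-\clambda-2\check\rho}[d]$ for Verma modules.
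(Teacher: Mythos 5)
The proposal hinges on the identification
$$\CHom_{\Rep^{\on{mxd}}_q(G)}(k,\CM)\;\simeq\; \CHom_{\Rep_q(B)}\bigl(k, \oblv_{\on{mxd}\to\on{Lus}^+}(\CM)\bigr),$$
and this is where the argument breaks. The rationale you give ("any morphism from the unit of the base is automatically compatible with the braiding") is not correct: the compatibility condition for a morphism $k\to\CM$ in the relative Drinfeld center involves the central structure of the \emph{target} $\CM$, not of $k$, and is a genuine constraint. In fact the identification fails. Take $\CM = \BM^{\vee,-2\check\rho}_{q,\on{mxd}}=\coind_{\on{DK}^-\to\on{mxd}}(k^{-2\check\rho})$. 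On the one hand, using the $(\oblv,\coind)$-adjunction on the $U^{\on{Lus}}_q(N)$-side one gets $\CHom_{\Rep_q(B)}(k^0, \oblv_{\on{mxd}\to\on{Lus}^+}(\CM)) \simeq \CHom_{\Rep_q(T)}(k^0,k^{-2\check\rho}) = 0$. On the other hand, using the $(\oblv_{\on{Dr}\to A^\vee},\coind_{A^\vee\to\on{Dr}})$-adjunction,
$$\CHom_{\Rep^{\on{mxd}}_q(G)}(k, \BM^{\vee,-2\check\rho}_{q,\on{mxd}}) \simeq \CHom_{U^{\on{DK}}_q(N^-)\mod(\Rep_q(T))}(k^0, k^{-2\check\rho}),$$
which, dualizing \thmref{t:cohomology DK}, is $k[-d]$ (the top Tor class in weight $2\check\rho$). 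So the left side is $k[-d]$ while the right side is $0$, and the two Hom-functors are genuinely different. A second, smaller, soft spot is the last step, where you apply \corref{c:cohomology of UqDK} to compute a Hom taken in $\Rep_q(B)=U^{\on{Lus}}_q(N)\mod_{\on{loc.nilp}}(\Rep_q(T))$, whereas the corollary lives in $U^{\on{DK}}_q(N^-)\mod(\Rep_q(T))$; the passage between these requires exactly the kind of bookkeeping that the false identification was supposed to hide.

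The paper sidesteps this by never trying to compute $\CHom_{\Rep^{\on{mxd}}_q(G)}(k,-)$ directly in terms of $\Rep_q(B)$. Instead it introduces the ambient category $U_q(G)^{\on{mxd}}\mod$ (no nilpotence hypotheses on either side) together with the fully faithful embeddings $j^+, j^-$ of the two mixed categories, and then the "long intertwining functor" $\Upsilon = (j^-)^R\circ j^+$. \propref{p:dualities and Ups} shows $\BD^{\on{can}}\simeq(\BD^{\on{contr}})^{-1}\circ\Upsilon$ via a \emph{tautological} identification of $\CHom_{\Rep^{\on{mxd}}_q(G)}(k,\CM_1\otimes\CM_2^\sigma)$ with a Hom computed in $U_{q^{-1}}(G)^{\on{mxd}}\mod$ (not in $\Rep_q(B)$). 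The shift $[d]$ and the $2\check\rho$-twist then come in through \propref{p:Upsilon Verma}, which is where \corref{c:cohomology of UqDK} is actually used — applied to the free $U^{\on{DK}}_q(N^-)$-module $\oblv_{\on{mxd}\to\on{DK}^-}(\BM^\clambda_{q,\on{mxd}})$. Your strategy is not a variant of this; the category in which you move the $\CHom$ is the wrong one, and the gap is not repairable without, in effect, introducing something like $\Upsilon$ and the ambient $U_q(G)^{\on{mxd}}\mod$.
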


The proof of \thmref{t:quantum duality} will use a tool which will be of independent interest: the long intertwining 
functor. 

\begin{rem}
The definition of the pairing \eqref{e:pairing mixed} and \thmref{t:quantum duality} are direct quantum analogs
of the corresponding assertions in the classical situation: we have the self-duality of $\fg\mod^B$ defined 
by the formula
$$\CM_1,\CM_2\mapsto \CHom_{\fg\mod^B}(k,\CM_1\otimes \CM_2),$$
and the corresponding contravraiant dualization functor 
$$\BD^{\on{can}}:\fg\mod^B_c\to \fg\mod^B_c$$ 
is known to send the Verma module $M^\clambda$ to $M^{-\clambda-2\rho}[d]$. 
\end{rem} 

\ssec{The long intertwining functor}

\sssec{}

Let us consider the following ``opposite" version of the category $\Rep^{\on{mxd}}_q(G)$. 

\medskip

Namely, set: 
$$\Rep^{\wt{\on{mxd}}}_q(G):=Z_{\on{Dr},\Rep_q(T)}(U^{\on{DK}}_q(N^-)\mod(\Rep_q(T))_{\on{loc.nilp}}).$$
equipped with pairs of adjoint functors
$$\ind_{\on{DK}^-\to \wt{\on{mxd}}}: U^{\on{DK}}_q(N^-) \mod(\Rep_q(T))_{\on{loc.nilp}} \rightleftarrows  \Rep^{\wt{\on{mxd}}}_q(G):
\oblv_{\wt{\on{mxd}}\to \on{DK}^-}$$
and
$$\oblv_{\wt{\on{mxd}}\to \on{Lus}^+}:\Rep^{\wt{\on{mxd}}}_q(G) \rightleftarrows U^{\on{Lus}}_q(N) \mod(\Rep_q(T)): 
\coind_{\on{Lus}^+ \to \wt{\on{mxd}}}.$$

For $\clambda\in \cLambda$ we will denote by
$$\BM_{q,\wt{\on{mxd}}}^\clambda, \BM_{q,\wt{\on{mxd}}}^{\vee,\clambda}\in \Rep^{\wt{\on{mxd}}}_q(G)$$
the corresponding standard and costandard objects, respectively. 

\medskip

The category $\Rep^{\wt{\on{mxd}}}_q(G)$ carries a t-structure and can be recovered from its heart by the procedure
of \secref{sss:recover from heart}. 

\begin{rem}
Note that when $q$ is not a root of unity, the action of $w_0\in \on{Aut}(\cLambda)$ defines an equivalence
$$\Rep^{\on{mxd}}_q(G)\to \Rep^{\wt{\on{mxd}}}_q(G).$$

However, for $q$ a root of unity, the two versions are truly different: in the former the Lusztig algebra is supposed to act locally
nilpotently, and in the latter the De Concini-Kac one.

\end{rem}

\sssec{}

Let $U_q(G)^{\on{mxd}}\mod$ be the derived category of the abelian category 
$(U_q(G)^{\on{mxd}}\mod)^\heartsuit$, where the latter consists of objects of $\Rep_q(T)$, equipped 
with an action of $U_q^{\on{Lus}}(N)$ and a compatible action of $U_q^{\on{DK}}(N^-)$. We have
the natural forgetful functors
$$U_q^{\on{Lus}}(N)\mod(\Rep_q(T)) \overset{\oblv_{\on{mxd}\to \on{Lus}^+}}\longleftarrow U_q(G)^{\on{mxd}}\mod
\overset{\oblv_{\on{mxd}\to \on{DK}^-}}\longrightarrow U_q^{\on{DK}}(N^-)\mod(\Rep_q(T)),$$
which admit left adjoints, denoted $\ind_{\on{Lus}^+\to \on{mxd}}$ and $\ind_{\on{DK}^-\to \on{mxd}}$, respectively. 

\medskip

We have the natural forgetful functors
$$\Rep^{\on{mxd}}_q(G)\overset{j^+}\longrightarrow U_q(G)^{\on{mxd}}\mod \overset{j^-}\longleftarrow \Rep^{\wt{\on{mxd}}}_q(G)$$
that make the following diagrams commute:
$$
\xymatrix{
\Rep^{\on{mxd}}_q(G) \ar[rr] \ar[d]<2pt> && U_q(G)^{\on{mxd}}\mod  \ar[d]<2pt>  \\
U_q^{\on{Lus}}(N)\mod(\Rep_q(T))_{\on{loc.nilp}} \ar[rr] \ar[u]<2pt> && U_q^{\on{Lus}}(N)\mod(\Rep_q(T)) \ar[u]<2pt>}
$$
and
$$
\xymatrix{
U_q(G)^{\on{mxd}}\mod \ar[d]<2pt> && \Rep^{\wt{\on{mxd}}}_q(G) \ar[ll] \ar[d]<2pt>  \\ 
U_q^{\on{DK}}(N^-)\mod(\Rep_q(T)) \ar[u]<2pt> && U_q^{\on{DK}}(N^-)\mod(\Rep_q(T))_{\on{loc.nilp}} \ar[ll] \ar[u]<2pt>}
$$

\sssec{}

Note that it follows from \corref{c:aug DK} that the functor $j^-$ sends compacts to compacts. Let 
$(j^-)^R$ denote its right adjoint. 

\medskip

We define the functor
$$\Upsilon: \Rep^{\on{mxd}}_q(G)\to \Rep^{\wt{\on{mxd}}}_q(G)$$
as 
$$\Upsilon:= (j^-)^R\circ j^+.$$

\begin{rem}
We will refer to $\Upsilon$ as the \emph{long intertwining functor}. It is a direct analog of the
functor in the classical situation given by 
$$\on{Av}^{N^-}_*:\fg\mod^B\to \fg\mod^{B^-}.$$

\medskip

The analog of \propref{p:Upsilon Verma} below in the classical situation holds as well: the proof given below
applies. Alternatively, one can deduce it from the Beilinson-Bernstein localization theory. 
\end{rem} 

\sssec{}

We claim:

\begin{prop}  \label{p:Upsilon Verma}
For $\clambda\in \cLambda$,
$$\Upsilon(\BM^\clambda_{q,\on{mxd}})\simeq \BM_{q,\wt{\on{mxd}}}^{\vee,\clambda+2\check\rho}[-d].$$
\end{prop}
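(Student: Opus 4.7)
The plan is to show that $\Upsilon(\BM^\clambda_{q,\on{mxd}})$ and $\BM^{\vee,\clambda+2\check\rho}_{q,\wt{\on{mxd}}}[-d]$ corepresent the same functor on $\Rep^{\wt{\on{mxd}}}_q(G)$. For $\CN \in \Rep^{\wt{\on{mxd}}}_q(G)$, the defining adjunction for $\coind_{\on{Lus}^+\to \wt{\on{mxd}}}$ yields
$$\Maps\bigl(\CN,\, \BM^{\vee,\clambda+2\check\rho}_{q,\wt{\on{mxd}}}[-d]\bigr) \simeq \Maps_{U_q^{\on{Lus}}(N)\mod(\Rep_q(T))}\bigl(\oblv_{\wt{\on{mxd}}\to\on{Lus}^+}(\CN),\, k^{\clambda+2\check\rho}\bigr)[-d],$$
while the $(j^-,(j^-)^R)$-adjunction rewrites the left-hand side as $\Maps_{U_q(G)^{\on{mxd}}\mod}\bigl(j^-(\CN),\, j^+(\BM^\clambda_{q,\on{mxd}})\bigr)$. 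The task therefore reduces to a functorial identification of these two mapping spaces.

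Next, I would identify $j^+(\BM^\clambda_{q,\on{mxd}})$: by the commutative diagram \eqref{e:ind cond1} it is the free $U_q^{\on{DK}}(N^-)$-module on $k^\clambda$ inside $U_q(G)^{\on{mxd}}\mod$, with $U_q^{\on{Lus}}(N)$-action inherited from the Drinfeld-double compatibility. Maps from the locally $U_q^{\on{DK}}(N^-)$-nilpotent object $j^-(\CN)$ into this free module can then be computed using the Calabi-Yau / Frobenius property of $U_q^{\on{DK}}(N^-)$ furnished by Corollary \ref{c:cohomology of UqDK}. The scalar identification $\CHom_{U_q^{\on{DK}}(N^-)\mod(\Rep_q(T))}(k^{2\check\rho}, U_q^{\on{DK}}(N^-)) \simeq k[-d]$ should be upgraded to the functorial isomorphism of $\Rep_q(T)$-graded objects
$$\CHom_{U_q^{\on{DK}}(N^-)\mod(\Rep_q(T))}\bigl(\oblv_{\on{mxd}\to\on{DK}^-}(j^-(\CN)),\, U_q^{\on{DK}}(N^-)\otimes k^\clambda\bigr) \simeq \CHom_{\Rep_q(T)}\bigl(\oblv(j^-(\CN)),\, k^{\clambda+2\check\rho}\bigr)[-d].$$
Cutting out the $U_q^{\on{Lus}}(N)$-equivariant part of both sides, using the compatibility of the Drinfeld-double structure with the Frobenius pairing, yields exactly $\Maps_{U_q^{\on{Lus}}(N)\mod(\Rep_q(T))}(\oblv_{\wt{\on{mxd}}\to\on{Lus}^+}(\CN),\, k^{\clambda+2\check\rho})[-d]$, matching the target, since $j^-$ acts as the identity on the underlying $U_q^{\on{Lus}}(N)$-module structure.

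The hard part is the functorial upgrade of the Frobenius identity in the display above: promoting the scalar statement from Corollary \ref{c:cohomology of UqDK} into a natural isomorphism of $\CN$-valued $\CHom$-spaces that intertwines the $U_q^{\on{Lus}}(N)$-actions on both sides. Concretely, one fixes a trace map $U_q^{\on{DK}}(N^-) \to k^{2\check\rho}[-d]$ (unique up to scalar by Corollary \ref{c:cohomology of UqDK}, whose existence relies on the finiteness of cohomological dimension from Corollary \ref{c:UqDK}) and verifies that (i) it is a map of $U_q^{\on{DK}}(N^-)$-bimodules up to the twist by $k^{2\check\rho}$, and (ii) it is compatible with the braided coaction relating $U_q^{\on{DK}}(N^-)$ to $U_q^{\on{Lus}}(N)$ via the Drinfeld-double structure on $U_q(G)^{\on{mxd}}\mod$. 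Granting this, the shift $[-d]$ and the weight twist by $2\check\rho$ propagate directly from the dualizing object of $U_q^{\on{DK}}(N^-)$ into the final answer, and the remaining manipulations are formal adjunction unwinding.
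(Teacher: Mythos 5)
You take a genuinely different and substantially harder route than the paper, and the "hard part" you correctly flag is left unproved, leaving the proposal incomplete.

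The paper's argument is much leaner: since the standard objects $\BM_{q,\wt{\on{mxd}}}^{\clambda'}$ compactly generate $\Rep^{\wt{\on{mxd}}}_q(G)$, it suffices to compute $\CHom_{\Rep^{\wt{\on{mxd}}}_q(G)}(\BM_{q,\wt{\on{mxd}}}^{\clambda'},\Upsilon(\BM^\clambda_{q,\on{mxd}}))$ for all $\clambda'$ and match it against the defining orthogonality of $\BM_{q,\wt{\on{mxd}}}^{\vee,\clambda+2\check\rho}$. Because $j^-(\BM_{q,\wt{\on{mxd}}}^{\clambda'})\simeq\ind_{\on{DK}^-\to\on{mxd}}(k^{\clambda'})$, two adjunctions collapse the left-hand side to $\CHom_{U_q^{\on{DK}}(N^-)\mod}(k^{\clambda'},\,U_q^{\on{DK}}(N^-)\otimes k^\clambda)$, which is precisely the scalar content of Corollary~\ref{c:cohomology of UqDK}. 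Nothing else is needed.

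Your approach instead aims at establishing $\Maps(\CN,\Upsilon(\BM^\clambda_{q,\on{mxd}}))\simeq\Maps(\CN,\BM^{\vee,\clambda+2\check\rho}_{q,\wt{\on{mxd}}}[-d])$ for \emph{arbitrary} $\CN$. This forces you into the display you rightly call "the hard part": a twisted Calabi-Yau duality for $U^{\on{DK}}_q(N^-)$ of the shape $\CHom_{U^{\on{DK}}_q(N^-)}(X,\,U^{\on{DK}}_q(N^-))\simeq X^\vee\otimes k^{2\check\rho}[-d]$, functorial in locally nilpotent $X$, \emph{and} compatible with the $U^{\on{Lus}}_q(N)$-structure inherited from the Drinfeld double so that passing to equivariants recovers the adjunction target. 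You only sketch the verifications (bimodule property of a trace map, equivariance against the braided coaction); none of this is carried out, and the required lemma is not in the paper. Constructing such a canonical functorial trace, proving its equivariance, and running the cobar argument that lets you "cut out" the $U^{\on{Lus}}_q(N)$-equivariant part is a substantially bigger project than the proposition itself. The remedy is simple: specialize $\CN$ to the compact generators $\BM_{q,\wt{\on{mxd}}}^{\clambda'}$, after which both mapping spaces collapse to the already-available scalar computation and the functorial Calabi-Yau upgrade is never needed.
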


\begin{proof}
It suffices to show that
$$\CHom_{\Rep^{\wt{\on{mxd}}}_q(G)}(\BM_{q,\wt{\on{mxd}}}^{\clambda'},\Upsilon(\BM^\clambda_{q,\on{mxd}}))\simeq
\begin{cases}
&k[-d] \text{ if } \clambda'=\clambda+2\check\rho \\
&0 \text{ otherwise}.
\end{cases}
$$

By definition, 
$$\CHom_{\Rep^{\wt{\on{mxd}}}_q(G)}(\BM_{q,\wt{\on{mxd}}}^{\clambda'},\Upsilon(\BM^\clambda_{q,\on{mxd}}))\simeq
\CHom_{U_q(G)^{\on{mxd}}\mod}(j^-(\BM_{q,\wt{\on{mxd}}}^{\clambda'}),j^+(\BM^\clambda_{q,\on{mxd}})),$$
which we further rewrite as
$$\CHom_{U_q^{\on{DK}}(N^-)\mod}(k^{\clambda'},\oblv_{\on{mxd}\to \on{DK}^-}(\BM^\clambda_{q,\on{mxd}})).$$

Now, since $\oblv_{\on{mxd}\to \on{DK}^-}(\BM^\clambda_{q,\on{mxd}})$ is free over $U_q^{\on{DK}}(N^-)$,
the assertion follows from \corref{c:cohomology of UqDK}.

\end{proof} 

\ssec{Cohomological vs contragredient duality}

\sssec{}

Let 
$$\Rep_q^{\on{mxd}}(G)_{\on{loc.fin.dim}} \subset \Rep_q^{\on{mxd}}(G) \text{ and } 
\Rep_q^{\wt{\on{mxd}}}(G)_{\on{loc.fin.dim}} \subset \Rep_q^{\wt{\on{mxd}}}(G)$$
be the full subcategories corresponding to the condition that in each cohomological degree 
each graded component is finite-dimensional. Note that the standard and costandard objects
belong to the corresponding subcategories; hence so do all compact objects. 

\medskip

Component-wise dualization, combined with the identifications
$$U_q^{\on{Lus}}(N)\simeq (U_q^{\on{DK}}(N^-))^\vee \text{ and } U_q^{\on{DK}}(N)\simeq (U_q^{\on{Lus}}(N^-))^\vee$$
defines a contravariant equivalence
$$\Rep_q^{\on{mxd}}(G)_{\on{loc.fin.dim}} \to \Rep_{q^{-1}}^{\wt{\on{mxd}}}(G)_{\on{loc.fin.dim}}$$
that we will refer to as \emph{contragredient duality} and denote by $\BD^{\on{contr}}$.

\medskip

By construction, we have
\begin{equation} \label{e:dual Verma tilde}
\BD^{\on{contr}}(\BM^\clambda_{q^{-1},\on{mxd}})\simeq \BM^{\vee,-\clambda}_{q,\wt{\on{mxd}}} \text{ and }
\BD^{\on{contr}}(\BM^{\vee,\clambda}_{q^{-1},\on{mxd}})\simeq \BM^{-\clambda}_{q,\wt{\on{mxd}}}. 
\end{equation} 

\sssec{}

We claim:

\begin{prop} \label{p:dualities and Ups}
There is a canonical isomorphism
$$\BD^{\on{can}}\simeq (\BD^{\on{contr}})^{-1}\circ \Upsilon.$$
\end{prop}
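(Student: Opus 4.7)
The plan is to prove \propref{p:dualities and Ups} by showing that both sides co-represent the same Hom-pairing on compact objects of $\Rep^{\on{mxd}}_{q^{-1}}(G)$. By the definition of the pairing \eqref{e:pairing mixed}, for $\CM_1 \in \Rep^{\on{mxd}}_{q^{-1}}(G)_c$ and any $\CM_2 \in \Rep^{\on{mxd}}_q(G)$, the functor $\BD^{\on{can}}$ is characterized by
$$\CHom_{\Rep^{\on{mxd}}_q(G)}(\BD^{\on{can}}(\CM_1), \CM_2) \simeq \CHom_{\Rep^{\on{mxd}}_q(G)}(k, \CM_2 \otimes \CM_1^\sigma),$$
so it would suffice to construct a canonical, natural isomorphism between this and $\CHom_{\Rep^{\on{mxd}}_q(G)}\bigl((\BD^{\on{contr}})^{-1}\circ \Upsilon(\CM_1),\, \CM_2\bigr)$.

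To analyze the latter, I would first reduce to the case when $\CM_2$ is loc.fin.dim (such objects generate $\Rep^{\on{mxd}}_q(G)$), apply the contravariant equivalence $\BD^{\on{contr}}$ on loc.fin.dim parts to the two arguments of the Hom, and then invoke the definition $\Upsilon = (j^-)^R \circ j^+$ together with the $(j^-, (j^-)^R)$-adjunction to rewrite it as
$$\CHom_{U_{q^{-1}}(G)^{\on{mxd}}\mod}\bigl(j^-(\BD^{\on{contr}}(\CM_2)),\, j^+(\CM_1)\bigr).$$
The core step is then to identify this Hom in the big $(U_{q^{-1}}^{\on{Lus}}(N), U_{q^{-1}}^{\on{DK}}(N^-))$-bimodule category with the tensor-Hom $\CHom_{\Rep^{\on{mxd}}_q(G)}(k, \CM_2\otimes \CM_1^\sigma)$. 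This should follow from a natural identification in the big bimodule category presenting $j^-\bigl(\BD^{\on{contr}}(\CM_2)\bigr)$ as the $\sigma$-twist of the componentwise dual of $j^+(\CM_2)$ (using that componentwise dualization intertwines the Lusztig and De Concini-Kac actions via the duality $U_q^{\on{Lus}}(N) = (U_q^{\on{DK}}(N^-))^\vee$), combined with the tautological bimodule identity $\CHom(V,W) \simeq \CHom(k, W\otimes V^\vee)$ transported to this setting.

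The main obstacle will be the bookkeeping required to track the braidings, the $\sigma$-twists, and the interchange between Lusztig and De Concini-Kac versions, so as to verify that the isomorphisms constructed are coherent and natural in both $\CM_1$ and $\CM_2$. As a sanity check, I would verify the resulting identification on the compact generators $\BM^\clambda_{q^{-1},\on{mxd}}$: by \propref{p:Upsilon Verma} and \eqref{e:dual Verma tilde}, both sides yield $\BM^{-\clambda-2\check\rho}_{q,\on{mxd}}[d]$, which simultaneously establishes \thmref{t:quantum duality} as an immediate corollary.
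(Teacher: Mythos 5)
Your proposal follows essentially the same route as the paper's own proof: reduce to a generating class, co-represent the pairing $\CHom(k,\CM_2\otimes\CM_1^\sigma)$, rewrite $(\BD^{\on{contr}})^{-1}\circ\Upsilon$ by applying $\BD^{\on{contr}}$ contravariantly and then using $\Upsilon=(j^-)^R\circ j^+$ with the $(j^-,(j^-)^R)$-adjunction to land in $\CHom_{U_{q^{-1}}(G)^{\on{mxd}}\mod}\bigl(j^-(\BD^{\on{contr}}(\CM_2)),\,j^+(\CM_1)\bigr)$, and then invoke the tautological componentwise-dual identification. The only cosmetic difference is a relabeling of $\CM_1$ and $\CM_2$; the chain of identifications is the one in the text.
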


\begin{rem}
\propref{p:dualities and Ups} is a direct quantum analog of a similar assertion in the classical situation,
see \cite[Corollary 3.2.3]{GY}.
\end{rem} 

\begin{proof}

First, we note that $\Upsilon$ sends $(\Rep^{\on{mxd}}_{q^{-1}}(G))_{c}$ to $(\Rep_{q^{-1}}^{\wt{\on{mxd}}}(G))_{\on{loc.fin.dim}}$:
indeed, it suffices to check this for the standard objects, and the result follows from \propref{p:Upsilon Verma}. 

\medskip

To prove the proposition, it suffices to show that for $\CM_1\in \Rep^{\on{mxd}}_q(G)_{c}$ and 
$\CM_2\in \Rep^{\on{mxd}}_{q^{-1}}(G)_{c}$, we have a
canonical identification
$$\CHom_{\Rep^{\on{mxd}}_q(G)}(k,\CM_1\otimes \CM^\sigma_2)\simeq 
\CHom_{\Rep^{\on{mxd}}_q(G)}((\BD^{\on{contr}})^{-1}\circ \Upsilon(\CM_2),\CM_1).$$

We rewrite the RHS as
$$\CHom_{\Rep^{\wt{\on{mxd}}}_{q^{-1}}(G)}(\BD^{\on{contr}}(\CM_1), \Upsilon(\CM_2)),$$
and further as
$$\CHom_{U_{q^{-1}}(G)^{\on{mxd}}\mod}(j_-(\BD^{\on{contr}}(\CM_1)),j_+(\CM_2)).$$

Now, the assertion follows from the (tautological) identification
$$\CHom_{\Rep^{\on{mxd}}_q(G)}(k,\CM_1\otimes \CM^\sigma_2)\simeq 
\CHom_{U_{q^{-1}}(G)^{\on{mxd}}\mod}(j_-(\BD^{\on{contr}}(\CM_1)),j_+(\CM_2)).$$

\end{proof} 

\sssec{}

From \propref{p:dualities and Ups} we obtain:

\begin{cor} \label{c:dualities and Ups}
There are canonical isomorphisms of functors 
$$\Upsilon\simeq \BD^{\on{contr}}\circ \BD^{\on{can}}: (\Rep^{\on{mxd}}_q(G))_c\to \Rep_q^{\wt{\on{mxd}}}(G)_{\on{loc.fin.dim}}$$
and 
$$\BD^{\on{contr}}\simeq \Upsilon\circ \BD^{\on{can}}: (\Rep^{\on{mxd}}_{q^{-1}}(G))_c\to (\Rep_q^{\wt{\on{mxd}}}(G))_{\on{loc.fin.dim}}.$$
\end{cor}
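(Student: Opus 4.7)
The corollary is a formal consequence of Proposition~\ref{p:dualities and Ups}, obtained by composing its statement with $\BD^{\on{contr}}$ on one side and by invoking the involutivity of $\BD^{\on{can}}$. The whole argument is purely formal manipulation of equivalences.

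For the first isomorphism, I would apply Proposition~\ref{p:dualities and Ups} with the roles of $q$ and $q^{-1}$ interchanged (the construction of $\BD^{\on{can}}$, $\BD^{\on{contr}}$, $\Upsilon$ is symmetric in $q \leftrightarrow q^{-1}$, so the proposition applies). This yields an isomorphism of contravariant functors
$$\BD^{\on{can}}\;\simeq\;(\BD^{\on{contr}})^{-1}\circ \Upsilon \;:\; (\Rep^{\on{mxd}}_q(G))_c \;\longrightarrow\; (\Rep^{\on{mxd}}_{q^{-1}}(G))_c,$$
where now $\Upsilon:\Rep^{\on{mxd}}_q(G)\to \Rep^{\wt{\on{mxd}}}_q(G)$ and $(\BD^{\on{contr}})^{-1}:\Rep^{\wt{\on{mxd}}}_q(G)_{\on{loc.fin.dim}}\to \Rep^{\on{mxd}}_{q^{-1}}(G)_{\on{loc.fin.dim}}$. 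Postcomposing both sides with $\BD^{\on{contr}}:\Rep^{\on{mxd}}_{q^{-1}}(G)_{\on{loc.fin.dim}}\to \Rep^{\wt{\on{mxd}}}_q(G)_{\on{loc.fin.dim}}$ then gives the first claimed isomorphism $\Upsilon \simeq \BD^{\on{contr}}\circ \BD^{\on{can}}$.

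For the second isomorphism, I would substitute $\CM \in (\Rep^{\on{mxd}}_{q^{-1}}(G))_c$ in the form $\BD^{\on{can}}(\CM')$, where $\CM' \in (\Rep^{\on{mxd}}_q(G))_c$, into the original Proposition~\ref{p:dualities and Ups}. The proposition gives $\BD^{\on{contr}}\circ \BD^{\on{can}} \simeq \Upsilon$ on $(\Rep^{\on{mxd}}_{q^{-1}}(G))_c$. Rewriting and using the involutivity $\BD^{\on{can}} \circ \BD^{\on{can}}\simeq \on{Id}$ of the canonical duality functor (valid for any dualization coming from a perfect pairing of DG categories, which is our situation via the pairing \eqref{e:pairing mixed}), one obtains $\BD^{\on{contr}} \simeq \Upsilon \circ \BD^{\on{can}}$ as functors $(\Rep^{\on{mxd}}_{q^{-1}}(G))_c \to (\Rep^{\wt{\on{mxd}}}_q(G))_{\on{loc.fin.dim}}$.

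The only genuinely nontrivial ingredient beyond the preceding Proposition is the involutivity of $\BD^{\on{can}}$, which I would justify by pointing out that the pairing \eqref{e:pairing mixed} is symmetric: the braiding isomorphism $\CM_1 \otimes \CM_2^\sigma \simeq \CM_2^\sigma \otimes \CM_1$ in $\Rep^{\on{mxd}}_q(G)$, together with the unit $\CHom(k,-)$ being intrinsic, identifies the pairing with its transpose. I do not expect any significant obstacle here; the argument is truly a short formal consequence of Proposition~\ref{p:dualities and Ups}, and the hardest part (verifying $\Upsilon(\BM^\clambda_{q,\on{mxd}}) \simeq \BM^{\vee,\clambda+2\check\rho}_{q,\wt{\on{mxd}}}[-d]$ and deducing the proposition itself) has already been completed.
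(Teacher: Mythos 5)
Your proposal is correct, and it matches the route the paper intends: the paper states the corollary with no proof at all, treating it as an immediate formal consequence of \propref{p:dualities and Ups}, which is exactly what you supply. Your derivation of the first isomorphism (apply the proposition with $q$ and $q^{-1}$ swapped, then postcompose with $\BD^{\on{contr}}$) and of the second (precompose the first with $\BD^{\on{can}}$, use involutivity) is the natural way to fill this in.

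The one point I would tighten is your justification of the involutivity of $\BD^{\on{can}}$. You phrase it as a consequence of the braiding in $\Rep^{\on{mxd}}_q(G)$, but the braiding on its own only relates $\CHom_{\Rep^{\on{mxd}}_q(G)}(k,\CM_1\otimes \CM_2^\sigma)$ to $\CHom_{\Rep^{\on{mxd}}_q(G)}(k,\CM_2^\sigma\otimes \CM_1)$, still inside the $q$-category. To match the $q$-pairing with the transpose of the $q^{-1}$-pairing you also need to know that $\sigma$ is \emph{monoidal-reversing} (being the equivalence $(\Rep^{\on{mxd}}_q(G))^{\on{rev}}\simeq \Rep^{\on{mxd}}_{q^{-1}}(G)$, it satisfies $\sigma^{-1}(A\otimes_q B)\simeq \sigma^{-1}(B)\otimes_{q^{-1}}\sigma^{-1}(A)$). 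Using that directly gives $\CHom_{\Rep^{\on{mxd}}_q(G)}(k,\CM_1\otimes \CM_2^\sigma)\simeq \CHom_{\Rep^{\on{mxd}}_{q^{-1}}(G)}(k,\CM_2\otimes \CM_1^{\sigma^{-1}})$ with no braiding at all, which is the cleanest way to see the symmetry of the pairing and hence $\BD^{\on{can}}_{q\to q^{-1}}\circ\BD^{\on{can}}_{q^{-1}\to q}\simeq \on{Id}$. With that amendment your argument is complete.
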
 

\sssec{Proof of \thmref{t:quantum duality}}

By \propref{p:dualities and Ups}, we have
$$\BD^{\on{can}}(\BM^\clambda_{q^{-1},\on{mxd}})\simeq (\BD^{\on{contr}})^{-1}\circ \Upsilon(\BM^\clambda_{q^{-1},\on{mxd}}),$$
while the RHS identifies, according to \propref{p:Upsilon Verma} with 
$$(\BD^{\on{contr}})^{-1}(\BM_{q^{-1},\wt{\on{mxd}}}^{\vee,\clambda+2\check\rho}[-d]),$$
and the latter identifies, according to \eqref{e:dual Verma tilde}, with 
$$\BM^{-\clambda-2\check\rho}_{q,\on{mxd}}[d],$$
as required.

\qed

\bigskip

\centerline{\bf Part III: Kac-Moody vs quantum group representations}

\bigskip

\section{A conjectural extension of the Kazhdan-Lusztig equivalence}  \label{s:conj}

In this section we take our field of coefficients $k$ to be $\BC$, and we let $b$ and $\kappa$ be related by formula
\eqref{e:q and kappa}. 

\medskip

We will formulate the main conjecture of this paper (\conjref{c:main}), which compares the categories
$\Rep_q^{\on{mxd}}(G)$ and $\hg\mod_{-\kappa}^I$. 

\medskip

\noindent {\it Notational remark}: in order to unburden the notation, from this section on we will choose a uniformizer $t\in \CO$,
thereby trivializing the line $\omega_x$ (see \secref{sss:omega}).

\ssec{The Kazhdan-Lusztig equivalence}

In this subsection we take $-\kappa$ to be a negative integral level (although whatever we will say goes
through for negative rational levels, see Remark \ref{r:Frob lattice}). 

\medskip

We will recall the statement of the Kazhdan-Lusztig equivalence of \cite{KL}. 

\sssec{}

Consider the category
$$\KL(G,-\kappa):=\hg\mod_{-\kappa}^{G(\CO)}.$$

We recall from \secref{sss:g-modK} that the category $\KL(G,-\kappa)$ is \emph{by definition} compactly generated by objects of the form
\begin{equation} \label{e:compact in KL}
\Ind_{\fg(\CO)}^{\hg_{-\kappa}}(V),
\end{equation} 
where $V$ is a \emph{finite-dimensional} representation of $G(\CO)$. 

\medskip

The category $\KL(G,-\kappa)$ carries a t-structure for which the forgetful functor $\KL(G,-\kappa)\to \Vect$
is t-exact. Note, however, that it is \emph{not true} that $\KL(G,-\kappa)$ is left-separated complete in its t-structure. 

\begin{rem} \label{r:recover KL from heart}
Let us emphasize again the relationship between $\KL(G,-\kappa)$ and the abelian category $\KL(G,-\kappa)^\heartsuit$:

\medskip

The former is the ind-completion of the full subcategory of $D^b(\KL(G,-\kappa)^\heartsuit)$ 
generated under finite colimits by objects \eqref{e:compact in KL}.

\medskip

Note that this relationship is the same as that between $\Rep_q(G)_{\on{ren}}$ and the abelian category $(\Rep_q(G))^\heartsuit$. 

\end{rem}

\sssec{}

For $\clambda\in \cLambda^+$ recall the notation for the Weyl modules
$$\BV^\clambda_{-\kappa}:=\Ind_{\fg(\CO)}^{\hg_{-\kappa}}(V^\clambda)\in \KL(G,-\kappa),$$
see \eqref{e:Weyl mod}. The objects $\BV^\clambda_{-\kappa}$ are compact and they generate $\KL(G,-\kappa)$. 

\sssec{}

Let $\Dmod_{-\kappa}(\Gr_G)^{G(\CO)}$ denote the monoidal category of spherical D-modules on the affine 
Grassmannian $\Gr_G$. We have a monoidal action of $\Dmod_{-\kappa}(\Gr_G)^{G(\CO)}$ on $\KL(G,-\kappa)$,
$$\CF,\CM\mapsto \CF\star \CM.$$

\medskip

A basic piece of structure that we need is the (classical) Geometric Satake, which is a monoidal functor
$$\on{Sat}:\Rep(\cG)\to \Dmod_{-\kappa}(\Gr_G)^{G(\CO)}.$$

We normalize it so that for $\mu\in \Lambda^+$, we have a canonical map
$$\BV^\mu_{-\kappa} \to \on{Sat}(V^\mu)\star \BV^0_{-\kappa}.$$

Here $V^\mu$ is the irreducible object of $\Rep(\cG)^\heartsuit$ of highest weight $\mu$;
when we write $\BV^\mu_{-\kappa}$, we consider $\mu$ as an element of $\cLambda$ via
$\kappa(\lambda,-)$. 

\medskip

For a pair of elements $\mu\in \Lambda^+$ and $\clambda^+\in \cLambda^+$ we have a canonical map
\begin{equation} \label{e:sph with Weyl}
\BV^{\mu+\clambda}_{-\kappa} \to \on{Sat}(V^\mu)\star \BV^\clambda_{-\kappa}.
\end{equation} 

\sssec{}

Let $q$ be the $\BC^\times$-valued 
quadratic form on $\cLambda$ corresponding to $\kappa$. Note that the assumption that $\kappa$ is integral implies
that the lattice $\Lambda_H$ (see \secref{sss:quant Frob lattice}) identifies with $\Lambda$, and the group $H$
identifies with the Langlands dual $\cG$ of $G$. 

\medskip

The theorem of Kazhdan and Lusztig of \cite{KL} states the existence of an equivalence of abelian categories
\begin{equation} \label{e:KL equiv ab}
\sF_{-\kappa}:\KL(G,-\kappa)^\heartsuit \simeq \Rep_q(G)^\heartsuit,
\end{equation}
with the property that for $\clambda\in \cLambda^+$ there exists a canonical isomorphism
\begin{equation} \label{e:Weyl to Weyl}
\sF_{-\kappa}(\BV^\clambda_{-\kappa})\simeq \CV_q^\clambda.
\end{equation}

In what follows we will use two more properties of the equivalence \eqref{e:KL equiv ab}, which are not
stated in the paper, but can be deduced from it:

\medskip

\begin{itemize}

\item The action of $\Rep(\cG)^\heartsuit$ on $\KL(G,-\kappa)^\heartsuit$ corresponds under $\sF_{-\kappa}$ to
the action of $\Rep(\cG)^\heartsuit$ on $\Rep_q(G)^\heartsuit$ via pullback by quantum Frobenius.

\medskip

\item Under the above identification, the image under $\sF_{-\kappa}$ of the map \eqref{e:sph with Weyl}
is the canonical map
\begin{equation} \label{e:Frob with Weyl}
\CV^{\mu+\clambda}_q \to \on{Frob}_q^*(V^\mu)\star \CV^\clambda_q.
\end{equation} 

\end{itemize}

\sssec{}

From \eqref{e:KL equiv ab}, by passing to the bounded derived categories and ind-completing the corresponding full subcategories, we obtain an
equivalence of DG categories:  

\begin{equation} \label{e:KL equiv}
\sF_{-\kappa}:\KL(G,-\kappa) \simeq \Rep_q(G)_{\on{ren}}.
\end{equation}

\medskip

The compatibility of the equivalence \eqref{e:KL equiv ab} with the action of $\Rep(\cG)^\heartsuit$ at the abelian level implies
the compatibility of the equivalence \eqref{e:KL equiv} with the action of $\Rep(\cG)$.

\begin{rem}
The equivalence \eqref{e:KL equiv ab} satisfying \eqref{e:Weyl to Weyl}
exists for irrational levels as well (but in this case we have neither the Hecke action nor quantum Frobenius).

\medskip

Note, however, that such an equivalence taken ``as-is" (i.e., without taking into account the braided monoidal
structures) is not very interesting as both categories are semi-simple.

\end{rem}

\ssec{Conjectural extension to the Iwahori case}

\sssec{}

We now consider the category $\hg\mod_{-\kappa}^I$.  We propose the following extension of the Kazhdan-Lusztig equivalence:

\begin{conj}  \label{c:main}
There exists an equivalence 
\begin{equation} \label{e:main functor}
\sF_{-\kappa}:\hg\mod_{-\kappa}^I\simeq \Rep_q^{\on{mxd}}(G)
\end{equation} 
with the following properties:

\medskip

\noindent{\em(i)} The square 
\begin{equation} \label{e:KL vs I}
\CD
\KL(G,-\kappa):=\hg\mod_{-\kappa}^{G(\CO)}   @>{\sF_{-\kappa}}>>   \Rep_q(G)_{\on{ren}}  \\
@V{\oblv_{G(\CO)/I}}VV   @VV{\oblv_{\on{big}\to \on{mxd}}}V  \\
\hg\mod_{-\kappa}^I   @>{\sF_{-\kappa}}>>  \Rep_q^{\on{mxd}}(G)
\endCD
\end{equation} 
commutes. 

\medskip

\noindent{\em(ii)} For $\mu\in \Lambda$, the action of $J_\mu$ on $\hg\mod_{-\kappa}^I$ corresponds to the
action of $k^\mu\in \Rep(\cT)$ on $\Rep_q^{\on{mxd}}(G)$.

\medskip

\noindent{\em(iii)} For $\clambda\in \cLambda$, we have
$$\sF_{-\kappa}(\BW_{-\kappa}^\clambda)\simeq \BM_{q,\on{mxd}}^\clambda.$$

\end{conj}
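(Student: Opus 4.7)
The plan is to construct $\sF_{-\kappa}$ via the factorization perspective outlined in Section \ref{ss:motiv 1}, using Wakimoto-type modules as the bridge between the two sides. First, I would apply Raskin's theorem to identify
$$\hg\mod_{-\kappa}^I \simeq \hg\mod_{-\kappa}^{N(\CK)\cdot T(\CO)},$$
which endows the left-hand side with a natural factorization structure over the Ran space. The semi-infinite Wakimoto modules $\BW^{\clambda,\semiinf}_{-\kappa}$ already live in this larger invariant category, which is consistent with the fact that on the target side $\BM_{q,\on{mxd}}^\clambda$ represents the semi-infinite fiber functor.

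The next step is to introduce a de Rham factorization algebra $\Omega_{-\kappa}^{\on{Lus}}$ in the factorization category attached to $\htt'_{-\kappa}\mod$, built so that its fiber at a point is (Koszul-dually) the cohomology of $U_q^{\on{Lus}}(N)$. Using the BRST functor of Section \ref{ss:BRST}, one defines
$$\Phi_{-\kappa}: \hg\mod_{-\kappa}^{N(\CK)\cdot T(\CO)} \longrightarrow \Omega_{-\kappa}^{\on{Lus}}\mod^{\on{Fact}},$$
sending $\CM$ to the factorization module whose underlying vector bundle of fibers is $\on{BRST}_\fn(\CM) \in \htt'_{-\kappa}\mod$, with $\Omega_{-\kappa}^{\on{Lus}}$-structure coming from chiral insertions of Wakimoto vertex algebras at nearby points. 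On the quantum side, the relative Drinfeld center formalism of Section \ref{s:quant alg}, combined with Koszul duality for Hopf algebras in $\Rep_q(T)$, identifies
$$\Rep_q^{\on{mxd}}(G) \simeq \Omega_q^{\on{Lus}}\mod^{\on{Fact}}$$
essentially tautologically. The equivalence $\sF_{-\kappa}$ would then be obtained by composing $\Phi_{-\kappa}$ with a Riemann-Hilbert identification $\Omega_{-\kappa}^{\on{Lus}} \simeq \Omega_q^{\on{Lus}}$ matching the de Rham and Betti factorization algebras under the exponential \eqref{e:q and kappa}.

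The three properties would be verified as follows. Property (iii) is forced by construction: Proposition \ref{p:semiinf with Wak} and Corollary \ref{c:semiinf with Wak I} show that $\BW^\clambda_{-\kappa}$ corepresents (up to a shift and a $2\check\rho$-twist) the weight-$\clambda$ component of $\on{BRST}_\fn$, while $\BM_{q,\on{mxd}}^\clambda$ corepresents the weight-$\clambda$ functor on $\Omega_q^{\on{Lus}}\mod^{\on{Fact}}$; matching these gives $\sF_{-\kappa}(\BW^\clambda_{-\kappa}) \simeq \BM_{q,\on{mxd}}^\clambda$. Property (ii) follows by combining Corollary \ref{c:conv Wak all}, which computes $J_\mu \star \BW^\clambda_{-\kappa} \simeq \BW^{\clambda+\mu}_{-\kappa}$, with the tautological identity $k^\mu \otimes \BM_{q,\on{mxd}}^\clambda \simeq \BM_{q,\on{mxd}}^{\clambda+\mu}$ in the Drinfeld center; since both actions are determined by their effect on the compact generators $\BW^\clambda_{-\kappa}$ and $\BM_{q,\on{mxd}}^\clambda$ respectively, they must correspond. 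Property (i) reduces to showing that the left adjoint $\on{Av}^{G(\CO)/I}_!$ of the forgetful functor corresponds under $\sF_{-\kappa}$ to $\ind_{\on{mxd}\to \on{big}}$ on the image of $\KL(G,-\kappa)$; this can be bootstrapped from (iii) and the identification $\ind_{\on{mxd}\to\on{big}}(\BM_{q,\on{mxd}}^\clambda) \simeq \CV_q^\clambda$ of \eqref{e:Weyl from mixed}, which translates to the statement $\on{Av}^{G(\CO)/I}_!(\BW^\clambda_{-\kappa}) \simeq \BV^\clambda_{-\kappa}$ (this is essentially the content of \thmref{t:Weyl} announced later in the paper).

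The hard part is the Riemann-Hilbert comparison $\Omega_{-\kappa}^{\on{Lus}} \simeq \Omega_q^{\on{Lus}}$, together with showing that $\Phi_{-\kappa}$ is in fact an equivalence (rather than merely a well-defined functor). Both require control over the factorization structure at the level of individual fibers and on sufficiently large subcategories; as the author notes at the end of Section \ref{ss:motiv 1}, a full factorization equivalence may be overly optimistic, but a fiberwise equivalence plus compatibility on enough compact generators should suffice. A further serious technical issue is the non-t-exactness of $\sF_{-\kappa}$: unlike in the spherical case, the target t-structure on $\Rep_q^{\on{mxd}}(G)$ is not separated (Section \ref{sss:t on double}), so one cannot bootstrap the equivalence purely at the level of hearts, and must instead work DG-categorically throughout via the characterizations in Remarks \ref{r:recover mixed from abelian} and \ref{r:recover KL from heart}.
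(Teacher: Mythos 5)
The statement you are trying to prove is \conjref{c:main}, which is explicitly a \emph{conjecture} in the paper: the author provides no proof, and in fact states in the ``What is not done in this paper?'' passage that ``There are two main themes that could have been part of this paper but that are not: One is the discussion of factorization (see \secref{ss:motiv 1}).'' Your proposal is essentially a paraphrase of \secref{ss:motiv 1}, which the author offers only as \emph{motivation}: Raskin's theorem to rewrite $\hg\mod_{-\kappa}^I$ as $\hg\mod_{-\kappa}^{N(\CK)\cdot T(\CO)}$, the factorization algebra $\Omega_{-\kappa}^{\on{Lus}}$, the BRST-based functor $\Phi_{-\kappa}$, the Betti/Koszul-dual identification $\Rep_q^{\on{mxd}}(G)\simeq \Omega_q^{\on{Lus}}\mod^{\on{Fact}}$, and a Riemann--Hilbert match. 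The author is clear that this is aspirational, writing that ``an equivalence between $\hg\mod_{-\kappa}^{N(\CK)\cdot T(\CO)}$ and $\Omega_{-\kappa}^{\on{Lus}}\mod^{\on{Fact}}$ as factorization categories may or may not be too much to hope for.'' You acknowledge the ``hard part'' at the end of your write-up, but that hard part \emph{is} the conjecture; what precedes it is scaffolding that both you and the author already know how to set up.

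Concretely, the gaps in your proposal are exactly the open problems: (1) you have not defined $\Omega_{-\kappa}^{\on{Lus}}$ or constructed the functor $\Phi_{-\kappa}$ with its chiral-insertion $\Omega$-module structure at the level of rigor needed; (2) you have not proved the Riemann--Hilbert comparison $\Omega_{-\kappa}^{\on{Lus}}\simeq\Omega_q^{\on{Lus}}$; (3) you have not shown $\Phi_{-\kappa}$ is an equivalence. Without these, the ``verifications'' of (i)--(iii) are vacuous because $\sF_{-\kappa}$ has not been built. There is also a subsidiary logical issue in your treatment of properties (i) and (ii): showing that a functor takes $\BW^\clambda_{-\kappa}$ to $\BM^\clambda_{q,\on{mxd}}$ and that $J_\mu$ and $k^\mu\otimes -$ agree on these generators does not yet establish commutativity of the square \eqref{e:KL vs I} or an identification of monoidal actions --- one needs agreement as natural transformations/module-category structures, not just object-by-object. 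Relatedly, the citation of \thmref{t:Weyl} as a \emph{lemma} towards (i) inverts the paper's logic: the paper proves \thmref{t:Weyl} unconditionally as an independent consistency check, not as an input to the (unproved) conjecture; the observation you cite, $\on{Av}^{G(\CO)/I}_!(\BW^\clambda_{-\kappa})\simeq\BV^\clambda_{-\kappa}$, is consistent with but does not prove the commutativity of the square for arbitrary objects.
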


There is one more expected property of the functor \eqref{e:main functor} that has to do with the structure
on both sides of categories over the stack $\cn/\on{Ad}(\cB)$; we will discuss it in \secref{sss:AB compat}.

\begin{rem}
\conjref{c:main}, satisfying (i) and (iii) applies also for $\kappa$ irrational. However, in this case it reduces to the known
statement that both categories identify with the finite-dimensional $\fg\mod^B$ so that
$$\BM^\clambda_{-\kappa}\, \leftrightarrow\, M^\lambda\, \leftrightarrow\,  \BM_{q,\on{mxd}}^\clambda.$$

Here we are using \propref{p:Wak irrational}, which says that for $\kappa$ irrational $\BW_{-\kappa}^\clambda\simeq \BM_{-\kappa}^\clambda$.
\end{rem}

\sssec{}

We will now run some consistency checks on \conjref{c:main}.

\medskip

From the commutativity of the square \eqref{e:KL vs I}, we deduce the commutativity of the following two squares:
\begin{equation} \label{e:KL vs I left}
\CD
\KL(G,-\kappa):=\hg\mod_{-\kappa}^{G(\CO)}   @>{\sF_{-\kappa}}>>   \Rep_q(G)_{\on{ren}}  \\
@A{\on{Av}^{G(\CO)/I}_!}AA   @AA{\ind_{\on{mxd}\to \on{big}}}A  \\
\hg\mod_{-\kappa}^I   @>{\sF_{-\kappa}}>>  \Rep_q^{\on{mxd}}(G)  
\endCD
\end{equation} 
and
$$
\CD
\KL(G,-\kappa):=\hg\mod_{-\kappa}^{G(\CO)}   @>{\sF_{-\kappa}}>>   \Rep_q(G)_{\on{ren}}  \\
@A{\on{Av}^{G(\CO)/I}_*}AA   @AA{\coind_{\on{mxd}\to \on{big}}}A  \\
\hg\mod_{-\kappa}^I   @>{\sF_{-\kappa}}>>  \Rep_q^{\on{mxd}}(G),  
\endCD
$$
where the vertical arrows are obtained by passing to left and right adjoints in \eqref{e:KL vs I}, respectively.
Note, however, that since $G(\CO)/I\simeq G/B$ is proper, we have a canonical isomorphism
$$\on{Av}^{G(\CO)/I}_*\simeq \on{Av}^{G(\CO)/I}_![-2d],$$
see \secref{sss:proper averaging}.

\medskip

We note that this is consistent with \conjref{c:adj to res}, which says that
$$\coind_{\on{mxd}\to \on{big}}\simeq \ind_{\on{mxd}\to \on{big}}[-2d].$$

\sssec{}

Points (ii) and (iii) of \conjref{c:main} are compatible due to the relations
$$J_\mu \star \BW_{-\kappa}^{\clambda}\simeq \BW_{-\kappa}^{\mu+\clambda} \text{ and }
k^\mu\otimes \BM_{q,\on{mxd}}^\clambda\simeq \BM_{q,\on{mxd}}^{\mu+\clambda}.$$

(Note that for the first isomorphism we use the trivialization of $\omega_x$.)

\sssec{}

Let us now investigate the compatibility between points (i) and (iii) of \conjref{c:main}. Namely,
evaluating both circuits of diagram \eqref{e:KL vs I left} on $\BW_{-\kappa}^{\clambda}$
we obtain an isomorphism:
$$\sF_{-\kappa}(\on{Av}^{G(\CO)/I}_!(\BW_{-\kappa}^{\clambda}))\simeq \ind_{\on{mxd}\to \on{big}}(\BM^\clambda_{q,\on{mxd}}),$$
where $\sF_{-\kappa}$ is the original Kazhdan-Lusztig equivalence.

\medskip

Recall that according to \eqref{e:Weyl from mixed}, we have
$$\ind_{\on{mxd}\to \on{big}}(\BM^\clambda_{q,\on{mxd}})\simeq \CV_q^\clambda.$$

Recall also that if $\clambda\in \cLambda^+$, then $\BW_{-\kappa}^{\clambda}\simeq \BM_{-\kappa}^{\clambda}$
(see \corref{c:Wak=Ver}), and since
$$\on{Av}^{G/B}_!(M^\clambda)=V^\clambda\in \Rep(G),$$
we obtain $\on{Av}^{G(\CO)/I}_!(\BM_{-\kappa}^{\clambda})\simeq \BV^\clambda_{-\kappa}$, and hence 
\begin{equation} \label{e:all Weyl}
\on{Av}^{G(\CO)/I}_!(\BW_{-\kappa}^{\clambda})\simeq \BV^\clambda_{-\kappa},
\end{equation}
establishing the desired consistency.

\sssec{}

We take \eqref{e:all Weyl} as the \emph{definition} of $\BV^\clambda_{-\kappa}$ for $\clambda\in \cLambda$ that is not necessarily dominant.

\medskip

To summarize, we obtain the following isomorphism that references only the \emph{original Kazhdan-Lusztig functor} $\sF_{-\kappa}$ of \eqref{e:KL equiv}
\begin{equation} \label{e:Weyl}
\sF_{-\kappa}(\BV^\clambda_{-\kappa})\simeq \CV_q^\clambda.
\end{equation}

The isomorphism \eqref{e:Weyl} is a basic property of $\sF_{-\kappa}$ for $\clambda$ dominant,
and it follows from \conjref{c:main} for all $\clambda$. However, in \secref{sss:proof of Weyl}
we will prove:

\begin{thm} \label{t:Weyl}
The isomorphism \eqref{e:Weyl} holds (unconditionally) for all $\clambda\in \cLambda$.
\end{thm}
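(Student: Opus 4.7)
The plan is to factor the argument through the ``$\frac{1}{2}$-category'' $\Rep^{\frac{1}{2}}_q(G)_{\on{ren}}$. The original Kazhdan-Lusztig equivalence $\sF_{-\kappa}:\KL(G,-\kappa)\simeq \Rep_q(G)_{\on{ren}}$ is compatible with the action of $\Rep(\cG)$ on both sides (via $\on{Sat}$ on the left and via $\on{Frob}_q^*$ on the right), so tensoring over $\Rep(\cG)$ with $\Rep(\cB)$ produces an equivalence
$$\sF^{\frac{1}{2}}_{-\kappa}:\Rep(\cB)\underset{\Rep(\cG)}\otimes \KL(G,-\kappa)\overset{\sim}\longrightarrow \Rep^{\frac{1}{2}}_q(G)_{\on{ren}},$$
where the right-hand side is identified via \thmref{t:AG grd} and descent along $\oblv_{\cG\to\cB}$. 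Using the factorization $\ind_{\on{Lus}^+\to\on{big}}=\ind_{\frac{1}{2}\to\on{big}}\circ \ind_{\on{Lus}^+\to\frac{1}{2}}$, we have $\CV_q^\clambda\simeq \ind_{\frac{1}{2}\to\on{big}}(\BM^\clambda_{q,\frac{1}{2}})$; the Kac-Moody avatar $F^{\on{KM}}$ of $\ind_{\frac{1}{2}\to\on{big}}$ under $\sF^{\frac{1}{2}}_{-\kappa}$ is the functor $\Rep(\cB)\otimes_{\Rep(\cG)}\KL(G,-\kappa)\to \KL(G,-\kappa)$ acting on pure tensors by $Y\otimes\CM\mapsto \on{Sat}(\ind_{\cB\to\cG}(Y))\star \CM$. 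Hence it suffices to identify the object
$$\Theta^\clambda:=(\sF^{\frac{1}{2}}_{-\kappa})^{-1}(\BM^\clambda_{q,\frac{1}{2}})\in \Rep(\cB)\underset{\Rep(\cG)}\otimes \KL(G,-\kappa)$$
and then to verify $F^{\on{KM}}(\Theta^\clambda)\simeq \on{Av}^{G(\CO)/I}_!(\BW^\clambda_{-\kappa})=\BV^\clambda_{-\kappa}$.

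The identification of $\Theta^\clambda$ is the main technical input of the proof. I would further project along the forgetful functor from $\Rep(\cB)$ to $\Rep(\cT)$: on the quantum side this sends $\BM^\clambda_{q,\frac{1}{2}}$ to the baby Verma module $\BM^\clambda_{q,\on{sml}}\in \Rep^{\on{sml,grd}}_q(G)_{\on{ren}}\simeq \Rep(\cT)\underset{\Rep(\cG)}\otimes \Rep_q(G)_{\on{ren}}$, so the task reduces to identifying the Kac-Moody object $\Theta^\clambda_0\in \Rep(\cT)\underset{\Rep(\cG)}\otimes \KL(G,-\kappa)$ corresponding to $\BM^\clambda_{q,\on{sml}}$. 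The claim to prove, essentially equivalent to the main theorem of \cite{Liu}, is that $\Theta^\clambda_0$ is built functorially from the Wakimoto module $\BW^\clambda_{-\kappa}$ together with its canonical $\clambda$-weight line, via the semi-infinite intersection cohomology sheaf construction of \cite{Ga2}. Dually, this is a statement about the small-cohomology functor $\CM\mapsto \on{C}^\cdot(u_q(N),\CM)^\clambda$ pulled through Kazhdan-Lusztig, which is the subject of \secref{ss:small cohomology}. The $\cB$-equivariant lift $\Theta^\clambda$ of $\Theta^\clambda_0$ is then pinned down by the additional weight datum carried by the Wakimoto module itself.

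Given the identification of $\Theta^\clambda$, the final step — computing $F^{\on{KM}}(\Theta^\clambda)$ — is a bookkeeping exercise. The sheaf $\on{Sat}(\ind_{\cB\to\cG}(k^\clambda))$ is the spherical avatar of the line bundle $\CL^\clambda$ on $\cG/\cB$, and convolving it with the Wakimoto module realizes the passage from $I$- to $G(\CO)$-equivariance via $\on{Av}^{G(\CO)/I}_!$; the cohomological shifts match through the identification $\on{Av}^{G(\CO)/I}_!\simeq \on{Av}^{G(\CO)/I}_*[2d]$ of \secref{sss:proper averaging} and its quantum counterparts \eqref{e:coind 1/2} and \corref{c:adj to res}. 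The case $\clambda\in \cLambda^+$ provides a consistency check: then $\BW^\clambda_{-\kappa}\simeq \BM^\clambda_{-\kappa}$ by \corref{c:Wak=Ver} and one recovers the classical identity $\BV^\clambda_{-\kappa}\simeq \on{Av}^{G(\CO)/I}_!(\BM^\clambda_{-\kappa})$. The main obstacle is the identification of $\Theta^\clambda$ via the semi-infinite IC sheaf; everything else is formal manipulation of adjunctions and shifts.
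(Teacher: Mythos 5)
Your overall strategy coincides with the paper's: factor through $\Rep^{\frac{1}{2}}_q(G)_{\on{ren}}\simeq \Rep(\cB)\underset{\Rep(\cG)}\otimes \Rep_q(G)_{\on{ren}}$, identify the Kac-Moody counterpart of the baby Verma object, and then recover \eqref{e:Weyl} by inducing back to the big quantum group (equivalently, applying $\inv_\cB$/averaging to $G(\CO)$-equivariance), with \corref{c:Wak=Ver} as the dominant-weight consistency check. However, there is a genuine gap: the step you yourself flag as ``the main technical input'' --- the identification of $\Theta^\clambda$ (resp.\ $\Theta^\clambda_0$) with the object built from the Wakimoto module and the semi-infinite IC sheaf --- is only asserted, not proved, and it is precisely the content of Theorems \ref{t:identify baby pos} and \ref{t:identify baby neg}, i.e.\ the heart of the paper's argument. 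Citing \cite{Liu} does not close this, since the paper's point is to prove such statements from the stated properties of $\sF_{-\kappa}$ alone; and saying the $\cB$-lift is ``pinned down by the weight datum of the Wakimoto module'' is not an argument.

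What is missing is the mechanism that bootstraps the non-dominant case from the dominant one: the Drinfeld-Pl\"ucker formalism of \secref{ss:Dr-Pl}. The paper writes the baby Verma object $\coind_{\on{Lus}^+\to \frac{1}{2}}(k^\clambda)$ (resp.\ $\ind_{\on{Lus}^+\to\frac{1}{2}}(k^\clambda)$) as $c^{\on{enh}}$ for the tensor-compatible family $c^\mu=\coind_{\on{Lus}^+\to\on{big}}(k^{\clambda+\mu})$ of dual Weyl modules, and writes the candidate Kac-Moody object as $c^{\on{enh}}$ for the family $\on{Av}^{G(\CO)/I}_*(J^{\BD}_{-\mu})\star\BD(\BW^{-\clambda}_{-\kappa})$ (positive level) or $\on{Av}^{G(\CO)/I}_!(J_\mu)\star(\dots)$ (negative level), the Hecke and $\cB$-structures coming from the semi-infinite IC sheaf $\overset{\bullet}\CF{}^{\semiinf}$. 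The two families are then matched under the Kazhdan-Lusztig equivalence only over the cofinal set of $\mu$ for which the relevant weight $\clambda\pm\mu$ is (anti)dominant: there the objects are honest (dual) Weyl modules lying in the heart, so the known isomorphism \eqref{e:Weyl to Weyl} applies and the compatibility of transition maps --- which reduces to \eqref{e:sph with Weyl} versus \eqref{e:Frob with Weyl} and a highest-weight-vector computation, together with \thmref{t:twist Verma} --- need only be checked at the level of $1$-morphisms, higher coherence being automatic. Without this colimit-over-a-cofinal-set argument (or an equivalent substitute), your identification of $\Theta^\clambda$ has no proof, and the remaining ``bookkeeping'' steps, which are indeed formal (via \eqref{e:coind 1/2}, \corref{c:adj to res}, \corref{c:inv in semiinf tau}), have nothing to rest on. Note also that the paper first establishes the identification at the \emph{positive} level and deduces \thmref{t:Weyl} from \thmref{t:Weyl positive} by duality, the direct negative-level argument requiring the extra duality manipulation of \secref{sss:duality Plucker}; your sketch silently works at negative level where the relevant family involves $\on{Av}^{G(\CO)/I}_!$ of the $J_\mu$'s and the twist $J_{-2\rho}[-d]$, another point your outline does not address.
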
 

\ssec{Digression: dual Weyl modules}

\sssec{}

For $\clambda\in \cLambda^+$ we introduce the dual Weyl module $\BV_{-\kappa}^{\vee,\clambda}\in \KL(G,-\kappa)$
by requiring
$$\CHom_{\hg\mod_{-\kappa}^{G(\CO)}}(\BV_{-\kappa}^{\clambda'},\BV_{-\kappa}^{\vee,\clambda})=
\begin{cases}
&k \text{ for } \clambda'=\clambda,\\
&0 \text{ for } \clambda'\neq \clambda,\, \clambda'\in \cLambda^+.
\end{cases}$$

It is known that $\BV_{-\kappa}^{\vee,\clambda}$ actually lies in $\KL(G,-\kappa)^\heartsuit$.

\medskip

We have the following assertion:

\begin{lem}  \label{l:dual Weyl as av}
For $\clambda\in \cLambda^+$ there exists a canonical isomorphism
$$\BV_{-\kappa}^{\vee,\clambda}\simeq \on{Av}^{G(\CO)/I}_*(\BM_{-\kappa}^{\vee,\clambda}).$$
\end{lem}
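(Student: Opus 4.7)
The plan is to verify the universal property defining $\BV_{-\kappa}^{\vee,\clambda}$ for the object $\on{Av}^{G(\CO)/I}_*(\BM_{-\kappa}^{\vee,\clambda})$. Since the Weyl modules $\{\BV^{\clambda'}_{-\kappa}\}_{\clambda'\in \cLambda^+}$ form a set of compact generators of $\KL(G,-\kappa)$, an object is determined up to canonical isomorphism by the functor $\clambda' \mapsto \CHom(\BV^{\clambda'}_{-\kappa},-)$ on dominant $\clambda'$. Thus it suffices to establish, for each $\clambda' \in \cLambda^+$, an isomorphism
$$\CHom_{\hg\mod_{-\kappa}^{G(\CO)}}(\BV^{\clambda'}_{-\kappa},\, \on{Av}^{G(\CO)/I}_*(\BM_{-\kappa}^{\vee,\clambda})) \simeq k \cdot \delta_{\clambda,\clambda'}$$
concentrated in cohomological degree zero. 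By the $(\oblv_{G(\CO)/I},\on{Av}^{G(\CO)/I}_*)$-adjunction of \secref{sss:averaging}, the left-hand side identifies with $\CHom_{\hg\mod^I_{-\kappa}}(\oblv_{G(\CO)/I}(\BV^{\clambda'}_{-\kappa}),\,\BM^{\vee,\clambda}_{-\kappa})$.

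To perform this computation, I would resolve $\oblv_{G(\CO)/I}(\BV^{\clambda'}_{-\kappa})$ in $\hg\mod^I_{-\kappa}$ by applying the exact induction functor $\Ind^{\hg_{-\kappa}}_{\fg(\CO)}$ to the classical BGG resolution of the irreducible $\fg$-representation $V^{\clambda'}$:
$$0 \to M^{w_0\cdot\clambda'} \to \ldots \to \bigoplus_{\ell(w)=k,\,w\in W} M^{w\cdot\clambda'} \to \ldots \to M^{\clambda'} \to V^{\clambda'} \to 0.$$
Transitivity of induction along the chain $\on{Lie}(I)\subset \fg(\CO)\subset \hg_{-\kappa}$, together with the observation that $\Ind^{\fg(\CO)}_{\on{Lie}(I)}(k^{w\cdot\clambda'})$ recovers $M^{w\cdot\clambda'}$ viewed as a $\fg(\CO)$-module (with trivial action of $t\fg(\CO)\subset \on{Lie}(I)$), identifies each term of the induced complex with the affine Verma module $\BM^{w\cdot\clambda'}_{-\kappa}$. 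Applying $\CHom(-,\BM^{\vee,\clambda}_{-\kappa})$ and invoking the defining identity $\CHom_{\hg\mod^I_{-\kappa}}(\BM^{w\cdot\clambda'}_{-\kappa}, \BM^{\vee,\clambda}_{-\kappa}) = k \cdot \delta_{w\cdot\clambda',\clambda}$, the problem reduces to the combinatorial observation that for $\clambda,\clambda'\in \cLambda^+$ and $w\in W$, the equation $w(\clambda'+\check\rho) = \clambda+\check\rho$ forces $w=1$ and $\clambda=\clambda'$, since both sides must lie in the strictly dominant chamber. Only the $w=1$ term of the complex contributes, and then only when $\clambda'=\clambda$, yielding the desired Hom calculation.

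The main technical obstacle is verifying that the induced BGG complex genuinely lives in the Iwahori-equivariant category $\hg\mod^I_{-\kappa}$: the classical BGG differentials are merely $\fg$-linear (not $G$-equivariant), but they are weight-preserving; combined with the $\hg$-linearity of their images under induction and the pro-unipotence of $\overset{\circ}{I}\subset I$, this should force them to be morphisms of $I$-equivariant objects. Granted this, exactness of $\Ind^{\hg_{-\kappa}}_{\fg(\CO)}$ makes the resolution exact in $\hg\mod^I_{-\kappa}$, completing the argument.
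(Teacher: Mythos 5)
Your proposal is correct and is essentially identical to the paper's own proof: both reduce via the $(\oblv_{G(\CO)/I},\on{Av}^{G(\CO)/I}_*)$-adjunction, resolve $\BV^{\clambda'}_{-\kappa}$ by induced affine Verma modules $\BM^{w\cdot\clambda'}_{-\kappa}$ using the classical BGG resolution, and then invoke Verma/dual-Verma orthogonality together with the observation that $w\cdot\clambda'$ is non-dominant (hence $\neq\clambda$) for $w\neq 1$. The "technical obstacle" you flag at the end is unproblematic and is implicitly taken for granted in the paper: the classical BGG resolution already lives in $\fg\mod^B$ (the differentials are $\fg$-linear and $\ft$-weight-preserving, hence $B$-equivariant), and applying the exact functor $\on{Ind}^{\hg_{-\kappa}}_{\fg(\CO)}$ lands it in $\hg\mod^I_{-\kappa}$.
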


\begin{proof}

We have:
$$\CHom_{\hg\mod_{-\kappa}^{G(\CO)}}(\BV_{-\kappa}^{\clambda'},\on{Av}^{G(\CO)/I}_*(\BM_{-\kappa}^{\vee,\clambda}))\simeq
\CHom_{\hg\mod_{-\kappa}^I}(\BV_{-\kappa}^{\clambda'},\BM_{-\kappa}^{\vee,\clambda}).$$

The BGG resolution of $V^\clambda$ with terms $M^{w(\clambda'+\check\rho)-\check\rho}$ implies that 
$\BV_{-\kappa}^{\clambda'}$ admits a resolution with terms of the form $\BM_{-\kappa}^{w(\clambda'+\check\rho)-\check\rho}$.
Note that for all $w\neq 1$, the weight $w(\clambda'+\check\rho)-\check\rho$ is non-dominant, and hence not equal to $\clambda$. 
Hence, we obtain that
$$\CHom_{\hg\mod_{-\kappa}^I}(\BV_{-\kappa}^{\clambda'},\BM_{-\kappa}^{\vee,\clambda})\simeq 
\CHom_{\hg\mod_{-\kappa}^I}(\BM_{-\kappa}^{\clambda'},\BM_{-\kappa}^{\vee,\clambda}),$$
and the assertion follows.

\end{proof}

\sssec{}

We now define
\begin{equation} \label{e:dual Weyl general}
\BV_{-\kappa}^{\vee,\clambda}:=\on{Av}^{G(\CO)/I}_!(\BW^{w_0(\clambda)-2\check\rho}_{-\kappa})[-d]
\end{equation} 
for \emph{any} $\clambda\in \cLambda$. 

\medskip

We claim:

\begin{lem}  \label{l:general dual Weyl}
For $\clambda\in \cLambda^+$, the definition of $\BV_{-\kappa}^{\vee,\clambda}$ agrees with the initial one.
\end{lem}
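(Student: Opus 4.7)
The plan is to reduce the equality of the two candidates for $\BV^{\vee,\clambda}_{-\kappa}$ to a known computation involving dual Verma modules, then verify it by testing against the compact generators $\BV^{\clambda'}_{-\kappa}$ via the classical BGG resolution.

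First, since $\clambda\in\cLambda^+$ implies $-w_0(\clambda)\in\cLambda^+$, \corref{c:Wakimoto for neg} applied with $-w_0(\clambda)$ in place of $\clambda$ identifies
$$\BW^{w_0(\clambda)-2\check\rho}_{-\kappa}\simeq \BM^{\vee,w_0(\clambda)-2\check\rho}_{-\kappa}.$$
Second, the Verdier duality identification $\on{Av}^{G(\CO)/I}_!\simeq \on{Av}^{G(\CO)/I}_*[2d]$ from \secref{sss:proper averaging} rewrites the definition \eqref{e:dual Weyl general} as
$$\on{Av}^{G(\CO)/I}_*(\BM^{\vee,w_0(\clambda)-2\check\rho}_{-\kappa})[d].$$
Writing $w\cdot \clambda := w(\clambda+\check\rho)-\check\rho$, note that $w_0\cdot \clambda = w_0(\clambda)-2\check\rho$, so it suffices to establish the isomorphism
$$\on{Av}^{G(\CO)/I}_*(\BM^{\vee,w_0\cdot\clambda}_{-\kappa})\simeq \BV^{\vee,\clambda}_{-\kappa}[-d]$$
in $\KL(G,-\kappa)$.

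To verify this, I will test against the compact generators $\{\BV^{\clambda'}_{-\kappa}\}_{\clambda'\in\cLambda^+}$. By the $(\oblv,\on{Av}_*)$-adjunction,
$$\CHom_{\KL}(\BV^{\clambda'}_{-\kappa},\on{Av}^{G(\CO)/I}_*(\BM^{\vee,w_0\cdot\clambda}_{-\kappa}))\simeq \CHom_{\hg\mod_{-\kappa}^I}(\oblv(\BV^{\clambda'}_{-\kappa}),\BM^{\vee,w_0\cdot\clambda}_{-\kappa}).$$
Apply the exact induction $\Ind_{\fg(\CO)}^{\hg_{-\kappa}}$ to the classical BGG resolution of $V^{\clambda'}$: this yields a resolution of $\oblv(\BV^{\clambda'}_{-\kappa})$ whose $-\ell(w)$-th term (for $w\in W$) is $\BM^{w\cdot\clambda'}_{-\kappa}$. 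Since $\CHom(\BM^{\mu}_{-\kappa},\BM^{\vee,\mu'}_{-\kappa})$ is $k$ when $\mu=\mu'$ and zero otherwise, the only contribution arises when $w\cdot\clambda'=w_0\cdot\clambda$; as both $\clambda'+\check\rho$ and $\clambda+\check\rho$ are strongly dominant, this forces $w=w_0$ and $\clambda'=\clambda$. The resulting contribution is $k$ placed in cohomological degree $d$ - precisely the Hom pattern of $\BV^{\vee,\clambda}_{-\kappa}[-d]$ (using the defining property of $\BV^{\vee,\clambda}_{-\kappa}$ from \lemref{l:dual Weyl as av}).

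To upgrade this matching of Hom functors to an isomorphism of objects, I will construct a canonical comparison map and show its cone vanishes. The dual BGG resolution
$$0\to \oblv(\BV^{\vee,\clambda}_{-\kappa})\to \BM^{\vee,\clambda}_{-\kappa}\to \cdots \to \BM^{\vee,w_0\cdot \clambda}_{-\kappa}\to 0$$
in $\hg\mod_{-\kappa}^I$ provides a canonical projection $\oblv(\BV^{\vee,\clambda}_{-\kappa})\to \BM^{\vee,w_0\cdot \clambda}_{-\kappa}[-d]$; composing the adjunction unit with $\on{Av}^{G(\CO)/I}_*$ of this projection gives the desired map $\BV^{\vee,\clambda}_{-\kappa}[-d]\to \on{Av}^{G(\CO)/I}_*(\BM^{\vee,w_0\cdot \clambda}_{-\kappa})$. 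By the computation of the preceding paragraph, its cone has vanishing $\CHom$ from every compact generator $\BV^{\clambda'}_{-\kappa}$, so the cone is zero. The main delicate point is the bookkeeping of cohomological shifts and the verification that $w=w_0$, $\clambda'=\clambda$ is the unique pair contributing to the BGG Hom calculation; the use of the dual BGG to construct the comparison map avoids any appeal to a t-exactness statement for $\on{Av}^{G(\CO)/I}_*$ that we would otherwise need.
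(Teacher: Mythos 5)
The Hom calculation against the compact generators $\BV^{\clambda'}_{-\kappa}$ via the induced BGG resolution is correct and parallels the argument in the proof of \lemref{l:dual Weyl as av}, and your overall strategy (reduce to a dual-Verma statement, test against compacts) is viable. However, the construction of the comparison map via the dual BGG resolution has two genuine problems.

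First, the ``canonical projection $\oblv(\BV^{\vee,\clambda}_{-\kappa})\to \BM^{\vee,w_0\cdot \clambda}_{-\kappa}[-d]$'' does not exist. Representing $\oblv(\BV^{\vee,\clambda}_{-\kappa})$ by the dual BGG complex $[\BM^{\vee,\clambda}_{-\kappa}\to\cdots\to\BM^{\vee,w_0\cdot\clambda}_{-\kappa}]$ in cohomological degrees $0,\dots,d$, the stupid truncation $\sigma^{\geq d}$ is a \emph{sub}complex, so what one obtains canonically is a map $\BM^{\vee,w_0\cdot\clambda}_{-\kappa}[-d]\to\oblv(\BV^{\vee,\clambda}_{-\kappa})$ (the Yoneda class of the $d$-fold extension), not a map in the other direction; the naive ``keep the last term'' assignment is not a chain map since it does not annihilate the image of the differential $\BM^{\vee,?}\to\BM^{\vee,w_0\cdot\clambda}_{-\kappa}$. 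Second, even granting such a projection, the shift bookkeeping is off by $[2d]$: composing the unit $\BV^{\vee,\clambda}_{-\kappa}\to\on{Av}^{G(\CO)/I}_*(\oblv(\BV^{\vee,\clambda}_{-\kappa}))$ with $\on{Av}^{G(\CO)/I}_*$ applied to a map landing in $\BM^{\vee,w_0\cdot\clambda}_{-\kappa}[-d]$ produces a map $\BV^{\vee,\clambda}_{-\kappa}\to\on{Av}^{G(\CO)/I}_*(\BM^{\vee,w_0\cdot\clambda}_{-\kappa})[-d]$, i.e.\ $\BV^{\vee,\clambda}_{-\kappa}[d]\to\on{Av}^{G(\CO)/I}_*(\BM^{\vee,w_0\cdot\clambda}_{-\kappa})$, whereas you want $\BV^{\vee,\clambda}_{-\kappa}[-d]\to\on{Av}^{G(\CO)/I}_*(\BM^{\vee,w_0\cdot\clambda}_{-\kappa})$.

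Both issues can be repaired simultaneously by using the \emph{other} adjoint. The Yoneda class gives a genuine map $\BM^{\vee,w_0\cdot\clambda}_{-\kappa}\to\oblv(\BV^{\vee,\clambda}_{-\kappa})[d]$; by the $(\on{Av}^{G(\CO)/I}_!,\oblv)$-adjunction this becomes $\on{Av}^{G(\CO)/I}_!(\BM^{\vee,w_0\cdot\clambda}_{-\kappa})\to\BV^{\vee,\clambda}_{-\kappa}[d]$; applying $[-2d]$ and using $\on{Av}^{G(\CO)/I}_*\simeq\on{Av}^{G(\CO)/I}_![-2d]$ yields the correct comparison map $\on{Av}^{G(\CO)/I}_*(\BM^{\vee,w_0\cdot\clambda}_{-\kappa})\to\BV^{\vee,\clambda}_{-\kappa}[-d]$, whose cone is then killed by your Hom calculation. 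For comparison, the paper avoids constructing a comparison map altogether: it first establishes $j_{w_0,*}\star\BM^{\vee,w_0\cdot\clambda}_{-\kappa}\simeq\BM^{\vee,\clambda}_{-\kappa}$ at the Iwahori level (testing against affine Verma modules and reducing the cone analysis to the finite-dimensional analogue $j_{w_0,*}\star M^{\vee,w_0\cdot\clambda}\simeq M^{\vee,\clambda}$), and then combines $\on{Av}^{G(\CO)/I}_*(\BM^{\vee,w_0\cdot\clambda}_{-\kappa})[d]\simeq\on{Av}^{G(\CO)/I}_*(j_{w_0,*}\star\BM^{\vee,w_0\cdot\clambda}_{-\kappa})$ with \lemref{l:dual Weyl as av}, which is shorter once that convolution identity is in hand.
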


\begin{proof} 

First, we claim that for $\clambda$ dominant, we have
$$j_{w_0,*}\star \BM^{\vee,w_0(\clambda)-2\check\rho}_{-\kappa}\simeq  \BM^{\vee,\clambda}_{-\kappa}.$$

Indeed, it suffices to show that
$$\CHom_{\hg\mod_{-\kappa}^I}(\BM^{\clambda'}_{-\kappa}, j_{w_0,*}\star \BM^{\vee,w_0(\clambda)-2\check\rho}_{-\kappa})\simeq 
\CHom_{\hg\mod_{-\kappa}^I}(j_{w_0,!}\star \BM^{\clambda'}_{-\kappa}, \BM^{\vee,w_0(\clambda)-2\check\rho}_{-\kappa})=
\begin{cases}
&k \text{ for } \clambda'=\clambda,\\
&0 \text{ otherwise.}
\end{cases}
$$

For this it suffices to show that the cone of the canonical map
$$j_{w_0,!}\star \BM^{\clambda'}_{-\kappa}\to \BM^{w_0(\clambda')-2\check\rho}_{-\kappa}$$
is an extension of Verma modules of highest weights different from $w_0(\clambda)-2\check\rho$. 
For the latter, it suffices to show the corresponding assertion for the map for the finite-dimensional $\fg$:
$$j_{w_0,!}\star M^{\clambda'}\to M^{w_0(\clambda')-2\check\rho},$$
which in turn follows from the (valid) isomorphism
$$j_{w_0,*}\star M^{\vee,w_0(\clambda)-2\check\rho}\simeq M^{\vee,\clambda}, \quad \clambda\in \cLambda^+.$$

\medskip

Now, using \corref{c:Wakimoto for neg}, we have
\begin{multline*} 
\on{Av}^{G(\CO)/I}_!(\BW^{\vee,w_0(\clambda)-2\check\rho}_{-\kappa})[-d]\simeq
\on{Av}^{G(\CO)/I}_!(\BM^{\vee,w_0(\clambda)-2\check\rho}_{-\kappa})[-d]\simeq
\on{Av}^{G(\CO)/I}_*(\BM^{\vee,w_0(\clambda)-2\check\rho}_{-\kappa})[d]\simeq \\
\\ \simeq \on{Av}^{G(\CO)/I}_*(j_{w_0,*}\star \BM^{\vee,w_0(\clambda)-2\check\rho}_{-\kappa})\simeq
\on{Av}^{G(\CO)/I}_*(\BM^{\vee,\clambda}_{-\kappa})\simeq \BV_{-\kappa}^{\vee,\clambda},
\end{multline*}
as required. 

\end{proof}

\sssec{}

Note that taking into account \corref{c:big ind an coind}, we obtain that \thmref{t:Weyl} is equivalent to the following
statement: 

\begin{cor} \label{c:dual Weyl}
For any $\clambda\in \cLambda$, we have
$\sF_{-\kappa}(\BV^{\vee,\clambda}_{-\kappa})\simeq \CV_q^{\vee,\clambda}$.
\end{cor}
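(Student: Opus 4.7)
The plan is to deduce \corref{c:dual Weyl} as a short formal consequence of \thmref{t:Weyl} combined with the definitions unpacked just above. Indeed, the author's parenthetical remark already signals that the two statements are equivalent, so there is no genuinely new content here beyond bookkeeping.

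First I would rewrite the left-hand side using \eqref{e:dual Weyl general}, which defines
$$\BV_{-\kappa}^{\vee,\clambda}:=\on{Av}^{G(\CO)/I}_!(\BW^{w_0(\clambda)-2\check\rho}_{-\kappa})[-d]$$
for every $\clambda\in \cLambda$. By \eqref{e:all Weyl}, extended as the definition of the Weyl module at a general weight, this is the same as $\BV^{w_0(\clambda)-2\check\rho}_{-\kappa}[-d]$. Applying \thmref{t:Weyl} at the weight $w_0(\clambda)-2\check\rho\in \cLambda$ yields
$$\sF_{-\kappa}(\BV_{-\kappa}^{\vee,\clambda})\simeq \CV_q^{w_0(\clambda)-2\check\rho}[-d].$$

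On the quantum side, I would invoke \corref{c:big ind an coind}, which gives the identification
$$\CV_q^{\vee,\clambda}:=\coind_{\on{Lus}^+\to \on{big}}(k^{w_0(\clambda)})\simeq \ind_{\on{Lus}^+\to \on{big}}(k^{w_0(\clambda)-2\check\rho})[-d]=\CV_q^{w_0(\clambda)-2\check\rho}[-d].$$
Matching the two right-hand sides gives the desired isomorphism $\sF_{-\kappa}(\BV_{-\kappa}^{\vee,\clambda})\simeq \CV_q^{\vee,\clambda}$.

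There is no substantive obstacle: all the work is contained in \thmref{t:Weyl} itself (whose proof is the subject of \secref{s:small cohomology}), together with the Frobenius-algebra-type shift \corref{c:big ind an coind} on the quantum side and the definition of $\BV_{-\kappa}^{\vee,\clambda}$ on the Kac-Moody side, both of which have already been verified. The only point worth double-checking is that the $[-d]$ shift and the $w_0$-twist appear symmetrically on both sides, which is exactly how the two corollaries \ref{c:big ind an coind} and \ref{c:Wakimoto for neg} were set up (the latter entering via \lemref{l:general dual Weyl} to reconcile \eqref{e:dual Weyl general} with the classical dual Weyl module for $\clambda\in\cLambda^+$).
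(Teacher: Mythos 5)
Your proof is correct and matches the paper's intended argument: the paper states the corollary as an equivalent reformulation of \thmref{t:Weyl}, modulo the shift isomorphism of \corref{c:big ind an coind} together with the definitions \eqref{e:dual Weyl general} and \eqref{e:all Weyl}, which is precisely the bookkeeping you carry out.
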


Finally, note that for $\clambda\in \cLambda^+$, the assertion of \corref{c:dual Weyl} follows formally from
\lemref{l:general dual Weyl}: indeed, it is clear that the original Kazhdan-Lusztig functor satisfies: 
$$\sF_{-\kappa}(\BV_{-\kappa}^{\vee,\clambda})\simeq \CV_q^{\vee,\clambda}.$$

\ssec{Digression: Kazhdan-Lusztig equivalence for positive level}

Let $\kappa$ be the positive level, opposite of the negative level $-\kappa$. In this subsection we will
discuss some consequences that the original Kazhdan-Lusztig equivalence has for the category $\KL(G,\kappa)$. 

\sssec{}

Recall that we have the duality identifications
$$\KL(G,\kappa)\simeq (\KL(G,-\kappa))^\vee \text{ and  } \Rep_{q^{-1}}(G)_{\on{ren}} \simeq (\Rep_q(G)_{\on{ren}})^\vee,$$
the former given by \propref{p:duality K} and the latter by \eqref{e:big duality}. 

\medskip

Thus, starting from the equivalence
$$\sF_{-\kappa}:\KL(G,-\kappa)\simeq \Rep_q(G)_{\on{ren}}$$
by duality, we obtain an equivalence
\begin{equation} \label{e:KL pos level}
\sF_\kappa:\KL(G,\kappa)\simeq \Rep_{q^{-1}}(G)_{\on{ren}}.
\end{equation} 

\sssec{}

Note that the compatibility of the equivalence $\sF_\kappa$ of \eqref{e:KL pos level} with the Hecke action reads as follows:
$$\sF_\kappa(\on{Sat}(V)\star \CM)\simeq \on{Frob}_q^*(V^\tau)\otimes M,$$
where $\tau$ is the Cartan involution on $\cG$.

\medskip

This is due to the fact that Verdier duality 
$$\BD:(\Dmod_{-\kappa}(\Gr_G)^{G(\CO)})_c\to (\Dmod_{\kappa}(\Gr_G)^{G(\CO)})_c$$
satisfies
\begin{equation} \label{e:Verdier and Sat}
\BD(\on{Sat}(V))\simeq \on{Sat}((V^\tau)^\vee).
\end{equation} 

\sssec{}

Note that the equivalence $\sF_\kappa:\KL(G,\kappa)\simeq \Rep_{q^{-1}}(G)_{\on{ren}}$
satisfies:
$$\sF_\kappa(\BV_{\kappa}^{\lambda})\simeq \CV_{q^{-1}}^{\vee,\clambda}, \quad \clambda\in \cLambda^+.$$

\medskip

This follows from the identifications 
$$\BD(\BV^\clambda_{-\kappa})\simeq \BV^{-w_0(\clambda)}_\kappa \text{ and } 
\BD(\CV_q^{-w_0(\clambda)})\simeq \CV_{q^{-1}}^{\vee,\clambda}.$$

\sssec{}

For any $\clambda\in \cLambda$, set
$$\BV_{\kappa}^{\lambda}:=\on{Av}^{G(\CO)/I}_*(\BD(\BW^{-w_0(\clambda)}_{-\kappa})).$$
Note that for $\clambda\in \cLambda^+$, this is consistent with the definition of $\BV_\kappa^\clambda$
as $\Ind_{\fg(\CO)}^{\hg_\kappa}(V^\clambda)$. Indeed,
$$\on{Av}^{G(\CO)/I}_*(\BD(\BW^{-w_0(\clambda)}_{-\kappa}))\simeq  
\BD\left(\on{Av}^{G(\CO)/I}_!(\BW^{-w_0(\clambda)}_{-\kappa})\right)\simeq
\BD(\BV_{-\kappa}^{-w_0(\clambda)})\simeq \BV_\kappa^\clambda.$$

\medskip 

Note that \thmref{t:Weyl} is equivalent to the following:

\begin{thm} \label{t:Weyl positive}
For any $\clambda\in \cLambda$, we have 
$\sF_k(\BV^\clambda_\kappa)\simeq \CV_{q^{-1}}^{\vee,\clambda}$.
\end{thm}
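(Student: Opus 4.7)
The plan is to deduce Theorem~\ref{t:Weyl positive} from Theorem~\ref{t:Weyl} directly, by transporting the assertion across the dualities of Sections~\ref{s:recall} and~\ref{s:q duality}. The reduction involves three steps, each essentially formal once the relevant definitions are unpacked; the substantive content lies entirely in Theorem~\ref{t:Weyl}.

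First, I would establish that the Kac-Moody dualization functor $\BD$ of \propref{p:duality K} satisfies $\BD(\BV^{-w_0(\clambda)}_{-\kappa})\simeq \BV_\kappa^\clambda$ for \emph{every} $\clambda\in \cLambda$. Starting from the intertwining $\BD\circ\on{Av}^{G(\CO)/I}_!\simeq \on{Av}^{G(\CO)/I}_*\circ\BD$ --- a formal consequence of the duality identifications $(\oblv_{G(\CO)/I})^\vee\simeq \on{Av}^{G(\CO)/I}_*$ and $(\on{Av}^{G(\CO)/I}_*)^\vee\simeq \oblv_{G(\CO)/I}$ of \secref{sss:Av adj}, combined with the proper-averaging identity $\on{Av}_!\simeq \on{Av}_*[2d]$ of \secref{sss:proper averaging} --- the claim reduces to comparing both sides with the definitions
\begin{equation*}
\BV^{-w_0(\clambda)}_{-\kappa}\simeq \on{Av}^{G(\CO)/I}_!(\BW^{-w_0(\clambda)}_{-\kappa})\quad\text{and}\quad \BV_\kappa^\clambda:=\on{Av}^{G(\CO)/I}_*(\BD(\BW^{-w_0(\clambda)}_{-\kappa})).
\end{equation*}

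Second, I would invoke the identification $\BD(\CV_q^{-w_0(\clambda)})\simeq \CV_{q^{-1}}^{\vee,\clambda}$ for the quantum duality of \secref{sss:duality big}. This was already recorded, for use in the dominant case, in the paragraph preceding Theorem~\ref{t:Weyl positive}; it follows from the explicit formula $\BD(\CM)\simeq (\CM^\sigma)^\vee$ in~\eqref{e:big dualization}, combined with the rigidity of the abelian category $\Rep_q(G)^\heartsuit$ (which forces contragredient duality to interchange induced and coinduced modules from the Borel, with a $w_0$-reflection of the highest weight).

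Third, since the equivalence $\sF_\kappa$ was defined in~\eqref{e:KL pos level} precisely as the dual of $\sF_{-\kappa}$, we automatically have an intertwining isomorphism of contravariant functors $\sF_\kappa\circ\BD\simeq \BD\circ\sF_{-\kappa}$ on $\KL(G,-\kappa)_c$. Combining the three ingredients and applying Theorem~\ref{t:Weyl} to the weight $-w_0(\clambda)$, one obtains
\begin{equation*}
\sF_\kappa(\BV_\kappa^\clambda)\simeq \sF_\kappa(\BD(\BV^{-w_0(\clambda)}_{-\kappa}))\simeq \BD(\sF_{-\kappa}(\BV^{-w_0(\clambda)}_{-\kappa}))\simeq \BD(\CV_q^{-w_0(\clambda)})\simeq \CV_{q^{-1}}^{\vee,\clambda},
\end{equation*}
as required. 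The main obstacle to the present approach is therefore not in Theorem~\ref{t:Weyl positive} itself, but rather in its negative-level counterpart Theorem~\ref{t:Weyl}, to be addressed in Section~\ref{s:small cohomology}. The only local obstacle to the above derivation is the pedestrian bookkeeping of the $[2d]$ shift coming from proper averaging and the interaction of $\BD$ with the semi-infinite colimit presentation of $\BW^{\clambda}_{-\kappa}$ from Section~\ref{s:Wak}; this bookkeeping is what dictates the precise form --- including the $-w_0$ reflection --- of the definition of $\BV_\kappa^\clambda$.
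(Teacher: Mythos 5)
Your deduction is correct, and the three identifications you invoke --- $\BD(\BV^{-w_0(\clambda)}_{-\kappa})\simeq \BV_\kappa^\clambda$, $\BD(\CV_q^{-w_0(\clambda)})\simeq \CV_{q^{-1}}^{\vee,\clambda}$, and $\sF_\kappa\circ\BD\simeq \BD\circ\sF_{-\kappa}$ --- are precisely the ones the paper records in the two paragraphs preceding the theorem statement, where it observes that \thmref{t:Weyl} and \thmref{t:Weyl positive} are equivalent. So the duality bridge you construct is the one the paper has in mind.

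The one substantive difference is the \emph{direction} in which that bridge is crossed. You assume \thmref{t:Weyl} and deduce \thmref{t:Weyl positive}; the paper's actual argument (\secref{sss:proof of Weyl}) runs the opposite way. It first proves \thmref{t:identify baby pos} directly at the positive level, using the Drinfeld-Pl\"ucker formalism and the semi-infinite IC sheaf $\overset{\bullet}\CF{}^{\semiinf,\on{invol}}_\kappa$, applies $\inv_{\cB}$ via \corref{c:inv in semiinf tau} to extract \thmref{t:Weyl positive}, and only then obtains \thmref{t:Weyl} from it by the same duality. Your route is logically valid --- and, since the paper also supplies an independent negative-level proof of \thmref{t:Weyl} via \thmref{t:identify baby neg}, it is not circular within the paper's overall scheme --- but it pushes the substantive work onto the other theorem, while the paper does the substantive work here. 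In other words, you correctly identify the equivalence but do not recover the paper's direct positive-level proof, which is where the geometric content (the object $(\overset{\bullet}\CF{}^{\semiinf,\on{invol}}_\kappa)^{\on{enh}}\star \BD(\BW_{-\kappa}^{-\clambda})$) lives.

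One small overstatement of difficulty: you flag a potential $[2d]$-shift to keep track of via $\on{Av}_!\simeq \on{Av}_*[2d]$, but this identity is not actually needed. The intertwining $\BD\circ\on{Av}^{G(\CO)/I}_!\simeq \on{Av}^{G(\CO)/I}_*\circ\BD$ follows directly from the adjunction identifications $(\oblv_{G(\CO)/I})^\vee\simeq \on{Av}^{G(\CO)/I}_*$ of \secref{sss:Av adj}, with no shift (the general formula being $F^\vee\circ\BD\simeq \BD\circ F^L$ for a continuous $F$ with compact-preserving left adjoint $F^L$). The bookkeeping you worry about is therefore automatic.
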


We emphasize again that the assertion of \thmref{t:Weyl positive} follows from the usual
Kazhdan-Lusztig equivalence for $\clambda\in \cLambda^+$. 

\ssec{Extension to the Iwahori case for positive level}

In this subsection we will assume \conjref{c:main} and deduce some consequences for the category $\hg\mod_\kappa^I$.

\sssec{}

Recall now that we have the equivalences
$$\hg\mod_{\kappa}^I \simeq (\hg\mod_{-\kappa}^I)^\vee  \text{ and } \Rep^{\on{mxd}}_{q^{-1}}(G) \simeq (\Rep^{\on{mxd}}_q(G))^\vee,$$
see \propref{p:duality K} for the former and \thmref{t:quantum duality} for the latter. 

\medskip

Thus, starting from the conjectural equivalence
$$\sF_{-\kappa}:\hg\mod_{-\kappa}^I\simeq \Rep^{\on{mxd}}_q(G),$$
by duality we obtain an equivalence 
$$\sF_\kappa:\hg\mod_{\kappa}^I \simeq \Rep^{\on{mxd}}_{q^{-1}}(G).$$

\sssec{}

Let us explore the properties of this equivalence that follow from the properties of \eqref{e:main functor}.

\medskip

First, the diagram 
\begin{equation} \label{e:KL vs I pos}
\CD
\KL(G,\kappa)   @>{\sF_{\kappa}}>>   \Rep_{q^{-1}}(G)_{\on{ren}}  \\
@V{\oblv_{G(\CO)/I}}VV   @VV{\oblv_{\on{big}\to \on{mxd}}}V  \\
\hg\mod_{\kappa}^I   @>{\sF_{\kappa}}>>  \Rep_{q^{-1}}^{\on{mxd}}(G). 
\endCD
\end{equation} 
is commutative. This follows from the commutativity of \eqref{e:KL vs I} by duality, see \secref{sss:Av adj} and \propref{p:duality big and mixed}. 

\sssec{}

Let us define the object 
$$J^{\BD}_\mu\in \Dmod_{\kappa}(\on{Fl}^{\on{aff}}_G)^I$$
as the Verdier dual of $J_\mu$. For example, for $\mu$ dominant, $J^{\BD}_\mu\simeq j_{\mu,*}$ and $J^{\BD}_{-\mu}=j_{-\mu,!}$. 

\medskip

Then the functor $\sF_{\kappa}$ intertwines the convolution action of $J^{\BD}_\mu$ on $\hg\mod_{\kappa}^I$ with the functor
$k^{-\mu}\otimes-$ on $\Rep_{q^{-1}}^{\on{mxd}}(G)$. 

\sssec{}

Assuming the existence of $\sF_{-\kappa}$ and hence that of $\sF_\kappa$, we obtain the following: 

\begin{prop}  \label{p:at pos level}
Under the equivalence 
$$\sF_{\kappa}:\hg\mod_{\kappa}^I\simeq \Rep^{\on{mxd}}_{q^{-1}}(G),$$
for $\clambda\in \cLambda$ the functor 
$$\on{C}^\semiinf(\fn(\CK),-)^\clambda:\hg\mod_{\kappa}^I \to \Vect$$
corresponds to the functor
$$\on{C}^\cdot(U_{q^{-1}}^{\on{Lus}}(N),-)^\clambda:\Rep_{q^{-1}}^{\on{mxd}}(G) \to \Vect.$$
\end{prop}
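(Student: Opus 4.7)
The plan is to reduce the positive-level statement to a co-representability computation at the negative level, and then apply the quantum duality of \thmref{t:quantum duality}. The bridge is the fact that $\sF_\kappa$ is defined as the dual of $\sF_{-\kappa}$, so the pairing $\langle-,-\rangle_{\hg\mod^I}$ of \secref{sss:duality} corresponds, under $\sF_{-\kappa}\otimes \sF_\kappa$, to the canonical pairing \eqref{e:pairing mixed} on $\Rep_q^{\on{mxd}}(G) \otimes \Rep_{q^{-1}}^{\on{mxd}}(G)$.

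First, I would apply \corref{c:semiinf with Wak I} (or rather the identity it contains in combination with the remark identifying $\det(\fn^-)^{\otimes -1}$ with a shifted character of $T$) to rewrite, for any $\CM\in \hg\mod_\kappa^I$,
$$\on{C}^\semiinf(\fn(\CK),\CM)^\clambda \simeq \langle \BW^{-\clambda-2\check\rho}_{-\kappa}, \CM\rangle_{\hg\mod^I}[d].$$
Transporting under the equivalences $\sF_{-\kappa}$ and $\sF_\kappa$, the Wakimoto module on the left becomes the standard Verma by \conjref{c:main}(iii), that is $\sF_{-\kappa}(\BW^{-\clambda-2\check\rho}_{-\kappa})\simeq \BM^{-\clambda-2\check\rho}_{q,\on{mxd}}$, and the pairing matches the canonical one, so the expression becomes
$$\langle \BM^{-\clambda-2\check\rho}_{q,\on{mxd}}, \sF_\kappa(\CM)\rangle_{\Rep^{\on{mxd}}}[d].$$

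Next I would invoke \thmref{t:quantum duality}, which provides the identification $\BM^{-\clambda-2\check\rho}_{q,\on{mxd}}[d] \simeq \BD^{\on{can}}(\BM^\clambda_{q^{-1},\on{mxd}})$. By the defining property of $\BD^{\on{can}}$ as the contravariant dualization functor associated to the pairing \eqref{e:pairing mixed} (namely $\langle \BD^{\on{can}}(M_1), M_2\rangle \simeq \CHom(M_1,M_2)$ on compact objects), the expression rewrites as
$$\CHom_{\Rep_{q^{-1}}^{\on{mxd}}(G)}(\BM^\clambda_{q^{-1},\on{mxd}}, \sF_\kappa(\CM)).$$
Finally, since $\BM^\clambda_{q^{-1},\on{mxd}} = \ind_{\on{Lus}^+\to \on{mxd}}(k^\clambda)$, the adjunction of \secref{ss:mixed} converts this to $\CHom_{\Rep_{q^{-1}}(B)}(k^\clambda, \oblv_{\on{mxd}\to \on{Lus}^+}(\sF_\kappa(\CM)))$, which by definition equals $\on{C}^\cdot(U_{q^{-1}}^{\on{Lus}}(N), \sF_\kappa(\CM))^\clambda$.

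The main obstacle will be carefully book-keeping the cohomological and weight shifts at each step: the $[d]$ coming from the $\det(\fn^-)$-twist in \corref{c:semiinf with Wak I}, the $[d]$ and $-2\check\rho$ appearing in \thmref{t:quantum duality}, and verifying that they cancel exactly as advertised. Equally delicate is pinning down the precise interaction of $\sF_\kappa$ (as constructed from $\sF_{-\kappa}$ by duality) with the pairings on both sides, and checking that the formula $\langle \BD^{\on{can}}(M_1), M_2\rangle \simeq \CHom(M_1,M_2)$ holds with the right variance and the correct twist by $\sigma$; all of this requires unwinding the definitions from \secref{sss:duality big}--\secref{sss:coh A}, but involves no additional mathematical input beyond what has already been established.
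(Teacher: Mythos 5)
Your proposal is correct and follows essentially the same route as the paper: both rewrite $\on{C}^\semiinf(\fn(\CK),-)^\clambda$ as the pairing with $\BW^{-\clambda-2\check\rho}_{-\kappa}[d]$ via \corref{c:semiinf with Wak I}, identify $\on{C}^\cdot(U_{q^{-1}}^{\on{Lus}}(N),-)^\clambda$ with the pairing against $\BM^{-\clambda-2\check\rho}_{q,\on{mxd}}[d]$ via \thmref{t:quantum duality}, and conclude using $\sF_{-\kappa}(\BW^\cmu_{-\kappa})\simeq \BM^\cmu_{q,\on{mxd}}$ together with the construction of $\sF_\kappa$ as the dual of $\sF_{-\kappa}$. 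Your version merely unwinds the duality/adjunction steps more explicitly, with the shifts matching the paper's.
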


\begin{proof}

By \thmref{t:quantum duality}, the functor
$$\on{C}^\cdot(U_{q^{-1}}^{\on{Lus}}(N),-)^\clambda:\Rep_{q^{-1}}^{\on{mxd}}(G) \to \Vect$$
is given by the pairing with $\BM^{-\clambda-2\check\rho}_{q,\on{mxd}}[d]$,
while by \corref{c:semiinf with Wak I}, the functor 
$$\on{C}^\semiinf(\fn(\CK),-)^\clambda:\hg\mod_{\kappa}^I \to \Vect$$
is given by the pairing with $\BW^{-\clambda-2\check\rho}_{-\kappa}[d]$.  Now the assertion
follows from the isomorphism $$\sF_{-\kappa}(\BW^\cmu_{-\kappa})\simeq \BM_{q,\on{mxd}}^\cmu.$$

\end{proof} 

\sssec{}

Bu juxtaposing \propref{p:at pos level} with the diagram \eqref{e:KL vs I pos}, we obtain the following assertion,
which, however, can be proved unconditionally:

\begin{cor}  \label{c:Weyl pos}
Under the equivalence 
$$\sF_{\kappa}:\KL(G,\kappa)\simeq \Rep_{q^{-1}}(G)_{\on{ren}},$$
for $\clambda\in \cLambda$ the functor 
$$\on{C}^\semiinf(\fn(\CK),-)^\clambda:\KL(G,\kappa) \to \Vect$$
corresponds to the functor
$$\on{C}^\cdot(U_{q^{-1}}^{\on{Lus}}(N),-)^\clambda:\Rep_{q^{-1}}(G)_{\on{ren}} \to \Vect.$$
\end{cor}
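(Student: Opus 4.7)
The strategy is to reduce the asserted identification of functors to the identification of their representing compact objects under the original Kazhdan--Lusztig equivalence $\sF_{-\kappa}\colon\KL(G,-\kappa)\simeq \Rep_q(G)_{\on{ren}}$; at that point the corollary follows from \thmref{t:Weyl} in its guise of \corref{c:dual Weyl}. Concretely, I will rewrite both functors in the form $\langle X,-\rangle$, using the dualities $\KL(G,\kappa)^\vee\simeq \KL(G,-\kappa)$ of \propref{p:duality K} and $(\Rep_{q^{-1}}(G)_{\on{ren}})^\vee\simeq \Rep_q(G)_{\on{ren}}$ of \secref{sss:duality big}, and check that the two resulting representing objects are $\BV^{\vee,-w_0(\clambda)}_{-\kappa}$ and $\CV_q^{\vee,-w_0(\clambda)}$, respectively.

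On the Kac--Moody side, \corref{c:semiinf with Wak I} read at level $-\kappa$, combined with the identification $\on{C}^\semiinf(\fn(\CK),\CM\otimes \det(\fn^-)^{\otimes -1})^{-\clambda}\simeq \on{C}^\semiinf(\fn(\CK),\CM)^{-\clambda-2\check\rho}[-d]$, identifies $\on{C}^\semiinf(\fn(\CK),\CM)^\clambda$ for $\CM\in \hg\mod_\kappa^I$ with $\langle \BW^{-\clambda-2\check\rho}_{-\kappa},\CM\rangle_{\hg\mod^I}[d]$. Substituting $\CM=\oblv_{G(\CO)/I}(\CM_0)$ for $\CM_0\in \KL(G,\kappa)$, applying the $(\oblv_{G(\CO)/I},\on{Av}^{G(\CO)/I}_*)$-duality of \secref{sss:Av adj}, converting $\on{Av}^{G(\CO)/I}_*$ to $\on{Av}^{G(\CO)/I}_![-2d]$ via the proper pushforward identification of \secref{sss:proper averaging}, and invoking the defining formula \eqref{e:dual Weyl general} for $\BV^{\vee,\cdot}_{-\kappa}$, the cumulative shifts cancel and produce
$$\on{C}^\semiinf(\fn(\CK),\CM_0)^\clambda \simeq \langle \BV^{\vee,-w_0(\clambda)}_{-\kappa},\CM_0\rangle_{\KL}.$$

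On the quantum side, $\on{C}^\cdot(U^{\on{Lus}}_{q^{-1}}(N),\CN)^\clambda$ is by adjunction $\CHom_{\Rep_{q^{-1}}(G)_{\on{ren}}}(\CV^\clambda_{q^{-1}},\CN)$, which the pairing of \secref{sss:duality big} rewrites as $\langle \BD(\CV^\clambda_{q^{-1}}),\CN\rangle$; the identification $\BD(\CV_q^{-w_0(\clambda)})\simeq \CV_{q^{-1}}^{\vee,\clambda}$ recalled there, applied with $q$ and $q^{-1}$ interchanged, yields $\BD(\CV^\clambda_{q^{-1}})\simeq \CV_q^{\vee,-w_0(\clambda)}$. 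Since $\sF_\kappa$ is by construction the dual of $\sF_{-\kappa}$, the pairing $\langle \CV_q^{\vee,-w_0(\clambda)},\sF_\kappa(\CM_0)\rangle$ transports to $\langle \sF_{-\kappa}^{-1}(\CV_q^{\vee,-w_0(\clambda)}),\CM_0\rangle_{\KL}$, and \corref{c:dual Weyl} identifies the representing object with $\BV^{\vee,-w_0(\clambda)}_{-\kappa}$, matching the Kac--Moody side. There is no substantive obstacle: the proof is a rearrangement of known inputs, the single nontrivial ingredient being \thmref{t:Weyl} itself; the only thing requiring care is the bookkeeping of the cohomological shifts ($[d]$ from the Wakimoto-semi-infinite identification, $[-2d]$ from $\on{Av}^{G(\CO)/I}_*$ versus $\on{Av}^{G(\CO)/I}_!$, and $[-d]$ built into \eqref{e:dual Weyl general}) and of the $w_0$-twists on both sides, which cancel and match consistently.
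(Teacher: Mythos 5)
Your proposal is correct and follows essentially the same route as the paper's own proof: on both sides you convert the cohomology functor into a pairing against a representing compact object ($\BV^{\vee,-w_0(\clambda)}_{-\kappa}$ via the Wakimoto characterization of semi-infinite cohomology and the averaging adjunction, and $\CV_q^{\vee,-w_0(\clambda)}$ via the $\ind/\oblv$ adjunction for the quantum Borel and contragredient duality), and then invoke \corref{c:dual Weyl}. The paper states the identification more tersely, but the bookkeeping of $w_0$-twists and cohomological shifts you carry out is exactly what underlies those displayed isomorphisms.
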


\begin{rem}

\corref{c:Weyl pos} reproduces the result of \cite[Theorem 5.3.1]{Liu}. The proof that we will give 
is close in spirit to one in {\it loc.cit.}, but is \emph{logically inequivalent} to it.  

\end{rem} 

\begin{proof}

The functor
$$\on{C}^\cdot(U_{q^{-1}}^{\on{Lus}}(N),-)^\clambda:\Rep_{q^{-1}}(G)_{\on{ren}} \to \Vect$$
is given by $\CHom_{\Rep_{q^{-1}}(G)_{\on{ren}}}(\CV^\clambda_{q^{-1}},-)$, i.e., by the pairing with the object
$$(\CV^\clambda_{q^{-1}})^\vee \simeq \CV^{\vee,-w_0(\clambda)}_q.$$

\medskip

The functor $\on{C}^\semiinf(\fn(\CK),-)^\clambda:\hg\mod_{\kappa}^I  \to \Vect$ is given by
the pairing with $\BW^{-\clambda-2\check\rho}_{-\kappa}[d]$, and hence as a functor
$\on{C}^\semiinf(\fn(\CK),-)^\clambda:\KL(G,\kappa) \to \Vect$
by the pairing by the pairing with
$$\on{Av}^{G(\CO)/I}_*(\BW^{-\clambda-2\check\rho}_{-\kappa}[d])\simeq 
\on{Av}^{G(\CO)/I}_!(\BW^{-\clambda-2\check\rho}_{-\kappa}[-d])=:\BV^{\vee,-w_0(\clambda)}_{-\kappa}.$$

Now the assertion follows from \corref{c:dual Weyl}. 

\end{proof}

\section{Cohomology of the small quantum group via Kac-Moody algebras}   \label{s:small cohomology}

The goal of this section is to prove \thmref{t:Weyl}. The idea is to bootstrap the assertion for
any $\clambda$ from the case when $\clambda$ is dominant. In the process of doing so
we will need to describe the counterpart on the Kac-Moody side of \emph{baby Verma modules}
for quantum groups. 

\medskip

As a byproduct we will describe the functor on the Kazhdan-Lusztig category that corresponds to
the functor of cohomology with respect to $u_q(N)$ for quantum groups. 

\ssec{The Drinfeld-Pl\"ucker formalism}  \label{ss:Dr-Pl}

The thrust of Sects. \ref{ss:Dr-Pl}-\ref{ss:Dr-Pl baby} is to give an expression of the baby Verma module
$\BM^\clambda_{q,\on{small}}$ in terms of restrictions of (dual) Weyl modules for the big quantum group (we need such a description
in order to be able to transfer it to the Kazhdan-Lusztig category via the functors $\sF_{-\kappa}$ or $\sF_\kappa$).
The framework for doing so is provided by the Drinfeld-Pl\"ucker formalism\footnote{The terminology ``Drinfeld-Pl\"ucker formalism"
as well as the abstract framework for this formalism was suggested by S.~Raskin.}. 

\medskip

In this subsection we recall some material from \cite[Sect. 6]{Ga2}.

\sssec{}  \label{sss:DrPl pattern}

We start with the following observation. 

\medskip

Let $\CC$ be a DG category equipped with an action of $\Rep(\cG)$. Let 
$$\{c^\mu,\,\mu\in \Lambda\}\in \CC$$ be a collection of objects
equipped with a \emph{Drinfeld-Pl\"ucker data}, i.e., a homotopy-coherent system of tensor-compatible maps
\begin{equation} \label{e:Plucker maps}
(V^{\mu_1})^\vee\star c^{\mu_2}\to c^{-\mu_1+\mu_2}.
\end{equation}

To the family $\{c^\mu\}$ we can attach the object
$$c:=\underset{\nu\in \Lambda}\oplus\, \underset{\mu\in \Lambda^+}{\on{colim}}\,  V^\mu \star c^{-\nu-\mu}\in \CC.$$
In the formation of the colimit the transition maps are as follows: for $\mu_2=\mu_1+\mu$, the corresponding map is 
$$V^{\mu_1} \star c^{-\mu_1-\nu} \to V^{\mu_1} \star V^{\mu_2}\star (V^{\mu_2})^\vee \star c^{-\mu_1-\nu} \to V^{\mu_1+\mu_2}\star c^{-\mu_1-\mu_2-\nu}.$$

\medskip

Let us denote the tautological maps $V^\mu \star c^{-\mu-\nu}\to c$ by $\phi_\mu$. In particular, we have the maps 
$$\phi_0: c^{-\nu}\to c.$$

\medskip

According to \cite[Sect. 6]{Ga2}, the above object $c$ has the following pieces of structure. 

\sssec{}  \label{sss:Hecke structure}

First, $c$ is a Hecke eigen-object, i.e., it carries a tensor-compatible system of maps
\begin{equation} \label{e:Hecke}
V \star c\simeq c\otimes \ul{V}, \quad V\in \Rep(\cG),
\end{equation} 
where $\ul{V}\in \Vect$ is the vector space underlying $V$. 

\medskip

Explicitly, the maps \eqref{e:Hecke} are given as follows: for $\eta\in \Lambda^+$ large compared to $V$, we have
$$V \star V^\eta \star c^{-\nu-\eta} \simeq (\underset{\epsilon}\oplus \, V^{\eta+\epsilon} \otimes \ul{V}(\epsilon))\star c^{-\nu-\eta}\simeq 
\underset{\epsilon}\oplus \, V^{\eta+\epsilon} \star c^{(-\nu+\epsilon)-\eta-\epsilon} \otimes \ul{V}(\epsilon).$$

\medskip

Note that in terms of the Hecke structure, the map $\phi_\mu:V^\mu \star c^{-\mu-\nu}\to c$ can be expressed via $\phi_0$ as
\begin{equation} \label{e:identify map}
V^\mu \star c^{-\mu-\nu}\overset{\on{Id}\otimes \phi_0}\longrightarrow V^\mu \otimes c \simeq c \otimes \ul{V}^\mu \to c,
\end{equation}
where the last arrow is given by the \emph{projection} onto the highest weight line. 

\sssec{}

We will think of the category of Hecke eigen-objects in $\CC$ as
$$\Rep(\cG)\underset{\Rep(\cG)}\otimes \CC,$$
and we will denote the resulting object by
$$c^{\on{H}}\in \Rep(\cG)\underset{\Rep(\cG)}\otimes \CC.$$

The original $c$ is recovered from $c^{\on{H}}$ via the forgetful functor
$$\Vect\underset{\Rep(\cG)}\otimes \CC \overset{\coind_\cG}\longrightarrow \Rep(\cG)\underset{\Rep(\cG)}\otimes \CC\simeq 
\CC.$$

\sssec{}  \label{sss:B-action}

Another piece of structure on $c$ is an action of the algebraic group $\cB$. This action is compatible 
with the Hecke structure, i.e., the isomorphisms \eqref{e:Hecke} are $\cB$-equivariant, where the
$\cB$-action on the LHS is induced by that on $c$ and on the RHS it is diagonal action.  

\sssec{}

The above compatibility means
that $cc^{\on{H}}$ lifts to an object $c^{\on{enh}}$ of the category
$$\Rep(\cB)\underset{\Rep(\cG)}\otimes \CC$$
along the forgetful functor 
$$\Rep(\cB)\underset{\Rep(\cG)}\otimes \CC \overset{\oblv_\cB}\longrightarrow 
\Vect\underset{\Rep(\cG)}\otimes \CC.$$

\medskip

Moreover:

\begin{itemize}

\item The direct summand $\underset{\mu\in \Lambda^+}{\on{colim}}\,  V^\mu \star c^{-\mu-\nu}$ of $c$ corresponds to the
weight $\nu$-component of $c$ with respect to the action of $\cT\subset \cB$, i.e., $\inv_{\cT}(k^{-\nu} \otimes c)$;

\medskip

\item The map $\phi_0:c^{-\nu}\to c$ factors through $c^{-\nu} \to \inv_{\cB}(k^{-\nu} \otimes c)\to c$.

\end{itemize}  

\sssec{} \label{sss:B action det}

We note that the latter property combined with \eqref{e:identify map} determines the $\cB$-action on all of $c$. 

\medskip

Indeed, for a point $b\in \cB$, the composite map
$$V^\mu\star c^{-\nu-\mu} \overset{\phi_\mu}\longrightarrow c \overset{b}\to c$$ identifies with
$$V^\mu \star c^{-\mu-\nu}\overset{\on{Id}\otimes \phi_0}\longrightarrow V^\mu \star c \simeq
c \otimes \ul{V}^\mu \overset{(-\nu-\mu)(b)\cdot \otimes b\cdot}\longrightarrow c \otimes \ul{V}^\mu \to c,$$
where $(-\nu-\mu)(b)\cdot$ stands for the operation of multiplication by the scalar $(-\nu-\mu)(b)$.

\ssec{Digression: a conceptual explanation}

We will now present, following S.~Raskin, a conceptual meaning of Drinfeld-Pl\"ucker structures, and
of the construction
$$\{c^\mu\}\rightsquigarrow c^{\on{enh}}$$
of the previous subsection. 

\sssec{}

Consider the category
$$\CC\otimes \Rep(\cT)$$
as acted on by $\Rep(\cG)\otimes \Rep(\cT)$.

\medskip

Consider the base affine space $\ol{\cG/\cN}$ of $\cG$, as acted on by $\cG\times \cT$. We can
view $\CO_{\ol{\cG/\cN}}$ as a commutative algebra object in $\Rep(\cG)\otimes \Rep(\cT)$.

\medskip

By definition, the category of families $\{c^\mu\}$ in \secref{sss:DrPl pattern}, denoted $\on{DrPl}(\CC)$, is
$$\CO_{\ol{\cG/\cN}}\mod(\CC\otimes \Rep(\cT)).$$
 
\sssec{}  \label{sss:jmath}

In terms of this interpretation, the construction
$$\{c^\mu\}\rightsquigarrow c^{\on{enh}}$$
is the functor
$$\jmath^*:\on{DrPl}(\CC) \to \Rep(\cB)\underset{\Rep(\cG)}\otimes \CC$$
given by
\begin{multline*}
\CO_{\ol{\cG/\cN}}\mod(\CC\otimes \Rep(\cT))\simeq 
\CO_{\ol{\cG/\cN}}\mod(\Rep(\cG)\otimes \Rep(\cT))\underset{\Rep(\cG)\otimes \Rep(\cT)}\otimes (\CC\otimes \Rep(\cT))\simeq \\
\simeq \QCoh((\ol{\cG/\cN})/(\cG\times \cT))\underset{\Rep(\cG)\otimes \Rep(\cT)}\otimes (\CC\otimes \Rep(\cT))\to \\
\to \QCoh((\cG/\cN)/(\cG\times \cT))\underset{\Rep(\cG)\otimes \Rep(\cT)}\otimes (\CC\otimes \Rep(\cT))\simeq
\Rep(\cB)\underset{\Rep(\cG)\otimes \Rep(\cT)}\otimes (\CC\otimes \Rep(\cT))\simeq \\
\simeq \Rep(\cB)\underset{\Rep(\cG)}\otimes \CC,
\end{multline*}
where the arrow
$$\QCoh((\ol{\cG/\cN})/(\cG\times \cT))\to \QCoh((\cG/\cN)/(\cG\times \cT))$$ 
is pullback with respect to the open embedding 
$$\jmath:\cG/\cN\to \ol{\cG/\cN}.$$

\sssec{}

The functor $\jmath^*$ is the left adjoint to a \emph{fully faithful} functor denoted $\jmath_*$, corresponding to the direct image functor
$$\QCoh((\cG/\cN)/(\cG\times \cT))\to \QCoh((\ol{\cG/\cN})/(\cG\times \cT)).$$

Explicitly, the functor $\jmath_*$ sends to an object $c'\in \Rep(\cB)\underset{\Rep(\cG)}\otimes \CC$
to the family $\{c^\mu\}$ with 
$$c^\mu:=\coind_{\cB\to \cG}(k^\mu\otimes c').$$

\medskip

The transition maps in this family are given by the canonical identifications
$$(V^\mu)^\vee\simeq \coind_{\cB\to \cG}(k^{-\mu}), \quad \mu\in \Lambda^+,$$
where we identify, by definition,
$$V^\mu:=\ind_{\cB\to \cG}(k^\mu).$$

\ssec{The baby Verma object via the Drinfeld-Pl\"ucker formalism}  \label{ss:Dr-Pl baby}

We will now show how the object 
$$\coind_{\on{Lus}^+\to \frac{1}{2}}(k^\clambda)\in \Rep_{q}^{\frac{1}{2}}(G)_{\on{ren}}\simeq \Rep(\cB) 
\underset{\Rep(\cG)}\otimes\Rep_{q}(G)_{\on{ren}}$$
arises following the pattern of \secref{sss:DrPl pattern}.

\sssec{}   \label{sss:baby via DrPl}

We take
\begin{equation} \label{c:quantum c}
c^\mu:=\coind_{\on{Lus}^+\to \on{big}}(k^{\clambda+\mu}),
\end{equation} 
with the maps \eqref{e:Plucker maps} given by the maps 
\begin{multline} \label{e:transition maps quantum}
\on{Frob}_{q}^*((V^{\mu_1})^\vee)\otimes \coind_{\on{Lus}^+\to \on{big}}(k^{\clambda+\mu_2})
\simeq \\
\simeq \coind_{\on{Lus}^+\to \on{big}}\left(\on{Frob}_{q}^*((V^{\mu_1})^\vee)|_{\Rep_q(B)}\otimes k^{\clambda+\mu_2}\right)\to 
\coind_{\on{Lus}^+\to \on{big}}(k^{\clambda-\mu_1+\mu_2})
\end{multline} 
that come from the natural projections
$$\on{Frob}_{q}^*((V^{\mu})^\vee)|_{\Rep_q(B)}\to k^{-\mu}.$$

We claim that the resulting object 
$$c^{\on{enh}}\in \Rep(\cB) \underset{\Rep(\cG)}\otimes\Rep_{q}(G)_{\on{ren}}=\Rep_{q}^{\frac{1}{2}}(G)_{\on{ren}}$$
identifies canonically with $\coind_{\on{Lus}^+\to \frac{1}{2}}(k^\clambda)$. 

\sssec{}

First, note that the functor
\begin{equation} \label{e:big forget}
\Rep(\cB) \underset{\Rep(\cG)}\otimes\Rep_{q}(G)_{\on{ren}} \overset{\oblv_{\cB}}\longrightarrow 
\Vect \underset{\Rep(\cG)}\otimes\Rep_{q}(G)_{\on{ren}} \overset{\coind_\cG}\longrightarrow \Rep_{q}(G)_{\on{ren}}
\end{equation} 
identifies with the composite
$$\Rep_{q}^{\frac{1}{2}}(G)_{\on{ren}} \overset{\oblv_{\frac{1}{2}\to \on{sml}}}\longrightarrow 
\Rep_{q}^{\on{sml}}(G)_{\on{ren}} \overset{\coind_{\on{sml}\to \on{big}}}\longrightarrow \Rep_{q}(G)_{\on{ren}}.$$

\medskip

We have:
\begin{multline}   \label{e:express baby}
\coind_{\on{sml}\to \on{big}}\circ \oblv_{\frac{1}{2}\to \on{sml}}\circ  \coind_{\on{Lus}^+\to \frac{1}{2}}(k^\clambda) \simeq
\coind_{\on{sml}\to \on{big}}\circ  \coind_{\on{sml}^+\to \on{sml}}(k^\clambda) \simeq \\
\simeq \coind_{\on{sml}^+\to \on{big}}(k^{\clambda}) \simeq 
\coind_{\on{Lus}^+\to \on{big}}\circ \coind_{\on{sml}^+\to \on{Lus}^+}(k^{\clambda})  \simeq \\
\simeq \coind_{\on{Lus}^+\to \on{big}}\left(\on{Frob}_{q}^*(\CO_\cB)\otimes k^\clambda\right). 
\end{multline} 

\medskip

We now note that $\CO_\cB\in \Rep(\cB)$ can be written as 
\begin{equation} \label{e:O B}
\underset{\nu\in \Lambda}\oplus\, \underset{\mu\in \Lambda^+}{\on{colim}}\, \, 
\oblv_{\cG\to \cB}(V^\mu) \otimes k^{-\nu-\mu}.
\end{equation} 

Hence, the RHS in \eqref{e:express baby} can be rewritten as 
$$\underset{\nu\in \Lambda}\oplus\, \underset{\mu\in \Lambda^+}{\on{colim}}\,\,  
\coind_{\on{Lus}^+\to \on{big}}\left(\on{Frob}_{q}^*(V^\mu) \otimes k^{\clambda-\nu-\mu}\right),$$
which we finally rewrite as
\begin{equation} \label{e:baby Verma as colimit}
\underset{\nu\in \Lambda}\oplus\, \underset{\mu\in \Lambda^+}{\on{colim}}\,\,  \on{Frob}_{q}^*(V^\mu) \otimes 
\coind_{\on{Lus}^+\to \on{big}}(k^{\clambda-\nu-\mu}).
\end{equation} 

Thus, we have identified the image of $\coind_{\on{Lus}^+\to \frac{1}{2}}(k^\clambda)$ under the functor
\eqref{e:big forget} with the object $c\in \Rep_{q}(G)_{\on{ren}}$ corresponding to the family \eqref{c:quantum c}. 

\sssec{} \label{sss:baby via DrPl verify}

We claim that the Hecke property and the $\cB$-action on 
\begin{equation} \label{e:baby induced}
\coind_{\on{sml}\to \on{big}}\circ \oblv_{\frac{1}{2}\to \on{sml}}\circ  \coind_{\on{Lus}^+\to \frac{1}{2}}(k^\clambda)
\end{equation} 
coincide with those on the colimit \eqref{e:baby Verma as colimit}, specified by the procedures in Sects. \ref{sss:Hecke structure} and \ref{sss:B-action}. 

\medskip

First off, in the formation of the colimit \eqref{e:baby Verma as colimit}, for each $\nu$, we can replace the index set $\{\mu\in \cLambda^+\}$
by its cofinal coset consisting of those $\mu$, for which $\clambda-\nu-\mu$ belongs to $-\cLambda^+\subset \cLambda$. In this case,
the terms appearing in the colimit belong to $(\Rep_{q}(G)_{\on{ren}})^\heartsuit$.  

\medskip

Hence, it suffices to check the corresponding assertions at the level of homotopy categories (i.e., homotopy-coherence is automatic). 

\sssec{}

Now, the fact that Hecke structure on \eqref{e:baby induced} is given, in terms of its presentation as \eqref{e:baby Verma as colimit},
by the procedure of \secref{sss:Hecke structure} follows from the corresponding property of $\CO_\cB$: the isomorphisms
$$\oblv_{\cG\to \cB}(V)\otimes \CO_{\cB} \simeq  \CO_{\cB}\otimes \ul{V}$$
are given by
\begin{multline*} 
\oblv_{\cG\to \cB}(V) \otimes \oblv_{\cG\to \cB}(V^\eta) \otimes k^{-\nu-\eta} 
\simeq \left(\underset{\epsilon}\oplus \, \oblv_{\cG\to \cB}(V^{\eta+\epsilon}) \otimes \ul{V}(\epsilon)\right) \otimes k^{-\nu-\eta} \simeq \\
\simeq \underset{\epsilon}\oplus \, \oblv_{\cG\to \cB}(V^{\eta+\epsilon})  \otimes k^{(-\nu+\epsilon)-\eta-\epsilon}\otimes \ul{V}(\epsilon).
\end{multline*}

\sssec{}

The fact that the $\cB$-actions agree follows from the fact that (at the level of homotopy categories), the Hecke structure determines
the $\cB$-action (see \secref{sss:B action det}), combined with the fact that the maps 
\begin{multline*} 
\coind_{\on{Lus}^+\to \on{big}}(k^{\clambda-\nu}) \to  \coind_{\on{Lus}^+\to \on{big}}\circ \coind_{\on{sml}^+\to \on{Lus}^+}(k^{\clambda}) \simeq \\
\simeq \coind_{\on{sml}\to \on{big}}\circ \oblv_{\frac{1}{2}\to \on{sml}}\circ  \coind_{\on{Lus}^+\to \frac{1}{2}}(k^{\clambda})
\end{multline*}
identify with 
\begin{multline*} 
\inv_{\cB}\left(k^{-\nu} \otimes \coind_{\on{sml}\to \on{big}}\circ \oblv_{\frac{1}{2}\to \on{sml}}\circ  \coind_{\on{Lus}^+\to \frac{1}{2}}(k^\clambda)\right)\to \\
\to \coind_{\on{sml}\to \on{big}}\circ \oblv_{\frac{1}{2}\to \on{sml}}\circ  \coind_{\on{Lus}^+\to \frac{1}{2}}(k^\clambda).
\end{multline*}

\ssec{The semi-infinite IC sheaf}  \label{ss:semiinf IC}

We will now start the process of transferring the baby Verma module to the Kac-Moody side, i.e., we would like to describe the
object of
$$\Rep(\cB)\underset{\Rep(\cG)}\otimes \KL(G,-\kappa)$$ 
corresponding under $\sF_{-\kappa}$ to
$$\coind_{\on{Lus}^+\to \frac{1}{2}}(k^\clambda)\in \Rep_{q}^{\frac{1}{2}}(G)_{\on{ren}}\simeq \Rep(\cB) 
\underset{\Rep(\cG)}\otimes\Rep_{q}(G)_{\on{ren}}.$$

A key tool for this will be a certain geometric object, introduced in \cite{Ga2} under the same ``semi-infinite IC sheaf". 

\sssec{}

Let $\ICs_{-\kappa}\in \Dmod_{-\kappa}(\Gr_G)^{T(\CO)}$ be the object introduced in \cite[Sect. 2.3]{Ga2}. Explicitly,
$$\underset{\mu\in \Lambda^+}{\on{colim}}\, t^{-\mu}\cdot \on{Sat}(V^\mu)[\langle \mu,2\check\rho\rangle].$$

\medskip

Let $\ICsd_{-\kappa}$ be its graded version, i.e.,
$$\underset{\nu\in \Lambda}\oplus\, t^{-\nu}\cdot\ICs_\kappa.$$

\medskip

It was shown in \cite[Sect. 6]{Ga2}, the object $c=\ICsd_{-\kappa}$ can be obtained by the Drifeld-Pl\"ucker formalism of \secref{sss:DrPl pattern}
starting from the collection of objects
$$c^\mu:=\delta_{t^\mu,\Gr}[\langle -\mu,2\check\rho\rangle]\in \Dmod_{-\kappa}(\Gr_G)^{T(\CO)},$$
and the maps
\begin{equation} \label{e:transition maps semiinf Gr}
\delta_{t^{\mu_2},\Gr}[\langle -\mu_2,2\check\rho\rangle] \star \on{Sat}((V^{\mu_1})^\vee)\to \delta_{t^{-\mu_1+\mu_2},\Gr}[\langle \mu_1-\mu_2,2\check\rho\rangle],
\end{equation} 
that come by adjunction from the canonical maps
$$\delta_{t^\mu,\Gr}[\langle -\mu,2\check\rho\rangle]\to \Sat(V^\mu).$$

\sssec{}

In particular, $\ICsd_{-\kappa}$ carries a Hecke structure and a $\cB$-action. Let $(\ICsd_{-\kappa})^{\on{enh}}$ denote the resulting object of 
$$\Rep(\cB)\underset{\Rep(\cG)}\otimes\Dmod_{-\kappa}(\Gr_G)^{T(\CO)}.$$

%\medskip
%
%The following is established in {\it loc.cit.}:
%
%\begin{thm}  \label{t:inv in semiinf}
%The tautological map $\delta_{1,\Gr}\to \ICsd_{-\kappa}$ identifies 
%$$\delta_{1,\Gr}\simeq \inv_{\cB}(\ICsd_{-\kappa}).$$
%\end{thm} 

\sssec{}

A key observation is that $\ICsd_{-\kappa}$ is $N(\CK)$-equivariant, see \cite[Proposition 2.3.7(a)]{Ga2}. 
Hence, we obtain that
$\ICsd_{-\kappa}$ is naturally an object of $\Dmod_{-\kappa}(\Gr_G)^{N(\CK)\cdot T(\CO)}$. 

\medskip

Since the forgetful functor
$$\Dmod_{-\kappa}(\Gr_G)^{N(\CK)\cdot T(\CO)}\to \Dmod_{-\kappa}(\Gr_G)^{T(\CO)}$$
is fully faithful, we obtain that $(\ICsd_{-\kappa})^{\on{enh}}$ is naturally an object of
$$\Rep(\cB)\underset{\Rep(\cG)}\otimes\Dmod_{-\kappa}(\Gr_G)^{N(\CK)\cdot T(\CO)}.$$

\ssec{The semi-infinite IC sheaf, Iwahori version}

\sssec{}

Recall now that according to \cite[Sect. 5]{Ga2}, the functor
$$\on{Av}^{I^-}_*: \Dmod_{-\kappa}(\Gr_G)^{T(\CO)}\to \Dmod_{-\kappa}(\Gr_G)^{I^-\cdot T(\CO)},$$
when restricted to
$$\Dmod_{-\kappa}(\Gr_G)^{N(\CK)\cdot T(\CO)}\subset  \Dmod_{-\kappa}(\Gr_G)^{T(\CO)}$$
defines an equivalence onto
$$\Dmod_{-\kappa}(\Gr_G)^I\subset \Dmod_{-\kappa}(\Gr_G)^{I^-\cdot T(\CO)}.$$

\medskip

Set
$$\overset{\bullet}\CF{}^\semiinf_{-\kappa}:=\on{Av}^{I^-}_*(\ICsd_{-\kappa}) \in \Dmod_{-\kappa}(\Gr_G)^I.$$

It is equipped with a Hecke structure and a compatible action of $\cB$. 
Let $(\overset{\bullet}\CF{}^\semiinf_{-\kappa})^{\on{enh}}$ denote the resulting object of 
$$\Rep(\cB)\underset{\Rep(\cG)}\otimes\Dmod_{-\kappa}(\Gr_G)^I.$$

\medskip

The following results from the construction of $\overset{\bullet}\CF{}^\semiinf_{-\kappa}$ in \cite[Sect. 4.5.1]{Ga2}:

\begin{thm} \label{t:inv in semiinf}
The tautological map $\delta_{1,\Gr}\to \overset{\bullet}\CF{}^\semiinf_{-\kappa}$ defines an isomorphism
$$\delta_{1,\Gr}\simeq \inv_{\cB}(\overset{\bullet}\CF{}^\semiinf_{-\kappa}).$$
\end{thm}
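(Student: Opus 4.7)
The plan is to deduce the theorem from the Drinfeld-Plücker description of $\ICsd_{-\kappa}$ given in \secref{ss:semiinf IC}, by passing through the equivalence
$$\on{Av}^{I^-}_*: \Dmod_{-\kappa}(\Gr_G)^{N(\CK)\cdot T(\CO)} \xrightarrow{\sim} \Dmod_{-\kappa}(\Gr_G)^I$$
of \cite[Sect. 5]{Ga2}. Since the $\cB$-actions on both $\ICsd_{-\kappa}$ and $\overset{\bullet}\CF{}^\semiinf_{-\kappa}$ are induced from the Hecke action of $\Rep(\cG)$ via the Drinfeld-Plücker formalism (\secref{sss:B-action}), and since $\on{Av}^{I^-}_*$ is $\Rep(\cG)$-linear (as it is implemented through the Hecke action), it commutes with the operation of taking $\inv_\cB$. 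Thus the theorem reduces to the corresponding statement for $\ICsd_{-\kappa}$, namely $\inv_\cB(\ICsd_{-\kappa}) \simeq \delta_{1,\Gr}$, once we observe that $\delta_{1,\Gr}$ is already $I^-$-equivariant (being supported at the $G(\CO)$-fixed basepoint and with $I^-\subset G(\CO)$), so $\on{Av}^{I^-}_*(\delta_{1,\Gr})\simeq \delta_{1,\Gr}$.

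I would establish this latter identification by proving the following general statement about the Drinfeld-Plücker setup: given a family $\{c^\mu\}$ in a $\Rep(\cG)$-module category $\CC$ with associated object $c$ and $\cB$-action as in \secref{sss:B-action}, the map $\phi_0: c^0 \to c$ factors through a canonical isomorphism $c^0 \xrightarrow{\sim} \inv_\cB(c)$. Applied to the family $c^\mu = \delta_{t^\mu,\Gr}[\langle -\mu,2\check\rho\rangle]$ underlying $\ICsd_{-\kappa}$, this gives precisely the required statement, and the tautological map $\delta_{1,\Gr}\to \overset{\bullet}\CF{}^\semiinf_{-\kappa}$ corresponds to $\phi_0$ evaluated at $\mu=0$, followed by $\on{Av}^{I^-}_*$.

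The crux is thus to prove the general assertion $c^0\simeq \inv_\cB(c)$, and the strategy is as follows. Using the interpretation in \secref{sss:jmath}, the category $\Rep(\cB)\underset{\Rep(\cG)}\otimes \CC$ is obtained from $\on{DrPl}(\CC) = \CO_{\ol{\cG/\cN}}\mod(\CC\otimes \Rep(\cT))$ by pullback along the open embedding $\jmath: \cG/\cN \to \ol{\cG/\cN}$, and the enhanced object $c^{\on{enh}}$ is $\jmath^*$ applied to the family. By $\CC$-linearity, it suffices to treat the universal case $\CC = \Rep(\cG)\otimes \Rep(\cT)$ with $\{c^\mu\} = \CO_{\ol{\cG/\cN}}$, for which $c = \CO(\cG/\cN)$ with its left $\cB$-action and $c^0 = k$; here the claim reduces to the classical fact that $\CO(\cG/\cN)^\cB = k$.

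The main obstacle is verifying compatibility of the various notions of $\cB$-invariants: one must check that the abstract functor $\inv_\cB$ on $\Rep(\cB)\underset{\Rep(\cG)}\otimes \CC$ matches the naive $\cB$-invariants of $c$ computed through the action described in \secref{sss:B action det}, and that base change through $\jmath^*$ interacts correctly with this functor. Once this bookkeeping is in place, the theorem follows formally. This matches the construction of $\overset{\bullet}\CF{}^\semiinf_{-\kappa}$ given in \cite[Sect. 4.5.1]{Ga2}, from which the result is extracted.
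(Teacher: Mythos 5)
You are attempting an independent proof here: the paper itself does not argue this statement but imports it from the construction of $\overset{\bullet}\CF{}^\semiinf_{-\kappa}$ in \cite[Sect. 4.5.1]{Ga2}. Your first reduction is fine as far as it goes: $\on{Av}^{I^-}_*$ commutes with the right convolution action of $\Rep(\cG)$, hence carries the Hecke/$\cB$-structure of $\ICsd_{-\kappa}$ to that of $\on{Av}^{I^-}_*(\ICsd_{-\kappa})$, it commutes with the limit defining $\inv_\cB$ (being a continuous right adjoint), and it fixes $\delta_{1,\Gr}$; so the theorem would indeed follow from the analogous assertion one step earlier, provided one is careful about the ambient category in which $\inv_\cB$ is computed (note that $\ICsd_{-\kappa}$ lives in the $N(\CK)\cdot T(\CO)$-equivariant category, which contains no copy of $\delta_{1,\Gr}$, so the invariants must be formed in $\Dmod_{-\kappa}(\Gr_G)^{T(\CO)}$ and must be shown to leave the equivariant subcategory --- already a hint that something non-formal is happening).

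The genuine gap is at what you yourself call the crux. The general lemma ``for any Drinfeld--Pl\"ucker family $\{c^\mu\}$ the map $\phi_0$ induces $c^0\simeq \inv_\cB(c)$'' is false. The object $c$, and hence $\inv_\cB(c)$, depends only on $c^{\on{enh}}=\jmath^*\{c^\mu\}$, whereas $c^0$ remembers the family; what is true is $\inv_\cB(c)\simeq \coind_{\cB\to\cG}(c^{\on{enh}})=(\jmath_*\jmath^*\{c^\mu\})^0$, which is exactly your computation for families in the image of $\jmath_*$. For families not in that image the claim fails: e.g. the skyscraper at the cone point of $\ol{\cG/\cN}$, i.e. $c^0=k$ and $c^\mu=0$ otherwise with zero Pl\"ucker maps, is a perfectly good $\CO_{\ol{\cG/\cN}}$-module with $c=0$. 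For the same reason the proposed ``reduction by $\CC$-linearity to the universal case'' is not available: a general $\CO_{\ol{\cG/\cN}}$-module in $\CC\otimes\Rep(\cT)$ is not of the form $\CO_{\ol{\cG/\cN}}\otimes c_0$, and the family $c^\mu=\delta_{t^\mu,\Gr}[\langle -\mu,2\check\rho\rangle]$ is certainly not free, so the classical fact $\CO(\cG/\cN)^\cB=k$ does not transport. What your argument actually shows is that the theorem is equivalent to the statement that the unit $\{c^\mu\}\to \jmath_*\jmath^*\{c^\mu\}$ is an isomorphism in degree $0$ for this particular family, i.e. that $\delta_{1,\Gr}$ maps isomorphically to the $\cB$-invariants of $\underset{\mu}{\on{colim}}\, \Sat(V^\mu)\star \delta_{t^{-\mu},\Gr}[\langle\mu,2\check\rho\rangle]$; this is precisely the non-formal geometric input about $\ICs_{-\kappa}$ (its behaviour along the $N(\CK)$-orbit stratification) that \cite{Ga2} establishes and that your proposal presupposes rather than proves.
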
 

\sssec{}

Note that by construction, $(\overset{\bullet}\CF{}^\semiinf_{-\kappa})^{\on{enh}}$, viewed as an object of 
$$\Rep(\cB)\underset{\Rep(\cG)}\otimes\Dmod_{-\kappa}(\Gr_G)^{T(\CO)},$$
can be obtained by the procedure of \secref{sss:DrPl pattern} 
applied to the family of objects
$$c^\mu:=\on{Av}^{I^-}_*(\delta_{t^{\mu,\Gr}})[-\langle \mu,2\check\rho\rangle]\in \Dmod_{-\kappa}(\Gr_G)^{T(\CO)},$$
and the maps 
$$\on{Av}^{I^-}_*(\delta_{t^{\mu_2},\Gr})[\langle -\mu_2,2\check\rho\rangle]\star 
\on{Sat}((V^{\mu_1})^\vee)\to \on{Av}^{I^-}_*(\delta_{t^{\mu_1+\mu_2},\Gr})[\langle \mu_1-\mu_2,2\check\rho\rangle],$$  
induced by \eqref{e:transition maps semiinf Gr}.

\sssec{}

Consider now another family, namely,
$$'\!c^\mu:=J_\mu\star \delta_{1,\Gr},$$ 
and the maps
$$J_{\mu_2}\star \delta_{1,\Gr} \star \on{Sat}((V^{\mu_1})^\vee)\to J_{-\mu_1+\mu_2}\star \delta_{1,\Gr}$$
that come by adjunction from the canonical maps
$$J_{\mu}\star  \delta_{1,\Gr} \simeq j_{\mu,!}\star \delta_{1,\Gr}\simeq \pi_*(j_{\mu,!})
 \to \on{Sat}(V^\mu), \quad \mu\in \cLambda^+,$$
where $\pi:\Fl^{\on{aff}}_G\to \Gr_G$. 

\medskip

We claim that the resulting object 
$$'\!c^{\on{enh}}\in \Rep(\cB)\underset{\Rep(\cG)}\otimes\Dmod_{-\kappa}(\Gr_G)^{T(\CO)}$$
will be canonically isomorphic to
$$c^{\on{enh}}=(\overset{\bullet}\CF{}^\semiinf_{-\kappa})^{\on{enh}}.$$

\medskip

This follows from the fact that if $\mu\in \Lambda^+$, we have an evident identification, 
$$'\!c^{-\mu}\simeq  c^{-\mu},$$  
whereas $\Lambda^+\subset \Lambda$ is cofinal. 

\medskip

Since the forgetful functor $ \Dmod_{-\kappa}(\Gr_G)^I\to  \Dmod_{-\kappa}(\Gr_G)^{T(\CO)}$ is fully faithful, we obtain 
that the isomorphism $c^{\on{enh}}\simeq {}'\!c^{\on{enh}}$ holds also in 
$$\Rep(\cB)\underset{\Rep(\cG)}\otimes\Dmod_{-\kappa}(\Gr_G)^I$$

\sssec{}

In particular, we obtain an isomorphism 
$$\overset{\bullet}\CF{}^\semiinf_{-\kappa} \simeq \underset{\nu}\oplus\, \underset{\mu}{\on{colim}}\, j_{-\nu-\mu,*}\star \on{Sat}(V^\mu),$$
where the colimit is taken over the set of those $\mu$ for which $\nu+\mu\in \Lambda^+$. 

\medskip

The latter presentation makes it clear that $\overset{\bullet}\CF{}^\semiinf_{-\kappa}$ lies in $(\Dmod_{-\kappa}(\Gr_G)^I)^\heartsuit[d]$:
indeed, for $\nu\in \Lambda^{++}$, we have
$$J_{-\nu}\star \delta_{1,\Gr}\simeq j_{-\mu,*}\star \delta_{1,\Gr}\in (\Dmod_{-\kappa}(\Gr_G)^I)^\heartsuit[d],$$
while convolution with objects of the form $\on{Sat}(V)$ for $V\in \Rep(\cG)^\heartsuit$ is a t-exact endo-functor of $\Dmod_{-\kappa}(\Gr_G)^I$. 

\sssec{}

Let $\on{invol}$ denote the equivalence
\begin{equation} \label{e:inversion}
\Dmod_{-\kappa}(\Gr_G)^I\simeq \Dmod_{\kappa}(\Fl^{\on{aff}}_G)^{G(\CO)}
\end{equation}
given by the inversion on $G(\CK)$. We normalize it so that
$$\on{invol}(\delta_{1,\Gr})\simeq \pi^*(\delta_{1,\Gr}).$$

\medskip

Note that
$$\on{invol}(\CF\star \Sat(V))\simeq \Sat(V^\tau) \star \on{invol}(\CF), \quad V\in \Rep(\cG),$$
where $V^\tau$ is obtained by the action of the Cartan involution $\tau$ on $V$
(in particular, $(V^\mu)^\tau\simeq V^{-w_0(\tau)}$).

\medskip

Denote
$$\overset{\bullet}\CF{}^{\semiinf,\on{invol}}_\kappa:=\on{invol}(\overset{\bullet}\CF{}^\semiinf_{-\kappa})\in \Dmod_{\kappa}(\Fl^{\on{aff}}_G)^{G(\CO)}.$$

This object has a natural Hecke structure, and a compatible $\cB$-action. Explicitly,
$$\overset{\bullet}\CF{}^{\semiinf,\on{invol}}_\kappa \simeq \underset{\nu}\oplus\, \underset{\mu}{\on{colim}}\, 
\on{Sat}(V^{-w_0(\mu)})\star j_{\nu+\mu,*},$$
where the colimit is taken over the set of those $\mu$ for which $\nu+\mu\in \Lambda^+$.

\medskip

Denote by 
$(\overset{\bullet}\CF{}^{\semiinf,\on{invol}}_\kappa)^{\on{enh}}$ the corresponding object of 
$$\Rep(\cB)\underset{\Rep(\cG)}\otimes\Dmod_{\kappa}(\Fl^{\on{aff}}_G)^{G(\CO)}.$$

\medskip

From \thmref{t:inv in semiinf} we obtain:

\begin{cor} \label{c:inv in semiinf tau}
The tautological map $\pi^*(\delta_{1,\Gr})\to \overset{\bullet}\CF{}^{\semiinf,\on{invol}}_\kappa$ defines an isomorphism
$$\pi^*(\delta_{1,\Gr})\simeq \inv_{\cB}(\overset{\bullet}\CF{}^{\semiinf,\on{invol}}_\kappa).$$
\end{cor}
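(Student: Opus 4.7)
The plan is to deduce Corollary~\ref{c:inv in semiinf tau} formally from Theorem~\ref{t:inv in semiinf} by applying the equivalence $\on{invol}$ of \eqref{e:inversion} and tracking the $\cB$-equivariant structure. Since $\on{invol}$ is an equivalence of DG categories, it transports the isomorphism
$$\delta_{1,\Gr} \simeq \inv_{\cB}(\overset{\bullet}\CF{}^\semiinf_{-\kappa})$$
into an isomorphism inside $\Dmod_\kappa(\Fl^{\on{aff}}_G)^{G(\CO)}$ between $\on{invol}(\delta_{1,\Gr}) \simeq \pi^*(\delta_{1,\Gr})$ (by the chosen normalization) and $\on{invol}(\inv_\cB(\overset{\bullet}\CF{}^\semiinf_{-\kappa}))$. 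The only content is to identify the latter with $\inv_\cB(\overset{\bullet}\CF{}^{\semiinf,\on{invol}}_\kappa)$.

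First I would make precise how $\on{invol}$ interacts with Hecke structures. Writing $\on{invol}(\CF \star \Sat(V)) \simeq \Sat(V^\tau) \star \on{invol}(\CF)$ as recorded above, the Hecke eigen-property for $\overset{\bullet}\CF{}^\semiinf_{-\kappa}$ (i.e.\ isomorphisms $\Sat(V)\star(-) \simeq (-)\otimes \ul{V}$) transports under $\on{invol}$ to a Hecke eigen-property for $\overset{\bullet}\CF{}^{\semiinf,\on{invol}}_\kappa$ with the left action of $\Rep(\cG)$ precomposed with the Cartan involution $\tau$. Equivalently, we view $\overset{\bullet}\CF{}^{\semiinf,\on{invol}}_\kappa$ as a Hecke eigen-object for the standard $\Rep(\cG)$-action, and the compatible $\cB$-action on $\overset{\bullet}\CF{}^\semiinf_{-\kappa}$ gets transported to a compatible $\tau(\cB)$-action. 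Since the Cartan involution takes $\cB$ to a Borel conjugate to $\cB$, after an internal conjugation we may assume the $\cB$-action on $\overset{\bullet}\CF{}^{\semiinf,\on{invol}}_\kappa$ as defined in the text is precisely the image of the $\cB$-action on $\overset{\bullet}\CF{}^\semiinf_{-\kappa}$ under $\on{invol}$; this is indeed the choice the text makes implicitly through the explicit colimit expression $\underset{\nu}\oplus \underset{\mu}{\on{colim}}\, \Sat(V^{-w_0(\mu)}) \star j_{\nu+\mu,*}$.

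Second, since $\on{invol}$ is an equivalence between DG categories equipped with compatible $\Rep(\cG)$-actions, it induces an equivalence after tensoring with $\Rep(\cB)$ over $\Rep(\cG)$, and in particular commutes with the functor $\inv_\cB$. Therefore
$$\on{invol}\bigl(\inv_\cB(\overset{\bullet}\CF{}^\semiinf_{-\kappa})\bigr) \simeq \inv_\cB\bigl(\overset{\bullet}\CF{}^{\semiinf,\on{invol}}_\kappa\bigr),$$
and the tautological map is identified with $\pi^*(\delta_{1,\Gr}) \to \overset{\bullet}\CF{}^{\semiinf,\on{invol}}_\kappa$ because under $\on{invol}$ the map $\delta_{1,\Gr} \to \overset{\bullet}\CF{}^\semiinf_{-\kappa}$ (which in the colimit presentation is the $\nu=0,\mu=0$ term) goes to the corresponding structure map out of $\pi^*(\delta_{1,\Gr})$ in the colimit presentation of $\overset{\bullet}\CF{}^{\semiinf,\on{invol}}_\kappa$.

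The only step requiring care is the bookkeeping in the previous paragraph: matching the $\cB$-action on $\overset{\bullet}\CF{}^{\semiinf,\on{invol}}_\kappa$ as it arises from the colimit definition in the text with the one transported from $\overset{\bullet}\CF{}^\semiinf_{-\kappa}$ via $\on{invol}$. This is essentially a normalization issue, and the characterization in \secref{sss:B action det} (which says that the $\cB$-action on a Drinfeld--Pl\"ucker object is uniquely determined by the map $\phi_0$ together with the Hecke structure) reduces it to checking compatibility on the map $\pi^*(\delta_{1,\Gr}) = \on{invol}(\delta_{1,\Gr}) \to \overset{\bullet}\CF{}^{\semiinf,\on{invol}}_\kappa$, which is tautological.
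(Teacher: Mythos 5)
Your proof is correct and is essentially the paper's own argument: the paper gives no explicit proof of the corollary, merely stating that it follows from Theorem~\ref{t:inv in semiinf}, and the intended deduction is precisely the transport along $\on{invol}$ that you carry out, with the care about compatible Hecke/$\cB$-structure being exactly the bookkeeping that the paper elides. One minor point: in the paper's conventions (as spelled out in \secref{sss:F-}) the Cartan involution $\tau$ is normalized to preserve $\cB$, so the ``internal conjugation'' you invoke is not actually needed—the $\cB$-action transports directly.
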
 

\sssec{}   \label{sss:semiinf tau family}

By construction, $\overset{\bullet}\CF{}^{\semiinf,\on{invol}}_\kappa$,
equipped with the above pieces of structure, arises by the procedure of \secref{sss:DrPl pattern} 
from the collection of objects
$$c^\mu:=\on{Av}_*^{G(\CO)/I}(J^{\BD}_{-\mu})\in \Dmod_{\kappa}(\Fl^{\on{aff}}_G)^{G(\CO)}$$
where we regard $\Dmod_{\kappa}(\Fl_G)^{G(\CO)}$ as acted on by $\Rep(\cG)$ via $\on{Sat}\circ \tau$. 
The corresponding maps
\begin{equation} \label{e:tau maps}
\on{Sat}((V^{-w_0(\mu_1)})^\vee)\star \on{Av}_*^{G(\CO)/I}(J^{\BD}_{-\mu_2}) \to \on{Av}_*^{G(\CO)/I}(J^{\BD}_{\mu_1-\mu_2}), \quad \mu_1\in \Lambda^+
\end{equation} 
are obtained by adjunction from the canonical maps
$$\on{Av}_*^{G(\CO)/I}(J^{\BD}_{-\mu})=\on{Av}_*^{G(\CO)/I}(j_{-\mu,!})\to \pi^*(\on{Sat}(V^{-w_0(\mu)})), \quad \mu\in \Lambda^+.$$

\ssec{The baby Verma object via Wakimoto modules: positive level case}

In this subsection we will be able to carry out the program indicated in the preamble to \secref{ss:semiinf IC} but for the Kazhdan-Lusztig
category at the positive level. 

\medskip

This will lead to the proof of \thmref{t:Weyl positive}, and by duality, to that of \thmref{t:Weyl}.   

\sssec{}

Recall that the (dual) Kazhdan-Lusztig equivalence
$$\sF_{\kappa}:\KL(G,\kappa)\simeq \Rep_{q^{-1}}(G)_{\on{ren}}$$
respects the action of $\Rep(\cG)$, where $\Rep(\cG)$ acts on $\KL(G,\kappa)$ via $\on{Sat}\circ \tau$.

\medskip

Hence, it induces an equivalence
\begin{equation} \label{e:KL B}
\Rep(\cB) \underset{\Rep(\cG)}\otimes\KL(G,\kappa)\simeq  
\Rep(\cB) \underset{\Rep(\cG)}\otimes \Rep_{q^{-1}}(G)_{\on{ren}},
\end{equation}
where we identify the right-hand side with $\Rep_{q^{}-1}^{\frac{1}{2}}(G)_{\on{ren}}$.

\medskip

Recall that for $\clambda\in \cLambda$, we have the object
$$\coind_{\on{Lus}^+\to \frac{1}{2}}(k^\clambda)\in \Rep_{q^{-1}}^{\frac{1}{2}}(G)_{\on{ren}}.$$

\medskip

In this subsection we will identify the image of this object under the equivalence \eqref{e:KL B}. 

\sssec{}

Consider the object
$$(\overset{\bullet}\CF{}^{\semiinf,\on{invol}}_\kappa)^{\on{enh}}\star \BD(\BW_{-\kappa}^{-\clambda})\in 
\Rep(\cB) \underset{\Rep(\cG)}\otimes\KL(G,\kappa).$$

We claim:

\begin{thm}  \label{t:identify baby pos}
The object $(\overset{\bullet}\CF{}^{\semiinf,\on{invol}}_\kappa)^{\on{enh}}\star \BD(\BW_{-\kappa}^{-\clambda})$ 
corresponds to $\coind_{\on{Lus}^+\to \frac{1}{2}}(k^\clambda)$
under the equivalence \eqref{e:KL B}.
\end{thm}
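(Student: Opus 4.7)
Both sides of the claimed isomorphism arise as $\jmath^*$ (in the sense of Section~\ref{sss:jmath}) of explicit Drinfeld-Pl\"ucker families. The plan is to show that $\sF_\kappa$ carries the Kac-Moody family, together with its transition data, to the quantum one, and then invoke the functoriality of $\jmath^*$ to conclude.

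\textbf{The two families.} By Section~\ref{sss:semiinf tau family}, $(\overset{\bullet}\CF{}^{\semiinf,\on{invol}}_\kappa)^{\on{enh}}$ is $\jmath^*$ of the family $\mu\mapsto \on{Av}^{G(\CO)/I}_*(J^{\BD}_{-\mu})$ with transition maps~\eqref{e:tau maps}. Convolving termwise with $\BD(\BW^{-\clambda}_{-\kappa})$ shows that the left-hand side of the theorem arises from
$$c^\mu_{\rm KM}:=\on{Av}^{G(\CO)/I}_*(J^{\BD}_{-\mu})\star\BD(\BW^{-\clambda}_{-\kappa}),\quad \mu\in\Lambda.$$
In parallel, the computation of Section~\ref{ss:Dr-Pl baby} applied for $q^{-1}$ shows that $\coind_{\on{Lus}^+\to\frac{1}{2}}(k^\clambda)$ is $\jmath^*$ of
$$c^\mu_Q:=\coind_{\on{Lus}^+\to\on{big}}(k^{\clambda+\mu})\simeq \CV^{\vee,w_0(\clambda+\mu)}_{q^{-1}},\quad \mu\in\Lambda,$$
with transition maps~\eqref{e:transition maps quantum}.

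\textbf{Matching on a cofinal subsystem.} In the formation of $\jmath^*$, the only terms that enter are $V^\mu\star c^{-\nu-\mu}$ with $\mu\in\Lambda^+$, and one may restrict to the cofinal subset on which $\nu+\mu\in\Lambda^+$ is large. In this regime $J^{\BD}_{\nu+\mu}\simeq j_{\nu+\mu,*}$ up to shift, and combining Verdier duality on D-modules on the affine flag scheme with Corollary~\ref{c:conv Wak all} yields
$$c^{-\nu-\mu}_{\rm KM}\simeq \on{Av}^{G(\CO)/I}_*\bigl(\BD(\BW^{-\clambda+\nu+\mu}_{-\kappa})\bigr)=\BV^{w_0(\clambda-\nu-\mu)}_\kappa,$$
by the very definition of the positive-level Weyl modules. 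For $\mu\in\Lambda^+$ large, $w_0(\clambda-\nu-\mu)\in\cLambda^+$, and the dominant case of Theorem~\ref{t:Weyl positive}, which follows at once from the original Kazhdan-Lusztig equivalence by the duality of Section~\ref{sss:duality big}, gives
$$\sF_\kappa(c^{-\nu-\mu}_{\rm KM})\simeq \CV^{\vee,w_0(\clambda-\nu-\mu)}_{q^{-1}}=c^{-\nu-\mu}_Q.$$
The transition maps of $c^\bullet_{\rm KM}$ are, by~\eqref{e:tau maps}, adjoint to the canonical morphisms $\on{Av}^{G(\CO)/I}_*(j_{-\mu,!})\to \pi^*(\on{Sat}(V^{-w_0(\mu)}))$. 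Since $\sF_\kappa$ intertwines the action of $\Rep(\cG)$ by $\on{Sat}\circ\tau$ on the Kac-Moody side with the action by $\on{Frob}_{q^{-1}}^*$ on the quantum side, these maps translate into the projection to the $(-\mu)$-weight line of $\on{Frob}_{q^{-1}}^*((V^\mu)^\vee)|_{\Rep_{q^{-1}}(B)}$ tensored with the appropriate costandard object, which is precisely~\eqref{e:transition maps quantum}. Since a Drinfeld-Pl\"ucker object is determined by any cofinal subsystem of its data, functoriality of $\jmath^*$ completes the identification.

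\textbf{Main obstacle.} The technical core is the last step: verifying that under $\sF_\kappa$ the geometric canonical morphisms $\on{Av}^{G(\CO)/I}_*(j_{-\mu,!})\to \pi^*(\on{Sat}(V^{-w_0(\mu)}))$ correspond on the nose, and not merely up to automorphisms, to the restriction-to-the-highest-weight-line projections on the quantum side. This reduces to compatibility of $\sF_{-\kappa}$ with the natural transformations \eqref{e:sph with Weyl}~$\mapsto$~\eqref{e:Frob with Weyl}, a well-established feature of the abelian Kazhdan-Lusztig equivalence, combined with careful bookkeeping for the Cartan involution $\tau$ and for the cohomological shifts introduced by Verdier duality on the affine flag scheme.
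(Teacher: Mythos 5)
Your proposal follows essentially the same route as the paper: express both sides as $\jmath^*$ of Drinfeld-Pl\"ucker families, observe that it suffices to match the families on a cofinal subset where the relevant weights are anti-dominant so that the objects lie in the heart (making homotopy-coherence automatic), invoke the dominant case of Theorem~\ref{t:Weyl positive} to match the individual terms, and then check compatibility of the transition maps using that $\sF_\kappa$ intertwines the Hecke action with quantum Frobenius. The one point you correctly flag as "the technical core" --- that the geometric maps $\on{Av}^{G(\CO)/I}_*(j_{-\mu,!})\to\pi^*(\on{Sat}(V^{-w_0(\mu)}))$ land on the projection to the extremal weight line on the quantum side --- is handled in the paper (Sections~\ref{sss:verify pos 1}--\ref{sss:verify pos 3}) by tracking highest-weight vectors and reducing to the compatibility \eqref{e:sph with Weyl}~$\leftrightarrow$~\eqref{e:Frob with Weyl}, which is exactly the reduction you propose.
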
 

\begin{rem}
The proof of \thmref{t:identify baby pos} that we give below will be ``artificial" in that it will result from some 
explicit calculation. However, later on, in \secref{s:AB action}, we will show ``why" \thmref{t:identify baby pos} 
should hold, based on an additional property of the conjectural equivalence \eqref{e:main functor}.
\end{rem} 

\sssec{}  \label{sss:proof of Weyl}

Before we prove \thmref{t:identify baby pos}, let us show that it implies \thmref{t:Weyl positive}, and hence \thmref{t:Weyl}. Indeed, 
consider the objects
$$\overset{\bullet}\CF{}^{\semiinf,\on{invol}}_\kappa \star \BD(\BW_{-\kappa}^{-\clambda})\in \KL(G,\kappa)$$
and
$$\coind_{\on{sml}\to \on{big}}\circ \oblv_{\frac{1}{2}\to \on{sml}}\circ  \coind_{\on{Lus}^+\to \frac{1}{2}}(k^\clambda)\in 
\Rep_{q^{-1}}(G)_{\on{ren}},$$
regarded as equipped with $\cB$-actions.

\medskip

According to \thmref{t:identify baby pos}, these two objects correspond to one another under the equivalence $\sF_\kappa$.
Hence, so do the corresponding objects obtained by applying $\inv_\cB$.

\medskip

However,
$$\inv_\cB\left(\coind_{\on{sml}\to \on{big}}\circ \oblv_{\frac{1}{2}\to \on{sml}}\circ  \coind_{\on{Lus}^+\to \frac{1}{2}}(k^\clambda)\right)\simeq
\coind_{\on{Lus}^+\to \on{big}}(k^\clambda)=:\CV^{\vee,w_0(\clambda)}_{q^{-1}},$$
while according to \corref{c:inv in semiinf tau}, we have
\begin{multline*} 
\inv_\cB\left(\overset{\bullet}\CF{}^{\semiinf,\on{invol}}_\kappa \star \BD(\BW_{-\kappa}^{-\clambda})\right)\simeq
\inv_\cB\left(\overset{\bullet}\CF{}^{\semiinf,\on{invol}}_\kappa\right) \star \BD(\BW_{-\kappa}^{-\clambda})\simeq \simeq \\
\simeq \pi^*(\delta_{1,\Gr})\star \BD(\BW_{-\kappa}^{-\clambda})\simeq \on{Av}^{G(\CO)/I}_*(\BD(\BW_{-\kappa}^{-\clambda}))=:
\BV_{\kappa}^{w_0(\clambda)},
\end{multline*} 
as desired.

\qed

\sssec{}

The rest of this subsection is devoted to the proof of \thmref{t:identify baby pos}. We will show that the image of 
$(\overset{\bullet}\CF{}^{\semiinf,\on{invol}}_\kappa)^{\on{enh}}\star \BD(\BW_{-\kappa}^{-\clambda})$ under the functor 
\eqref{e:KL B} and $\coind_{\on{Lus}^+\to \frac{1}{2}}(k^\clambda)$ are obtained by the Drinfeld-Pl\"ucker formalism of \secref{sss:DrPl pattern} 
from equivalent families of objects $c^\mu$ and $'\!c^\mu$. 

\medskip

On the one hand, by \secref{sss:semiinf tau family}, the object $(\overset{\bullet}\CF{}^{\semiinf,\on{invol}}_\kappa)^{\on{enh}}\star \BD(\BW_{-\kappa}^{-\clambda})$
is obtained from the family of objects
$$c^\mu:=\on{Av}_*^{G(\CO)/I}(J^{\BD}_{-\mu})\star \BD(\BW_{-\kappa}^{-\clambda})=:\BV^{w_0(\clambda+\mu)}_\kappa,$$
where the transition maps
\begin{equation} \label{e:transition map KL}
\on{Sat}((V^{-w_0(\mu_1)})^\vee)\star \BV^{w_0(\clambda+\mu_2)}_\kappa\to \BV^{w_0(\clambda-\mu_1+\mu_2)}_\kappa
\end{equation} 
are obtained from the maps \eqref{e:tau maps} by convolution.

\medskip

On the other hand, by Sects. \ref{sss:baby via DrPl}-\ref{sss:baby via DrPl verify}, the object $\coind_{\on{Lus}^+\to \frac{1}{2}}(k^\clambda)$
is obtained from the collection of objects
$$'\!c^\mu:=\coind_{\on{Lus}^+\to \on{big}}(k^{\clambda+\mu})=:\CV_{q^{-1}}^{\vee,w_0(\clambda+\mu)},$$
and the transtition maps 
\begin{multline}  \label{e:tran map quantum}
\on{Frob}_{q^{-1}}^*((V^{\mu_1})^\vee)\otimes \coind_{\on{Lus}^+\to \on{big}}(k^{\clambda+\mu_2}) \simeq \\
\simeq \coind_{\on{Lus}^+\to \on{big}}\left(\on{Frob}_{q}^*((V^{\mu_1})^\vee)|_{\Rep_q(B)}\otimes k^{\clambda+\mu_2}\right)
\to \coind_{\on{Lus}^+\to \on{big}}(k^{\clambda-\mu_1+\mu_2}).
\end{multline} 

\medskip

It suffices to show that the families $\sF_\kappa(c^{-\mu})$ and $'\!c^{-\mu}$ can be identified 
for $\mu$ running over a subset cofinal in $\Lambda$. 
We take this subset to consist of those $\mu$ for which $\clambda-\mu$ is anti-dominant. 

\sssec{}  \label{sss:verify pos 1}

Indeed, if $\clambda-\mu$ is anti-dominant, we do know that
$$\sF_\kappa(\BV^{w_0(\clambda-\mu)}_\kappa)\simeq \CV_{q^{-1}}^{\vee,w_0(\clambda-\mu)}.$$

Since these objects lie in $(\Rep_{q^{-1}}(G)_{\on{ren}})^\heartsuit$, the compatibility with the
transition maps is sufficient to check at the level of 1-morphisms (i.e., the compatibility with
homotopy-coherence is automatic). 

\sssec{}

On the one hand, we note that the maps \eqref{e:tran map quantum} are obtained by duality from the maps \eqref{e:Frob with Weyl}
$$\CV_q^{\mu_1-(\clambda+\mu_2)} \to \on{Frob}_q^*(V^{\mu_1})\otimes \CV_q^{-(\clambda+\mu_2)},$$
which are the images under $\sF_{-\kappa}$ of the maps \eqref{e:sph with Weyl}
\begin{equation} \label{e:transition map KL again}
\BV_{-\kappa}^{\mu_1-(\clambda+\mu_2)} \to \Sat(V^{\mu_1}) \star \BV_{-\kappa}^{-(\clambda+\mu_2)}.
\end{equation}

\medskip

On the other hand, the transition maps \eqref{e:transition map KL} are obtained by duality from the maps
\begin{equation} \label{e:transition map KL again again}
\on{Av}^{G(\CO)/I}_!(\BW_{-\kappa}^{\mu_1-\clambda-\mu_2}) \simeq 
\on{Av}^{G(\CO)/I}_!(J_{\mu_1}\star \BW_{-\kappa}^{-\clambda-\mu_2}) \to 
\on{Sat}(V^{\mu_1})\star \on{Av}^{G(\CO)/I}_!(\BW_{-\kappa}^{-\clambda-\mu_2}),
\end{equation}
induced by the maps 
\begin{equation} \label{e:st to sph}
\on{Av}^{G(\CO)/I}_!(J_{\mu})=\on{Av}^{G(\CO)/I}_!(j_{\mu,!}) \to \pi^!(\on{Sat}(V^{\mu})), \quad \mu\in \Lambda^+.
\end{equation}

\sssec{} \label{sss:verify pos 3}

Recall that for $\clambda'\in \cLambda^+$ we identify $\on{Av}^{G(\CO)/I}_!(\BW_{-\kappa}^{\clambda'})$ with $\BV_{-\kappa}^{\clambda'}$ via
$$\BW_{-\kappa}^{\clambda'}\simeq \BM_{-\kappa}^{\clambda'}$$
and
\begin{multline*} 
\on{Av}^{G(\CO)/I}_!(\BM_{-\kappa}^{\clambda'})=
\on{Av}^{G(\CO)/I}_!\left(\on{Ind}_{\fg(\CO)}^{\hg_{-\kappa}}(M^{\clambda'})\right)\simeq
\on{Ind}_{\fg(\CO)}^{\hg_{-\kappa}}\left(\on{Av}^{G(\CO)/I}_!(M^{\clambda'})\right)\simeq \\
\simeq \on{Ind}_{\fg(\CO)}^{\hg_{-\kappa}}\left(\on{Av}^{G/B}_!(M^{\clambda'})\right)\simeq 
\on{Ind}_{\fg(\CO)}^{\hg_{-\kappa}}(V^{\clambda'})\simeq \BV_{-\kappa}^{\clambda'}.
\end{multline*} 

So we need to show that for $\clambda'\in \cLambda^+$ and $\mu\in \Lambda^+$, the map
\begin{multline*} 
\BV_{-\kappa}^{\mu+\clambda'}\simeq \on{Av}^{G(\CO)/I}_!(\BM_{-\kappa}^{\mu+\clambda'})\simeq
\on{Av}^{G(\CO)/I}_!(j_{\mu,!} \star j_{-\mu,*}\star \BM_{-\kappa}^{\mu+\clambda'}) \overset{\text{\eqref{e:twist Verma}}}\longrightarrow \\
\to \on{Av}^{G(\CO)/I}_!(j_{\mu,!} \star \BM_{-\kappa}^{\clambda'})  \overset{\text{\eqref{e:st to sph}}}\longrightarrow 
\on{Sat}(V^\mu) \star \on{Av}^{G(\CO)/I}_!(\BM_{-\kappa}^{\clambda'})\simeq \on{Sat}(V^\mu) \star  \BV_{-\kappa}^{\clambda'}
\end{multline*}
coincides with the canonical map
$$\BV_{-\kappa}^{\mu+\clambda'} \to \on{Sat}(V^\mu) \star  \BV_{-\kappa}^{\clambda'}.$$

However, this follows by unwinding the definitions by tracking the image of the highest weight vector. 

\ssec{The baby Verma object via Wakimoto modules: the negative level case}

Even though we have already proved \thmref{t:Weyl}, we would now like it to prove it more directly,
without appealing to positive vs negative level duality. 

\medskip

For this we will need to carry out the program
indicated in the preamble to \secref{ss:semiinf IC} directly in the negative level case. 

\sssec{}

Consider the original Kazhdan-Lusztig equivalence
$$\sF_{-\kappa}:\KL(G,-\kappa)\simeq \Rep_q(G)_{\on{ren}},$$
and the induced equivalence 

\medskip

It induces an equivalence
\begin{equation} \label{e:KL B neg}
\Rep(\cB) \underset{\Rep(\cG)}\otimes\KL(G,-\kappa)\simeq  
\Rep(\cB) \underset{\Rep(\cG)}\otimes \Rep_q(G)_{\on{ren}}.
\end{equation}

In this subsection we will identify the image of
$$\ind_{\on{Lus}^+\to \frac{1}{2}}(k^\clambda)\in \Rep_q^{\frac{1}{2}}(G)_{\on{ren}}\simeq 
\Rep(\cB) \underset{\Rep(\cG)}\otimes \Rep_q(G)_{\on{ren}}$$
as an object of $\Rep(\cB) \underset{\Rep(\cG)}\otimes\KL(G,-\kappa)$ under the equivalence \eqref{e:KL B neg}. 

\sssec{}  \label{sss:F-}

Consider the following variant of the object $(\overset{\bullet}\CF{}^{\semiinf,\on{invol}}_\kappa)^{\on{enh}}$, denoted
$$(\overset{\bullet}\CF{}^{-,\semiinf,\on{invol}}_{-\kappa})^{\on{enh}}\in \Rep(\cB)\underset{\Rep(\cG)}\otimes\Dmod_{-\kappa}(\Fl^{\on{aff}}_G)^{G(\CO)}.$$

Namely, 
$$(\overset{\bullet}\CF{}^{\semiinf,\on{invol}}_{-\kappa})^{\on{enh}}:=\left((\overset{\bullet}\CF{}^{\semiinf,\on{invol}}_{-\kappa})^{\on{enh}}\star j_{w_0,*}\right)^\tau[d],$$
where $\tau$ is the Cartan involution on $\cG$ (normalized so that it preserves $\cB$ and acts as $\mu\mapsto -w_0(\mu)$ on the weights).  

\medskip

Explicitly, $(\overset{\bullet}\CF{}^{-,\semiinf,\on{invol}}_{-\kappa})^{\on{enh}}$ is obtained by the Drinfeld-Pl\"ucker formalism of \secref{sss:DrPl pattern} from
the collection of objects
$$c^\mu:=\on{Av}^{G(\CO)/I}_!(J_\mu),$$
and the transition maps 
\begin{equation} \label{e:trans maps neg}
\Sat((V^{\mu_1})^\vee) \star \on{Av}^{G(\CO)/I}_!(J_{\mu_2})\to \on{Av}^{G(\CO)/I}_!(J_{-\mu_1+\mu_2})
\end{equation}
coming by adjunction from the maps
$$\on{Av}^{G(\CO)/I}_!(J_\mu) =\on{Av}^{G(\CO)/I}_!(j_{\mu,!}) \to \pi^!(\Sat(V^\mu)), \quad \mu\in \Lambda^+.$$

The corresponding object $\overset{\bullet}\CF{}^{-,\semiinf,\on{invol}}_{-\kappa}\in \Dmod_{-\kappa}(\Fl^{\on{aff}}_G)^{G(\CO)}$ is given by
$$\underset{\nu\in \Lambda}\oplus\, \underset{\mu\in \Lambda^+}{\on{colim}}\, \on{Sat}(V^\mu)\star \on{Av}^{G(\CO)/I}_!(J_{-\mu-\nu}).$$

\sssec{}

We claim:

\begin{thm} \label{t:identify baby neg}
Under the equivalence \eqref{e:KL B neg}, the object $\ind_{\on{Lus}^+\to \frac{1}{2}}(k^\clambda)$ corresponds to
$$(\overset{\bullet}\CF{}^{-,\semiinf,\on{invol}}_{-\kappa})^{\on{enh}}\star J_{-2\rho}[-d]\star \BW_{-\kappa}^{\clambda}.$$
\end{thm}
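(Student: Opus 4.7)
The plan is to prove Theorem~\ref{t:identify baby neg} in direct parallel with the proof of Theorem~\ref{t:identify baby pos}, by realizing both sides via the Drinfeld-Pl\"ucker formalism of \secref{ss:Dr-Pl} starting from collections of objects $c^\mu$ and $'\!c^\mu$, $\mu\in\Lambda$, that correspond to one another under the Kazhdan-Lusztig equivalence $\sF_{-\kappa}:\KL(G,-\kappa)\simeq \Rep_q(G)_{\on{ren}}$.

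On the quantum-group side, I will compute the DrPl family for $\ind_{\on{Lus}^+\to\frac{1}{2}}(k^\clambda)$ by mimicking the argument of \secref{ss:Dr-Pl baby}, with $\ind$ replacing $\coind$ throughout. Using the identity $\oblv_{\frac{1}{2}\to\on{sml}}\circ \ind_{\on{Lus}^+\to\frac{1}{2}}(k^\clambda) \simeq \ind_{\on{sml}^+\to\on{sml}}(k^\clambda)$, converting via \corref{c:ind and coind from Borel to small} to $\coind_{\on{sml}^+\to\on{sml}}(k^{\clambda+2\check\rho-2\rho_H})$, and invoking the derivation of \secref{sss:baby via DrPl} to obtain
$$\coind_{\on{sml}\to\on{big}}\circ\oblv_{\frac{1}{2}\to\on{sml}}\circ \ind_{\on{Lus}^+\to\frac{1}{2}}(k^\clambda)\simeq \coind_{\on{Lus}^+\to\on{big}}\bigl(\on{Frob}_q^*(\CO_\cB)\otimes k^{\clambda+2\check\rho-2\rho_H}\bigr),$$
and finally applying \corref{c:big ind an coind} to convert $\coind$ back to $\ind$ (paying a twist of $-2\check\rho$ and a shift of $[-d]$), together with the presentation \eqref{e:O B} of $\CO_\cB$, the resulting DrPl family is
$$'\!c^\mu = \CV_q^{\clambda-2\rho_H+\mu}[-d] = \ind_{\on{Lus}^+\to\on{big}}(k^{\clambda-2\rho_H+\mu})[-d],$$
equipped with transition maps coming (by adjunction) from \eqref{e:Frob with Weyl}.

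On the Kac-Moody side, the object $(\overset{\bullet}\CF{}^{-,\semiinf,\on{invol}}_{-\kappa})^{\on{enh}}$ is, by the construction in \secref{sss:F-}, the DrPl enhancement of the family $\on{Av}^{G(\CO)/I}_!(J_\mu)$ with transition maps \eqref{e:trans maps neg}. Since convolution on the right by an $I$-integrable module commutes with $\on{Av}^{G(\CO)/I}_!$, convolving with $J_{-2\rho}[-d]\star \BW^\clambda_{-\kappa}$ gives the family
$$c^\mu \simeq \on{Av}^{G(\CO)/I}_!\bigl(J_\mu\star J_{-2\rho}\star \BW^\clambda_{-\kappa}\bigr)[-d]\simeq \on{Av}^{G(\CO)/I}_!\bigl(\BW^{\clambda-2\rho+\mu}_{-\kappa}\bigr)[-d] =: \BV^{\clambda-2\rho+\mu}_{-\kappa}[-d],$$
using \eqref{e:BMW mult bis} and \corref{c:conv Wak all} in the chosen trivialization of $\omega_x$, together with the definition of $\BV^{\clambda'}_{-\kappa}$ for general $\clambda'\in\cLambda$ from \eqref{e:dual Weyl general}. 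In the integral case at hand, the two embeddings $\Lambda_H\hookrightarrow\cLambda$ (via Frobenius) and $\Lambda\hookrightarrow\cLambda$ (via $\kappa$) send $2\rho_H$ and $2\rho$ to the same element of $\cLambda$, so Theorem~\ref{t:Weyl} yields $\sF_{-\kappa}(c^\mu)\simeq {}'\!c^\mu$ term-wise.

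The main obstacle, just as in the positive-level case, is the matching of the transition maps under $\sF_{-\kappa}$. I will carry this out in the spirit of Sects.~\ref{sss:verify pos 1}-\ref{sss:verify pos 3}: it is enough to check the identification on a cofinal subset of $\mu$'s for which $\clambda-2\rho+\mu\in\cLambda^+$, where the objects in play lie in the heart of the respective categories and the homotopy-coherence of the DrPl data is automatic. On this cofinal cone, the Kac-Moody transition maps reduce, after using the averaging–convolution identity and \thmref{t:twist Verma} in the guise of \secref{sss:verify pos 3}, to the Hecke-type maps \eqref{e:sph with Weyl} between Weyl modules; by the compatibility of the Kazhdan-Lusztig equivalence with the $\Rep(\cG)$-actions and \eqref{e:Weyl}, their counterparts under $\sF_{-\kappa}$ are precisely the maps \eqref{e:Frob with Weyl} used to form the quantum family $'\!c^\mu$. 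This completes the identification of the DrPl families, and hence, via the functor $\jmath^*$ of \secref{sss:jmath}, of the corresponding enhanced objects.
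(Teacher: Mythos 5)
Your strategy is sound, but it is genuinely different from the route taken in the paper. The paper does \emph{not} re-run the Drinfeld--Pl\"ucker comparison at the negative level: it deduces \thmref{t:identify baby neg} from \thmref{t:identify baby pos} by a duality procedure on Drinfeld--Pl\"ucker families (the assignment $c^{\vee,\mu}:=\BD(c^{-\mu+2\rho})[-d]$ of \secref{sss:duality Plucker}, conceptualized in \secref{ss:Dr-Pl and duality}), reusing wholesale the matching of families already established under $\sF_\kappa$ in Sects.~\ref{sss:verify pos 1}--\ref{sss:verify pos 3}; the only new verification is on the quantum side, namely that the dual family $(\coind_{\on{Lus}^+\to\on{big}}(k^{\clambda+\mu+2\rho}))^\vee[-d]$ with its transition maps is the family attached to $\ind_{\on{Lus}^+\to\frac{1}{2}}(k^{-\clambda})$, which is checked via \corref{c:big ind an coind} and a Serre-duality diagram on $\cG/\cB$. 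You instead stay at the negative level and compare the families directly under $\sF_{-\kappa}$: your identification of the Kac--Moody family as $\BV^{\clambda+\mu-2\rho}_{-\kappa}[-d]$, the reduction to the dominant cofinal cone where everything lies in the heart, and the reuse of the highest-weight-vector argument of \secref{sss:verify pos 3} (which is indeed stated there for negative-level objects) are all correct, and the weight bookkeeping $2\rho_H\leftrightarrow 2\rho$ is consistent with the paper's conventions; also note that on the cofinal cone you only need the dominant case of \eqref{e:Weyl}, so no circularity with \thmref{t:Weyl} arises either way. What each approach buys: the paper's duality detour avoids redoing any transition-map matching and fits into a general framework, while your direct argument avoids the duality formalism altogether and makes \thmref{t:identify baby neg} independent of the positive-level theorem.

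One step you leave implicit deserves to be spelled out: you assert that the quantum family $'\!c^\mu=\CV_q^{\clambda-2\rho_H+\mu}[-d]$ produced by your $\ind$-version of \secref{ss:Dr-Pl baby} is ``equipped with transition maps coming (by adjunction) from \eqref{e:Frob with Weyl}.'' What the construction actually hands you are the $\coind$-form maps \eqref{e:transition maps quantum} (with weight shifted by $2\check\rho-2\rho_H$), and identifying these, under the isomorphism $\coind_{\on{Lus}^+\to\on{big}}(k^{\clambda'})\simeq\ind_{\on{Lus}^+\to\on{big}}(k^{\clambda'-2\check\rho})[-d]$ of \corref{c:big ind an coind}, with the maps obtained by adjunction from \eqref{e:Frob with Weyl} is precisely the compatibility the paper verifies at the end of its own proof (the juxtaposed commutative diagrams, the second being Serre duality on $\cG/\cB$). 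This check is elementary and can be done at the level of $1$-morphisms on your cofinal cone, so it does not threaten the argument, but without it the matching of transition maps on the quantum side is not yet complete.
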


\sssec{}

Before giving the proof, let us note that \thmref{t:identify baby neg} gives 
another proof for \thmref{t:Weyl}.

\medskip

Indeed, let us apply $\inv_{\cB}$ to 
$$\coind_{\cG}\circ \oblv_{\cB}(\ind_{\on{Lus}^+\to \frac{1}{2}}(k^\clambda))$$
and
$$\overset{\bullet}\CF{}^{-,\semiinf,\on{invol}}_{-\kappa}\star J_{-2\rho}[-d]\star \BW_{-\kappa}^{\clambda}.$$

\medskip

On the quantum group side we get 
\begin{multline*}
\coind_{\frac{1}{2}\to \on{big}}\circ \ind_{\on{Lus}^+\to \frac{1}{2}}(k^\clambda)  \simeq 
\coind_{\frac{1}{2}\to \on{big}}\circ \coind_{\on{Lus}^+\to \frac{1}{2}}(k^{\clambda+2\check\rho-2\rho})  \simeq  \\
\simeq \coind_{\on{Lus}^+\to \on{big}}(k^{\clambda+2\check\rho-2\rho})\simeq \ind_{\on{Lus}^+\to \on{big}}(k^{\clambda-2\rho})[-d].
\end{multline*}

\medskip

On the Kac-Moody side, by \corref{c:inv in semiinf tau}, we get
\begin{multline*}
\on{Av}^{G(\CO)/I}_*(j_{w_0,*} \star J_{-2\rho}[-d]\star \BW_{-\kappa}^{\clambda})[d]\simeq \on{Av}^{G(\CO)/I}_*(j_{w_0,*} \star J_{-2\rho}\star \BW_{-\kappa}^{\clambda})
\simeq \\
\simeq \on{Av}^{G(\CO)/I}_*(j_{w_0,*} \star \BW_{-\kappa}^{\clambda-2\rho})
\simeq \pi^!(\delta_{1,\Gr})\star \BW_{-\kappa}^{\clambda-2\rho}[-d]
\simeq \on{Av}^{G(\CO)/I}_!(\BW_{-\kappa}^{\clambda-2\rho})[-d].
\end{multline*}

Thus, we obtain
$$\sF_{-\kappa}(\on{Av}^{G(\CO)/I}_!(\BW_{-\kappa}^{\clambda-2\rho}))[-d]\simeq \ind_{\on{Lus}^+\to \on{big}}(k^{\clambda-2\rho})[-d],$$
as desired.

\qed

\sssec{}  \label{sss:duality Plucker}

The rest of the subsection is devoted to the proof of \thmref{t:identify baby neg}. We will deduce it from 
\thmref{t:identify baby neg} using an idea involving duality. On the first pass, this will look like an 
artificial procedure, but in \secref{ss:Dr-Pl and duality} we will explain a conceptual framework that it fits in. 

\medskip

Let us start with a family of objects $\{c^\mu\}\in \CC$ as in \secref{sss:DrPl pattern}; let us assume $\CC$
is compactly generated, and that the objects $c^\mu$ are compact.

\medskip

We define the dual family $\{c^{\vee,\mu}\}\in \CC^\vee$ by setting
$$c^{\vee,\mu}:=\BD(c^{-\mu+2\rho})[-d],$$
where $\BD$ denotes the canonical contravariant equivalence $\CC^c\to (\CC^\vee)^c$. 
Let $c^{\vee,\on{enh}}$ denote the resulting object of $\Rep(\cB)\underset{\Rep(\cG)}\otimes \CC^\vee$. 

\sssec{}

We apply the above procedure first to $\CC=\KL(G,\kappa)$ and $c^\mu$ being 
$$\on{Av}_*^{G(\CO)/I}(J^{\BD}_{-\mu})\star \BD(\BW_{-\kappa}^{-\clambda})$$
and the transition maps obtained from the maps \eqref{e:tau maps} by convolution.

\medskip

Note that the corresponding dual family $c^{\vee,\mu}$ is
$$\on{Av}^{G(\CO)/I}_!(J_{\mu-2\rho})\star \BW_{-\kappa}^{-\clambda}[-d]$$
and the transition maps obtained from the maps \eqref{e:trans maps neg} by convolution.
Hence the resulting object $c^{\vee,\on{enh}}$ identifies with 
$$(\overset{\bullet}\CF{}^{-,\semiinf,\on{invol}}_{-\kappa})^{\on{enh}}\star J_{-2\rho}[-d]\star \BW_{-\kappa}^{-\clambda}.$$

\sssec{}  \label{sss:identify dual system}

We now perform the same procedure on the quantum group side. Take $\CC:=\Rep_{q^{-1}}(G)_{\on{ren}}$. 
We start with the family $c^\mu$ 
$$\coind_{\on{Lus}^+\to \on{big}}(k^{\clambda+\mu})$$
and the transition maps \eqref{e:transition maps quantum}. This family corresponds to the one on the Kac-Moody
side under $\sF_\kappa$, by Sects. \ref{sss:verify pos 1}-\ref{sss:verify pos 3}. 

\medskip

Consider the dual family $c^{\vee,\mu}$. We obtain that in order to prove \thmref{t:identify baby neg} it suffices to show that
the corresponding object 
$$c^{\vee,\on{enh}}\in \Rep(\cB) \underset{\Rep(\cG)}\otimes \Rep_q(G)_{\on{ren}}\simeq \Rep_q^{\frac{1}{2}}(G)_{\on{ren}}$$
identifies with $\ind_{\on{Lus}^+\to \frac{1}{2}}(k^{-\clambda})$.

\medskip

We will give two proofs of this fact. One, more direct but less conceptual, right below, and an essentially equivalent
but more conceptual one in \secref{sss:Dr-Pl and duality}. 

\sssec{}

Recall that according to \corref{c:ind and coind from Borel to small}, 
$$\ind_{\on{Lus}^+\to \frac{1}{2}}(k^{-\clambda})\simeq \coind_{\on{Lus}^+\to \frac{1}{2}}(k^{-\clambda+2\check\rho-2\rho}),$$
and according to \ref{sss:baby via DrPl}-\ref{sss:baby via DrPl verify} it corresponds to 
$'\!c^{\on{enh}}$ given by the family
$$'\!c^\mu:=\coind_{\on{Lus}^+\to \on{big}}(k^{-\clambda+\mu+2\check\rho-2\rho})$$
and transition maps \eqref{e:transition maps quantum}.

\sssec{}

Hence, it suffices to show that the families $c^{\vee,-\mu}$ and $'\!c^{-\mu}$ are equivalent for $\mu$ belonging to a cofinal
subset in $\Lambda$. We take the subset in question to consist of those $\mu$ for which $-\clambda-\mu+2\check\rho-2\rho$
is anti-dominant.

\medskip

First, for an individual $\mu$, we have
$$c^{\vee,-\mu}=(\coind_{\on{Lus}^+\to \on{big}}(k^{\clambda+\mu+2\rho}))^\vee[-d]\simeq 
\ind_{\on{Lus}^+\to \on{big}}(k^{-\clambda-\mu-2\rho})[-d],$$
which by \corref{c:big ind an coind} identifies with
$$\coind_{\on{Lus}^+\to \on{big}}(k^{-\clambda-\mu+2\check\rho-2\rho})\simeq {}'\!c^{-\mu}.$$

Now, when $-\clambda-\mu+2\check\rho-2\rho\in -\cLambda^+$, these objects belong to $(\Rep_q(G)_{\on{ren}})^\heartsuit$.
Hence, in order to show that the two coincide as systems, it is enough to do so at the level of $1$-morphisms 
(homotopy-coherence is automatic). 

\sssec{}

Thus, we have to show that for $\clambda'\in \cLambda$ and $\mu\in \Lambda^+$ the following diagram commutes:
$$
\CD
\on{Frob}^*_q((V^\mu)^\vee) \otimes  \ind_{\on{Lus}^+\to \on{big}}(k^{\mu+\clambda'})[-d]  @>>>  \ind_{\on{Lus}^+\to \on{big}}(k^{\clambda'})[-d] \\
@V{\sim}VV  @VV{\sim}V   \\
\on{Frob}^*_q((V^\mu)^\vee) \otimes \coind_{\on{Lus}^+\to \on{big}}(k^{\mu+\clambda'+2\check\rho}) @>>> \coind_{\on{Lus}^+\to \on{big}}(k^{\clambda'+2\check\rho}).
\endCD
$$

This follows by juxtaposing the following two commutative diagrams
$$
\CD
\on{Frob}^*_q((V^\mu)^\vee) \otimes  \ind_{\on{Lus}^+\to \frac{1}{2}}(k^{\mu+\clambda'})  @>>>  \ind_{\on{Lus}^+\to \frac{1}{2}}(k^{\clambda'}) \\
@V{\sim}VV  @VV{\sim}V   \\
\on{Frob}^*_q((V^\mu)^\vee) \otimes \coind_{\on{Lus}^+\to \frac{1}{2}}(k^{\mu+\clambda'+2\check\rho-2\rho}) @>>> 
\coind_{\on{Lus}^+\to \frac{1}{2}}(k^{\clambda'+2\check\rho-2\rho}),
\endCD
$$
which follows from the construction, and
$$
\CD
(V^\mu)^\vee \otimes \ind_{\cB\to \cG}(k^\mu \otimes V)[-d] @>>>  \ind_{\cB\to \cG}(V)[-d]   \\
@VVV   @VVV   \\
(V^\mu)^\vee \otimes \coind_{\cB\to \cG}(k^{\mu+2\rho} \otimes V) @>>>  \coind_{\cB\to \cG}(k^{2\rho}\otimes V),
\endCD
$$ 
which is a property of Serre duality on $\cG/\cB$.

\qed

\ssec{Drinfeld-Pl\"ucker formalism and duality}  \label{ss:Dr-Pl and duality}

In this subsection we will give a conceptual explanation of the duality procedure of \secref{sss:duality Plucker}. 

\sssec{}  \label{sss:duality B}

Since $\CC$ was assumed compactly generated, we obtain that 
$\Rep(\cB)\underset{\Rep(\cG)}\otimes \CC$
is also compactly generated, and hence, dualizable. Moreover, we have a canonical identification
\begin{equation} \label{e:B dual}
\left(\Rep(\cB)\underset{\Rep(\cG)}\otimes \CC\right)^\vee \simeq \Rep(\cB)\underset{\Rep(\cG)}\otimes \CC^\vee,
\end{equation}
so that that the functor
$$\oblv_{\cG\to \cB}:\Rep(\cB)\underset{\Rep(\cG)}\otimes \CC^\vee\to \CC^\vee$$
is the dual of the functor 
$$\coind_{\cB\to \cG}:\Rep(\cB)\underset{\Rep(\cG)}\otimes \CC\to \CC.$$

Explicitly, for $c\in \CC^c$ and $V\in \Rep(\cB)^c$, we have
$$\BD(V\otimes c)\simeq V^\vee\otimes \BD(c)$$
as objects in the two sides of \eqref{e:B dual}, respectively. 

\sssec{}  \label{sss:Serre and B}

Let $c'$ be a compact object of $\Rep(\cB)\underset{\Rep(\cG)}\otimes \CC$. Tautologically, we have
$$\coind_{\cB\to \cG}(\BD(c'))\simeq \BD(\ind_{\cB\to \cG}(c'))$$
as objects in $\CC^\vee$.

\medskip

Recall also that 
$$\ind_{\cB\to \cG}(c')\simeq \coind_{\cB\to \cG}(k^{2\rho}\otimes c')[d],$$
and recall the functor 
$$\jmath^*: \Rep(\cB)\underset{\Rep(\cG)}\otimes \CC\to 
\on{DrPl}(\CC).$$

Hence, we obtain the following expression for $\jmath_*(\BD(c'))$:
\begin{equation} \label{e:Serre and B}
\jmath_*(\BD(c'))^\mu\simeq \BD(\jmath_*(c)^{-\mu+2\rho})[-d].
\end{equation}

\medskip

This is the origin for the duality procedure in \secref{sss:duality Plucker}. 

\sssec{}

We now consider the situation when
$$\CC:=\Rep_{q^{-1}}(G)_{\on{ren}},$$
so that $\CC^\vee=\Rep_q(G)_{\on{ren}}$ and
$$\Rep(\cB)\underset{\Rep(\cG)}\otimes \CC=\Rep_{q^{-1}}^{\frac{1}{2}}(G)_{\on{ren}},$$
\begin{equation} \label{e:1/2 dual}
(\Rep(\cB)\underset{\Rep(\cG)}\otimes \CC)^\vee\simeq \Rep(\cB)\underset{\Rep(\cG)}\otimes \CC^\vee\simeq 
\Rep_q^{\frac{1}{2}}(G)_{\on{ren}}.
\end{equation}

\medskip

By unwinding the constructions we obtain that the resulting pairing
$$\Rep_q^{\frac{1}{2}}(G)_{\on{ren}}\otimes \Rep_{q^{-1}}^{\frac{1}{2}}(G)_{\on{ren}}\to \Vect$$
is given by
$$\CM_1,\M_2\mapsto \CHom_{\Rep_q^{\frac{1}{2}}(G)_{\on{ren}}}(k,\CM_1\otimes \CM^\sigma_2),$$
where $\sigma$ is a canonical equivalence
$$(\Rep_q^{\frac{1}{2}}(G)_{\on{ren}})^{\on{rev}}\to \Rep_{q^{-1}}^{\frac{1}{2}}(G)_{\on{ren}}.$$

\medskip

In particular, we obtain that with respect to the duality \eqref{e:1/2 dual} we have:
\begin{equation} \label{e:dual half}
\BD(\coind_{\on{Lus}^+\to \on{big}}(k^\clambda))\simeq \ind_{\on{Lus}^+\to \on{big}}(k^{-\clambda}).
\end{equation}

\sssec{}   \label{sss:Dr-Pl and duality}

We will now give a conceptual proof of the identification stated \secref{sss:identify dual system}, i.e., that
for the family 
$$c^{\vee,\mu}:=(\coind_{\on{Lus}^+\to \on{big}}(k^{\clambda-\mu+2\rho}))^\vee[-d]$$
the resulting object $c^{\vee,\on{enh}}$ identifies with $\ind_{\on{Lus}^+\to \on{big}}(k^{-\clambda})$.

\medskip

Indeed, this follows from \eqref{e:Serre and B} and \eqref{e:dual half} using the fact that the functor 
$\jmath_*$ is fully faithful and that the initial family
$$c^\mu:=\coind_{\on{Lus}^+\to \on{big}}(k^{\clambda+\mu})$$
identifies with
$\jmath_*(\coind_{\on{Lus}^+\to \on{big}}(k^\clambda))$.

\ssec{Cohomology of the small quantum group}   \label{ss:small cohomology}

In this subsection we will show how Theorems \ref{t:identify baby neg} and \ref{t:identify baby pos}
allow us to express the functor of cohomology with respect to $u_q(N)$ on the Kazhdan-Lusztig side. 

\sssec{}

Consider the functor
$$\Rep_q(G)_{\on{ren}}\to \Vect,$$
given by
$$\CM\mapsto \on{C}^\cdot(u_q(N),\CM)^\clambda.$$

In this subsection we will explain what functor it corresponds to under the equivalences
$$\sF_{-\kappa}:\KL(G,-\kappa)\simeq \Rep_q(G)_{\on{ren}} \text{ and }
\sF_\kappa:\KL(G,\kappa)\simeq \Rep_{q^{-1}}(G)_{\on{ren}}.$$

\sssec{}

Recall the object
$$\overset{\bullet}\CF{}^\semiinf_{-\kappa}:=\on{Av}^{I^-}_*(\ICsd_{-\kappa}) \in \Dmod_{-\kappa}(\Gr_G)^I.$$

It is acted on by $\cB$, and in particular by $\cT$. Set
$$\CF^\semiinf_{-\kappa}:=\inv_{\cT}(\overset{\bullet}\CF{}^\semiinf_{-\kappa}).$$

Explicitly,
$$\CF^\semiinf_{-\kappa}\simeq \underset{\mu}{\on{colim}}\, j_{-\mu,*}\star \on{Sat}(V^\mu).$$

\sssec{}

Recall also the object
$$\overset{\bullet}\CF{}^{-,\semiinf,\on{invol}}_{-\kappa}\in \Dmod_{-\kappa}(\Fl^{\on{aff}}_G)^{G(\CO)},$$
see \secref{sss:F-}. It is also acted on by $\cB$ and hence by $\cT$. 

\medskip

Let
$$\overset{\bullet}\CF{}^{-,\semiinf}_{\kappa}\in \Dmod_{\kappa}(\Gr_G)^I$$
be the image of $\overset{\bullet}\CF{}^{-,\semiinf,\on{invol}}_{-\kappa}$ under the inversion equivalence \eqref{e:inversion}.
Set
$$\CF^{-,\semiinf}_\kappa:=\inv_{\cT}(\overset{\bullet}\CF{}^{-,\semiinf}_\kappa).$$

Explicitly,
$$\CF^{-,\semiinf}_\kappa\simeq \underset{\mu}{\on{colim}}\,  J^{\BD}_\mu \star \on{Sat}(V^{-w_0(\mu)})[2d].$$

\sssec{}

We will prove:

\begin{thm} \label{t:small cohomology}  \hfill

\smallskip

\noindent{\em(a)} Under the equivalence $\sF_{-\kappa}:\KL(G,-\kappa)\simeq \Rep_q(G)_{\on{ren}}$, the functor 
$$\CM\mapsto \on{C}^\cdot(u_q(N),\CM)^\clambda, \quad \Rep_q(G)_{\on{ren}}\to \Vect$$
corresponds to 
$$\CM\mapsto \CHom_{\hg\mod^I_{-\kappa}}(\BW^\clambda_{-\kappa},\CF^\semiinf_{-\kappa}\star \CM).$$

\smallskip

\noindent{\em(b)} Under the equivalence $\sF_{\kappa}:\KL(G,\kappa)\simeq \Rep_{q^{-1}}(G)_{\on{ren}}$, the functor 
$$\CM\mapsto \on{C}^\cdot(u_{q^{-1}}(N),\CM)^\clambda, \quad \Rep_{q^{-1}}(G)_{\on{ren}}\to \Vect$$
corresponds to 
$$\CM\mapsto \on{C}^\semiinf(\fn(\CK),\CF^{-,\semiinf}_{\kappa}\star \CM)^\clambda[-2d].$$
\end{thm}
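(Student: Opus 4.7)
The plan is to prove part (a) directly, using Theorem~\ref{t:identify baby neg}, and to deduce part (b) either by a parallel argument invoking Theorem~\ref{t:identify baby pos}, or by the duality procedure of \secref{s:q duality} applied to (a). We outline the approach for (a).

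The starting point is that the baby Verma module co-represents $u_q(N)$-cohomology in weight $\clambda$: for $\CN\in\Rep^{\on{sml,grd}}_q(G)_{\on{ren}}$,
\begin{equation*}
\on{C}^\cdot(u_q(N),\CN)^\clambda \;\simeq\; \CHom_{\Rep^{\on{sml,grd}}_q(G)_{\on{ren}}}(\BM^\clambda_{q,\on{sml}},\CN),
\end{equation*}
since $\BM^\clambda_{q,\on{sml}}=\ind_{\on{sml}^+\to\on{sml}}(k^\clambda)$. Taking $\CN=\oblv_{\on{big}\to\on{sml,grd}}(\CM)$ for $\CM\in\Rep_q(G)_{\on{ren}}$, using the identification $\BM^\clambda_{q,\on{sml}}=\oblv_{B_H\to T_H}(\BM^\clambda_{q,\frac{1}{2}})$ with $\BM^\clambda_{q,\frac{1}{2}}=\ind_{\on{Lus}^+\to\frac{1}{2}}(k^\clambda)$, and applying the $(\ind_{T_H\to B_H},\oblv_{B_H\to T_H})$-adjunction, we rewrite this as a Hom in $\Rep^{\frac{1}{2}}_q(G)_{\on{ren}}$ whose source is $\BM^\clambda_{q,\frac{1}{2}}$ and whose target is $\oblv_{\on{big}\to\frac{1}{2}}(\CM)$ twisted by the appropriate induction factor along $B_H\to T_H$.

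We then transport this Hom to the Kac-Moody side via the $\frac{1}{2}$-enhancement of $\sF_{-\kappa}$, namely the equivalence
\begin{equation*}
\Rep(\cB)\underset{\Rep(\cG)}\otimes \KL(G,-\kappa)\;\simeq\;\Rep^{\frac{1}{2}}_q(G)_{\on{ren}},
\end{equation*}
invoking Theorem~\ref{t:identify baby neg} to replace $\BM^\clambda_{q,\frac{1}{2}}$ with the object $(\overset{\bullet}\CF{}^{-,\semiinf,\on{invol}}_{-\kappa})^{\on{enh}}\star J_{-2\rho}[-d]\star\BW^\clambda_{-\kappa}$. After collapsing the resulting $\cB$-enhanced Hom to a plain Hom in $\hg\mod^I_{-\kappa}$ via the $(\oblv_\cB,\inv_\cB)$-adjunction, applying the inversion equivalence \eqref{e:inversion} to transform $\overset{\bullet}\CF{}^{-,\semiinf,\on{invol}}_{-\kappa}$ into $\overset{\bullet}\CF{}^\semiinf_{-\kappa}$ on $\Gr_G$, and using \corref{c:inv in semiinf tau} to evaluate the $\cB$-invariants of the enhancement, the expression simplifies to $\CHom_{\hg\mod^I_{-\kappa}}(\BW^\clambda_{-\kappa},\CF^\semiinf_{-\kappa}\star\CM)$.

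The main technical obstacle is the precise bookkeeping of $B_H\simeq \cB$-enhancements and cohomological shifts: the twist $J_{-2\rho}[-d]$ coming from Theorem~\ref{t:identify baby neg} must combine with the Serre-duality shift recorded in \corref{c:ind and coind from Borel to small 1/2} (relating $\coind_{\on{Lus}^+\to\frac{1}{2}}$ and $\ind_{\on{Lus}^+\to\frac{1}{2}}$) so as to leave no net contribution. For part (b), the analogous argument uses Theorem~\ref{t:identify baby pos}; the Kac-Moody-side functor there takes the form $\on{C}^\semiinf(\fn(\CK),-)^\clambda[-2d]$ rather than a Hom out of the Wakimoto, reflecting the fact that at positive level the Wakimoto module represents semi-infinite cohomology via \corref{c:semiinf with Wak I}, while the shift $[-2d]$ reflects the Verdier-duality identification $\on{Av}^{G(\CO)/I}_!\simeq\on{Av}^{G(\CO)/I}_*[2d]$ on the proper piece $G(\CO)/I\simeq G/B$ (cf.~\secref{sss:proper averaging}).
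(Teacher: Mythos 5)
The paper's proof goes in the opposite direction from your plan: it proves part~(a) using \thmref{t:identify baby pos} and part~(b) using \thmref{t:identify baby neg}, whereas you propose part~(a) via \thmref{t:identify baby neg} and part~(b) via \thmref{t:identify baby pos}. The reason the paper reaches for the positive-level theorem in~(a) is that it first expresses the functor $\on{C}^\cdot(u_q(N),-)^\clambda$ on $\Rep_q(G)_{\on{ren}}$ as a \emph{duality pairing} with a compact object of $\Rep_{q^{-1}}(G)_{\on{ren}}$ (namely $\inv_\cT\big(\coind_{\on{sml}^+\to\on{big}}(k^{-\clambda})\big)$, lifted via $\coind_{\on{Lus}^+\to\frac{1}{2}}(k^{-\clambda})$); since this dual object lives on the $q^{-1}$/positive-level side, \thmref{t:identify baby pos} applies there, and the resulting pairing with $\CM$ is then unwound to the Hom $\CHom_{\hg\mod^I_{-\kappa}}(\BW^\clambda_{-\kappa},\CF^\semiinf_{-\kappa}\star\CM)$. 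Your route stays entirely on the $q$/negative-level side, which is why you naturally reach for \thmref{t:identify baby neg}. Both routes are in principle available, but the one-sided route carries a genuine technical burden which your sketch does not resolve.

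The gap is in your step~2, the lift from the Hom in $\Rep^{\on{sml,grd}}_q(G)_{\on{ren}}$ to a Hom in $\Rep^{\frac{1}{2}}_q(G)_{\on{ren}}$. The adjunction you invoke, $(\ind_{T_H\to B_H},\oblv_{B_H\to T_H})$, is the left-adjoint pair, hence converts $\CHom(\ind(X),Y)$ into $\CHom(X,\oblv(Y))$; it does \emph{not} directly apply to a Hom whose source is $\oblv_{B_H\to T_H}(\BM^\clambda_{q,\frac{1}{2}})$. You would need the right adjoint $\coind_{T_H\to B_H}$, yielding
\[
\CHom_{\Rep^{\on{sml,grd}}_q(G)_{\on{ren}}}(\oblv(\BM^\clambda_{q,\frac{1}{2}}),\oblv(\CN))\;\simeq\;
\CHom_{\Rep^{\frac{1}{2}}_q(G)_{\on{ren}}}\big(\BM^\clambda_{q,\frac{1}{2}},\,\coind_{T_H\to B_H}\circ\oblv_{B_H\to T_H}(\CN)\big),
\]
and the appearing endofunctor $\coind\circ\oblv$ is tensoring by $\CO_{N_H}\simeq\CO_{\cN}$, an \emph{infinite-dimensional} regular-representation factor, not the one-dimensional character twist that could be absorbed by the Serre-duality shift in \corref{c:ind and coind from Borel to small 1/2}. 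Your phrase ``twisted by the appropriate induction factor'' glosses over this: the factor is a Koszul-type colimit, and you do not explain how, after transport via \thmref{t:identify baby neg}, the resulting expression $\CHom\big((\overset{\bullet}\CF{}^{-,\semiinf,\on{invol}}_{-\kappa})^{\on{enh}}\star J_{-2\rho}[-d]\star\BW^\clambda_{-\kappa},\ \coind_{\cT\to\cB}\oblv_{\cG\to\cT}(\CM')\big)$ collapses to the stated answer $\CHom_{\hg\mod^I_{-\kappa}}(\BW^\clambda_{-\kappa},\CF^\semiinf_{-\kappa}\star\CM')$. One expects this to hold via \corref{c:identify baby geom} and the identity $(\on{Av}^{G(\CO)/I}_!)^{\on{enh}}(\BW^\clambda_{-\kappa})=(\overset{\bullet}\CF{}^{-,\semiinf,\on{invol}}_{-\kappa})^{\on{enh}}\star J_{-2\rho}[-d]\star\BW^\clambda_{-\kappa}$, reducing the claim to the assertion $(\oblv_{G(\CO)/I})^{\on{enh}}\circ\coind_{\cT\to\cB}\circ\oblv_{\cG\to\cT}\simeq\CF^\semiinf_{-\kappa}\star(-)$, but this is precisely what still needs a proof; the paper's dual-pairing reformulation avoids the issue entirely.
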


\begin{rem}

Note that we can rewrite the RHS in \thmref{t:small cohomology}(a) also as
$$\CM\mapsto \CHom_{\hg\mod^{T(\CO)}_{-\kappa}}(\BW^\clambda_{-\kappa},\ICs_{-\kappa}\star \CM).$$

\medskip 

Similarly, set
$$\ICsm_{\kappa}=w_0\cdot \ICs_{\kappa}\in \Dmod_{\kappa}(\Gr_G)^{N^-(\CK)\cdot T(\CO)},$$
and note that
$$\CF^{-,\semiinf}_\kappa\simeq \on{Av}^{I/T(\CO)}_*(\ICsm_{\kappa})[2d]\simeq \on{Av}^{N(\CO)}_*(\ICsm_{\kappa})[2d].$$

Then the RHS in \thmref{t:small cohomology}(b) can be rewritten as
$$\CM\mapsto \on{C}^\semiinf(\fn(\CK),\ICsm_\kappa\star \CM)^\clambda.$$

\end{rem}

\sssec{}

The rest of this section is devoted to the proof of \thmref{t:small cohomology}.

\medskip

First, we note that the functor 
$$\CM\mapsto \on{C}^\cdot(u_q(N),\CM)^\clambda, \quad \Rep_q(G)_{\on{ren}}\to \Vect$$ 
can be interpreted as the functor of pairing with the object
$$\coind_{\on{sml,grd}\to \on{big}}\circ \coind_{\on{sml,grd}^+\to \on{sml,grd}}(k^{-\clambda})\in \Rep_{q^{-1}}(G)_{\on{ren}}.$$

Note also that the above object 
$$\coind_{\on{sml,grd}\to \on{big}}\circ \coind_{\on{sml,grd}^+\to \on{sml,grd}}(k^{-\clambda})\simeq 
\coind_{\on{sml,grd}^+\to \on{big}}(k^{-\clambda})$$
can be canonically identified with
$$\inv_{\cT}\left(\coind_{\on{sml}^+\to \on{big}}(k^{-\clambda})\right)\simeq
\inv_{\cT}\left(\coind_{\on{sml}\to \on{big}}\circ  \coind_{\on{sm}^+\to \on{sm}}(k^{-\clambda})\right).$$

\sssec{}

Hence, applying Theorems \ref{t:identify baby pos} and \thmref{t:identify baby neg}, respectively,
we rewrite the corresponding functors on the Kazhdan-Lusztig side as
$$\CM\mapsto \langle 
\CF^{\semiinf,\on{invol}}_\kappa\star \BD(\BW_{\kappa}^{-\clambda}),\CM\rangle$$
(for point (a)), and
$$\CM\mapsto \langle
\CF^{-,\semiinf,\on{invol}}_{-\kappa}\star J_{-2\rho}[-d]\star \BW_{-\kappa}^{-\clambda+2\rho-2\check\rho},\CM\rangle$$
(for point (b)), respectively (here for point (b) we have also used \corref{c:ind and coind from Borel to small} to pass from
$\coind_{\on{sm}^+\to \on{sm}}(k^{-\clambda})$ to $\ind_{\on{sm}^+\to \on{sm}}(k^{-\clambda+2\rho-2\check\rho})$). 

\medskip

We have:
$$\langle 
\CF^{\semiinf,\on{invol}}_\kappa\star \BD(\BW_{-\kappa}^{\clambda}),\CM\rangle\simeq
\CHom_{\hg\mod^I_{-\kappa}}(\BW^\clambda_{-\kappa},\CF^\semiinf_{-\kappa}\star \CM),$$
thereby proving point (a). 

\medskip

For point (b) we have:
$$\langle
\CF^{-,\semiinf,\on{invol}}_{-\kappa}\star J_{-2\rho}[-d]\star \BW_{-\kappa}^{-\clambda+2\rho-2\check\rho},\CM\rangle\simeq
\langle 
\CF^{-,\semiinf,\on{invol}}_{-\kappa}\star \BW_{-\kappa}^{-\clambda-2\check\rho}[-d],\CM\rangle\simeq
\langle \BW_{-\kappa}^{-\clambda-2\check\rho}[d],\CF^{-,\semiinf}_{\kappa}\star \CM\rangle[-2d].$$

Finally, we recall that for $\CM'\in \hg\mod_\kappa^I$ we have
$$\langle \BW_{-\kappa}^{-\clambda-2\check\rho}[d],\CM'\rangle \simeq
\on{C}^\semiinf(\fn(\CK),\CM')^\clambda,$$
by \corref{c:semiinf with Wak I}. 

\qed

\section{Compatibility with the Arkhipov-Bezrukavnikov action action}  \label{s:AB action}

In this section we introduce one more requirement on the conjectural equivalence 
\eqref{e:main functor}. This will provide a conceptual explanation of the isomorphisms of
Theorems \ref{t:identify baby neg} and \ref{t:identify baby pos}.  

\medskip

In the next section 
we will use it to give an interpretation of the functor of cohomology with respect to
$U^{\on{DK}}_q(N)$ on the Kac-Moody side. 

\ssec{The Arkhipov-Bezrukavnikov action: a reminder}

\sssec{}

Recall that in \cite[Sect. 3]{AB} a monoidal functor
\begin{equation} \label{e:AB functor}
\QCoh(\cn/\on{Ad}(\cB))\to \Dmod_{-\kappa}(\Fl^{\on{aff}}_G)^I
\end{equation} 
was constructed.

\medskip

We will need the following pieces of information regarding this functor:

\begin{itemize}

\item For $\mu\in \Lambda$, 
$$\sfq^*(k^\mu)\mapsto J_\mu,$$
where $\sfq$ denotes the projection $$\cn/\on{Ad}(\cB)\to \on{pt}/\cB.$$

\item The resulting action of $\Rep(\cG)$ on $\Dmod_{-\kappa}(\Gr_G)^I$ obtained via
\begin{equation} \label{e:G to n/B}
\Rep(\cG) \overset{\oblv_{\cG\to \cB}}\longrightarrow \Rep(\cB) \overset{\sfq^*}\to \QCoh(\cn/\on{Ad}(\cB))
\end{equation} 
and the action of $\Dmod_{-\kappa}(\Fl^{\on{aff}}_G)^I$ on $\Dmod_{-\kappa}(\Gr_G)^I$, is given by
$$\CF\mapsto \CF \star \on{Sat}(-).$$

%\item For $\mu\in \Lambda^+$ the map
%$$j_{\mu,!}\star  \delta_{1,\Gr}\simeq 
%J_\mu\star \delta_{1,\Gr}\overset{\sfq^*(k^\mu)\to \sfq^*(\oblv_{\cG\to \cB}(V^\mu))}\longrightarrow \delta_{1,\Gr} \star \on{Sat}(V^\mu) \simeq \on{Sat}(V^\mu)$$
%is the canonical map $j_{\mu,!}\star  \delta_{1,\Gr}\to \on{Sat}(V^\mu)$.

\end{itemize}

\sssec{}   \label{sss:G(O) and I}

Thus, if $\CC$ is a category equipped with an action of $G(\CK)$ at level $-\kappa$ (see \secref{sss:action at level kappa}), 
then the category $\CC^I$ acquires an action of $\QCoh(\cn/\on{Ad}(\cB))$ with the following properties:

\begin{itemize}

\item For $c\in \CC$ and $\mu\in \Lambda$, we have 
$$\sfq^*(k^\mu)\otimes c:=J_\mu\star c.$$

\item The forgetful functor $\oblv_{G(\CO)/I}:\CC^{G(\CO)}\to \CC^I$ intertwines the $\Rep(\cG)$-action on $\CC^{G(\CO)}$
coming from $\on{Sat}$ and the $\Rep(\cG)$-action on $\CC^I$ coming from \eqref{e:G to n/B}. 

%\item For $\mu\in \Lambda^+$ and $c\in \CC^{G(\CO)}$, the resulting map
%\begin{multline*} 
%J_\mu \star \oblv_{G(\CO)/I}(c)\simeq \sfq^*(k^\mu)\otimes (\oblv_{G(\CO)/I}(c)) \overset{\sfq^*(k^\mu)\to \sfq^*(\oblv_{\cG\to \cB}(V^\mu))}\longrightarrow  \\
%\to V^\mu \otimes (\oblv_{G(\CO)/I}(c))  \simeq \oblv_{G(\CO)/I}(\on{Sat}(V^\mu)\star c)
%\end{multline*} 
%comes from the canonical map $j_{\mu,!}\star  \delta_{1,\Gr}\to \on{Sat}(V^\mu)$.

\end{itemize} 

\sssec{}

It follows that the functor $\oblv_{G(\CO)/I}:\CC^{G(\CO)}\to \CC^I$ canonically factors as
$$\CC^{G(\CO)}  \overset{\oblv_{\cG\to \cB}}\longrightarrow
\Rep(\cB)\underset{\Rep(\cG)}\otimes \CC^{G(\CO)} \overset{(\oblv_{G(\CO)/I})^{\on{enh}}}\longrightarrow \CC^I,$$
and its left adjoint $\on{Av}^{G(\CO)/I}_!$ factors as 
$$\CC^{G(\CO)}  \overset{\ind_{\cG\to \cB}}\longleftarrow
\Rep(\cB)\underset{\Rep(\cG)}\otimes \CC^{G(\CO)} \overset{(\on{Av}^{G(\CO)/I}_!)^{\on{enh}}}\longleftarrow \CC^I,$$
where $(\on{Av}^{G(\CO)/I}_!)^{\on{enh}}$ is the left adjoint of $(\oblv_{G(\CO)/I})^{\on{enh}}$, and is a functor
of $\Rep(\cB)$-module categories. 

\sssec{}

Similarly, $\on{Av}^{G(\CO)/I}_*$ factors as 
$$\CC^{G(\CO)}  \overset{\coind_{\cG\to \cB}}\longleftarrow
\Rep(\cB)\underset{\Rep(\cG)}\otimes \CC^{G(\CO)} \overset{(\on{Av}^{G(\CO)/I}_*)^{\on{enh}}}\longleftarrow \CC^I,$$
where $(\on{Av}^{G(\CO)/I}_*)^{\on{enh}}$ is the right adjoint of $(\oblv_{G(\CO)/I})^{\on{enh}}$, and is a functor of 
of $\Rep(\cB)$-modules categories. 

\medskip

Recall now that 
$$\on{Av}^{G(\CO)/I}_!\simeq \on{Av}^{G(\CO)/I}_*[2d] \text{ and }\ind_{\cG\to \cB}(-)\simeq \coind_{\cG\to \cB}(k^{2\rho}\otimes -)[d].$$ 
It follows formally that we have a canonical isomorphism of functors of $\Rep(\cB)$-modules categories
$$(\on{Av}^{G(\CO)/I}_!)^{\on{enh}}\simeq (\on{Av}^{G(\CO)/I}_*)^{\on{enh}} \circ (J_{-2\rho}\star -)[d].$$

\sssec{}

The following result was established in \cite[Theorem 7.3.1]{FG3}:

\begin{thm}  \label{t:identify baby geom}
The functor 
$$(\on{Av}^{G(\CO)/I}_!)^{\on{enh}}: \Dmod_{-\kappa}(\Fl^{\on{aff}}_G)^I\to \Rep(\cB)\underset{\Rep(\cG)}\otimes\Dmod_{-\kappa}(\Fl^{\on{aff}}_G)^{G(\CO)}$$
is given by
$$\CF'\mapsto (\overset{\bullet}\CF{}^{-,\semiinf,\on{invol}}_{-\kappa})^{\on{enh}}\star J_{-2\rho}\star \CF'[-d].$$
\end{thm}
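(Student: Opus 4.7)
The plan is to reduce the claim to its evaluation on the monoidal unit via a linearity argument, and then compare the resulting Drinfeld--Pl\"ucker data explicitly.

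First I would observe that both functors $\CF'\mapsto (\on{Av}^{G(\CO)/I}_!)^{\on{enh}}(\CF')$ and $\CF'\mapsto (\overset{\bullet}\CF{}^{-,\semiinf,\on{invol}}_{-\kappa})^{\on{enh}}\star J_{-2\rho}\star \CF'[-d]$ are right $\Dmod_{-\kappa}(\Fl^{\on{aff}}_G)^I$-linear: for the former, because $\on{Av}^{G(\CO)/I}_!$ is the left adjoint of the tautological forgetful functor $\oblv_{G(\CO)/I}$, which is manifestly right $\Dmod_{-\kappa}(\Fl^{\on{aff}}_G)^I$-linear; for the latter, by associativity of convolution. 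Hence it suffices to prove the isomorphism after evaluation at $\CF'=\delta_{1,\Fl}$.

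Next, to compare two objects in $\Rep(\cB)\underset{\Rep(\cG)}\otimes \Dmod_{-\kappa}(\Fl^{\on{aff}}_G)^{G(\CO)}$, I would apply the fully faithful functor $\jmath_*$ of \secref{sss:jmath} and match the resulting Pl\"ucker families. For the left-hand side, combining $\coind_{\cB\to \cG}(-)\simeq \ind_{\cB\to\cG}(-\otimes k^{-2\rho})[-d]$ with the $\Rep(\cB)$-linearity of $(\on{Av}^{G(\CO)/I}_!)^{\on{enh}}$, the AB identification $\sfq^*(k^\mu)\simeq J_\mu$ from \secref{sss:G(O) and I}, and the defining factorization $\ind_{\cB\to\cG}\circ (\on{Av}^{G(\CO)/I}_!)^{\on{enh}}\simeq \on{Av}^{G(\CO)/I}_!$, the $\mu$-component of $\jmath_*((\on{Av}^{G(\CO)/I}_!)^{\on{enh}}(\delta_{1,\Fl}))$ computes to $\on{Av}^{G(\CO)/I}_!(J_{\mu-2\rho})[-d]$. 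For the right-hand side, by \secref{sss:F-} the object $(\overset{\bullet}\CF{}^{-,\semiinf,\on{invol}}_{-\kappa})^{\on{enh}}$ is the Pl\"ucker object attached to the family $\mu\mapsto \on{Av}^{G(\CO)/I}_!(J_\mu)$; convolving with $J_{-2\rho}[-d]$ on the right and invoking right $\Dmod_{-\kappa}(\Fl^{\on{aff}}_G)^I$-linearity of $\on{Av}^{G(\CO)/I}_!$ shifts each entry to $\on{Av}^{G(\CO)/I}_!(J_{\mu-2\rho})[-d]$. Thus the underlying families of objects match.

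The main obstacle lies in matching the transition maps of these two Pl\"ucker structures. On the right-hand side, the transitions (indexed by $\mu_1\in\Lambda^+$) are defined by adjunction from the canonical maps $\on{Av}^{G(\CO)/I}_!(j_{\mu_1,!})\to \pi^!(\Sat(V^{\mu_1}))$. On the left-hand side, they are produced by the $\Rep(\cG)$-module structure on $\Dmod_{-\kappa}(\Fl^{\on{aff}}_G)^{G(\CO)}$ via $\Sat$, transferred through the AB factorization $\Rep(\cG)\overset{\oblv_{\cG\to\cB}}\to \Rep(\cB)\overset{\sfq^*}\to \QCoh(\cn/\on{Ad}(\cB))\to \Dmod_{-\kappa}(\Fl^{\on{aff}}_G)^I$. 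The required identification is the centrality compatibility of the AB functor: that $\on{Av}^{G(\CO)/I}_!$ intertwines convolution by $J_\mu$ with convolution by $\Sat(V^\mu)$ through the canonical maps of \secref{sss:G(O) and I}. To control the homotopy coherence, I would restrict to the cofinal subsystem $\{\mu\in\Lambda\mid \mu-2\rho\in -\Lambda^+\}$, for which the objects $\on{Av}^{G(\CO)/I}_!(J_{\mu-2\rho})=\on{Av}^{G(\CO)/I}_!(j_{\mu-2\rho,*})$ lie in a single degree of the heart of $\Dmod_{-\kappa}(\Fl^{\on{aff}}_G)^{G(\CO)}$; this reduces the coherent comparison of the two Pl\"ucker structures to a check of $1$-morphisms, matching the adjunction data on both sides.
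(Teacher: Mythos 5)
This statement is not proved in the paper: it is cited as \cite[Theorem 7.3.1]{FG3}, so there is no internal proof to compare your argument against. I will therefore evaluate your proposal on its own terms.

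Your overall strategy — reduce to $\CF'=\delta_{1,\Fl}$ by right $\Dmod_{-\kappa}(\Fl^{\on{aff}}_G)^I$-linearity, then compare the two sides by exhibiting each as $\jmath^*$ of an explicit Drinfeld--Pl\"ucker family — is sound, and your object-level computations are correct. The reduction to the unit is legitimate because $\Dmod_{-\kappa}(\Fl^{\on{aff}}_G)^I$ is cyclic as a right module over itself, and the manipulation $\coind_{\cB\to\cG}(k^\mu\otimes -)\simeq \ind_{\cB\to\cG}(k^{\mu-2\rho}\otimes-)[-d]$, combined with $\Rep(\cB)$-linearity of $(\on{Av}^{G(\CO)/I}_!)^{\on{enh}}$ and $\ind_{\cB\to\cG}\circ(\on{Av}^{G(\CO)/I}_!)^{\on{enh}}\simeq\on{Av}^{G(\CO)/I}_!$, does correctly produce $\on{Av}^{G(\CO)/I}_!(J_{\mu-2\rho})[-d]$ for the $\mu$-component of $\jmath_*$ of the left-hand side, which matches the family defining the right-hand side after convolving with $J_{-2\rho}[-d]$.

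The genuine gap is the one you flag yourself: matching the transition maps. On the left side the transitions arise from the $\coind_{\cB\to\cG}$-adjunction used to build $\jmath_*$ (the ``$\{c^\mu\}=\jmath_*(c')$'' recipe of \secref{sss:jmath}); on the right side they arise from the geometrically defined maps $\on{Av}^{G(\CO)/I}_!(j_{\mu,!})\to\pi^!(\on{Sat}(V^\mu))$ of \secref{sss:F-}. Your reduction to a cofinal subsystem where the objects lie in a single degree of the heart is a valid way to eliminate higher homotopy coherence, but it leaves the essential $1$-morphism-level identity unproved: you must show that the particular adjunction map $\on{Av}^{G(\CO)/I}_!(j_{\mu,!})\to\pi^!(\on{Sat}(V^\mu))$ that Frenkel--Gaitsgory use to glue the Pl\"ucker family agrees with the map produced by the $\Rep(\cG)$-module structure on $\on{Av}^{G(\CO)/I}_!$ and the Serre-duality identification. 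Invoking ``the centrality compatibility of the AB functor'' names the ingredient but does not carry out the comparison; this is precisely the substantive content hidden behind the citation to [FG3], and as written your argument leaves it unjustified.

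Two smaller points worth tightening. First, to conclude from a match of Pl\"ucker families you need an actual isomorphism of families (objects together with transitions), not merely an objectwise identification — you should say explicitly that once both families are shown to agree, fully faithfulness of $\jmath_*$ gives $\text{LHS}=\jmath^*\jmath_*(\text{LHS})=\jmath^*(\text{RHS family})=\text{RHS}$. Second, the claim that the right-hand functor is right $\Dmod_{-\kappa}(\Fl^{\on{aff}}_G)^I$-linear is of course true by associativity, but for the left-hand functor you should note that $(\oblv_{G(\CO)/I})^{\on{enh}}$ is itself right $\Dmod_{-\kappa}(\Fl^{\on{aff}}_G)^I$-linear (the right action and the left AB action commute), so its left adjoint inherits this.
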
 

\begin{cor} \label{c:identify baby geom}
For any $\CC$ with an action of $G(\CK)$ at level $-\kappa$, the functors 
$$(\on{Av}^{G(\CO)/I}_!)^{\on{enh}} \text{ and } (\on{Av}^{G(\CO)/I}_*)^{\on{enh}},$$
are given by convolution with the objects 
$$(\overset{\bullet}\CF{}^{-,\semiinf,\on{invol}}_{-\kappa})^{\on{enh}}\star J_{-2\rho}[-d] \text{ and }
(\overset{\bullet}\CF{}^{-,\semiinf,\on{invol}}_{-\kappa})^{\on{enh}}[-2d],$$
respectively. 
\end{cor}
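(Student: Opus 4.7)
The plan is to deduce the corollary from \thmref{t:identify baby geom} by a formal argument invoking the universality of convolution kernels, and then to derive the $\on{Av}_*^{G(\CO)/I}$ formula from the $\on{Av}_!^{G(\CO)/I}$ formula via the twist-and-shift identity relating the two averaging functors.

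First I would explain why the statement of \thmref{t:identify baby geom}, which is formulated for $\CC = \Dmod_{-\kappa}(G(\CK))$ (so that $\CC^I = \Dmod_{-\kappa}(\on{Fl}^{\on{aff}}_G)^I$), implies the corresponding statement for an arbitrary $\CC$ equipped with a $G(\CK)$-action at level $-\kappa$. The point is that both functors
$$(\on{Av}^{G(\CO)/I}_!)^{\on{enh}}: \CC^I \to \Rep(\cB)\underset{\Rep(\cG)}\otimes \CC^{G(\CO)}$$
and ``convolution with $(\overset{\bullet}\CF{}^{-,\semiinf,\on{invol}}_{-\kappa})^{\on{enh}}\star J_{-2\rho}[-d]$" are $\Dmod_{-\kappa}(\on{Fl}^{\on{aff}}_G)^I$-linear (the former tautologically, the latter because convolution against an object of $\Dmod_{-\kappa}(\on{Fl}^{\on{aff}}_G)^{G(\CO)}$ enhanced with its Hecke structure is so). Consequently both functors are obtained from their restrictions to the universal category by the tautological extension along the action morphism
$$\Dmod_{-\kappa}(\on{Fl}^{\on{aff}}_G)^I\underset{\Dmod_{-\kappa}(\on{Fl}^{\on{aff}}_G)^{G(\CO)}}\otimes \CC^{G(\CO)}\to \CC^I,$$
and \thmref{t:identify baby geom} matches them in the universal case. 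This gives the desired formula for $(\on{Av}_!^{G(\CO)/I})^{\on{enh}}$.

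Next I would derive the formula for $(\on{Av}_*^{G(\CO)/I})^{\on{enh}}$. Recall from \secref{sss:G(O) and I} that we have the two canonical identifications
$$\on{Av}^{G(\CO)/I}_!\simeq \on{Av}^{G(\CO)/I}_*[2d]
\quad\text{and}\quad
\ind_{\cG\to \cB}(-)\simeq \coind_{\cG\to \cB}(k^{2\rho}\otimes -)[d],$$
which together yield the identity of enhanced functors
$$(\on{Av}^{G(\CO)/I}_!)^{\on{enh}}\simeq (\on{Av}^{G(\CO)/I}_*)^{\on{enh}}\circ (J_{-2\rho}\star -)[d]$$
already recorded in the paper. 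Using the multiplicativity $J_{-2\rho}\star J_{2\rho}\simeq \delta_{1,\on{Fl}}$ (a special case of \eqref{e:BMW mult bis} with the trivialized $\omega_x$), the above identity becomes
$$(\on{Av}^{G(\CO)/I}_*)^{\on{enh}}\simeq (\on{Av}^{G(\CO)/I}_!)^{\on{enh}}\circ (J_{2\rho}\star -)[-d],$$
and substituting the formula we have just established for $(\on{Av}_!^{G(\CO)/I})^{\on{enh}}$ yields the kernel
$(\overset{\bullet}\CF{}^{-,\semiinf,\on{invol}}_{-\kappa})^{\on{enh}}\star J_{-2\rho}[-d]\star J_{2\rho}[-d]\simeq (\overset{\bullet}\CF{}^{-,\semiinf,\on{invol}}_{-\kappa})^{\on{enh}}[-2d]$, as required.

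The only nontrivial part of this proposal is the first step — the universal-kernel argument. What needs to be checked carefully is that the enhancement along $\Rep(\cB)$ is compatible with the $\Dmod_{-\kappa}(\on{Fl}^{\on{aff}}_G)^{G(\CO)}$-linear structure used in the extension from the universal case. This is essentially a bookkeeping matter, since the $\Rep(\cB)$-structure on $(\on{Av}_!^{G(\CO)/I})^{\on{enh}}$ comes from the Arkhipov--Bezrukavnikov action \eqref{e:AB functor}, whose construction manifestly upgrades convolution to a $\QCoh(\cn/\on{Ad}(\cB))$-linear operation, but it is the step whose verification I expect to be the most delicate.
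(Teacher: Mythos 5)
Your proposal is correct and matches the paper's implicit argument: the corollary is stated in the paper without proof as a formal consequence of \thmref{t:identify baby geom}, and the two steps you lay out — extending from the universal case $\CC=\Dmod_{-\kappa}(G(\CK))$ by $\Dmod_{-\kappa}(\Fl^{\on{aff}}_G)^I$-linearity, then applying the already-recorded identity $(\on{Av}^{G(\CO)/I}_!)^{\on{enh}}\simeq (\on{Av}^{G(\CO)/I}_*)^{\on{enh}}\circ(J_{-2\rho}\star -)[d]$ together with $J_{-2\rho}\star J_{2\rho}\simeq\delta_{1,\Fl}$ — are exactly the expected bookkeeping.
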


\sssec{}  \label{sss:AB pos}

Let us now consider the situation at the positive level. We define the monoidal functor
\begin{equation} \label{e:AB functor pos}
\QCoh(\cn/\on{Ad}(\cB))\to \Dmod_{\kappa}(\Fl^{\on{aff}}_G)^I
\end{equation} 
by applying monoidal duality 
$$\QCoh(\cn/\on{Ad}(\cB))_c\to \QCoh(\cn/\on{Ad}(\cB))_c$$
(i.e., the naive duality on perfect complexes) 
and Verdier duality 
$$\BD:(\Dmod_{-\kappa}(\Fl^{\on{aff}}_G)^I)_c\to (\Dmod_{\kappa}(\Fl^{\on{aff}}_G)^I)_c.$$

\medskip

Thus, for a category $\CC$ with an action of $G(\CK)$ at level $\kappa$, we obtain an action 
of $\QCoh(\cn/\on{Ad}(\cB))$ on $\CC^I$ with the following properties:

\begin{itemize}

\item For $c\in \CC$ and $\mu\in \Lambda$, we have 
$$\sfq^*(k^\mu)\otimes c:=J^{\BD}_{-\mu}\star c.$$

\item The forgetful functor $\oblv_{G(\CO)/I}:\CC^{G(\CO)}\to \CC^I$ intertwines the $\Rep(\cG)$-action on $\CC^{G(\CO)}$
coming from $\on{Sat}\circ \tau$ and the $\Rep(\cG)$-action on $\CC^I$ coming from \eqref{e:G to n/B}. 

\end{itemize}

\sssec{}

We still have the pair of functors
$$(\oblv_{G(\CO)/I})^{\on{enh}}:\Rep(\cB)\underset{\Rep(\cG)}\otimes \CC^{G(\CO)} \rightleftarrows \CC^I:
(\on{Av}^{G(\CO)/I}_!)^{\on{enh}},$$
and it follows formally from \thmref{t:identify baby geom} that the functor $(\on{Av}^{G(\CO)/I}_!)^{\on{enh}}$ is given by 
convolution with the object
$$(\overset{\bullet}\CF{}^{\semiinf,\on{invol}}_{\kappa})^{\on{enh}}\star J^{\BD}_{2\rho}[d]\in 
\Rep(\cB)\underset{\Rep(\cG)}\otimes\Dmod_{\kappa}(\Fl^{\on{aff}}_G)^{G(\CO)}.$$

\ssec{Compatibility with the equivalences $\sF_{-\kappa}$ and $\sF_{\kappa}$}

\sssec{}  \label{sss:AB compat}

We are now ready state one more expected property of the conjectural equivalence $\sF_{-\kappa}$:

\medskip

The equivalence 
$$\sF_{-\kappa}:\hg\mod^I_{-\kappa}\to \Rep^{\on{mxd}}_q(G)$$
intertwines the $\QCoh(\cn/\on{Ad}(\cB))$-action on $\hg\mod^I_{-\kappa}$
and on $\Rep^{\on{mxd}}_q(G)$ (the latter is from \conjref{c:1/2 vs mixed}). Moreover, \eqref{e:KL vs I} is a 
commutative diagram of categories acted on by $\Rep_q(G)$.

\medskip

As a formal consequence we obtain the following commutative diagram 
\begin{equation}  \label{e:KL vs I B}
\CD
\Rep(\cB)\underset{\Rep(\cG)}\otimes \KL(G,-\kappa)  @>{\sF_{-\kappa}}>>   \Rep(\cB)\underset{\Rep(\cG)}\otimes \Rep_q(G)_{\on{ren}}  \\
@A{(\on{Av}^{G(\CO)/I}_!)^{\on{enh}}}AA   @AA{\fr_{\on{baby-ren}\to \on{ren}}\circ \ind_{\on{mxd}\to \frac{1}{2}}}A  \\
\hg\mod^I_{-\kappa}   @>{\sF_{-\kappa}}>>  \Rep^{\on{mxd}}_q(G)
\endCD
\end{equation}
of categories acted on by $\Rep(\cB)$. 

\sssec{}

Let us show how commutative diagram \eqref{e:KL vs I B} explains the result of 
\thmref{t:identify baby neg}: indeed, we apply both
circuits to $\BW_{-\kappa}^\clambda$, noting that
$$\ind_{\on{mxd}\to \frac{1}{2}}(\BM_{q,\on{mxd}}^\clambda)\simeq \BM_{q,\frac{1}{2}}^\clambda\simeq
\ind_{\on{Lus}^+\to \frac{1}{2}}(k^\clambda),$$
and use \corref{c:identify baby geom}. 

\sssec{}

By duality, we obtain that the equivalence 
$$\sF_{\kappa}:\hg\mod^I_{\kappa}\to \Rep^{\on{mxd}}_{q^{-1}}(G)$$
intertwines the $\QCoh(\cn/\on{Ad}(\cB))$-action on $\hg\mod^I_{\kappa}$
from \secref{sss:AB pos}, and on $\Rep^{\on{mxd}}_{q^{-1}}(G)$, and we obtain a 
commutative diagram 
\begin{equation}  \label{e:KL vs I B pos}
\CD
\Rep(\cB)\underset{\Rep(\cG)}\otimes \KL(G,\kappa)  @>{\sF_{\kappa}}>>   \Rep(\cB)\underset{\Rep(\cG)}\otimes \Rep_{q^{-1}}(G)_{\on{ren}}  \\
@A{(\on{Av}^{G(\CO)/I}_!)^{\on{enh}}}AA   @AA{\fr_{\on{baby-ren}\to \on{ren}}\circ \ind_{\on{mxd}\to \frac{1}{2}}}A  \\
\hg\mod^I_{\kappa}   @>{\sF_{\kappa}}>>  \Rep^{\on{mxd}}_{q^{-1}}(G)
\endCD
\end{equation}
of categories acted on by $\Rep(\cB)$. 

\sssec{}

Note that \eqref{e:KL vs I B pos} gives a conceptual explanation of \thmref{t:identify baby pos}. Indeed, let us apply both circuits of the diagram
to $\BD(\BW_{-\kappa}^{-\clambda-2\rho})[d]$. 

\medskip

On the one hand,
$$\sF_{\kappa}(\BD(\BW_{-\kappa}^{-\clambda-2\rho})[d])\simeq
\BD^{\on{can}}(\BM^{-\clambda-2\rho}_{q,\on{mxd}})[-d]\simeq \BM^{\clambda+2\rho-2\check\rho}_{q^{-1},\on{mxd}},$$
and
$$\ind_{\on{mxd}\to \frac{1}{2}}(\BM^{\clambda+2\rho-2\check\rho}_{q^{-1},\on{mxd}})\simeq 
\ind_{\on{Lus}^+\to \frac{1}{2}}(k^{\clambda+2\rho-2\check\rho}_{q^{-1},\on{mxd}})\simeq
\coind_{\on{Lus}^+\to \frac{1}{2}}(k^{\clambda}_{q^{-1},\on{mxd}}).$$

On the other hand,
\begin{multline*}
(\on{Av}^{G(\CO)/I}_!)^{\on{enh}}(\BD(\BW_{-\kappa}^{-\clambda-2\rho})[d])\simeq
(\overset{\bullet}\CF{}^{\semiinf,\on{invol}}_{\kappa})^{\on{enh}}\star J^{\BD}_{2\rho}[d]\star 
\BD(\BW_{-\kappa}^{-\clambda-2\rho}[d]) \simeq \\
\simeq (\overset{\bullet}\CF{}^{\semiinf,\on{invol}}_{\kappa})^{\on{enh}}\star 
\BD(J_{2\rho}\star \BW_{-\kappa}^{-\clambda-2\rho}) \simeq (\overset{\bullet}\CF{}^{\semiinf,\on{invol}}_{\kappa})^{\on{enh}}\star 
\BD(\BW_{-\kappa}^{-\clambda}),
\end{multline*}
as required. 

\ssec{More on spherical vs Iwahori}

The material in this subsection is included for the sake of completeness. We will not need in the sequel. 

\sssec{}

We return to the general setting of \secref{sss:G(O) and I}. The following is established in 
\cite[Main Theorem 4, Sect. 5]{FG3}\footnote{The statement of \cite[Main Theorem 4, Sect. 5]{FG3} contains a typo: 
the functor $\Upsilon$ goes in the opposite direction.}:

\begin{thm}  \label{t:restr thm}
The functor of $\Rep(\cB)$-module categories
$$(\on{Av}^{G(\CO)/I}_*)^{\on{enh}}:\Dmod_{-\kappa}(\Fl^{\on{aff}}_G)^I \to
\Rep(\cB)\underset{\Rep(\cG)}\otimes \Dmod_{-\kappa}(\Fl^{\on{aff}}_G)^{G(\CO)}$$
factors canonically as
\begin{multline*}
\Dmod_{-\kappa}(\Fl^{\on{aff}}_G)^I \overset{\iota^*}\longrightarrow
\QCoh(\on{pt}/\cB)\underset{\QCoh(\cn/\on{Ad}(\cB))}\otimes \Dmod_{-\kappa}(\Fl^{\on{aff}}_G)^I \overset{\fr_{\on{geom}}}\to  \\
\to \Rep(\cB)\underset{\Rep(\cG)}\otimes \Dmod_{-\kappa}(\Fl^{\on{aff}}_G)^{G(\CO)},
\end{multline*}
where the functor $\fr_{\on{geom}}$ is fully faithful.
\end{thm}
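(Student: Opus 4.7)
The plan is to leverage \corref{c:identify baby geom}, which realizes $(\on{Av}^{G(\CO)/I}_*)^{\on{enh}}$ as convolution with the explicit kernel $(\overset{\bullet}\CF{}^{-,\semiinf,\on{invol}}_{-\kappa})^{\on{enh}}[-2d]$. The functor $\iota^*:\QCoh(\cn/\on{Ad}(\cB))\to \Rep(\cB)$ is a monoidal localization whose ``kernel'' on module categories is controlled by the augmentation ideal $I_0\subset \cO_{\cn/\on{Ad}(\cB)}$ of functions vanishing on $\on{pt}/\cB$; as a $\cB$-representation this ideal is generated by $\cn^\vee$. To establish the asserted factorization, I need to show that the $\QCoh(\cn/\on{Ad}(\cB))$-action of any element of $I_0$ on $\Dmod_{-\kappa}(\Fl^{\on{aff}}_G)^I$ is annihilated by $(\on{Av}^{G(\CO)/I}_*)^{\on{enh}}$, since the $\Rep(\cB)$-module structure on the target is obtained from the $\QCoh(\cn/\on{Ad}(\cB))$-module structure via $\iota^*$.

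First, I would reduce to the universal case $\CC=\Dmod_{-\kappa}(G(\CK))$ acting by convolution, so that the question becomes purely about the convolution kernel. Next, observe that the generators of $I_0$ provided by $\cn^\vee$ correspond, under the AB functor \eqref{e:AB functor}, to specific $I$-equivariant D-modules supported on the codimension-one Iwahori orbits labelled by simple roots. Using the Drinfeld--Pl\"ucker presentation of \secref{sss:F-},
$$\overset{\bullet}\CF{}^{-,\semiinf,\on{invol}}_{-\kappa}\simeq \underset{\nu\in \Lambda}\oplus\, \underset{\mu\in \Lambda^+}{\on{colim}}\, \on{Sat}(V^\mu)\star \on{Av}^{G(\CO)/I}_!(J_{-\mu-\nu}),$$
the convolution of $(\overset{\bullet}\CF{}^{-,\semiinf,\on{invol}}_{-\kappa})^{\on{enh}}$ with these ``nilpotent'' AB generators should become a telescoping colimit: the transition maps in the Drinfeld--Pl\"ucker system are manufactured from the $\cG$-equivariant (Satake) structure, and the $\cB$-action on the enhancement tracks precisely the additional $\cn^\vee$-directions, so that convolution with an element of $\cn^\vee$ identifies consecutive terms of the colimit up to a coboundary, forcing vanishing.

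Once the factorization through $\iota^*$ is established, it produces a functor $\fr_{\on{geom}}$ of $\Rep(\cB)$-module categories, and I would prove full faithfulness by testing on a set of compact generators. On one side, compact generators of $\QCoh(\on{pt}/\cB)\underset{\QCoh(\cn/\on{Ad}(\cB))}\otimes \Dmod_{-\kappa}(\Fl^{\on{aff}}_G)^I$ arise as images of the AB generators $J_\mu\star \delta_{1,\Fl}$, while on the other side $\Rep(\cB)\underset{\Rep(\cG)}\otimes \Dmod_{-\kappa}(\Fl^{\on{aff}}_G)^{G(\CO)}$ is generated by objects of the form $V\otimes \delta_{1,\Gr}$ for $V\in \Rep(\cB)^c$. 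Invoking \thmref{t:identify baby geom} to identify $(\on{Av}^{G(\CO)/I}_!)^{\on{enh}}$ as a left adjoint realized by convolution, full faithfulness of $\fr_{\on{geom}}$ reduces to the identity $\inv_{\cB}\bigl((\overset{\bullet}\CF{}^{-,\semiinf,\on{invol}}_{-\kappa})^{\on{enh}}\bigr)\simeq \pi^!(\delta_{1,\Gr})[-d]$ (and its Hecke-equivariant refinement), which is the content of \thmref{t:inv in semiinf} transported through the inversion equivalence \eqref{e:inversion}.

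The main obstacle is the computation in the second step, establishing that the augmentation ideal $I_0$ acts by zero through the convolution kernel $(\overset{\bullet}\CF{}^{-,\semiinf,\on{invol}}_{-\kappa})^{\on{enh}}$. This requires carefully unpacking the $\cB$-equivariance coming from the Drinfeld--Pl\"ucker formalism of \secref{ss:Dr-Pl}, in particular the description of the $\cB$-action given in \secref{sss:B action det}, and tracking how the AB images of $\cn^\vee$ interact with the colimits indexed by $\Lambda^+$ in the presentation of $\overset{\bullet}\CF{}^{-,\semiinf,\on{invol}}_{-\kappa}$. Geometrically, this vanishing expresses the fact that the semi-infinite IC sheaf ``sees only the zero nilpotent element,'' a statement about its Springer-theoretic support that should be verifiable directly from the colimit presentation and the cofinality of $\Lambda^+\subset \Lambda$.
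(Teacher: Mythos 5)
You should first be aware that the paper contains no proof of \thmref{t:restr thm}: it is imported from \cite[Main Theorem 4, Sect. 5]{FG3}, and the remark following it explains that it is a formal consequence of Bezrukavnikov's equivalence \eqref{e:Bezr1}, under which the statement becomes the assertion that the restriction functor $\IndCoh((\cn\underset{\cg}\times \wt{\check\CN})/\cB)\to \IndCoh((\on{pt}\underset{\cg}\times \wt{\check\CN})/\cB)$ factors through the base change with fully faithful second arrow, which is visible from the singular-support theory of \cite{AriG}. Your proposal is therefore an attempt at a genuinely different, direct D-module-side argument, and as such it has a real gap at its center. The factorization through $\iota^*$ is not the property that generators of the augmentation ideal ``act by zero'' on objects in the image; it is the datum of a coherent $\QCoh(\cn/\on{Ad}(\cB))$-linear structure on $(\on{Av}^{G(\CO)/I}_*)^{\on{enh}}$, with the target viewed as a module via $\iota^*$ --- equivalently, that the kernel $(\overset{\bullet}\CF{}^{-,\semiinf,\on{invol}}_{-\kappa})^{\on{enh}}$, for the right Arkhipov--Bezrukavnikov action, is scheme-theoretically a pushforward from the zero section. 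Your ``telescoping colimit'' step, which is exactly this content, is asserted rather than proved: nothing in the Drinfeld--Pl\"ucker transition maps (which are manufactured from the Satake/$\Rep(\cG)$-structure) is shown to interact with the AB images of $\cn^\vee$ in the claimed way, and establishing this directly amounts to redoing the Bezrukavnikov-side identification you are trying to bypass.

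The full-faithfulness step is also not a reduction to \thmref{t:inv in semiinf}. Homs in $\QCoh(\on{pt}/\cB)\underset{\QCoh(\cn/\on{Ad}(\cB))}\otimes \Dmod_{-\kappa}(\Fl^{\on{aff}}_G)^I$ are computed through $\iota_*\iota^*$ and therefore acquire Koszul factors $\Sym(\cn^\vee[1])$ (compare the filtration discussed at the end of \secref{ss:DK cohomology}); matching them with Homs after applying $(\on{Av}^{G(\CO)/I}_*)^{\on{enh}}$ requires computing the $\cB$-equivariant internal Hom of the kernel against all its weight twists and identifying the answer with that Koszul complex, which is strictly more than the $\cB$-invariants statement of \thmref{t:inv in semiinf} or \corref{c:inv in semiinf tau}. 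In addition, your proposed test objects $J_\mu\star\delta_{1,\Fl}$ are not compact generators of $\Dmod_{-\kappa}(\Fl^{\on{aff}}_G)^I$: under \eqref{e:Bezr1} they correspond to $*$-pushforwards of line bundles along the diagonal $\wt{\check\CN}/\cG\to (\wt{\check\CN}\underset{\cg}\times \wt{\check\CN})/\cG$, and any object $*$-extended from the complement of the diagonal is right-orthogonal to all of them; hence verifying the desired Hom identities on this family would not establish full faithfulness even if the computations went through.
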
 

\begin{rem}
The assertion of \thmref{t:restr thm} is in fact a formal consequence of Bezrukavnikov's theorem in \cite{Bez}, which we will
review in \secref{ss:review of Bez}. The corresponding assertion on the coherent side is that the functor
$$\IndCoh((\cn\underset{\cg}\times \wt{\check\CN})/\on{Ad}(\cB)) \overset{\iota^*}\longrightarrow
\IndCoh((\on{pt}\underset{\cg}\times \wt{\check\CN})/\on{Ad}(\cB))$$
factors as
\begin{multline*}
\IndCoh((\cn\underset{\cg}\times \wt{\check\CN})/\on{Ad}(\cB)) \overset{\iota^*}\longrightarrow
\QCoh(\on{pt}/\cB)\underset{\QCoh(\cn/\on{Ad}(\cB))}\otimes  \IndCoh((\cn\underset{\cg}\times \wt{\check\CN})/\on{Ad}(\cB))\to \\
\to \IndCoh((\on{pt}\underset{\cg}\times \wt{\check\CN})/\on{Ad}(\cB)),
\end{multline*}
where the second arrow is fully faithful (the latter can be seen from the theory of singular support of \cite{AriG}).
\end{rem} 

\sssec{}

It follows formally from \thmref{t:restr thm} that for $\CC$ as in \secref{sss:G(O) and I}, the functor 
$$(\on{Av}^{G(\CO)/I}_*)^{\on{enh}}:\CC^I\to \Rep(\cB)\underset{\Rep(\cG)}\otimes \CC^{G(\CO)}$$
factors canonically as
$$\CC^I \overset{\iota^*}\longrightarrow
\QCoh(\on{pt}/\cB)\underset{\QCoh(\cn/\on{Ad}(\cB)}\otimes \CC^I \overset{\fr_\CC}\to 
\Rep(\cB)\underset{\Rep(\cG)}\otimes \CC^{G(\CO)},$$
where the functor $\fr_{\CC}$ is fully faithful.

\begin{rem}
The essential image of the functor 
\begin{equation} \label{e:r geom} 
\fr_{\CC}:\QCoh(\on{pt}/\cB)\underset{\QCoh(\cn/\on{Ad}(\cB)}\otimes \CC^I \to \Rep(\cB)\underset{\Rep(\cG)}\otimes \CC^{G(\CO)}
\end{equation} 
can be described explicitly via the \emph{derived Satake equivalence}. 

\medskip

Namely, the category $\CC^{G(\CO)}$ is acted on by the monoidal category $\IndCoh((\on{pt}\underset{\cg}\times \on{pt})/\cG)$, and hence 
the category $\Rep(\cB)\underset{\Rep(\cG)}\otimes \CC^{G(\CO)}$ is acted on by the monoidal category $\IndCoh((\on{pt}\underset{\cg}\times \on{pt})/\cB)$. 
Hence, objects in $\Rep(\cB)\underset{\Rep(\cG)}\otimes \CC^{G(\CO)}$ admit a singular support, which is a conical $\on{Ad}(\cB)$-invariant Zariski-closed
subset in $\cg^\vee$ (which is automatically contained in the nilpotent cone). It follows from \cite[Proposition7.4.3]{AriG} 
and Bezrukavnikov's theory \cite{Bez} that the image of the functor $\fr_{\CC}$ of \eqref{e:r geom} is the full subcategory consisting of objects whose singular 
support is contained in $(\cg/\cn)^\vee\subset \cg^\vee$.
\end{rem}  

\sssec{}

Let us apply this to $\CC=\hg\mod_{-\kappa}$. We obtain that the functor
$$(\on{Av}^{G(\CO)/I}_!)^{\on{enh}}:\hg\mod_{-\kappa}^I \to
\Rep(\cB)\underset{\Rep(\cG)}\otimes \KL(G,-\kappa)$$ factors via a fully faithful functor
$$\fr_{\hg\mod_{-\kappa}}:\QCoh(\on{pt}/\cB)\underset{\QCoh(\cn/\on{Ad}(\cB)}\otimes \hg\mod_{-\kappa}^I \to
\Rep(\cB)\underset{\Rep(\cG)}\otimes \KL(G,-\kappa).$$

Recall that on the quantum group side we have
$$\QCoh(\on{pt}/\cB)\underset{\QCoh(\cn/\on{Ad}(\cB)}\otimes \Rep_q^{\on{mxd}}(G)\simeq
\Rep_q^{\frac{1}{2}}(G)_{\on{baby-ren}}.$$

Thus, the functor $\fr_{\hg\mod_{-\kappa}}$ corresponds to the fully faithful embedding
$$\fr_{\on{baby-ren}\to \on{ren}}:\Rep_q^{\frac{1}{2}}(G)_{\on{baby-ren}}\to \Rep_q^{\frac{1}{2}}(G)_{\on{ren}}\simeq \Rep(\cB)\underset{\Rep(\cG)}\otimes \Rep_q(G)_{\on{ren}}.$$

\section{Cohomology of the De Concini-Kac quantum group via Kac-Moody algebras}   \label{s:DK cohomology}

In this section we will show how \conjref{c:main} allows to interpret the functor of cohomology with respect to
$U_q^{\on{DK}}(N)$ on the Kac-Moody side of the equivalences $\sF_{-\kappa}$ and $\sF_\kappa$. 

\ssec{The statement}

\sssec{}

Let us assume the existence of the equivalence 
\begin{equation} \label{e:KLI neg}
\sF_{-\kappa}:\hg\mod_{-\kappa}^I \simeq \Rep^{\on{mxd}}_q(G)
\end{equation} 
that satisfies the additional compatibility of \secref{sss:AB compat},
and hence by duality also of the equivalence
\begin{equation} \label{e:KLI pos}
\sF_\kappa:\hg\mod_{\kappa}^I \simeq \Rep^{\on{mxd}}_{q^{-1}}(G)
\end{equation} 
with the corresponding additional compatibility. 

\medskip

For $\clambda\in \cLambda$, on the quantum group side we consider the functor
$$\CM \mapsto \on{C}^\cdot(U_q^{\on{DK}}(N^-),-)^\clambda:\Rep_q^{\on{mxd}}(G) \to \Vect.$$

We now wish to describe what this functor corresponds to on the Kac-Moody side.

\sssec{}

We will prove: 

\begin{thm} \label{t:DK}  Assume \conjref{c:main}. Then: 

\smallskip

\noindent{\em(a)} Under the equivalence \eqref{e:KLI neg}, the functor 
$$\CM \mapsto \on{C}^\cdot(U_q^{\on{DK}}(N^-),-)^\clambda$$
corresponds to the functor 
$$\on{C}^\semiinf(\fn^-(\CK),-)^\clambda:\hg\mod_{-\kappa}^I \to \Vect.$$

\smallskip

\noindent{\em(b)} Under the equivalence \eqref{e:KLI pos}, the functor 
$$\CM \mapsto \on{C}^\cdot(U_{q^{-1}}^{\on{DK}}(N^-),-)^\clambda$$
corresponds to the functor
$$\CM \mapsto \on{C}^\semiinf\left(\fn^-(\CK), \underset{\mu\in \Lambda^+}{\on{colim}}\, j_{-\mu,*}\star j_{\mu,*}\star \CM\right)^\clambda.$$

\end{thm}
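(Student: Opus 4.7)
For part (a), the strategy is to reformulate both functors through a ``long intertwining'' framework and match the reformulations under $\sF_{-\kappa}$. On the quantum side, exactly as in the proof of \propref{p:Upsilon Verma} (unfolding $\Upsilon = (j^-)^R \circ j^+$ and $\BM^\clambda_{q,\wt{\on{mxd}}} = \ind_{\on{DK}^- \to \wt{\on{mxd}}}(k^\clambda)$), one has the tautological identification
$$\on{C}^\cdot(U^{\on{DK}}_q(N^-),\CM)^\clambda \simeq \CHom_{\Rep^{\wt{\on{mxd}}}_q(G)}(\BM^\clambda_{q,\wt{\on{mxd}}},\Upsilon(\CM)), \qquad \CM \in \Rep^{\on{mxd}}_q(G).$$
This expresses the desired functor as a $\on{Hom}$ in an ambient ``tilde'' category, with a clean representing object.

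On the Kac-Moody side, I would develop the parallel picture. Introduce the opposite semi-infinite category $\hg\mod_{-\kappa}^{N^-(\CK)\cdot T(\CO)}$ and the opposite semi-infinite Wakimoto modules $\BW^{-,\clambda,\semiinf}_{-\kappa}$ by mimicking the construction in \secref{ss:The semi-infinite Wakimoto module} with the roles of $N$ and $N^-$ interchanged; in addition, let $\Upsilon_{\hg}:\hg\mod^I_{-\kappa}\to \hg\mod_{-\kappa}^{N^-(\CK)\cdot T(\CO)}$ be the analog of $\Upsilon$, obtained as the right adjoint to the forgetful functor into $\hg\mod^{T(\CO)}_{-\kappa}$ composed with the $I$-averaging (compare \secref{sss:semi-inf categ}). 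By the evident opposite version of \propref{p:semiinf with Wak}, the functor $\on{C}^\semiinf(\fn^-(\CK),-)^\clambda$ on $\hg\mod^I_{-\kappa}$ is given (up to the character twist $k^{2\check\rho}$ and the shift $[d]$ coming from $\det(\fn)^{\otimes -1}$) by the pairing with $\BW^{-,\clambda,\semiinf}_\kappa$ after applying $\Upsilon_{\hg}$.

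The heart of the argument is then to check that $\sF_{-\kappa}$, canonically extended to these opposite/tilde categories, intertwines $\Upsilon$ with $\Upsilon_{\hg}$ and matches $\BM^\clambda_{q,\wt{\on{mxd}}}$ with $\BW^{-,\clambda,\semiinf}_\kappa$ (with the appropriate shift). The first of these compatibilities is formal once one knows the equivalence respects the relevant averaging functors (a direct analog of the averaging compatibility built into \conjref{c:main}); the second reduces, via an argument parallel to Sections \ref{ss:Dr-Pl}-\ref{ss:Dr-Pl baby}, to a computation on standards. Indeed, once matching is established, the numerics are forced by the single consistency check
$$\on{C}^\cdot(U^{\on{DK}}_q(N^-),\BM^{\clambda_1}_{q,\on{mxd}})^\clambda \simeq \on{C}^\semiinf(\fn^-(\CK),\BW^{\clambda_1}_{-\kappa})^\clambda,$$
both of which equal $k[-d]$ exactly when $\clambda = \clambda_1 + 2\check\rho$, by \corref{c:cohomology of UqDK} and \propref{p:neg semiinf of Wak} respectively.

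For part (b), I would deduce the statement from (a) by applying the duality equivalences $(\Rep^{\on{mxd}}_q(G))^\vee \simeq \Rep^{\on{mxd}}_{q^{-1}}(G)$ (from \thmref{t:quantum duality}) and $(\hg\mod^I_{-\kappa})^\vee \simeq \hg\mod^I_\kappa$ (from \propref{p:duality K}), together with $(\sF_{-\kappa})^\vee \simeq \sF_\kappa$. Under these, the representing object of the DK-cohomology functor for $q^{-1}$ on $\Rep^{\on{mxd}}_{q^{-1}}(G)$ corresponds to an object in $\hg\mod^I_{-\kappa}$ that, rather than being directly a Wakimoto module at level $-\kappa$, is the dual of an object in $\hg\mod^I_\kappa$ naturally related to opposite Wakimoto modules at the positive level. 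The extra colimit $\underset{\mu\in\Lambda^+}{\on{colim}}\, j_{-\mu,*}\star j_{\mu,*}\star \CM$ enters exactly as in \corref{c:semiinf neg via Wak} (applied with $\fn^-$ in place of $\fn$): at the positive level, an opposite-Wakimoto expression is no longer ``finite'' — it is forced to be an ind-system of the form $j_{-\mu,*}\star j_{\mu,*}\star (-)$ — mirroring the asymmetry already recorded between Sections \ref{s:Verma} for negative and positive levels.

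The main obstacle is the step of matching $\BM^\clambda_{q,\wt{\on{mxd}}}$ with the opposite semi-infinite Wakimoto module under the extended equivalence: \conjref{c:main} as stated only identifies the positive Wakimoto modules in $\hg\mod^I_{-\kappa}$ with the standards in $\Rep^{\on{mxd}}_q(G)$, and one must promote this to an identification across the ``tilde'' / opposite categories. Once this extension is in place the rest is an exercise in tracking shifts, but the extension itself requires an argument parallel to the one that produced the Drinfeld-Pl\"ucker / semi-infinite IC identification of \thmref{t:identify baby neg} and \thmref{t:identify baby pos}, now on the opposite side.
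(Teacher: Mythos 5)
Your reformulation on the quantum side is correct: unfolding $\Upsilon=(j^-)^R\circ j^+$ exactly as in the proof of \propref{p:Upsilon Verma} gives $\on{C}^\cdot(U^{\on{DK}}_q(N^-),\CM)^\clambda \simeq \CHom_{\Rep^{\wt{\on{mxd}}}_q(G)}(\BM^\clambda_{q,\wt{\on{mxd}}},\Upsilon(\CM))$. The gap is in the matching step, which you flag as ``the main obstacle'' but then treat as a deferrable extension. To run your argument you would need $\sF_{-\kappa}$ to extend to an equivalence between an opposite semi-infinite Kac-Moody category and $\Rep^{\wt{\on{mxd}}}_q(G)$, intertwining $\Upsilon_\hg$ with $\Upsilon$ and carrying opposite semi-infinite Wakimoto modules to the standards $\BM^\clambda_{q,\wt{\on{mxd}}}$. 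None of this follows from properties (i)--(iii) of \conjref{c:main}, nor from the $\QCoh(\cn/\on{Ad}(\cB))$-action compatibility of \secref{sss:AB compat}: when $q$ is a root of unity, $\Rep^{\wt{\on{mxd}}}_q(G)$ is a genuinely different category from $\Rep^{\on{mxd}}_q(G)$, so what you need is effectively a second, independent instance of the main conjecture for the opposite Borel. It is not a formal corollary, and the ``single consistency check'' on standard objects you propose does not pin down an isomorphism of functors --- it is only a sanity check of the sort the paper itself performs after stating the theorem.

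The paper's proof is built specifically to avoid invoking any tilde/opposite equivalence. It first uses the $P$-functor formalism (\propref{p:P functors}, \propref{p:DK via Lus}) to rewrite $\on{C}^\cdot(U^{\on{DK}}_q(N^-),-)^\clambda$ as $\underset{\mu\in\Lambda^+}{\on{colim}}\, \on{C}^\cdot(U^{\on{Lus}}_q(N),\wt{P}_\mu(-))^{w_0(\clambda)}$, where each $\wt{P}_\mu$ is built from $\oblv_{\on{big}\to\on{mxd}}$, $\coind_{\on{mxd}\to\on{big}}$ and the $\Rep(\cT)$-action; all of these are already intertwined with the Kac-Moody side by \conjref{c:main} plus \secref{sss:AB compat}. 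The remaining content is then purely geometric and lives on the D-module side: \thmref{t:P geom} identifies the colimit endofunctor $\wt P''$ (built from $j_{-\mu,*}\star j_{\mu,*}\star j_{w_0,*}$, which feeds $\on{C}^\semiinf(\fn^-(\CK),-)$ via \corref{c:semiinf neg via Wak}) with $\wt P'$, and \propref{p:P vs P'} identifies $\wt P'$ with the $\wt P$ coming from the Arkhipov--Bezrukavnikov action. Part (b) is then obtained by transporting this through the inversion equivalence at the positive level, not by the abstract duality you suggest, though the latter could be made to work once (a) is in hand. If you want to salvage your $\Upsilon$-based strategy, you would first have to supply --- or add to the conjecture --- the opposite-Borel equivalence and its compatibilities, and then argue separately that your $\Upsilon_\hg$ agrees with the long intertwining structure already present in the $P$-functor picture.
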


\sssec{}

As a plausibility check for \thmref{t:DK}(a), let us check that both sides of the theorem evaluate in the same way on 
$$\sF_{-\kappa}(\BW^\clambda_{-\kappa})\simeq \BM^\clambda_{q,\on{mxd}}.$$

Indeed,
$$\on{C}^\semiinf(\fn^-(\CK),\BW_{-\kappa}^{\clambda})^{\clambda'}=
\begin{cases}
&k[-d] \text{ if } \clambda'=\clambda+2\check\rho \\
&0 \text{ otherwise},
\end{cases}
$$
by \propref{p:neg semiinf of Wak}, and 
$$\on{C}^\cdot(U_q^{\on{DK}}(N^-),\BM^\clambda_{q,\on{mxd}})^{\clambda'}=
\begin{cases}
&k[-d] \text{ if } \clambda'=\clambda+2\check\rho \\
&0 \text{ otherwise},
\end{cases}
$$
by \corref{c:cohomology of UqDK}.

\sssec{}

As a formal consequence of \thmref{t:DK} we obtain the following statement (which is conjectural, since \thmref{t:DK} assumes \conjref{c:main})
pertaining to the original Kazhdan-Lusztig equivalence: 

\begin{conj}   \hfill

\smallskip

\noindent{\em(a)} Under the equivalence $\sF_{-\kappa}:\KL(G,-\kappa)\simeq \Rep_q(G)$, the functor
$$\CM \mapsto \on{C}^\cdot(U_q^{\on{DK}}(N^-),-)^\clambda$$
corresponds to the functor 
$$\on{C}^\semiinf(\fn^-(\CK),-)^\clambda.$$

\smallskip

\noindent{\em(b)} Under the equivalence $\sF_\kappa:\KL(G,\kappa)\simeq \Rep_{q^{-1}}(G)$, the functor
$$\CM \mapsto \on{C}^\cdot(U_{q^{-1}}^{\on{DK}}(N^-),-)^\clambda$$
corresponds to the functor
$$\CM \mapsto \on{C}^\semiinf\left(\fn^-(\CK), \underset{\mu\in \Lambda^+}{\on{colim}}\, j_{-\mu,*}\star j_{\mu,*}\star \CM\right)^\clambda.$$

\end{conj}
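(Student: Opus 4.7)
The plan is to derive both parts as purely formal consequences of \thmref{t:DK}, using the commutative diagrams \eqref{e:KL vs I} and \eqref{e:KL vs I pos} to transport the Iwahori-equivariant identifications to the spherical categories. Concretely, to show that a functor $F^{\on{sph}}$ on $\KL(G,\mp\kappa)$ corresponds under $\sF_{\mp\kappa}$ to a functor $G^{\on{sph}}$ on $\Rep_{q^{\mp 1}}(G)$, it is enough to exhibit functors $F^{\on{Iw}}$ on $\hg\mod_{\mp\kappa}^I$ and $G^{\on{Iw}}$ on $\Rep^{\on{mxd}}_{q^{\mp 1}}(G)$ that correspond to each other under the Iwahori equivalence \eqref{e:KLI neg} (resp.~\eqref{e:KLI pos}), together with factorizations $F^{\on{sph}}\simeq F^{\on{Iw}}\circ \oblv_{G(\CO)/I}$ and $G^{\on{sph}}\simeq G^{\on{Iw}}\circ \oblv_{\on{big}\to\on{mxd}}$.

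For part (a), I would take $F^{\on{Iw}}=\on{C}^\semiinf(\fn^-(\CK),-)^\clambda$ and $G^{\on{Iw}}=\on{C}^\cdot(U_q^{\on{DK}}(N^-),-)^\clambda$, whose correspondence under the Iwahori equivalence is precisely \thmref{t:DK}(a). The identity $F^{\on{sph}}\simeq F^{\on{Iw}}\circ \oblv_{G(\CO)/I}$ is tautological since the semi-infinite cohomology functor depends only on the underlying Kac-Moody structure and is oblivious to the $G(\CO)$-equivariance; similarly the equality $G^{\on{sph}}\simeq G^{\on{Iw}}\circ \oblv_{\on{big}\to\on{mxd}}$ is automatic because the $U_q^{\on{DK}}(N^-)$-action on $\oblv_{\on{big}\to\on{mxd}}(\CN)$ coincides with the one on $\CN$. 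For part (b) the same scheme applies, now with
\[
F^{\on{Iw}}(\CM) := \on{C}^\semiinf\Bigl(\fn^-(\CK),\, \underset{\mu\in \Lambda^+}{\on{colim}}\, j_{-\mu,*}\star j_{\mu,*}\star \CM\Bigr)^\clambda,
\]
the correspondence $F^{\on{Iw}}\leftrightarrow G^{\on{Iw}}$ being \thmref{t:DK}(b). The one subtlety is that although the individual convolutions $j_{-\mu,*}\star j_{\mu,*}\star \oblv_{G(\CO)/I}(\CM)$ are no longer $G(\CO)$-equivariant, nothing needs to be simplified: the spherical functor $F^{\on{sph}}$ is simply the restriction $F^{\on{Iw}}\circ \oblv_{G(\CO)/I}$, and part (b) is obtained by invoking \thmref{t:DK}(b) for $\oblv_{G(\CO)/I}(\CM)$ and then using the commutative square \eqref{e:KL vs I pos}.

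The hard part is thus entirely absorbed into \thmref{t:DK}, which itself depends on \conjref{c:main} together with the Arkhipov-Bezrukavnikov compatibility of \secref{sss:AB compat}; once those are granted, no additional geometric or representation-theoretic input is required for the spherical statement. In particular, the proof proposed here does not attempt to give a ``$G(\CO)$-intrinsic'' expression for the functor in (b) that avoids the colimit over $\Lambda^+$, and indeed one should not expect such a simplification, since the analogous Iwahori expression in \corref{c:semiinf neg via Wak} already requires the same type of limiting procedure on the Kac-Moody side at negative level.
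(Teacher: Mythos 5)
Your proposal is correct and matches the paper's approach: the paper states this unnumbered conjecture as ``a formal consequence of \thmref{t:DK}'', and your proof spells out exactly what that formality is, namely composing the Iwahori-level identifications of \thmref{t:DK} with the forgetful functors and invoking the commutative square \eqref{e:KL vs I} (resp.\ \eqref{e:KL vs I pos}) from \conjref{c:main}(i). Both the factorization $\on{C}^\cdot(U_q^{\on{DK}}(N^-),-)^\clambda = \on{C}^\cdot(U_q^{\on{DK}}(N^-),-)^\clambda\circ \oblv_{\on{big}\to\on{mxd}}$ and the analogous one for semi-infinite cohomology are indeed tautological, as you say, so nothing further is needed.
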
 

The rest of this section is devoted to the proof of \thmref{t:DK}. 

\ssec{Cohomology of $U_q^{\on{DK}}(N^-)$ via the cohomology of $U_q^{\on{Lus}}(N)$}

We begin the proof of \thmref{t:DK} by showing that there is a certain categorical procedure that allows to
express the cohomology of $U_q^{\on{DK}}(N^-)$ in terms of the cohomology of $U_q^{\on{Lus}}(N)$. 

\medskip

The rest of the proof will essentially consist of applying the same procedure on the Kac-Moody side. 

\sssec{}   \label{sss:P functors}

Let $\CC$ be a category equipped with an action of $\Rep(\cG)$. For $\mu\in \Lambda^+$ consider the following endo-functor of 
$\Rep(\cB)\underset{\Rep(\cG)}\otimes \CC$,
$$c\mapsto P_\mu(c):=k^{-\mu}\otimes \left(\oblv_{\cG\to \cB}\circ \coind_{\cB\to \cG}(k^{w_0(\mu)}\otimes c)\right).$$

\medskip

Pick a representative $w'_0\in \on{Norm}_{\cG}(\cT)$ of $w_0\in W$. The action of $w'_0$ trivializes the lowest weight line
in every $V^\mu$; in particular we obtain a canonical map of $\cB$-modules
\begin{equation} \label{e:lowest weight}
V^{-w_0(\mu)}\to k^{-\mu}
\end{equation} 
or, equivalently, an identification
$$V^{-w_0(\mu)}\simeq (V^\mu)^\vee.$$

\medskip

The choice of $w'_0$ defines on the family of functors $\mu\rightsquigarrow P_\mu$ a structure of directed family 
with the transition maps being 
\begin{multline*}
k^{-\mu_2}\otimes \left(\oblv_{\cG\to \cB}\circ \coind_{\cB\to \cG}(k^{w_0(\mu_2)}\otimes c)\right) \to \\
\to k^{-\mu_2}\otimes V^{-w_0(\mu_1)}\otimes (V^{-w_0(\mu_1)})^\vee 
\otimes \left(\oblv_{\cG\to \cB}\circ \coind_{\cB\to \cG}(k^{w_0(\mu_2)}\otimes c)\right) \overset{\text{\eqref{e:lowest weight}}}\longrightarrow \\
\to k^{-\mu_1-\mu_2} \otimes (V^{-w_0(\mu_1)})^\vee 
\otimes \left(\oblv_{\cG\to \cB}\circ \coind_{\cB\to \cG}(k^{w_0(\mu_2)}\otimes c)\right) \to \\
\to k^{-\mu_1-\mu_2} \otimes \left(\oblv_{\cG\to \cB}\circ \coind_{\cB\to \cG}(k^{w_0(\mu_1+\mu_2)}\otimes c)\right),
\end{multline*}
where the last arrow comes from the canonical map 
$$(V^\mu)^\vee \to \coind_{\cB\to \cG}(k^{-\mu}), \quad \mu\in \Lambda^+$$
dual to $\ind_{\cB\to \cG}(k^\mu)\simeq V^\mu$. 

\medskip

Denote
$$P:=\underset{\mu\in \Lambda^+}{\on{colim}}\, P_\mu.$$

\sssec{}

Take $\CC:=\Rep_q(G)$, and consider the above family of endo-functors of 
$$\Rep_q^{\frac{1}{2}}(G)_{\on{ren}}\simeq \Rep(\cB)\underset{\Rep(\cG)}\otimes \Rep_q(G).$$

Consider now the following family of endo-functors of $\Rep_q^{\on{mxd}}(G)$:
$$\CM\mapsto \wt{P}_\mu(\CM):=\oblv_{\frac{1}{2}\to \on{mxd}}\circ \fs_{\on{ren}\to \on{baby-ren}} \circ P_\mu \circ 
\fr_{\on{baby-ren}\to \on{ren}}\circ \coind_{\on{mxd}\to \frac{1}{2}}(\CM),$$
which we can also rewrite as
$$\CM\mapsto k^{-\mu}\otimes \left(\oblv_{\on{big}\to \on{mxd}} \circ \coind_{\on{mxd}\to \on{big}}(k^{w_0(\mu)}\otimes \CM)\right).$$

%\medskip

%Denote 
%$$\wt{P}:=\underset{\mu\in \Lambda^+}{\on{colim}}\, \wt{P}_\mu.$$

\medskip

We claim:

\begin{prop}  \label{p:DK via Lus}
The functor
$$\CM\mapsto \on{C}^\cdot(U_q^{\on{DK}}(N^-),\CM)^\clambda,\quad \Rep_q^{\on{mxd}}(G)\to \Vect$$
identifies canonically with
$$\underset{\mu\in \Lambda^+}{\on{colim}}\, \on{C}^\cdot(U_q^{\on{Lus}}(N),\wt{P}_\mu(\CM))^{w_0(\clambda)}.$$
\end{prop}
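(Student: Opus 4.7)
The plan is as follows. Both functors appearing in the claimed identification are continuous from $\Rep_q^{\on{mxd}}(G)$ to $\Vect$, so it suffices to produce a canonical natural isomorphism on a set of compact generators, for which I will use the standard objects $\BM_{q,\on{mxd}}^\nu$. On such a module, the underlying $U_q^{\on{DK}}(N^-)$-module is free on a generator of weight $\nu$, so by \corref{c:cohomology of UqDK} the LHS evaluates to $k[-d]$ when $\clambda = \nu + 2\check\rho$ and to $0$ otherwise. The heart of the proof will therefore consist in evaluating the colimit on the RHS and matching it with this answer.

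For the RHS, the plan is to rewrite each $\mu$-term through a sequence of adjunction manipulations. First, $\on{C}^\cdot(U_q^{\on{Lus}}(N), \CN)^\xi \simeq \CHom_{\Rep_q^{\on{mxd}}(G)}(\BM_{q,\on{mxd}}^\xi, \CN)$ by the $(\ind_{\on{Lus}^+\to\on{mxd}}, \oblv_{\on{mxd}\to\on{Lus}^+})$-adjunction. Applying this to $\CN = \wt{P}_\mu(\BM_{q,\on{mxd}}^\nu) = k^{-\mu}\otimes \oblv_{\on{big}\to\on{mxd}}\coind_{\on{mxd}\to\on{big}}(\BM_{q,\on{mxd}}^{\nu+w_0(\mu)})$ and absorbing the $k^{-\mu}$ into the first slot, one obtains $\CHom_{\Rep_q^{\on{mxd}}(G)}(\BM_{q,\on{mxd}}^{w_0(\clambda)+\mu}, \oblv_{\on{big}\to\on{mxd}}\coind_{\on{mxd}\to\on{big}}(\BM_{q,\on{mxd}}^{\nu+w_0(\mu)}))$. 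Then, using the $(\ind_{\on{mxd}\to\on{big}}, \oblv_{\on{big}\to\on{mxd}})$-adjunction, the identification $\ind_{\on{mxd}\to\on{big}}(\BM_{q,\on{mxd}}^\xi) \simeq \CV_q^\xi$ from \eqref{e:Weyl from mixed}, and the relation $\coind_{\on{mxd}\to\on{big}} \simeq \ind_{\on{mxd}\to\on{big}}[-2d]$ from \corref{c:adj to res}, this simplifies to $\CHom_{\Rep_q(G)_{\on{ren}}}(\CV_q^{w_0(\clambda)+\mu}, \CV_q^{\nu+w_0(\mu)})[-2d]$.

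The next step is to invoke the Serre duality identification $\coind_{\on{Lus}^+\to\on{big}}(k^\alpha) \simeq \ind_{\on{Lus}^+\to\on{big}}(k^{\alpha-2\check\rho})[-d]$ of \corref{c:big ind an coind} to rewrite $\CV_q^{\nu+w_0(\mu)}$ as $\CV_q^{\vee,\, w_0(\nu)+\mu-2\check\rho}[d]$, turning the $\mu$-term into $\CHom_{\Rep_q(G)_{\on{ren}}}(\CV_q^{w_0(\clambda)+\mu}, \CV_q^{\vee,\, w_0(\nu)+\mu-2\check\rho})[-d]$. For $\mu$ sufficiently deep in $\Lambda^+$, both $w_0(\clambda)+\mu$ and $w_0(\nu)+\mu-2\check\rho$ lie in $\cLambda^+$, and by the orthogonality $\CHom(\CV^\alpha, \CV^{\vee,\beta}) = \delta_{\alpha,\beta}\, k$ for dominant $\alpha,\beta$ (the standard/costandard orthogonality in the highest-weight category structure on $\Rep_q(G)^\heartsuit$, which extends to the derived category by the acyclicity of tilting resolutions) the Hom space is $k[-d]$ precisely when $w_0(\clambda) = w_0(\nu)-2\check\rho$, i.e.\ when $\clambda = \nu+2\check\rho$, and $0$ otherwise. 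This stable value is independent of $\mu$ and matches the LHS.

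The principal obstacle will be the verification that the transition maps in the colimit reduce to the \emph{identity} on this one-dimensional Hom space in the stable range, rather than to $0$ (which would cause the colimit to collapse). These transitions originate, via the Plücker structure of \secref{sss:P functors}, from the lowest-weight projections $V^{-w_0(\mu')} \to k^{-\mu'}$; tracking them through the above chain of adjunctions and through the Serre duality identification should reduce the claim to a compatibility between the highest-weight line in $\CV_q^{w_0(\clambda)+\mu+\mu'}$ and the lowest-weight line in $(V^{\mu'})^\vee$, which can be checked by a direct weight-space computation. Naturality in $\CM$ follows automatically from the naturality of every adjunction involved, producing the required canonical isomorphism of functors.
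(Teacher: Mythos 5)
Your approach is genuinely different from the paper's. The paper first proves \propref{p:P functors}, a functorial identification $P\simeq\coind_{\cT\to\cB}\circ w'_0\circ\oblv_{\cB\to\cT}$ as functors of $\Rep(\cG)$-module categories, and then deduces \propref{p:DK via Lus} by a chain of adjunctions that is natural in $\CM$ throughout, passing through the $\frac{1}{2}$-category and the small-quantum-group side and never specializing to particular generators. You bypass \propref{p:P functors} entirely and instead compute both sides on the standard objects $\BM_{q,\on{mxd}}^\nu$. Your $\mu$-term arithmetic is correct: the adjunction rewriting, the use of \corref{c:big ind an coind} for Serre duality, the Weyl/dual-Weyl orthogonality, and the cohomological shifts all check out, and the stable value $k[-d]$ precisely when $\clambda=\nu+2\check\rho$ matches the LHS computed via \corref{c:cohomology of UqDK}.

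However, there is a real gap beyond the transition-map issue you flag. Agreement of two continuous functors on individual compact generators does not by itself produce an isomorphism of functors; one needs a natural transformation to begin with, which one then checks is an isomorphism on generators. Your manipulations are functorial in $\CM$ only up to the stage $\CHom_{\Rep_q(G)_{\on{ren}}}(\CV_q^{w_0(\clambda)+\mu},\ind_{\on{mxd}\to\on{big}}(k^{w_0(\mu)}\otimes\CM))[-2d]$; the Serre-duality rewriting $\CV_q^{\nu+w_0(\mu)}\simeq\CV_q^{\vee,w_0(\nu)+\mu-2\check\rho}[d]$ and the subsequent orthogonality are object-level identifications valid only for $\CM=\BM^\nu_{q,\on{mxd}}$, so what you obtain is a $\nu$-indexed family of vector-space isomorphisms, not a map of functors. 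Your closing claim that naturality in $\CM$ "follows automatically from the naturality of every adjunction involved" is therefore unjustified — the step that matches the stable RHS value with the LHS $\on{C}^\cdot(U_q^{\on{DK}}(N^-),-)^\clambda$ is not an adjunction and is not performed functorially. The transition-map verification, even if carried out, would at most show that the colimit stabilizes and is nonvanishing on generators; it would not supply the missing natural transformation. \propref{p:P functors} exists precisely to furnish that identification at the level of functors, which is why the paper proves it as a separate statement and then deduces \propref{p:DK via Lus} from it.
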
 

The rest of this subsection is devoted to the proof of \propref{p:DK via Lus}.

\sssec{}

We will deduce \propref{p:DK via Lus} from the following statement in the abstract context of 
\secref{sss:P functors}. 

\medskip

Note that the action of $w_0$ defines an endo-functor of $\Rep(\cT)$. A choice of a lift of $w_0$ to an 
element $w'_0\in \on{Norm}_\cG(\cT)$ endows this endo-functor with a structure of endo-functor of
$\Rep(\cT)$ as a module over $\Rep(\cG)$. In particular, we have a well-defined endo-functor
$$\Rep(\cT)\underset{\Rep(\cG)}\otimes \CC \overset{w'_0}\to \Rep(\cT)\underset{\Rep(\cG)}\otimes \CC.$$ 

\medskip

We claim:

\begin{prop}  \label{p:P functors}
The functor $P$ identifies canonically with the composition
$$\Rep(\cB)\underset{\Rep(\cG)}\otimes \CC \overset{\oblv_{\cB\to \cT}}\longrightarrow
\Rep(\cT)\underset{\Rep(\cG)}\otimes \CC \overset{w'_0}\to \Rep(\cT)\underset{\Rep(\cG)}\otimes \CC
\overset{\coind_{\cT\to \cB}}\longrightarrow \Rep(\cB)\underset{\Rep(\cG)}\otimes \CC.$$
\end{prop}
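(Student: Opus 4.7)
The plan is to reduce to the universal case $\CC = \Rep(\cG)$, express both sides as pull-push on the double Bruhat stack $\cB\backslash\cG/\cB$, and then identify the colimit over $\mu\in \Lambda^+$ with a localization at the open Bruhat stratum.

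All the functors in sight are continuous and $\Rep(\cG)$-linear in $\CC$, and both sides of the equivalence arise from corresponding endofunctors of $\Rep(\cB) = \Rep(\cB)\underset{\Rep(\cG)}\otimes \Rep(\cG)$ by base change along $\Rep(\cG)\to \CC$. Hence it suffices to treat the universal case $\CC = \Rep(\cG)$, where both sides become endofunctors of $\QCoh(\on{pt}/\cB)$. Base change along the Cartesian square with lower-right corner $\on{pt}/\cG$ and upper-left corner $\cB\backslash \cG/\cB$ identifies
$$\oblv_{\cG\to \cB}\circ \coind_{\cB\to \cG} \simeq p_{2,*}\circ p_1^*,$$
where $p_1,p_2:\cB\backslash\cG/\cB\rightrightarrows \on{pt}/\cB$ are the two projections. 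Incorporating the twist by $k^{w_0(\mu)}$ inside the coinduction and by $k^{-\mu}$ outside, the projection formula gives
$$P_\mu(\CF) \simeq p_{2,*}\bigl(\CL_\mu\otimes p_1^*(\CF)\bigr),\qquad \CL_\mu := p_1^*(\CL^{w_0(\mu)})\otimes p_2^*(\CL^{-\mu}),$$
where $\CL^\nu$ denotes the $\cB$-equivariant line bundle associated to $\nu\in\Lambda$.

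Unwinding the construction in \secref{sss:P functors}, the transition map $P_{\mu_2}\to P_{\mu_1+\mu_2}$ arises from the $\cB$-linear projection $V^{-w_0(\mu_1)}\to k^{-\mu_1}$ onto the lowest-weight line singled out by $w'_0$. On $\cB\backslash\cG/\cB$ this corresponds to multiplication by a canonical section $s_{\mu_1}\in \Gamma(\cB\backslash\cG/\cB,\CL_{\mu_1})$---concretely a matrix coefficient of $V^{-w_0(\mu_1)}$ pairing the highest weight covector against the lowest weight vector---whose zero divisor is exactly the complement of the open Bruhat stratum $\cB\backslash \cB w_0\cB/\cB$. Consequently, the colimit $P = \on{colim}_{\mu\in\Lambda^+} P_\mu$ is the pull-push $p_{2,*}\circ p_1^*$ restricted to (i.e., localized at) this open stratum.

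Finally, the $(\cB\times \cB)$-stabilizer of the point $w_0\in \cG$ equals $\{(t, w_0^{-1}tw_0) : t\in\cT\}\simeq \cT$, so the open stratum is canonically $\on{pt}/\cT$. Under this identification $p_1$ becomes the standard inclusion $\on{pt}/\cT\to \on{pt}/\cB$, while $p_2$ becomes that same inclusion precomposed with the $w_0$-conjugation automorphism of $\cT$. Base change along these two maps identifies the restriction of $p_{2,*}p_1^*$ to the open stratum with $\coind_{\cT\to \cB}\circ w'_0\circ \oblv_{\cB\to\cT}$, as desired. The main technical hurdle is the identification of the colimit $P$ with the open-stratum localization: one must verify that the $\Lambda^+$-indexed directed system of canonical sections $s_\mu$ saturates the full sheaf of meromorphic sections with arbitrary poles along the Schubert divisors. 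This can be checked either by a direct computation of $\cT$-weights (comparing the stable limit of weight multiplicities of $V^{\mu+w_0(\nu)}$ shifted by $-\mu$ with $\CO_\cN \otimes k^{w_0(\nu)}$), or by a BGG-style cohomological argument on $\cG/\cB$.
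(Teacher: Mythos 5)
Your route is correct in outline but genuinely different from the paper's. You reduce to the universal case $\CC=\Rep(\cG)$ (as the paper does), but then recast $\oblv_{\cG\to\cB}\circ\coind_{\cB\to\cG}$ as pull--push along the two projections of $\cB\backslash\cG/\cB$, identify the transition maps of \secref{sss:P functors} with multiplication by the matrix-coefficient sections $s_\mu$ cutting out the complement of the open Bruhat stratum, and realize $P$ as the localization onto that stratum, which is $\on{pt}/\cT$ embedded via the $w_0$-twisted diagonal; this indeed yields $\coind_{\cT\to\cB}\circ w'_0\circ\oblv_{\cB\to\cT}$. The paper instead constructs a natural transformation $P\to \coind_{\cT\to\cB}\circ w'_0\circ\oblv_{\cB\to\cT}$ purely by adjunction (from the counit of $(\oblv_{\cG\to\cB},\coind_{\cB\to\cG})$ together with $w'_0$) and then checks it is an isomorphism on the compact generators $k^\nu\in\Rep(\cB)$, using the colimit presentation \eqref{e:O B} of $\CO_\cB$. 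The step you flag as the "technical hurdle" --- that the system $\{s_\mu\}_{\mu\in\Lambda^+}$ is cofinal among all pole orders along the boundary divisors (which holds because strictly dominant $\mu$ are cofinal and the vanishing multiplicities grow linearly), together with the unwinding that the representation-theoretic transition maps really are multiplication by $s_\mu$ (this is where $w'_0$ and the map $(V^\mu)^\vee\to\coind_{\cB\to\cG}(k^{-\mu})$ enter) --- is precisely the content the paper sidesteps: once a map of functors is in hand, evaluating on $k^\nu$ and invoking \eqref{e:O B} is the object-by-object shadow of your localization statement, so no control of multiplicities or cofinality is needed. What your approach buys is geometric transparency: it exhibits $P$ as restriction to the big Schubert cell, matching the coherent-side picture in \secref{ss:big Schubert}; what the paper's buys is brevity and the avoidance of the affine-open/divisor bookkeeping. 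To make your plan a complete proof you would still need to carry out the flagged verification (either of your two suggested checks works), but no step of the plan is wrong.
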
 

\sssec{}

Let us first show how \propref{p:P functors} implies \propref{p:DK via Lus}. 

\medskip

First, we note that both sides are continuous functors out of $\Rep_q^{\on{mxd}}(G)$ (for the LHS this follows from
the definition of $\Rep_q^{\on{mxd}}(G)$, and for the RHS from \corref{c:UqDK}). Hence, we can assume that $\CM$ is compact. 
Denote $\CM':=\coind_{\on{mxd}\to \frac{1}{2}}(\CM)$. This is a compact object 
in $\Rep_q^{\frac{1}{2}}(G)_{\on{baby-ren}}$. 

\medskip

We rewrite

\begin{multline*} 
\on{C}^\cdot(U_q^{\on{Lus}}(N),\wt{P}_\mu(\CM))^{w_0(\clambda)}\simeq \\
\simeq 
\CHom_{\Rep_q^{\on{mxd}}(G)}\left(\BM_{q,\on{mxd}}^{w_0(\clambda)}, 
\oblv_{\frac{1}{2}\to \on{mxd}}\circ \fs_{\on{ren}\to \on{baby-ren}} \circ P_\mu \circ 
\fr_{\on{baby-ren}\to \on{ren}}\circ \coind_{\on{mxd}\to \frac{1}{2}}(\CM)\right)\simeq \\
\simeq  \CHom_{\Rep_q^{\frac{1}{2}}(G)_{\on{baby-ren}}}\left(\BM_{q,\frac{1}{2}}^{w_0(\clambda)}, 
\fs_{\on{ren}\to \on{baby-ren}} \circ P_\mu \circ \fr_{\on{baby-ren}\to \on{ren}}(\CM')\right),
\end{multline*}
and hence
\begin{multline*} 
\underset{\mu\in \Lambda^+}{\on{colim}}\, \on{C}^\cdot(U_q^{\on{Lus}}(N),\wt{P}_\mu(\CM))^{w_0(\clambda)}\simeq \\
\simeq 
\underset{\mu\in \Lambda^+}{\on{colim}}\, 
\CHom_{\Rep_q^{\frac{1}{2}}(G)_{\on{baby-ren}}}\left(\BM_{q,\frac{1}{2}}^{w_0(\clambda)}, 
\fs_{\on{ren}\to \on{baby-ren}} \circ P_\mu \circ \fr_{\on{baby-ren}\to \on{ren}}(\CM')\right)\simeq  \\
\simeq \CHom_{\Rep_q^{\frac{1}{2}}(G)_{\on{baby-ren}}}\left(\BM_{q,\frac{1}{2}}^{w_0(\clambda)}, \fs_{\on{ren}\to \on{baby-ren}} \circ P \circ 
\fr_{\on{baby-ren}\to \on{ren}}(\CM')\right)\simeq \\
\simeq  \CHom_{\Rep_q^{\frac{1}{2}}(G)_{\on{ren}}}\left(\BM_{q,\frac{1}{2}}^{w_0(\clambda)},P \circ 
\fr_{\on{baby-ren}\to \on{ren}}(\CM')\right)
\end{multline*}

Now using \propref{p:P functors} we rewrite the latter expression as
$$\CHom_{\Rep_q^{\frac{1}{2}}(G)_{\on{ren}}}\left(\BM_{q,\frac{1}{2}}^{w_0(\clambda)}, 
\coind_{\cT\to \cB}\circ w'_0\circ \oblv_{\cB\to \cT} \circ \fr_{\on{baby-ren}\to \on{ren}}(\CM')\right)$$
and further by adjunction as
$$\CHom_{\Rep_q^{\on{sml,grd}}(G)_{\on{ren}}}\left(\BM_{q,{\on{sml}}}^{w_0(\clambda)}, 
w'_0\circ \oblv_{\cB\to \cT} \circ \fr_{\on{baby-ren}\to \on{ren}}(\CM')\right).$$

Since both sides are compact as objects in $\Rep_q^{\on{sml,grd}}(G)_{\on{ren}}$, and since the functor $\fs$
is fully faithful on compact objects, the latter expression maps isomorphically to
\begin{multline*} 
\CHom_{\Rep_q^{\on{sml,grd}}}(G)\left(\BM_{q,\on{sml}}^{w_0(\clambda)}, 
\fs \circ w'_0\circ \oblv_{\cB\to \cT} \circ \fr_{\on{baby-ren}\to \on{ren}}(\CM')\right) \simeq \\
\simeq \CHom_{\Rep_q^{\on{sml,grd}}}(G)\left(\BM_{q,\on{sml}}^{w_0(\clambda)}, 
w'_0\circ \oblv_{\cB\to \cT} \circ \fs \circ \fr_{\on{baby-ren}\to \on{ren}}(\CM')\right)\simeq \\
\simeq \CHom_{\Rep_q^{\on{sml,grd}}}(G)\left(\BM_{q,\on{sml}}^{w_0(\clambda)}, 
w'_0\circ \oblv_{\cB\to \cT} \circ \fs_{\on{baby}}(\CM')\right).
\end{multline*} 

\medskip

Next, we rewrite 
\begin{multline*} 
\on{C}^\cdot(U_q^{\on{DK}}(N^-),\CM)^\clambda \simeq
\on{C}^\cdot\left(u_q(N^-),\oblv_{\frac{1}{2}\to \on{sml,grd}}\circ \fs_{\on{baby}}\circ \coind_{\on{mxd}\to \frac{1}{2}}(\CM)\right)^\clambda\simeq \\
\simeq \on{C}^\cdot\left(u_q(N^-),\oblv_{\cB\to \cT}\circ \fs_{\on{baby}}(\CM')\right)^\clambda,
\end{multline*} 
which we further rewrite as
\begin{equation} \label{e:rewrite DK}
\CHom_{\Rep_q^{\on{sml,grd}}(G)}\left(\ind_{\on{sml}^-\to \on{sml,grd}}(k^\clambda), 
\oblv_{\cB\to \cT }\circ \fs_{\on{baby}}(\CM')\right).
\end{equation} 

We have
$$\Rep_q^{\on{sml,grd}}(G)\simeq \Rep(\cT)\underset{\Rep(\cG)}\otimes \Rep_q(G).$$
Let us apply the functor $w'_0$ and use the identification
$$w'_0(\ind_{\on{sml}^-\to \on{sml,grd}}(k^\clambda))\simeq \ind_{\on{sml}^+\to \on{sml,grd}}(k^{w_0(\clambda)}).$$

Thus, we rewrite the expression in \eqref{e:rewrite DK} as
$$\CHom_{\Rep_q^{\on{sml,grd}}(G)}\left(\ind_{\on{sml}^+\to \on{sml,grd}}(k^{w_0(\clambda)}), w'_0\cdot 
\oblv_{\cB\to \cT }\circ \fs_{\on{baby}}(\CM')\right),$$
as desired. 

\sssec{Proof of \propref{p:P functors}}

In order to prove \propref{p:P functors} it suffices to consider the universal case, i.e., $\CC=\Rep(\cG)$.
We need to establish an isomorphism between the two endo-functors of $\Rep(\cB)$ 
as functors between $\Rep(\cG)$-module categories. 

\medskip

We first construct a natural transformation
\begin{equation} \label{e:P nat trans}
P\to \coind_{\cT\to \cB}\circ w'_0 \circ \oblv_{\cB\to \cT}.
\end{equation}

To do so, we need to construct a compatible family of natural transformations
$$P_\mu\to \coind_{\cT\to \cB}\circ w'_0 \circ \oblv_{\cB\to \cT}, \quad  \mu\in \Lambda^+.$$

By adjunction, the latter amounts to a compatible system of natural transformations
$$\oblv_{\cG\to \cT}\circ \coind_{\cB\to \cG}(k^{w_0(\mu)}\otimes -)\to w'_0 \circ (k^{w_0(\mu)}\otimes \oblv_{\cB\to \cT}(-)).$$

These natural transformations are obtained from the natural transformation
\begin{multline*} 
\oblv_{\cG\to \cT}\circ \coind_{\cB\to \cG} \simeq w'_0\circ \oblv_{\cG\to \cT}\circ \coind_{\cB\to \cG} \simeq \\
\simeq w'_0\circ \oblv_{\cB\to \cT}\circ \oblv_{\cG\to \cB}\circ \coind_{\cB\to \cG} \to w'_0\circ \oblv_{\cB\to \cT}.
\end{multline*} 

\medskip

In order to check that \eqref{e:P nat trans} is an isomorphism, it is enough to do so on the objects $k^\nu\in \Rep(\cB)$. 
We have
$$P(k^\nu)\simeq \underset{\mu\in \Lambda^+}{\on{colim}}\, k^{w_0(\mu)}\otimes \oblv_{\cG\to \cB}((V^{\mu-\nu})^\vee)
\overset{w'_0}\simeq \underset{\mu\in \Lambda^+}{\on{colim}}\, k^{w_0(\mu)}\otimes \oblv_{\cG\to \cB}(V^{w_0(-\mu+\nu)}),$$
which maps isomorphically to $\coind_{\cT\to \cB}(k^{w_0(\nu)})$ (see \eqref{e:O B}), as required. 

\ssec{The $P_\mu$ functors on the geometric side}

\sssec{}

Let $\CC$ be a category with an action of $G(\CK)$ at level $-\kappa$. For $\mu\in \Lambda^+$, consider the 
endo-functor of $\CC^I$, denoted $\wt{P}''_\mu$ of $\CC^I$, given by convolution with the object $j_{-\mu,*}\star j_{\mu,*}\star j_{w_0,*}[-d]$. 
These functors form a directed family under the transitions maps specified in \corref{c:semiinf neg via Wak}. 

\medskip

Denote
$$\wt{P}'':=\underset{\mu\in \Lambda^+}{\on{colim}}\, \wt{P}''_\mu.$$

\sssec{}  \label{sss:P'}

Consider now the object of $\Dmod_{-\kappa}(\Fl^{\on{aff}}_G)^I$ equal to 
\begin{equation} \label{e:P' objects}
j_{-\mu,*}\star p^*(\delta_{1,\Gr})\star j_{w_0(\mu),*}, \quad \mu\in \Lambda^+.
\end{equation}

We note that for $\mu\in \Lambda^{++}$, we have 
$$j_{-\mu,*}\star p^*(\delta_{1,\Gr})\star j_{w_0(\mu),*}\in 
(\Dmod_{-\kappa}(\Fl^{\on{aff}}_G)^I)^{\heartsuit}[d].$$ This follows from the fact that
$$p_*(j_{-\mu,*})\in (\Dmod_{-\kappa}(\Gr^{\on{aff}}_G)^I)^{\heartsuit}[d] \text{ and } 
\on{Av}^{G(\CO)/I}_*(j_{w_0(\mu),*})\in \Dmod_{-\kappa}(\Fl^{\on{aff}}_G)^{G(\CO)},$$
combined with  \cite[Lemma 9.1.4]{FG3}. 

\medskip

We consider the submonoid $\{0\}\cup \Lambda^{++}\subset \Lambda^+$; note that
the resulting subcategory $\Lambda^{++}\subset \Lambda^+$ is cofinal. 

\medskip

We claim that the objects \eqref{e:P' objects} form a directed family indexed by $\Lambda^{++}$. Namely, the transition maps are induced by the maps
\begin{equation} \label{e:trans map prime}
p^*(\delta_{1,\Gr}) \to j_{-\mu,*}\star p^*(\delta_{1,\Gr})\star j_{w_0(\mu),*}, \quad \mu\in \Lambda^{++}
\end{equation}
constructed as follows: 

\medskip

By adjunction, the datum of a map \eqref{e:trans map prime} is equivalent to that of a map
$$\delta_{1,\Gr} \to j_{-\mu,*}\star p^*(\delta_{1,\Gr})\star p_*(j_{w_0(\mu),*})$$
and further to that of a map
\begin{equation} \label{e:trans map prime next}
p_*(j_{\mu,!}) \to p^*(\delta_{1,\Gr})\star p_*(j_{w_0(\mu),*}).
\end{equation}

We note:
$$p_*(j_{w_0(\mu),*})\simeq p_*(j_{w_0(\mu)\cdot w_0,*})[d]=p_*(j_{w_0\cdot \mu,*})[d].$$
The sought-for map \eqref{e:trans map prime next} is 
\begin{multline*} 
p_*(j_{\mu,!}) \simeq 
j_{w_0,!}\star p_*(j_{w_0\cdot \mu,!})\to p^!(\delta_{1,\Gr})[-d]\star  p_*(j_{w_0\cdot \mu,!})\simeq \\
\simeq p^*(\delta_{1,\Gr})[d]\star  p_*(j_{w_0\cdot \mu,!})
\to p^*(\delta_{1,\Gr})[d]\star  p_*(j_{w_0\cdot \mu,*}).
\end{multline*} 

\begin{rem}
Consider the object
$$\underset{\mu\in \Lambda^{++}}{\on{colim}}\, j_{-\mu,*}\star p^*(\delta_{1,\Gr})\star j_{w_0(\mu),*} \in \Dmod_{-\kappa}(\wt\Fl^{\on{aff}}_G)^I.$$
In \secref{ss:big Schubert} we will see that under Bezrukavnikov's equivalence
$$\Dmod_{-\kappa}(\Fl^{\on{aff}}_G)^I\simeq \IndCoh((\wt{\check\CN}\underset{\cg}\times \wt{\check\CN})/\cG)$$
the above object corresponds (up to a twist) to the dualizing sheaf on the big Schubert cell in 
$$(\cG/\cB\times \cG/\cB)^{o}/\cG\subset (\cG/\cB\times \cG/\cB)/\cG\subset (\wt{\check\CN}\underset{\cg}\times \wt{\check\CN})/\cG.$$

\end{rem}

\sssec{}  \label{sss:P' to P''}

Let $\wt{P}'_\mu$ denote the endo-functor of $\CC^I$ given by convolution with $j_{-\mu,*}\star p^*(\delta_{1,\Gr})\star j_{w_0(\mu),*}$.
Denote 
$$\wt{P}':=\underset{\mu\in \Lambda^{++}}{\on{colim}}\, \wt{P}'_\mu.$$

Note that we have a natural transformation between the families 
$$\wt{P}'_\mu\to \wt{P}''_\mu,$$
namely
\begin{multline*} 
j_{-\mu,*}\star p^*(\delta_{1,\Gr})\star j_{w_0(\mu),*} \to j_{-\mu,*}\star j_{w_0,*}[-d]\star j_{w_0(\mu),*} \simeq 
j_{-\mu,*}\star j_{w_0\cdot w_0(\mu),*}[-d]\simeq \\
\simeq j_{-\mu,*}\star j_{\mu\cdot w_0,*}[-d]\simeq 
j_{-\mu,*}\star j_{\mu,*}\star j_{w_0,*}[-d].
\end{multline*}

\medskip

We will prove:

\begin{thm} \label{t:P geom}
The resulting natural transformation $\wt{P}'\to \wt{P}''$ is an isomorphism. 
\end{thm}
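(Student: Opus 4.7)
The plan is to reduce the statement to the universal case where $\CC$ acts on itself, and then to establish the resulting isomorphism of objects in $\Dmod_{-\kappa}(\Fl^{\on{aff}}_G)^I$ by a support/length analysis on the affine flag variety.

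First I would observe that both $\wt{P}'_\mu$ and $\wt{P}''_\mu$ act on $\CC^I$ by left convolution with the corresponding objects of $\Dmod_{-\kappa}(\Fl^{\on{aff}}_G)^I$, compatibly with their respective transition maps. It therefore suffices to prove that the induced map between colimits
$$F_1 := \on{colim}_{\mu \in \Lambda^{++}}\, j_{-\mu,*} \star p^*(\delta_{1,\Gr}) \star j_{w_0(\mu),*} \;\longrightarrow\; F_2 := \on{colim}_{\mu \in \Lambda^{++}}\, j_{-\mu,*} \star j_{\mu,*} \star j_{w_0,*}[-d]$$
is an isomorphism in $\Dmod_{-\kappa}(\Fl^{\on{aff}}_G)^I$, the right-hand colimit being restricted along the cofinal inclusion $\Lambda^{++}\hookrightarrow \Lambda^+$, and the natural transformation being given termwise by the map $\wt{P}'_\mu\to\wt{P}''_\mu$ of \secref{sss:P' to P''}.

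Next, the termwise map is convolution with the canonical restriction morphism $p^*(\delta_{1,\Gr})\to j_{w_0,*}[-d]$ from the (shifted) constant D-module on $G(\CO)/I=G/B$ to the $*$-extension from its open Bruhat cell. Its cone $K$ is supported on the closed complement $\bigcup_{w\in W,\, w<w_0}\overline{I\cdot w}/I$ and admits a finite filtration, by the Schubert stratification, whose subquotients are shifts of $I$-equivariant standard or costandard D-modules $j_{w,?}$ for $w\in W$ with $w\ne w_0$. Thus the cone of $F_1\to F_2$ is $\on{colim}_{\mu\in \Lambda^{++}}\, j_{-\mu,*}\star K\star j_{w_0(\mu),*}$, and it suffices to verify that for each $w\in W$ with $w\ne w_0$ and each $?\in\{!,*\}$,
$$\on{colim}_{\mu\in \Lambda^{++}}\, j_{-\mu,*}\star j_{w,?}\star j_{w_0(\mu),*} \;=\; 0 \quad \text{in }\Dmod_{-\kappa}(\Fl^{\on{aff}}_G)^I.$$
For this, since $\Dmod_{-\kappa}(\Fl^{\on{aff}}_G)^I$ is compactly generated, it is enough to check that $\on{colim}_\mu \CHom(\CG, j_{-\mu,*}\star j_{w,?}\star j_{w_0(\mu),*})=0$ for every compact $\CG$. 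The term $j_{-\mu,*}\star j_{w,?}\star j_{w_0(\mu),*}$ is the $*$- or $!$-extension from the Schubert cell of $\Fl^{\on{aff}}_G$ labelled by $(-\mu)\cdot w\cdot w_0(\mu)=(-\mu+w(w_0(\mu)))\cdot w\in W\ltimes \Lambda$. For $w\ne w_0$ the translation part $-\mu+w(w_0(\mu))\in \Lambda$ is unbounded as $\mu\to\infty$ in $\Lambda^{++}$, whereas the support of the fixed compact object $\CG$ is confined to a bounded union of Schubert cells. Hence for $\mu$ sufficiently deep in $\Lambda^{++}$ the supports of $\CG$ and of $j_{-\mu,*}\star j_{w,?}\star j_{w_0(\mu),*}$ are disjoint, so the Hom vanishes and the filtered colimit of Homs is zero.

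The main obstacle I anticipate is the precise matching of the transition maps: one must check that the map of directed systems $\{\wt{P}'_\mu\to\wt{P}''_\mu\}$ is compatible with the transition maps \eqref{e:trans map prime} on the left and with those from \corref{c:semiinf neg via Wak} (coming from $j_{\mu',!}\to j_{\mu',*}$) on the right. This is essentially a bookkeeping exercise using the identifications $j_{w_0,*}\star j_{w_0(\mu),*}\simeq j_{\mu,*}\star j_{w_0,*}$ built into \eqref{e:trans map prime}. An alternative, more conceptual route which I would run in parallel as a consistency check is to appeal to Bezrukavnikov's equivalence $\Dmod_{-\kappa}(\Fl^{\on{aff}}_G)^I\simeq \IndCoh\bigl((\wt{\check\CN}\underset{\cg}\times \wt{\check\CN})/\cG\bigr)$ (to be reviewed in \secref{ss:review of Bez}) and to identify both $F_1$ and $F_2\star j_{w_0,!}[d]$ with the $*$-pushforward along the open immersion $(\cG/\cB\times \cG/\cB)^o/\cG\hookrightarrow (\wt{\check\CN}\underset{\cg}\times \wt{\check\CN})/\cG$ of the dualizing sheaf of the big Schubert cell, as announced in the remark following \secref{sss:P'}.
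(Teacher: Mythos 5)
Your approach is genuinely different from the paper's, and it has a concrete gap.

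The paper reduces the statement to a conservativity claim: picking a regular $\kappa$-admissible weight $\clambda$, Kashiwara--Tanisaki localization makes $\CF\mapsto\CF\star\BM^\clambda_{-\kappa}$ a conservative (and t-exact) functor, and then both colimits, applied to $\BM^\clambda_{-\kappa}\simeq\BW^\clambda_{-\kappa}$, are shown to co-represent the same functor by computing Homs out of Wakimoto modules (\corref{c:right orth to Wak} for one side, an explicit calculation using $\on{Av}^{G(\CO)/I}_!$ and \corref{c:Wakimoto for neg} for the other). You instead try to prove the statement purely on the D-module side, by analyzing the cone of $p^*(\delta_{1,\Gr})\to j_{w_0,*}[-d]$ and showing that the relevant colimits vanish cell-by-cell by a support argument. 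That is a legitimately different strategy, and if it worked it would be more elementary in the sense of avoiding Kac--Moody representation theory.

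However, the central step in your argument is not correct as stated. You assert that for $w\neq w_0$ the convolution $j_{-\mu,*}\star j_{w,?}\star j_{w_0(\mu),*}$ is the $*$- or $!$-extension from the single affine Schubert cell labelled by $(-\mu)\cdot w\cdot w_0(\mu)$. This requires the lengths to add, i.e.\ $\ell(t^{-\mu}\,w\,t^{w_0(\mu)})=\ell(t^{-\mu})+\ell(w)+\ell(t^{w_0(\mu)})$, and that identity fails in general. For example in type $A_2$, take $w=s_1$ and $\mu=N\rho^\vee$; then $t^{-\mu}s_1t^{w_0(\mu)}=t^{-2N\rho^\vee+N\alpha_1^\vee}s_1$, and the Iwahori--Matsumoto formula gives $\ell(t^{-\mu}s_1t^{w_0(\mu)})=6N+1$ while $\ell(t^{-\mu})+\ell(s_1)+\ell(t^{w_0(\mu)})=8N+1$. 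So the object in question is \emph{not} a single (co)standard sheaf; its support is the closure of a Demazure product, which always contains the origin, and the ``supports become disjoint'' claim that you build on is therefore false. There is an additional unstated hypothesis even where lengths do add: a convolution $j_{a,*}\star j_{b,!}$ with lengths adding is still in general neither a $*$- nor a $!$-extension. Finally, you invoke a finite Schubert filtration of the cone $K$ and interchange it with the colimit, but you do not check that the transition maps coming from \eqref{e:trans map prime} respect this filtration; since the transition map goes $K\to j_{-\mu',*}\star K\star j_{w_0(\mu'),*}$ rather than $K\to K$, this is not automatic. The underlying intuition — that Homs from a fixed compact object eventually vanish because things are pushed to infinity by the BMW twists $J_\mu,J_{-w_0(\mu)}$ — may well be salvageable (e.g.\ via the coherent description under Bezrukavnikov's equivalence and a weight/degree argument), but the argument as written does not establish it.

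Your proposed alternative route through Bezrukavnikov's equivalence also does not yield a free proof: the coherent identification of $F_1$ in \secref{ss:big Schubert} is derived using \propref{p:P vs P'} and the enhanced averaging formalism, and you would separately need a coherent identification of $F_2\star j_{w_0,!}[d]$, which the paper does not provide and which is not an immediate consequence of the normalization of the equivalence.
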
 

\ssec{Relation to semi-infinite cohomology on the Kac-Moody side: negative level case}  

In this subsection we will use the formalism of the functors $P_\mu$ to prove point (a) of
\thmref{t:DK}. 

\sssec{}  \label{sss:P functors neg}

Let $P_\mu$ be the endo-functor of 
$$\Rep(\cB)\underset{\Rep(\cG)}\otimes \CC^{G(\CO)}$$
defined in \secref{sss:P functors}.

\medskip

Define the endo-functor $\wt{P}_\mu$ of $\CC^I$ by 
$$\wt{P}_\mu:=(\oblv_{G(\CO)/I})^{\on{enh}} \circ P_\mu \circ (\on{Av}^{G(\CO)/I}_*)^{\on{enh}}.$$

Denote
$$\wt{P}:=\underset{\mu\in \Lambda^+}{\on{colim}}\, \wt{P}_\mu.$$

Note that we have a term-wise isomorphism
\begin{equation} \label{e:P vs P'}
\wt{P}_\mu\simeq \wt{P}'_\mu.
\end{equation}

We will prove:

\begin{prop} \label{p:P vs P'}
The term-wise isomorphism \eqref{e:P vs P'} lifts to an isomorphism of directed families for $\mu\in \Lambda^{++}$.
\end{prop}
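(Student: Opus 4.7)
The proof plan is to translate both directed systems into the common geometric language of the Arkhipov--Bezrukavnikov framework of \secref{ss:AB action}, and then trace the transition structures to a single source. Under the AB dictionary, the $\Rep(\cB)$-module structure on $\Rep(\cB)\underset{\Rep(\cG)}\otimes \CC^{G(\CO)}$ matches, via $(\oblv_{G(\CO)/I})^{\on{enh}}$, the convolution action of $\{J_\nu\}$ on $\CC^I$; thus for $\mu\in\Lambda^+$, twisting by $k^{-\mu}$ (resp.\ $k^{w_0(\mu)}$) corresponds to convolution with $j_{-\mu,*}$ (resp.\ $j_{w_0(\mu),*}$). Moreover, $(\oblv_{G(\CO)/I})^{\on{enh}}\circ \oblv_{\cG\to\cB} = p^*$ and $(\on{Av}^{G(\CO)/I}_*)^{\on{enh}}\circ \coind_{\cB\to\cG} = p_*$, so the monad $\oblv_{\cG\to\cB}\circ \coind_{\cB\to\cG}$ on the mixed side corresponds to the monad $p^*\circ p_*$ on $\CC^I$. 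Unwinding the formula for $P_\mu$ via this dictionary immediately yields
$$\wt{P}_\mu(c)\;\simeq\; j_{-\mu,*}\star p^*(\delta_{1,\Gr})\star j_{w_0(\mu),*}\star c\;=\;\wt{P}'_\mu(c),$$
which recovers the term-wise identification \eqref{e:P vs P'}.

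The substance of the proposition lies in matching the two directed-system structures. On the representation-theoretic side, the transition $P_{\mu_1}\to P_{\mu_1+\mu_2}$ is built (following \secref{sss:P functors}) from the lowest-weight projection $V^{-w_0(\mu_2)}\to k^{-\mu_2}$ provided by $w'_0\in\on{Norm}_\cG(\cT)$, together with the canonical map $(V^{\mu_2})^\vee\to\coind_{\cB\to\cG}(k^{-\mu_2})$. On the geometric side, the transition $\wt{P}'_{\mu_1}\to\wt{P}'_{\mu_1+\mu_2}$ is built in \secref{sss:P'} from the morphism $p_*(j_{\mu_2,!})\to p^*(\delta_{1,\Gr})\star p_*(j_{w_0\cdot \mu_2,*})$ of \eqref{e:trans map prime next}. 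The matching then reduces to the assertion that under the geometric Satake identification $\on{Sat}:\Rep(\cG)\xrightarrow{\sim}\Dmod(\Gr_G)^{G(\CO)}$, the morphism $p_*(j_{\mu,!})\to \pi^!\on{Sat}(V^\mu)$ of \eqref{e:st to sph} is the geometric avatar of $\ind_{\cB\to\cG}(k^\mu)\simeq V^\mu$, while the factor $j_{w_0,!}$ appearing in \eqref{e:trans map prime next} implements precisely the $w'_0$-twist used to extract the lowest-weight line in $V^{-w_0(\mu)}\simeq (V^\mu)^\vee$.

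The main obstacle will be verifying this matching coherently, in particular checking that the cohomological shifts, the twist by $j_{w_0,!}$ versus the action of $w'_0$, and the normalization of the Satake equivalence all align. A crucial simplification, already implicit in \secref{sss:P'}, is that the transition to the cofinal subset $\Lambda^{++}\subset\Lambda^+$ places the objects $j_{-\mu,*}\star p^*(\delta_{1,\Gr})\star j_{w_0(\mu),*}$ in a single cohomological degree (the heart, shifted by $d$); consequently, the homotopy-coherent data of the directed system is forced and verification reduces to the level of $1$-morphisms. At that level, the matching is an incarnation of the compatibility of the AB functor with geometric Satake that was already used to identify standard/dual Weyl modules in Sections \ref{sss:verify pos 1}--\ref{sss:verify pos 3} of the proof of \thmref{t:identify baby pos}; the present proposition can thus be viewed as a refinement of that matching, extended from individual morphisms to the directed system organized by $\mu\in\Lambda^{++}$.
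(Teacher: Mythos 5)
Your proof takes essentially the same approach as the paper's: both establish the term-wise identification via the Arkhipov--Bezrukavnikov framework, both exploit the fact that for $\mu\in\Lambda^{++}$ the objects $j_{-\mu,*}\star p^*(\delta_{1,\Gr})\star j_{w_0(\mu),*}$ lie in a single cohomological degree so that higher homotopy coherence is automatic and one only needs to match $1$-morphisms, and both observe that geometric Satake provides the canonical lift $w'_0\in\on{Norm}_\cG(\cT)$ (via $\on{Sat}(V^{-w_0(\mu)})\simeq\on{Sat}((V^\mu)^\vee)$) that reconciles the lowest-weight projection with the $j_{w_0,!}$-twist of \eqref{e:trans map prime next}. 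The paper's proof is somewhat more explicit in the final step, unwinding the colimit presentation of $\overset{\bullet}\CF{}^{-,\semiinf,\on{invol}}_{-\kappa}$ and reducing the matching to an explicit commutative diagram; your appeal to the map $p_*(j_{\mu,!})\to\pi^!\on{Sat}(V^\mu)$ as the avatar of $\ind_{\cB\to\cG}(k^\mu)\simeq V^\mu$ captures the same content more briefly, but it would strengthen the argument to carry out the final diagram chase (or at least indicate why \eqref{e:trans map prime} factors through $\on{IC}_{-w_0(\mu)}\to p_*(j_{-\mu,*})$ as in the paper) rather than leaving it as an assertion.
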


\begin{rem}
Recall that the transition maps for the family $\{\wt{P}_\mu\}$ came from the transition maps for the family $\{P_\mu\}$,
and those depended on a choice of a lift of $w_0\in W$ to an element $w'_0\in \on{Norm}_\cG(\cT)$. 
However, in the course of the proof of \propref{p:P vs P'} we will see that Geometric Satake provides a particular
choice of such a lift.
\end{rem}

\begin{cor} \label{c:P vs P'}
There exists an isomorphism of endo-functors $\wt{P}\simeq \wt{P}'$.
\end{cor}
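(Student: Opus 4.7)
The proof of this corollary is essentially formal: it consists of passing to colimits the already-established isomorphism of directed systems from \propref{p:P vs P'}, using cofinality of the indexing set.

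My plan is first to observe that the submonoid $\Lambda^{++}\subset \Lambda^+$ is cofinal (this was already noted in \secref{sss:P'}, since for any $\mu\in \Lambda^+$ one can find $\mu'\in \Lambda^{++}$ with $\mu'-\mu\in \Lambda^+$). Cofinality implies that restricting the indexing of the directed family $\{\wt{P}_\mu\}_{\mu\in \Lambda^+}$ along the inclusion $\Lambda^{++}\hookrightarrow \Lambda^+$ does not change the colimit, so
$$\wt{P} \;=\; \underset{\mu\in \Lambda^+}{\on{colim}}\, \wt{P}_\mu \;\simeq\; \underset{\mu\in \Lambda^{++}}{\on{colim}}\, \wt{P}_\mu.$$

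Next, I would invoke \propref{p:P vs P'}, which promotes the term-wise isomorphism $\wt{P}_\mu\simeq \wt{P}'_\mu$ to an isomorphism of directed systems indexed by $\Lambda^{++}$. Passing to the colimit over $\Lambda^{++}$ on both sides yields
$$\underset{\mu\in \Lambda^{++}}{\on{colim}}\, \wt{P}_\mu \;\simeq\; \underset{\mu\in \Lambda^{++}}{\on{colim}}\, \wt{P}'_\mu \;=\; \wt{P}',$$
where the right-hand equality is the definition of $\wt{P}'$ given in \secref{sss:P' to P''}. Composing the two displayed isomorphisms produces the desired $\wt{P}\simeq \wt{P}'$.

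There is essentially no obstacle to overcome at this stage, since all the real work has been placed in \propref{p:P vs P'} (which matches the transition maps) and in the cofinality observation. The only point worth double-checking is that the transition maps in the two families, once identified term-wise, agree up to the normalization of the lift $w'_0\in \on{Norm}_\cG(\cT)$; but as noted in the remark following \propref{p:P vs P'}, Geometric Satake naturally selects such a lift, so this compatibility is automatic once \propref{p:P vs P'} has been established.
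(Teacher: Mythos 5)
Your proof is correct and matches the paper's (implicit) reasoning exactly: the corollary is a purely formal consequence of \propref{p:P vs P'}, obtained by passing to colimits after using cofinality of $\Lambda^{++}\subset\Lambda^+$ to reconcile the two indexing sets. Your closing remark about the lift $w'_0$ being pinned down by Geometric Satake is also accurate and is indeed the point that makes the identification of directed families in \propref{p:P vs P'} well-posed.
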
 

\sssec{}

Let us assume \propref{p:P vs P'} (and hence \corref{c:P vs P'}) and prove \thmref{t:DK}(a):

\medskip 

According to \corref{c:semiinf neg via Wak}, we have:
\begin{multline*}
\on{C}^\semiinf(\fn^-(\CK),\CM)^\clambda \simeq 
\underset{\mu\in \Lambda^+}{\on{colim}}\,
\CHom_{\hg\mod_{-\kappa}^I}(\BW_{-\kappa}^{w_0(\clambda)}, j_{-\mu,*}\star j_{\mu,*}\star j_{w_0,*}[-d]\star \CM)\simeq \\
\simeq \CHom_{\hg\mod_{-\kappa}^I}(\BW_{-\kappa}^{w_0(\clambda)}, \wt{P}''(\CM)),
\end{multline*}
which according to \thmref{t:P geom} identifies with 
$$\CHom_{\hg\mod_{-\kappa}^I}(\BW_{-\kappa}^{w_0(\clambda)}, \wt{P}'(\CM)),$$
and further, by \corref{c:P vs P'} with
$$\CHom_{\hg\mod_{-\kappa}^I}(\BW_{-\kappa}^{w_0(\clambda)}, \wt{P}(\CM)).$$

\medskip

Now, by \secref{sss:AB compat}, the equivalence $\sF_{-\kappa}$ intertwines the endo-functor $\wt{P}$ on $\hg\mod_{-\kappa}^I$
with the functor $\wt{P}$ on $\Rep_q^{\on{mxd}}(G)$. Hence, the assertion of \thmref{t:DK}(a) follows from \propref{p:DK via Lus}. 

\qed 

\ssec{Relation to semi-infinite cohomology on the Kac-Moody side: positive level case}

In this subsection we will use the formalism of the functors $P_\mu$ to prove point (b) of
\thmref{t:DK}. 

\sssec{}   

Let $\CC$ be a category acted on (strongly) by $G(\CK)$ at level $\kappa$. We define the endo-functors
$\wt{P}''_\mu$ of $\CC^I$ to be given by convolution with the objects
$$j_{w_0,*}[-d]\star j_{w_0(\mu),*}\star j_{-w_0(\mu),*}, \quad \mu\in \Lambda^+.$$

Set
$$\wt{P}'':=\underset{\mu\in \Lambda^+}{\on{colim}}\, \wt{P}''_\mu.$$

\medskip

Define the endo-functors $\wt{P}'_\mu$ to be given by convolutions with the objects
$$j_{\mu,*}\star p^*(\delta_{1,\Gr})\star j_{-w_0(\mu),*}.$$

These functors form a directed family by a procedure similar to that in \secref{sss:P'}. Set
$$\wt{P}':=\underset{\mu\in \Lambda^{++}}{\on{colim}}\, \wt{P}'_\mu.$$

As in \secref{sss:P' to P''} we have a natural transformation between the families 
$$\wt{P}'_\mu\to \wt{P}''_\mu.$$

From \thmref{t:P geom} we obtain:

\begin{cor} \label{c:P geom pos}
The resulting natural transformation $\wt{P}'\to \wt{P}''$ is an isomorphism.
\end{cor}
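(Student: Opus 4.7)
The plan is to deduce Corollary \ref{c:P geom pos} from Theorem \ref{t:P geom} by transport along the anti-involution $\iota\colon g\mapsto g^{-1}$ of $G(\CK)$. This is the same inversion used for \eqref{e:inversion}; restricted to the two-sided quotient it induces an equivalence of monoidal categories
$$\iota^*\colon \Dmod_{-\kappa}(\Fl^{\on{aff}}_G)^I \xrightarrow{\sim} \Dmod_\kappa(\Fl^{\on{aff}}_G)^I, \qquad \iota^*(A\star B)\simeq \iota^*(B)\star\iota^*(A),$$
with the level swap accounting for the fact that inversion negates the Kac-Moody cocycle. Under $\iota^*$ one has $\iota^*(j_{w,*})\simeq j_{w^{-1},*}$ for every $w$ in the extended affine Weyl group, and likewise for $j_{w,!}$; in particular $\iota^*(j_{\mu,*})\simeq j_{-\mu,*}$ and $\iota^*(j_{w_0,*})\simeq j_{w_0,*}$ (since $w_0^{-1}=w_0$). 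Moreover $\iota^*(p^*(\delta_{1,\Gr}))\simeq p^*(\delta_{1,\Gr})$, because the support $G(\CO)/I\subset \Fl^{\on{aff}}_G$ is $\iota$-stable.

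Applying $\iota^*$ term-by-term to the objects of Theorem \ref{t:P geom}, and remembering to reverse the convolution order, yields
\begin{align*}
\iota^*\bigl(j_{-\mu,*}\star p^*(\delta_{1,\Gr})\star j_{w_0(\mu),*}\bigr) &\simeq j_{-w_0(\mu),*}\star p^*(\delta_{1,\Gr})\star j_{\mu,*},\\
\iota^*\bigl(j_{-\mu,*}\star j_{\mu,*}\star j_{w_0,*}[-d]\bigr) &\simeq j_{w_0,*}[-d]\star j_{-\mu,*}\star j_{\mu,*}.
\end{align*}
The substitution $\mu\mapsto -w_0(\mu)$ is a bijection $\Lambda^{++}\to \Lambda^{++}$, preserving the order on the index category, and applying it converts the right-hand sides into
$$j_{\mu,*}\star p^*(\delta_{1,\Gr})\star j_{-w_0(\mu),*} \qquad \text{and} \qquad j_{w_0,*}[-d]\star j_{w_0(\mu),*}\star j_{-w_0(\mu),*},$$
which by definition are the objects that govern $\wt{P}'_\mu$ and $\wt{P}''_\mu$ at level $\kappa$.

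To finish, I would verify that the combination of $\iota^*$ with the reindexing $\mu\mapsto -w_0(\mu)$ carries the directed system structure on $\{\wt{P}'_\mu\}_{-\kappa}$ to that on $\{\wt{P}'_\mu\}_\kappa$ (and likewise for $\wt{P}''$), and intertwines the natural transformation $\wt{P}'\to \wt{P}''$ constructed in \secref{sss:P' to P''}. This step is essentially formal: the transition maps of \secref{sss:P'} and the map $\wt{P}'_\mu\to \wt{P}''_\mu$ are built from adjunction units/counits and from the canonical arrows $j_{w,!}\to j_{w,*}$ in the ambient monoidal category, and all such structures are preserved by the monoidal equivalence $\iota^*$ (with reversal of order). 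Granting this compatibility, the positive-level natural transformation $\wt{P}'\to \wt{P}''$ becomes identified with $\iota^*$ applied to the negative-level one, and the corollary follows immediately from Theorem \ref{t:P geom}.

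The main obstacle I anticipate is precisely this last bookkeeping: one must check that $\iota^*$ respects the cohomological shifts (which are governed by $\dim(I\cdot w\cdot I/I) = \ell(w)$, an $\iota$-invariant on the extended affine Weyl group by $\ell(w^{-1})=\ell(w)$) and the canonical $j_{w,!}\to j_{w,*}$ maps on the nose. Both points are standard for the inversion anti-involution, but need to be spelled out to make the deduction rigorous.
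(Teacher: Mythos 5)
Your approach is exactly the paper's: the published proof is a one-sentence remark that Corollary \ref{c:P geom pos} follows by applying the inversion equivalence $\Dmod_{-\kappa}(\Fl^{\on{aff}}_G)^I\to \Dmod_{\kappa}(\Fl^{\on{aff}}_G)^I$ to Theorem \ref{t:P geom}, and you have simply supplied the bookkeeping (order reversal under $\iota^*$, the reindexing $\mu\mapsto -w_0(\mu)$, and the compatibility of transition maps) that the paper takes for granted. Your calculations are correct, so this is a faithfully elaborated version of the intended argument rather than a different one.
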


\begin{proof}
The assertion of \corref{c:P geom pos} follows from the fact that the situation at the positive level is
obtained from that at the negative level by applying the inversion equivalence
$$\Dmod_{-\kappa}(\Fl^{\on{aff}}_G)^I\to \Dmod_{\kappa}(\Fl^{\on{aff}}_G)^I.$$
\end{proof}

\sssec{}

Let $P_\mu$ be the directed family of endo-functors of $$\Rep(\cB)\underset{\Rep(\cG)}\otimes \CC^{G(\CO)}$$
defined in \secref{sss:P functors}. We have a term-wise identification
\begin{equation} \label{e:P vs P' pos}
\wt{P}_\mu\simeq \wt{P}'_\mu.
\end{equation}

As in \propref{p:P vs P'} we prove:

\begin{prop} \label{p:P vs P' pos}
The term-wise isomorphism \eqref{e:P vs P' pos} lifts to an isomorphism of directed families for $\mu\in \Lambda^+$. 
\end{prop}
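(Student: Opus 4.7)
The plan is to run the argument of \propref{p:P vs P'} in the positive-level setting, substituting the conventions of \secref{sss:AB pos} for the Arkhipov-Bezrukavnikov action on $\CC^I$. First, I would verify the term-wise identification \eqref{e:P vs P' pos}: one unfolds the definition of $\wt P_\mu$ using that $\sfq^*(k^\nu)\otimes (-) = J^{\BD}_{-\nu}\star(-)$, that $\oblv_{G(\CO)/I}$ is a morphism of $\Rep(\cG)$-module categories with the $\Rep(\cG)$-action on $\CC^{G(\CO)}$ given by $\Sat\circ \tau$, and that the composite $(\oblv_{G(\CO)/I})^{\on{enh}}\circ (\on{Av}^{G(\CO)/I}_*)^{\on{enh}}$ is convolution with the positive-level analogue of $(\overset{\bullet}\CF{}^{-,\semiinf,\on{invol}}_{-\kappa})^{\on{enh}}$, supplied by \secref{sss:AB pos}. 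A direct computation then identifies the output with convolution by $j_{\mu,*}\star p^*(\delta_{1,\Gr}) \star j_{-w_0(\mu),*}$, which is $\wt P'_\mu$.

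The substantive step is compatibility with the transition maps. The transition $P_\mu \to P_{\mu+\mu'}$ of \secref{sss:P functors} depends on a choice of lift $w'_0 \in \on{Norm}_\cG(\cT)$ of $w_0$. As in the proof of \propref{p:P vs P'}, Geometric Satake supplies a canonical such lift, namely the one that controls the Cartan involution $\tau$ entering \secref{sss:AB pos} and \eqref{e:Verdier and Sat}. With this choice fixed, comparing the two families reduces to verifying that the algebraic transition built from \eqref{e:lowest weight} matches the canonical geometric unit $p^*(\delta_{1,\Gr}) \to j_{\mu,*}\star p^*(\delta_{1,\Gr})\star j_{-w_0(\mu),*}$ defining the directed structure of $\{\wt P'_\mu\}$; this is a direct check after reducing to the universal case $\CC = \Dmod_\kappa(G(\CK))$ regarded as a module over itself.

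The cleanest way to execute this, which I would adopt, is to deduce \propref{p:P vs P' pos} from \propref{p:P vs P'} by transport of structure along the inversion equivalence \eqref{e:inversion}. Under $\on{invol}$, the negative-level object $j_{-\mu,*}\star p^*(\delta_{1,\Gr}) \star j_{w_0(\mu),*}$ defining $\wt P'_\mu$ at negative level is matched with $j_{\mu,*}\star p^*(\delta_{1,\Gr}) \star j_{-w_0(\mu),*}$ defining it here, because $p^*(\delta_{1,\Gr})$ is $\on{invol}$-self-dual and $\on{invol}$ interchanges $j_{?,*}$ with $j_{-?,*}$. Since $\on{invol}$ also intertwines the $\Rep(\cG)$-actions through $\tau$, the directed system of $P_\mu$-functors is preserved, and the isomorphism of directed families from \propref{p:P vs P'} transports verbatim. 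The main obstacle in this approach is pinning down the normalization of the inversion equivalence so that no spurious cohomological shifts appear; this can be resolved by comparing both sides on the single term $\mu=0$, where each reduces to convolution with $p^*(\delta_{1,\Gr})$.
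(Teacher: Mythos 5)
Your first two paragraphs correctly describe what the paper in fact does: \propref{p:P vs P' pos} is proved by re-running the proof of \propref{p:P vs P'} in the positive-level setting, substituting the $\Sat\circ\tau$ action and the object $(\overset{\bullet}\CF{}^{\semiinf,\on{invol}}_{\kappa})^{\on{enh}}\star J^{\BD}_{2\rho}[d]$ of \secref{sss:AB pos} for their negative-level counterparts, so that part of your proposal matches the paper's approach.

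Your third, ``adopted'' approach — transport along inversion — is an attractive shortcut (it is, after all, what the paper does for the neighbouring statement \corref{c:P geom pos}), but as written it has a concrete gap. First, the citation: you invoke \eqref{e:inversion}, which is the equivalence $\Dmod_{-\kappa}(\Gr_G)^I\simeq \Dmod_{\kappa}(\Fl^{\on{aff}}_G)^{G(\CO)}$; what you actually need is the Iwahori-biequivariant inversion $\Dmod_{-\kappa}(\Fl^{\on{aff}}_G)^I\to\Dmod_{\kappa}(\Fl^{\on{aff}}_G)^I$ appearing in the proof of \corref{c:P geom pos}. Second, and more seriously, that functor is \emph{anti}-monoidal — it is induced by the anti-automorphism $g\mapsto g^{-1}$, so it reverses the order of convolution. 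Thus it sends the negative-level object $j_{-\mu,*}\star p^*(\delta_{1,\Gr})\star j_{w_0(\mu),*}$ not to $j_{\mu,*}\star p^*(\delta_{1,\Gr})\star j_{-w_0(\mu),*}$, as you assert, but to $j_{-w_0(\mu),*}\star p^*(\delta_{1,\Gr})\star j_{\mu,*}$. This \emph{is} one of the positive-level objects, but at the re-indexed parameter $-w_0(\mu)$. So the two directed families are matched only after applying the poset automorphism $\mu\mapsto -w_0(\mu)$ of $\Lambda^+$ (resp.\ $\Lambda^{++}$), and the same re-indexing has to be tracked through the $P_\mu$-system (where it is absorbed by the choice of lift $w'_0$ and the Cartan involution $\tau$). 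As stated, ``the isomorphism of directed families from \propref{p:P vs P'} transports verbatim'' elides this; the argument is salvageable but not verbatim, and the obstacle you flag (cohomological normalization, resolved at $\mu=0$) is not the only one — the reversal of the convolution order does not show up at $\mu=0$ and so would not be caught by that check.
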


Hence:

\begin{cor} \label{c:P vs P' pos}
There exists an isomorphism of endo-functors $\wt{P}\simeq \wt{P}'$.
\end{cor}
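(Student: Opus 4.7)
The plan is to deduce Corollary \ref{c:P vs P' pos} directly from Proposition \ref{p:P vs P' pos} by passing to the colimit in the indexing parameter. Proposition \ref{p:P vs P' pos} upgrades the termwise equivalence $\wt{P}_\mu\simeq \wt{P}'_\mu$ to an equivalence of $\Lambda^+$-indexed directed systems in the $(\infty,1)$-category of continuous endo-functors of $\CC^I$. Since filtered colimits of endo-functors are computed pointwise and respect equivalences of diagrams, the induced comparison map on colimits is automatically an equivalence.

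A minor bookkeeping point is that $\wt{P}'$ was defined as the colimit over the sub-monoid $\Lambda^{++}\subset \Lambda^+$ (this restricted indexing was used in \secref{sss:P'} in order to ensure that the objects $j_{\mu,*}\star p^*(\delta_{1,\Gr})\star j_{-w_0(\mu),*}$ lie in the heart of the t-structure up to the cohomological shift $[d]$), whereas $\wt{P}$ is a colimit over all of $\Lambda^+$. However, $\Lambda^{++}$ is cofinal in $\Lambda^+$ (any $\mu\in \Lambda^+$ is dominated by $\mu+\rho\in \Lambda^{++}$), so one may equivalently compute $\wt{P}=\underset{\mu\in \Lambda^{++}}{\on{colim}}\, \wt{P}_\mu$. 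Restricting the $\Lambda^+$-indexed isomorphism supplied by Proposition \ref{p:P vs P' pos} to the cofinal subposet $\Lambda^{++}$ then produces the desired isomorphism $\wt{P}\simeq \wt{P}'$.

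There is no substantive obstacle at this step; the entire analytic content resides in Proposition \ref{p:P vs P' pos}, whose proof in turn is obtained from the negative-level Proposition \ref{p:P vs P'} via the inversion equivalence $\Dmod_{-\kappa}(\Fl^{\on{aff}}_G)^I\simeq \Dmod_{\kappa}(\Fl^{\on{aff}}_G)^I$, exactly as Corollary \ref{c:P geom pos} was derived from \thmref{t:P geom}. The only conceivable wrinkle in the colimit-passing argument would be if the transition maps in the two systems failed to be intertwined coherently, but this coherence is precisely what it means for Proposition \ref{p:P vs P' pos} to assert an \emph{isomorphism of directed families} (as opposed to just a termwise isomorphism), and so nothing further needs to be verified here.
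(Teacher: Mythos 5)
Your proposal is correct and takes essentially the same approach as the paper. The paper's derivation of \corref{c:P vs P' pos} from \propref{p:P vs P' pos} is the single word ``Hence,'' and what you have written is precisely the unpacking of that implication: pass to filtered colimits over the index poset, noting that since $\Lambda^{++}$ is cofinal in $\Lambda^+$ one may compute $\wt{P}$ equivalently as a colimit over $\Lambda^{++}$, and then invoke the isomorphism of directed families supplied by the proposition. Your observation that the isomorphism-of-directed-families formulation of \propref{p:P vs P' pos} already contains all the required coherence is exactly right; there is nothing further to check.

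One small notational remark: in asserting cofinality you write that $\mu+\rho\in\Lambda^{++}$ dominates $\mu$, but in the paper's conventions $\Lambda$ is the \emph{coweight} lattice, so one should replace $\rho$ by any strictly dominant element of $\Lambda$ (e.g.\ the sum of fundamental coweights if that lies in $\Lambda$, or $2\rho^\vee$); the argument is unchanged. Also note that \propref{p:P vs P' pos} as printed asserts the isomorphism of directed families for $\mu\in\Lambda^+$, whereas the analogous \propref{p:P vs P'} is stated over $\Lambda^{++}$ — this looks like a typo in the paper, but your cofinality argument accommodates either reading, so it does not affect the validity of your proof.
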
 

\sssec{}

We are now ready to deduce \thmref{t:DK}(b):

\medskip

We have:
\begin{multline*}
\on{C}^\semiinf\left(\fn^-(\CK), \underset{\mu\in \Lambda^+}{\on{colim}}\, j_{-\mu,*}\star j_{\mu,*}\star \CM\right)^\clambda\simeq \\
\simeq \on{C}^\semiinf\left(\fn(\CK), j_{w_0,*}[-d]\star \left(\underset{\mu\in \Lambda^+}{\on{colim}}\, j_{-\mu,*}\star j_{\mu,*}\right)\star \CM\right)^{w_0(\clambda)}\simeq \\
\simeq \on{C}^\semiinf(\fn(\CK),\wt{P}''(\CM))^{w_0(\clambda)},
\end{multline*}
which according to \corref{c:P geom pos} identifies with
$$\on{C}^\semiinf(\fn(\CK),\wt{P}'(\CM))^{w_0(\clambda)},$$
and further, according to \corref{c:P vs P' pos}, with
$$\on{C}^\semiinf(\fn(\CK),\wt{P}(\CM))^{w_0(\clambda)}.$$

Now, by \secref{sss:AB compat}, the equivalence $\sF_{\kappa}$ intertwines the endo-functor $\wt{P}$ on $\hg\mod_{\kappa}^I$
with the functor $\wt{P}$ on $\Rep_{q^{-1}}^{\on{mxd}}(G)$. Hence, the assertion of \thmref{t:DK}(a) follows from \propref{p:DK via Lus}. 

\qed 

\ssec{Proof of \thmref{t:P geom}}

\sssec{}

Although the assertion of \thmref{t:P geom} is geometric (talks about D-modules on the affine flag scheme), we will
use representation theory to prove it. Namely, choose an integral weight $\clambda\in \cLambda$ which is $\kappa$-admissible
(see \secref{sss:adm}) and \emph{regular}, which means that the inequalities in \eqref{e:adm} are strict. 

\medskip

Then the Kashiwara-Tanisaki localization theorem says that the functor
\begin{equation} \label{e:Gamma lambda}
\CF \mapsto \CF\star \BM^{\clambda}_{-\kappa}, \quad \Dmod_{-\kappa}(\on{Fl}^{\on{aff}}_G)^I\to \hg\mod_{-\kappa}^I
\end{equation}
is conservative (in addition to being t-exact). 

\medskip

Indeed, under the equivalence
$$\Dmod_{-\kappa}(\on{Fl}^{\on{aff}}_G)^I\simeq \Dmod_{(-\kappa,\lambda)}(\on{Fl}^{\on{aff}}_G)^I,$$
the functor \eqref{e:Gamma lambda} corresponds to the functor 
$$\Gamma(\on{Fl}^{\on{aff}}_G,-):\Dmod_{(-\kappa,\lambda)}(\on{Fl}^{\on{aff}}_G)^I\to \hg\mod_{-\kappa}^I.$$

\sssec{}

Hence, it suffices to show that the resulting map
\begin{equation}   \label{e:two versions of orth}
\underset{\mu\in \Lambda^+}{\on{colim}}\,  j_{-\mu,*}\star p^*(\delta_{1,\Gr})\star j_{w_0(\mu),*}\star \BM^\clambda_{-\kappa}[d]\to
\underset{\mu\in \Lambda^+}{\on{colim}}\,  j_{-\mu,*}\star j_{\mu,*}\star j_{w_0,*}\star \BM^\clambda_{-\kappa}
\end{equation} 
is an isomorphism.

\medskip

We will show that both sides of \eqref{e:two versions of orth} yield an object $\CM\in \hg\mod_{-\kappa}^I$ that satisfies: 
$$
\CHom_{\hg\mod_{-\kappa}^I}(\BW^{\clambda'}_{-\kappa},\CM)=
\begin{cases}
&k \text{ if } \clambda'=w_0(\clambda)-2\check\rho; \\
&0 \text{ otherwise}.
\end{cases}
$$

This would imply that the map \eqref{e:two versions of orth} is an isomorphism: indeed it is easy to see that it is non-zero,
while the objects $\BW^{\clambda'}_{-\kappa}$ generate $\hg\mod_{-\kappa}^I$.

\begin{rem}
Note that the above object $\CM$ is \emph{not} one of the affine dual Verma modules: the latter are right-orthogonal
to affine Verma modules, whereas our $\CM$ is right-orthogonal to the Wakimoto modules. 
\end{rem}

\sssec{}

The fact that the functor \eqref{e:Gamma lambda} is t-exact and conservative implies that $\BM^\clambda_{-\kappa}$
is irreducible. 

\medskip

This, in turn implies that the canonical map
$$\BM^\clambda_{-\kappa}\to \BW^\clambda_{-\kappa}$$
is an isomorphism (indeed, the two sides have equal formal characters). 

\sssec{}

Now the fact that the RHS in \eqref{e:two versions of orth} yields an object with the desired orthogonality property follows from 
\corref{c:right orth to Wak}. 

\sssec{}

For the LHS we have:
\begin{multline*} 
\CHom_{\hg\mod_{-\kappa}^I}\left(\BW_{-\kappa}^{\clambda'},
\underset{\mu\in \Lambda^+}{\on{colim}}\,  j_{-\mu,*}\star p^*(\delta_{1,\Gr})\star j_{w_0(\mu),*}\star \BW^\clambda_{-\kappa}[d]\right)\simeq \\
\simeq \underset{\mu\in \Lambda^+}{\on{colim}}\,  \CHom_{\hg\mod_{-\kappa}^I}(\BW_{-\kappa}^{\clambda'}, j_{-\mu,*}\star 
p^*(\delta_{1,\Gr})\star j_{w_0(\mu),*}\star \BW^\clambda_{-\kappa}[d]).
\end{multline*}

For an individual $\mu$, we have:
\begin{multline*} 
\CHom_{\hg\mod_{-\kappa}^I}(\BW_{-\kappa}^{\clambda'}, j_{-\mu,*}\star 
p^*(\delta_{1,\Gr})\star j_{w_0(\mu),*}\star \BW^\clambda_{-\kappa}[d])\simeq  \\
\simeq \CHom_{\hg\mod_{-\kappa}^I}(j_{\mu,!}\star \BW_{-\kappa}^{\clambda'},p^*(\delta_{1,\Gr})\star j_{w_0(\mu),*}\star \BW^\clambda_{-\kappa}[d])\simeq \\
\simeq \CHom_{\hg\mod_{-\kappa}^I}(\BW_{-\kappa}^{\mu+\clambda'},p^*(\delta_{1,\Gr})\star \BW^{w_0(\mu)+\clambda}_{-\kappa}[d])\simeq \\
\simeq \CHom_{\hg\mod_{-\kappa}^I}\left(\BW_{-\kappa}^{\mu+\clambda'},\oblv_{G(\CO)/I}\circ \on{Av}^{G(\CO)/I}_*(\BW_{-\kappa}^{w_0(\mu)+\clambda})[d]\right)\simeq \\
\simeq \CHom_{\KL(G,-\kappa)}(\on{Av}^{G(\CO)/I}_!(\BW_{-\kappa}^{\mu+\clambda'}),\on{Av}^{G(\CO)/I}_*(\BW_{-\kappa}^{w_0(\mu)+\clambda})[d]).
\end{multline*}

Assume now that $\mu$ is large enough so that $\mu+\lambda'$ is dominant and $w_0(\mu)+\clambda+2\check\rho$ is anti-dominant. In this case
$$\on{Av}^{G(\CO)/I}_!(\BW_{-\kappa}^{\mu+\clambda'})\simeq \BV_{-\kappa}^{\mu+\clambda'} \text{ and }
\on{Av}^{G(\CO)/I}_*(\BW_{-\kappa}^{w_0(\mu)+\clambda})[d]\simeq \BV_{-\kappa}^{\vee,\mu+w_0(\clambda)-2\check\rho},$$
and the desired orthogonality is manifest. 

\qed 

\ssec{Proof of \propref{p:P vs P'}}

\sssec{}

Since the objects  
$$j_{-\mu,*}\star p^*(\delta_{1,\Gr})\star j_{w_0(\mu),*}, \quad \mu\in \Lambda^{++}$$ lie in the heart of the t-structure, it suffices to show
that the transition maps coincide for individual 1-morphisms (i.e., higher homotopy coherence is automatic). 

\sssec{}

Recall that the term-wise isomorphism 
$$\wt{P}_\mu\simeq \wt{P}'_\mu$$
comes from the (tautological) identifications
$$\on{Av}^{G(\CO)/I}_*\simeq \coind_{\cB\to \cG}\circ (\on{Av}^{G(\CO)/I}_*)^{\on{enh}} \text{ and }
\oblv_{G(\CO)/I}\simeq (\oblv_{G(\CO)/I})^{\on{enh}}\circ \oblv_{\cG\to \cB}.$$

\medskip

Recall also that according to \corref{c:identify baby geom}, the functor $(\on{Av}^{G(\CO)/I}_*)^{\on{enh}}$
is given by convolution with the object
$$(\overset{\bullet}\CF{}^{-,\semiinf,\on{invol}}_{-\kappa})^{\on{enh}}[-2d]\in \Dmod_{-\kappa}(\Fl^{\on{aff}}_G)^{G(\CO)}.$$

\sssec{}

Let us now recall the construction of the transition maps for the family $\{\wt{P}_\mu\}$.   

\medskip

First, let us note that Geometric Satake defines an identification
\begin{equation} \label{e:geom Sat and w_0}
\on{Sat}(V^{-w_0(\mu)})\simeq \on{Sat}((V^\mu)^\vee),
\end{equation}
thereby fixing a choice of a lift $w'_0\in \on{Norm}_\cG(\cT)$.  

\medskip

Indeed, 
\begin{equation} \label{e:geom Sat and IC}
\on{Sat}(V^\mu)\simeq \on{IC}_\mu,
\end{equation}
so \eqref{e:geom Sat and w_0} is the assertion that the objects
$$\on{IC}_\mu \text{ and } \on{IC}_{-w_0(\mu)}$$
of $\Dmod_{-\kappa}(\Gr_G)^{G(\CO)}$ are monoidal duals of each other. However, this
follows from the fact that the operation of passage to the monoidal dual in $\Dmod_{-\kappa}(\Gr_G)^{G(\CO)}$
is given by
$$\CF\mapsto \BD(\CF^{\on{invol}}),$$
where $$\on{invol}:\Dmod_{-\kappa}(\Gr_G)^{G(\CO)}\to \Dmod_{\kappa}(\Gr_G)^{G(\CO)}$$ is induced by inversion,
and $\BD: \Dmod_{\kappa}(\Gr_G)^{G(\CO)}\to \Dmod_{-\kappa}(\Gr_G)^{G(\CO)}$ is Verdier duality.

\sssec{}

Thus, the assertion of \propref{p:P vs P'} amounts to the verification of the commutativity of the following diagram
$$
\CD
p^*(\delta_{1,\Gr})  @>>>  \coind_{\cB\to \cG}\left((\overset{\bullet}\CF{}^{-,\semiinf,\on{invol}}_{-\kappa})^{\on{enh}}[-2d]\right) \\
& &  @VVV  \\
@VVV   \on{IC}_{-w_0(\mu)}\star \on{IC}_\mu \star  \coind_{\cB\to \cG}\left((\overset{\bullet}\CF{}^{-,\semiinf,\on{invol}}_{-\kappa})^{\on{enh}}[-2d]\right) \\
& &  @VVV  \\
j_{-\mu,*} \star  p^*(\delta_{1,\Gr})  \star j_{w_0(\mu),*}   @>>>
j_{-\mu,*}\star  \coind_{\cB\to \cG}\left((\overset{\bullet}\CF{}^{-,\semiinf,\on{invol}}_{-\kappa})^{\on{enh}}[-2d]\right)\star j_{w_0(\mu),*} 
\endCD
$$
where the lower right vertical arrow comes from the canonical maps
$$\on{IC}_{-w_0(\mu)}\to p_*(j_{-\mu,*})$$
and
\begin{equation} \label{e:mu map}
\on{IC}_\mu \star  \coind_{\cB\to \cG}\left((\overset{\bullet}\CF{}^{-,\semiinf,\on{invol}}_{-\kappa})^{\on{enh}}\right)  \to 
\coind_{\cB\to \cG}\left((\overset{\bullet}\CF{}^{-,\semiinf,\on{invol}}_{-\kappa})^{\on{enh}}\star j_{w_0(\mu),*}\right) 
\end{equation} 

Let us rewrite the map \eqref{e:mu map} more explicitly. First, we have
$$\coind_{\cB\to \cG}\left((\overset{\bullet}\CF{}^{-,\semiinf,\on{invol}}_{-\kappa})^{\on{enh}}\right)\simeq
\inv_{\cB}\left(\overset{\bullet}\CF{}^{-,\semiinf,\on{invol}}_{-\kappa}\right).$$

By unwinding the definitions, we obtain that the map
$$\on{IC}_\mu \star  \overset{\bullet}\CF{}^{-,\semiinf,\on{invol}}_{-\kappa} \to 
\overset{\bullet}\CF{}^{-,\semiinf,\on{invol}}_{-\kappa}\star j_{w_0(\mu),*},$$
obtained from \eqref{e:mu map}, and viewed as a map of objects of $\Dmod_{-\kappa}(\Gr_G)^{G(\CO)}$
equipped with an action of $\cB$, equals the composition
\begin{multline*}
\on{IC}_\mu \star  \overset{\bullet}\CF{}^{-,\semiinf,\on{invol}}_{-\kappa} =
\on{Sat}(V^\mu) \star  \overset{\bullet}\CF{}^{-,\semiinf,\on{invol}}_{-\kappa}
\simeq \overset{\bullet}\CF{}^{-,\semiinf,\on{invol}}_{-\kappa} \otimes \ul{V^\mu} \to \\
\to \overset{\bullet}\CF{}^{-,\semiinf,\on{invol}}_{-\kappa} \otimes k^{w_0(\mu)} \simeq 
\overset{\bullet}\CF{}^{-,\semiinf,\on{invol}}_{-\kappa} \star  j_{w_0(\mu),*},
\end{multline*} 
where the projection $V^\mu\to k^{w_0(\mu)}$ uses the specified choice of the lift $w'_0\in \on{Norm}_\cG(\cT)$,
and the isomorphism
$$\overset{\bullet}\CF{}^{-,\semiinf,\on{invol}}_{-\kappa} \otimes k^{w_0(\mu)} \simeq 
\overset{\bullet}\CF{}^{-,\semiinf,\on{invol}}_{-\kappa} \star  j_{w_0(\mu),*}$$
follows from the construction of $\overset{\bullet}\CF{}^{-,\semiinf,\on{invol}}_{-\kappa}$ as
$$\underset{\nu}\oplus\, \underset{\mu'\in \Lambda^+}{\on{colim}}\, \on{Sat}(V^{\mu'})\star \on{Av}^{G(\CO)/I}_!(j_{-\nu-\mu',*}).$$

To summarize, we need to establish the commutativity of the following diagram
$$
\CD
p^*(\delta_{1,\Gr})  @>>>  \overset{\bullet}\CF{}^{-,\semiinf,\on{invol}}_{-\kappa}[-2d]  \\
& & @VVV  \\
@VVV  \on{IC}_{-w_0(\mu)}\star \on{IC}_\mu \star \overset{\bullet}\CF{}^{-,\semiinf,\on{invol}}_{-\kappa}[-2d]  \\
& & @VVV  \\
j_{-\mu,*} \star  p^*(\delta_{1,\Gr})  \star j_{w_0(\mu),*}   @>>> j_{-\mu,*} \star \overset{\bullet}\CF{}^{-,\semiinf,\on{invol}}_{-\kappa}[-2d] \star  j_{w_0(\mu),*}. 
\endCD
$$

\sssec{}

Let us now describe explicitly the map 
$$\on{IC}_\mu \star \overset{\bullet}\CF{}^{-,\semiinf,\on{invol}}_{-\kappa} \to \overset{\bullet}\CF{}^{-,\semiinf,\on{invol}}_{-\kappa} \star  j_{w_0(\mu),*}.$$

By unwinding the constructions, we obtain that this map fits into the commutative diagrams
$$
\CD 
\on{IC}_\mu \star \IC_{-w_0(\mu)}\star \on{Av}^{G(\CO)/I}_!(j_{-\nu+w_0(\mu),*})  @>>>  \on{Av}^{G(\CO)/I}_!(j_{-\nu+w_0(\mu),*})  \\
@V{\sim}VV   @VV{\sim}V  \\
\on{IC}_\mu \star \on{Sat}(V^{-w_0(\mu)})\star \on{Av}^{G(\CO)/I}_!(j_{-\nu+w_0(\mu),*})  & &  \on{Av}^{G(\CO)/I}_!(j_{-\nu,*}) \star j_{w_0(\mu),*} \\
@VVV  @VVV   \\
\on{IC}_\mu \star \overset{\bullet}\CF{}^{-,\semiinf,\on{invol}}_{-\kappa} @>>> \overset{\bullet}\CF{}^{-,\semiinf,\on{invol}}_{-\kappa} \star  j_{w_0(\mu),*}
\endCD
$$
for $\nu\in \Lambda$.

\medskip

Hence, we are reduced to showing the commutativity of the next diagram
$$
\CD
p^*(\delta_{1,\Gr})  @>>> \IC_{-w_0(\mu)}\star \on{Av}^{G(\CO)/I}_!(j_{w_0(\mu),*})[-2d] \\
& & @VVV \\
@VVV  \IC_{-w_0(\mu)}\star \on{IC}_\mu \star \IC_{-w_0(\mu)}\star \on{Av}^{G(\CO)/I}_!(j_{w_0(\mu),*})[-2d]  \\
& & @VVV  \\
j_{-\mu,*} \star  p^*(\delta_{1,\Gr})  \star j_{w_0(\mu),*}   @>>> j_{-\mu,*}  \star \on{Av}^{G(\CO)/I}_!(j_{w_0(\mu),*})[-2d].
\endCD
$$

\sssec{}

We note, however, that the composite right vertical arrow coincides with the map
$$\IC_{-w_0(\mu)}\star \on{Av}^{G(\CO)/I}_!(j_{w_0(\mu),*})\to  j_{-\mu,*}  \star \on{Av}^{G(\CO)/I}_!(j_{w_0(\mu),*})$$
induced by the map $\on{IC}_{-w_0(\mu)}\to p_*(j_{-\mu,*})$. 

\medskip

Identifying $\on{Av}^{G(\CO)/I}_!(-)[-2d]\simeq \on{Av}^{G(\CO)/I}_*$, we obtain that it suffices to show that the map 
$$\delta_{1,\Gr} \to j_{-\mu,*}\star p^*(\delta_{1,\Gr})\star p_*(j_{w_0(\mu),*})$$
of \eqref{e:trans map prime} equals the composition
$$p^*(\delta_{1,\Gr})  \to \IC_{-w_0(\mu)}\star \on{Av}^{G(\CO)/I}_*(j_{w_0(\mu),*})\to 
j_{-\mu,*}  \star \on{Av}^{G(\CO)/I}_*(j_{w_0(\mu),*}),$$
which is an elementary verification. 

\qed

\section{Interpretation in terms of coherent sheaves}   \label{s:coh}

In this final section we will show that the (conjectural) equivalence \eqref{e:main functor} 
allows to compare a \emph{regular block} of $\Rep^{\on{mxd}}_q(G)$ with the category
of ind-coherent sheaves on the Steinberg variety for $\cG$, 

\ssec{Bezrukavnikov's theory: recollections}   \label{ss:review of Bez}

\sssec{}

Bezrukavnikov's theory of \cite{Bez} states the existence of an equivalence of monoidal categories
\begin{equation} \label{e:Bezr1}
\Dmod_{-\kappa}(\Fl^{\on{aff}}_G)^I\simeq \IndCoh((\wt{\check\CN}\underset{\cg}\times \wt{\check\CN})/\cG)
\end{equation}
and of their module categories
\begin{equation} \label{e:Bezr2}
\Dmod_{-\kappa}(\wt\Fl^{\on{aff}}_G)^I\simeq \IndCoh((\wt{\check\CN}\underset{\cg}\times \wt\cg)/\cG),
\end{equation}
where $\wt\Fl^{\on{aff}}_G:=G(\CK)/\overset{\circ}{I}$.

\medskip

Under the above equivalences, the direct image functor along
$$\IndCoh((\wt{\check\CN}\underset{\cg}\times \wt{\check\CN})/\cG\to \IndCoh((\wt{\check\CN}\underset{\cg}\times \wt\cg)/\cG)$$
corresponds to the pullback functor along $\wt\Fl^{\on{aff}}_G\to \Fl^{\on{aff}}_G$ shifted by $[\dim(T)]$.

\sssec{}

We normalize equivalence \eqref{e:Bezr1} so that the object $J_\mu\in \Dmod_{-\kappa}(\Fl^{\on{aff}}_G)^I$ goes over to the image
if $k^\mu\in \Rep(\cB)$ under the composite functor
\begin{multline*}
\Rep(\cB)\simeq \QCoh(\on{pt}/\cB) \overset{\sfq^*}\to \QCoh(\cn/\on{Ad}(\cB)) \overset{-\otimes \omega_{\cn/\on{Ad}(\cB)}}\longrightarrow 
\IndCoh(\cn/\on{Ad}(\cB)) \simeq \\
\simeq \IndCoh(\wt{\check\CN}/\cG)  \overset{*\on{-pshfwd}}\longrightarrow
\IndCoh((\wt{\check\CN}\underset{\cg}\times \wt{\check\CN})/\cG).
\end{multline*}

In particular the monoidal unit $\delta_{1,\Fl}\in \Dmod_{-\kappa}(\Fl^{\on{aff}}_G)^I$ corresponds to the image of $k\in \Rep(\cB)$ under
the above functor (which is a monoidal unit in $\IndCoh((\wt{\check\CN}\underset{\cg}\times \wt{\check\CN})/\cG)$, as it should be).

\sssec{}

Along with the equivalences \eqref{e:Bezr1} and \eqref{e:Bezr2} one proves their spherical counterparts:
\begin{equation} \label{e:Bezr1 sph}
\Dmod_{-\kappa}(\Fl^{\on{aff}}_G)^{G(\CO)}\simeq \IndCoh((\on{pt}\underset{\cg}\times \wt{\check\CN})/\cG)
\end{equation}
and
\begin{equation} \label{e:Bezr2 sph}
\Dmod_{-\kappa}(\wt\Fl^{\on{aff}}_G)^{G(\CO)}\simeq \IndCoh((\on{pt}\underset{\cg}\times \wt\cg)/\cG).
\end{equation}

The equivalence \eqref{e:Bezr1 sph} has the following features:

\begin{itemize}

\item For $\mu\in \Lambda$, it sends $\on{Av}^{G(\CO)/I}_!(J_\mu)[-d] \in \Dmod_{-\kappa}(\Fl^{\on{aff}}_G)^{G(\CO)}$ to the image of $k^\mu\in \Rep(\cB)$
under the functor
\begin{equation} \label{e:B to spring}
\Rep(\cB) \simeq \QCoh(\on{pt}/\cB) \overset{-\otimes \omega_{\on{pt}/\cB}}\longrightarrow \IndCoh(\on{pt}/\cB) \to 
\IndCoh((\on{pt}\underset{\cg}\times \wt{\check\CN})/\cG),
\end{equation}
where the last arrow is $*$-pushforward along 
$$\on{pt}/\cB \to (\on{pt}\underset{\cg}\times \cn)/\on{Ad}(\cB)\simeq (\on{pt}\underset{\cg}\times \wt{\check\CN})/\cG.$$

\item For $\mu\in \Lambda^+$ it sends $p^!(\on{Sat}(V^\mu))[-d]\in \Dmod_{-\kappa}(\Fl^{\on{aff}}_G)^{G(\CO)}$ to the image of $V^\mu\in \Rep(\cG)$ under 
the composition of $\oblv_{\cG\to \cB}: \Rep(\cG) \to \Rep(\cB)$ and the functor \eqref{e:B to spring}.

\end{itemize}

\sssec{}

The equivalences \eqref{e:Bezr1} and \eqref{e:Bezr1 sph} are related via the commutative diagram 
$$
\CD
\Dmod_{-\kappa}(\Fl^{\on{aff}}_G)^I   @>>>    \IndCoh((\wt{\check\CN}\underset{\cg}\times \wt{\check\CN})/\cG)  \\
@V{\on{Av}^{G(\CO)/I}_![-d]}VV     @VVV  \\
\Dmod_{-\kappa}(\Fl^{\on{aff}}_G)^{G(\CO)} @>>> \IndCoh((\on{pt}\underset{\cg}\times \wt{\check\CN})/\cG),
\endCD
$$
where the right vertical arrow is
\begin{multline*}
\IndCoh((\wt{\check\CN}\underset{\cg}\times \wt{\check\CN})/\cG)\simeq
\IndCoh((\on{\cn}\underset{\cg}\times \wt{\check\CN})/\cB) \overset{\text{!-plbck}}\longrightarrow
\IndCoh((\on{pt}\underset{\cg}\times \wt{\check\CN})/\cB)  \overset{*\on{-pshfwd}}\longrightarrow 
\IndCoh((\on{pt}\underset{\cg}\times \wt{\check\CN})/\cG)
\end{multline*}
or, by passing to right adjoints, via the diagram 
\begin{equation}  \label{e:sph Bez comp}
\CD
\Dmod_{-\kappa}(\Fl^{\on{aff}}_G)^I   @>>>    \IndCoh((\wt{\check\CN}\underset{\cg}\times \wt{\check\CN})/\cG)  \\
@A{\oblv_{G(\CO)/I}}AA     @AAA  \\
\Dmod_{-\kappa}(\Fl^{\on{aff}}_G)^{G(\CO)} @>>> \IndCoh((\on{pt}\underset{\cg}\times \wt{\check\CN})/\cG),
\endCD
\end{equation}
where the right vertical arrow is the functor
%$$\IndCoh((\on{pt}\underset{\cg}\times \wt{\check\CN})/\cG)\overset{\text{!-plbck}}\longrightarrow 
%\IndCoh((\on{pt}\underset{\cg}\times \wt{\check\CN})/\cB)\overset{*\on{-pshfwd}}\longrightarrow \IndCoh((\on{\cn}\underset{\cg}\times \wt{\check\CN})/\cB)\simeq 
%\IndCoh((\wt{\check\CN}\underset{\cg}\times \wt{\check\CN})/\cG).$$
\begin{multline*}
\IndCoh((\on{pt}\underset{\cg}\times \wt{\check\CN})/\cG)\overset{\text{!-plbck}}\longrightarrow
\IndCoh((\on{pt}\underset{\cg}\times \wt{\check\CN})/\cB) \overset{k^{-2\rho}\otimes -}\longrightarrow
\IndCoh((\on{pt}\underset{\cg}\times \wt{\check\CN})/\cB) \overset{*\on{-pshfwd}}\longrightarrow \\
\to \IndCoh((\on{\cn}\underset{\cg}\times \wt{\check\CN})/\cB)\simeq \IndCoh((\wt{\check\CN}\underset{\cg}\times \wt{\check\CN})/\cG).
\end{multline*}

The functors \eqref{e:Bezr2} and \eqref{e:Bezr2 sph} are related in a similar way. 

\sssec{}

Note that it follows that the image of
$$J_{2\rho}\star p^*(\delta_{1,\Gr})[d]\in \Dmod_{-\kappa}(\Fl^{\on{aff}}_G)^I$$
under the equivalence \eqref{e:Bezr1} is the image of the dualizing sheaf on $\QCoh((\cG/\cB\times \cG/\cB)/\cG$ under
%$$\QCoh((\cG/\cB\times \cG/\cB)/\cG)\overset{-\otimes \omega_{(\cG/\cB\times \cG/\cB)/\cG)}} 
%\longrightarrow
$$\IndCoh((\cG/\cB\times \cG/\cB)/\cG)\overset{*\on{-pshfwd}}\longrightarrow
\IndCoh((\wt{\check\CN}\underset{\cg}\times \wt{\check\CN})/\cG).$$

\ssec{Regular block categories}

\sssec{}

The linkage principle for $\hg\mod_{-\kappa}^I$ says that 
$$\CHom_{\hg\mod_{-\kappa}^I}(\BM_{-\kappa}^{\clambda_1},\BM_{-\kappa}^{\clambda_2})=0$$
unless $\clambda_1,\clambda_2\in \cLambda$ lie in the same orbit of the ``dotted" action of $W^{\on{aff,ext}}:=W\ltimes \Lambda$ on $\cLambda$,
i.e., when $\clambda_2$ is not of the form
$$(\mu\cdot w)\cdot \clambda_1:=\mu+w(\clambda_1+\check\rho)-\rho, \quad \mu\in \Lambda,\, w\in W.$$

\medskip

Fix a \emph{regular} $\kappa$-admissible weight $\clambda_0$. Let 
$$\Bl(\hg\mod_{-\kappa}^I)\subset \hg\mod_{-\kappa}^I$$ 
be the full subcategory generated by affine Verma modules $\BM_{-\kappa}^\clambda$ for $\clambda\in W^{\on{aff,ext}}\cdot \clambda_0$.
By the above, this subcategory is actually a direct summand of $\hg\mod_{-\kappa}^I$.

\medskip

Since the objects $\BM_{-\kappa}^\clambda$ and $\BW_{-\kappa}^\clambda$ have the same decomposition series, we have
$$\BW_{-\kappa}^\clambda \in \Bl(\hg\mod_{-\kappa}^I)\, \Leftrightarrow\, \clambda\in W^{\on{aff,ext}}\cdot \clambda_0.$$

\sssec{}

The Kashiwara-Tanisaki localization theorem of \cite{KT} (see also \cite[Theorem 5.5]{FG1}) says that there exists a canonical t-exact equivalence

\begin{equation}  \label{e:KT} 
\Dmod_{(-\kappa,\clambda_0)}(\wt\Fl^{\on{aff}}_G)^I\simeq \Bl(\hg\mod_{-\kappa}^I),
\end{equation}
given by taking sections on $\wt\Fl^{\on{aff}}_G$ and then $T$-invariants. 

\medskip

The composite functor
$$\Dmod_{-\kappa}(\Fl^{\on{aff}}_G)^I\to \Dmod_{-\kappa}(\wt\Fl^{\on{aff}}_G)^I\simeq \Bl(\hg\mod_{-\kappa}^I)$$
is given by
$$\CF\mapsto \CF\star \BM_{-\kappa}^{\clambda_0}.$$

\medskip

Composing with \eqref{e:Bezr2} we thus obtain an equivalence 
\begin{equation}  \label{e:coh to KM}
\IndCoh((\wt{\check\CN}\underset{\cg}\times \wt\cg)/\cG)\simeq \Bl(\hg\mod_{-\kappa}^I)
\end{equation}

\sssec{}

The linkage principle for the small quantum group implies that
$$\CHom_{\Rep^{\on{sml,grd}}_q(G)}(\BM^{\clambda_1}_{q,\on{sml}},\BM^{\clambda_2}_{q,\on{sml}})=0$$
unless unless $\clambda_1,\clambda_2\in \cLambda$ lie in the same orbit of the dotted action of $W^{\on{aff,ext}}$ on $\cLambda$.

\medskip

Using the grading, it follows that we also have
$$\CHom_{\Rep^{\on{mxd}}_q(G)}(\BM^{\clambda_1}_{q,\on{mxd}},\BM^{\clambda_2}_{q,\on{mxd}})=0$$
unless $\clambda_2\in W^{\on{aff,ext}}\cdot \clambda_1$. 

\medskip

Let 
$$\Bl(\Rep^{\on{mxd}}_q(G))\subset \Rep^{\on{mxd}}_q(G)$$ 
be the full subcategory generated by the standard objects $\BM^\clambda_{q,\on{mxd}}$ for 
$\clambda\in W^{\on{aff,ext}}\cdot \clambda_0$.
By the above, this subcategory is actually a direct summand of $\Rep^{\on{mxd}}_q(G)$.

\sssec{}

Thus, from \conjref{c:main} we deduce:

\begin{conj} \label{c:main block} 
There exists an equivalence
$$\Bl(\hg\mod_{-\kappa}^I)\simeq \Bl(\Rep^{\on{mxd}}_q(G)).$$
\end{conj}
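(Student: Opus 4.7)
The plan is to derive \conjref{c:main block} as an immediate restriction of \conjref{c:main}. By property (iii) of \conjref{c:main}, the equivalence
$$\sF_{-\kappa}:\hg\mod_{-\kappa}^I\simeq \Rep^{\on{mxd}}_q(G)$$
sends $\BW_{-\kappa}^{\clambda}$ to $\BM^{\clambda}_{q,\on{mxd}}$ for every $\clambda\in \cLambda$. Since both sides of \conjref{c:main block} are defined as direct summands generated by these objects for $\clambda$ running over the single $W^{\on{aff,ext}}$-orbit through $\clambda_0$, matching these generators under $\sF_{-\kappa}$ will yield the desired restricted equivalence.

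First I would pin down the generators of each block. On the Kac-Moody side, $\Bl(\hg\mod_{-\kappa}^I)$ is by definition the direct summand generated by $\BM_{-\kappa}^\clambda$ for $\clambda\in W^{\on{aff,ext}}\cdot \clambda_0$. As noted in the excerpt, $\BW_{-\kappa}^\clambda$ and $\BM_{-\kappa}^\clambda$ share a decomposition series, so the Wakimoto modules indexed by this orbit lie in $\Bl(\hg\mod_{-\kappa}^I)$. Moreover, \propref{p:Wak as colim Verma} exhibits $\BW_{-\kappa}^\clambda$ as a filtered colimit of shifted convolutions $j_{-\mu,*}\star \BM_{-\kappa}^{\clambda+\mu}$ with $\mu\in\Lambda^+$; since the $W^{\on{aff,ext}}$-orbit is closed under translation by $\Lambda$, the Vermas appearing in these resolutions lie in the same linkage class, so the two generating families cut out the same direct summand. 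The analogous linkage analysis on the quantum side (using the $\cLambda$-grading and the standard filtration on $\BM^\clambda_{q,\on{mxd}}$) shows that $\Bl(\Rep^{\on{mxd}}_q(G))$ is the direct summand generated by $\BM^\clambda_{q,\on{mxd}}$ for $\clambda$ in the same orbit.

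Since $\sF_{-\kappa}$ is an equivalence of DG categories sending one set of compact generators bijectively onto the other, it must restrict to an equivalence of the direct summands they generate, which is precisely \conjref{c:main block}. Combining with the Kashiwara-Tanisaki-Bezrukavnikov equivalence \eqref{e:coh to KM} then yields the advertised description
$$\IndCoh\bigl((\wt{\check\CN}\underset{\cg}\times \wt\cg)/\cG\bigr)\simeq \Bl(\Rep^{\on{mxd}}_q(G)).$$
The only substantive obstacle is of course \conjref{c:main} itself; the block-level statement here is a formal corollary and inherits all of its difficulty from the full conjecture.
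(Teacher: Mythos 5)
Your derivation is correct and matches the paper's intended argument: the paper states the implication from \conjref{c:main} without detail, and the filling-in you provide (linkage gives the blocks as direct summands, Wakimotos indexed by the orbit lie in and generate $\Bl(\hg\mod_{-\kappa}^I)$, property (iii) of \conjref{c:main} identifies generators, hence $\sF_{-\kappa}$ restricts to the summands) is exactly the natural route. Your extra remark using \propref{p:Wak as colim Verma} to justify that Wakimotos and Vermas generate the same summand is a reasonable supplement to the paper's briefer appeal to equality of decomposition series.
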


Combining with \eqref{e:coh to KM}, we obtain:

\begin{conj} \label{c:coh vs} 
There exists an equivalence
$$\IndCoh((\wt{\check\CN}\underset{\cg}\times \wt\cg)/\cG)\simeq \Bl(\Rep^{\on{mxd}}_q(G)).$$
\end{conj}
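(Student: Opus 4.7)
The plan is to derive \conjref{c:coh vs} as a two-step composition: first reduce to \conjref{c:main block} (the block form of the main equivalence), then compose with the equivalence \eqref{e:coh to KM}. The second step is essentially already in place: the composition of Kashiwara-Tanisaki localization \eqref{e:KT} with Bezrukavnikov's equivalence \eqref{e:Bezr2} yields the identification of $\Bl(\hg\mod_{-\kappa}^I)$ with $\IndCoh((\wt{\check\CN}\underset{\cg}\times \wt\cg)/\cG)$ for any choice of regular $\kappa$-admissible weight $\clambda_0$, so no additional work is required there. The whole burden therefore falls on \conjref{c:main block}.

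For \conjref{c:main block} itself, I would proceed by showing that the conjectural equivalence
$$\sF_{-\kappa}\colon \hg\mod_{-\kappa}^I \simeq \Rep_q^{\on{mxd}}(G)$$
of \conjref{c:main} respects the block decompositions on both sides. The blocks on the two sides are generated (as triangulated subcategories closed under colimits) by Wakimoto modules $\BW_{-\kappa}^\clambda$ and by standard objects $\BM^\clambda_{q,\on{mxd}}$ respectively, with $\clambda$ ranging over a single orbit of the dotted $W^{\on{aff,ext}}$-action on $\cLambda$. Since property (iii) of \conjref{c:main} gives
$$\sF_{-\kappa}(\BW_{-\kappa}^\clambda)\simeq \BM^\clambda_{q,\on{mxd}},$$
and since on both sides the block containing $\BW_{-\kappa}^{\clambda_0}$ (resp.\ $\BM^{\clambda_0}_{q,\on{mxd}}$) coincides with the full subcategory generated by the objects indexed by $W^{\on{aff,ext}}\cdot \clambda_0$, it follows formally that $\sF_{-\kappa}$ matches the two block summands. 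Here I use the linkage principles stated in \secref{s:coh}, which ensure that these subcategories are in fact direct summands.

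Assembling the two steps yields the sought equivalence
$$\IndCoh((\wt{\check\CN}\underset{\cg}\times \wt\cg)/\cG)\;\xrightarrow{\;\eqref{e:coh to KM}\;}\;\Bl(\hg\mod_{-\kappa}^I)\;\xrightarrow{\;\sF_{-\kappa}\;}\;\Bl(\Rep^{\on{mxd}}_q(G)).$$
The main obstacle is of course \conjref{c:main} itself: conditional on that conjecture, the remaining work is essentially formal. A secondary, but more subtle, point is that one should verify compatibility with the choice of $\clambda_0$: the stack $(\wt{\check\CN}\underset{\cg}\times \wt\cg)/\cG$ is intrinsically attached to $\cG$ with no choice of weight, while $\Bl(\Rep^{\on{mxd}}_q(G))$ and $\Bl(\hg\mod_{-\kappa}^I)$ a priori depend on $\clambda_0$. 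The expected resolution is that different choices of regular admissible $\clambda_0$ yield equivalent blocks via convolution with appropriate translation functors (i.e., with the $J_\mu$'s and the $\cB$-action on the coherent side), and these intertwining equivalences match under both \eqref{e:coh to KM} and $\sF_{-\kappa}$ thanks to property (ii) of \conjref{c:main} and the Arkhipov-Bezrukavnikov compatibility of \secref{sss:AB compat}.
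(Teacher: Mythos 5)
Your proposal follows the paper's own derivation essentially verbatim: the paper deduces \conjref{c:main block} from \conjref{c:main}(iii) together with the linkage principles on both sides (using that Verma and Wakimoto modules have the same decomposition series), and then obtains \conjref{c:coh vs} by composing with the equivalence \eqref{e:coh to KM} coming from Kashiwara-Tanisaki localization and Bezrukavnikov's equivalence. Your additional remark on independence of the choice of $\clambda_0$ via translation functors is consistent with the paper's setup but not something the paper spells out.
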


\sssec{}  \label{sss:coh norm}

Thus, we obtain a string of equivalences
\begin{equation} \label{e:string of equiv}
\IndCoh((\wt{\check\CN}\underset{\cg}\times \wt\cg)/\cG)\simeq \Dmod_{-\kappa}(\wt\Fl^{\on{aff}}_G)^I\simeq 
\Bl(\hg\mod_{-\kappa}^I) \simeq \Bl(\Rep^{\on{mxd}}_q(G)).
\end{equation}

If we also assume the compatibility of $\sF_{-\kappa}$ specified in \secref{sss:AB compat}, then these equivalences 
respect the action of $\QCoh(\cn/\on{Ad}(\cB))\simeq \QCoh(\wt{\check\CN}/\cG)$. 

\medskip

Under the equivalences \eqref{e:string of equiv} we have the following correspondence of objects
\begin{equation} \label{e:mu objects}
\Delta_*^{\IndCoh}\circ (-\otimes \omega_{\cn/\cB}) \circ \sfq^*(k^\mu) 
\, \leftrightarrow\, J_\mu \, \leftrightarrow\, \BW_{-\kappa}^{\mu+\clambda_0}
\, \leftrightarrow\, \BM_q^{\mu+\clambda_0},
\end{equation} 

\sssec{}

We obtain that for $\mu\in \Lambda\subset \cLambda$, the functor
$$\on{C}^\cdot(U^{\on{Lus}}_q(N),-)^{\mu+\clambda_0}:\Bl(\Rep^{\on{mxd}}_q(G))\to \Vect$$
corresponds to the functor on $\IndCoh((\wt{\check\CN}\underset{\cg}\times \wt\cg)/\cG)$ given by
\begin{multline*}
\IndCoh((\wt{\check\CN}\underset{\cg}\times \wt\cg)/\cG) \overset{\Delta^!}\longrightarrow 
\IndCoh(\wt{\check\CN}/\cG)\simeq \IndCoh(\cn/\cB)\overset{-\otimes \omega^{-1}_{\cn/\cB}}\longrightarrow
\QCoh(\cn/\cB)\overset{\sfq_*}\to \\
\to \QCoh(\on{pt}/\cB) \overset{\CHom(k^\mu,-)}\longrightarrow k.
\end{multline*}

\begin{rem}
The composite conjectural equivalence 
\begin{equation} \label{e:U chi fam}
\IndCoh(\cn\underset{\cg}\times \wt\cg)/\cB)\simeq \Bl(\Rep^{\on{mxd}}_q(G)),
\end{equation}
viewed as categories acted on by $\QCoh(\cn/\on{Ad}(\cB))$ implies the following:

\medskip

Take an element $\chi\in \cn$, and let us tensor both sides of \eqref{e:U chi fam} with $\Vect$ over $\QCoh(\cn/\on{Ad}(\cB))$, where 
$$\QCoh(\cn/\on{Ad}(\cB))\to \Vect$$
corresponds to the evaluation at $\chi$. We obtain that the resulting category
$$\Bl(\Rep^\chi_q(G)):=\Vect\underset{\QCoh(\cn/\on{Ad}(\cB))}\otimes \Bl(\Rep^{\on{mxd}}_q(G))$$ 
identifies with
\begin{equation} \label{e:almost Spr}
\Vect\underset{\QCoh(\cn/\on{Ad}(\cB))}\otimes \IndCoh(\cn\underset{\cg}\times \wt\cg)/\cB).
\end{equation}

Note that \eqref{e:almost Spr} is a full subcategory of 
$$\IndCoh(\on{pt}\underset{\cg}\times \wt\cg),$$
where $\on{pt}\to \cg$ is the image of $\chi$ under $\cn\to \cg$. Here the (derived) scheme $$\on{pt}\underset{\cg}\times \wt\cg$$
is the derived Springer fiber over $\chi$. 

\end{rem}

\begin{rem}

One actually expects something a little stronger than the above equivalence
\begin{equation} \label{e:old chi}
\Vect\underset{\QCoh(\cn/\on{Ad}(\cB))}\otimes \IndCoh(\cn\underset{\cg}\times \wt\cg)/\cB)\simeq \Bl(\Rep^\chi_q(G))_\chi.
\end{equation}

Namely, let $\Rep^\chi_q(G)_{\on{ren}}$ be the following renormalized version of
$$\Rep^\chi_q(G):=\Vect\underset{\QCoh(\cn/\on{Ad}(\cB))}\otimes \Rep^{\on{mxd}}_q(G).$$

Namely, $\Rep^\chi_q(G)_{\on{ren}}$ is the ind-completion of the full (but not cocomplete) subcategory of 
comprised of objects whose image under the forgetful functor $\Rep^\chi_q(G)\to \Rep^{\on{mxd}}_q(G)$
is compact. 

\medskip

Let $\Bl(\Rep^\chi_q(G)_{\on{ren}})$ be the corresponding direct summand of $\Rep^\chi_q(G)_{\on{ren}}$. Then we expect an equivalence
\begin{equation} \label{e:better chi}
\IndCoh(\on{pt}\underset{\cg}\times \wt\cg)\simeq \Bl(\Rep^\chi_q(G)_{\on{ren}}).
\end{equation}

Note that for $\chi=0$, we have
$$\Rep^0_q(G)\simeq \Rep^{\on{sml}}_q(G)_{\on{baby-ren}} \text{ and } \Rep^0_q(G)_{\on{ren}}\simeq \Rep^{\on{sml}}_q(G)_{\on{ren}},$$
so in this case \eqref{e:better chi} recovers the equivalence
$$\IndCoh(\on{pt}\underset{\cg}\times \wt\cg)  \simeq   \Bl(\Rep^{\on{sml}}_q(G)_{\on{ren}}),$$
see \eqref{e:ABG all} below.

\end{rem}

\ssec{Spherical case and relation to the \cite{ABG} equivalence}

\sssec{}

Let $\Bl(\KL(G,-\kappa))$ denote the premiage of $\Bl(\hg\mod_{-\kappa}^I)$ under the forgetful functor
$$\oblv_{G(\CO)/I}:\Bl(\KL(G,-\kappa))\to \Bl(\hg\mod_{-\kappa}^I).$$

This is a direct summand of $\Bl(\KL(G,-\kappa))$ generated by the Weyl modules
$$\BV^\clambda_{-\kappa}, \quad \clambda\in \cLambda^+\cap (W^{\on{aff,ext}}\cdot \clambda_0).$$

\medskip

The Kashiwara-Tanisaki equivalence induces an equivalence
$$\Dmod_{-\kappa}(\wt\Fl^{\on{aff}}_G)^{G(\CO)} \to \Bl(\KL(G,-\kappa))$$
that makes the following diagram commute: 
$$
\CD
\Dmod_{-\kappa}(\wt\Fl^{\on{aff}}_G)^I   @>>> \Bl(\hg\mod_{-\kappa}^I) \\
@AAA   @AAA  \\
\Dmod_{-\kappa}(\wt\Fl^{\on{aff}}_G)^{G(\CO)}  @>>>  \Bl(\KL(G,-\kappa)).
\endCD
$$ 

\sssec{}

Let 
$$\Bl(\Rep_q(G)_{\on{ren}})\subset \Rep_q(G)_{\on{ren}}$$ be the full subcategory generated by the Weyl modules
$$\CV^\clambda_q, \quad \clambda\in \cLambda^+\cap (W^{\on{aff,ext}}\cdot 0).$$

The linkage principle for $\Rep_q(G)_{\on{ren}}$ says that $\Bl(\Rep_q(G)_{\on{ren}})$ is actually a direct summand of
$\Rep_q(G)_{\on{ren}}$. 

\medskip

The Kazhdan-Lusztig equivalence $\sF_{-\kappa}$ induces an equivalence
$$\Bl(\KL(G,-\kappa))\to  \Bl(\Rep_q(G)_{\on{ren}}).$$

\sssec{}

Composing, we obtain a string of equivalences
\begin{equation} \label{e:string of equiv sph}
\IndCoh((\on{pt}\underset{\cg}\times \wt\cg)/\cG) \simeq \Dmod_{-\kappa}(\wt\Fl^{\on{aff}}_G)^{G(\CO)}\simeq 
\Bl(\KL(G,-\kappa)) \simeq \Bl(\Rep_q(G)_{\on{ren}}),
\end{equation}
compatible with those in \eqref{e:string of equiv} under the functors specified earlier.

\medskip

The composite equivalence 
\begin{equation} \label{e:ABG}
\IndCoh((\on{pt}\underset{\cg}\times \wt\cg)/\cG)  \simeq \Bl(\Rep_q(G)_{\on{ren}})
\end{equation}
is the equivalence established in \cite{ABG}.

\sssec{}

Note that under the equivalence \eqref{e:ABG}, the image of 
$$k^\mu\in \Rep(\cB), \quad \mu\in \Lambda$$
under the functor 
%\begin{equation} \label{e:B to spring tilde}
$$\Rep(\cB) \simeq \QCoh(\on{pt}/\cB) \overset{-\otimes \omega_{\on{pt}/\cB}}\longrightarrow \IndCoh(\on{pt}/\cB) 
\overset{*\on{-pshfwd}}\longrightarrow \IndCoh((\on{pt}\underset{\cg}\times \cb)/\cB)\simeq 
\IndCoh((\on{pt}\underset{\cg}\times \wt\cg)/\cG)$$
%\end{equation}
corresponds to 
$$\CV^{\mu+\clambda_0}_q[-d]\simeq \ind_{\on{Lus}^+\to \on{big}}(k^{\mu+\clambda_0})[-d]\simeq 
\coind_{\on{Lus}^+\to \on{big}}(k^{\mu+\clambda_0+2\check\rho})\simeq \CV_q^{\vee,\mu+\clambda_0+2\check\rho}
\in \Bl(\Rep_q(G)_{\on{ren}}).$$

\medskip

The latter identification seems well-known for $\mu\in -\Lambda^+$, but may be new for more general $\mu$.

\sssec{}

The equivalence \eqref{e:ABG} induces a diagram of equivalences:
\begin{equation} \label{e:ABG all}
\CD
\IndCoh(\on{pt}\underset{\cg}\times \wt\cg)  @>>>   \Bl(\Rep^{\on{sml}}_q(G)_{\on{ren}})  \\
@A{{\text{!-plbck}}}AA   @AA{\oblv_{\on{sml.grd}\to \on{sml}}}A  \\
\IndCoh((\on{pt}\underset{\cg}\times \wt\cg)/\cT)   @>>> \Bl(\Rep^{\on{sml,grd}}_q(G)_{\on{ren}})  \\
@A{{\text{!-plbck}}}AA   @AA{\oblv_{\frac{1}{2}\to \on{sml.grd}}}A  \\
\IndCoh((\on{pt}\underset{\cg}\times \wt\cg)/\cB)   @>>> \Bl(\Rep^{\frac{1}{2}}_q(G)_{\on{ren}}) \\
@A{{\text{!-plbck}}}AA   @AA{\oblv{\on{big}\to \frac{1}{2}}}A  \\
\IndCoh((\on{pt}\underset{\cg}\times \wt\cg)/\cG)   @>>> \Bl(\Rep_q(G)_{\on{ren}}).
\endCD
\end{equation}

\sssec{}

Let $\sfi$ denote the embedding of the unit point
$$\on{pt}\to \cG/\cB.$$

We obtain that with respect to the equivalence 
\begin{equation} \label{e:ABG small}
\IndCoh((\on{pt}\underset{\cg}\times \wt\cg)/\cT)   \simeq \Bl(\Rep^{\on{sml,grd}}_q(G)_{\on{ren}}), 
\end{equation}
the image of $k^\mu\in \Rep(\cT)$ under the functor
$$\Rep(\cT) \simeq \QCoh(\on{pt}/\cT) \overset{-\otimes \omega_{\on{pt}/\cT}}\longrightarrow \IndCoh(\on{pt}/\cT) \overset{\sfi^{\IndCoh}_*}\longrightarrow
\IndCoh((\on{pt}\underset{\cg}\times \wt\cg)/\cT)$$
corresponds to the object
$$\ind_{\on{sml}^+\to \on{sml.grd}}(k^{\mu+\clambda_0})[-d] \in \Bl(\Rep^{\on{sml,grd}}_q(G)_{\on{ren}}).$$
%\simeq \coind_{\on{sml}^+\to \on{sml.grd}}(k^{\mu-2\rho})[-d]\in \Bl(\Rep^{\on{sml,grd}}_q(G)_{\on{ren}}).$$

\sssec{}

Let $\sfi^-$ denote the embedding of the point $w_0$
$$\on{pt}\to \cG/\cB.$$

We obtain that under the equivalence \eqref{e:ABG small} the image of $k^\mu\in \Rep(\cT)$ under the functor
$$\Rep(\cT) \simeq \QCoh(\on{pt}/\cT) \overset{-\otimes \omega_{\on{pt}/\cT}}\longrightarrow \IndCoh(\on{pt}/\cT) \overset{(\sfi^-)^{\IndCoh}_*}\longrightarrow
\IndCoh((\on{pt}\underset{\cg}\times \wt\cg)/\cT)$$
corresponds to the object
$$\ind_{\on{sml}^-\to \on{sml.grd}}(k^{\mu+\clambda_0})[-d]\in \Bl(\Rep^{\on{sml,grd}}_q(G)_{\on{ren}}).$$
%\simeq \coind_{\on{sml}^-\to \on{sml.grd}}(k^{\mu-2\rho})[-d]\in \Bl(\Rep^{\on{sml,grd}}_q(G)_{\on{ren}}).$$

\sssec{}  \label{sss:small and coh}

From the above identifications, we obtain that for $\mu\in \Lambda\subset \cLambda$, the functor
$$\IndCoh((\on{pt}\underset{\cg}\times \wt\cg)/\cT)\to \Vect$$
given by
$$\IndCoh((\on{pt}\underset{\cg}\times \wt\cg)/\cT) \overset{\sfi^!}\longrightarrow \IndCoh(\on{pt}/\cT) 
\overset{-\otimes \omega^{-1}_{\on{pt}/\cT}}\longrightarrow \QCoh(\on{pt}/\cT) \simeq \Rep(\cT)
\overset{\CHom(k^\mu,-)}\longrightarrow \Vect$$
corresponds to the functor 
$$\on{C}^\cdot(u_q(N),-)^{\mu+\clambda_0}[d]: \Bl(\Rep^{\on{sml,grd}}_q(G)_{\on{ren}})\to \Vect.$$

\medskip

Similarly, the functor 
$$\IndCoh((\on{pt}\underset{\cg}\times \wt\cg)/\cT) \overset{(\sfi^-)^!}\longrightarrow \IndCoh(\on{pt}/\cT) 
\overset{-\otimes \omega^{-1}_{\on{pt}/\cT}}\longrightarrow \QCoh(\on{pt}/\cT) \simeq \Rep(\cT)
\overset{\CHom(k^\mu,-)}\longrightarrow \Vect$$
corresponds to the functor 
$$\on{C}^\cdot(u_q(N^-),-)^{\mu+\clambda_0}[d]:\Bl(\Rep^{\on{sml,grd}}_q(G)_{\on{ren}})\to \Vect.$$

\ssec{Cohomology of the DK quantum group via coherent sheaves}  \label{ss:DK cohomology}

We will now give an interpretation of the functors $\on{C}^\cdot(U^{\on{DK}}_q(N^-),-)^\clambda$ 
on $\Bl(\Rep^{\on{mxd}}_q(G))$ and $\Bl(\Rep_q(G)_{\on{ren}})$ (for some specific values of $\clambda$)
in terms of coherent sheaves.

\medskip

For the duration of this subsection we will assume \conjref{c:main block}.

\sssec{}

We obtain that for $\clambda$ of the form $\mu+\clambda_0$, the functor
\begin{equation} \label{e:DK coh coh}
\on{C}^\cdot(U^{\on{DK}}_q(N^-),-)^{\mu+\clambda_0}[d]: \Bl(\Rep^{\on{mxd}}_q(G))\to \Vect
\end{equation} 
corresponds to the functor
\begin{multline*}
\IndCoh((\wt{\check\CN}\underset{\cg}\times \wt\cg)/\cG) \simeq 
\IndCoh((\cn\underset{\cg}\times \wt\cg)/\cB) \overset{!\on{-plbck}}\longrightarrow \\
\to \IndCoh((\on{pt}\underset{\cg}\times \wt\cg)/\cB) \overset{!\on{-plbck}}\longrightarrow 
\IndCoh((\cG/\cB)/\cB) \overset{!\on{-plbck}}\longrightarrow  \\
\to \IndCoh((\cG/\cB)/\cT) \overset{(\sfi^-)^!}\longrightarrow \IndCoh(\on{pt}/\cT) 
\overset{-\otimes \omega^{-1}_{\on{pt}/\cT}}\longrightarrow \QCoh(\on{pt}/\cT) \simeq \Rep(\cT)
\overset{\CHom(k^\mu,-)}\longrightarrow \Vect.
\end{multline*}

Note, however, that the composite map
$$\on{pt}/\cT\overset{\sfi^-}\to (\cG/\cB)/\cT\to (\cG/\cB)/\cB\to (\on{pt}\underset{\cg}\times \wt\cg)/\cB\to
(\cn\underset{\cg}\times \wt\cg)/\cB\simeq (\wt{\check\CN}\underset{\cg}\times \wt\cg)/\cG$$
appearing in the above formula identifies with
$$\on{pt}/\cT\simeq ((\cG/\cB)\times (\cG/\cB))^o/\cG\subset (\wt{\check\CN}\underset{\cg}\times \wt\cg)/\cG,$$
where $((\cG/\cB)\times (\cG/\cB))^o\subset (\cG/\cB)\times (\cG/\cB)$ is the open Bruhat cell, which is naturally
an open subset in $(\wt{\check\CN}\underset{\cg}\times \wt\cg)/\cG$. 

\medskip

Hence, we obtain that the functor \eqref{e:DK coh coh} corresponds to 
\begin{multline*}
\IndCoh((\wt{\check\CN}\underset{\cg}\times \wt\cg)/\cG) \overset{!\on{-plbck}}\longrightarrow 
\IndCoh( ((\cG/\cB)\times (\cG/\cB))^o/\cG)\simeq \IndCoh(\on{pt}/\cT)\simeq\\ 
\simeq  \QCoh(\on{pt}/\cT) \simeq\Rep(\cT) \overset{\CHom(k^\mu,-)}\longrightarrow \Vect.
\end{multline*}

\sssec{}

Note now that we have a Cartesian diagram
$$
\CD
(\on{pt}\underset{\cg}\times \cb^-)/\cT  @>>>  \on{pt}/\cT   \\
@VVV   @VVV   \\
(\on{pt}\underset{\cg}\times \wt\cg)/\cB @>>>  (\cn\underset{\cg}\times \wt\cg)/\cB
\endCD
$$
and also a Cartesian diagram
$$
\CD
(\on{pt}\underset{\cg}\times \cb^-)/\cT   @>>>  \on{pt}/\cT   \\ 
@VVV  @VVV  \\
(\on{pt}\underset{\cg}\times \wt\cg)/\cG  @>{r}>>  (\cG/\cB)/\cG,
\endCD
$$
where the right vertical arrow is the composite
$$\on{pt}/\cT \overset{\sfi^-}\longrightarrow (\cG/\cB)/\cT\to (\cG/\cB)/\cG,$$
and the bottom horizontal arrow, denoted $r$, is 
$$(\on{pt}\underset{\cg}\times \wt\cg)/\cG\to \wt\cg/\cG\to (\cG/\cB)/\cG.$$

\medskip

From here, combined with \conjref{c:coh vs} (and taking into account \eqref{e:sph Bez comp}), we obtain:

\begin{conj}  \label{c:ABG DK}
Under the \cite{ABG} equivalence
$$\IndCoh((\on{pt}\underset{\cg}\times \wt\cg)/\cG)  \simeq \Bl(\Rep_q(G)_{\on{ren}}),$$
the functor
$$\on{C}^\cdot(U^{\on{DK}}_q(N^-),-)^{\mu+\clambda_0}[d]: \Bl(\Rep_q(G)_{\on{ren}})\to \Vect$$
corresponds to the composition 
\begin{multline*}
\IndCoh((\on{pt}\underset{\cg}\times \wt\cg)/\cG)  \overset{r^\IndCoh_*}\longrightarrow 
\IndCoh((\cG/\cB)/\cG)  \overset{!\on{-plbck}}\longrightarrow  \IndCoh((\cG/\cB)/\cT) 
\overset{(\sfi^-)^!}\longrightarrow \\
\to \IndCoh(\on{pt}/\cT) \overset{-\otimes \omega^{-1}_{\on{pt}/\cT}}\longrightarrow \QCoh(\on{pt}/\cT) \simeq \Rep(\cT)
\overset{\CHom(k^{\mu+2\rho},-)}\longrightarrow \Vect.$$
\end{multline*}
\end{conj}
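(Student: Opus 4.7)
The plan is to deduce \conjref{c:ABG DK} as a formal consequence of \thmref{t:DK}(a) (specifically its coherent reformulation derived in \secref{ss:DK cohomology}) and of the compatibility of Bezrukavnikov's equivalence between the $G(\CO)$- and $I$-equivariant versions, recorded in \eqref{e:sph Bez comp}. The main input is the elementary observation that for $\CM\in\Bl(\Rep_q(G)_{\on{ren}})$ one has a canonical isomorphism
$$\on{C}^\cdot(U_q^{\on{DK}}(N^-),\CM)^{\mu+\clambda_0}\simeq \on{C}^\cdot(U_q^{\on{DK}}(N^-),\oblv_{\on{big}\to \on{mxd}}(\CM))^{\mu+\clambda_0},$$
since the $U_q^{\on{DK}}(N^-)$-action on $\CM$ factors through $\oblv_{\on{big}\to \on{mxd}}$ by construction.

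The first step is to transport the forgetful functor $\oblv_{\on{big}\to \on{mxd}}$ to the coherent side. By the commutativity of \eqref{e:KL vs I} asserted in \conjref{c:main} and the Kashiwara--Tanisaki identifications underlying \eqref{e:string of equiv} and \eqref{e:string of equiv sph}, $\oblv_{\on{big}\to \on{mxd}}$ corresponds to the right vertical arrow in the $\wt\cg$-version of \eqref{e:sph Bez comp}. Concretely, letting $\pi\colon(\on{pt}\underset{\cg}\times \wt\cg)/\cB\to (\on{pt}\underset{\cg}\times \wt\cg)/\cG$ be the natural projection and $\iota\colon(\on{pt}\underset{\cg}\times \wt\cg)/\cB\hookrightarrow (\cn\underset{\cg}\times \wt\cg)/\cB\simeq (\wt{\check\CN}\underset{\cg}\times \wt\cg)/\cG$ be the closed embedding (cut out by $x=0$), if $\CF\in \IndCoh((\on{pt}\underset{\cg}\times \wt\cg)/\cG)$ corresponds to $\CM$, then $\oblv_{\on{big}\to \on{mxd}}(\CM)$ corresponds to $\iota^{\IndCoh}_*(\pi^!(\CF)\otimes k^{-2\rho})$.

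Next, I would apply the mixed-level description already established in \secref{ss:DK cohomology}: on $\Bl(\Rep_q^{\on{mxd}}(G))$ the functor $\on{C}^\cdot(U_q^{\on{DK}}(N^-),-)^{\mu+\clambda_0}[d]$ corresponds to !-pullback to the open Bruhat cell $\on{pt}/\cT\hookrightarrow(\wt{\check\CN}\underset{\cg}\times \wt\cg)/\cG$, followed by $-\otimes\omega^{-1}_{\on{pt}/\cT}$ and $\CHom(k^\mu,-)$. Base change along the first Cartesian square in the statement of \conjref{c:ABG DK} rewrites the !-pullback of $\iota^{\IndCoh}_*(\pi^!(\CF)\otimes k^{-2\rho})$ to $\on{pt}/\cT$ as $p^{\IndCoh}_*\bigl(q^!(\CF)\otimes k^{-2\rho}\bigr)$, where $p,q$ are the top and left sides of that square; by the projection formula the $k^{-2\rho}$ twist passes outside $p^{\IndCoh}_*$, and absorbing it into the final pairing converts $\CHom(k^\mu,-)$ into $\CHom(k^{\mu+2\rho},-)$. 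A second application of base change, along the \emph{second} Cartesian square, identifies $p^{\IndCoh}_*\circ q^!$ with $(\sfi^-)^!\circ s^!\circ r^{\IndCoh}_*$ (with $s\colon(\cG/\cB)/\cT\to (\cG/\cB)/\cG$ the quotient map), matching exactly the composition in the statement.

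The main obstacle is essentially bookkeeping: one has to verify that the cohomological $[d]$-shift and the $k^{-2\rho}$ twist produced by the Iwahori--spherical comparison \eqref{e:sph Bez comp} thread cleanly through both base-change steps and the projection formula, so that they recombine into the prescribed $\CHom(k^{\mu+2\rho},-)$. All the deep input is packaged in the mixed case \thmref{t:DK}(a) and in the description of $\oblv_{\on{big}\to \on{mxd}}$ on the coherent side (which in turn rests on the additional compatibility of $\sF_{-\kappa}$ formulated in \secref{sss:AB compat}); granting these, \conjref{c:ABG DK} is really a consistency check on the spherical quotient of \conjref{c:main block} against the \cite{ABG} equivalence.
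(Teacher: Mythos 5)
Your proposal is correct and follows essentially the same route as the paper: reduce to the mixed-level statement from \secref{ss:DK cohomology} via the tautological identification of $\on{C}^\cdot(U_q^{\on{DK}}(N^-),-)$ on $\Bl(\Rep_q(G)_{\on{ren}})$ with its composite through $\oblv_{\on{big}\to \on{mxd}}$, transport $\oblv_{\on{big}\to \on{mxd}}$ to the coherent side using the Iwahori--spherical compatibility \eqref{e:sph Bez comp}, and then run base change along the two Cartesian diagrams that the paper records, tracking the $k^{-2\rho}$ twist through the projection formula so that it lands as the $2\rho$-shift in $\CHom(k^{\mu+2\rho},-)$. The paper leaves the composition with $\oblv_{\on{big}\to \on{mxd}}$ and the passage through the Cartesian squares largely implicit (``from here, combined with \conjref{c:coh vs} and taking into account \eqref{e:sph Bez comp}''); you have simply made those steps explicit, which is a faithful unpacking rather than a different argument.
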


\sssec{}

Equivalently, we obtain that the functor
$$\on{C}^\cdot(U^{\on{DK}}_q(N),-)^{\mu+\clambda_0}[d]: \Bl(\Rep_q(G)_{\on{ren}})\to \Vect$$
corresponds to the composition 
\begin{multline*}
\IndCoh((\on{pt}\underset{\cg}\times \wt\cg)/\cG)  \overset{r^\IndCoh_*}\longrightarrow 
\IndCoh((\cG/\cB)/\cG)  \overset{!\on{-plbck}}\longrightarrow  \IndCoh((\cG/\cB)/\cT) 
\overset{\sfi^!}\longrightarrow \\
\to \IndCoh(\on{pt}/\cT) \overset{-\otimes \omega^{-1}_{\on{pt}/\cT}}\longrightarrow \QCoh(\on{pt}/\cT) \simeq \Rep(\cT)
\overset{\CHom(k^{\mu+2\rho},-)}\longrightarrow \Vect.$$
\end{multline*}

\sssec{}

As a reality check, let us compare the latter expression for $\on{C}^\cdot(U^{\on{DK}}_q(N),-)^{\mu+\clambda_0}[d]$ with one for
$\on{C}^\cdot(u_q(N),-)^{\mu+\clambda_0}[d]$ given in \secref{sss:small and coh}. 

\medskip

Namely, we note that for $\CM\in u_q(N)\mod(\Rep_q(T))$ the object
$$\coind_{\on{DK}^+\to \on{sm}^+}\circ \oblv_{\on{sm}^+\to \on{DK}^+}(\CM)$$
admits a filtration with associated graded isomorphic to
$$\CM\otimes \Sym(\cn[-1]).$$

Similarly, the object 
$$r^*\circ \sfi^{\IndCoh}_*(k^\mu)$$ has a canonical filtration with the associated graded being the image under
$$\Rep(\cT) \simeq \QCoh(\on{pt}/\cT) \overset{-\otimes \omega_{\on{pt}/\cT}}\longrightarrow \IndCoh(\on{pt}/\cT)
\overset{\sfi_*^{\IndCoh}}\longrightarrow \IndCoh((\on{pt}\underset{\cg}\times \wt\cg)/\cG)$$
of the objects 
$$k^\mu \otimes \Sym((\cg/\cb)^\vee[1])\simeq k^{\mu+2\rho} \otimes \Sym(\cn^\vee[1]).$$

\ssec{The big Schubert cell on the D-modules side}  \label{ss:big Schubert}

\sssec{}

In this subsection, for completeness, we will prove the following result:

\begin{thm}
Under the equivalence \eqref{e:Bezr1}, the object 
$$J_{2\rho}\star \left(\underset{\mu\in \Lambda^{++}}{\on{colim}}\, j_{-\mu,*}\star p^*(\delta_{1,\Gr})\star j_{w_0(\mu),*} \in \Dmod_{-\kappa}(\Fl^{\on{aff}}_G)^I\right)[d]$$
corresponds to the direct image of the dualizing sheaf along the map 
$$(\cG/\cB\times \cG/\cB)^{o}/\cG\to  (\wt{\check\CN}\underset{\cg}\times \wt{\check\CN})/\cG.$$
\end{thm}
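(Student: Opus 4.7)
The plan is to reduce the colimit to a more tractable form via \thmref{t:P geom}, identify the resulting object on the D-module side with a $*$-extension from an $N(\CK)$-orbit via the remark following \corref{c:semiinf neg via Wak}, and then match with the dualizing-sheaf pushforward on the coherent side using Bezrukavnikov's equivalence and stratification compatibility.

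First, by \thmref{t:P geom} the natural map
$$\underset{\mu\in \Lambda^{++}}{\on{colim}}\, j_{-\mu,*}\star p^*(\delta_{1,\Gr})\star j_{w_0(\mu),*} \;\longrightarrow\; \underset{\mu\in \Lambda^+}{\on{colim}}\, j_{-\mu,*}\star j_{\mu,*}\star j_{w_0,*}[-d]$$
is an isomorphism. Convolving on the left by $J_{2\rho}$ and shifting by $[d]$ (so that $[-d]$ and $[d]$ cancel), and using that convolution commutes with colimits and is associative, our object becomes
$$J_{2\rho}\star \Big(\underset{\mu\in \Lambda^+}{\on{colim}}\, j_{-\mu,*}\star j_{\mu,*}\Big)\star j_{w_0,*}.$$
By the remark following \corref{c:semiinf neg via Wak}, and using our trivialization of $\omega_x$, the inner colimit is canonically identified with $\on{Av}^{I/T}_*(\iota_*(\omega_{N(\CK)\cdot 1}))$, where $\iota\colon N(\CK)\cdot 1\hookrightarrow \Fl_G^{\on{aff}}$ is the embedding of the $N(\CK)$-orbit of the base point. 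Thus we must identify $J_{2\rho}\star \on{Av}^{I/T}_*(\iota_*(\omega_{N(\CK)\cdot 1}))\star j_{w_0,*}$ under Bezrukavnikov's equivalence with the claimed pushforward from the big Bruhat stratum.

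Next, I would transport this across \eqref{e:Bezr1}. Under the equivalence, the Iwahori Schubert stratification of $\Fl_G^{\on{aff}}$ matches the $\cG$-orbit stratification of $(\wt{\check\CN}\underset{\cg}\times \wt{\check\CN})/\cG$; the averaged $N(\CK)$-orbit object is a semi-infinite cell whose image should be the $*$-pushforward from a stratum of the Steinberg stack. Convolution by $J_{2\rho}\star -\star j_{w_0,*}$ implements the required weight shift by $2\rho$ (reconciling the normalizations $\omega_{\on{pt}/\cB}$ and $\omega_{\on{pt}/\cT}$) and the translation to the open Bruhat stratum $(\cG/\cB\times \cG/\cB)^o/\cG\simeq \on{pt}/\cT$. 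To verify this rigorously, I would test against the generating collection $\{J_\nu\}_{\nu\in\Lambda}$: on the coherent side, $\CHom(\sfq^*(k^\nu)\otimes \omega_{\cn/\on{Ad}(\cB)},\,\text{pushforward of }\omega_{\on{pt}/\cT})$ reduces by adjunction to the $\nu$-weight component of $\omega_{\on{pt}/\cT}$, concentrated in a single cohomological degree at $\nu = -2\check\rho + 2\rho$; on the D-module side, \corref{c:right orth to Wak} together with the relation between Wakimoto and Verma modules yields precisely the matching orthogonality pattern.

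\textbf{Main obstacle.} The central difficulty is in Step 3: the $*$-extension $\on{Av}^{I/T}_*(\iota_*(\omega_{N(\CK)\cdot 1}))$ is neither standard nor costandard in the Iwahori-Schubert decomposition of $\Fl_G^{\on{aff}}$, so its image on the coherent side is not given directly by the normalizations recalled in \secref{ss:review of Bez}. A natural route around this is to characterize the object \emph{uniquely} by its right-orthogonality to the collection of Wakimoto modules (via \corref{c:right orth to Wak} and \corref{c:semiinf neg via Wak}), and then to match this orthogonality with the defining property of $\jmath_*(\omega_{\on{pt}/\cT})$ on the coherent side, namely that it represents the composition of $!$-restriction to the open stratum followed by internal Hom into the dualizing sheaf.
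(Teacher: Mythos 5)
Your approach is genuinely different from the paper's. You first use \thmref{t:P geom} to pass from the $\wt{P}'$-presentation to the $\wt{P}''$-presentation, then identify the inner colimit as the $*$-extension from the $N(\CK)$-orbit, and then propose to pin this down on the coherent side by an orthogonality argument. The paper goes the other way: it invokes \propref{p:P vs P'} to realize the colimit as the value (on the monoidal unit) of the abstractly defined endofunctor $\wt{P}=(\oblv_{G(\CO)/I})^{\on{enh}}\circ P\circ (\on{Av}^{G(\CO)/I}_*)^{\on{enh}}$, which is built from $\Rep(\cB)$-module-categorical operations. Because the ingredients of $\wt{P}$ ($(\oblv_{G(\CO)/I})^{\on{enh}}$, $(\on{Av}^{G(\CO)/I}_*)^{\on{enh}}$, and the $P_\mu$'s, which use only $\Rep(\cG)$, $\Rep(\cB)$, $\Rep(\cT)$ data) correspond tautologically under Bezrukavnikov's equivalence to explicit coherent operations, the transport is automatic, and the remaining computation is carried out entirely in $\QCoh((\cG/\cB)/\cB)$ where it reduces to the Pl\"ucker-type expansion of $\CO_\cB$. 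The paper's route is therefore ``functorial'' whereas yours is ``object-by-object''.

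The obstacle you flag is the crux, and the workaround you suggest does not close it. Two issues. First, $\{J_\nu\}_{\nu\in\Lambda}$ do not generate $\Dmod_{-\kappa}(\Fl^{\on{aff}}_G)^I$: these objects are supported on the Iwahori orbits through $t^\nu$, which range only over the translation part of $W^{\on{aff,ext}}$, and on the coherent side they are pushforwards along the closed diagonal $\wt{\check\CN}/\cG\hookrightarrow(\wt{\check\CN}\underset{\cg}\times\wt{\check\CN})/\cG$, which cannot generate the $\IndCoh$ of the fiber product. So matching $\CHom$'s against them only shows agreement of the two functors on a proper subcategory, not that the two objects are isomorphic. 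Second, the Wakimoto-orthogonality statements (\corref{c:right orth to Wak}, \corref{c:semiinf neg via Wak}) live in $\hg\mod_{-\kappa}^I$, not in $\Dmod_{-\kappa}(\Fl^{\on{aff}}_G)^I$; to use them you would need to pass through the Kashiwara--Tanisaki localization (exactly as in the proof of \thmref{t:P geom}), and by the time you have done that you have essentially re-proved \thmref{t:P geom} rather than the coherent-side identification. The shape of a correct fix is indeed available, but it is the one the paper uses: realize the object in question as the value of the $\Rep(\cB)$-equivariant endofunctor $\wt{P}$ (\propref{p:P vs P'}), transport that functor via \eqref{e:sph Bez comp}, and compute $P$ on the image of the unit in $\QCoh((\cG/\cB)/\cB)$ --- which bypasses any orthogonality test.
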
 

The rest of this subsection is devoted to the proof of this theorem.

\sssec{}

Consider the setting of \secref{sss:P functors neg} with $\CC=\Dmod_{-\kappa}(\on{Fl}^{\on{aff}}_G)^I$. We have the diagram
\begin{equation} \label{e:enh diag}
\CD
\Rep(\cB)\underset{\Rep(\cG)}\otimes \Dmod_{-\kappa}(\on{Fl}^{\on{aff}}_G)^{G(\CO)}  @>{(\oblv_{G(\CO)/I})^{\on{enh}}}>> \Dmod_{-\kappa}(\on{Fl}^{\on{aff}}_G)^I \\
@A{\oblv_{\cG\to \cB}}AA  \\
\Dmod_{-\kappa}(\on{Fl}^{\on{aff}}_G)^{G(\CO)}
\endCD
\end{equation} 

By \propref{p:P vs P'}, the $\wt{P}$ endofunctor of $\Dmod_{-\kappa}(\on{Fl}^{\on{aff}}_G)^I$ associated with this diagram is given by
convolution with the object
$$\underset{\mu\in \Lambda^{++}}{\on{colim}}\, j_{-\mu,*}\star p^*(\delta_{1,\Gr})\star j_{w_0(\mu),*}.$$

\medskip

By \eqref{e:sph Bez comp}, under Bezrukavnikov's equivalence, diagram \eqref{e:enh diag} corresponds to the following diagram:
$$
\CD
\IndCoh((\on{pt}\underset{\cg}\times \wt{\check\CN})/\cB)  @>{\sfq^*(k^{-2\check\rho})\otimes \iota^\IndCoh_*}>> 
\IndCoh(\cn\underset{\cg}\times \wt{\check\CN})/\cB)  @>{\sim}>> (\wt{\check\CN}\underset{\cg}\times \wt{\check\CN})/\cG  \\
@A{!\on{-plbck}}AA   \\
\IndCoh((\on{pt}\underset{\cg}\times \wt{\check\CN})/\cG). 
\endCD
$$

Thus, we need to show that the value of the corresponding $\wt{P}$ endofunctor of $\IndCoh(\wt{\check\CN}\underset{\cg}\times \wt{\check\CN})/\cB)$
on $\Delta^\IndCoh_*(\omega_{\wt{\check\CN}/\cG})$ 
produces the object isomorphic to the direct image of the dualizing sheaf along the map 
$$(\cG/\cB\times \cG/\cB)^{o}/\cG\to  (\wt{\check\CN}\underset{\cg}\times \wt{\check\CN})/\cG\simeq (\cn\underset{\cg}\times \wt{\check\CN})/\cB$$
tensored by $\sfq^*(k^{-2\check\rho})[-d]$. 

\sssec{}

The right adjoint of $\sfq^*(k^{-2\check\rho})\otimes \iota^\IndCoh_*$ identifies with $k^{2\check\rho}\otimes \iota^!$. 
And we have a Cartesian diagram
$$
\CD
\on{pt}/\cB   @>>> \cn/\cB  @>{\sim}>>   \wt{\check\CN}/\cG \\
@V{\iota}VV  @VVV @VV{\Delta}V  \\
(\on{pt}\underset{\cg}\times \wt{\check\CN})/\cB @>{\iota}>> (\cn\underset{\cg}\times \wt{\check\CN})/\cB @>{\sim}>> (\wt{\check\CN}\underset{\cg}\times \wt{\check\CN})/\cG,
\endCD
$$
hence, 
$$\iota^!(\Delta^\IndCoh_*(\omega_{\wt{\check\CN}/\cG})) \simeq \iota_*^\IndCoh(\omega_{\on{pt}/\cB}).$$

Since the map $\iota:\on{pt}/\cB\to (\on{pt}\underset{\cg}\times \wt{\check\CN})/\cB$ factors as
$$\on{pt}/\cB \overset{\sfi}\longrightarrow (\cG/\cB)/\cB \to (\on{pt}\underset{\cg}\times \wt{\check\CN})/\cB,$$
it suffices to show that the value of the endofunctor of $\IndCoh((\cG/\cB)/\cB)$ given by $P$ on the object
\begin{equation} \label{e:rho delta}
k^{2\check\rho}\otimes \sfi^\IndCoh_*(\omega_{\on{pt}/\cB})\in \IndCoh((\cG/\cB)/\cB)
\end{equation}
produces the dualizing sheaf on the big Schubert cell in $(\cG/\cB)/\cB$, shifted by $[-d]$. 

\sssec{}

Consider the diagram
$$
\CD
\QCoh((\cG/\cB)/\cB) @>{-\otimes\omega_{(\cG/\cB)/\cB}}>> \IndCoh((\cG/\cB)/\cB)  \\
@A{*\on{-plbck}}AA   @AA{!\on{-plbck}}A   \\
\QCoh(\on{pt}/\cG)  @>{-\otimes\omega_{\on{pt}/\cG}}>>   \IndCoh(\on{pt}/\cG).
\endCD
$$

The object \eqref{e:rho delta} identifies with 
$$\sfi_*(\CO_{\on{pt}/\cB})[-d]\otimes \omega_{(\cG/\cB)/\cB}.$$

Hence, it suffices to show that the endofunctor of $\QCoh((\cG/\cB)/\cB)$ given by $P$ on $\sfi_*(\CO_{\on{pt}/\cB})$
produces the structure sheaf of the big Schubert cell. However, the latter is manifest.

\end{document}